\newtheorem{theorem}{Theorem}[section]
\newtheorem{corollary}[theorem]{Corollary}
\newtheorem{definition}[theorem]{Definition}
\newtheorem{lemma}[theorem]{Lemma}
\newtheorem{notation}[theorem]{Notation}
\newtheorem{proposition}[theorem]{Proposition}
\newtheorem{remark}[theorem]{Remark}
\newenvironment{proof}[1][Proof]{\textbf{#1.} }{\ \rule{0.5em}{0.5em}}
\def \L{\Lambda}
\def \<{\langle}
\def \>{\rangle}
\def \R{\mathbb R}
\def \D{{\cal D}}
\def \Hc{{\mathcal H}}
\def \H^0{{\cal H}^0 or}
\def \G{{\cal G}}
\def \P{{\cal P}}
\def \Y{{\cal Y}}
\def \w{\omega}
\def \kf{\frak k}
\def \p{\partial}
\def \beq{\begin{equation}}
\def \eeq{\end{equation}}
\def \n{\nabla}
\def \eref{\eqref}
\def \V{\mathcal V}
\def \ra{\rho_a}
\def \rp{\rho_p}
\def \lrc{\lrcorner\,}
\numberwithin{equation}{section}
\begin{document}

\title{The Yang-Mills heat equation with finite action
\footnote{\emph{Key words and phrases.} Yang-Mills, heat equation, weakly parabolic, 
    gauge groups,  Gaffney-Friedrichs inequality, Neumann domination.  \newline
 \indent 
\emph{2010 Mathematics Subject Classification.} 
 Primary; 35K58, 35K65,  Secondary; 70S15, 35K51, 58J35.} 
\author{Leonard Gross \\
Department of Mathematics\\
Cornell University\\
Ithaca, NY 14853-4201\\
{\tt gross@math.cornell.edu}}
}

\maketitle

\begin{abstract}
The existence and uniqueness of solutions to the Yang-Mills heat equation
is proven over $\R^3$ and over a bounded open convex set in $\R^3$. 
The initial data is taken to lie in the Sobolev space of order one half,
which is the critical Sobolev index for this equation over a three dimensional
manifold. The existence is proven by 
    solving first an augmented, strictly parabolic equation
and then gauge transforming the solution  to a solution of the Yang-Mills heat equation
itself. The gauge functions needed to carry out this procedure lie in the critical gauge group
of Sobolev regularity three halves, which is a complete topological group in a natural metric but is 
not a Hilbert Lie group.  The nature of this group must  
 be understood in order to carry
out the reconstruction procedure. Solutions to the Yang-Mills heat
equation are shown to be strong
solutions modulo these gauge functions. Energy inequalities and Neumann
domination inequalities  are used to establish needed initial behavior properties of solutions
to the augmented equation.

\end{abstract}

\tableofcontents

\section{Introduction} \label{secintro}
The Yang-Mills heat equation is a weakly parabolic, quasi-linear differential equation for a Lie algebra valued 1-form on $\R^n$.
Denote by $\kf$ the Lie algebra of a compact Lie group $K$.
Let
\beq
A(x,t) = \sum_{j=1}^n A_j(x,t) dx^j, \ \ x \in \R^n, \ \ t \ge 0,    \label{I1}
\eeq
 with coefficients $A_j(x,t) \in \kf$.
 The Yang-Mills heat  equation has the form
\beq
\frac{\p}{\p t} A(x, t) = -d^*d A(x,t) +(\text{quadratic terms + cubic terms) in}\  A. \label{I2}
\eeq
The linear terms are missing a portion of the Laplacian, $-\Delta = d^*d + d d^*$,
on 1-forms. For this reason the equation is only weakly parabolic. This paper is concerned
with the question of existence and uniqueness of solutions to the Cauchy problem
for \eref{I2} with fairly rough initial data:
Let $A_0$ be a $\kf$ valued 1-form on $\R^n$. We seek solutions to \eref{I2} such that
\beq
A(0) = A_0.                                                                          \label{I3}
\eeq
There is a standard approach to the problem of existence of solutions to a quasi-linear 
parabolic equation, $\p u/\p t = (Lu)(t) + F(u(t))$, wherein $L$ is an elliptic linear operator and 
$F$ is a possibly non-linear function of the unknown $u|_t$. One converts the differential
equation to the more or less equivalent integral equation 
$u(t) = e^{tL} u_0 + \int_0^t e^{(t-s)L} F(u(s)) ds$ and uses then a contraction
principle in a space of paths $u:[0,T]\rightarrow$ 1-forms on $\R^n$.
 But if $L$ is not elliptic then the basis for all the estimates that one needs
in order  to carry out this procedure breaks down. In the case of \eref{I2}, 
one has $L = -d^*d$ on 1-forms
and $L$ is therefore not elliptic. The failure of this standard method is
 accompanied by the  failure
of the equation itself to smooth out initial data. This is quite visible
 in \eref{I2} in case the compact
group $K$ is just the circle group. In this case all the nonlinear terms are zero. The resulting
equation has time independent solutions of the form $A(t, x) = d\lambda(x)$ for any function 
$\lambda:\R^n \rightarrow \R$. $\lambda$ does not even have to be differentiable
for this to be a solution because $d^2 =0$ in any reasonable generalized sense. 
But if, say, $\lambda \in C_c^2(\R^n)$ then this is a classical solution and clearly
 the 
 evolution does not smooth the initial data $A_0: = d\lambda$.  Even if $K$ is not
 commutative such ``pure gauge'' solutions exist and are similarly propagated by the equation
  in a time independent way. This phenomenon  greatly affects the nature of the
  solutions that one  would         expect if the equation were parabolic.   
        Ignoring this for a moment, one can compute that the critical
  Sobolev index  for our problem   in dimension three is one half in the sense of scaling. 
  That is, the Sobolev $H_a(\R^3)$ norm of a 1-form is invariant under the scaling
   $\R^3\ni x \mapsto c x$   if (and only if) $a = 1/2$, 
   while the equation itself is invariant under  the scaling $x,t\mapsto cx, c^2t$. 
    Since we will be concerned only with
  spatial dimension $n =3$, one can hope, at best,  that the Cauchy problem has long time
  solutions when the initial data $A_0$ is in the Sobolev space $H_{1/2}$. 
  This is indeed what we will   prove. But the fact that some  data
   propagates in a time independent way
  means that one cannot expect that the solution will always be 
  smooth enough,   even at strictly positive time,  to give
  clear meaning to those nonlinear terms which depend on the first
  spatial derivatives of $A(t)$, since $A(t)$ need not  be in $H_1(\R^3)$ for $t >0$.

  To be more precise,
  denote by $B:= dA + A\wedge A$ the curvature (magnetic field) of the connection
   form $A$ over $\R^3$. 
Then the Yang-Mills heat equation is
\beq
\frac{\p}{\p t} A(x,t) = -d_A^* B,                  \label{I4}
\eeq
where $d_A^* = d^* +$ the  interior product by $A$. One can verify easily that this has the form
indicated in \eref{I2}. Recall that a function $g:\R^3 \rightarrow K$ determines the gauge transformation $A\mapsto A^g$, defined by
\beq
 A^g = g^{-1} A g + g^{-1} dg.                  \label{I5}
 \eeq
 This in turn induces
an action on the curvature given 
by $B\mapsto g^{-1} B g$. In particular if $A =0$ then $B= 0$ and the curvature
 of the ``pure gauge'' $g^{-1}dg$ is therefore 
 zero. Thus if $A_0 = g^{-1}dg$ then
  $A(t): = g^{-1}dg$ is clearly a solution to the Yang-Mills heat equation \eref{I4}.
 It can happen, therefore,  that  if the initial data is only in $H_{1/2}$, the solution
 need be no smoother than this for positive time.
 Yet, once one has  computed the curvature in some generalized sense,
  the equation  \eref{I4} may now involve only classical derivatives
   of the curvature (which is zero in this example). 
Thus  the first spatial derivatives of $A(t)$
 need to be defined in some generalized sense in this example while the needed  second derivatives are definable classically. This is the reverse of what one usually
  considers to be a weak solution.  
 The main theorem of this paper will  prove the existence and uniqueness of long time 
 solutions to the equation \eref{I4} for initial data $A_0 \in H_{1/2}$, wherein the notion of 
 solution will allow first spatial derivatives of $A(t)$ to exist only in a generalized sense
 while the resulting ``weak'' curvature of $A(t)$ is actually in $H_1$ for all $t >0$. 
 The theorem  will also  point to the 
 source of this phenomenon  by showing that there is a gauge function $g_0$ such that
 $A(t)^{g_0}$ is itself in $H_1$ for all $t > 0$. 
 Otherwise said, any initial data $A_0 \in H_{1/2}$ gives
 rise to a strong solution up to gauge transformation.

 The question of existence and uniqueness of solutions to the Yang-Mills
  heat equation  is of intrinsic interest, partly because it is a naturally
   occurring quasilinear diffusion equation and partly because of the way that gauge invariance
   intervenes in the very  formulation of the Cauchy problem. But, as in \cite{CG1, CG2},
   this work is ultimately aimed at the  
   construction  of gauge invariant functions of distributional
   initial data by a completion procedure sketched in the introduction to \cite{CG1},
    in an anticipated 
 application to quantum field theory.
 In order  to construct local observables for this application
 we will 
 be interested in solutions over a bounded open subset of $\R^3$, 
 as well as over  all of $\R^3$.  
 The question of
  boundary conditions therefore arises.
  In  \cite{CG1}  we considered Dirichlet, Neumann and Marini boundary conditions.
  The latter consists in setting the normal component of the curvature to zero on the boundary,
  \cite{Ma3, Ma4, Ma7, Ma8}.
  These are  the only boundary conditions  commensurate with the intended
   applications to quantum field theory.
 Solutions satisfying
  Marini boundary conditions will be derived from solutions satisfying
   Neumann boundary conditions in a future work.
   In this paper we will only consider
   Dirichlet and Neumann boundary conditions. 
 The meaning of the space $H_{1/2}$ will henceforth 
  depend on the choice of boundary conditions.

We are also going to derive existence and uniqueness theorems in case
 the initial data $A_0$ is in the
Sobolev space $H_a$ for some $a > 1/2$. This will illuminate the way   in which 
 some results and some techniques break down as $a$ decreases to its critical value, 1/2.
 In particular we will see that the usual contraction mechanism for proving
  existence of solutions to integral equations breaks down
  as $a \downarrow 1/2$ and must be replaced by a different contraction mechanism, special
  to the Yang-Mills heat equation.

The strategy for proving existence of solutions to the initial value problem
 \eref{I4}, \eref{I3} consists of the following components. 

{\it ZDS procedure.} 
 The   Zwanziger-Donaldson-Sadun (ZDS) method of ameliorating the failed 
 ellipticity of $d^*d$ will underly the  approach in this paper, as it did in \cite{CG1}.
     In the ZDS method one deals at first with  a modified version of \eref{I4}, obtained by adding
      a term to the right hand side,
      which makes the resulting equation strictly parabolic.
  The so augmented equation is 
  \begin{align}
  \frac{\p}{ \p t} C(t)  = - d_C^* B_C - d_C d^*C,       \label{I6}
  \end{align}
  wherein $C(t) $ is a $\kf$ valued 1-form with the same initial data $A_0$ as \eref{I3}
  and $B_C(t)$ is the curvature of $C(t)$.
  The desired solution $A(t)$ is then  recovered from $C(t)$ by a time
   dependent gauge transformation,
   \beq
   A(t) = C(t)^{g(t)},                     \label{I7}
   \eeq
   where $g(t,x)$ is determined  from $C(\cdot)$ for each point $x \in \R^3$  
   by
     a simple ordinary differential equation:
   \beq
   \frac{d}{dt} g(t, x) = (d^*C(t, x)) g(t, x), \ \ \   g(0, x) =  \text{ identity element of } K.    \label{I8}
   \eeq
   The difficulty in carrying out the ZDS procedure for the recovery of 
   $A(\cdot)$  from $C(\cdot)$ arises
   from the fact that $d^*C(t)$ has very singular behavior as $t\downarrow 0$. 
   Indeed  $d^*C(0)$ need only be in $H_{-1/2}$.  
   This reflects  itself in a corresponding degree of irregularity of the gauge
    function $g(t, \cdot)$ and its
    spatial differential $ g(t)^{-1} dg(t)$, both of  which enter into the transformation \eref{I7}. 
 To carry out the ZDS procedure it will be necessary to understand first the
  nature of the  group
 of gauge functions in which each $g(t)$ lies.

{\it Gauge groups.} If $A_0 \in H_{1/2}(\R^3)$ then the solution $C(\cdot)$ to \eref{I6} will
 be shown to be a continuous function into $  H_{1/2}(\R^3)$. We wish to construct a solution
 $A(\cdot)$ of \eref{I4} which is also a continuous function into $H_{1/2}(\R^3)$.
  A gauge transformation, $C \mapsto g^{-1} C g + g^{-1} dg$, such as occurs in\eref{I7},
 will take $H_{1/2}$ into itself if $ g \in H_{3/2}(\R^3, K)$.
 The statement that
 $g \in H_{3/2}(\R^3;K)$  needs an interpretation that 
 makes this set of 
  gauge functions into a topological  group in a way that serves 
  the needs of the ZDS procedure.
  It would be reasonable, for example, to
   define such functions to be those of the form 
  $g(x) = \exp(\alpha(x))$, with $\alpha \in H_{3/2}(\R^3;\kf)$. 
   But the $H_{3/2}$ norm of $\alpha(\cdot)$
  just barely fails to control the supremum norm of $\alpha$, with the result that $\exp(\alpha(x))$
  wraps around $K$ in an uncontrolled manner as $x$ runs over $\R^3$, 
   leading to failure  of this set to be a    topological group 
   as well as failure to serve the needs of the ZDS procedure.  
   We will define instead a 
   group $\G_{3/2}$ of gauge functions, which in its natural metric topology 
    is a complete topological group in the pointwise product and which serves the needs
    of the ZDS procedure.
    Similarly, if $A_0 \in H_a$ for some $a \in (1/2, 1]$ we will define a group $\G_{1+a}$
    of gauge functions appropriate for implementing the ZDS procedure in this case.
    For $a > 1/2$ this group is a nice Hilbert manifold, whereas for $a=1/2$
    there appears to be no tangent space at the identity.
      This distinction is one of the many ways that distinguish
    the case $a> 1/2$ from the critical case $a = 1/2$.
     
     As to whether the solution $g(t)$ to \eref{I8} actually lies in $\G_{3/2}$ for each $t$
      depends on the nature of the coefficient $d^* C(t)$, which typically has
       a strong  singularity at $t = 0$, as already noted. 
     Most of this paper, accordingly,  
     is   devoted
     to proving that the  function $t\mapsto g(t)$ is actually a continuous function 
     into $\G_{3/2}$ (or into $\G_{1+a}$ if $A_0 \in H_a$).  
     The proof of this, in turn,    
     relies  
     on obtaining 
     detailed information about the
     singular initial behavior of the solution $C(\cdot)$ of the augmented equation \eref{I6}.

{\it Initial behavior of $C(\cdot)$.}    The nature  of the initial singularity of  
the solution $C(\cdot)$
which is needed to establish the required properties of the conversion 
functions $g(t)$ 
  will   be analyzed  in three steps.
 First, some knowledge of the singular behavior
of $C(t)$ as $t \downarrow 0$ comes immediately  from the fact that the solution
 lies in the path space that will be 
used for proving existence of a solution to the integral equation corresponding to \eref{I6}.
 Second, we will derive  energy estimates,
which use  the fact that the function $C(\cdot)$ not only lies in the path space but is also
a solution  of the augmented equation \eref{I6}. Generally this gives
 only $L^p$ information for $2\le p \le 6$ by
Gaffney-Friedrichs-Sobolev
inequalities. Third, we will derive information from a Neumann domination technique
 that builds on the previous energy estimates.
 This will give $L^p$ information for all $p\le \infty$.

{\it Finite action.} 
     The functional
      \beq
     \int_0^1 t^{-1/2} \|B(t)\|_{L^2(M)}^2 dt           \label{I9}
    \eeq
  is a gauge invariant functional of the initial data $A_0$, 
  wherein $M = \R^3$ or is a bounded open subset of $\R^3$ . 
  It captures    in a gauge invariant way  the 
  $H_{1/2}$ norm of $A_0$, which is itself not a gauge invariant norm. It controls
  many of the estimates needed in this $H_{1/2}$ theory. 
  It has thus important technical usefulness
  for us in this paper and important conceptual significance  for the applications.  We will
  say that a solution to the Yang-Mills heat equation has finite action if
  the functional \eref{I9} is finite. 
        (This terminology is motivated by the fact that, when \eref{I9} is finite, the initial data $A_0$
  has an extension to a time interval  in Minkowski  space which assigns finite value to the magnetic
  contribution to   the Lagrangian.)
  We will prove that if $\|A_0\|_{H_{1/2}}$ is sufficiently small,
  then the solution has finite action. If $\|A_0\|_{H_a} <\infty$ for some $a > 1/2$ then the
  corresponding ``a-action'' is always finite. This is yet another distinction between the
  critical case $a =1/2$ and the cases $a > 1/2$.

  The techniques in this paper rely heavily on the results in \cite{CG1} and \cite{CG2},
   which  deal with the Yang-Mills heat equation for initial data in $H_1$. 
   The Bianchi identity,  $d_A B =0$, encodes much of what is special about
    the form of the Yang-Mills heat equation.
 To take advantage of this it is essential to formulate identities and inequalities 
   in terms of the gauge covariant exterior derivative $d_A$ and its adjoint. All of the
   key inequalities we get with this use are gauge invariant.
  The gauge invariant Gaffney-Friedrichs inequality, 
 which gives information about the gradient of a form in terms of the
  exterior derivative and co-derivative of the form,  
   is   needed here, as in \cite{CG1}, for enabling 
   use of   Sobolev inequalities. It  will continue to be a major tool.

     Once one has propagated the initial data for a short time
     one can apply the results of  \cite {CG1} to establish
       long time existence  of the solution $A(\cdot)$. 
    Concerning uniqueness of solutions, a standard kind of 
    proof, based on a Gronwall type of
   argument, applies to our equation in case $a > 1/2$, but breaks down when $a = 1/2$.
   For $a = 1/2$ we will give a proof of uniqueness which is special to the structures
   at our disposal.
       
         The first proof of existence of solutions of the Yang-Mills heat equation over
 a three dimensional closed manifold  with $H_1$ initial data was given by J. R\aa de  in \cite{Ra}.
 In his proof,   R\aa de took the magnetic field as an independent unknown function, 
 in addition to the gauge potential $A$, and showed in the end that, for the joint solution 
 of the resulting parabolic system,
 the independent magnetic field is indeed the curvature of $A$.    This method goes back
 to Ginibre and Velo \cite{GV1, GV2} in the context of the hyperbolic Yang-Mills
  equations in $2+1$  space time dimensions and to DeTurck \cite{DeT} 
 in the  context of the  parabolic Ricci flow problem. 
 The ZDS procedure used in the present paper produces the gauge function $g_0$
 for which $A(t)^{g_0}$ is a strong solution.
  Any method alternative to the ZDS procedure  would  also be required 
  to produce such a
   gauge function 
   because of the conceptual role that $g_0$ plays when
    the initial data    $A_0$ is in $H_a$ with $a <1$. Remark \ref{remgq2} discusses this further.
    More history of the Yang-Mills heat flow is given in the introduction of \cite{CG1}.
 
       The ZDS procedure, which we used in \cite{CG1, CG2} and in the present paper,
has its origins in a suggestion by D. Zwanziger \cite{Z} in the context of stochastic quantization,
and in the work of Donaldson \cite{Do1} and Sadun \cite{Sa}. 
 Recently, Sung-Jin Oh \cite{Oh1, Oh2} 
has used the Yang-Mills heat equation
to  provide a very novel way to attack the Cauchy problem for the hyperbolic Yang-Mills
equation  in $3+1$ space time dimensions. He combines the hyperbolic and parabolic
equations into one system in order to force a  continuously changing gauge choice,
which is  favorable for the hyperbolic system.  The ZDS method underlies his analysis
of the parabolic  portion of the system. He is concerned with $H_1$ initial data
 for the heat equation since it matches with  the initial data of the hyperbolic equation.
 But the critical Sobolev index for the hyperbolic equation is also 1/2. 
 One could reasonably anticipate that Oh's method might be implementable for 
 $H_{1/2}$  initial data for the hyperbolic equation by combining it with the heat equation results
 which we derive here.

Complementary to the question of long time existence of the Yang-Mills heat flow 
is the question of short time blow up of solutions. 
  In four space dimensions the existence of long time solutions with $H_1$ initial
data has recently been proven  by Waldron in \cite{Wal}. Previously, long time solutions 
  had been proven in four space dimensions 
if the solution were
  restricted  by some strong symmetry conditions or some additional invariant of the equation.
 See e.g.  \cite{Stru2, Stru3, HT1} and \cite{Do1}.
But in \cite{Grot1},  J. Grotowski showed that over $\R^n$,  with $n\ge5$, 
 solutions  with smooth initial data can blow up in a finite time even if the initial
  data and solution are restricted by stringent symmetry conditions. 
  In  \cite{Nai} and  \cite{Wein}  the nature of the singularity formation has been investigated 
  and   \cite{Grot2} makes a comparison of blow up phenomena in Yang-Mills evolution and harmonic map evolution.
 In \cite{Gast1, Gast2} it was  shown how singularity formation is associated
  with non-uniqueness of the flow. Semi-probabilistic methods for determining
   blow up and no blow up are described in   \cite{ABT}  
  and \cite{Pu}. 
 
It is a pleasure to acknowledge useful comments from Nelia Charalambous and Artem Pulemotov.

\section{Statement of results} \label{secstate}

 $M$ will 
 denote  either $\R^3$ or the closure of a bounded, 
 open 
 set in $\R^3$
 with smooth boundary.
      $K$ will denote a compact Lie group contained in the unitary (resp. orthogonal)
  group of some finite dimensional inner product space $\V$. $\<\cdot, \cdot\>$ will denote
  an Ad $K$ invariant inner product on its Lie algebra $\kf$.

  We continue to use the commutator-wedge product notation from \cite{CG1} given 
      by $[(\xi dx^{j_1}\wedge \cdots \wedge dx^{j_p}) \wedge ( \eta  dx^{k_1}\wedge\cdots \wedge dx^{k_q})] = [\xi, \eta] dx^{j_1}\wedge \cdots  dx^{k_q}$ for $\kf$ valued functions $\xi$ and $\eta$.
For a $\kf$ valued connection form $A$ on $M$ the exterior derivative $d$ and
 the gauge invariant exterior derivative $d_A$ are related by 
 $d_A \w = d\w + [A\wedge \w]$,  wherein $\w$ is a  $\kf$ valued p-form.  
 If $u$ is a $\kf$ valued r-form
  and $v$ is a $\kf$ valued p-form with $ r \le p$ then the interior product $[u\lrc v]$ is
   the (p-r)-form  defined  by $\<[u\lrc v], \w\>_{\L^{p-r}\otimes \kf} = \< v, [u\wedge \w]\>_{\L^p\otimes \kf}$ for all (p-r)-forms
   $\w$. The adjoint of $d_A$ in $L^2(M; \L^*\otimes \kf)$ is then given by 
   $d_A^* \w = d^*\w + [A\lrc \w]$  for any $\kf$ valued p-form $\w$. In the following,
   $W_1$ refers to the Sobolev space of order one without boundary conditions.
 
\subsection{Strong solutions} 
 
\begin{definition}\label{defstrsol} {\rm   Let $0 < T \le \infty$.
A {\it strong solution} to the Yang-Mills heat
equation over $[0,T)$  is a continuous function 
\beq
A(\cdot): [0,T) \rightarrow L^2(M;\L^1\otimes \kf) \subset \{\frak k\text{-valued 1-forms on}\ M \}       \label{str1}
\eeq
such that
\begin{align}
a)&\  A(t) \in W_1\ \text{for all}\ \ t\in(0,T)\ \text{and}\ A(\cdot):(0,T)\rightarrow W_1\
                                                                \text{is continuous},      \label{str2}\\
b)& \ B(t):= dA(t) +A(t)\wedge A(t)  \in W_1 \  \text{for each}\ \  t\in (0,T),\  
                                    \label{str3}\\
c)& \  \text{the strong $L^2(M)$ derivative $A'(t) \equiv dA(t)/dt $}\ 
\text{exists on}\ (0,T), \ \   \text{and}\ \  \notag\\ 
&\ \ \ \ \  A'(\cdot):(0, T) \rightarrow L^2(M) \ \ \text{is continuous},    \label{str4}    \\
d)& \  A'(t) = - d_{A(t)}^* B(t)\ \ \text{for each}\ t \in(0, T).     \label{str5}
\end{align}
An {\it almost strong solution} is a function $A(\cdot)$ satisfying all of the preceding conditions except a).
 In this case the spatial exterior derivative $dA(t)$, which enters
into the  definition of the curvature $B$, 
must be interpreted  as a weak derivative.
}
\end{definition}

\begin{remark}\label{weakcurv} {\rm (Weak curvature) If a) does not hold then the weak derivatives \linebreak
$\p A(t)/\p x_j$, 
$j = 1,2,3$ need not be  functions. Yet condition b) requires 
   that  the particular
combination of derivatives that enter into $dA(t)$ be a function.  
    This can happen easily, as one sees in the example  $A = d\lambda$, where $\lambda$  is an arbitrary 
    real valued distribution on $\R^3$. In this case    the distribution $dA$ is the function identically equal to zero.
    Many of the problems that we will need to deal with in this paper arise from the presence of pure gauges,
     which are the non-commutative analogs of this example.
}
\end{remark}

\begin{definition}\label{defbc}{\rm (Boundary conditions) Let
\beq
-\Delta = d^*d + dd^*\ \ \text{on $\kf$ valued 1-forms over $M$}
\eeq
In case $M\ne \R^3$ then, for a  $\kf$ valued 1-form $\w$ on $M$,
 the Neumann and Dirichlet  domains for $\Delta$ are given by
\begin{align}
(N)\ \w_{norm} &=0, \ \ \ \ \ \, \ d\w_{norm}=0   \ \ \ \ \ \ \text{Neumann domain},  \label{ST11N0}\\
(D)\ \ \  \w_{tan} &= 0, \ \ \ \ (d^*\w)|_{\p M} =0\ \ \ \ \ \   \text{Dirichlet domain},     \label{ST11D0}
\end{align}
where $\w_{norm}$ and $\w_{tan}$ are the normal and tangential components of $\w$ on $\p M$.
The boundary conditions (N) and (D) are respectively absolute and relative boundary conditions
 in the sense of  Ray and Singer, \cite{RaS}. See \cite[Remark 2.11]{CG1} for further discussion.
Both versions of $-\Delta$ are non-negative self-adjoint operators on the appropriate domains.
The corresponding Sobolev spaces are
\beq
H_a = \text{Domain of}\ (-\Delta)^{a/2}\ \text{in} \ L^2(M;\L^1\otimes \kf)   \label{ST18}
\eeq
with norm
\beq
     \|\w\|_{H_a} = \| (1- \Delta)^{a/2}\w \|_{L^2(M;\L^1\otimes \kf)}.         \label{ST19}
     \eeq
With this definition one has
     \begin{align}
     \| \w\|_{H_a} \le c_{a,b} \|\w\|_{H_b}\ \ \ \  \text{if} \ \ \ 0 \le a \le b <\infty,       \label{ST20}
     \end{align}
 where $c_{a,b}$ is a constant independent of $M$. In particular, $H_b \subset H_a$ 
 when $ a \le b$.
}
\end{definition}
A theorem stated without specifying Neumann or Dirichlet boundary conditions
applies to both cases when $M \ne \R^3$, as well as to the case $M =\ R^3$.

\begin{remark}\label{mabc}{\rm  (More about boundary conditions) 
Suppose that $M$ is the closure
of a bounded open set in $\R^3$ with smooth boundary. It is well understood
that the usual Neumann Laplacian for real valued functions $u$ on $M$ can be simply defined
as the unique self-adjoint operator $-\Delta_N$ whose quadratic form  is given by 
$Q_N(u) = \int_M  |\n u(x)|^2 dx$. $\D(Q_N)$ consists of all measurable
 functions $u$ for which $Q_N(u)$ is finite.   $\Delta_N$ is unique under the usual 
 assumption that $\D(\Delta_N) \subset \D(Q_N)$.
Similarly the Dirichlet Laplacian is the unique self-adjoint
operator $-\Delta_D$ whose quadratic form  $Q_D$ is again given by this integral but
with domain consisting of those functions $u$ in the domain of $Q_N$ which are zero on 
$\p M$. (More precisely $Q_D$ is the closure of the form $Q_N|C_c^\infty(M^{int})$).
In this paper we are concerned with Laplacians  on 1-forms over $M$. 
 There are two natural senses for a 1-form $\w$ to be zero on the boundary.
Define
\begin{align}
Q_{norm}(\w) &=\int_M \Big(|d\w(x)|_{\L^2}^2 + |\delta \w(x)|_{\L^0}^2 \Big) dx,
          \ \ \w_{norm} = 0\ \text{on}\ \p M \\
Q_{tan}(\w) &=\int_M \Big(|d\w(x)|_{\L^2}^2 + |\delta \w(x)|_{\L^0}^2 \Big) dx, 
        \ \ \ \ \w_{tan} = 0 \ \text{on}\ \p M,
\end{align}
where $d$ denotes the exterior derivative on 1-forms in $C^\infty(M)$ and $\delta$ denotes the coderivative  on 1-forms in $C^\infty(M)$. The domains of both of these quadratic forms
is specified  by imposing a Dirichlet type condition on $\w$. Yet the Laplacian naturally
 associated to each one forces a form $\w$  in its domain to satisfy 
 not only the Dirichlet type condition $\w_{norm} =0$, resp. $\w_{tan}=0$, but also
 a derivative type condition $(d\w)_{norm} =0 $, resp. $(\delta \w)_{tan} =0$. The latter are
 Neumann type conditions. Thus the two Laplacians associated to the two quadratic forms
 are neither Dirichlet Laplacians nor Neumann Laplacians. As noted above, 
 Ray and Singer \cite{RaS} have called
 the associated boundary conditions absolute and relative boundary
  conditions, respectively, because of their
 role in absolute and relative cohomology.
 We are going to continue to follow Conner \cite{Co},
 who refers to them as the Neumann Laplacian and Dirichlet Laplacian, respectively,
 because they reduce to these on zero forms.  We will often deal with
 small fractional  powers of the Laplacian.  
 When $1/2 < a < 3/2$ the domain of   
 $(-\Delta_N)^{a/2}$ is restricted only by the boundary condition $\w_{norm} =0$ and 
  not by a condition on the derivative of $\w$. A similar comment applies to $(-\Delta_D)^{a/2}$
  and $\w_{tan} =0$.
  }
\end{remark}

\subsection{Gauge groups}
\begin{notation}\label{notgg} {\rm (Gauge groups) 
 There are several gauge groups over $M$ 
that  mediate the formulation of the existence theorems. 
A measurable function $g: M\rightarrow K \subset End\ \V$ is a bounded 
function into a linear space and therefore
its weak derivatives over the interior of $M$ are well defined. 
We will  be interested  in such functions $g$
for which  the weak derivatives $ \p_j g$ are in $L^2(M; End\, \V)$, $j = 1,2,3$.  
We will say that $g \in W_1$
in this case.  For a function $g \in W_1$ the functions $x \mapsto g(x)^{-1} \p_j g(x)$ take their
values a.e. in the Lie algebra $\kf \subset End\ \V$.  
Thus the $\kf$ valued 1-form $g^{-1}dg$ lies in $L^2(M; \L^1\otimes \kf)$. 
Postponing  till Section \ref{secgg} a precise discussion of the boundary conditions on $g$ itself,
let us define $\G_{1+a}$ to  consist of those functions $g \in W_1$ such
 that $\|g^{-1}dg\|_{H_a} <\infty$.
 }
 \end{notation}

\begin{theorem} \label{thmgg1} The set $\G_{1+a}$ is a complete topological group
under the pointwise product if $1/2 \le a \le 1$. $($See Section \ref{secgg} for the topology.$)$
\end{theorem}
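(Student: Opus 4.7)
The plan is to work with the natural metric on $\G_{1+a}$ (to be specified in Section \ref{secgg}), which I shall treat schematically as the sum of the term $\|g^{-1}dg - h^{-1}dh\|_{H_a}$ and a term measuring the pointwise difference $g-h$ (for instance in $L^2(M;End\,\V)$), and to verify three requirements separately: closure of $\G_{1+a}$ under the pointwise product and inversion, continuity of these operations in the metric, and completeness of the metric space.

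For closure, the decisive identities are
\[
(gh)^{-1}d(gh) = h^{-1}(g^{-1}dg)h + h^{-1}dh, \qquad (g^{-1})^{-1}d(g^{-1}) = -\,g(g^{-1}dg)g^{-1}.
\]
Thus the task reduces to showing that the conjugation $\omega \mapsto h^{-1}\omega h$ sends $H_a$ into itself whenever $h \in \G_{1+a}$, together with bilinear bounds expressing how the output depends continuously on both $h$ and $\omega$. For $a \in (1/2,1]$ the Sobolev embedding $H_{1+a}(M)\hookrightarrow L^\infty(M)$ (available in three dimensions because $1+a>3/2$) combined with standard multiplier lemmas for $H_a$ yields both the qualitative statement and the required continuity. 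Continuity of multiplication and of inversion in the metric then follow by subtracting the above identities for two pairs $(g_n,h_n)$ and $(g,h)$ and estimating the resulting commutators.

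For completeness I would take a Cauchy sequence $\{g_n\}$ in $\G_{1+a}$, use the pointwise-difference term in the metric to extract an a.e. limit $g:M\to K$ (along a subsequence), and use the Cauchy property of $\{g_n^{-1}dg_n\}$ in $H_a$ to produce a limit $\omega\in H_a$. The remaining task is to identify $\omega$ with $g^{-1}dg$ as a weak derivative; this is done by passing to the limit in the identity $dg_n = g_n\cdot(g_n^{-1}dg_n)$, exploiting a.e.\ convergence of $g_n$, its uniform pointwise bound coming from $K$-valuedness, and convergence of $g_n^{-1}dg_n$ at least in $L^2$ via the embedding $H_a\subset L^2$.

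The main obstacle is the critical case $a=1/2$. In three dimensions $H_{3/2}$ just fails to embed into $L^\infty$, so the multiplier bounds underlying the previous paragraphs cannot be justified by an $L^\infty$ bound on $h$ inferred from the metric alone. The compensating fact is that every $g\in \G_{3/2}$ is \emph{a priori} pointwise bounded because it takes values in the compact group $K$, even though this bound is not quantitatively controlled by the metric. One must therefore peel off the $L^\infty$ factor $h$ in products such as $(\p h)\,\omega$ and estimate the remainder by duality, exploiting that $\omega\in H_{1/2}$ pairs with $H_{-1/2}$ objects while $\p h\in L^2$. Arranging this so that the pointwise $K$-valued information cooperates with the metric information, and so that the bilinear operations are not merely well defined but continuous, is what separates the critical $a=1/2$ theory from the easier Hilbert-manifold picture available when $a>1/2$, and is where the bulk of the care will be required.
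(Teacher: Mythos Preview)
Your structural outline---closure via the product and inversion identities, continuity of those operations, and completeness via Cauchy sequences---matches the paper's, and your completeness argument is essentially correct. But there is a genuine gap in your treatment of the multiplier bound and of continuity at the critical index.

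First, a minor imprecision: you invoke $H_{1+a}\hookrightarrow L^\infty$ for $a>1/2$, but elements $h\in\G_{1+a}$ are not defined by membership in $H_{1+a}$; the hypothesis is $h\in W_1$ with $h^{-1}dh\in H_a$. The paper does not split cases on $a$. Instead it proves, by complex interpolation between $b=0$ (where $Ad\,g$ is isometric on $L^2$) and $b=1$ (where $d((Ad\,g)u)$ and $d^*((Ad\,g)u)$ are computed explicitly), the uniform bound $\|(Ad\,g)u\|_{H_b}\le(1+c_1\|g^{-1}dg\|_3)\|u\|_{H_b}$ for all $0\le b\le1$. The key is that only the $L^3$ norm of $g^{-1}dg$ enters, and this is controlled by $\|g^{-1}dg\|_{H_a}$ whenever $a\ge1/2$. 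This interpolation idea is absent from your plan.

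Second, the more serious gap: your duality-with-$H_{-1/2}$ sketch for $a=1/2$ does not produce what is needed. The issue is not boundedness of $Ad\,g$ on $H_{1/2}$ (interpolation gives that) but \emph{continuity} of $g\mapsto Ad\,g$ at the identity, i.e.\ $\|(Ad\,g-1)u\|_{H_{1/2}}\to0$ for each fixed $u$. At $a=1/2$ this is only strong continuity, not norm continuity, and your pairing argument does not yield it. The paper's device is a spectral cutoff: write $u=v+w$ with $v=\chi_{[0,\lambda)}(D)u$ and $\|w\|_{H_{1/2}}$ small; bound the $w$-piece crudely, and on $v$ use a refined interpolation estimate $\|(Ad\,g-1)v\|_{H_{1/2}}\le(\kappa\|Ad\,g-1\|_{p_1}+c_1\|g^{-1}dg\|_3)\|v\|_{H_{1/2+\delta_1}}$ with $p_1=3/\delta_1<\infty$. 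The extra $\delta_1$ of smoothness is available because $v$ is spectrally bounded, and $\|Ad\,g-1\|_{p_1}$ is controlled by $\|g-I_\V\|_2^{2/p_1}$ since $g$ is $K$-valued and hence pointwise bounded. That is where the $K$-valuedness enters---but via an $L^{p_1}$ bound with $p_1$ finite, not via the $L^\infty$-duality mechanism you propose.
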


\begin{remark}\label{remgg} {\rm (Failure of $\exp (H_{3/2})$)
As noted in the introduction, there is a 
 technical disadvantage in attempting to 
use the representation $g(x) = \exp((\alpha(x))$ for elements of the critical gauge group
$\G_{3/2}$.  The inverse of the
exponential map can 
  be  poorly behaved at some points $\exp(\alpha(x))$ because $\exp( \alpha(x))$ can
   wrap around the whole group $K$ even when $\|\alpha\|_{H_{3/2}}$ is small, leading to failure
   of inversion and multiplication to be continuous. 
            In four dimensions the analogous borderline gauge group is $\G_2$, since  the
Sobolev $W_2$ norm also just fails to control the supremum norm.  K. Uhlenbeck has
already pointed out \cite[page 33] {Uh2} that in this four dimensional case 
multiplication and inversion fail to be continuous in $\G_2(\text{Ball  in}\ \R^4)$ 
if one defines the Sobolev behavior of an element of $\G_2$ by means of the exponential 
representation.   In our definition of the topological group $\G_{3/2}$, 
 the set $\{\exp((\alpha(x)): \alpha \in H_{3/2}(M)\}$
  is not likely  even to cover any neighborhood of the identity.
Further discussion of this can be found in Remark \ref{remdiffstr}.
}
\end{remark}

\subsection{Main theorem} \label{secMT}

\begin{definition}\label{finaact}{\rm(Finite a-action)
An almost strong solution $A(\cdot)$ to the Yang-Mills heat equation
has {\it finite a-action} if
\beq
\int_0^\epsilon s^{-a} \|B(s)\|_{L^2(M)}^2 ds  < \infty \ \ \text{for some}\ \ \epsilon > 0 ,  \label{fa1a}
\eeq
where $B(s)$ is the curvature of $A(s)$.
This definition is of interest for $ 1/2 \le a <1$. For $a = 1/2$ it reduces to the definition \eref{I9} 
of finite action given in the introduction.
}
\end{definition}

\begin{definition}\label{convex} 
      {\rm (Convexity of $M$) For some of the results in this paper we will need to 
assume that $M$, if it isn't all of $\R^3$,  is the closure of a bounded, open convex
 subset of $\R^3$ with smooth boundary. This ensures that the second fundamental
  form of $\p M$ is  everywhere non-negative.
 When needed, we will simply refer to such a set as convex.
Discussion of when convexity of $M$ is not needed may be found in Remark \ref{remconvex}.
}
\end{definition}

\begin{theorem} \label{thmeu>} $(a >1/2)$ 
Let $1/2 < a < 1$. Assume that $M$ is all of $\R^3$ or is convex in the sense
 of Definition \ref{convex}. Suppose that  $A_0 \in H_a(M)$.
Then

1$)$  there exists an almost strong  solution $A(t)$ to \eref{str5}
         over $[0,\infty)$ with initial value $A_0$ and with the following properties. 
         
2$)$ There exists  a gauge  function $g_0 \in \G_{1+a}$
such that $A(t)^{g_0}$ is a strong solution.         

3$)$ $A(\cdot)$ and $A(\cdot)^{g_0}$ are continuous functions on $[0, \infty)$ into $H_a$.
 
4$)$   $A(\cdot)$ and $A(\cdot)^{g_0}$  both have finite $a$-action.         
         
5$)$ If $M \ne \R^3$ then the following boundary condition is 
            satisfied by both $A(t)$ and $A(t)^{g_0}$.                  
\begin{align}
   (curvature|_{t})_{norm} &=0 \ \ \text{in case}\ \ (N)\ \ \    \forall t >0     \label{s50} \\
   (curvature|_{t})_{tan}  &=0  \ \ \text{in case}\ \ (D)\ \ \     \forall t >0 .     \label{s51}
   \end{align}
Moreover 
   \begin{align}
 (A(t)^{g_0})_{norm} &=0         \ \ \text{in case}\ \ (N)\ \ \        \forall t >0  \label{s52}\\
 (A(t)^{g_0})_{tan} &= 0             \ \ \text{in case}\ \ (D)\ \ \     \forall t >0 .      \label{s53}
\end{align}

 6$)$ Strong solutions are unique among solutions with finite $a$-action
 under  the boundary condition $($in case $M \ne \R^3$$)$
  \begin{align}
  B(t)_{norm}& =0 \ \ \text{for all}\ \  t>0\ \ \ \text{in case}\ \ (N)       \label{uN} \\
  A(t)_{tan} &=0 \ \ \text{for all}\ \  t>0\ \ \ \text{in case}\ \ (D).           \label{uD}
  \end{align} 
  \end{theorem}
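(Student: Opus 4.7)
I would follow the ZDS procedure outlined in the introduction: first solve the strictly parabolic augmented equation \eref{I6} for an auxiliary connection $C(\cdot)$ starting at $A_0\in H_a$, then recover $A(t):=C(t)^{g(t)}$ with $g(\cdot)$ obtained from the pointwise ODE \eref{I8}. For the augmented equation I would pass to the Duhamel form $C(t)=e^{t\Delta}A_0+\int_0^t e^{(t-s)\Delta}N(C(s))\,ds$, with $\Delta$ the self-adjoint Laplacian on 1-forms built from Definition \ref{defbc} and $N$ bundling the quadratic and cubic nonlinearity (including the DeTurck-type term $d_Cd^*C$); a Banach contraction in the weighted path space $\{C\in C([0,T];H_a):\sup_{0<t\le T}t^{(1-a)/2}\|C(t)\|_{H_1}<\infty\}$ would produce a short-time solution. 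Strict parabolicity delivers the expected heat-semigroup smoothing, and the condition $a>1/2$ makes the Sobolev bounds on $N(C)$ routine.

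\textbf{Next,} I would establish sharp estimates on the initial behavior of $C(\cdot)$. Testing \eref{I6} appropriately and using the gauge-invariant Gaffney--Friedrichs--Sobolev inequalities of \cite{CG1} would yield the finite $a$-action bound $\int_0^T s^{-a}\|B_C(s)\|_{L^2}^2\,ds<\infty$ together with $L^p$ control on $B_C$ for $p\in[2,6]$. A Neumann-domination argument, comparing $|B_C|$ pointwise to a scalar heat flow, would then upgrade these to $L^p$ bounds for all $p\le\infty$ and deliver the sharp $t\downarrow 0$ asymptotics of $d^*C(t)$. Integrating \eref{I8} pointwise in $x$ yields the gauge function $g(t,x)$, and the central claim---and the main technical obstacle of the proof---is that $g(\cdot)\in C([0,T];\G_{1+a})$ in the topology of Theorem \ref{thmgg1}. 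The delicacy is that $d^*C(t)$ is genuinely singular in $H_{a-1}$ near $t=0$, so proving $g(t)^{-1}dg(t)\in H_a$ with continuous dependence on $t$ rests precisely on the weighted estimates from the previous step and on the topological-group structure provided by Theorem \ref{thmgg1}.

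\textbf{With} $A(t):=C(t)^{g(t)}$ in hand, a direct computation using gauge covariance of \eref{I4} combined with the defining ODE \eref{I8} shows that $A(\cdot)$ is an almost strong solution with $B(t)\in H_1$ for $t>0$; items 1) and 4) (finite $a$-action is gauge invariant) then follow. For the gauge $g_0$ of item 2), since $C(t)\in H_1$ and $g(t)$ gains improved spatial regularity at any fixed $t_0>0$, I would take $g_0:=g(t_0)^{-1}$, giving $A(t)^{g_0}=C(t)^{g(t)g(t_0)^{-1}}$, which lies in $H_1$ for all $t>0$ using that $\G_2$ is closed under multiplication, and extends continuously to $A_0^{g_0}\in H_a$ at $t=0$; this establishes item 3). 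Boundary conditions \eref{s50}--\eref{s53} follow from the domain specifications in Definition \ref{defbc}, since the Neumann/Dirichlet data are built into the self-adjoint Laplacian driving \eref{I6} and are preserved by the gauge transformations by $g(t)$ and $g_0$. Long-time continuation for item 1) is then obtained by invoking the $H_1$ long-time existence theorem of \cite{CG1} applied at $A(t_0)^{g_0}=C(t_0)\in H_1$.

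\textbf{Finally,} for uniqueness (item 6), given two strong solutions $A^{(1)},A^{(2)}$ of finite $a$-action sharing the same initial condition and satisfying the appropriate boundary condition \eref{uN} or \eref{uD}, I would test the difference equation for $A^{(1)}-A^{(2)}$ in the $H_a$ norm; Sobolev bounds available for $a>1/2$ combined with the finite $a$-action of both solutions feed a Gronwall inequality that forces $A^{(1)}=A^{(2)}$. This argument is known to break down exactly at the critical index $a=1/2$, which is why that case requires the separate argument alluded to in the introduction. The single hardest step of the plan is the $\G_{1+a}$-continuity of $g(\cdot)$; essentially all the energy and Neumann-domination machinery built in the first two paragraphs exists to make that step possible.
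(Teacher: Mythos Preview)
Your overall architecture matches the paper: solve the augmented equation by contraction in the weighted $H_a$--$H_1$ path space, derive initial behavior of $C(\cdot)$ by energy estimates and Neumann domination, build $g(\cdot)\in C([0,T];\G_{1+a})$ from the ODE \eref{I8}, set $A(t)=C(t)^{g(t)}$ and $g_0=g(\tau)^{-1}$, then extend to long time via the $H_1$ theory of \cite{CG1}. The identification of the $\G_{1+a}$-continuity of $g(\cdot)$ as the central technical obstacle is exactly right.

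There is, however, one genuine misconception in your treatment of item 2). You write that ``$g(t)$ gains improved spatial regularity at any fixed $t_0>0$'' and invoke closure of $\G_2$ under multiplication to conclude $A(t)^{g_0}\in H_1$. In fact $g(t)$ does \emph{not} gain regularity for $t>0$: the singularity of $d^*C$ near $t=0$ propagates into a lack of smoothness of $g(t,\cdot)$ for \emph{every} $t>0$ (Remark \ref{nosmooth}). The function $g(t)$ lies only in $\G_{1+a}$ with $1+a<2$, so neither factor of $g(t)g(t_0)^{-1}$ is in $\G_2$ and the group-closure argument fails. The correct mechanism (Lemma \ref{smrat}) is that the ratio $k(t):=g(t)g(\tau)^{-1}$ satisfies the same ODE $k'(t)k(t)^{-1}=d^*C(t)$ with initial condition $k(\tau)=I_\V$ at an \emph{interior} time $\tau>0$; since $d^*C$ is $C^\infty$ on $(0,T)\times M$, so is $k$. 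Then $A(t)^{g_0}=C(t)^{k(t)}$ is a smooth strong solution directly, without any appeal to improved regularity of $g$ itself.

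For uniqueness the paper does not test in $H_a$; it uses the $L^2$ inequality $\frac{d}{dt}\|A_1-A_2\|_2^2\le c(\|B_1\|_\infty+\|B_2\|_\infty)\|A_1-A_2\|_2^2$ from \cite{CG1}, with the integrability $\int_0^T\|B(t)\|_\infty\,dt<\infty$ (a consequence of Neumann domination for $a>1/2$, Proposition \ref{p.Anda}) enabling a direct Gronwall. Your $H_a$-norm approach would require controlling second-derivative terms of the form $\|d_{A_1}^*B_1-d_{A_2}^*B_2\|_{H_a}$, which is not obviously tractable without reverting to the $\|B\|_\infty$ bounds anyway.
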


\begin{theorem} \label{thmeu=} $(a =1/2)$. Assume that $M$ is all of $\R^3$ or is 
   convex in the sense of Definition \ref{convex}.
 Suppose that  $A_0 \in H_{1/2}(M)$.
Then

1$)$  there exists an almost strong  solution $A(t)$ to \eref{str5}
         over $[0,\infty)$ with initial value $A_0$. Its curvature  satisfies the boundary conditions
          \eref{s50} resp. \eref{s51} if $M \ne \R^3$. 
         
2$)$ There exists  a gauge  function $g_0$ 
such that $A(t)^{g_0}$ is a strong solution.  $A(t)^{g_0}$  satisfies the boundary conditions
 \eref{s50} resp. \eref{s51} as well as \eref{s52} resp. \eref{s53} when $M \ne \R^3$.
 
 3$)$    If $\|A_0\|_{H_{1/2}}$ is sufficiently small then $A(\cdot)$ and $A(\cdot)^{g_0}$ have finite 
 $(1/2)$ action.  In this case one may choose $g_0$ to lie in $\G_{3/2}$
 
 4$)$  If $\|A_0\|_{H_{1/2}}$ is sufficiently small then $A(\cdot)$ is a continuous function from $[0, \infty)$
 into $H_{1/2}$. If, in addition, $g_0$ is chosen to lie in $\G_{3/2}$ then $A^{g_0}(\cdot):[0,\infty) \rightarrow H_{1/2}$
 is also continuous.

 5$)$ Strong solutions are unique among solutions with finite $(1/2)$-action under
   the boundary condition $($in case $M \ne \R^3$$)$ \eref{uN} resp. \eref{uD}.
 \end{theorem}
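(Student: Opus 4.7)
The goal is to prove the critical-case theorem, for initial data $A_0 \in H_{1/2}(M)$. The overall plan is the ZDS procedure sketched in the introduction: first solve the augmented, strictly parabolic equation \eref{I6} for $C$, then transform by a gauge function $g$ satisfying the ODE \eref{I8} to produce $A = C^g$, and finally verify the regularity, boundary, and uniqueness conclusions.

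For \textbf{(1)}, existence of an almost strong solution without smallness, I would approximate $A_0$ in $H_{1/2}(M)$ by a sequence $A_0^n \in H_a(M)$ with $a > 1/2$, apply the preceding theorem to obtain almost strong solutions $A^n(\cdot)$ on $[0,\infty)$, and extract a limit. Uniform bounds in $n$ come from gauge-invariant energy controls on the curvature combined with the smoothing properties of the augmented flow. For the sharper assertions in \textbf{(3)} and \textbf{(4)}, which require smallness, I would instead work directly with the integral formulation of \eref{I6},
\[
C(t) = e^{t\Delta} A_0 + \int_0^t e^{(t-s)\Delta} N(C(s))\,ds,
\]
in a time-weighted path space whose norms track the scaling-invariant quantity $\sup_t \|C(t)\|_{H_{1/2}}$ together with smoothing terms of the form $\sup_t t^{1/4}\|C(t)\|_{L^4}$ and $\int_0^T t^{-1/2}\|B_C(t)\|_{L^2}^2\,dt$. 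At the critical index the naive contraction fails, so I would close a specialized contraction in this weighted space under $\|A_0\|_{H_{1/2}} < \epsilon_0$, which simultaneously yields the finite $(1/2)$-action bound.

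For \textbf{(2)} and the $\G_{3/2}$ statement in \textbf{(3)}, I would solve the ODE \eref{I8} pointwise in $x$ to obtain $g(t,x)$ and set $A(t) = C(t)^{g(t)}$. The central obstacle is showing that $g(t) \in \G_{3/2}$ with continuous dependence on $t$; this requires delicate control of $d^*C(t)$ near $t=0$, where it is only in $H_{-1/2}$. Following the three-step recipe described in the introduction, I would first use the path-space regularity of $C$, then run energy identities against \eref{I6} combined with the gauge-invariant Gaffney-Friedrichs-Sobolev inequalities to upgrade to $L^p$ estimates for $2\le p\le 6$, and finally invoke a Neumann domination argument driven by the Bianchi identity to extend the range to all $p \le \infty$. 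Integrating these bounds along \eref{I8} and invoking Theorem \ref{thmgg1} then produces $g_0 \in \G_{3/2}$. The absence of a usable tangent space at the identity of $\G_{3/2}$, a feature not present when $a>1/2$, is precisely what makes this step the hardest part of the entire argument.

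Continuity of $A(\cdot)$ and $A(\cdot)^{g_0}$ into $H_{1/2}$ in \textbf{(4)} then follows by combining continuity of $t\mapsto C(t)$ into $H_{1/2}$ (inherited from the fixed-point construction), continuity of $t\mapsto g(t)$ into $\G_{3/2}$ (from the previous step), and continuity of the gauge action $H_{1/2} \times \G_{3/2} \to H_{1/2}$ guaranteed by the group structure in Theorem \ref{thmgg1}. Finally, for uniqueness \textbf{(5)}, the standard Gronwall-type argument that succeeds when $a>1/2$ fails at the critical index. My plan is to exploit the finite-action hypothesis itself as the replacement integrability: given two solutions $A, \tilde A$ with finite $(1/2)$-action, I would derive a differential inequality of the form
\[
\|A(t)-\tilde A(t)\|_{H_{1/2}}^2 \le \int_0^t K(s)\,\|A(s)-\tilde A(s)\|_{H_{1/2}}^2\,ds,
\]
in which the weight $K\in L^1_{\text{loc}}$ is produced precisely by the $\int_0^1 s^{-1/2}\|B(s)\|_{L^2}^2\,ds<\infty$ bounds on the two solutions; Gronwall then forces $A = \tilde A$ after undoing the common ZDS gauge.
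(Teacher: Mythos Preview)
Your overall ZDS plan is correct, but two steps diverge from the paper in ways that matter.

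For item (1) without smallness, you propose approximating $A_0$ by smoother data $A_0^n \in H_a$, $a>1/2$, and passing to a limit. The paper does not do this: it solves the augmented integral equation \eref{ST25} \emph{directly} for $C_0 = A_0 \in H_{1/2}$ in the path space $\P_T^{1/2}$ (Theorem \ref{thmmild}), where the contraction closes without any smallness thanks to the built-in condition $t^{1/4}\|C(t)\|_{H_1}\to 0$ (see Remark \ref{usual}). It then recovers $A$ via the \emph{infinite-action} conversion Theorem \ref{thmrec0}, which places $g$ only in $\G_{1,2}$ rather than $\G_{3/2}$; that already suffices for items (1)--(2). Smallness is invoked only later, to upgrade to finite action and hence to $g\in\G_{3/2}$ and $H_{1/2}$-continuity via Theorem \ref{thmreca}. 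Your approximation route is not obviously wrong, but the ``uniform bounds in $n$'' you invoke are vague: uniform control of the conversion functions $g^n$ across the approximants is precisely the critical-index difficulty, and you have no compactness mechanism to substitute for the explicit $H_{1/2}$-level machinery of Sections \ref{secia} and \ref{secconvgp}.

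For uniqueness, item (5), your argument has a genuine gap. You aim for an $H_{1/2}$-level Gronwall with an integrable weight $K$, asserting $K\in L^1_{\mathrm{loc}}$ from the finite-action bound $\int_0^1 s^{-1/2}\|B(s)\|_2^2\,ds<\infty$. But the coefficient in the difference inequality is governed by $\|B(s)\|_\infty$, not $\|B(s)\|_2$, and at $a=1/2$ Neumann domination yields only $\int_0^T s\,\|B(s)\|_\infty^2\,ds<\infty$ (Proposition \ref{p.Anda}); this does \emph{not} make $\|B(s)\|_\infty$ integrable near $0$. The paper states explicitly (Remark after Theorem \ref{thmufa}) that the Gronwall mechanism breaks here. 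Instead the paper works with the $L^2$ difference $f(t)=\|A_1(t)-A_2(t)\|_2^2$, first proves the a priori decay $f(t)=o(t^{1/2})$ from finite action (Lemma \ref{lemu3}), sets $g(t)=f(t)/\sqrt{t}$ and $w(t)^2=\int_0^t s\,u(s)^2\,ds$ with $u=c(\|B_1\|_\infty+\|B_2\|_\infty)$, and iterates the bound $g(t)\le w(t)\bigl(t^{-1}\int_0^t g^2\bigr)^{1/2}$ to obtain $g(t)\le w(t)^{n+1}C$ for all $n$. Since $w(t)\to 0$ this forces $g\equiv 0$ on a short interval, after which the $H_1$ uniqueness from \cite{CG1} finishes. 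No $L^1$ bound on $u$ is ever used or available.
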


 \begin{remark}{\rm (Meaning of boundary conditions) Since the curvature $B(t)$ is
  in $W_1$ for $t >0$,  by the definition
 of a strong or almost strong solution, the restriction $B(t)|\p M$ is well 
 defined a.e. on $\p M$, and consequently
 the boundary conditions \eref{s50} and \eref{s51} are meaningful. However
 $A(t)$ need not be in $W_1$ for an almost strong solution. The boundary conditions 
 \eref{s52} and \eref{s53}  may therefore not be meaningful for $A(t)$ itself but only for the
 gauge transformed solution $A(t)^{g_0}$.
 This relates to Remarks \ref{weakcurv} and \ref{remau}.
 }
 \end{remark}

\begin{remark}\label{remau} {\rm (Uniqueness for almost strong solutions)
  Our proof of uniqueness depends 
on initial behavior bounds for  $\|B(t)\|_\infty$, which we will derive via Neumann domination
techniques in Section \ref{secibN}. These bounds depend in turn on energy bounds
 for initial behavior, which in turn depend on finite a-action. 
For this reason our formulation of uniqueness specifies
uniqueness only among solutions of finite action. It is reasonable to ask whether uniqueness
holds among almost strong solutions of finite action. In the case that $M \ne \R^3$ one
needs of course to impose a boundary condition such as \eref{uN} or \eref{uD}. 
Since $B(t) \in W_1$ for  almost strong solutions,
  the requirement \eref{uN} is meaningful for an almost strong solution
   for Neumann boundary conditions.
             But, since $A(t)$ need not be in $W_1(M)$ for an almost strong solution, 
 one would need to interpret  the boundary condition \eref{uD}  properly to
  address uniqueness for almost strong solutions
 in the case of Dirichlet boundary conditions.   In this case the assertion that $A(t) \in H_a$
 already reflects a boundary condition on $A(t)$. For example, it can be expected, on the basis of Fujiwara's theorem   \cite{Fuj},
 that $A(t)_{tan} =0$ when $a >1/2$ and that 
 this holds in a mean sense when $a =1/2$.

 Aside from the problem  of formulation of uniqueness for almost strong solutions (in Dirichlet case)
 there are some (seemingly) technical issues in justifying the computations that lead to the key
 inequality \eref{u11} needed for the proof of uniqueness. 
We do not have available for almost strong solutions such a good approximation
 mechanism as can be found  in \cite[Lemma 9.1]{CG1}.
      The issues raised by the question of uniqueness
 for almost strong solutions of finite action  relate to other problems, 
 which will be addressed elsewhere.
 We will not consider uniqueness of almost strong solutions in this paper.
  }
 \end{remark}

         \begin{remark}{\rm (Smoothness) The solution $A^{g_0}$ produced in Theorems 
\ref{thmeu>} and \ref{thmeu=} is actually in $C^\infty((0, T]\times M; \L^1\otimes \kf)$
for some time $T <\infty$. Very likely it is in  $C^\infty((0, \infty) \times M; \L^1\otimes \kf)$.
But our proof does not rule out the possibility  that it loses smoothness if one doesn't make occasional gauge transforms, 
 even though it remains in $H_1(M)$ for all $t > 0$. See Theorem \ref{thmreca}. 
 However  it will be shown  in \cite{CG3} that gauge covariant derivatives of all orders exist.
}
\end{remark}

\begin{remark}\label{remgq}{\rm The gauge transformation $g_0$  that converts an almost strong
 solution, $A(\cdot)$, 
to a strong solution  $A(\cdot)^{g_0}$ is not unique: If $g_1$ lies in the gauge group $\G_{2}$
 then it gauge transforms a strong solution to another strong solution and
  consequently $A(\cdot)^{g_0g_1}$
is also a strong solution. It will be shown in \cite{G72} that this is the extent of the non-uniqueness.
Denote by  $\Y_a(M)$ the set of strong solutions over $M$ with initial  data in $H_a(M)$. (Choose either Dirichlet or Neumann boundary conditions when $M \ne \R^3$.)
$\G_{1+a}(M)$ acts continuously on $H_a(M)$ in its natural action 
$A\mapsto g^{-1} A g + g^{-1}dg$. 
Theorem \ref{thmeu>} asserts that each fiber in  the bundle $H_{a}(M) \mapsto  H_a(M)/\G_{1+a}(M)$
contains at least one element of $\Y_a(M)$. Thus we have
\begin{align}
H_a/\G_{1+a} = \Y_a/ \G_2,                       \label{St40}
\end{align}
given the assertion above concerning the extent of the non-uniqueness of $g_0$.
We will see in \cite{G72} that
$\Y_a$ is a complete Riemannian manifold with respect to a $\G_2$ invariant Riemannian metric
associated to the action \eref{fa1a}. An identity similar to \eref{St40} holds also for $a = 1/2$ if one restricts
to small $\|A_0\|_{H_{1/2}}$ in accordance with Theorem \ref{thmeu=}. 
In the sense of \eref{St40}, the non-gauge invariant norm on the linear space 
$H_a(M)$ is captured,  up to gauge transformations, by a gauge invariant
Riemannian metric on the manifold $\Y_a$. 
}
\end{remark}

\begin{remark}\label{remgq2} {\rm 
 If the initial data $A_0$ is in $H_1$ then the role
 of the gauge functions $g(t)$
produced by the ZDS procedure, discussed in the introduction and in the next subsection,
 is an auxiliary one in the sense that it 
 is needed only to produce the solution
$A$ from $C$. But if $A_0$ is only in $H_a$ for some $a <1$ then
 these gauge functions play a more
fundamental role. A solution with initial value $A_0\in H_a$ need not be a strong solution.
But the ZDS procedure produces a gauge function $g_0$ which transforms $A_0$ into another
element of $H_a$, which is the initial value of  a strong solution.
This is 
 a reflection of the identity \eref{St40}. 
 The gauge function $g_0$ therefore plays an indispensable role in the formulation
  of the Cauchy problem.
 There does not appear to be a way to decompose the initial data space
$H_a$ into ``longitudinal plus transverse'' subsets that propagate as
 ``constant solutions", respectively strong solutions, 
 so that the transverse subset might serve as a section 
  over the quotient space  $H_a/\G_{1+a} $.
}
\end{remark}

\begin{remark} \label{remconvex} {\rm (Convexity of $M$)  
  The convexity of $M$ enters only in the proof
 of Neumann domination bounds.
  Convexity of $M$ is therefore not needed in our discussion of gauge groups (Section \ref{secgg})
 or in our proof of existence and uniqueness of solutions to the augmented Yang-Mills heat 
 equation (Section \ref{secmild}). 
 It is reasonable to anticipate that  convexity
   could be replaced by a negative lower bound on the second fundamental form of $\p M$
   in some version of the Neumann domination bounds.
}
\end{remark}

\begin{remark}\label{rembdd} {\rm (Boundedness of $M$) In case $M\ne \R^3$ we have
 assumed that $M$ is bounded,
in addition to being  the closure of an open set with smooth boundary. Together, these assumptions
ensure that standard Sobolev inequalities hold over $M$  as well as  the needed 
operator bounds for the Neumann Laplacian heat semigroup.
  But boundedness of $M$ is not essential for these to hold. 
  Classes of unbounded domains for which these hold have been extensively investigated. 
 For the anticipated applications of this paper,  it suffices to note  that  these unbounded domains
  include   half-spaces and infinite slabs.
If $M$ is unbounded and the standard Sobolev inequalities and Neumann heat operator bounds hold,
then those  of our results which are dependent on the  
Gaffney-Friedrichs inequality  will  also hold when, in addition, 
 the second fundamental form of $\p M$ is bounded below,
and those of our results dependent on Neumann domination will   also hold when, in addition, 
 the second fundamental form of $\p M$
is non-negative.  Of course, if $M = \R^3$ then the Neumann Laplacian is to be replaced
  by the self-adjoint version of the Laplacian over $\R^3$. All of our results hold in this case.
}
\end{remark}

\subsection{The ZDS procedure and the augmented equation}
 The proof of Theorems \ref{thmeu>} and \ref{thmeu=} 
 will be based on the ZDS procedure, already discussed in the introduction,
  and in particular on use of 
 the following modified Yang-Mills heat equation. 
 See \cite{CG1} for more discussion of the ZDS procedure.

\begin{definition}{\rm (Augmented equation.) The {\it augmented Yang-Mills heat equation}
 is
\beq
-\frac{\p}{\p t}C(t) =  d_{C(t)}^* B_C(t) + d_{C(t)} d^* C(t),\ \ C(0) = C_0.               \label{aymh}
\eeq
Here $C(t)$ is a $\kf$ valued 1-form on $M$ for each $t \ge 0$ and $B_C(t)$ is its curvature. 
 Equation \eref{aymh}
 differs from the Yang-Mills heat equation \eref{str5} by the addition of the second term on the right.
The added term makes the equation strictly parabolic. 
If $M \ne \R^3$ the equation goes along with one of the following two kinds of boundary conditions, 
(N) (for Neumann) or (D) (for Dirichlet).
\begin{align}
(N)\ \  C(t)_{norm}=0\ 
    &\text{for}\ t \ge 0, \ \ (B_C(t))_{norm}\  =0 \ \ \text{for}\ t >0  \label{ST11N} \\
(D)\ \    C(t)_{tan} =0\ \ \ 
    &\text{for}\ t \ge 0,\ \ (d^*C(t))|_{\p M} = 0\ \ \text{for}\ t >0. \label{ST11D} 
\end{align}
By a {\it strong solution} to the augmented Yang-Mills heat equation \eref{aymh} 
 over an interval $[0, T]$ we mean   a continuous function 
 $C(\cdot):[0,T]\rightarrow L^2(M; \L^1\otimes \kf)$
 satisfying the four conditions a) - d) of Definition \ref{defstrsol}, with $A$ replaced by $C$,
 $B$ replaced by $B_C$  and \eref{str5} replaced by  \eref{aymh}. We will be concerned only 
 with $T < \infty$ for the augmented equation.
}
\end{definition}

  \begin{theorem}\label{thmaug} $($Solutions to the augmented equation.$)$ Let $1/2 \le a <1$. 
          Suppose that    $C_0 \in H_a(M)$.  Then there exists a real number $T >0$ and a 
          continuous function
          $C: [0,T]\rightarrow H_a(M)$ such that $C(0) = C_0$ and
          
 a$)$ $C(\cdot)$ is a strong solution to the augmented equation  \eref{aymh} over $(0,T]$ 
     satisfying  the respective boundary conditions \eref{ST11N} or \eref{ST11D}, 
     when $M \ne \R^3$.
 
 b$)$ $t^{(1-a)} \|C(t)\|_{H_1}^2 \rightarrow 0$ as $t\downarrow 0$.

\noindent          
The solution is unique under the preceding conditions. 
Moreover $C(\cdot)$ lies in $C^\infty((0, T)\times M; \L^1\otimes \kf)$

          If $ a >1/2$ then the solution has {\bf finite strong a-action} in the  sense that
 \beq
\int_0^T s^{-a} \| C(s)\|_{H_1}^2 ds <  \infty .            
                                    \label{fa1C}
\eeq          
          
          If  $a =1/2$ and $\|C_0\|_{H_{1/2}}$ is sufficiently small then 
          \eref{fa1C} holds with $a =1/2$.   
\end{theorem}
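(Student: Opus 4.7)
The augmented equation is strictly parabolic because expanding the right hand side using $-\Delta = d^*d + dd^*$ on 1-forms yields
\[
\frac{\partial}{\partial t} C = \Delta C - F(C),
\]
where $\Delta$ is the Hodge Laplacian (with Neumann or Dirichlet boundary conditions when $M\ne\R^3$, as described in Remark \ref{mabc}) and
\[
F(C) := d^*(C\wedge C) + [C\lrc dC] + [C\wedge d^*C] + [C\lrc C\wedge C]
\]
collects the quadratic and cubic nonlinearities. The plan is to pass via Duhamel's principle to the mild integral equation $C(t) = e^{t\Delta} C_0 - \int_0^t e^{(t-s)\Delta} F(C(s))\,ds$ and solve it by a fixed-point argument in a weighted path space tailored to condition $b)$.

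I would work in the Banach space $X_T$ of continuous maps $C:[0,T]\to H_a$ with $[C]_T := \sup_{0<t\le T} t^{(1-a)/2}\|C(t)\|_{H_1}<\infty$, equipped with the norm $\|C\|_{X_T} = \sup_t \|C(t)\|_{H_a} + [C]_T$, and with the additional property $t^{(1-a)/2}\|C(t)\|_{H_1}\to 0$ as $t\downarrow 0$, which is exactly condition $b)$. The semigroup bound $\|e^{t\Delta}f\|_{H_1}\le c\,t^{-(1-a)/2}\|f\|_{H_a}$, together with density of smooth data in $H_a$, shows that the linear term $t\mapsto e^{t\Delta}C_0$ lies in $X_T$.

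To bound the Duhamel integral, I would estimate the quadratic pieces of $F(C)$ via Hölder and the three-dimensional Sobolev embeddings $H_1\hookrightarrow L^6$, $H_{1/2}\hookrightarrow L^3$, moving the outer derivative in $d^*(C\wedge C)$ onto the semigroup factor through $\|e^{(t-s)\Delta}\nabla g\|_{L^2}\le c(t-s)^{-1/2}\|g\|_{L^2}$, and handling the cubic term similarly. After integrating in $s$ one arrives at an estimate of the form
\[
\Big\|\int_0^t e^{(t-s)\Delta} F(C(s))\,ds\Big\|_{X_T} \le c_1 T^{\gamma}\|C\|_{X_T}^2 + c_2 T^{\gamma'}\|C\|_{X_T}^3,
\]
with exponents $\gamma,\gamma'>0$ precisely when $a>1/2$. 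For $a>1/2$ this yields a contraction on a ball in $X_T$ for $T$ sufficiently small, hence a unique fixed point; uniqueness in general follows from the same Lipschitz estimate, and smoothness on $(0,T)\times M$ follows from a standard parabolic bootstrap applied to the strictly parabolic augmented equation, with the boundary conditions propagating automatically because $e^{t\Delta}$ encodes them.

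The main obstacle is the critical case $a=1/2$, where $\gamma=\gamma'=0$ and the naive smallness-in-$T$ contraction breaks down. Following the mechanism foreshadowed in the introduction, the remedy is to work in the subspace of $X_T$ where $[C]_T$ is small (not merely finite), which holds for $t\mapsto e^{t\Delta}C_0$ by density, and to measure the ball radius by this weighted quantity rather than by $\|C_0\|_{H_{1/2}}$; this recovers a contraction provided $\|C_0\|_{H_{1/2}}$ is sufficiently small. Finally, finite strong $a$-action for $a>1/2$ is not a consequence of the pointwise bound $\|C(s)\|_{H_1}^2\le c\,s^{-(1-a)}\|C_0\|_{H_a}^2$, which is only logarithmically divergent when fed into $\int s^{-a}(\cdot)\,ds$. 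Instead it follows from the spectral identity
\[
\int_0^\infty s^{-a}\|e^{s\Delta}C_0\|_{H_1}^2\,ds \simeq \|C_0\|_{H_a}^2,
\]
obtained by Plancherel against the spectral measure of $-\Delta$, combined with an analogous bound on the Duhamel integral built from the $X_T$ estimate; under the small-data hypothesis this argument also delivers \eqref{fa1C} for $a=1/2$.
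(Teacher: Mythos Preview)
Your outline is essentially the paper's own route: convert to the mild equation, work in the weighted path space built on $\sup_t t^{(1-a)/2}\|C(t)\|_{H_1}$ together with continuity into $H_a$, run a contraction, and bootstrap to smoothness via the $H_1$ theory for strictly parabolic equations. Two points deserve correction.

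First, in the critical case $a=1/2$ you do \emph{not} need $\|C_0\|_{H_{1/2}}$ small to obtain existence. The mechanism you describe in the same sentence is already sufficient for arbitrary $C_0\in H_{1/2}$: one first fixes the ball radius $b$ so that the nonlinear map contracts on $\{[C]_T\le b\}$ (this depends only on Sobolev constants, not on $C_0$), and then one chooses $T$ depending on $C_0$ so that $\sup_{0<t\le T} t^{1/4}\|e^{t\Delta}C_0\|_{H_1}\le b/2$, which is possible precisely because of the decay $t^{1/4}\|e^{t\Delta}C_0\|_{H_1}\to 0$ that you correctly noted. Smallness of $\|C_0\|_{H_{1/2}}$ enters only in the finite strong $(1/2)$-action statement, not in existence, uniqueness, or smoothness; your last clause conflates the two.

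Second, the finite strong $a$-action bound cannot be ``built from the $X_T$ estimate'' for the Duhamel term: as you yourself observe, the pointwise-in-$t$ information $\|C(s)\|_{H_1}^2=O(s^{a-1})$ fails to integrate against $s^{-a}$. The paper instead proves an integral smoothing estimate of the type
\[
\int_0^T t^{-b}\Big\|\int_0^t e^{(t-s)\Delta}F(C(s))\,ds\Big\|_{H_1}^2\,dt \;\le\; \mu^2 \int_0^T s^{-b}\|C(s)\|_{H_1}^2\,ds,
\]
with a coefficient $\mu$ controlled by $T^{a-1/2}\sup_s\|C(s)\|_{L^{q_a}}$ (where $q_a^{-1}=\tfrac12-\tfrac{a}{3}$), via a spectral/maximal-regularity lemma for $\int_0^t L^\alpha e^{-(t-s)L}g(s)\,ds$. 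One then combines this with the spectral identity you wrote for the linear term and absorbs, taking $b\uparrow a$. For $a>1/2$ the factor $T^{a-1/2}$ makes $\mu<1$ by shrinking $T$; for $a=1/2$ this factor is absent, and one needs $\sup_s\|C(s)\|_{3}$ small, which is where the smallness of $\|C_0\|_{H_{1/2}}$ is actually used.
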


The proof of this theorem will be given in Section \ref{secmild}.

\begin{remark}\label{remlink}{\rm  The link between the augmented ymh equation \eref{aymh}
and the ymh equation \eref{str5} is provided by the ZDS procedure outlined in the introduction. 
As already noted there,  much  of this paper is   concerned with determining 
  the behavior of of the initial singularity of $C(\cdot)$ and its derivatives in order 
   to establish the required differentiability properties of the solution $g(\cdot)$ to
    the Equation \eref{I8}. These, in turn, 
  will give the differentiability  properties of $A(\cdot)$ asserted in Theorems 
  \ref{thmeu>} and \ref{thmeu=}.  
   Thus, most of   
   this paper is devoted to proving the following theorem, which is 
   stated here just for $a = 1/2$ for simplicity.
}
\end{remark}

\begin{theorem}\label{thmrec}  Assume that $M = \R^3$ or is convex in the sense
 of Definition \ref{convex}. Suppose that $A_0 \in H_{1/2}$ and that $C(\cdot)$
is a strong solution to the augmented  equation \eref{aymh} with finite strong
 action over $[0,T]$ and with $C_0 = A_0$.
 Then there exists a continuous function
 \beq
 g:[0, T] \rightarrow \G_{3/2}
 \eeq
such that the gauge transform $A(\cdot)$ defined by 
\beq
A(t) = C(t)^{g(t)} \ \ 0 \le t \le T                               \label{rec215}
\eeq
is an almost strong  solution to the  Yang-Mills heat equation over $(0, T)$,  
whose  curvature satisfies the boundary condition \eref{s50} resp. \eref{s51}. The function
\begin{align}
A(\cdot):[0, T] \rightarrow H_{1/2}
\end{align}
is continuous. In particular,  $A(t)$ converges in $H_{1/2}$ norm to $A_0$ as $t\downarrow 0$. 

 If $0 < \tau < T$ and
$g_0 \equiv g(\tau)^{-1}$ then the function $t\mapsto A(t)^{g_0}$ is a strong solution
 to the  Yang-Mills heat equation  satisfying, if $M \ne \R^3$,  the boundary condition
  \eref{s50} resp \eref{s51} as well as  the boundary condition 
\eref{s52} resp. \eref{s53}.  
$A^{g_0}(\cdot)$ is a continuous function on $[0,T]$ into $H_{1/2}$, and 
in particular $A(t)^{g_0}$ converges in $H_{1/2}$ norm to $A_0^{g_0}$ as $t\downarrow 0$.

Furthermore $A(\cdot)$ and $A^{g_0}(\cdot)$ have finite action:
\beq
\int_0^Ts^{-1/2} \|B(s)\|_2^2 ds < \infty.            \label{rec216}
\eeq
\end{theorem}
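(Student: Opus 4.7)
The plan is to execute the Zwanziger-Donaldson-Sadun recovery outlined in the introduction. Given the strong solution $C(\cdot)$ of the augmented equation, define the gauge function $g(t,x)$ as the pointwise $K$-valued solution of the ODE \eqref{I8}, namely $\partial_t g(t,x) = (d^*C(t,x))\,g(t,x)$ with $g(0,x)=e$; since $C$ is smooth on $(0,T]\times M$ by Theorem \ref{thmaug}, pointwise existence and uniqueness on $(0,T]$ are immediate. Setting $A(t):=C(t)^{g(t)}$, the formal verification that $A$ solves the Yang-Mills heat equation is short: differentiating \eqref{rec215}, substituting the ODE for $g$, and using the gauge covariance $d_{A(t)}^* B_{A(t)} = g(t)^{-1} d_{C(t)}^* B_C(t)\, g(t)$, one obtains
\begin{equation*}
A'(t) = g(t)^{-1}\bigl(C'(t) + d_{C(t)} d^* C(t)\bigr) g(t) = -g(t)^{-1} d_{C(t)}^* B_C(t)\, g(t) = -d_{A(t)}^* B_{A(t)}.
\end{equation*}
Hence the analytic burden collapses to establishing the regularity statement $g \in C([0,T];\G_{3/2})$, from which the rest of the theorem is read off.

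The crux is to control $\alpha(t) := g(t)^{-1} dg(t)$ in $H_{1/2}$. A direct calculation, using $\alpha(0) = 0$, the spatial exterior derivative of the defining ODE for $g$, and cancellation of two terms of the form $(d^*C)\alpha$, yields
\begin{equation*}
\alpha'(t) = g(t)^{-1}\,d(d^*C)(t)\,g(t),
\end{equation*}
so $\alpha(t)$ is the time integral of $\mathrm{Ad}(g(s)^{-1})$-conjugates of $d(d^*C)(s)$. The obstacle is the very singular initial behavior of $d^*C$, which lies only in $H_{-1/2}$ at $t=0$; naive parabolic smoothing is not enough. What makes the estimate possible is the hierarchy of initial-behavior bounds on $C$: (i) the path-space membership from Theorem \ref{thmaug}, in particular finite strong action $\int_0^T s^{-1/2}\|C(s)\|_{H_1}^2\,ds<\infty$; (ii) energy inequalities for $B_C$ and its covariant derivatives, obtained by testing the augmented equation against appropriate gauge-covariant expressions and invoking the gauge-invariant Gaffney-Friedrichs-Sobolev inequality to get $L^p$ bounds for $2\le p\le 6$; (iii) Neumann domination, whose use of convexity of $M$ enters here, upgrading (ii) to $L^\infty$ bounds on $B_C$ and hence on $d(d^*C)$. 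Combined with the $\mathrm{Ad}$-invariance of the inner product on $\kf$ (which tames the conjugation by $g$), these control $\alpha(t)$ in $H_{1/2}$ uniformly on $[0,T]$; applied to differences $\alpha(t)-\alpha(s)$, the same bounds yield continuity.

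Once $g \in C([0,T];\G_{3/2})$ is in hand, the remaining pieces are comparatively routine. Continuity of $A = C^g$ as a map into $H_{1/2}$ follows from the continuous action of $\G_{3/2}$ on $H_{1/2}$ established in Section \ref{secgg}, and the curvature boundary conditions \eqref{s50}-\eqref{s51} pass from $C$ to $A$ through the pointwise identity $B_A = g^{-1} B_C g$. For the strong solution $A^{g_0}$, fix any $\tau \in (0,T)$ and take $g_0 := g(\tau)^{-1}$: the composite $h(t) := g(t) g(\tau)^{-1}$ equals $e$ at $t=\tau$ and, thanks to the smoothness of $C$ on $(0,T]$, inherits the extra Sobolev regularity needed to place $h(t)$ in $\G_2$ for every $t\in(0,T)$, so gauge-transforming $A$ by $g_0$ upgrades it to a strong solution satisfying the boundary conditions \eqref{s52}-\eqref{s53}. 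The finite-action bound \eqref{rec216} is then gauge invariant: $\|B_A(s)\|_2 = \|B_C(s)\|_2$, and $\|B_C\|_2 \le \|C\|_{H_1} + c\|C\|_{H_1}^2$ via the Sobolev embedding $H_1 \hookrightarrow L^4$, combined with finite strong action of $C$ and the parabolic control of $\|C(s)\|_{H_1}$ on $[\tau,T]$, delivers \eqref{rec216}.

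The principal obstacle is the second paragraph: converting the ``finite action'' hypothesis into quantitative control of $d(d^*C)$ strong enough to absorb the $H_{-1/2}$ initial singularity of $d^*C$ into an $H_{1/2}$-valued integral. This is precisely where the convexity assumption on $M$ is forced (via Neumann domination) and where the bulk of the paper's technical effort is concentrated; the ZDS formal computation and the subsequent verification of strong-solution status from an arbitrary $\tau>0$ are by contrast bookkeeping.
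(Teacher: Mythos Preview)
Your high-level outline is right, but the second paragraph contains a genuine gap that is exactly the technical heart of the paper, and your proposed mechanism for closing it does not work.

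The assertion that ``$\mathrm{Ad}$-invariance of the inner product on $\kf$ tames the conjugation by $g$'' is the problem. Ad-invariance gives $|g^{-1}ug|=|u|$ pointwise, hence preserves all $L^p$ norms, but it does \emph{not} preserve $H_{1/2}$ norms: differentiating $g^{-1}ug$ produces commutators with $g^{-1}dg$, so $\|(\mathrm{Ad}\,g^{-1})u\|_{H_{1/2}}$ is controlled only by $(1+c_1\|g^{-1}dg\|_3)\|u\|_{H_{1/2}}$ (the multiplier bound of Proposition~\ref{propgp1}). Thus to bound $\|\alpha(t)\|_{H_{1/2}}$ via $\alpha(t)=\int_0^t a(s)\,d\phi(s)\,ds$ you already need $\|\alpha(s)\|_3$, which is circular. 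The paper breaks the circularity in two steps: first it proves uniform $L^q$ bounds on $h_\epsilon(t)=g_\epsilon^{-1}dg_\epsilon$ for $3\le q\le q_a$ (Lemma~\ref{lemhe6}), where $\mathrm{Ad}$ \emph{is} an isometry, and only then bootstraps to $H_a$ using the multiplier bound with the now-available $L^3$ control.

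Second, the simple representation $\alpha'=a\,d\phi$ that you use is explicitly flagged in the paper (just before Proposition~\ref{prophrep}) as ``inadequate for use in the estimates we will need.'' The paper instead integrates by parts in time, writing $d\phi=-\hat C'-P^\perp(d_C^*B_C+[C,\phi])$ with $\hat C=P^\perp C$, to obtain the representation \eqref{h32} involving $\chi(s)$; the point is that $\int_0^T\|\chi(s)\|_{H_a}\,ds<\infty$ (Theorem~\ref{thmhest1}) can be established from the order-1 and order-2 energy estimates, whereas the analogous integrability of $\|d\phi(s)\|_{H_{1/2}}$ is not available. Relatedly, your claim that Neumann domination yields $L^\infty$ bounds on $d(d^*C)$ is not what the paper proves: Theorems~\ref{thmnd1}--\ref{thmndp} give $L^p$ control of $B_C$ and $\phi=d^*C$, not of $d\phi$.

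Finally, a smaller point: you cannot solve the ODE for $g$ directly from $t=0$ with $g(0)=e$, since $d^*C(0)$ is merely in $H_{-1/2}$. The paper defines $g_\epsilon$ with initial condition $g_\epsilon(\epsilon)=I_\V$ at $\epsilon>0$ and proves (Theorem~\ref{thmgconv}) that the $g_\epsilon$ converge in $\G_{3/2}$ as $\epsilon\downarrow 0$; this convergence is itself part of the work. The remaining items in your outline (the ZDS formal computation, the smooth-ratio argument for $A^{g_0}$, and finite action via $\|B\|_2=\|B_C\|_2$) are essentially as in the paper.
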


This theorem, along with its analog for $a > 1/2$, will be proved in Section \ref{secrec}, 
(Theorem \ref{thmreca}).
In case $A_0 \in H_{1/2}$ but $C(\cdot)$ does not have finite strong action one
needs a weaker version of Theorem \ref{thmrec}, based on infinite (1/2)-action, 
in order to prove items 1) and 2) in Theorem \ref{thmeu=}.
The infinite action version of Theorem \ref{thmrec}  will be stated and proved
 in Section \ref{secrec} also (Theorem \ref{thmrec0}).

\begin{remark}\label{remcontrast} {\rm (Contrast with $H_1$ initial data) 
In case the initial data $A_0$ is in $H_1(M)$ there is no need to invoke the use of gauge transformations
in the the formulation of the existence theorem because  the solution that the ZDS procedure produces
is already a strong solution. Here is a version of the main result from \cite{CG1}, formulated in $\R^3$
rather than over a compact Riemannian manifold.
}
\end{remark}

\begin{theorem}\label{thmH1} \cite{CG1} Let $M$ be either $\R^3$ or 
the closure of a bounded, convex, open subset of $\R^3$ with smooth boundary. 
Suppose that $A_0 \in H_1(M)$. Then there exists a strong solution $A(\cdot)$ to the Yang-Mills
heat equation \eref{str5} over $[0, \infty)$ with initial value $A_0$.
 Moreover $A:[0,\infty) \rightarrow H_1$ is continuous.
 \end{theorem}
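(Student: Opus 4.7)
The plan is to apply the ZDS procedure of Subsection 2.4 with a simplification special to $H_1$ initial data: because the initial singularity of $d^*C(t)$ is mild in this case, the gauge function produced by the associated ODE \eqref{I8} already lies in $\G_2$, and the recovered path $A(t) = C(t)^{g(t)}$ is itself a strong solution, so no additional corrective gauge transform $g_0$ is required, in contrast with Theorems \ref{thmeu>} and \ref{thmeu=}.

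First I would invoke the $a = 1$ analog of Theorem \ref{thmaug}, established in \cite{CG1}, to produce a short-time strong solution $C:[0,T] \to H_1(M)$ of the augmented equation \eqref{aymh} with $C(0) = A_0$, satisfying the chosen boundary conditions when $M \ne \R^3$. The strict parabolicity of the augmented equation smooths the $H_1$ data: $d^*C(0) \in L^2$ and $\|d^*C(t)\|_{H_1}^2$ is locally integrable near $t=0$. I would then solve the pointwise ODE
\[
\dot g(t,x) = (d^*C(t,x))\,g(t,x), \qquad g(0,x) = e,
\]
and show that $g \in C([0,T]; \G_2(M))$ by differentiating the ODE in $x$ and applying Gronwall-type estimates to $g^{-1}\partial_j g$ using the $H_1$-bounds on $d^*C$. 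A direct computation shows that $A(t) := C(t)^{g(t)}$ satisfies the Yang-Mills heat equation \eqref{str5}: the extra term $d_C d^* C$ in the augmented equation cancels against the derivative contribution coming from the ODE for $g$. The regularity of $g$ in $\G_2$ guarantees that $A:(0,T] \to W_1$ is continuous, so $A$ is a strong solution on $[0,T]$.

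To extend to $[0,\infty)$, I would rely on the gauge-invariant energy identity
\[
\frac{d}{dt}\|B(t)\|_{L^2}^2 = -2\|d_{A(t)}^* B(t)\|_{L^2}^2 \le 0,
\]
which forces $\|B(t)\|_{L^2}$ to be non-increasing along the flow. Combined with the gauge-invariant Gaffney-Friedrichs inequality (which over $\R^3$ and over a convex bounded domain with smooth boundary controls the $W_1$ norm of a 1-form from its exterior derivative and coderivative) and the Bianchi identity $d_A B = 0$, one obtains a priori $H_1$ bounds on $A(t)$ in a suitable gauge on any interval of existence. A standard iteration-and-continuation argument, restarting the ZDS procedure in successive gauges at times approaching any putative blow-up, then yields the global solution; continuity of $A:[0,\infty) \to H_1$ follows from the local continuity at each restart together with the uniform energy bounds.

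The hard part is the long-time extension. The $H_1$ norm of $A$ is not gauge invariant and is not directly controlled by the conserved magnetic energy $\|B\|_{L^2}$, so one must re-gauge at each restart in such a way that the Gaffney-Friedrichs inequality can convert control of the curvature back into control of the potential. Verifying that the re-gauging is consistent, that the estimates close up uniformly on $[0,\infty)$, and that the patched path is genuinely a strong solution of the original Yang-Mills heat equation (and not merely of some gauge-transformed version) is the technical substance of \cite{CG1}, of which the present statement is the $\R^3$ (or convex bounded) version of the main result.
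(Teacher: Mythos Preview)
The theorem is cited from \cite{CG1} rather than proved in the present paper; the only added content is the Remark following it, noting that the $M = \R^3$ case goes through without essential change. Your outline of the ZDS procedure---solve the augmented equation with $H_1$ data, build $g(t)$ from the ODE \eqref{I8} landing in $\G_2$, and recover $A(t) = C(t)^{g(t)}$ as a strong solution without need for a corrective $g_0$---correctly captures the \cite{CG1} strategy and matches exactly the point made in Remark \ref{remcontrast} distinguishing the $H_1$ case.
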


 \begin{remark} {\rm 
 1. The conclusion of Theorem \ref{thmH1} should be contrasted with the 
          conclusions of Theorems  \ref{thmeu>} and \ref{thmeu=}. One does not need to gauge transform
          the solution  to obtain a strong solution when $A_0 \in H_1$.
          
 2. When $M\ne \R^3$, boundary conditions  on $A_0$   are 
       implied    in Theorem \ref{thmH1} by  the assumption that $A_0 \in H_1$. These are $(A_0)_{norm} =0$
       in case $(N)$ or $(A_0)_{tan} =0$ in case $(D)$. Compare Remark \ref{mabc}. Moreover the solution
       satisfies, for all $t >0$, the boundary conditions \eref{s50} resp. \eref{s51} on its curvature,
       as well as $A(t)_{norm} =0$ resp. $A(t)_{tan} =0$. No gauge transform need intervene as  in 
       \eref{s52} and \eref{s53}. Uniqueness holds just under the condition \eref{uN} resp. \eref{uD}.   
       
 3. The case $M =\R^3$ is not stated in \cite{CG1}. However if $M =\ R^3$, then all the steps in the
  proof in \cite{CG1} go through without essential change. In fact 
  the proof in this case is considerably simpler 
   because all of the desiderata concerning boundary conditions can be ignored.    Finite volume of $M$
   is never used. 
}
\end{remark}

\section{Solutions for the augmented Yang-Mills heat equation} \label{secmild}

\subsection{The integral equation and path space } \label{secinteq}

Throughout Section \ref{secmild} $M$ will be assumed to be either all of $\R^3$ or
 else the closure of a bounded open set in $\R^3$ with smooth boundary. $M$ need not be convex.
      
      We will convert the augmented Yang-Mills heat equation \eref{aymh} to an
       integral equation 
       and  then show that the integral  equation
        has a unique solution  for a short time.  In Section \ref{secstr} it will be shown
     that the solution 
     is actually a strong solution      to \eref{aymh}.
    
      To carry out the conversion to an integral
     equation one   needs to separate the linear terms from the non-linear terms in \eref{aymh}.     
     Throughout this section we will write $d$ for the exterior derivative
     with the understanding that this represents the maximal or minimal
     version, in agreement with the boundary conditions when $M \ne \R^3$. 
     See \cite{CG1} for a discussion of these domains.       
               It will be convenient to use the exterior and interior commutator products
   of $\kf$ valued forms defined at the beginning of Section \ref{secstate}.

Writing $ B \equiv B_C = dC +(1/2) [C\wedge C]$, one  can compute that
\beq
d_C^* B + d_Cd^*C = (d^*d + d d^*)C - X(C),        \label{ST13}
\eeq
where $X$ is the first order nonlinear differential operator
 on $\frak k$ valued
1-forms  $C$ defined by
\beq
-X(C) = - [C\lrc B] +(1/2) d^*[C\wedge C] + [ C, d^*C], \ \ \ \
       C:M \rightarrow \Lambda^1\otimes \frak k    .                   \label{ST14}
\eeq

The terms in $X(C)$ which are cubic in $C$ involve no derivatives  of $C$
while the terms which are quadratic all involve a factor of one spatial 
derivative of $C$. 
As in \cite{CG1} we will write this symbolically as
\beq
X(C) = C^3 + C \cdot \p C.                                     \label{ST14.1}
\eeq
$X(C)$ contains all the non-linear terms in Eq.\ \eref{aymh}, which can now be rewritten as
\beq
C'(t) = \Delta C(t) + X(C(t)), \ \ C(0) = C_0,     \label{ST15} 
\eeq
wherein $\Delta$ is the self-adjoint Laplacian on $\kf$ valued 1-forms over $\R^3$, or the
Neumann, resp. Dirichlet Laplacian defined in Definition  \ref{defbc}, in case $M \ne \R^3$.

Informally, the equation \eref{ST15} is equivalent to the integral equation
\beq
C(t) = e^{t\Delta} C_0
+ \int_0^t e^{(t-\sigma)\Delta} X(C(\sigma)) d\sigma.   \label{ST25} 
\eeq

A solution to the integral equation \eref{ST25} is sometimes referred to as a mild solution to the 
differential equation \eref{ST15}  \cite[Definition 11.15]{RR2}.
We will show in Section \ref{secstr} that such a mild solution is actually a strong solution.
The existing general theorems showing that a mild solution is  a strong solution
seem inapplicable to our case.

\begin{remark}{\rm Any choice of path space within which one wishes to
 seek a solution to \eref{ST25} with initial data $C_0 \in H_{a}$ should be
  contained in $C([0,T]; H_{a}(M))$
  and should include paths having 
  arbitrary initial  value in $H_{a}(M)$.
 But it must also
 have a strong enough metric to allow control of the non-linear function $X(C)$.
 The following path space
 seems well adapted to this purpose for  our particular non-linearities and initial conditions.
 }
 \end{remark}

 \begin{notation} \label{notpatha}{\rm (Path space.) 
 Suppose that  $0 < a < 1$ and  $ 0 < T < \infty$. 
 Let $C_0 \in H_a\equiv H_a(M; \L^1\otimes \kf)$.  Define 
\begin{align}
\P_{T}^a = \Big\{ C(\cdot)  \in C\Big([0,T]; H_a\Big) &\cap C\Big((0,T]; H_1\Big) : \notag\\
\qquad \qquad \ \ \  & i.\ \ \  C(0) = C_0                                                             \label{ST411a}\\
\qquad \qquad \ \ \ & ii.\ \ t^{1-a} \|C(t)\|_{H_1}^2 \rightarrow 0\ \ \text{as}\ 
                                                                \ t\downarrow 0 \Big\}. \label{ST413a}
\end{align}
Define also
  \beq
  |C|_t = \sup_{0 < s \le t} s^{(1-a)/2} \|C(s)\|_{H_1},    \  0<t \le T . \label{ST415a}
  \eeq
  Then, for $C\in \P_T^a$, we have
  \begin{align}
  \|C(s)\|_{H_1} &\le s^{(a-1)/2} |C|_t \ \ \ \text{for} \ \ 0 < s \le t \le T\ \   \text{and}   \label{ST416a}\\
   |C|_t &\le |C|_T\ \qquad \ \ \ \ \,  \text{for}\ \ 0 <t \le T.           \label{ST418a}
  \end{align}
  Condition {\it ii.}   ensures that
  \beq
  |C|_t \rightarrow 0,\ \text{as}\ \ t\downarrow 0.      \label{ST417a}
  \eeq  
 $\P_{T}^a$ is a complete metric space in the metric
\beq
dist(C_1, C_2) = \sup_{0\le t \le T}\|C_1(t)- C_2(t)\|_{H_{a}}  + |C_1 - C_2|_T .  \label{ST414a}
\eeq
The inequality \eref{ST416a} ensures that, for some Sobolev constant $\kappa_6$, one has
\beq
\| C(s)\|_6 \le s^{(a-1)/2} |C|_t \kappa_6,\ \ \text{for}\ \ \ 0 < s \le t.       \label{ST420a}
\eeq
}
\end{notation}
These spaces will  be useful only for $1/2  \le a <1$.

The next theorem is the mild version of Theorem \ref{thmaug}. 
 It will be proven in the following four sections.

\begin{theorem}\label{thmmild} Let $1/2 \le a <1$ and let $C_0 \in H_{a}(M;\L^1\otimes\kf)$. 

i.$)$  There exists $T>0$
 depending on $C_0$  $($and not just on $\|C_0\|_{H_{a}}$. See Remark \ref{phil5}.$)$  such
  that the integral equation \eref{ST25} has a
  unique solution in $\P_T^a$. 
  
  ii.$)$ If $ 1/2 < a < 1$ then the solution has finite strong a-action in the sense of \eref{fa1C}.
  
  iii.$)$ If $a = 1/2$ and $\|C_0\|_{H_{1/2}}$ is sufficiently small then the solution
   has finite strong action in the sense of \eref{fa1C} with $a = 1/2$.
\end{theorem}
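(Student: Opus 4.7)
The plan is a Banach fixed-point argument for
\[
\Phi(C)(t) := e^{t\Delta}C_0 + \int_0^t e^{(t-\sigma)\Delta}\, X(C(\sigma))\,d\sigma
\]
on a closed ball in $\P_T^a$ around the free solution $C^{(0)}(t):=e^{t\Delta}C_0$. First I would verify $C^{(0)}\in \P_T^a$: continuity into $H_a$ is standard; the smoothing bound $\|e^{t\Delta}C_0\|_{H_1}\le c\,t^{-(1-a)/2}\|C_0\|_{H_a}$ places $C^{(0)}$ in $C((0,T];H_1)$; and the decay condition $t^{1-a}\|e^{t\Delta}C_0\|_{H_1}^2 \to 0$ follows by dominated convergence from the spectral representation, since $t^{1-a}(1+\lambda)^{1-a}e^{-2t\lambda}$ is uniformly bounded and tends pointwise to $0$. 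The same reasoning shows $|C^{(0)}|_t\to 0$ as $t\downarrow 0$, but at a rate depending on the spectral distribution of $C_0$ and not just on $\|C_0\|_{H_a}$; this is ultimately why $T$ in i.) depends on $C_0$ itself.

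The core nonlinear estimates bound the Duhamel integral by decomposing $X(C)$ as in \eqref{ST14} into a cubic piece $C^3$ and three quadratic pieces of the form $C\cdot\partial C$ or $d^*(C\cdot C)$. For the cubic term, the Sobolev embedding $H_1\hookrightarrow L^6$ and \eqref{ST420a} give $\|C(\sigma)^3\|_{L^2}\le \kappa_6^3\,\sigma^{3(a-1)/2}|C|_t^3$, and composing with $\|e^{s\Delta}\|_{L^2\to H_1}\le c\,s^{-1/2}$, followed by multiplication by $t^{(1-a)/2}$ and a beta-function computation, yields a contribution of $c\,t^{(2a-1)/2}|C|_t^3$. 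The quadratic pieces are bounded in $L^{3/2}$ by H\"older via $\|C\|_{L^6}\|\partial C\|_{L^2}$, then converted by the embedding $L^{3/2}\hookrightarrow H_{-1/2}$ and $\|e^{s\Delta}\|_{H_{-1/2}\to H_1}\le c\,s^{-3/4}$ (with the $d^*$ absorbed into the semigroup where it appears), producing a contribution $c\,t^{(2a-1)/4}|C|_t^2$. Combining, every term takes the form $c\,t^{\alpha(a)}(|C|_t^2+|C|_t^3)$ with $\alpha(a)>0$ for $a>1/2$ and $\alpha(1/2)=0$; a parallel computation gives difference bounds with contraction constant $c\,t^{\alpha(a)}(|C_1|_t+|C_2|_t+|C_1|_t^2+|C_2|_t^2)$.

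The central obstacle is the critical case $a=1/2$. For $a>1/2$ one simply shrinks $T$ to make $\Phi$ a strict contraction on any fixed-radius ball in $\P_T^a$, so existence and uniqueness in i.) follow immediately. For $a=1/2$ the vanishing of $\alpha(1/2)$ means no help comes from shrinking $T$ alone; one must instead exploit smallness of $|C|_T$. A ball of radius $R$ about $C^{(0)}$ in the path metric satisfies $|C|_T\le R+|C^{(0)}|_T\le R+c\,\|C_0\|_{H_{1/2}}$, so the contraction closes only when both $R$ and $\|C_0\|_{H_{1/2}}$ are small, giving statement iii.). In both cases, existence and uniqueness in $\P_T^a$ follow from the contraction mapping principle.

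For finite strong $a$-action \eqref{fa1C}, the linear contribution satisfies $\int_0^T s^{-a}\|e^{s\Delta}C_0\|_{H_1}^2\,ds\le c\,\|C_0\|_{H_a}^2$ directly from the spectral theorem (this being the Littlewood-Paley characterization of $H_a$ applied to the semigroup). The nonlinear contribution from $N:=C-C^{(0)}$ is then handled by substituting the Duhamel formula for $N$ into the integrand and applying Schur/Young inequalities to the resulting double integrals, using exactly the smoothing bounds above. For $a>1/2$ the time powers make this absolutely convergent without further hypothesis; for $a=1/2$ the integrals are borderline and precisely the smallness of $|C|_T$ produced by iii.) is what closes the bootstrap and yields finite strong action.
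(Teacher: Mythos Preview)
Your argument has a genuine gap in the proof of part i.) for the critical case $a=1/2$. You write that for $a=1/2$ ``the contraction closes only when both $R$ and $\|C_0\|_{H_{1/2}}$ are small, giving statement iii.),'' and then conclude ``In both cases, existence and uniqueness in $\P_T^a$ follow.'' But statement i.) asserts existence for \emph{every} $C_0\in H_{1/2}$, with no smallness hypothesis; the smallness enters only in iii.), which concerns finite action, not existence. As written, you have not proved i.) for $a=1/2$ and large data.

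The repair is already in your own proposal, but you failed to deploy it. You correctly observed that $|C^{(0)}|_T\to 0$ as $T\downarrow 0$ (at a rate depending on $C_0$ itself, not merely on $\|C_0\|_{H_a}$). The paper exploits exactly this: one first fixes a ball radius $b$ so small that the nonlinear constants satisfy $(b^2+b^3)\cdot\text{const}\le b/2$ and the Lipschitz constant is at most $1/2$; this choice is independent of $C_0$. One then chooses $T$ small enough (depending on $C_0$) that $b_0:=\sup_{0<t\le T}t^{(1-a)/2}\|e^{t\Delta}C_0\|_{H_1}\le b/2$. The map $\Phi$ then sends $\P^a_{T,b}$ into itself and is a contraction there, for any $C_0\in H_{1/2}$. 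Your bound $|C^{(0)}|_T\le c\,\|C_0\|_{H_{1/2}}$ is true but throws away precisely the decay you need; the built-in requirement \eqref{ST413a} in the definition of $\P_T^a$ is what makes the critical case work without a small-data hypothesis. This is also the mechanism behind the parenthetical remark in i.) that $T$ depends on $C_0$ and not just on its norm.

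For parts ii.) and iii.) your sketch is in the right spirit but vague at the decisive point. The paper's argument writes the $s^{-a}$-weighted $H_1$ action of $C$ as at most that of the free term plus $\mu_a$ times the action of $C$ itself, where $\mu_a$ is an explicit constant built from $T^{a-1/2}$ and $\beta_a:=\sup_{[0,T]}\|C(s)\|_{q_a}$; one then subtracts and lets the weight approach $a$ by monotone convergence. For $a>1/2$ the factor $T^{a-1/2}$ makes $\mu_a<1$ for small $T$; for $a=1/2$ the factor is absent and one needs $\beta_{1/2}=\sup\|C(s)\|_3$ small, which follows from continuity of $C(\cdot)$ into $L^3$ and smallness of $\|C_0\|_{H_{1/2}}$. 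Your phrasing ``the smallness of $|C|_T$ produced by iii.)'' reads as circular; the smallness is a \emph{hypothesis} of iii.), and it enters through $\beta_{1/2}$, not through the contraction radius.
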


The proof of this theorem requires establishing properties  
of each of the terms on the right side of \eref{ST25}.
Section \ref{secfreeprop} will show that the first term lies in $\P_T^a$. Section \ref{secce}
will establish the needed  contraction estimates for the second term. These will be put 
together in Section \ref{seceum} to prove item {\it i.}), the existence and uniqueness
 portion of the theorem.
Items {\it ii.}) and {\it iii.}), finite action,  will be proven in Section \ref{secfinact}. 
Section \eref{secstr} will show
 that solutions to the integral equation \eref{ST25}  are actually strong solutions.

\subsection{Free propagation lies in the path space $\P_T^a$} \label{secfreeprop}

We will show in this subsection that the first term on the right in \eref{ST25}
lies in $\P_T^a$ and has finite strong a-action. 
All estimates in this subsection will be made
 for initial data $C_0 \in H_a$ with $0 \le a < 1$ since there is no simplification  for $a\ge1/2$
 and  the greater generality will be needed later.

               \begin{lemma}\label{freeesta1} Let  $ 0 \le a <1$  
 and suppose that  $C_0 \in H_a$.  Then, for some real 
 constants $c_a$ and $\gamma_a$
 there holds
\begin{align}
e^{2t} c_a \|C_0\|_{H_a}^2 &\ge t^{1-a} \|e^{t\Delta} C_0\|_{H_1}^2 \rightarrow 0\
                         \text{as}\ \ t\downarrow 0\ \ \ \text{and}            \label{ST449a} \\
\int_0^T t^{-a} \|e^{t\Delta} C_0\|_{H_1}^2 dt 
         &\le e^{2T} \gamma_a^2 \|C_0\|_{H_a}^2.                       \label{ST450a} 
\end{align}
\end{lemma}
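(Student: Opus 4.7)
Both estimates are spectral in nature. Since $-\Delta$ is a non-negative self-adjoint operator on $L^2(M;\Lambda^1\otimes\kf)$ (for any of the three cases: $M=\R^3$, Neumann, or Dirichlet boundary), it admits a spectral resolution $-\Delta = \int_0^\infty \lambda\, dE_\lambda$. Set $f = (1-\Delta)^{a/2}C_0 \in L^2$, so that $\|f\|_2^2 = \|C_0\|_{H_a}^2$, and let $\mu(\cdot) = \|E_\cdot f\|^2$ be the associated spectral measure, a finite measure of total mass $\|C_0\|_{H_a}^2$. Then by the functional calculus,
\beq
\|e^{t\Delta}C_0\|_{H_1}^2 = \|(1-\Delta)^{1/2}e^{t\Delta}(1-\Delta)^{-a/2}f\|_2^2 = \int_0^\infty (1+\lambda)^{1-a} e^{-2t\lambda}\, d\mu(\lambda).   \label{sp1}
\eeq
The entire proof reduces to pointwise bounds on the scalar kernel $k_t(\lambda):=(1+\lambda)^{1-a}e^{-2t\lambda}$.

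\textbf{First inequality.} Multiplying by $t^{1-a}$ and making the substitution $u = 2t(1+\lambda)\in[2t,\infty)$, one finds
\[
t^{1-a}(1+\lambda)^{1-a}e^{-2t\lambda} \;=\; 2^{a-1}\, u^{1-a}e^{-u}\,e^{2t}.
\]
The function $u\mapsto u^{1-a}e^{-u}$ on $[0,\infty)$ attains its maximum $(1-a)^{1-a}e^{a-1}$ at $u=1-a$, so
\[
t^{1-a}k_t(\lambda)\;\le\; c_a\, e^{2t},\qquad c_a := 2^{a-1}(1-a)^{1-a}e^{a-1},
\]
uniformly in $\lambda\ge 0$ and $t>0$. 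Integrating against $d\mu$ yields the stated upper bound in \eref{ST449a}. For the limit $t\downarrow 0$, observe that for each fixed $\lambda$, $t^{1-a}k_t(\lambda)\to 0$; combined with the uniform bound above (which is $\mu$-integrable since $\mu$ is finite), dominated convergence gives $t^{1-a}\|e^{t\Delta}C_0\|_{H_1}^2\to 0$.

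\textbf{Second inequality.} By Fubini applied to \eref{sp1},
\[
\int_0^T t^{-a}\|e^{t\Delta}C_0\|_{H_1}^2\,dt = \int_0^\infty (1+\lambda)^{1-a}\!\left(\int_0^T t^{-a} e^{-2t\lambda}\,dt\right) d\mu(\lambda).
\]
To obtain a uniform bound on the inner integral, use $e^{-2t\lambda} \le e^{2T}e^{-2t(1+\lambda)}$ on $[0,T]$, then substitute $u = 2t(1+\lambda)$:
\[
\int_0^T t^{-a} e^{-2t\lambda}\,dt \le e^{2T}\bigl(2(1+\lambda)\bigr)^{a-1}\!\int_0^{2T(1+\lambda)} u^{-a}e^{-u}\,du \le e^{2T}\,2^{a-1}(1+\lambda)^{a-1}\,\Gamma(1-a).
\]
(Here the hypothesis $a<1$ is crucial: it is precisely what makes $\Gamma(1-a)$ finite, and this is the only place where the restriction matters.) Multiplying by $(1+\lambda)^{1-a}$ collapses the $\lambda$-dependence and yields
\[
\int_0^T t^{-a}\|e^{t\Delta}C_0\|_{H_1}^2\,dt \le e^{2T}\,2^{a-1}\Gamma(1-a)\,\mu((0,\infty)) \le e^{2T}\gamma_a^2\|C_0\|_{H_a}^2,
\]
with $\gamma_a^2 := 2^{a-1}\Gamma(1-a)$, proving \eref{ST450a}.

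\textbf{Main obstacle.} There is no real obstacle here—the lemma is a clean application of the spectral theorem to a scalar kernel, and the only delicate point is keeping track of the fact that the $\Gamma$-integral in the second estimate converges exactly when $a<1$. The substitution $u=2t(1+\lambda)$ is the key move in both parts because it is what decouples the $t$- and $\lambda$-dependence in the kernel $k_t(\lambda)$.
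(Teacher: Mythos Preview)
Your proof is correct and follows essentially the same route as the paper: reduce to a scalar spectral integral and bound the kernel pointwise. The only cosmetic difference is that the paper diagonalizes $1-\Delta$ (spectral parameter $\lambda\ge 1$) rather than $-\Delta$, but your substitution $u=2t(1+\lambda)$ and the paper's $\sigma=t\lambda$ lead to the same constants $c_a$ and $\gamma_a$.
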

          \begin{proof}  
Denote by $E(d\lambda)$ the spectral resolution for the operator $1-\Delta$ and let
$\mu(d\lambda) = (E(d\lambda) D^{a} C_0, D^{a}C_0)$, where $D\: = \sqrt{ 1 - \Delta}$.
 In view of the definition
 \eref{ST19} of the   $H_a$ norm  we may write
\begin{align}
e^{-2t}\|e^{t\Delta} C_0\|_{H_1}^2 & = \|D e^{t(\Delta -1)} C_0\|_2^2\notag\\
&= \|D^{1-a}e^{-tD^2} D^{a}C_0\|_2^2       \notag\\
&= (D^{2(1-a)} e^{-2tD^2} D^{a}C_0,D^{a}C_0)   \notag\\
&=  \int_1^\infty \lambda^{(1-a)} e^{-2t\lambda} \mu(d\lambda).  \notag 
\end{align}
Hence
\begin{align}
e^{-2t}t^{1-a}\|e^{t\Delta} C_0\|_{H_1}^2 
                = \int_1^\infty (t\lambda)^{1-a} 
     e^{-2t\lambda} \mu(d\lambda).        \label{ST456a}
\end{align}
The integrand is uniformly bounded in \{$t >0$ and $\lambda\ge 0$\}
 by $c_a \equiv \sup_{\sigma >0} \sigma^{1-a}e^{-2\sigma}$ and therefore the integral
  is at most  $c_a \|D^a C_0\|_2^2$. Moreover for
  each point $\lambda \in [0,\infty)$ the integrand
 goes to zero as $t\downarrow 0$.  Since $\mu$ is a finite measure the dominated convergence
 theorem implies the remainder of \eref{ST449a}.

       Using now  \eref{ST456a} again, and
      substituting $\tau = t\lambda$, we find
 \begin{align*}
e^{-2T} \int_0^T t^{-a} \|e^{t\Delta}C_0\|_{H_1}^2 dt
 &=e^{-2T} \int_0^T e^{2t}\int_1^\infty (t\lambda)^{-a} \lambda e^{-2t\lambda}\mu(d\lambda)  dt\\
 &\le\int_1^\infty\int_0^T  (t\lambda)^{-a}e^{-2t\lambda} \lambda dt\ \mu(d\lambda)  \\
 &= \int_1^\infty \Big(\int_0^{T\lambda}\tau^{-a} e^{-2\tau} d\tau\Big) \mu(d\lambda) \\
 &\le \gamma_a^2\int_1^\infty \mu(d\lambda) = \gamma_a^2 \| D^{a} C_0\|_2^2,
 \end{align*}  
 where $\gamma_a^2 =\int_0^\infty \tau^{-a} e^{-2\tau} d\tau$. This proves \eref{ST450a}.
\end{proof}

\begin{corollary}\label{corcefree} Let $ 0 \le a <1$ and let $C_0 \in H_{a}$. Then the function
 \beq
 [0, T] \ni t\mapsto C(t) := e^{t\Delta} C_0
 \eeq
 lies in $\P_T^a$ for all $T \in (0, \infty)$.
 \end{corollary}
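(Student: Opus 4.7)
The plan is to verify directly the four defining conditions for membership in $\P_T^a$ as set out in Notation \ref{notpatha}, so the proof reduces to assembling ingredients that are either tautological or already established in Lemma \ref{freeesta1}.

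First, the initial-value condition $C(0) = C_0$ is immediate from $e^{0\cdot\Delta} = I$. The key singular-behavior condition \eqref{ST413a}, namely $t^{1-a}\|C(t)\|_{H_1}^2 \to 0$ as $t\downarrow 0$, is precisely the content of \eqref{ST449a} in Lemma \ref{freeesta1}, so nothing further is needed here.

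It remains to check the two continuity statements. Because $-\Delta$ is a non-negative self-adjoint operator, $\{e^{t\Delta}\}_{t\ge0}$ is a strongly continuous contraction semigroup on $L^2(M;\L^1\otimes\kf)$. Since $(1-\Delta)^{a/2}$ commutes spectrally with $e^{t\Delta}$, we may write
\beq
\|e^{t\Delta}C_0 - e^{s\Delta}C_0\|_{H_a} = \|(e^{t\Delta} - e^{s\Delta})(1-\Delta)^{a/2}C_0\|_2,    \notag
\eeq
and strong continuity of $e^{t\Delta}$ on $L^2$ applied to the vector $(1-\Delta)^{a/2}C_0 \in L^2$ yields continuity of $[0,T]\ni t\mapsto e^{t\Delta}C_0$ into $H_a$. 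For continuity into $H_1$ on $(0,T]$, use the spectral measure $\nu(d\l) = (E(d\l)C_0,C_0)$ to express
\beq
\|e^{t\Delta}C_0 - e^{t_0\Delta}C_0\|_{H_1}^2 = \int_1^\infty \l\bigl(e^{-t(\l-1)} - e^{-t_0(\l-1)}\bigr)^2 e^{2\min(t,t_0)} \nu(d\l),    \notag
\eeq
and apply dominated convergence: for $t$ in a small neighborhood of any fixed $t_0 > 0$, the integrand is pointwise bounded by $C_{t_0}\l e^{-t_0(\l-1)/2}$, which is $\nu$-integrable since $C_0 \in H_a \subset L^2$.

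No step presents a real obstacle; the mild novelty is simply recognizing that condition \eqref{ST413a} is not a continuity condition but a decay-rate condition, and this is exactly what Lemma \ref{freeesta1} was engineered to supply. The two continuity statements are routine consequences of strong continuity of the heat semigroup on $L^2$ together with its smoothing effect for $t > 0$.
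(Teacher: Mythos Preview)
Your argument is correct and follows essentially the same route as the paper: verify the defining conditions of $\P_T^a$ directly, invoking strong continuity of the heat semigroup for the $H_a$-continuity and Lemma~\ref{freeesta1} for the decay condition~\eqref{ST413a}. The paper handles $H_1$-continuity on $(0,T]$ more tersely by observing that $C(t_0)\in H_1$ for each $t_0>0$ and then using strong continuity of $e^{t\Delta}$ on $H_1$ itself, which avoids the spectral dominated-convergence computation; your version is also fine, though note that your displayed spectral formula carries a spurious factor $e^{2\min(t,t_0)}$ (the correct integrand is simply $\lambda\bigl(e^{-t(\lambda-1)}-e^{-t_0(\lambda-1)}\bigr)^2$), which does not affect the validity of the bound or the conclusion.
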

        \begin{proof} $C(\cdot)$ is  a continuous function on $[0,T]$ into $H_{a}$ because
        $e^{t\Delta}$ is a strongly continuous semigroup in $H_{a}$.  The second assertion 
        in  \eref{ST449a} shows that
   $ t^{(1-a)/2} \| e^{t\Delta} C_0\|_{H_1} \rightarrow 0\ \ \text{as}\ \ t\downarrow 0$,
 which is condition  \eref{ST413a}.
  Since $C(t) \in H_1$ for any $t > 0$, $C(\cdot)$   is also a continuous function
  on $(0, T]$  into $H_1$.
  \end{proof}

\begin{remark}\label{phil2}{\rm (Pointwise behavior vs integral behavior)
Let $f(t) =  \| e^{t\Delta} C_0\|_{H_1}^2$. Observe that \eref{ST449a}  
says that $t^{-a}f(t)  =o(t^{-1})$ while \eref{ST450a}  says that $t^{-a} f(t)$ is integrable over
$(0, T)$. Neither assertion implies the other.
Both hold for this particular function.
Both types of inequalities, pointwise in $t$  and integral, will be needed for solutions 
$C(\cdot)$ to  \eref{aymh}.
Many of the apriori estimates that we will derive
 will show the strong interplay between them. 
 This  interplay was already  a key tool in \cite{CG1}.
 A pointwise inequality in $t$, such as \eref{ST449a} or \eref{ST413a}, provides a mechanism for 
 proving the existence of solutions for $H_{a}$ initial data.
 But  it is an integral  condition, such as \eref{ST450a}  or \eref{fa1C},  or more 
 particularly their gauge invariant version \eref{fa1a}, which has direct
  physical significance and  which we will address in a more gauge invariant
   formulation in a future work, \cite{G72}.
}
\end{remark}

\subsection{Contraction estimates}\label{secce}

           For $C(\cdot)$ in the path space $\P_T^a$ we have at
 our disposal two kinds of size conditions for use in  estimating  the terms in \eref{ST14.1}. 
$\|C(s)\|_{H_{a}}$ is continuous and therefore bounded on $[0,T]$, 
and therefore so also is $\|C(s)\|_{q_a}$, by Sobolev, where $q_a^{-1} = 1/2 - a/3$.
(It may be useful to keep in mind that $q_{1/2} = 3$.)
  In addition, we have an $s$ dependent bound on $\|C(s)\|_{H_1}$ of the form
  $\|C(s)\|_{H_1} \le s^{(a-1)/2} \cdot |C|_T$, from \eref{ST416a}.
  These two bounds will be used in different combinations.
 The following lemma lists several kinds of estimates that will be needed for
the two different types of terms in $X(C)$. The proofs just rely on H\"older inequalities together
with the Sobolev inequality $\| C\|_6 \le \kappa_6 \|C\|_{H_1}$. 
 We use 
 $\|\p C\|_2 \le \|C\|_{H_1}$.
  We also continue to use, as in \cite{CG1}, the constant
 $c\equiv \sup \{ \| ad\ x \|_{\kf\rightarrow \kf}: \|x\|_\kf \le 1\}$, which measures the non-commutativity of $\kf$.

         \begin{lemma} Let $C$ be a $\kf$ valued 1-form on $M$. Then the following inequalities hold.
The H\"older inequality arithmetic needed in the proof is on the same line as the inequality.
The power of $c$ reflects the number of commutators that appear on the left.
\begin{align}
\|C^3\|_{6/5} &\le c^2\kappa_6 \ \ \|C\|_{H_1}\|C\|_3^2    \ \ \ \  5/6 = 1/6 +1/3 + 1/3   \label{ce13}\\
\|C^3\|_{3/2} &\le c^2\kappa_6^2 \ \ \|C\|_{H_1}^2\|C\|_3 \ \ \ \ 2/3 =  1/6 +1/6 +1/3       \label{ce15}\\
\|C^3\|_2\  \ &\le  c^2 \kappa_6^3\ \  \|C\|_{H_1}^3 \ \ \ \ \  \ \ \ \  \ 1/2 = 1/6+1/6+1/6   \label{ce11}\\
\|C\cdot \p C\|_{6/5} &\le c \ \ \ \ \ \ \|C\|_{H_1}\|C\|_3     \ \ \ \     5/6 = 1/2 + 1/3        \label{ce14}\\
\|C\cdot \p C\|_{3/2} &\le  c\kappa_6\ \  \ \|C\|_{H_1}^2\ \ \ \ \ \ \ \ \ \  2/3= 1/6+1/2   \label{ce12}
\end{align}
\end{lemma}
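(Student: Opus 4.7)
The proof amounts to routine applications of H\"older's inequality together with the Sobolev embedding $\|C\|_6 \le \kappa_6 \|C\|_{H_1}$, the trivial bound $\|\partial C\|_2 \le \|C\|_{H_1}$, and the bookkeeping of commutator constants. The plan is uniform across all five inequalities, so I would first fix the conventions and then dispatch each line in turn.

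First I would recall that a cubic expression $C^3$ denotes a pointwise algebraic combination of three copies of $C$ joined by two brackets (two applications of $\mathrm{ad}$), so that pointwise $|C^3(x)|_{\kf} \le c^2 |C(x)|_{\kf}^3$, where $c = \sup\{\|\mathrm{ad}\,x\|_{\kf\to\kf}: \|x\|_{\kf} \le 1\}$. Similarly $C\cdot \partial C$ involves one bracket, giving the pointwise bound $|C\cdot \partial C|_{\kf} \le c|C|_{\kf}|\partial C|_{\kf}$. These pointwise estimates reduce the desired norm inequalities to scalar-valued H\"older estimates on $|C|_{\kf}$ and $|\partial C|_{\kf}$, and account for the powers of $c$ on the right-hand sides exactly as stated.

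Next I would process each of the five estimates in sequence. For the cubic inequalities \eqref{ce13}, \eqref{ce15}, \eqref{ce11}, the target exponents $6/5$, $3/2$, $2$ match the Young-type decompositions $5/6=1/6+1/3+1/3$, $2/3=1/6+1/6+1/3$, and $1/2=1/6+1/6+1/6$, respectively. In each case I apply the three-factor H\"older inequality to $|C|_{\kf}^3$ with the indicated exponents: every factor carrying the exponent $1/6$ is estimated by $\|C\|_6 \le \kappa_6\|C\|_{H_1}$, while every factor carrying the exponent $1/3$ is kept as $\|C\|_3$. The number of $\kappa_6$'s equals the number of $1/6$ slots, giving exactly the constants $\kappa_6$, $\kappa_6^2$, $\kappa_6^3$ appearing on the right. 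For the quadratic inequalities \eqref{ce14} and \eqref{ce12}, I apply the two-factor H\"older inequality with $5/6=1/2+1/3$ and $2/3=1/6+1/2$ to $|C|_{\kf}|\partial C|_{\kf}$. The $1/2$ slot always goes to $\|\partial C\|_2 \le \|C\|_{H_1}$; the remaining slot is either kept as $\|C\|_3$ (for \eqref{ce14}) or estimated by $\|C\|_6 \le \kappa_6 \|C\|_{H_1}$ (for \eqref{ce12}).

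There is essentially no obstacle to overcome: the only conceptual point is that all these $L^p$ spaces refer to $\kf$-valued forms on $M$, so one must use the pointwise $\kf$-norm bounds for brackets before invoking scalar H\"older, and one must ensure that the Sobolev embedding $H_1(M) \hookrightarrow L^6(M)$ indeed holds with a constant $\kappa_6$ independent of whether $M=\R^3$ or a bounded smooth domain (which is the reason for restricting to the classes of $M$ discussed in Remark \ref{rembdd}). Both points are standard, so the proof is genuinely just the arithmetic of exponents displayed alongside each inequality in the statement.
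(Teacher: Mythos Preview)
Your proposal is correct and follows exactly the approach of the paper: the paper's proof consists of the single line ``The proofs are in the right hand column,'' relying on precisely the H\"older arithmetic displayed, the Sobolev bound $\|C\|_6\le\kappa_6\|C\|_{H_1}$, the bound $\|\partial C\|_2\le\|C\|_{H_1}$, and the commutator constant $c$ as you describe.
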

  \begin{proof} The proofs are in the right hand column.
  \end{proof}

        \begin{remark}{\rm  The following elementary inequality is displayed here for frequent reference.
If $L$ is a non-negative self-adjoint operator
on a Hilbert space and $D = L^{1/2}$ then
\beq
\| D^\alpha e^{-tL}\| \le c_\alpha t^{-\alpha/2},\ \ \  t > 0, \ \ \ \alpha \ge0,     \label{hk0}
\eeq
for some constant $c_\alpha$, as follows from the spectral theorem  and the inequality  
$\sup_{\lambda>0} \lambda^{\alpha/2} e^{-t\lambda} =t^{-\alpha/2} \sup_{\sigma >0} \sigma^{\alpha/2}e^{-\sigma}$.
Here $\lambda \ge 0$ is a spectral parameter for $L$. The case of interest for us
 will be  $L = 1 -\Delta$ acting on $L^2(M; \Lambda^1\otimes \kf)$.
}
\end{remark}

             \begin{lemma}\label{lemce2a} Let $0 < a < 1$ and let $C \in \P_T^a$. Then
 \begin{align}
\| C(s)^3 \|_2 &\le s^{-(3/2)(1-a)} |C|_t^3\ (c^2\kappa_6^3) \ \ \text{for}\ 0<s \le t \le T \ \ 
                                                     \text{and}                                                 \label{ST310a}\\
\|C(s)\cdot \p C(s)\|_{3/2} &\le s^{-(1-a)} |C|_t^2\   (c \kappa_6)\ \ \ \ \ \ \ \
                                                    \text{for}\ 0<s \le t \le T,      \qquad                           \label{ST311a}
\end{align}
where $|C|_t$ is defined by \eref{ST415a}.
Further,
\begin{align}
  \|e^{(t-s) \Delta}\{C(s)^3\}\|_{H_1}  
           &\le   (t-s)^{-1/2} s^{3(a-1)/2}\  |C|_t^3\ C_{40a} .       \label{ST312a} \\
  \|e^{(t-s) \Delta}\{C(s)^3\}\|_{H_{a}}  
         &\le  (t-s)^{-a/2}   s^{3(a-1)/2}\  |C|_t^3\ C_{41a}.      \label{ST313a}\\
 \|e^{(t-s) \Delta} \{ C(s)\cdot \p C(s)\}\|_{H_1}  
         &\le  (t-s)^{-3/4}  s^{a-1}\ \ \ \ \ \  |C|_t^2\    C_{42a}.   \label{ST314a}\\
   \|e^{(t-s) \Delta} \{ C(s)\cdot \p C(s)\}\|_{H_{a}}  
         & \le   (t-s)^{-(2a+1)/4}      s^{a-1}\  |C|_t^2\    C_{43a}. \label{ST315a}
\end{align} 
The constants $C_{ja}$ depend only on Sobolev constants, on the constants $c_\alpha$
in \eref{hk0}, on powers of the commutator norm $c$ in $\kf$ and on $a$.
\end{lemma}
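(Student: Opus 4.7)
The two a priori bounds \eref{ST310a} and \eref{ST311a} are immediate: they follow from \eref{ce11} and \eref{ce12} respectively after substituting the pointwise-in-$s$ bound $\|C(s)\|_{H_1}\le s^{(a-1)/2}|C|_t$ from \eref{ST416a}. This uses only H\"older/Sobolev arithmetic already packaged in the previous lemma, with the extra factor $s^{(a-1)/2}$ raised to the appropriate power ($3$ for the cubic, $2$ for the quadratic).

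For the semigroup estimates \eref{ST312a} and \eref{ST313a} on the cubic term, the plan is to apply the operator bound \eref{hk0} to the $L^2$ estimate \eref{ST310a}. Writing $D=\sqrt{1-\Delta}$ so that $e^{\tau\Delta}=e^{\tau}e^{-\tau D^2}$, we get
\[
\|e^{\tau\Delta} u\|_{H_\alpha}=\|D^{\alpha}e^{\tau\Delta}u\|_2\le c_\alpha\, e^{\tau}\,\tau^{-\alpha/2}\|u\|_2
\]
for any $\alpha\ge 0$. Setting $u=C(s)^3$ and $\tau=t-s$, and then taking $\alpha=1$ and $\alpha=a$, yields \eref{ST312a} and \eref{ST313a} respectively, with the constants $C_{40a},C_{41a}$ absorbing the factor $e^{T}c_\alpha c^{2}\kappa_6^{3}$.

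For the quadratic terms \eref{ST314a} and \eref{ST315a}, the obstacle is that \eref{ST311a} gives us only an $L^{3/2}$ bound on $C(s)\cdot\partial C(s)$, not $L^2$, so we cannot use \eref{hk0} directly. The standard fix is a duality argument: the Sobolev embedding $H_{1/2}\hookrightarrow L^3$ dualizes to $L^{3/2}\hookrightarrow H_{-1/2}$, that is, $\|D^{-1/2}u\|_2\le c\,\|u\|_{3/2}$. (This is where the smoothness and boundedness of $M$, plus the boundary conditions built into the definition of $D$, need to cooperate with the duality step; it is the one slightly delicate point, but it is a routine consequence of the self-adjoint functional calculus for $1-\Delta$ under either Neumann or Dirichlet boundary conditions.) Once this is in hand, we write
\[
\|D^{\alpha}e^{\tau\Delta}u\|_2=\|D^{\alpha+1/2}e^{\tau\Delta}\,D^{-1/2}u\|_2\le c_{\alpha+1/2}\,e^{\tau}\,\tau^{-(\alpha+1/2)/2}\,\|D^{-1/2}u\|_2,
\]
which via the duality bound majorizes by a constant times $e^{\tau}\tau^{-(\alpha+1/2)/2}\|u\|_{3/2}$. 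Applying this with $u=C(s)\cdot\partial C(s)$ and $\tau=t-s$, first for $\alpha=1$ and then for $\alpha=a$, combined with \eref{ST311a}, produces \eref{ST314a} and \eref{ST315a} with the exponents $-3/4$ and $-(2a+1)/4$ on $(t-s)$ exactly as stated; the constants $C_{42a},C_{43a}$ then collect the dual Sobolev constant, the constant $c_{\alpha+1/2}$ from \eref{hk0}, the factor $e^T$, and the non-commutativity factor $c\kappa_6$ from \eref{ce12}. The main (minor) obstacle throughout is simply bookkeeping the powers of $(t-s)$ and $s$; the duality step for the quadratic pieces is the only conceptually non-trivial ingredient beyond the two-line application of \eref{hk0}.
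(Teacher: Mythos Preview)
Your proof is correct and follows essentially the same route as the paper: \eref{ST310a}--\eref{ST311a} come directly from \eref{ce11}, \eref{ce12} and the path-space bound \eref{ST416a}; the cubic semigroup estimates follow from \eref{hk0} applied to the $L^2$ bound; and for the quadratic term the paper uses exactly the Sobolev bound $\|D^{-1/2}u\|_2\le\kappa'\|u\|_{3/2}$ (what you call the duality step) before applying \eref{hk0} with exponent $3/2$, respectively $a+1/2$. The only cosmetic difference is that the paper phrases the $L^{3/2}\to H_{-1/2}$ bound as boundedness of $D^{-1/2}:L^{3/2}\to L^2$ rather than as duality, and absorbs the $e^{t-s}\le e^T$ factors into the constants with a one-line remark.
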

\begin{proof} 
     By \eref{ce11} and  \eref{ST416a} we have
     \begin{align*}
   \|C(s)^3\|_2 \le   c^2 \kappa_6^3 \| C(s)\|_{H_1}^3 
\le c^2\kappa_6^3 (s^{(a-1)/2} |C|_t)^3,
\end{align*}
which is \eref{ST310a}. Combining \eref{ce12} and \eref{ST416a}, one finds
\begin{align*}
 \| C(s)\cdot \p C(s)\|_{3/2} 
      \le c \kappa_6 \|C(s)\|_{H_1}^2
      \le   c \kappa_6 (s^{(a-1)/2} |C|_t)^2,
      \end{align*}
      which is \eref{ST311a}. 
      
      The two inequalities \eref{ST312a} and \eref{ST313a} follow directly 
      from  \eref{ST310a} combined with \eref{hk0} with  $\alpha =1$ or $a$, respectively. 
      Here we are ignoring irrelevant factors of $e^T$ needed to justify 
      $ \|e^{r\Delta} f\|_{H_\alpha} = \| D^\alpha e^{r\Delta} f\|_2$ because we are only concerned with small $T$.
            
      For the remaining two inequalities we need to interpose a Sobolev inequality
    before applying \eref{hk0}.
       We have a bound, 
      $\kappa'$ say, on  the norm of $D^{-1/2}: L^{3/2} \rightarrow L^2$
 because  $1/2 = (2/3) - (1/2)/3$. Thus we can write
       \begin{align*}
       \| e^{(t-s) \Delta} \{C(s)\cdot \p C(s)\} \|_{H_1} &=
 \|D^{1/2}e^{(t-s) \Delta} D^{-1/2} \{C(s)\cdot \p C(s)\}\|_{H_1}  \\
 &=\| D^{3/2} e^{(t-s) \Delta} D^{-1/2} \{C(s)\cdot \p C(s)\}\|_2 \\
 &\le \| D^{3/2} e^{(t-s) \Delta}\|_{2\rightarrow 2}  \|D^{-1/2} \{C(s)\cdot \p C(s))\}\|_2\\
         &\le \Big(c_{3/2} (t-s)^{-3/4}\Big)\cdot\  \Big( \kappa's^{a-1} |C|_t^2\   (c \kappa_6)\Big),
 \end{align*}
 wherein we have  used \eref{ST311a} and \eref{hk0}. This proves \eref{ST314a}. 
 The proof of \eref{ST315a} is the same but with $H_1$ replaced by 
    $H_{a}$ and 
 with  $D^{3/2}$ replaced by $D^{1/2 +a} $ in the second and third lines. In this case
 $\alpha$ in \eref{hk0} should be taken to be $1/2 + a$, giving \eref{ST315a}
\end{proof}

      \begin{remark}\label{remcv}{\rm    The following   identity, which arises frequently,
 is listed here for convenience.
Let  $\mu$ and $\nu$ be real numbers with  $ \mu <1$ and  $\nu <1$. Then
\begin{align}
\frac{1}{t} \int_0^t(t-s)^{-\mu} s^{-\nu} ds 
                        &= t^{-\mu -\nu } C_{\mu, \nu}\                            \label{rec519}  
\end{align} 
for some finite constant $C_{\mu,\nu}$.
         For the proof,
         make the change of variables $s = tr$ 
         to convert the integral  to 
$t^{-\mu -\nu}\int_0^1 (1-r)^{-\mu} r^{-\nu} dr$, which has the asserted form.
}
\end{remark}

          \begin{lemma}\label{lemce6a} Let $1/2 \le a <1$ and let $C(\cdot)$ be in $\P_T^a$. 
    Define 
    \begin{align}
w(t) = \int_0^t e^{(t-s)\Delta} X(C(s)) ds.            \label{ST319}
\end{align}
Then
\begin{align}
w:[0,T] \rightarrow H_{a}  \ \ \text{and}\ \ w:(0,T] \rightarrow H_1   \label{ST319c}
\end{align}
are both continuous. Moreover
\begin{align} 
t^{\frac{1-a}{2}} \|w(t) \|_{H_1}\ 
   &\le  \Big(t^{a -(1/2)}|C|_t^3 + t^{\frac{a -(1/2)}{2}}|C|_t^2\Big)  C_{50a}, \  \label{ST320a} \\
\|w(t) \|_{H_{a}}\ 
      &\le \Big( t^{a - (1/2)} |C|_t^3 +t^{\frac{a -(1/2)}{2}}|C|_t^2\Big)C_{51a}  \ \ \ \ \ \ \text{and} \label{ST331a}\\
\| w(t) \|_{q_a} \  
      &\le  \Big( t^{a - (1/2)} |C|_t^3 +t^{\frac{a -(1/2)}{2}}|C|_t^2\Big) C_{52a},  \label{ST330a}
\end{align}
where $q_a^{-1}= (1/2) - (a/3)$.

      The constants $C_{ja}$ depend only  on Sobolev constants,
  the coefficients $c_\alpha$ in \eref{hk0}, on the commutator norm $c$ and on $a$.
\end{lemma}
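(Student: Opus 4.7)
The plan is to apply the estimates of Lemma \ref{lemce2a} term-by-term. Since $X(C) = C^3 + C\cdot\partial C$, for any target norm $\|\cdot\|$ we have
$$\|w(t)\| \le \int_0^t \|e^{(t-s)\Delta} C(s)^3\|\, ds + \int_0^t \|e^{(t-s)\Delta} (C(s)\cdot\partial C(s))\|\, ds.$$
For the $H_1$ bound \eqref{ST320a}, insert \eqref{ST312a} and \eqref{ST314a}, factor out $|C|_t^3$ and $|C|_t^2$, and evaluate the remaining $s$-integrals via the beta-function identity \eqref{rec519}. For the cube term this uses the pair $(\mu,\nu) = (1/2, 3(1-a)/2)$, and for the $C\cdot\partial C$ term the pair $(3/4, 1-a)$; both pairs satisfy $\mu, \nu < 1$ throughout $1/2 \le a < 1$. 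A short computation of $1-\mu-\nu$ in each case, then multiplication by $t^{(1-a)/2}$, reproduces exactly the two powers $t^{a-1/2}$ and $t^{(a-1/2)/2}$ on the right of \eqref{ST320a}.

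The bound \eqref{ST331a} is obtained identically from \eqref{ST313a} and \eqref{ST315a} with the pairs $(a/2, 3(1-a)/2)$ and $((2a+1)/4, 1-a)$; the emerging powers of $t$ match exactly those predicted by the right-hand side. The bound \eqref{ST330a} in $L^{q_a}$ is then immediate from \eqref{ST331a} by the Sobolev embedding $H_a \hookrightarrow L^{q_a}$ with $q_a^{-1} = 1/2 - a/3$, absorbing the Sobolev constant into $C_{52a}$.

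For the continuity assertions \eqref{ST319c}, note first that \eqref{ST320a} and \eqref{ST331a} already yield $\|w(t)\|_{H_a} \to 0$ and $t^{(1-a)/2}\|w(t)\|_{H_1} \to 0$ as $t\downarrow 0$: for $a > 1/2$ the powers of $t$ do the job, while at the critical $a = 1/2$ the factor $|C|_t^3 + |C|_t^2$ still forces the bounds to vanish by \eqref{ST417a}. Since $w(0) = 0$, this gives continuity at $0$ in both $H_a$ and (after multiplying by $t^{(1-a)/2}$, which is the relevant continuity statement near $0$ in $H_1$). For continuity at an interior point $0 < t_0 \le T$ in either space, write
$$w(t+h) - w(t) = \int_t^{t+h} e^{(t+h-s)\Delta} X(C(s))\, ds + \int_0^t \bigl(e^{(t+h-s)\Delta} - e^{(t-s)\Delta}\bigr) X(C(s))\, ds,$$
applying the same pointwise bounds from Lemma \ref{lemce2a} to show the first integral is $O(h^\alpha)$ for some $\alpha > 0$, and using strong continuity of the semigroup $\{e^{r\Delta}\}_{r \ge 0}$ on $H_a$ (resp.\ $H_1$) plus dominated convergence on the second.

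The main obstacle will be bookkeeping the several pairs of exponents fed into \eqref{rec519}: one must verify that each satisfies $\mu, \nu < 1$ uniformly for $1/2 \le a < 1$ and that the algebra $1 - \mu - \nu + (1-a)/2$ or $1 - \mu - \nu$ matches the target power of $t$ in each of \eqref{ST320a}--\eqref{ST330a}. The endpoint $a = 1/2$ deserves care, as the cube contribution to $\|w(t)\|_{H_a}$ carries $t^{a-1/2} = 1$ with no intrinsic decay in $t$; the required vanishing at $t = 0$ then rests entirely on $|C|_t \to 0$ from the definition of $\P_T^a$.
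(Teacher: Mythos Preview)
Your proposal is correct and follows essentially the same approach as the paper: decompose $X(C)$ into $C^3$ and $C\cdot\partial C$, feed the four estimates \eqref{ST312a}--\eqref{ST315a} into the beta integral \eqref{rec519}, and deduce \eqref{ST330a} from \eqref{ST331a} by Sobolev. The one noteworthy difference is in the continuity argument for the second integral in your $w(t+h)-w(t)$ decomposition: you invoke strong continuity of $e^{r\Delta}$ plus dominated convergence (which works, since $\|e^{(t-s)\Delta}F(s)\|_{H_\alpha}$ is integrable in $s$ on $(0,t)$ for fixed $t>0$), whereas the paper inserts $D^{-2\delta}D^{2\delta}$ so that $(e^{(t-r)\Delta}-I)D^{-2\delta}\to 0$ in operator norm and the remaining integral stays uniformly bounded; your route is slightly more direct, the paper's gives a uniform rate.
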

               \begin{proof}  The sums on the right sides of these three inequalities
correspond to the decomposition $X(C)  = C^3 + C\cdot \p C$ in \eref{ST14.1}. 
We need to carry out 
the derivation of these inequalities separately for the cases $C(s)^3$ and $C(s)\cdot \p C(s)$
because of the slightly different powers of $(t-s)$ and $s$ that occur in
  \eref{ST312a} - \eref{ST315a}.
The following derivation 
is typical of all of them. We have
\begin{align}
\int_0^t \| e^{(t-s)\Delta} C(s)^3 \|_{H_1} ds 
 \le \int_0^t (t-s)^{-1/2} s^{3(a-1)/2} ds\  |C|_t^3\ C_{40a}       \label{ST333a}
\end{align}
by \eref{ST312a}. All the other three estimates needed in \eref{ST320a} and \eref{ST331a}
 have a similar form. They differ only in the powers $(t-s)^{-\mu} s^{-\nu}$
  that occur.
      The identity \eref{rec519} shows that  $\int_0^t(t-s)^{-\mu} s^{-\nu} ds =
t^{1 - \mu - \nu} \cdot\ constant$.
Thus in the case of \eref{ST333a} one sees that 
$1-\mu - \nu =  1 -(1/2) + 3(a-1)/2 = -1 + (3/2)a $. This gives correctly the power for the first term
on the right in \eref{ST320a} upon taking into account the  factor $t^{(1-a)/2}$ on the left side
of \eref{ST320a}. We leave the arithmetic for the remaining three cases to the reader.
By Sobolev, \eref{ST330a} follows from \eref{ST331a}.

         It remains to prove the two assertions about continuity in \eref{ST319c}. 
         Observe first that  \eref{ST331a}
         implies continuity of $w$ into $H_{a}$ at $t =0$ because 
         $w(0) = 0$ and $|C|_t \rightarrow 0$  as $t \downarrow 0$ by \eref{ST417a}. (Notice that
         for $a = 1/2$ we must rely on $|C|_t \rightarrow 0$ whereas for $a > 1/2$ the strictly
         positive powers of $t$ in \eref{ST331a} are enough to ensure
          that $\|w(t)\|_{H_a} \rightarrow 0$.)
         It suffices, therefore, to prove both continuities on an interval $[\epsilon, T]$ with 
         $\epsilon > 0$. Suppose then
   that $0< \epsilon \le r < t \le T$. The identity
   \begin{align}
   w(t) - w(r) = \int_r^t e^{(t-s)\Delta} F(s) ds 
   + \int_0^r\Big(e^{(t-r) \Delta} - I\Big) e^{(r-s) \Delta} F(s) ds,            \label{ST336}
   \end{align}
   wherein $F(s) = X(C(s))$ is easily verified. We need to show that $\|w(t) - w(r)\|_{H_\alpha} \rightarrow 0$ as $t-r \rightarrow 0$   in the interval 
   $[\epsilon, T]$ for $\alpha = 1$ and $\alpha =a$. 
   First consider the term $F(s) = C(s)^3$ in $X(C(s))$.
 We have, by \eref{ST312a} and \eref{ST313a}, 
 \begin{align*}
 \int_r^t  \|e^{(t-s)\Delta} C(s)^3\|_{H_\alpha} ds 
 \le \begin{cases}  & \int_r^t (t-s)^{-1/2} s^{3(a-1)/2} ds\  |C|_T^3\  C_{40}, \ \ \  \alpha = 1\\ 
 &\int_r^t (t-s)^{-a/2} s^{3(a-1)/2} ds\  |C|_T^3\  C_{41},  \ \ \  \alpha = a.                             
                            \end{cases}
 \end{align*}
 Both integrals on the right go to zero as $t-r \rightarrow 0$ if $r$ and $t$ are bounded away
 from zero.     Similarly, by \eref{ST314a} and \eref{ST315a}, 
  \begin{align*}
 \int_r^t  \|e^{(t-s)\Delta} \{C(s)\cdot\p C(s)\}\|_{H_\alpha} ds 
 \le \begin{cases}& \int_r^t (t-s)^{-3/4} s^{a-1} ds\  |C|_T^2\  C_{42}, \  \alpha = 1\\
 &\int_r^t (t-s)^{-(2a +1)/4} s^{a-1} ds\  |C|_T^2\  C_{43},  \  \alpha = a,                             
                            \end{cases}
 \end{align*} 
 which also goes to zero if $r$ and $t$ lie in the interval $[\epsilon, T] $ and $t-r \rightarrow 0$.

 Concerning the second integral in \eref{ST336} observe that,
  although the operator in parentheses
 goes to zero strongly as $ t-r \downarrow 0$, it does not go to zero in norm.
 Let $ 0 < \delta < 1/4$. Then, for any measurable function
  $F:(0, T]\rightarrow L^2(M; \L^1\otimes \kf)$, we have 
 \begin{align}
  \int_0^r\|\Big(&e^{(t-r) \Delta} - I\Big) e^{(r-s) \Delta} F(s)\|_{H_\alpha} ds\notag\\
  &=\int_0^r \|\Big(e^{(t-r) \Delta} - I\Big)D^{-2\delta} 
                  D^{2\delta} e^{(r-s) \Delta} F(s)\|_{H_\alpha} ds                        \notag\\
  &\le \|\Big(e^{(t-r) \Delta} - I\Big)D^{-2\delta}\|_{2\rightarrow 2}
   \int_0^r\|D^{2\delta} e^{(r-s) \Delta} F(s)\|_{H_\alpha}   ds.       \label{ST339}
   \end{align}
   The operator norm in the first factor goes to zero for any 
   $\delta >0$ as $t-r\downarrow 0$. It suffices to prove therefore
   that the integral factor is uniformly bounded for $r \in [\epsilon, T]$. 
   But    \eref{hk0} implies  that, in the presence of the factor $D^{2\delta}$, 
   each of the factors $(t-s)^{-\mu}$ in the  inequalities \eref{ST312a} - \eref{ST315a}
   need only be replaced by $(t-s)^{ -\mu -  \delta}$.    All of these four exponents remain
   greater than $-1$ for $1/2 \le a \le 1$ because $ \delta < 1/4$. Consequently the four
    estimates needed to bound the integral factor in \eref{ST339}, for $\alpha = 1$ or $a $ and
     $F= C^3$ or $C\cdot \p C$, remain bounded on the
    interval $\epsilon\le r \le T$.
   \end{proof}

\subsection{Proof of existence of mild solutions} \label{seceum}

\begin{notation}\label{notb}{\rm  Let $1/2 \le a < 1$  and suppose that $C_0 \in H_a(M)$.  
Let
\beq
\P_{T,b}^a = \{ C(\cdot)\in \P_T^a:  |C|_T \le b\},
\eeq
where   $|C|_t$ is defined as in \eref{ST415a}.
 For any $b >0$ the set $\P_{T,b}^a$ is complete in  the metric \eref{ST414a}
and is non-empty for some $T>0$ since, by Corollary \ref{corcefree},  $ \P_{T_1}^a$
is non-empty for all $T_1 >0$, and if   $C(\cdot) \in \P_{T_1}^a$  then the restriction
 of $C(\cdot)$ to $[0, T]$ will be in $\P_{T, b}^a$ for some $T \in (0 ,T_1]$ by virtue
 of
  \eref{ST417a}. It is this feature of the spaces $\P_T^a$ that
  will allow our method   to work in the critical case $a = 1/2$. 
  See Remark \ref{usual} for further discussion of this.
 }
\end{notation}

\begin{lemma}\label{lemce5} Define
\beq
Z(C)(t) = e^{t\Delta}C_0 +\int_0^t e^{(t-s)\Delta} X(C(s)) ds \  \ \
                      \text{for}\ \ \ C(\cdot) \in \P_T^a.                                        \label{ST341}
\eeq
Then 
\beq
 Z(\P_T^a) \subset \P_T^a.                    \label{ST342}
\eeq
Let
\beq
b_0 = \sup_{0 < t \le T} t^{(1-a)/2} \|e^{t\Delta}C_0 \|_{H_1}.                   \notag
\eeq
If $C\in \P_{T,b}^a$  and $T \le 1$ then
\beq
|Z(C)|_T   \le b_0 + (b^2 + b^3) C_{50a},               \label{ST343}
\eeq
where $C_{50a}$ is defined in \eref{ST320a}.
If   $b>0$ is chosen so small that   
\beq
(b^2 + b^3) C_{50a} \le b/2    \label{ST344}
\eeq
 and $T >0$ is chosen
so small that 
\beq
b_0 \le b/2     \label{ST345}
\eeq
 then $Z$ takes $\P_{T,b}^a$ into itself.
\end{lemma}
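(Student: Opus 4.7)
The plan is to decompose $Z(C)(t) = e^{t\Delta}C_0 + w(t)$, where $w(t) = \int_0^t e^{(t-s)\Delta} X(C(s))\, ds$ is exactly the map studied in Lemma \ref{lemce6a}. Corollary \ref{corcefree} already tells us that the free-propagation term lies in $\P_T^a$, so the task reduces to verifying the three path-space conditions for $w$ and then assembling the bound on $|Z(C)|_T$ via the triangle inequality in $H_1$.

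For \eref{ST342} I would appeal directly to Lemma \ref{lemce6a}. The two continuity statements \eref{ST319c} give continuity of $w$ into $H_a$ on $[0,T]$ and into $H_1$ on $(0,T]$, and since $w(0)=0$ we get $Z(C)(0)=C_0$, which is condition \textit{i.} of Notation \ref{notpatha}. For condition \textit{ii.}, combine the free-propagation estimate \eref{ST449a} with the pointwise bound \eref{ST320a}. Since $a \ge 1/2$, both exponents $a-1/2$ and $(a-1/2)/2$ in \eref{ST320a} are nonnegative, so for $t \le 1$ the right-hand side is controlled by $(|C|_t^2+|C|_t^3)\,C_{50a}$, which tends to $0$ as $t \downarrow 0$ by \eref{ST417a}. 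Squaring shows $t^{1-a}\|w(t)\|_{H_1}^2 \to 0$, and combining with the analogous statement for $e^{t\Delta}C_0$ gives \eref{ST342}.

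For the quantitative bound \eref{ST343}, the triangle inequality in $H_1$ yields
$$t^{(1-a)/2}\|Z(C)(t)\|_{H_1} \;\le\; t^{(1-a)/2}\|e^{t\Delta}C_0\|_{H_1} + t^{(1-a)/2}\|w(t)\|_{H_1}.$$
Taking the supremum over $0 < t \le T$, the first term contributes $b_0$ by definition of $b_0$, while \eref{ST320a} bounds the second by $(T^{a-1/2}|C|_T^3 + T^{(a-1/2)/2}|C|_T^2)\,C_{50a}$. Under the hypotheses $T \le 1$ and $|C|_T \le b$, each of $T^{a-1/2}$ and $T^{(a-1/2)/2}$ is at most $1$, so this second contribution is bounded by $(b^2+b^3)C_{50a}$, giving \eref{ST343}. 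Substituting the smallness assumptions \eref{ST344} and \eref{ST345} yields $|Z(C)|_T \le b/2 + b/2 = b$, whence $Z$ maps $\P_{T,b}^a$ into itself. Note that $b_0$ depends on $T$, but $b_0 \to 0$ as $T \downarrow 0$ by \eref{ST449a}, so \eref{ST345} can indeed be satisfied by choosing $T$ small.

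There is no serious obstacle once Lemma \ref{lemce6a} is in hand; the only conceptually delicate point is the critical case $a = 1/2$, where the factors $t^{a-1/2}$ and $t^{(a-1/2)/2}$ in \eref{ST320a} collapse to $1$ and cannot by themselves force the right-hand side to vanish as $t \downarrow 0$. What rescues the argument is the qualitative vanishing condition \textit{ii.} built into the definition of $\P_T^a$, which supplies $|C|_t \to 0$ and thereby plays the role that positive powers of $t$ would play for $a > 1/2$ (cf.\ Remark \ref{phil2} and Notation \ref{notb}).
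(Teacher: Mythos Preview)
Your proof is correct and follows essentially the same route as the paper's: decompose $Z(C)=e^{t\Delta}C_0+w(t)$, invoke Corollary \ref{corcefree} for the free term and Lemma \ref{lemce6a} (in particular \eref{ST320a}) for $w$, then take the supremum in $H_1$ to obtain \eref{ST343}. Your closing remark on the critical case $a=1/2$ matches the paper's own observation that in that case the vanishing of $|C|_t$ from condition~\textit{ii.} is what forces $t^{(1-a)/2}\|w(t)\|_{H_1}\to 0$.
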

               \begin{proof}  By Corollary \ref{corcefree} the first term in \eref{ST341} lies in $\P_T^a$.
 By Lemma \ref{lemce6a} the second term in \eref{ST341} defines a continuous function
 on $[0, T]$ into $H_{a}$ and a continuous function on $(0,T]$ into $H_1$. Further,
 \eref{ST320a}  shows that condition \eref{ST413a} holds for the second term because
 the factors $|C|_t $ go to zero as $t\downarrow 0$ in accordance with
 \eref{ST417a}. This proves \eref{ST342}.

 It is worth noting 
 a distinction between $a = 1/2$ and $a > 1/2$ that will recur often:  
 For $a = 1/2$ the right side of \eref{ST320a} goes to zero as $t\downarrow 0$ only because
 $|C|_t \rightarrow 0$,  which is built into the definition of $\P_T^a$.  For $a > 1/2$ the
  two strictly positive powers of $t$ on the right side of \eref{ST320a}  contribute further to the
   decay as $t \downarrow 0$.

In view of the definition \eref{ST415a}, the inequality \eref{ST343} follows
 from \eref{ST320a} (with $0 \le t \le T \le 1$)  and the definition of $b_0$. 
        Now if $b$ is chosen so small that  \eref{ST344} holds   then
 \eref{ST343} shows  that $|Z(C)|_T \le b_0 +(b/2)$. 
 Further, \eref{ST449a} 
 shows that we can choose
 $T>0$ so small that  \eref{ST345} holds.  Thus for these values of $b$ and $T$ we find
 $|Z(C)|_T \le b$. Therefore $Z$  takes  $\P_{T,b}^a$ into itself.
\end{proof}

          \begin{lemma}\label{lemce7} $Z$ is a contraction on $\P_{T,b}^a$ for  $b$ and $T$
 sufficiently small.
\end{lemma}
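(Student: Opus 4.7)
The plan is to mimic the derivation of Lemmas \ref{lemce2a} and \ref{lemce6a} but applied to the difference. Since the first term of $Z$ does not depend on $C$, one has
\[
Z(C_1)(t) - Z(C_2)(t) = \int_0^t e^{(t-s)\Delta}\bigl(X(C_1(s)) - X(C_2(s))\bigr)\,ds.
\]
I would decompose $X(C_1) - X(C_2)$ by the standard telescoping: write
\[
C_1^3 - C_2^3 = (C_1 - C_2)\cdot C_1 \cdot C_1 + C_2\cdot (C_1 - C_2)\cdot C_1 + C_2\cdot C_2\cdot (C_1 - C_2),
\]
and
\[
C_1\cdot \partial C_1 - C_2\cdot \partial C_2 = (C_1 - C_2)\cdot \partial C_1 + C_2 \cdot \partial(C_1 - C_2).
\]
Each summand is now a trilinear or bilinear expression in $\{C_1, C_2, C_1 - C_2\}$ with exactly one factor equal to $C_1 - C_2$. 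The same Hölder-Sobolev arithmetic used in \eref{ce11} and \eref{ce12} then yields
\[
\|C_1(s)^3 - C_2(s)^3\|_2 \le c^2\kappa_6^3\bigl(\|C_1(s)\|_{H_1}^2 + \|C_1(s)\|_{H_1}\|C_2(s)\|_{H_1} + \|C_2(s)\|_{H_1}^2\bigr)\|(C_1 - C_2)(s)\|_{H_1},
\]
and analogously for the derivative term bounded in $L^{3/2}$. Using $\|C_j(s)\|_{H_1} \le s^{(a-1)/2}|C_j|_t$ and the same control for $C_1 - C_2$ reduces each term to the form $s^{3(a-1)/2}$ (cubic) or $s^{a-1}$ (quadratic) times products of $|C_j|_t$ and $|C_1 - C_2|_t$.

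Next, applying $e^{(t-s)\Delta}$ exactly as in Lemma \ref{lemce2a} (using \eref{hk0} and, for the quadratic terms, the Sobolev embedding $D^{-1/2}\colon L^{3/2} \to L^2$) gives pointwise bounds of the form $(t-s)^{-\mu} s^{-\nu}$ with the exponents already computed in \eref{ST312a}--\eref{ST315a}. Integrating in $s$ via \eref{rec519} and repeating for the $H_a$ norm, I expect to obtain exactly the analog of \eref{ST320a}--\eref{ST331a}:
\[
\mathrm{dist}\bigl(Z(C_1), Z(C_2)\bigr) \le \Bigl(T^{a-1/2}\bigl(|C_1|_T^2 + |C_1|_T|C_2|_T + |C_2|_T^2\bigr) + T^{(a-1/2)/2}\bigl(|C_1|_T + |C_2|_T\bigr)\Bigr)\, K_a\, \mathrm{dist}(C_1, C_2),
\]
where $K_a$ is a constant depending only on Sobolev constants, $c_\alpha$, $c$, and $a$. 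On $\P_{T,b}^a$ this becomes a contraction constant bounded by $(2b^2 + 2b)K_a$ when $a = 1/2$, and by $(3b^2 T^{a-1/2} + 2b T^{(a-1/2)/2}) K_a$ when $a > 1/2$.

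For $a > 1/2$ one fixes any $b$ satisfying Lemma \ref{lemce5}'s condition \eref{ST344} and then shrinks $T$, exploiting the strictly positive powers of $T$, to drive the factor below $1/2$. The real obstacle is the critical case $a = 1/2$: here the powers of $T$ collapse to $T^0 = 1$, so shrinking $T$ alone cannot produce a contraction — this is exactly the failure of the usual mild-solution contraction argument alluded to in the introduction. The rescue is that one is still free to shrink $b$: choose $b$ so small that simultaneously $(b^2 + b^3) C_{50a} \le b/2$ (from \eref{ST344}) and $(2b^2 + 2b) K_{1/2} \le 1/2$, and then pick $T$ small enough so that $b_0 \le b/2$, which is possible by \eref{ST449a}. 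This shrinking of $b$ (rather than of $T$) is the critical-index-specific contraction mechanism that the path space $\P_T^a$ was designed to accommodate via condition \eref{ST417a}.
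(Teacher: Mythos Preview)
Your proposal is correct and takes essentially the same approach as the paper: the paper's invocation of ``polarization of \eref{ST320a} and \eref{ST331a}'' is precisely the telescoping of $X(C_1)-X(C_2)$ that you write out explicitly, leading to the same contraction factor $(2b+3b^2)$ times a universal constant, and the same resolution of the critical case $a=1/2$ by shrinking $b$ first and then $T$. (Minor arithmetic: on $\P_{T,b}^a$ the cubic sum $|C_1|_T^2+|C_1|_T|C_2|_T+|C_2|_T^2$ is bounded by $3b^2$, not $2b^2$, but this is harmless.)
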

              \begin{proof} If $C_j \in \P_{T,b}^a$ for $j = 1,2$ then, for $0 < t \le T \le 1$,  
   \begin{align}
   t^{(1-a)/2} \|& Z(C_1)(t) - Z(C_2)(t)\|_{H_1}     \notag \\
     &\le
     t^{(1-a)/2} \int_0^t\|e^{(t-s) \Delta} \{X(C_1(s)) - X(C_2(s))\}\|_{H_1} ds \notag\\
     &\le  |C_1 -C_2|_T (2b +3b^2)C_{50a}       \label{ST350}
\end{align}   
by polarization of \eref{ST320a} (with $t = T$ in that inequality.) 
Similarly, by polarizing \eref{ST331a} we find, for $0\le t \le T \le 1$,
\begin{align}
 \|& Z(C_1)(t) - Z(C_2)(t)\|_{H_{a}} \le |C_1 - C_2|_T (2b + 3b^2)C_{51a}.     \label{ST351}
 \end{align}
 Choose  $b$ so small that   not only \eref{ST344} holds, but also  
  \beq
 (2b+3b^2) \max (C_{50a}, C_{51a}) \le 1/4.  \label{ST352}
 \eeq
 Since $C_{50a}$ and $C_{51a}$ are independent of $C_0$ and $T$,
  so is the size restriction  on $b$. 
       As we saw in Lemma \ref{lemce5}, for our fixed $C_0 \in H_{a}$,       
      we can choose $T$ so small that \eref{ST345} holds.  
  For such choices of $b$ and $T$,   $Z$ takes $\P_{T,b}^a$ into itself by 
 Lemma  \ref{lemce5} and 
 \begin{align}
 dist(Z(C_1), Z(C_2)) &= \sup_{0\le t \le T} \|Z(C_1)(t) - Z(C_2)(t)\|_{H_a}  \notag\\
 &  + \sup_{0 < t \le T}  t^{(1-a)/2}\|Z(C_1)(t) - Z(C_2)(t)\|_{H_1} \notag\\
 &\le (1/4) |C_1 - C_2|_T + (1/4) |C_1-C_2|_T \notag\\
& \le (1/2) dist(C_1, C_2)
 \end{align}
  by   \eref{ST350} and \eref{ST351}.
 Thus $Z$  is a contraction on $\P_{T,b}^a$. 
 \end{proof}

\begin{remark} {\rm  The fact that  
  $\sup_{0<t\le T} \|C_1(t)  - C_2(t)\|_{H_{a}}$
  does not enter into  the right sides of \eref{ST350} or \eref{ST351}
  suggests that, in some sense, the behavior \eref{ST413a}, of $\|C(s)\|_{H_1}$  near $ s = 0$, controls $\|C(t)\|_{H_a}$.
  We will see  strong forms of this in the papers \cite{G71} and \cite{G72}.
}
\end{remark}

\bigskip
\noindent
\begin{proof}[Proof of Theorem \ref{thmmild}, Part {\it i.})] The proof is an immediate consequence
of Lemma \ref{lemce7}. 
\end{proof}

\begin{remark}\label{phil5} {\rm (Dependence of $T$ on $C_0$) 
The time $T$  that we have produced in Theorem \ref{thmmild} 
depends on $C_0$ itself, and not just on $\|C_0\|_{H_{a}}$, because the strong limit
in \eref{ST449a} cannot be replaced by a limit in operator norm.
Indeed,  \eref{ST449a} asserts  that 
the operator function $t\mapsto (t^{(1-a)/2} e^{t\Delta}:H_a \rightarrow H_1)$ 
goes to zero  strongly as $t\downarrow 0$. 
 One can verify with the help of the spectral theorem  that it does
  not go to zero in operator norm. 
}
\end{remark}

\begin{remark}\label{usual}{\rm   
 Typically, a proof of existence and uniqueness of solutions
 for the integral equation   \eref{ST25} proceeds by establishing estimates 
 for the non-linear operator $Z$ in \eref{ST341} of the form
 \beq
\| Z(C_1(\cdot)) - Z(C_2(\cdot))\|\le const. T^\alpha \| C_1(\cdot) - C_2(\cdot)\|, \label{ST402}
\eeq
for some $\alpha > 0$ and some norm on   a Banach space containing the
freely propagated term $e^{t\Delta}C_0$ in \eref{ST25}.
One need only take $T$ small to conclude that $Z$ is a contraction. 
Typical estimation methods  for establishing \eref{ST402} in some contexts can be
  found, for example, in  Taylor, \cite[page 273]{Tay3}.  
In our context such a contraction proof  works in case $ a > 1/2$. 
Indeed polarization of \eref{ST320a} and \eref{ST331a} shows  that for $0 < t \le T \le 1$ one
can include a factor  $T^{\frac{a -(1/2)}{2}}$ on the right hand sides
 of \eref{ST350} and \eref{ST351}.  Thus \eref{ST402} holds with  $\alpha =(a - (1/2))/2$.
Since $\alpha >0$ when $a > 1/2$ one could proceed
 in this case in the usual way without having to rely on the fact that 
 $t^{(1-a)/2} \|C(t)\|_{H_1}$ is not only 
bounded, but also goes to zero as $t \downarrow 0$, as was assumed in \eref{ST413a}. 
In case $a = 1/2$ one has $\alpha =0$
and the preceding standard technique for proving contraction fails.
The requirement \eref{ST413a} then becomes essential in the choice of the path space $\P_T^a$.
 It is not clear whether this distinction in techniques  for $a > 1/2$ or $a = 1/2$ 
 is an intrinsic feature of criticality or an artifact of our choice of metric space $\P_T^a$.
   We will see a similar dichotomy in dealing with finite action in the next subsection.
}
\end{remark}

\subsection{$C(\cdot)$ has finite action} 
\label{secfinact}

\begin{theorem}\label{thmfa}
 Let $1/2 \le a <1$. Suppose that $C_0 \in H_a$.
Let $C(\cdot)$ be the solution to the integral equation \eref{ST25} produced in
 Theorem \ref{thmmild}, Part i.$)$.

\noindent 
If $a > 1/2$  then 
\beq
\int_0^T s^{-a} \| C(s)\|_{H_1}^2 ds < \infty \ \   \ \ \  \text{for sufficiently small}\ T .  \label{ce100a}
\eeq
If $a = 1/2$  and $ \|C_0\|_{H_{1/2}}$ is sufficiently small then
\beq
\int_0^T s^{-1/2} \| C(s)\|_{H_1}^2 ds < \infty \ \   \ \ \  \text{for sufficiently small}\ T .\label{ce100}
\eeq
\end{theorem}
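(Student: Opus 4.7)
The plan is to decompose the solution as $C(t) = e^{t\Delta}C_0 + w(t)$ with $w(t) = \int_0^t e^{(t-s)\Delta}X(C(s))\,ds$, and treat the two cases separately. The contribution of the free propagator is immediate: Lemma \ref{freeesta1} gives
\beq
\int_0^T s^{-a}\|e^{s\Delta}C_0\|_{H_1}^2\,ds \le e^{2T}\gamma_a^2\|C_0\|_{H_a}^2,                  \notag
\eeq
so in both cases it suffices to bound $\int_0^T s^{-a}\|w(s)\|_{H_1}^2\,ds$.

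For the supercritical case $1/2<a<1$, the pointwise bound \eref{ST320a} (dividing by $t^{(1-a)/2}$ and squaring) yields
\beq
\|w(t)\|_{H_1}^2 \le 2\,C_{50a}^2\,\bigl(t^{(3a-3)/2}|C|_t^6 + t^{(a-1)/2+(a-1/2)}|C|_t^4\bigr),    \notag
\eeq
after expanding the square and absorbing the factor $t^{a-1}$. Multiplying by $s^{-a}$ gives an integrand dominated by a constant times $s^{2a-2}|C|_s^6 + s^{a-3/2}|C|_s^4$. Since $2a-2>-1$ and $a-3/2>-1$ strictly when $a>1/2$, and since $|C|_s$ is bounded by $|C|_T=b$ from Notation \ref{notb}, the integral over $(0,T)$ is finite. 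This takes care of \eref{ce100a}.

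For the critical case $a=1/2$, this direct estimate breaks down: both exponents degenerate to $-1$ and no extra room remains. The idea is instead to run a fixed point argument in the augmented norm
\beq
\|\!|C|\!\|_T^2 \;:=\; |C|_T^2 + \int_0^T s^{-1/2}\|C(s)\|_{H_1}^2\,ds.                     \notag
\eeq
Existence of a mild solution with $\|\!|C|\!\|_T<\infty$ is established by showing that $Z$ defined in \eref{ST341} is a contraction on a small closed ball in $\P_{T,b}^{1/2}$ intersected with $\{\|\!|C|\!\|_T\le \rho\}$, for $\rho$ and $b$ small. The free-propagation contribution is controlled by \eref{ST450a} with $a=1/2$. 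For the nonlinear contribution to $\int_0^T s^{-1/2}\|w(s)\|_{H_1}^2 ds$, I would insert the pointwise bounds \eref{ST312a} and \eref{ST314a}, apply Minkowski's integral inequality to move the $L^2(s^{-1/2}ds)$ norm inside, and then estimate the resulting weighted convolution operators with kernels $(t-s)^{-1/2}$ and $(t-s)^{-3/4}$ via Hardy--Littlewood--Sobolev style bounds on $L^2((0,T),s^{-1/2}ds)$. Each nonlinear piece then contributes a factor $|C|_T \cdot \|\!|C|\!\|_T$ or $|C|_T^2 \cdot \|\!|C|\!\|_T$, which can be absorbed by choosing $\|C_0\|_{H_{1/2}}$, hence $b$ and $\rho$, sufficiently small. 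Uniqueness of the fixed point in $\P_{T,b}^{1/2}$ forces this refined solution to coincide with the one from Theorem \ref{thmmild}, yielding \eref{ce100}.

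The main obstacle is the $a=1/2$ case: the estimates are exactly borderline, so one cannot obtain finite action by plugging an apriori bound into the pointwise estimates of Lemma \ref{lemce6a}. The essential point is that the smallness of $\|C_0\|_{H_{1/2}}$ must be used twice -- once to keep $|C|_T$ small (as already used in Theorem \ref{thmmild}, Part i.), and again to close a self-improving bound that couples the finite-action integral to itself through the quadratic and cubic nonlinearities. Verifying the continuity of the weighted convolution operators above and tracking the constants carefully is where all the work lies.
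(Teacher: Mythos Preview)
Your treatment of the supercritical case $a>1/2$ is correct and in fact more direct than the paper's: the paper runs its abstract action machinery (Theorem~\ref{thmactint} and Lemmas~\ref{actint2a}--\ref{actint5a}) uniformly in $a$, but as you observe, for $a>1/2$ the pointwise bound \eref{ST320a} already makes $s^{-a}\|w(s)\|_{H_1}^2$ integrable without any further work.

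For $a=1/2$ your overall strategy---a self-improving estimate closed by smallness---is the right one, but there is a genuine gap in the implementation you describe. The bounds \eref{ST312a} and \eref{ST314a} are already expressed purely in terms of $|C|_t$; all the $\|C(s)\|_{H_1}$ information has been absorbed into that sup quantity. Consequently no weighted convolution bound applied after the fact can produce a factor of $\|\!|C|\!\|_T$ on the right: you would get $|C|_T^4$ or $|C|_T^6$ times $\int_0^T t^{-1}\,dt$, which diverges. To make your idea work you must go back one step, to estimates like \eref{ce12} or \eref{ce242a}--\eref{ce243a}, which keep a single factor of $\|C(s)\|_{H_1}$ visible and absorb the remaining factors into something that stays bounded (an $L^3$ or $L^{q_a}$ norm of $C(s)$). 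Only then can a weighted-$L^2$ bound on the resulting convolution operator close the loop.

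This is exactly what the paper does, but more cleanly than a redo of the fixed point. The paper works a posteriori on the already-constructed solution. It sets $\beta_a=\sup_s\|C(s)\|_{q_a}$ and shows, via the abstract estimate Theorem~\ref{thmactint} (a Schwarz-like inequality plus spectral theorem), that $\int_0^T t^{-b}\|w(t)\|_{H_1}^2\,dt \le \mu_a^2 \int_0^T s^{-b}\|C(s)\|_{H_1}^2\,ds$ with $\mu_a^2 = c_7(T^{a-1/2}\beta_a^2)(T^{a-1/2}\beta_a^2+1)$. At $a=1/2$ smallness of $\|C_0\|_{H_{1/2}}$ and then of $T$ makes $\beta_{1/2}$ small, hence $\mu_{1/2}<1$; one then subtracts. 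There is one further subtlety you would also need: the subtraction is illegal until one knows the integral is finite, so the paper first runs the argument with exponent $b<a$ (where finiteness comes from $C\in\P_T^a$) and then lets $b\uparrow a$ by monotone convergence.
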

The proof will be developed in the next two subsections.

\subsubsection{Abstract action estimates } \label{secae}

We need to make an estimate of the integral term in \eref{ST25} similar to the 
estimate \eref{ST450a} for the freely propagated term.
We will be able to avoid using heat kernel estimates in favor of just the spectral theorem and simple Sobolev inequalities with the help of the following theorem. 
      The parameters $\alpha, \mu, b$ in the theorem  will be chosen to
       fit our various needs   in this paper and its sequel. The operator $L$ of interest to us
       will be $1 -\Delta$ on forms.

          \begin{theorem} \label{thmactint} Let $L$ be a non-negative
  self-adjoint operator on a Hilbert space $\Hc$. Suppose that $\alpha, \mu, b$ are real numbers
  such that
  \begin{align}
 &0 \le \alpha \le 1,                                   \label{ce300}\\
 &0 \le \mu \le b < 1,                                                    \label{ce302}\\
 &\delta \equiv 1-\alpha - \mu \ge0.                       \label{ce301}
  \end{align}
  Then there is a constant $C_{\alpha,\mu}$, depending only on $\alpha$ and $\mu$,
   such that $C_{\alpha,0} \le 1$ and such that for $0 < T < \infty$ and for any measurable function $g:(0,T)\rightarrow \Hc$ there holds
  \begin{align}
 \int_0^T t^{-b} \Big\|\int_0^t s^{-\mu} L^\alpha e^{-(t-s)L} g(s) ds\Big\|^2 dt 
 \le T^{2\delta}\int_0^T s^{-b} \| g(s)\|^2 ds \cdot  C_{\alpha,\mu} .           \label{ce303}
 \end{align}  
 In particular, for $\mu =0$, there holds
 \begin{align}
 \int_0^T t^{-b} \Big\|\int_0^t  L^{ \alpha} e^{-(t-s)L} g(s) ds\Big\|^2 dt 
 \le T^{2(1-\alpha)}\int_0^T s^{-b} \| g(s)\|^2 ds  \ \ \text{for}\ \ 0\le b <1.      \label{ce303b}
 \end{align}
          \end{theorem}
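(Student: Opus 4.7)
My plan is to reduce the operator-valued inequality \eref{ce303} to a scalar weighted Hardy-type inequality via the spectral theorem, and then prove the scalar inequality by a symmetric Cauchy-Schwarz split. Applying Minkowski's inequality for integrals together with the operator-norm bound \eref{hk0} gives
\begin{align*}
\Big\|\int_0^t s^{-\mu} L^\alpha e^{-(t-s)L} g(s)\,ds\Big\| \le c_\alpha \int_0^t (t-s)^{-\alpha} s^{-\mu}\|g(s)\|\,ds.
\end{align*}
Setting $f(s) := \|g(s)\|$, it thus suffices to prove the scalar inequality
\begin{align*}
\int_0^T t^{-b}\Big(\int_0^t (t-s)^{-\alpha} s^{-\mu} f(s)\,ds\Big)^2 dt \le T^{2\delta}\, C'_{\alpha,\mu} \int_0^T s^{-b} f(s)^2\,ds.
\end{align*}

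For the scalar inequality, assuming first that $\alpha < 1$, I would apply Cauchy-Schwarz to the inner integral in the symmetric form $(\int K f)^2 \le (\int K)\cdot(\int K f^2)$ with kernel $K(t,s) = (t-s)^{-\alpha} s^{-\mu}$. The first factor evaluates to $t^{1-\alpha-\mu} B(1-\alpha, 1-\mu)$ by the Beta identity (finite since $\alpha,\mu < 1$). Multiplying by $t^{-b}$, exchanging the order of integration, and substituting $t = s\tau$ in the remaining $t$-integral converts the left-hand side into
\begin{align*}
B(1-\alpha,1-\mu)\int_0^T s^{-\mu} f(s)^2 \cdot s^{2\delta+\mu-b} \int_1^{T/s} \tau^{1-b-\alpha-\mu}(\tau-1)^{-\alpha}\,d\tau\, ds.
\end{align*}
A short case analysis on the sign of $2-b-2\alpha-\mu$ (which controls integrability of the $\tau$-integrand at infinity) bounds the innermost integral uniformly by a constant depending only on $\alpha,\mu$ in one regime, and by a factor $(T/s)^{2-b-2\alpha-\mu}$ in the other; in either regime the net $s$-factor is $s^{2\delta + \mu - b}$ times a bounded factor in $T$. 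Using the hypothesis $\mu \le b$ together with $s \le T$ then yields $s^{-\mu} \le T^{b-\mu} s^{-b}$, which converts the right-hand weight to $s^{-b}$ and absorbs all excess powers of $T$ into exactly $T^{2\delta}$.

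The main obstacle is the endpoint $\alpha = 1$, which under the constraint $\delta \ge 0$ forces $\mu = 0$ and $\delta = 0$: the kernel $(t-s)^{-1}$ is no longer integrable and the Beta factor $B(0,1)$ diverges, so the argument above breaks down. Here I would fall back on parabolic maximal regularity for the self-adjoint operator $L \ge 0$, which in the Hilbert-space setting follows from Plancherel in $t$ applied to $g$ extended by zero outside $[0,T]$ together with the spectral bound $|\lambda/(\lambda+i\xi)| \le 1$; the weighted $L^2(t^{-b}dt)$ version can then be obtained by interpolation with the unweighted bound. The sharper claim $C_{\alpha,0} \le 1$ in \eref{ce303b} similarly requires bypassing the crude Cauchy-Schwarz, since the latter only produces the Beta factor $B(1-\alpha,1) = 1/(1-\alpha) > 1$; for this I would again work spectrally, reducing to a scalar convolution inequality for each spectral parameter $\lambda$ and exploiting the exact identity $\int_0^t\lambda^{2\alpha} e^{-2(t-s)\lambda}\,ds = \tfrac12 \lambda^{2\alpha-1}(1-e^{-2t\lambda})$ to avoid Beta factors altogether.
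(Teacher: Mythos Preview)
Your opening step is the crucial misstep: applying Minkowski and the operator bound \eref{hk0} replaces $\Big\|\int_0^t s^{-\mu} L^\alpha e^{-(t-s)L}\,ds\Big\|$ by $\int_0^t s^{-\mu}\|L^\alpha e^{-(t-s)L}\|\,ds$, and the paper explicitly flags this as the step that fails (see the remark after Lemma~\ref{lemai1}). The point of the paper's argument is to keep the operator structure intact: with $H(s)=s^{-\mu}L^\alpha e^{-(t-s)L}$ one has, by the spectral theorem, $\int_0^t H(s)\,ds = $ functional calculus of $\lambda\mapsto\int_0^t s^{-\mu}\lambda^\alpha e^{-(t-s)\lambda}\,ds$, and Lemma~\ref{lemai2} bounds this by $t^{\delta}(1-\mu)^{\alpha-1}$ uniformly in $\lambda$, even at $\alpha=1$ (where the integral is just $1-e^{-t\lambda}\le 1$). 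The Schwarz-like inequality of Lemma~\ref{lemai1} then gives $\|\int H g\|^2\le \|\int H\|\cdot\int(Hg,g)$, and a second spectral estimate (Lemma~\ref{ai4}) handles the dual $t$-integral after Fubini. This yields the clean constant $C_{\alpha,0}\le 1$ without any endpoint patching.

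Your fallback for $\alpha=1$ via Plancherel-based maximal regularity plus ``interpolation with the unweighted bound'' is not a complete argument: you have only one endpoint (the unweighted $b=0$ case), so there is nothing to interpolate against, and the weight $t^{-b}$ requires either an $A_2$-weighted maximal regularity theorem or a direct spectral computation that you have not supplied. Likewise, your proposed route to $C_{\alpha,0}\le 1$ via the identity for $\int_0^t\lambda^{2\alpha}e^{-2(t-s)\lambda}\,ds$ is on the right track spectrally but does not by itself control the double integral in $s$ and $t$; you would still need something like Lemmas~\ref{lemai1} and \ref{ai4} to close it. In short, the paper's operator-Schwarz approach is not just cleaner but is what makes the theorem go through uniformly in $\alpha\in[0,1]$ with the stated constant.
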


  The proof depends on the following lemmas.

\begin{lemma} \label{lemai1} $($Schwarz-like inequality$)$  Suppose that $(0, t) \ni s \mapsto H(s)$ is strongly continuous function into a set  of commuting, bounded, non-negative Hermitian  operators on a Hilbert space $\cal H$. 
If $ g:(0, T)\rightarrow \cal H$
is measurable then
\begin{align}
\Big\| \int_0^t H(s) g(s) ds \Big\|^2 
         \le \Big\| \int_0^t H(s) ds \Big\|\  \int_0^t ( H(s) g(s), g(s)) ds              \label{ai10}
\end{align}
\end{lemma}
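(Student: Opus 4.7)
The plan is to reduce the inequality to the scalar Cauchy--Schwarz inequality applied to the $L^2([0,t])$ pairing after splitting each $H(s)$ into its square roots. Because each $H(s)$ is bounded non-negative self-adjoint, the functional calculus yields $H(s)^{1/2}$, another bounded non-negative Hermitian operator, with $H(s) = H(s)^{1/2} H(s)^{1/2}$ and $(H(s)g(s), g(s)) = \|H(s)^{1/2} g(s)\|^2$.

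First I would use the duality formula $\|v\| = \sup_{\|\phi\|=1} |(\phi, v)|$ to write
\begin{equation*}
\Big\| \int_0^t H(s) g(s)\, ds \Big\| = \sup_{\|\phi\|=1} \Big| \int_0^t (\phi, H(s) g(s))\, ds \Big|.
\end{equation*}
Inside the integral, rewrite the integrand as $(\phi, H(s) g(s)) = (H(s)^{1/2}\phi,\, H(s)^{1/2} g(s))$ and apply the Cauchy--Schwarz inequality for the scalar $L^2([0,t], ds)$ inner product to obtain
\begin{equation*}
\Big| \int_0^t (H(s)^{1/2}\phi,\, H(s)^{1/2} g(s))\, ds \Big|^2 \le \Big(\int_0^t \|H(s)^{1/2}\phi\|^2 ds\Big)\Big(\int_0^t \|H(s)^{1/2} g(s)\|^2 ds\Big).
\end{equation*}

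The second factor equals $\int_0^t (H(s) g(s), g(s))\, ds$, which is exactly the second factor on the right side of \eqref{ai10}. For the first factor, note that $\|H(s)^{1/2}\phi\|^2 = (H(s)\phi, \phi)$, so by Fubini (justified by non-negativity of the integrand and strong continuity of $H$) one has $\int_0^t \|H(s)^{1/2}\phi\|^2 ds = \bigl(\int_0^t H(s)\, ds\,\phi,\, \phi\bigr) \le \|\int_0^t H(s)\, ds\|\cdot \|\phi\|^2$. Taking the supremum over $\|\phi\|=1$ and squaring then yields the claim.

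I do not expect a major obstacle here. The only subtleties are bookkeeping: verifying that the strong continuity hypothesis on $s\mapsto H(s)$ makes $s\mapsto (\phi, H(s) g(s))$ and $s\mapsto (H(s) g(s), g(s))$ measurable and justifies interchanging $\int_0^t$ with the inner product (both follow from strong continuity plus measurability of $g$). The commutativity hypothesis is not actually needed for this particular estimate, but it is harmlessly included, presumably because in the intended application $H(s)$ arises from spectral calculus of a single operator $L$.
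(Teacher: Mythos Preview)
Your proof is correct and takes a genuinely different route from the paper's. The paper expands $\|\int_0^t H(s)g(s)\,ds\|^2$ as a double integral over $[0,t]^2$, uses the commutativity hypothesis to rewrite $(F(s_1)^2 g(s_1), F(s_2)^2 g(s_2)) = (F(s_2)F(s_1) g(s_1), F(s_1)F(s_2) g(s_2))$ with $F(s)=H(s)^{1/2}$, applies the elementary bound $|(a,b)|\le \tfrac12(\|a\|^2+\|b\|^2)$, and then collapses one integration against $\|\int_0^t F(s)^2\,ds\|$. Your duality argument, by contrast, tests against a unit vector $\phi$, splits $H(s)=H(s)^{1/2}H(s)^{1/2}$, and applies Cauchy--Schwarz in $L^2([0,t];\mathcal H)$ to the pair $s\mapsto H(s)^{1/2}\phi$ and $s\mapsto H(s)^{1/2}g(s)$. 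Your observation that commutativity is unnecessary is correct: the paper's proof genuinely uses it at the rearrangement step, whereas yours does not. What the paper's approach buys is a slightly more self-contained computation without invoking the vector-valued $L^2$ space or the duality formula; what yours buys is a shorter argument and a strictly more general statement. One small wording quibble: the Cauchy--Schwarz you apply is really in $L^2([0,t];\mathcal H)$ (or equivalently a pointwise $\mathcal H$-Cauchy--Schwarz followed by scalar $L^2$-Cauchy--Schwarz), not in scalar $L^2([0,t])$ alone, but the displayed inequality you wrote is exactly right.
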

\begin{proof} Let $F(s) = H(s)^{1/2}$. Then 
\begin{align*}
\Big\| \int_0^t H(s) g(s) ds \Big\|^2 &= \Big\|\int_0^t  F(s)^2 g(s) ds \Big\|^2 \\
& =\int_0^t ds_1 \int_0^t ds_2 \Big( F(s_1)^2 g(s_1), F(s_2)^2 g(s_2)\Big) \\
&= \int_0^t ds_1 \int_0^t ds_2 \Big ( F(s_2)F(s_1) g(s_1), F(s_1)F(s_2) g(s_2) \Big)\\
&\le (1/2) \int_0^t ds_1 \int_0^t ds_2 \Big( \| F(s_2)F(s_1) g(s_1)\|^2 
                                    + \| F(s_1)F(s_2) g(s_2)\|^2\Big) \\
&= \int_0^t ds_1 \int_0^t ds_2  \| F(s_2)F(s_1) g(s_1)\|^2  \\
&=  \int_0^t ds_1 \int_0^t ds_2 \Big( F(s_2)^2 F(s_1) g(s_1), F(s_1) g(s_1)\Big) \\
&=   \int_0^t ds_1  \Big( \Big\{ \int_0^t ds_2F(s_2)^2\Big\} F(s_1) g(s_1), F(s_1) g(s_1)\Big) \\
&\le \Big\| \int_0^t F(s_2)^2  ds_2 \Big\|  \int_0^t ds_1 \Big(F(s_1) g(s_1),F(s_1) g(s_1)\Big).
\end{align*}
\end{proof}
\begin{remark}{\rm  A simpler and cruder proof would easily give the inequality \eref{ai10}
but with $\| H(s)\|$ under the integral in the first factor. However in the case of interest to us this integral would diverge.
}
\end{remark}

         \begin{lemma} 
\label{lemai2}  
 Under the hypotheses of  Theorem \ref{thmactint}  we have 
   \beq
  \Big\| \int_0^t s^{-\mu} L^\alpha e^{-(t-s)L} ds \Big\| \le t^\delta (1-\mu)^{\alpha -1}. \label{ce304}
  \eeq  
        \end{lemma}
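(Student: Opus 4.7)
The plan is to reduce to a scalar inequality via the spectral theorem, then extract the factor $t^\delta$ by rescaling, and finally split into two regimes according to whether $\sigma := t\lambda$ lies above or below the threshold $1-\mu$.

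Since $L$ is non-negative self-adjoint and $s\mapsto s^{-\mu} L^\alpha e^{-(t-s)L}$ consists of commuting non-negative functions of $L$, the spectral theorem yields
\[
\Big\|\int_0^t s^{-\mu} L^\alpha e^{-(t-s)L}\, ds\Big\|
= \sup_{\lambda\ge 0}\int_0^t s^{-\mu}\,\lambda^\alpha\, e^{-(t-s)\lambda}\, ds .
\]
Setting $\sigma = t\lambda$ and then changing variables successively by $u = (t-s)\lambda$ and $w = \sigma - u$ puts the inner integral into the form $t^\delta\,\phi(\sigma)$, where
\[
\phi(\sigma) \;=\; \sigma^{-\delta}\, e^{-\sigma} \int_0^\sigma w^{-\mu}\, e^{w}\, dw .
\]
It therefore suffices to show $\phi(\sigma) \le (1-\mu)^{\alpha-1}$ for every $\sigma > 0$.

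On the range $0 < \sigma \le 1 - \mu$, the crude estimate $e^w \le e^\sigma$ on $[0,\sigma]$ gives $\int_0^\sigma w^{-\mu} e^w\, dw \le e^\sigma\, \sigma^{1-\mu}/(1-\mu)$, so that
\[
\phi(\sigma) \;\le\; \frac{\sigma^{1-\mu-\delta}}{1-\mu} \;=\; \frac{\sigma^\alpha}{1-\mu} \;\le\; \frac{(1-\mu)^\alpha}{1-\mu} \;=\; (1-\mu)^{\alpha-1},
\]
where the last inequality uses $\alpha \ge 0$ together with $\sigma \le 1-\mu$. On the complementary range $\sigma > 1-\mu$, I would split $\int_0^\sigma w^{-\mu} e^w\, dw$ at the same threshold $w = 1-\mu$: on $[0, 1-\mu]$ use $e^w \le e^{1-\mu}$ to bound that piece by $e^{1-\mu}(1-\mu)^{-\mu}$, and on $[1-\mu, \sigma]$ use the monotonicity $w^{-\mu} \le (1-\mu)^{-\mu}$ to bound that piece by $(1-\mu)^{-\mu}(e^\sigma - e^{1-\mu})$. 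The two contributions sum to $(1-\mu)^{-\mu} e^\sigma$, so
\[
\phi(\sigma) \;\le\; \sigma^{-\delta} (1-\mu)^{-\mu} \;\le\; (1-\mu)^{-\delta}(1-\mu)^{-\mu} \;=\; (1-\mu)^{\alpha-1},
\]
using $\sigma \ge 1-\mu$ and $\delta \ge 0$ for the middle inequality, and $-\delta - \mu = \alpha - 1$ for the final equality.

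The only real subtlety is the choice of splitting point: both estimates become sharp precisely at $\sigma = 1-\mu$, which is what forces that to be the correct threshold in both the $\sigma$-split and the $w$-split; any other choice would widen the gap between the two regimes. The case $\mu = 0$ collapses the integral to $L^{\alpha-1}(I - e^{-tL})$, whose norm equals $t^{1-\alpha} \sup_{\sigma > 0}\sigma^{\alpha-1}(1 - e^{-\sigma}) \le t^{1-\alpha}$ by the case split $\sigma \le 1$ versus $\sigma \ge 1$, yielding the refinement $C_{\alpha,0} \le 1$ invoked in Theorem \ref{thmactint}.
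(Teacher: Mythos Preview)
Your proof is correct and follows essentially the same route as the paper: the spectral reduction to the scalar function $\phi(\sigma)=\sigma^{-\delta}e^{-\sigma}\int_0^\sigma w^{-\mu}e^{w}\,dw$, the case split at $\sigma=1-\mu$, and the estimates in each regime (including the further split of the $w$-integral at $1-\mu$ in the large-$\sigma$ case) all coincide with the paper's argument. The separate treatment of $\mu=0$ you give at the end is a nice additional remark but is already subsumed in the general bound.
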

         \begin{proof}
          By the spectral theorem we need only prove that for any $\lambda >0$ there holds
 \beq
    \int_0^t s^{-\mu} \lambda^\alpha e^{-(t-s) \lambda} ds 
                         \le    t^{1 - \alpha - \mu} (1-\mu)^{\alpha -1} .              \label{ce305}
 \eeq
    Make the substitution $r = s\lambda$ and define $\gamma = t\lambda$ to find
 \begin{align*}
  \int_0^t s^{-\mu} \lambda^\alpha e^{-(t-s) \lambda} ds
  &= e^{-\gamma}\int_0^\gamma r^{-\mu} \lambda^{\alpha +\mu -1} e^r dr \\   
  &= t^{1 -\alpha -\mu}\Big\{ \gamma^{\alpha + \mu -1} 
                                     e^{-\gamma}\int_0^\gamma r^{-\mu} e^r dr\Big\}. 
  \end{align*}
  It suffices to prove that the expression in braces is at most
   $(1-\mu)^{\alpha -1}$ for all $\gamma >0$.
  In case $\gamma \le 1-\mu$  the expression in braces is at most
  \begin{align*}
   \gamma^{\alpha + \mu -1} \int_0^\gamma r^{-\mu}  dr 
           = \gamma^{\alpha + \mu -1} \frac{\gamma^{1-\mu}}{ 1 -\mu} = \frac{\gamma^\alpha}{1-\mu}
           \le \frac{(1-\mu)^\alpha }{1-\mu}.
   \end{align*}
   This proves \eref{ce305} when  $t\lambda \le 1-\mu$.   
       In case $\gamma > 1-\mu$ the expression in braces is at most, considering that $\alpha + \mu -1 \le 0$, 
       \begin{align*}
   (1-\mu&)^{\alpha + \mu -1} e^{-\gamma}\int_0^\gamma r^{-\mu} e^r dr \\
   &\le  (1-\mu)^{\alpha + \mu -1} e^{-\gamma}\Big(\int_0^{1-\mu} r^{-\mu}dr\ e^{1-\mu} 
   + (1-\mu)^{-\mu} \int_{1-\mu} ^\gamma e^{r} dr \Big) \\
   &=(1-\mu)^{\alpha + \mu -1} e^{-\gamma}\Big((1-\mu)^{-\mu}\ e^{1-\mu}  
    + (1-\mu)^{-\mu}(e^\gamma - e^{1-\mu} )\Big) \\
    &= (1-\mu)^{\alpha + \mu -1}  e^{-\gamma}(1-\mu)^{-\mu}e^\gamma.
   \end{align*}
  This proves \eref{ce305} when $t\lambda > 1-\mu$.    
    \end{proof}
    
\begin{lemma} \label{ai3}
 Under the hypotheses of Theorem \ref{thmactint} we have
 \begin{align}
 \Big\| \int_0^t  s^{-\mu}L^\alpha &e^{-(t-s)L} g(s) ds\Big\|^2 \notag\\
         &\le  (1-\mu)^{\alpha -1}\, t^\delta  
                \int_0^t\Big( s^{-\mu}L^\alpha e^{-(t-s)L}g(s), g(s)\Big)ds .   \label{ce210} 
         \end{align}
\end{lemma}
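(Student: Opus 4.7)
My plan is to prove Lemma \ref{ai3} by simply combining the two preceding lemmas with the operator family $H(s) := s^{-\mu} L^\alpha e^{-(t-s)L}$ for $s \in (0,t)$. The inequality in the conclusion has exactly the shape of the Schwarz-like inequality \eqref{ai10} in Lemma \ref{lemai1}, with the constant in front being the operator norm bound supplied by Lemma \ref{lemai2}. So the work consists almost entirely of verifying that $H(\cdot)$ satisfies the hypotheses of Lemma \ref{lemai1}.

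First I would check that each $H(s)$ is a bounded, non-negative, self-adjoint operator on $\mathcal H$, and that the family $\{H(s)\}_{0<s<t}$ is commuting. Boundedness of $H(s)$ for fixed $s\in(0,t)$ follows from the spectral theorem in the style of \eqref{hk0}: $\|L^\alpha e^{-(t-s)L}\| \le c_\alpha (t-s)^{-\alpha}$, so $\|H(s)\| \le s^{-\mu} c_\alpha (t-s)^{-\alpha}$. All operators $H(s)$ are (bounded) functions of the single self-adjoint operator $L$, so they mutually commute and are self-adjoint. Non-negativity follows from $s^{-\mu}>0$, $L^\alpha \ge 0$, and $e^{-(t-s)L}\ge 0$; the product of commuting non-negative self-adjoint operators is non-negative (again by the joint spectral calculus in $L$). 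Strong continuity of $s\mapsto H(s)$ on $(0,t)$ is routine from the functional calculus.

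Having verified the hypotheses, I would apply Lemma \ref{lemai1} directly to obtain
\begin{equation*}
\Big\| \int_0^t H(s) g(s)\, ds \Big\|^2 \le \Big\| \int_0^t H(s)\, ds \Big\| \int_0^t \big( H(s) g(s), g(s)\big)\, ds.
\end{equation*}
Lemma \ref{lemai2} then gives exactly $\big\| \int_0^t H(s)\, ds \big\| \le t^\delta (1-\mu)^{\alpha-1}$, where $\delta = 1-\alpha-\mu \ge 0$ by hypothesis \eqref{ce301}. Substituting back $H(s) = s^{-\mu}L^\alpha e^{-(t-s)L}$ yields the claimed inequality \eqref{ce210}.

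There is no real obstacle here; the entire proof is a bookkeeping exercise. The only point that deserves a moment's care is making sure $H(s)$ is genuinely bounded on $(0,t)$ despite the unboundedness of $L^\alpha$: this is where the regularizing factor $e^{-(t-s)L}$ is essential, and it forces $s$ to stay strictly less than $t$, which matches the open interval of integration in Lemma \ref{lemai1}.
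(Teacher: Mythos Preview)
Your proposal is correct and follows exactly the same approach as the paper: set $H(s) = s^{-\mu} L^\alpha e^{-(t-s)L}$, apply the Schwarz-like inequality of Lemma \ref{lemai1}, and bound $\big\|\int_0^t H(s)\,ds\big\|$ via Lemma \ref{lemai2}. The paper's proof is the same two-line combination; your additional verification of the hypotheses (boundedness, commutativity, non-negativity, strong continuity) is fine and just makes explicit what the paper leaves to the reader.
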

  \begin{proof} Let $H(s) =   s^{-\mu}L^\alpha e^{-(t-s)L} , 0 < s < t$. Then \eref{ce304} shows that
  $\|\int_0^t H(s) ds \| \le  t^{\delta} (1-\mu)^{\alpha - 1}$. Insert this bound into \eref{ai10} to find 
  \eref{ce210}.
\end{proof}

  We will need also an estimate of the following integral over $(s, T)$.

        \begin{lemma}\label{ai4}  
 Under the hypotheses of Theorem \ref{thmactint} we have
 \begin{align}
 \Big\| \int_s^T t^{-b}\, t^\delta L^\alpha e^{-(t-s) L} dt\Big\| 
                    \le s^{\mu -b}\, T^{2\delta}\, C(\mu, \delta)     \label{ce320}
 \end{align}
 for some finite constant $C(\mu, \delta)$ with $C(0, \delta) \le1$. 
 In particular, for $\mu =0$, there holds
 \begin{align}
 \Big\| \int_s^T t^{-b} t^{\delta}L^\alpha e^{-(t-s) L} dt\Big\| 
                    \le s^{-b}T^{2\delta}      \ \ \text{for} \ \  0 \le b <1,             \label{ce320b} \\
   \Big\| \int_s^T t^{-b} L^\alpha e^{-(t-s) L} dt\Big\| 
                    \le s^{-b}T^{\delta}      
    \ \ \text{for} \ \  0 \le b <1,                                                \label{ce320c}
 \end{align}
 where $\delta = 1 -\alpha$.
       \end{lemma}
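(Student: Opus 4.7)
The plan is to reduce the operator inequality to a scalar estimate via the spectral theorem, and then to handle the integral in two regimes according to the sign of $\delta - \mu$. Since $L$ is non-negative self-adjoint, the spectral theorem reduces \eqref{ce320} to showing that for every $\lambda \ge 0$,
\begin{equation*}
\int_s^T t^{\delta - b}\lambda^{\alpha} e^{-(t-s)\lambda}\, dt \le s^{\mu - b} T^{2\delta} C(\mu,\delta),
\end{equation*}
the case $\lambda = 0$ being either trivial (when $\alpha > 0$) or a direct computation.

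The first step is a simple algebraic separation: since $b - \mu \ge 0$ and $t \ge s$, one has $t^{-b} \le s^{\mu - b} t^{-\mu}$, which pulls the required factor $s^{\mu - b}$ outside the integral and reduces matters to
\begin{equation*}
I(\lambda) := \int_s^T t^{\delta - \mu}\lambda^{\alpha} e^{-(t-s)\lambda}\, dt \le T^{2\delta}\, C(\mu,\delta).
\end{equation*}
When $\delta \ge \mu$, I would bound $t^{\delta - \mu} \le T^{\delta - \mu}$ and use the standard estimate $\lambda^{\alpha - 1}(1 - e^{-(T-s)\lambda}) \le T^{1-\alpha}$ (proved by comparing $(T-s)\lambda$ with $1$ and using $1 - e^{-u} \le \min(u,1)$). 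Since $\delta + \mu = 1 - \alpha$, this immediately gives $I(\lambda) \le T^{2\delta}$ with constant $1$. This sub-case also covers $\mu = 0$ in full, so the same computation directly yields the specialized bounds \eqref{ce320b} and \eqref{ce320c}, in particular the claim $C(0,\delta) \le 1$.

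The case $\delta < \mu$ is the main obstacle, because the naive bound $t^{\delta - \mu} \le s^{\delta - \mu}$ produces an $s^{\delta - \mu}$ factor whose sign of exponent is the wrong one to fold into $T^{2\delta}$. To handle it I would split $I(\lambda) = \int_s^{2s} + \int_{2s}^{T}$. On the short interval, combining $t^{\delta - \mu} \le s^{\delta - \mu}$ with $\lambda^{\alpha - 1}(1 - e^{-s\lambda}) \le s^{1-\alpha}$ gives a contribution of $s^{\delta - \mu + 1 - \alpha} = s^{2\delta} \le T^{2\delta}$. On the long interval, the inequality $t - s \ge t/2$ allows me to replace $e^{-(t-s)\lambda}$ by $e^{-t\lambda/2}$; the substitution $u = t\lambda/2$ then yields two competing bounds, namely $C_1\, \lambda^{-2\delta}$ (by extending the $u$-integral to $\infty$, whose resulting Gamma integral is finite because $\delta - \mu + 1 = 2 - \alpha - 2\mu > 0$, itself a consequence of $\alpha + \mu \le 1$ and $\mu < 1$) and $C_2\,(T\lambda)^{\alpha}\, T^{2\delta}$ (by discarding the exponential and integrating $u^{\delta - \mu}$ explicitly). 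These two bounds coincide up to constants at $\lambda = 1/T$, so their minimum is uniformly at most $\max(C_1, C_2)\, T^{2\delta}$ in $\lambda$. Assembling the two parts gives $I(\lambda) \le C(\mu,\delta)\, T^{2\delta}$, completing the proof.
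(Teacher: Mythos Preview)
Your argument is correct, but the route differs from the paper's in one simplifying observation you miss. After pulling out $s^{\mu-b}$ from $t^{-b}$, you keep the combined factor $t^{\delta-\mu}$ and are then forced into the sign dichotomy $\delta \gtrless \mu$, with the harder case requiring the dyadic split at $2s$ and two competing bounds on the tail. The paper instead also pulls out $T^{\delta}$ from $t^{\delta}$ (always legitimate because $\delta\ge 0$), leaving only $t^{-\mu}$ in the integrand. A single shift substitution $t = s+\sigma/\lambda$ together with the trivial inequality $(s+\sigma/\lambda)^{-\mu}\le(\sigma/\lambda)^{-\mu}$ then reduces everything, uniformly in the parameters, to showing that
\[
C(\mu,\delta)\;=\;\sup_{\tau>0}\,\tau^{-\delta}\int_0^\tau \sigma^{-\mu}e^{-\sigma}\,d\sigma
\]
is finite, which is immediate from $\mu<1$ and $\delta\ge 0$. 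So the paper avoids the case split and the interval decomposition entirely. On the other hand, your treatment of the case $\delta\ge\mu$ (hence of $\mu=0$) via $1-e^{-u}\le\min(u,1)$ is cleaner than the paper's separate verification that $C(0,\delta)\le 1$, and it delivers the constant $1$ directly; the paper has to argue that $\sup_{\tau>0}\tau^{-\delta}\int_0^\tau e^{-\sigma}\,d\sigma\le 1$ by a small case analysis on $\tau$. One minor omission in your write-up: the split at $2s$ tacitly assumes $2s\le T$; when $2s>T$ the ``long interval'' is empty and your short-interval bound already covers $[s,T]$, so this is harmless, but it is worth a sentence.
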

       \begin{proof}
  By the spectral theorem it suffices to show that
  \begin{align}
  \int_s^T t^{-b}\, t^\delta \lambda^\alpha e^{-(t-s)\lambda} dt 
                           \le s^{\mu- b}\, T^{2\delta}\  C(\mu, \delta)                           \label{ce321}
  \end{align}
  for some finite function $C(\cdot, \cdot)$ with $C(0, \delta) \le 1$.
  Observe that $ t^{-b} t^{\delta} = t^{\mu-b} t^\delta t^{-\mu} \le s^{\mu-b} T^\delta t^{-\mu}$
  for $s \le t \le T$ because $\mu -b \le 0$ and $\delta \ge 0$. Hence
  \begin{align}
  \int_s^T t^{-b} t^\delta \lambda^{\alpha} e^{-(t-s) \lambda} dt \le s^{\mu -b} 
  T^\delta \int_s^T t^{-\mu} \lambda^\alpha e^{-(t-s)\lambda} dt.               \label{ce322}
  \end{align}
  To estimate the last integral make the change of variables $ t = s +(\sigma/\lambda)$ in the integral to find
  \begin{align*}
  \int_s^T t^{-\mu} \lambda^\alpha e^{-(t-s)\lambda} dt 
  &= \int_0^{(T-s) \lambda} \Big(s +\frac{\sigma}{\lambda}\Big)^{-\mu} 
   \lambda^{\alpha-1} e^{-\sigma} d\sigma \\
   &\le \int_0^{T\lambda} \Big(\frac{\sigma}{\lambda}\Big)^{-\mu} 
                                               \lambda^{\alpha-1}e^{-\sigma} d\sigma \\
  &=  \int_0^{T\lambda}\sigma^{-\mu} \lambda^{-\delta} e^{-\sigma} d\sigma\\
  &= T^\delta (T\lambda)^{-\delta} \int_0^{T\lambda}  \sigma^{-\mu}e^{-\sigma} d\sigma \\
  &\le T^\delta \sup_{\tau > 0}\tau^{-\delta}\int_0^\tau \sigma^{-\mu} e^{-\sigma} d\sigma.
  \end{align*}
 It remains, therefore, only to show that the function 
 \beq
 C(\mu,\delta) \equiv  \sup_{\tau > 0}\tau^{-\delta}\int_0^\tau \sigma^{-\mu} e^{-\sigma} d\sigma
 \eeq
  is finite for the allowed values of $\mu$ and 
 $\delta$ and is at most one at $\mu =0$.  
 Since $\delta \ge 0$ and $\mu <1$
    we have 
   $$
   \limsup_{\tau\rightarrow \infty} 
   \Big( \tau^{-\delta} \int_0^\tau \sigma^{ - \mu} e^{-\sigma} d\sigma\Big)
    \le (\limsup_{\tau\rightarrow \infty} \tau^{-\delta})
                                           \int_0^\infty\sigma^{ - \mu} e^{-\sigma} d\sigma <\infty .
    $$     
 For small $\tau$ we have 
    \begin{align*}
    \tau^{-\delta} \int_0^\tau \sigma^{- \mu} e^{-\sigma} d\sigma 
    \le   \tau^{-\delta} \int_0^\tau \sigma^{ - \mu}  d\sigma   
    =   \tau^{-\delta} \tau^{1- \mu} /(1 -\mu)
    = \tau^{1-\delta- \mu}/(1-\mu),
 \end{align*}
 which is bounded for small $\tau$ because $1- \delta - \mu = \alpha \ge 0$. Thus 
 $C(\mu, \delta) < \infty$. Finally,  if $\mu = 0$ then  $\delta = 1 -\alpha$ and 
 $C(0, \delta) = \sup_{\tau > 0} \Big\{\tau^\alpha \tau^{-1}\int_0^\tau   e^{-\sigma} d\sigma \Big\}$. For $\tau\ge 1$ the expression in braces is increasing in $\alpha$
  and for  $\alpha =1$ is at most one, while for $\tau<1$ it is decreasing
   in $\alpha$ and for $\alpha =0$  is at most one. Therefore  the expression
    in braces is at most one for all $\tau>0$.         
    \end{proof}

\bigskip
\noindent
     \begin{proof}[Proof of Theorem \ref{thmactint}] By \eref{ce210}  and \eref{ce320} we have
     \begin{align*}
 \int_0^T t^{-b} &\Big\|\int_0^t s^{-\mu} L^\alpha e^{-(t-s)L} g(s) ds\Big\|^2 dt  \\
  & \le  (1-\mu)^{\alpha-1}  \int_0^T t^{-b}  t^{\delta}
                                  \int_0^t \Big(s^{-\mu} L^\alpha e^{-(t-s)L} g(s), g(s)\Big) ds dt\\
  &= (1-\mu)^{\alpha-1}  \int_0^T s^{-\mu} \int_s^T t^{\delta- b} 
                                     \Big(L^\alpha e^{-(t-s)L} g(s), g(s)\Big) dt ds \\
 & = (1-\mu)^{\alpha-1}  \int_0^T s^{-\mu}
                                \Big(\Big\{\int_s^T t^{\delta- b} L^\alpha e^{-(t-s)L}dt\Big\} g(s), g(s)\Big) ds\\
 &\le (1-\mu)^{\alpha-1}  \int_0^T s^{-\mu}
                           \Big\|\int_s^T t^{\delta- b} L^\alpha e^{-(t-s)L}dt\Big\| \|g(s)\|^2 ds \\
        &\le (1-\mu)^{\alpha-1}  \int_0^T s^{-\mu} \{s^{\mu-b} T^{2\delta} C(\mu, \delta)\}  \|g(s)\|^2 ds. 
 \end{align*}
 Thus we may take $C_{\alpha, \mu} = (1-\mu)^{\alpha -1} C(\mu, \delta)$ to arrive at 
 \eref{ce303}. Since $C(\mu, \delta) \le 1$ if $\mu  = 0$, \eref{ce303b} holds.
 \end{proof}

\subsubsection{Proof of finite action} 
 \label{secpfa}

     Action estimates for the freely propagated term in \eref{ST25} have been made
    in Lemma \ref{freeesta1} for all $a \in (0,1)$. In this section action estimates will be made
    for the integral term in \eref{ST25} for $1/2\le a <1$.

                  \begin{lemma}\label{actint2a} 
                  Define
                  \beq
                  D = (1-\Delta)^{1/2}.     \notag
                  \eeq
Let $6/5 \le p \le 2$. Define $\gamma \in [0, 1]$ by the condition
\beq
1/2 = p^{-1} - \gamma/3.                                                       \label{ce27a}
\eeq
 Let
$\kappa_p$ denote the norm of $D^{-\gamma}$ as an operator from
 $L^p(M; \L^1\otimes \kf)$ into $L^2(M;\L^1\otimes \kf)$.
By Sobolev this is finite. Let $0 < b < 1$ and let $f:(0, T) \rightarrow L^p(M;\L^1\otimes \kf)$ 
be a measurable function. Then
\begin{align}
\int_0^T t^{-b} \|\int_0^t e^{(t-s)\Delta} f(s) ds \|_{H_1}^2 dt  
\le T^{1-\gamma} \int_0^T s^{-b} \|f(s)\|_p^2 ds\cdot        (e^{2T}\kappa_p^2).    \label{ce28a}
\end{align}
\end{lemma}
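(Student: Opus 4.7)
The plan is to reduce the statement to the abstract action estimate \eqref{ce303b} of Theorem \ref{thmactint} by splitting the kernel into an $L^p \to L^2$ Sobolev factor and a spectral factor to which the abstract result applies.

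First, I would write $L = 1 - \Delta$ and $D = L^{1/2}$, so that $\|\cdot\|_{H_1} = \|D \cdot\|_2$ and $e^{t\Delta} = e^{t}\,e^{-tL}$. Given $f(s) \in L^p$, define
\begin{equation*}
g(s) = D^{-\gamma} f(s),
\end{equation*}
which lies in $L^2(M;\Lambda^1 \otimes \mathfrak{k})$ with $\|g(s)\|_2 \le \kappa_p \|f(s)\|_p$ by the definition of $\kappa_p$. Then
\begin{equation*}
D \int_0^t e^{(t-s)\Delta} f(s)\, ds
= \int_0^t e^{(t-s)} L^{\alpha} e^{-(t-s) L}\, g(s)\, ds,
\qquad \alpha := \tfrac{1+\gamma}{2}.
\end{equation*}
Since $0\le \gamma \le 1$, we have $\alpha \in [1/2, 1]$, and pulling out $e^{(t-s)} \le e^{T}$ gives
\begin{equation*}
\Big\|\int_0^t e^{(t-s)\Delta} f(s)\, ds\Big\|_{H_1}^2
\le e^{2T} \Big\|\int_0^t L^{\alpha} e^{-(t-s)L}\, g(s)\, ds\Big\|_2^2.
\end{equation*}

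Next I would apply Theorem \ref{thmactint} with $\mu = 0$, i.e.\ the special case \eqref{ce303b}. The hypotheses \eqref{ce300}--\eqref{ce301} are satisfied: $0 \le \alpha = (1+\gamma)/2 \le 1$, $\mu = 0 \le b < 1$, and $\delta = 1 - \alpha - \mu = (1-\gamma)/2 \ge 0$, giving $2\delta = 1-\gamma$. Hence
\begin{equation*}
\int_0^T t^{-b} \Big\|\int_0^t L^{\alpha} e^{-(t-s)L}\, g(s)\, ds\Big\|_2^2 dt
\le T^{1-\gamma} \int_0^T s^{-b} \|g(s)\|_2^2\, ds.
\end{equation*}
Combining this with the pointwise Sobolev bound $\|g(s)\|_2^2 \le \kappa_p^2\, \|f(s)\|_p^2$ and the $e^{2T}$ factor above yields \eqref{ce28a}.

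No real obstacle is anticipated: the only matching one needs to do is the bookkeeping $2\alpha = 1 + \gamma$ so that $D \cdot D^{\gamma} = L^\alpha$, and the absorption of the semigroup shift $e^{t\Delta} = e^t e^{-tL}$ into the constant $e^{2T}$. Everything else is direct application of the previously established abstract estimate and Sobolev embedding.
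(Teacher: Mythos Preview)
Your approach is exactly the paper's: set $L=1-\Delta$, $\alpha=(1+\gamma)/2$, convert $f$ to an $L^2$-valued $g$ via $D^{-\gamma}$, and invoke the abstract estimate \eqref{ce303b} with $\mu=0$, so that $2\delta=1-\gamma$.

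There is one step that, as written, is not correct. You cannot ``pull out $e^{(t-s)}\le e^T$'' from inside a vector-valued integral: the inequality
\[
\Big\|\int_0^t e^{(t-s)} L^{\alpha} e^{-(t-s)L} g(s)\,ds\Big\|_2^2
\;\le\; e^{2T}\,\Big\|\int_0^t L^{\alpha} e^{-(t-s)L} g(s)\,ds\Big\|_2^2
\]
is false in general because of possible cancellation on the right. The remedy is the factorization $e^{(t-s)}=e^t\cdot e^{-s}$: absorb $e^{-s}$ into $g$ by setting $g(s)=e^{-s}D^{-\gamma}f(s)$ (still $\|g(s)\|_2\le\kappa_p\|f(s)\|_p$), pull the genuinely constant-in-$s$ scalar $e^t$ outside the inner integral, and then bound $e^{2t}\le e^{2T}$ in the outer integral. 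This is precisely what the paper does, and with that adjustment your argument is complete and identical to the paper's.
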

\begin{proof} Let $g(s) = e^{-s}D^{-\gamma} f(s)$. 
    Then $\|g(s)\|_2 \le \kappa_p \| f(s)\|_p$. We are going to apply
     Theorem \ref{thmactint}     with $ L = (1 -\Delta) = D^2$, $\mu =0$ and 
     $2 \alpha = 1 +\gamma$. Then $2 \delta = 2- 2\alpha = 1-\gamma$.   Since $f(s) = e^s D^\gamma g(s)$ we have, using   \eref{ce303b} 
       in the fourth line,
  \begin{align*}
 \int_0^T t^{-b} \|\int_0^t e^{(t-s)\Delta} f(s) ds \|_{H_1}^2 dt 
 &=  \int_0^T t^{-b} \|\int_0^t D e^{-(t-s)(1-\Delta)} e^{(t-s)} f(s) ds \|_2^2dt \\
 &=\int_0^T t^{-b} \|e^t \int_0^t D^{1+\gamma} e^{-(t-s)(1-\Delta)} g(s) ds \|_2^2dt \\
 &=\int_0^T t^{-b}e^{2t} \| \int_0^t L^\alpha e^{-(t-s)L} g(s) ds \|_2^2dt \\
& \le T^{2\delta} \int_0^T s^{-b} \|g(s)\|_2^2ds\  e^{2T}\\
& \le T^{2\delta} \int_0^T s^{-b} \|f(s)\|_p^2ds(e^{2T} \kappa_p^2 ).
 \end{align*}
 \end{proof}
 
           \begin{lemma}\label{actint3a} Let $ 1/2 \le a <1$. Define 
\beq
q_a^{-1}= (1/2) - (a/3),  \ \ \ p_a^{-1} = (7/6) - (2a/3), \ \ \ \    r_a^{-1} =1 - (a/3) .\label{ce240a}  
\eeq
Let $C(\cdot) \in \P_T^a$  and define
\beq
 \beta_a = \sup_{0\le s \le T} \|C(s)\|_{q_a}.                                          \label{ce241a}
\eeq
 Then, for $ 0 < s \le T$, 
\begin{align}
\|C(s)^3\|_{p_a}  &\le c^2 \kappa_6 \beta_a^2 \| C(s)\|_{H_1},\ \ \  \ \ 
                                                          p_a^{-1} = 2q_a^{-1} + (1/6)  \label{ce242a}\\
 \|C(s)\cdot \p C(s)\|_{r_a} &\le c \beta_a \| C(s)\|_{H_1}, \ \ \ \ \ \ \ \ \ 
  r_a^{-1} = q_a^{-1} + (1/2) .  \label{ce243a}
 \end{align}
\end{lemma}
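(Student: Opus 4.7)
The lemma records two H\"older-type pointwise bounds on the nonlinear terms $C^3$ and $C\cdot\partial C$, using one factor of $\|C(s)\|_{H_1}$ and the rest absorbed into the uniform bound $\beta_a$ of the $L^{q_a}$ norm. The plan is simply to verify the indicated exponent identities and then apply H\"older together with the Sobolev embedding $\|\cdot\|_6\le \kappa_6\|\cdot\|_{H_1}$ and the trivial estimate $\|\partial C\|_2\le\|C\|_{H_1}$, exactly in the style of the earlier estimates \eref{ce13}--\eref{ce12}. The factors of $c$ and $c^2$ come from applying the commutator norm bound $\|[\xi,\eta]\|\le c\|\xi\|\|\eta\|$ once (for $C\cdot\partial C$, which contains one bracket) and twice (for $C^3$, which contains two nested brackets), respectively.

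For the cubic term, I would split one factor of $C$ in $L^6$ and two in $L^{q_a}$. The exponent identity to check is
\begin{equation*}
\frac{1}{p_a}=\frac{2}{q_a}+\frac{1}{6}=2\Bigl(\frac12-\frac{a}{3}\Bigr)+\frac16=\frac76-\frac{2a}{3},
\end{equation*}
which matches the definition of $p_a$ in \eref{ce240a}. H\"older then gives
\begin{equation*}
\|C(s)^3\|_{p_a}\le c^2\|C(s)\|_6\,\|C(s)\|_{q_a}^2\le c^2\kappa_6\beta_a^2\,\|C(s)\|_{H_1},
\end{equation*}
which is \eref{ce242a}.

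For the quadratic term involving a derivative, I would place $C$ in $L^{q_a}$ and $\partial C$ in $L^2$. Here the exponent identity is
\begin{equation*}
\frac{1}{r_a}=\frac{1}{q_a}+\frac12=\Bigl(\frac12-\frac{a}{3}\Bigr)+\frac12=1-\frac{a}{3},
\end{equation*}
again matching \eref{ce240a}. H\"older together with $\|\partial C(s)\|_2\le\|C(s)\|_{H_1}$ yields
\begin{equation*}
\|C(s)\cdot\partial C(s)\|_{r_a}\le c\,\|C(s)\|_{q_a}\,\|\partial C(s)\|_2\le c\,\beta_a\,\|C(s)\|_{H_1},
\end{equation*}
which is \eref{ce243a}.

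There is no real obstacle here: the content is the bookkeeping of H\"older exponents to guarantee that, of the three (resp.\ two) factors of $C$ appearing in the nonlinearity, exactly one is charged to $\|C(s)\|_{H_1}$ (with Sobolev inserted in the cubic case), while the others are absorbed into $\beta_a$. The mild point worth verifying is only that all exponents $q_a,p_a,r_a$ lie in admissible ranges ($q_a\in[3,6)$, $p_a\in(1,6/5]$, $r_a\in(1,6/5]$ for $a\in[1/2,1)$), so that the Sobolev embedding $H_1\hookrightarrow L^6$ and the H\"older pairings used are indeed valid.
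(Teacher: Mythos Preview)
Your proof is correct and follows exactly the same approach as the paper's, which likewise just invokes H\"older together with $\|C(s)\|_6\le\kappa_6\|C(s)\|_{H_1}$, $\|\partial C(s)\|_2\le\|C(s)\|_{H_1}$, and the exponent arithmetic displayed in the statement. (One harmless slip in your closing remark: for $a\in[1/2,1)$ the ranges are actually $p_a\in[6/5,2)$ and $r_a\in[6/5,3/2)$, not $(1,6/5]$, but this has no bearing on the argument.)
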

              \begin{proof} Since $\|C(s)\|_6 \le \kappa_6 \| C(s)\|_{H_1}$  and 
$\|\p C(s)\|_2 \le \| C(s)\|_{H_1}$, H\"older's inequality
proves \eref{ce242a} and \eref{ce243a} in accordance with the arithmetic shown in the 
second column.
\end{proof}

\begin{remark}{\rm  It may be clarifying  to contrast the critical case $a =1/2$ with the
case $a > 1/2$. The definitions  \eref{ce240a}  give            
 \begin{align*} 
 &q_a = 3, \qquad\qquad p_a = r_a = 6/5  \qquad\qquad\qquad \text{for}\ \ \ a = 1/2 \\
 3 < &q_a < 6, \ \ \  6/5 < p_a < 2,\ \ \  6/5 < r_a <  3/2 \ \ \ \ \text{for}\ \ 1/2 < a <1.
 \end{align*}
 For the two types of terms that appear on the left side in \eref{ce242a} and \eref{ce243a}
 we are going to apply Lemma \ref{actint2a} with $p = p_a$ and $p = r_a$, respectively.
 With $\gamma$  
 defined by \eref{ce27a} and with the help of some arithmetic,  the reader
 can verify that the corresponding values of $\gamma$ yield
 \begin{align}
 1 - \gamma =
 \begin{cases} &2(a -\frac{1}{2})\ \ \ \, \text{if}\  p = p_a \\
      &  (a -\frac{1}{2}) \ \ \ \ \ \text{if}\  p = r_a .
      \end{cases}                                             \label{ce247}
      \end{align}
       }
 \end{remark}

       \begin{lemma}\label{actint4a}  Let $1/2 \le a <1$, $ 0 < b <1$ and $0 < T\le 1$. 
    Suppose that $C(\cdot) \in \P_T^a$.
    Then there are constants $c_5$ and $ c_6$ independent of $a, b,T$  and 
   $C( \cdot)$ such that
 \begin{align}
 \int_0^T t^{-b} \|\int_0^t e^{(t-s)\Delta} C(s)^3 ds \|_{H_1}^2 dt         
 &\le  c_5 T^{2a - 1} \beta_a^4  \int_0^T s^{-b} \| C(s)\|_{H_1}^2 ds,            \label{ce251a}\\
 \int_0^T t^{-b} \|\int_0^t e^{(t-s)\Delta} C(s)\cdot\p C(s) ds \|_{H_1}^2 dt 
 &\le c_6  T^{a - \frac{1}{2}} \beta_a^2\int_0^T s^{-b} \| C(s)\|_{H_1}^2 ds.    \label{ce252a}
 \end{align}
             \end{lemma}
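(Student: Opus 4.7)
The plan is to apply Lemma \ref{actint2a} term by term, once with the exponent $p=p_a$ for the cubic contribution and once with $p=r_a$ for the $C\cdot\p C$ contribution, and then use the pointwise $L^p$ bounds from Lemma \ref{actint3a} to convert the right-hand side of \eref{ce28a} into an $\int_0^T s^{-b}\|C(s)\|_{H_1}^2\,ds$ expression. The remark immediately preceding the lemma already identified, for both choices of $p$, the exponent $1-\gamma$ appearing in \eref{ce28a} via \eref{ce247}; this is where the powers $T^{2a-1}$ and $T^{a-1/2}$ in the conclusion come from.

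For \eref{ce251a}, I would set $f(s)=C(s)^3$ and apply Lemma \ref{actint2a} with $p=p_a$. By \eref{ce28a},
\begin{equation*}
\int_0^T t^{-b}\Big\|\int_0^t e^{(t-s)\Delta}C(s)^3\,ds\Big\|_{H_1}^2\,dt
\le T^{\,1-\gamma}\!\int_0^T s^{-b}\|C(s)^3\|_{p_a}^2\,ds\cdot (e^{2T}\kappa_{p_a}^2),
\end{equation*}
and by \eref{ce247} the factor $T^{1-\gamma}$ equals $T^{2a-1}$. Squaring the pointwise estimate \eref{ce242a} gives $\|C(s)^3\|_{p_a}^2\le c^4\kappa_6^2\beta_a^4\|C(s)\|_{H_1}^2$. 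Since $T\le 1$ forces $e^{2T}\le e^2$, substituting this bound yields \eref{ce251a} with $c_5=c^4\kappa_6^2 e^2\sup_{a\in[1/2,1)}\kappa_{p_a}^2$.

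For \eref{ce252a}, I would repeat the same strategy with $f(s)=C(s)\cdot\p C(s)$ and $p=r_a$. Now \eref{ce247} gives $1-\gamma = a-1/2$, so \eref{ce28a} produces the prefactor $T^{a-1/2}$, and squaring \eref{ce243a} gives $\|C(s)\cdot\p C(s)\|_{r_a}^2\le c^2\beta_a^2\|C(s)\|_{H_1}^2$. Combining these yields \eref{ce252a} with $c_6=c^2 e^2\sup_{a\in[1/2,1)}\kappa_{r_a}^2$.

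The only point requiring a remark is the claim that $c_5$ and $c_6$ can be chosen independent of $a\in[1/2,1)$. As $a$ ranges over this interval, $p_a$ stays in the compact interval $[6/5,2]$ and $r_a$ in $[6/5,3/2]$, and the Sobolev embedding constant $\kappa_p$ governing $D^{-\gamma(p)}:L^p\to L^2$ is continuous in $p$ on each of these intervals, hence bounded. This is the only place where uniformity in $a$ could fail, and it doesn't; everything else in the bookkeeping (the identifications of the powers of $T$ via \eref{ce247}, and the commutator constant $c$) is manifestly $a$-independent. There is no real obstacle: the lemma is essentially Lemma \ref{actint2a} plus the H\"older estimates of Lemma \ref{actint3a} assembled with a careful eye on the $(p,\gamma)$ arithmetic.
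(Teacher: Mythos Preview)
Your proof is correct and follows essentially the same approach as the paper: apply Lemma \ref{actint2a} with $p=p_a$ and $p=r_a$, invoke \eref{ce247} for the powers of $T$, and use the H\"older bounds \eref{ce242a}, \eref{ce243a}; your remark on the boundedness of $\kappa_p$ over $p\in[6/5,2]$ is exactly how the paper secures independence of the constants from $a$.
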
 
             \begin{proof}  We are going to use Lemma \ref{actint2a} for each
     inequality, with the appropriate choice of $p$.
     
     Take $ p = p_a$ in \eref{ce27a} and choose $f(s) = C(s)^3$ in \eref{ce28a}.     
     In view of       \eref{ce242a} and \eref{ce247} we find
      \begin{align*}
   \int_0^T t^{-b} \|\int_0^t &e^{(t-s)\Delta} C(s)^3 ds \|_{H_1}^2 dt  \\
   &\le  T^{2a-1} \int_0^T s^{-b} \| C(s)^3\|_{p_a}^2 ds\   (\kappa_{p_a}e^T)^2      \\
   &\le  T^{2a-1}  \int_0^T s^{-b} \| C(s)\|_{H_1}^2 ds\  
     (c^2\kappa_6 \beta_a^2)^2         (\kappa_{p_a}e^T)^2  ,                                 
 \end{align*} 
 which is \eref{ce251a} upon taking $c_5 = c^4 \kappa_6^2 \kappa_{p_a}^2 e^2$, since $T\le 1$.

 Take  $ p = r_a$ in \eref{ce27a} and choose $f(s) = C(s)\cdot \p C(s)$ in \eref{ce28a}.
 In view of  \eref{ce243a} and \eref{ce247} we find
 \begin{align*}
 \int_0^T t^{-b} \|\int_0^t &e^{(t-s)\Delta} C(s)\cdot\p C(s) ds \|_{H_1}^2 dt       \\
 &\le  T^{a -(1/2)} \int_0^T s^{-b} \| C(s)\cdot \p C(s)\|_{r_a}^2 ds\ 
                                                              \cdot (\kappa_{r_a}e^T)^2                   \\
 &\le T^{a -(1/2)} \int_0^T s^{-b} \| C(s)\|_{H_1}^2 ds\cdot  (c\beta_a)^2 (\kappa_{r_a}e^T)^2,                                       
\end{align*}   
which is \eref{ce252a} upon taking $c_6 = c^2 \kappa_{r_a}^2 e^2$. $c_5$ and $c_6$ can be taken independent of $a$ because $\kappa_p$ is bounded for $6/5 \le p \le 2$.
\end{proof}

       \begin{lemma}\label{actint5a} Let $1/2 \le a <1$, $0 < b < 1$ and $0 <T \le 1$.  
There is a constant $c_7$ independent of $a, b, T$ and $C(\cdot)$ such that
\begin{align}
\int_0^T t^{-b} \|w(t)\|_{H_1}^2 dt 
\le c_7 \Big(T^{a -\frac{1}{2}} \beta_a^2\Big) 
 \Big(T^{a -\frac{1}{2}} \beta_a^2 + 1\Big) \int_0^T s^{-b} \| C(s)\|_{H_1}^2 ds ,         \label{ce248a}
\end{align}
wherein $w(t)$ is defined by \eref{ST319}.
\end{lemma}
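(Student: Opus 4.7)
The plan is to split $w(t)$ according to the decomposition $X(C) = C^3 + C\cdot\partial C$ from \eref{ST14.1} and then invoke the two bounds of Lemma \ref{actint4a} separately on each piece. Concretely, write
\[
w(t) = \int_0^t e^{(t-s)\Delta} C(s)^3\, ds + \int_0^t e^{(t-s)\Delta} C(s)\cdot\partial C(s)\, ds,
\]
apply the triangle inequality in $H_1$, and then the elementary $(u+v)^2 \le 2(u^2+v^2)$ to obtain a pointwise-in-$t$ bound on $\|w(t)\|_{H_1}^2$ by twice the sum of the $H_1$ norms squared of the two terms.

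Next I would multiply by $t^{-b}$ and integrate over $(0,T)$, so that the right-hand side becomes a sum of two integrals to which Lemma \ref{actint4a} applies directly. The cubic term is controlled by \eref{ce251a}, giving a factor $c_5 T^{2a-1}\beta_a^4 = c_5 (T^{a-1/2}\beta_a^2)^2$ in front of $\int_0^T s^{-b}\|C(s)\|_{H_1}^2 ds$, and the quadratic-with-derivative term is controlled by \eref{ce252a}, giving $c_6 T^{a-1/2}\beta_a^2$ in front of the same integral.

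Finally, pulling out the common factor $T^{a-1/2}\beta_a^2$, the bracket simplifies to $c_5 (T^{a-1/2}\beta_a^2) + c_6$, which is at most $\max(c_5,c_6)\bigl(T^{a-1/2}\beta_a^2 + 1\bigr)$. Combining with the factor of $2$ from the inequality $(u+v)^2\le 2(u^2+v^2)$, one gets \eref{ce248a} with $c_7 = 2\max(c_5, c_6)$, which is independent of $a$, $b$, $T$, and $C(\cdot)$ since $c_5$ and $c_6$ are.

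There is no serious obstacle here; the lemma is essentially an algebraic repackaging of the two estimates of Lemma \ref{actint4a}. The only point requiring minor care is to ensure the powers of $T$ and $\beta_a$ combine into the asserted form $(T^{a-1/2}\beta_a^2)(T^{a-1/2}\beta_a^2 + 1)$, which is forced by the identity $T^{2a-1}\beta_a^4 = (T^{a-1/2}\beta_a^2)^2$ and the observation $a + bX \le \max(a,b)(1+X)$ for $X\ge 0$.
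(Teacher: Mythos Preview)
Your proposal is correct and matches the paper's proof essentially line for line: the paper also splits $w$ via $X(C)=C^3+C\cdot\partial C$, uses $(u+v)^2\le 2(u^2+v^2)$, applies \eqref{ce251a} and \eqref{ce252a}, and takes $c_7=2\max(c_5,c_6)$.
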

 \begin{proof}
 In view of the definition \eref{ST319} we see that $\int_0^T t^{-b} \| w(t)\|_{H_1}^2 dt$
  is at most twice the sum of the left hand sides  of \eref{ce251a} and \eref{ce252a}.
  Take $c_7 = 2 \max (c_5, c_6)$ and add  \eref{ce251a} and \eref{ce252a} 
  to arrive at \eref{ce248a}.
\end{proof}

\bigskip
\noindent
  \begin{proof}[Proof of Theorem \ref{thmfa}] 
  The integral
   equation \eref{ST25} is given by
$C(t) = e^{t\Delta} C_0 + w(t)$  in view of the definition \eref{ST319} of $w$. Hence,  
for any number $b \in (0,1)$, we have
\begin{align}
\Big(\int_0^T& t^{-b} \|C(t)\|_{H_1}^2 dt \Big)^{1/2}     \notag\\
&\le  \Big(\int_0^T t^{-b} \| e^{t\Delta} C_0\|_{H_1}^2 dt \Big)^{1/2} 
+ \Big( \int_0^T t^{-b} \| w(t)\|_{H_1}^2 dt \Big)^{1/2}            \notag\\
&\le  \Big(\int_0^T t^{-b} \| e^{t\Delta} C_0\|_{H_1}^2 dt \Big)^{1/2} 
+ \mu_a \Big(\int_0^T s^{-b} \|C(s)\|_{H_1}^2 dt \Big)^{1/2} ,    \label{ce275a}
\end{align}
where, by \eref{ce248a},  we can take
\beq
\mu_a = \sqrt{c_7 \Big(T^{a -\frac{1}{2}} \beta_a^2\Big) 
 \Big(T^{a -\frac{1}{2}} \beta_a^2 + 1\Big)}.                           \label{ce276a}
 \eeq
The rest of the proof hinges on whether we can take $\mu_a <1$ by choosing $T$ and/or $C_0$
suitably. Here the proof diverges into two cases. Either $a > 1/2$, in which case 
choosing  $T$ small makes $\mu_a$ small because $T$ occurs with a strictly positive power
in $\mu_a$. Or else $a = 1/2$, in which case 
\beq
\mu_{1/2}^2 = c_7 \beta_{1/2}^2 (\beta_{1/2}^2 +1).         \label{ce277a}
\eeq
In this case we will show that choosing both $\|C_0\|_{H_{1/2}}$ small  and $T$ small ensures
that $\mu_{1/2} < 1/2$.

   Let us carry out the details for the case $a > 1/2$ first. We are considering a particular
   solution of the integral equation \eref{ST25}. Clearly $\beta_a$, defined in \eref{ce241a},
   decreases as $T$ decreases and is finite for all $T$ by Sobolev's inequality. Consequently
   $\mu_a$ decreases to zero as $ T \downarrow 0$ because $a -1/2 >0$. 
   Choose $T$ so small that $\mu_a \le 1/2$. We would like to subtract the last term
   on the right of \eref{ce275a} from the left side to obtain a bound on the $b$ action for 
   $b =a$. However we do not know that the right side is finite for $b =a$ since this
    is what we are trying to prove. Let $b <a$. Since $C(\cdot)$ lies in the path space
    $\P_T^a$ we know that $\|C(s)\|_{H_1}^2 = o(s^{a-1})$ as $s\downarrow 0$ 
    by \eref{ST413a}.
        Hence $s^{-b} \|C(s)\|_{H_1}^2 = o(s^{a-b -1})$ as $s \downarrow 0$. 
        Since $a - b - 1 > -1$ it follows that $\int_0^Ts^{-b} \|C(s)\|_{H_1}^2 ds < \infty$.
We can therefore subtract the last term in \eref{ce275a} from the  left side to find, after squaring,
\begin{align}
(1/4)\int_0^T& t^{-b} \|C(t)\|_{H_1}^2 dt   \le \int_0^T t^{-b} \| e^{t\Delta} C_0\|_{H_1}^2 dt 
\  \  \text{for} \  \ 0 < b <a.
\end{align}
As $b \uparrow a$ we have $ t^{-b}\uparrow t^{-a}$ on $(0, 1]$. Therefore  the monotone convergence theorem now shows that
\begin{align}
(1/4)\int_0^T& t^{-a} \|C(t)\|_{H_1}^2 dt   \le \int_0^T t^{-a} \| e^{t\Delta} C_0\|_{H_1}^2 dt 
\end{align}
The right side is finite by \eref{ST450a}. This completes the proof of Theorem \ref{thmfa} in case
$ a > 1/2$.

Suppose now that $a = 1/2$. By assumption, $C(\cdot)$ is a continuous function into
$H_{1/2}$ and therefore, by Sobolev, a continuous function into $L^3$. 
Hence, if $\|C_0\|_{H_{1/2}}$, and therefore $\|C_0\|_3$, are small
 then $\|C(s)\|_3$ remains small for a short time.   Thus we can choose $\|C_0\|_3$ small
  and then choose $T > 0$ so small that $\beta_{1/2} \equiv \sup \|C(s)\|_3$
 is small  enough to ensure, by virtue of  \eref{ce277a},  that $\mu_{1/2} \le 1/2$. 
 The remaining details of the proof are the same as for the case $a >1/2$. 
 This completes the proof of Theorem \ref{thmfa} as well as Parts {\it ii}) and {\it iii})
 of Theorem \ref{thmmild}.
\end{proof}

\subsection{Mild solutions are strong solutions}\label{secstr}

We wish to show that a function $C(\cdot) \in\P_T^a$  is a solution to the integral
 equation \eref{ST25} if and only if it  is a strong solution to the differential
  equation \eref{aymh}. Combined with Theorem \ref{thmfa}, this will complete the proof
  of Theorem \ref{thmaug}.
 
\begin{theorem}\label{mildstr} Let $1/2 \le a < 1$. Suppose that $C(\cdot)$ is a mild solution
 on $[0, T)$ lying in $\P_T^a$.
Then $ C(\cdot)$ is a strong solution on $(0,T)$. Moreover, 
$C(\cdot) \in C^\infty((0, T) \times M; \L^1\otimes \kf)$
and  satisfies, for $0 < t <T$, the Neumann, resp. Dirichlet boundary conditions
\begin{align}
&(N)\  C(t)_{norm}=0,\  (dC(t))_{norm} =0, \  (B_{C(t)})_{norm} =0. \label{ST140}\\
&(D)\  C(t)_{tan} =0, \ \ \ (dC(t))_{tan} =0,\  (B_{C(t)})_{tan} =0,\ 
                   (d^*C(t))_{tan}=0.                                                   \label{ST141}
\end{align}
Two strong solutions lying in $\P_T^a$ are equal.
\end{theorem}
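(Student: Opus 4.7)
The plan is to proceed in four stages: bootstrap spatial regularity on $(0,T)$, verify the strong form of the equation, check the boundary conditions, and finally deduce uniqueness from the contraction already in hand.

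First, I would bootstrap parabolic regularity. By hypothesis $C(t)\in H_1$ for every $t>0$, so for any $\tau\in(0,T)$ the shifted function $\tilde C(r):=C(\tau+r)$ satisfies the shifted Duhamel identity $\tilde C(r) = e^{r\Delta}C(\tau) + \int_0^r e^{(r-s)\Delta} X(\tilde C(s))\,ds$ with initial data $C(\tau)\in H_1\subset H_a$. Since $X(C) = C^3 + C\cdot\partial C$ is a polynomial first-order differential operator, the estimates of Lemma \ref{lemce2a}, applied with $C(\tau)$ regarded as data of arbitrarily high Sobolev regularity and combined with the $t^{-\alpha/2}$ smoothing in \eqref{hk0}, allow an iterative gain of regularity: if $\tilde C\in C([0,\eta];H_k)$ then each nonlinear term lies in $L^2$ of a higher Sobolev space, and $e^{r\Delta}$ converts this into $C((0,\eta];H_{k+1})$. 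Iterating and shrinking $\tau$, this gives $C(\cdot)\in C^\infty((0,T);H_k(M))$ for every $k$, hence $C\in C^\infty((0,T)\times M;\Lambda^1\otimes\kf)$ by Sobolev embedding.

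Second, with this regularity, I would verify the strong form. Fix $0<\tau<t<T$ and write $C(t) = e^{(t-\tau)\Delta}C(\tau) + \int_\tau^t e^{(t-s)\Delta}X(C(s))\,ds$. Now $C(\tau)$ lies in the domain of $\Delta$ and $s\mapsto X(C(s))$ is continuous into $L^2$ on $[\tau,t]$, so standard analytic semigroup theory gives that $C(\cdot)$ is strongly $L^2$-differentiable on $(\tau,t)$ with $C'(t) = \Delta C(t) + X(C(t))$. Rewriting via \eqref{ST13} yields condition d) of Definition \ref{defstrsol}; conditions a)--c) are immediate from the $C^\infty$ regularity established in step one.

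Third, the boundary conditions. Since $e^{t\Delta}$ is generated by the Neumann (resp. Dirichlet) Laplacian on 1-forms, $C(t)$ lies in the domain of $\Delta^k$ for every $k$ and every $t>0$. For Neumann boundary conditions this forces $C(t)_{norm}=0$ and $(dC(t))_{norm}=0$ (cf.\ Remark \ref{mabc}); since $C(t)_{norm}=0$ on $\partial M$ the quadratic wedge term $[C(t)\wedge C(t)]$ also has vanishing normal component there, giving $(B_{C(t)})_{norm}=0$. The Dirichlet case is analogous: $C(t)_{tan}=0$ forces $(dC(t))_{tan}=0$ and $[C(t)\wedge C(t)]_{tan}=0$, hence $(B_{C(t)})_{tan}=0$; the condition $(d^*C(t))_{tan}=0$ is the natural Neumann-type part of the Dirichlet Laplacian's domain, again by Remark \ref{mabc}.

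Finally, for uniqueness, let $C_1,C_2$ be two strong solutions in $\P_T^a$. Both satisfy the integral equation \eqref{ST25}, so both are fixed points of the map $Z$ of \eqref{ST341}. Choose $T_1\in(0,T]$ and $b>0$ so that both $C_j$ restrict to $\P_{T_1,b}^a$ and $Z$ is a $(1/2)$-contraction on this set, as in the proof of Lemma \ref{lemce7}; uniqueness of fixed points gives $C_1=C_2$ on $[0,T_1]$. Iterating in time by using $C_1(T_1)=C_2(T_1)$ as new initial data extends the equality to all of $[0,T)$. The main obstacle is the bootstrap of step one: one must verify that at each iteration the nonlinearity $X(C(s))$ is well-defined in the expected Sobolev space, compatible with the boundary conditions, and integrable against the smoothing kernel near $s=\tau$, for which the quantitative control provided by the path-space estimates of Section \ref{secce} is indispensable.
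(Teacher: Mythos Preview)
Your overall architecture---shift to a positive time $\tau$, use that $C(\tau)\in H_1$, and run the $H_1$ theory from there---is exactly the paper's. The paper, however, does not redo the $H_1$ analysis: it invokes \cite[Theorem 7.3]{CG1} to obtain a mild/strong solution $\hat C$ on $[\tau,\tau+\epsilon]$ with initial value $C(\tau)$, observes that $\hat C$ lies in $\P_{[\tau,\tau+\epsilon]}^a$ (since the $H_1$ path space of \cite{CG1} embeds in ours), and uses uniqueness of mild solutions in $\P_{[\tau,\tau+\epsilon]}^a$ to get $\hat C=C$. Smoothness and the boundary conditions then come for free from \cite[Theorem 7.3 and Corollary 7.10]{CG1}. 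Your steps one through three are effectively a sketch of what those cited results contain; the bootstrap in step one is too thin to stand on its own (you would need to check that $X(C)$ respects the boundary-encoded Sobolev spaces $H_k$ at each stage and that the smoothing gain beats the loss from $\partial C$, which is the substance of those sections of \cite{CG1}).

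There is a genuine gap in step four. You write ``both satisfy the integral equation \eqref{ST25}'' and proceed to the contraction, but this is precisely the converse direction (strong $\Rightarrow$ mild) and it requires proof. The paper handles it by first showing that on $[\tau,T]$ a strong solution is mild with initial value $C(\tau)$ (this uses the $H_1$ continuity on $[\tau,T]$ and is already in \cite{CG1}), and then letting $\tau\downarrow 0$ in the shifted Duhamel formula \eqref{ST142}, using the $H_a$ continuity at $0$ for the linear term and the estimates of Section~\ref{secce} for the integral term. Without this, your uniqueness argument is circular. Your time-iteration is also delicate: as noted in Remark~\ref{phil5}, the contraction time depends on the data itself, not on its norm, so propagating equality forward requires either the $H_1$ uniqueness from \cite{CG1} once $t>0$, or a separate argument that the steps do not shrink to zero.
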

        \begin{proof}
      Let $ 0 < \tau < t < T$. Suppose that $C(\cdot)$ is a solution of the integral 
      equation  \eref{ST25} lying in $\P_T^a$. 
      Define $f(s) = X(C(s))$. The integral equation  \eref{ST25}
      may be rewritten as 
      \begin{align}
      C(t) &= e^{t\Delta}C_0 + \int_0^t e^{(t-s) \Delta} f(s) ds \notag \\
           &  = e^{(t-\tau) \Delta}\Big( e^{\tau \Delta} C_0 
 + \int_0^\tau e ^{(\tau -s) \Delta} f(s) ds \Big) + \int_\tau^ t e^{(t-s)\Delta} f(s) ds      \notag\\
  & =e^{(t-\tau) \Delta} C(\tau) +   \int_\tau^ t e^{(t-s)\Delta} f(s) ds.      \label{ST142}
           \end{align}
  Thus $ C(t)$ is a mild solution over the interval $[\tau, T]$ with initial value
   $C(t)|_{t = \tau} = C(\tau)$. Now $C(\cdot)$ is a continuous function into $H_1$
    over $[\tau, T]$ as well as into $H_{a}$ and therefore lies in the path space
    $\P_{[\tau, T]}^a$, defined as in  Notation \ref{notpatha}  but with $\tau$ as the origin. 
    The corresponding path space $\hat \P_{[\tau, T]}$ used in \cite{CG1}
     is contained in  $ \P_{[\tau, T]}^a$.  Since $C(\tau) \in H_1$, \cite[Theorem 7.3]{CG1}
     assures that there exists a mild solution  $\hat C(\cdot)$ over some interval 
     $[\tau, \tau+\epsilon]$ with initial value $C(\tau)$ and which lies in
      $\hat \P_{[\tau, \tau + \epsilon]}$. 
     We may assume without loss of generality that $\tau + \epsilon <T$. But
   $\hat \P_{[\tau, \tau+\epsilon]} \subset \P_{[\tau, \tau+\epsilon]}^a $ and  mild solutions are unique 
   in  $\P_{[\tau, \tau+\epsilon]}^a$. Hence $\hat C(t) = C(t)$ for $t \in [\tau, \tau +\epsilon]$.
   Now \cite[Theorem 7.3]{CG1} also assures that $\hat C$ is a strong solution
     over $(\tau, \tau +\epsilon)$. Therefore $C(\cdot)$ is a strong solution
      over  $(\tau, \tau +\epsilon)$. Since $\tau$ is arbitrary in $(0,T)$, $C(\cdot)$ is a strong solution over $(0, T)$. The same argument, using again   \cite[Theorem 7.3]{CG1}, 
      shows that $C(\cdot) \in C^\infty((0,T)\times M; \L^1\otimes \kf) $.
             Moreover, by \cite[Corollary 7.10]{CG1}, $\hat C(t)$ 
         satisfies
      the boundary conditions  \eref{ST140}, resp. \eref{ST141}   over $(\tau, \tau + \epsilon)$. Therefore $C(\cdot) $ does also.

      Conversely, suppose that $C(\cdot)$ is a strong solution to the differential equation
      \eref{aymh} and which satisfies the conditions a) and b) of Theorem \ref{thmaug}. 
      That is to say, $C(\cdot) \in \P_T^a$.    If $0 < \tau < T$ then $C(\cdot)$ is a continuous
      function into $H_1$ over $[\tau, T]$. Consequently, as shown
       in \cite[Proof of Theorem 2.13]{CG1}, $C(\cdot)$ is a solution to the integral equation
       \eref{ST142} for $\tau \le t \le T$. Since, for fixed $t >0$, the integral
       equation \eref{ST142} holds
       for  all $\tau \in (0, t)$, we can let $\tau \downarrow 0$ and find that \eref{ST142} holds also
       for $\tau =0$ by observing first, that $ C(\cdot)$ is continuous into $L^2$ (in fact into $H_a$)
       over $[0, T]$, allowing the strong limit in the first term, and second, that the estimates
       on $f(s)$ made in Section \ref{secce} on the basis of the hypotheses  of
        Theorem \ref{thmaug}  allow us to take the limit in the integral term  in \eref{ST142}.
       Hence a strong solution to the differential equation  \eref{aymh}  which lies
       in $\P_T^a$ 
       is a solution to the integral equation \eref{ST25}. Uniqueness for such
        strong solutions now follows.
        
           This completes the existence and uniqueness portion of  Theorem \ref{thmaug}.
    Since a strong solution is also a mild solution we can apply Theorem \ref{thmfa} to 
    deduce the remaining, finite action, assertions of  Theorem \ref{thmaug}.
\end{proof}

\section{Initial behavior of solutions to the augmented equation}  \label{secib}
We are  going to derive energy estimates for the first and second order spatial
derivatives of a solution $C(\cdot)$ to the augmented Yang-Mills heat equation \eref{aymh}
and then use these to derive additional  bounds via the method of Neumann domination.
These estimates will be used in Section \ref{secconvgp} 
to establish the properties of the conversion group which are needed
 to recover the desired solution
$A(\cdot)$ of \eref{str5} from $C(\cdot)$.

In this section we will take $M$ to be either all of $\R^3$ or the closure of a bounded,
 convex, open subset of $\R^3$ with smooth boundary.

The main technique  in the next few subsections will be based on the  
Gaffney-Friedrichs-Sobolev inequality, which                     
  asserts, for our  convex subset $M$ of $\R^3$,  that for any integer
   $p \ge 1$ and  any  $\kf$ valued p-form $\w$ (satisfying appropriate boundary conditions)
there holds
\beq
\|\w\|_6^2  \le \kappa^2\Big\{ \|d_C^* \w\|_2^2 + \| d_C \w\|_2^2 
                                   + \lambda(B_C) \|\w\|_2^2\Big\}            \label{gaf50}
\eeq
for any $\kf$ valued connection form $C$ on $M$ with curvature $B_C$.
Here  
we have written  
\beq
\lambda(B) =1 + \gamma \|B\|_2^4,           \label{gaf70} 
\eeq
where $\gamma \equiv  (27/4)\kappa^6 c^4$ is a constant depending only on a 
 Sobolev constant  $\kappa$ for $M$, which can be $\R^3$,  and the commutator bound  $c$  defined in Section \ref{secce}.
See \cite[Theorem 2.17, Remark 2.18 and  Equ.(4.31)]{CG1} for the derivation of the  
 inequality    \eref{gaf50}.  If $M$ is not convex then the inequality  \eref{gaf50} still holds,
 but with different constants in  \eref{gaf70} provided that the second fundamental
 form of $\p M$ is bounded below.
 
 Gaffney-Friedrichs inequalities \cite{Fr,Mt01,ME56,Mo56} 
 give information about the gradient of a form in terms of
 the exterior derivative and co-derivative of the form. The use of these is essential for us
 because the differential equations are posed in terms of the gauge covariant exterior
 derivative $d_C$ and its co-derivative $d_C^*$, whereas Sobolev inequalities require
 information about gradients.

 For a $\kf$-valued 0-form $\phi$ on $M$ one has the simple  gauge invariant 
 Sobolev inequality 
 \beq
 \|\phi\|_6^2 \le \kappa_6^2 \Big( \| d_C \phi\|_2^2 + \|\phi\|_2^2\Big)       \label{gaf01}
 \eeq
 where $\kappa_6 \le \kappa$. 
 This is valid independently of boundary conditions on $\phi$.
 It is just a consequence of Kato's inequality, $|grad\ |\phi(x)|\ | \le |d_{C(x)} \phi(x)|$.
 If $M = \R^3$ then the summand $\|\phi\|_2^2$ is not needed. 
 See, e.g., \cite[Notation 2.16]{CG1} for further discussion.

\subsection{Identities} \label{secid}

Suppose that $C(\cdot)$ is a solution to \eref{aymh} over some interval.
Define 
\beq
\phi(s) = d^*C(s).        \label{vs20}
\eeq
 We are   going to derive  energy estimates
for $\phi$ and $B_C$ and their gauge covariant derivatives. Similar energy
 estimates have been made for the curvature of $A(\cdot)$ and its covariant derivatives
 in  \cite{CG1} and \cite{CG2}.  
 The augmented equation \eref{aymh} is a little more complicated
than the Yang-Mills heat equation \eref{str5},  which was the basis for the
 energy estimates in \cite{CG1} and \cite{CG2}. This reflects itself in slightly
 more complicated   energy estimates for $\phi$ and $B_C$.

 \begin{lemma}\label{lemids4p}$($Pointwise identities$)$ Suppose that $C(\cdot)$
  is a smooth solution  to \eref{aymh} over $(0, T)$. Then
 \begin{align}
 d\phi(s)/ds &= d_C^* C' - [C\lrc C'],     \label{vs520} \\
&= - d_C^* d_C \phi - [C\lrc C']\ \ \ \ \ \ \ \ \ \    \text{and} \label{vs510}\\
&= \Delta \phi + [C \lrc (d_C^* B_C - d \phi)].                   \label{vs510.1}
\end{align}
Further, 
\begin{align}
dB_C(s)/ds & = \sum_{j=1}^3 (\n_j^C)^2 B_C + B_C\# B_C - [B_C, \phi], \label{vs511}
\end{align}
where $\#$ denotes a pointwise product coming from the Bochner-Weitzenboch formula.
\end{lemma}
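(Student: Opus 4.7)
All four identities are pointwise assertions for a smooth solution $C(\cdot)$, so by Theorem \ref{mildstr} I can freely commute $d/ds$ with spatial operators and work without regard to boundary behaviour. I would derive the identities in the stated order.

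For \eqref{vs520}, differentiate $\phi = d^*C$ in $s$; since $d^*$ is independent of $C$, this gives $d\phi/ds = d^*C'$, and the decomposition $d^* = d_C^* - [C\lrc\,\cdot\,]$ on 1-forms rearranges this into \eqref{vs520}. For \eqref{vs510}, apply $d_C^*$ to the augmented equation $-C' = d_C^*B_C + d_C\phi$. The crucial algebraic fact is $d_C^{*2}B_C = 0$, which is most transparent in index notation: from $(d_C^*B_C)_k = -\sum_i \nabla_i^C B_{ik}$, antisymmetry of $B_{ik}$ in $i,k$, the curvature commutator $[\nabla_k^C,\nabla_i^C] = [B_{ki},\,\cdot\,]$ on adjoint-valued sections, and antisymmetry of the Lie bracket force
\begin{equation*}
 d_C^{*2} B_C \;=\; \tfrac12 \sum_{i,k}[\nabla_k^C,\nabla_i^C] B_{ik} \;=\; \tfrac12\sum_{i,k}[B_{ki}, B_{ik}] \;=\; 0 .
\end{equation*}
Substituting $-d_C^*C' = d_C^*d_C\phi$ into \eqref{vs520} yields \eqref{vs510}.

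For \eqref{vs510.1}, expand $d_C^*d_C\phi$ in flat operators. Using $d_C\phi = d\phi + [C,\phi]$ on 0-forms and $d_C^* = d^* + [C\lrc\,\cdot\,]$ on 1-forms produces four terms, and the key simplification is $d^*[C,\phi] = [C\lrc d\phi]$. This follows by expanding componentwise: the only nontrivial step is $[\sum_i\partial_i C_i,\phi] = [-\phi,\phi] = 0$, which uses the defining relation $d^*C = \phi$, i.e. $\sum_i \partial_i C_i = -\phi$. Together with $d^*d\phi = -\Delta\phi$ on 0-forms, this gives $-d_C^*d_C\phi = \Delta\phi - 2[C\lrc d\phi] - [C\lrc[C,\phi]]$. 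On the other hand the augmented equation gives $-[C\lrc C'] = [C\lrc d_C^*B_C] + [C\lrc d\phi] + [C\lrc[C,\phi]]$, and when these are added the $[C\lrc[C,\phi]]$ terms cancel and exactly one copy of $[C\lrc d\phi]$ survives with a minus sign, yielding \eqref{vs510.1}.

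For \eqref{vs511}, start from the standard variation identity $\partial_s B_C = d_C C'$, obtained by differentiating $B_C = dC + \tfrac12[C\wedge C]$ in $s$. Substituting $C' = -d_C^*B_C - d_C\phi$,
\begin{equation*}
 \partial_s B_C = -d_C d_C^* B_C - d_C^2\phi = -d_C d_C^* B_C - [B_C,\phi] ,
\end{equation*}
using $d_C^2\psi = [B_C\wedge\psi] = [B_C,\psi]$ on 0-forms. For the remaining term, the Bianchi identity $d_C B_C = 0$ upgrades $d_C d_C^*B_C$ to the covariant Hodge Laplacian $(d_C d_C^* + d_C^* d_C)B_C$, and the Bochner--Weitzenb\"ock formula on $\kf$-valued 2-forms over a flat region rewrites this as $-\sum_j (\nabla_j^C)^2 B_C - B_C\# B_C$ for the pointwise algebraic quantity $B_C\#B_C$ already isolated in \cite{CG1, CG2}; substituting gives \eqref{vs511}. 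The only step that is not purely formal is this Bochner--Weitzenb\"ock bookkeeping: the precise sign and tensor structure of $B_C\#B_C$ arising from $[\nabla_i^C,\nabla_j^C] = [B_{ij},\,\cdot\,]$ acting on the 2-form $B_C$ itself must be isolated carefully, but the calculation has already been carried out in the cited works for the ordinary Yang--Mills heat equation and transfers verbatim.
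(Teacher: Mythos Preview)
Your proof is correct and follows essentially the same route as the paper: the same differentiation of $\phi=d^*C$ for \eqref{vs520}, the same use of $(d_C^*)^2B_C=0$ for \eqref{vs510}, the same expansion of $d_C^*d_C\phi$ together with $[d^*C,\phi]=[\phi,\phi]=0$ for \eqref{vs510.1}, and the same combination of $\partial_s B_C=d_C C'$, Bianchi, and Bochner--Weitzenb\"ock for \eqref{vs511}. The only cosmetic difference is that the paper organizes the \eqref{vs510.1} computation by grouping $d_C^*d_C\phi = d^*d\phi + [C\lrc(d\phi+d_C\phi)]$ so that the $[C\lrc d_C\phi]$ terms cancel directly against the corresponding piece of $-[C\lrc C']$, whereas you expand everything to flat operators and track the $[C\lrc[C,\phi]]$ cancellation explicitly; both bookkeepings are equivalent.
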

  \begin{proof}
  The definition \eref{vs20}  
  gives
   \begin{align*}
 d\phi/ds  =(d/ds) d^*C = d^* C' = d_C^* C' - [C\lrc C'],
\end{align*}  
which proves \eref{vs520}. In view of the differential equation \eref{aymh} we have
$d_C^* C' = - d_C^* (d_C^* B_C + d_C \phi) = -d_C^* d_C\phi$ by Bianchi's identity.
This proves \eref{vs510}. Expand
 $d_C^*d_C \phi = d^*(d\phi + [C,\phi]) + [C\lrc d_C \phi] 
 = d^*d \phi +d^*[C, \phi]  + [C\lrc d_C \phi]$ and use the identity 
 $d^*[C,\phi] = [d^*C, \phi] + [C\lrc d\phi] =  [C\lrc d\phi]$. The last equality 
 follows from $[d^*C, \phi] = [\phi, \phi] = 0$. 
 Thus 
 $d_C^*d_C \phi = d^*d \phi + [C\lrc (d\phi +d_C \phi)]$.
 But $-[C\lrc C'] = [C\lrc (d_C^*B_C + d_C \phi)]$ by \eref{aymh}. Combine the last two
 equalities with \eref{vs510} to find \eref{vs510.1}.

 Over our flat manifold $M$ the   Bochner-Weitzenboch
formula asserts that, for any $\kf$-valued p-form $\w$,
\beq
-(d_C d_C^*  + d_C^* d_C) \w = \sum_{j=1}^3 (\n_j^C)^2 \w + B_C\# \w   \label{vs516}
\eeq
 for some pointwise product $\#$.  Since $d_C B_C =0$ by the Bianchi identity, we have
 \begin{align*}
 (d/ds)B_C(s)  &= d_C C'(s) \\
 & = -d_C\Big( d_C^* B_C + d_C \phi\Big)\\
 & =  -d_C d_C^* B_C - [B_C, \phi]  \\
 &= -(d_C d_C^* + d_C^*d_C)B_C - [B_C, \phi].
 \end{align*}
 Insert \eref{vs516} with $\w = B_C$ to  arrive at \eref{vs511}. 
\end{proof}

        \begin{lemma}\label{lemids4} $($Integral identities$)$ Suppose that $C(\cdot)$
  is a smooth solution  to \eref{aymh} over $(0, T)$. Then
\begin{align}
\frac{d}{ds} \Big(  \|B_C(s)\|_2^2 + \| \phi(s)\|_2^2 \Big)  + 2 \| C'(s)\|_2^2
           &= - 2(C', [C,\phi])     \label{vs522} 
  \end{align}
  and
  \begin{align}
 \frac{d}{ds} \|C'(s)\|_2^2 + 2\Big\{ \| d_{C(s)}&C'(s)\|_2^2 + \| d_{C(s)}^* C'(s)\|_2^2 \Big\}\notag\\
&= -2(B_C, [C'\wedge C']) + 2([C\lrc C'], d_C^* C') .      \label{vs523}
\end{align}
\end{lemma}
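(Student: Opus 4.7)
The plan is to establish both identities by differentiating the relevant $L^2$ norms in $s$ and applying integration by parts, using repeatedly: (i) the pointwise identities of Lemma \ref{lemids4p}; (ii) the augmented equation $C' = -(d_C^* B_C + d_C\phi)$; (iii) the duality $([u\lrc v], \w) = (v, [u\wedge \w])$ from Section \ref{secstate}; (iv) Ad-invariance of $\<\cdot, \cdot\>_\kf$ (in particular $\<[x,y], y\>_\kf = 0$); and (v) the boundary conditions \eqref{ST140}--\eqref{ST141} established in Theorem \ref{mildstr}, which justify every integration by parts on $M$.

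For \eqref{vs522} I would handle $\|B_C\|_2^2$ and $\|\phi\|_2^2$ separately. From $B_C = dC + (1/2)[C\wedge C]$ one has $B_C' = d_C C'$, so
\[
\tfrac{d}{ds}\|B_C\|_2^2 = 2(d_C C', B_C) = 2(C', d_C^* B_C),
\]
after an integration by parts. Using \eqref{vs520} I would then compute
\[
\tfrac{d}{ds}\|\phi\|_2^2 = 2(d_C^* C' - [C\lrc C'], \phi) = 2(C', d_C\phi) - 2(C', [C,\phi]),
\]
where the first summand is another integration by parts and the second invokes the duality $([C\lrc C'], \phi) = (C', [C\wedge \phi])$. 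Adding the two results and substituting $d_C^* B_C + d_C \phi = -C'$ collapses $2(C', d_C^* B_C) + 2(C', d_C \phi)$ into $-2\|C'\|_2^2$, yielding \eqref{vs522}.

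For \eqref{vs523} I would differentiate the augmented equation, being careful that $d_C^* = d^* + [C\lrc \cdot]$ and $d_C = d + [C\wedge \cdot]$ themselves depend on $s$. This gives
\[
(d_C^* B_C)' = d_C^* B_C' + [C'\lrc B_C] = d_C^* d_C C' + [C'\lrc B_C],
\]
\[
(d_C\phi)' = d_C \phi' + [C',\phi] = d_C d_C^* C' - d_C[C\lrc C'] + [C',\phi],
\]
where \eqref{vs520} is used in the last equality. Hence
\[
C'' = -(d_C^* d_C + d_C d_C^*)C' - [C'\lrc B_C] + d_C[C\lrc C'] - [C',\phi].
\]
Taking the $L^2$ inner product with $2C'$ and integrating by parts produces $-2\|d_C C'\|_2^2 - 2\|d_C^* C'\|_2^2$, plus $-2(B_C, [C'\wedge C'])$ from $-2([C'\lrc B_C], C')$ by duality, plus $2([C\lrc C'], d_C^* C')$ from $2(d_C[C\lrc C'], C')$ by integration by parts.

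The only remaining term is $-2([C',\phi], C')$, which must be shown to vanish to match the target identity. This is not an integration-by-parts issue but follows pointwise from Ad-invariance: componentwise $\sum_j \<[C'_j, \phi], C'_j\>_\kf = 0$ is an instance of $\<[x,y], y\>_\kf = 0$. This elimination produces exactly \eqref{vs523}. The principal obstacle is therefore not analytic but bookkeeping: one must differentiate the $C$-dependent operators $d_C, d_C^*$ correctly, track the signs produced by the duality identity, and recognize that the cross-term $([C', \phi], C')$ disappears by symmetry of the bracket rather than by any boundary or integration identity. The smoothness $C(\cdot) \in C^\infty((0,T)\times M; \Lambda^1\otimes \kf)$ together with \eqref{ST140}--\eqref{ST141} ensures that all boundary terms vanish.
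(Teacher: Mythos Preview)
Your proof is correct and follows essentially the same route as the paper: differentiate $\|B_C\|_2^2 + \|\phi\|_2^2$ using $B_C' = d_C C'$ and \eqref{vs520}, integrate by parts, and collapse via $d_C^*B_C + d_C\phi = -C'$; then differentiate the augmented equation to obtain $C''$, pair with $C'$, and use duality to convert $([C'\lrc B_C],C')$ and $(d_C[C\lrc C'],C')$ into the two surviving right-hand terms, with $([C',\phi],C')$ vanishing by Ad-invariance. One small slip: the identity you cite for the vanishing term should be $\langle[x,y],x\rangle_\kf = 0$ (with $x=C'_j$, $y=\phi$) rather than $\langle[x,y],y\rangle_\kf = 0$, though both hold; the paper phrases this as $([C',\phi],C') = (\phi,[C'\lrc C']) = 0$ since $[C'\lrc C'] = \sum_j[C'_j,C'_j]=0$.
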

            \begin{proof} 
 The identity \eref{vs522} follows from the computation
\begin{align*}
 (1/2) \frac{d}{ds} \Big(  \|B_C(s)\|_2^2 + \| \phi(s)\|_2^2 \Big) 
 &=(d_C C', B_C) + ( \phi', \phi) \\
 &=(C', d_C^*B_C) +(d_C^* C' - [C\lrc C'], \phi)\\
 &= (C', d_C^*B_C) +(C', d_C\phi) -(C', [C,\phi]) \\
 & = - \| C'\|_2^2  -(C', [C,\phi]).
 \end{align*}
 To prove  \eref{vs523} observe that 
    \begin{align*}
   - C'' &= (d/ds)(d_C^* B_C + d_C\phi)\\
   &=\Big\{ d_C^* B_C' + [C'\lrc B_C]\Big\} + \Big\{ d_C\phi' + [C',\phi] \Big\}\\
   &=  \Big\{ d_C^*d_C C' + [C'\lrc B_C]\Big\} 
                     + \Big\{ d_Cd_C^*C' - d_C[C\lrc C'] + [C',\phi] \Big\}.
\end{align*}
Hence
\begin{align*}
(1/2) &(d/ds) \| C'(s)\|_2^2  =(C'', C') \\
&= -\Big( \Big\{ d_C^*d_C C' + [C'\lrc B_C]\Big\}  
              + \Big\{ d_Cd_C^*C' - d_C[C\lrc C'] + [C',\phi] \Big\}, C'\Big) \\
 &= -\| d_C C'\|_2^2 -([C'\lrc B_C], C') - \| d_C^* C'\|_2^2  + ( [C\lrc C'], d_C^* C')
      - ( [C', \phi], C') \\
 &= -\| d_C C'\|_2^2 - \| d_C^* C'\|_2^2 -( B_C, [C'\wedge C']) + ( [C\lrc C'], d_C^* C')
\end{align*}
because $ ( [C', \phi], C') = (\phi, [C'\lrc C']) =0$. This proves \eref{vs523}.
 \end{proof}

\begin{remark}{\rm (Strategy)  Typically, parabolic equations lead to energy decay via identities
 such as \eref{vs522} and \eref{vs523} when the right hand sides are small or easily controllable.
 However the non-linearities  of the augmented equation  \eref{aymh} produce 
 strong terms on the right side
  with uncontrolled sign. We will   balance out some of the strong
   terms on the right against half of the positive terms on the left. For this we will need $L^6$
   bounds on some factors to estimate the non-linear terms on the right. 
   These in turn will be obtained by applying the Gaffney-Friedrichs-Sobolev inequality
  to the next higher derivative.
  }
  \end{remark}

\subsection{Differential inequalities and initial behavior} \label{secdi}

The identities of the preceding subsection give the following differential inequalities
with the help of the Gaffney-Friedrichs-Sobolev inequality. At the end of this subsection
it will be shown how these differential inequalities give information about the initial behavior.

\begin{theorem}\label{diff-ineq} Suppose that $C(\cdot)$ is a strong solution to \eref{aymh} on 
some interval. There are constants $a_j,b_j$ depending only on  Sobolev constants
and the commutator norm $c$ such that
\begin{align}
\frac{d}{ds} \Big( \|B_C(s)\|_2^2 + \| \phi(s)\|_2^2 \Big) + \| C'(s)\|_2^2 
\le  \alpha(s)    \| \phi(s)\|_2^2,\ \ \ \ \ \ Order 1  \label{vs529}
\end{align}
and 
\begin{align}
\frac{d}{ds} \| C'(s)\|_2^2 +  \Big(\|d_C^* C'(s)\|_2^2 + \|d_C C'(s)\|_2^2\Big)  
    \le  \beta(s)  \| C'(s)\|_2^2,\   Order 2,    \label{vs530}
\end{align}
where
\begin{align}
\alpha(s) &= a_1 + a_2  \|C(s)\|_6^4  \qquad  \ \ \ \ \ \ \ \ \ \ \ \ \ \ \text{and}         \label{vs529.2} \\
\beta(s)  &= b_1 + b_2  \| C(s)\|_6^4  + b_3 \|B_C(s)\|_2^4 .     \label{vs530.2}
\end{align}
The constants  are given explicitly in \eref{vs529.3} and \eref{vs530.3}.
\end{theorem}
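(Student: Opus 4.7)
My plan is to derive both differential inequalities from the pointwise $L^2$ identities of Lemma~\ref{lemids4} using H\"older estimates, the gauge-covariant Sobolev bounds \eref{gaf01} and \eref{gaf50}, and Young's inequality to absorb derivative terms back into the left-hand side. The function $\lambda(B_C) = 1 + \gamma\|B_C\|_2^4$ appearing in \eref{gaf50} will be the natural source of the $\|B_C\|_2^4$ factor in $\beta(s)$.

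For \eref{vs529}, I start with identity \eref{vs522}. The cross term $-2(C',[C,\phi])$ is bounded by Cauchy-Schwarz and then H\"older ($1/2 = 1/6 + 1/3$) to give $2c\|C'\|_2\|C\|_6\|\phi\|_3$. The interpolation $\|\phi\|_3 \le \|\phi\|_2^{1/2}\|\phi\|_6^{1/2}$ together with the Kato-based Sobolev bound $\|\phi\|_6 \le \kappa_6(\|d_C\phi\|_2 + \|\phi\|_2)$ from \eref{gaf01} introduces a factor of $\|d_C\phi\|_2^{1/2}$. To handle this I take a positive linear combination of \eref{vs522} with the auxiliary identity $(1/2)(d/ds)\|\phi\|_2^2 + \|d_C\phi\|_2^2 = -(C',[C,\phi])$ obtained from \eref{vs510} by pairing with $\phi$, so that $\|d_C\phi\|_2^2$ appears positively on the LHS. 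A suitably scaled three-term Young inequality with exponents $(2,4,4)$ then splits the resulting four-factor product into pieces absorbed by $\|C'\|_2^2$ and $\|d_C\phi\|_2^2$ and a remainder of the required form $(a_1 + a_2\|C\|_6^4)\|\phi\|_2^2$. The residual non-negative $\|d_C\phi\|_2^2$ on the LHS is then dropped.

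For \eref{vs530}, I begin with \eref{vs523}, whose LHS is $(d/ds)\|C'\|_2^2 + 2(\|d_CC'\|_2^2 + \|d_C^*C'\|_2^2)$. The cross term $(B_C, [C'\wedge C'])$ is estimated by $c\|B_C\|_2\|C'\|_4^2$ via H\"older; interpolation gives $\|C'\|_4^2 \le \|C'\|_2^{1/2}\|C'\|_6^{3/2}$, and a scaled Young bound of the form $ab^{3/2}c^{1/2} \le \delta b^2 + C_\delta a^4 c^2$ produces $\delta\|C'\|_6^2 + C_\delta\|B_C\|_2^4\|C'\|_2^2$. The second cross term $([C\lrc C'], d_C^*C')$ is similarly bounded by $c\|C\|_6\|C'\|_2^{1/2}\|C'\|_6^{1/2}\|d_C^*C'\|_2$ and distributed via three-term Young with exponents $(4,4,2)$ into contributions of $\|C\|_6^4\|C'\|_2^2$, $\|C'\|_6^2$, and $\|d_C^*C'\|_2^2$. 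The Gaffney-Friedrichs-Sobolev inequality \eref{gaf50} applied to the $1$-form $C'$ then converts every occurrence of $\|C'\|_6^2$ into $\kappa^2(\|d_CC'\|_2^2 + \|d_C^*C'\|_2^2 + \lambda(B_C)\|C'\|_2^2)$, and the $\lambda(B_C)$ factor contributes directly to both $b_1$ and $b_3\|B_C\|_2^4$ in $\beta(s)$.

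The principal obstacle is the simultaneous tuning of the Young parameters (the $\delta$ above, together with the parallel scalings in the second cross term) so that after \eref{gaf50} is applied to absorb every $\|C'\|_6^2$ contribution, the final coefficients multiplying $\|d_CC'\|_2^2$ and $\|d_C^*C'\|_2^2$ sum to something strictly less than $2$ (permitting absorption into the LHS of \eref{vs523}) while the quartic coefficients on $\|C\|_6^4$ and $\|B_C\|_2^4$ remain finite. The same kind of delicate balancing underlies the Order 1 estimate, where the combination coefficients of \eref{vs522} and the $\phi$-energy identity must be chosen to leave $\|C'\|_2^2$ intact on the LHS. The explicit constants $a_j$, $b_j$ promised in \eref{vs529.3} and \eref{vs530.3} encode precisely this balance.
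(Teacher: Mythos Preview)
Your Order~2 argument is essentially the paper's: Lemmas~\ref{lemdi2} and~\ref{lemdi3} carry out exactly the H\"older/interpolation/GFS/absorption scheme you describe, and the tuning you worry about is routine (the paper ends up absorbing exactly half of $\|d_CC'\|_2^2+\|d_C^*C'\|_2^2$).

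Your Order~1 argument has a gap. Adding a positive multiple $\mu$ of the auxiliary identity
\[
\tfrac12\frac{d}{ds}\|\phi\|_2^2+\|d_C\phi\|_2^2=-(C',[C,\phi])
\]
to \eqref{vs522} changes the quantity being differentiated to $\|B_C\|_2^2+(1+\mu/2)\|\phi\|_2^2$, so you cannot recover \eqref{vs529} as stated; at best you get a variant with a different coefficient on $\|\phi\|_2^2$ inside the derivative. The paper avoids this entirely by using the orthogonality
\[
\|C'\|_2^2=\|d_C^*B_C\|_2^2+\|d_C\phi\|_2^2
\]
(equation \eqref{vs561.1}), which gives $\|d_C\phi\|_2\le\|C'\|_2$ for free. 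Then \eqref{gaf01} yields $\kappa_6^{-2}\|\phi\|_6^2\le\|C'\|_2^2+\|\phi\|_2^2$, and the whole estimate reduces to a single two-term Young split of $2|(C',[C,\phi])|\le\tfrac12\|C'\|_2^2+2\|[C,\phi]\|_2^2$ followed by $2\|[C,\phi]\|_2^2\le 2c^2\|C\|_6^2\|\phi\|_2\|\phi\|_6$ and one more Young. No auxiliary identity, no three-term Young, and the left-hand side stays exactly as in the statement. You should replace your linear-combination step with this orthogonality observation.
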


The proof  depends on the following lemmas, in which it is assumed that $C(\cdot)$ is a strong solution to \eref{aymh}.

\begin{lemma} \label{lemdi1}$($Order 1$)$ There are constants $a_1, a_2$ such that at
 each time $s$ there holds
\begin{align}
2 \| \, [C,\phi]\, \|_2^2 \le  \alpha(s) \| \phi\|_2^2 +(1/2) \| C'\|_2^2           \label{vs528}
\end{align}
with $\alpha(s)$ given by \eref{vs529.2}.
\end{lemma}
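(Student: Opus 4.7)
The plan is a standard H\"older--Sobolev--Young chain, with one crucial twist: the $L^6$ norm of $\phi$ is controlled by $\|C'\|_2$ via an orthogonality hidden in the augmented equation itself.

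First, H\"older with $1/2 = 1/6 + 1/3$ and the commutator bound give
\[
\|[C,\phi]\|_2^2 \le c^2 \|C\|_6^2 \|\phi\|_3^2 \le c^2 \|C\|_6^2 \|\phi\|_2 \|\phi\|_6,
\]
after the interpolation $\|\phi\|_3^2 \le \|\phi\|_2\|\phi\|_6$. Since $\phi$ is a $\kf$-valued $0$-form, the gauge-invariant Sobolev inequality \eqref{gaf01} yields
\[
\|\phi\|_6^2 \le \kappa_6^2\bigl(\|d_C\phi\|_2^2 + \|\phi\|_2^2\bigr).
\]

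The crucial bound is $\|d_C\phi\|_2 \le \|C'\|_2$. From the augmented equation $-C' = d_C^*B_C + d_C\phi$ one expands
\[
\|C'\|_2^2 = \|d_C^*B_C\|_2^2 + \|d_C\phi\|_2^2 + 2\bigl(d_C^*B_C,\,d_C\phi\bigr).
\]
The cross term vanishes: integration by parts is legitimate because the boundary term cancels under either \eqref{ST140} (since $(B_C)_{norm}=0$ in the Neumann case) or \eqref{ST141} (since $\phi|_{\partial M}=0$ forces $(d_C\phi)_{tan}=0$ in the Dirichlet case), so
\[
\bigl(d_C^*B_C,\,d_C\phi\bigr) = \bigl(B_C,\,d_C^2\phi\bigr) = \bigl(B_C,\,[B_C,\phi]\bigr) = 0
\]
by the identity $d_C^2\phi=[B_C,\phi]$ together with $\mathrm{Ad}$-invariance, which yields $\langle X,[X,Y]\rangle_{\kf}=0$ pointwise. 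Hence $\|d_C\phi\|_2\le\|C'\|_2$ and therefore $\|\phi\|_6^2\le\kappa_6^2(\|C'\|_2^2+\|\phi\|_2^2)$.

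Combining with the H\"older bound and applying Young's inequality $2xy\le\eta x^2+\eta^{-1}y^2$ with $x=\|\phi\|_6$, $y=c^2\|C\|_6^2\|\phi\|_2$, $\eta=1/(2\kappa_6^2)$ gives
\[
2\|[C,\phi]\|_2^2 \le \tfrac{1}{2}\|C'\|_2^2 + \tfrac{1}{2}\|\phi\|_2^2 + 2c^4\kappa_6^2\|C\|_6^4\|\phi\|_2^2,
\]
which is \eqref{vs528} with $a_1=1/2$ and $a_2=2c^4\kappa_6^2$. The main obstacle is recognizing the orthogonality $(d_C^*B_C,d_C\phi)=0$; without it, the only evidently available identity $d_C\phi=-C'-d_C^*B_C$ introduces a term $\|d_C^*B_C\|_2$ on the right with no apparent way to absorb it into the target estimate \eqref{vs528}.
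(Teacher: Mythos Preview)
Your proof is correct and essentially identical to the paper's, arriving at the same constants $a_1=1/2$ and $a_2=2\kappa_6^2 c^4$. The only cosmetic difference is in justifying the orthogonality $(d_C^*B_C,d_C\phi)=0$: the paper integrates by parts the other way and invokes $(d_C^*)^2 B_C=0$ (which follows from $d_C^2=[B_C,\cdot]$ on the Hodge dual side, just as your $d_C^2\phi=[B_C,\phi]$ does), but both routes are equivalent and require the same boundary cancellations you noted.
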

             \begin{proof} Since $(d_C^*)^2 B_C = 0$, it follows that $ d_C^*B_C$ and  $d_C \phi$ are mutually orthogonal in $L^2$. Consequently
\beq
\|C'\|_2^2 = \| d_C^*B_C\|_2^2 + \|d_C \phi\|_2^2.        \label{vs561.1}
\eeq
 Hence $\|d_C \phi\|_2^2  \le \| C'(s)\|_2^2$.                         
 Sobolev's inequality \eref{gaf01} then gives, at each time $s$,
 \begin{align}
 (\kappa_6^{-1}\| \phi\|_6)^2    \le \|C'\|_2^2  + \| \phi\|_2^2.  \label{vs540}
 \end{align} 
Therefore 
\begin{align*}
2\| \, [ C, \phi]\, \|_2^2 &\le 2c^2 \|C\|_6^2 \| \phi\|_3^2 \\
&\le 2c^2 \| C\|_6^2 \| \phi\|_2\|\phi\|_6 =\Big(2\kappa_6 c^2 \| C\|_6^2\|\phi\|_2\Big) \Big( \kappa_6^{-1} \|\phi\|_6 \Big)\\
&\le (1/2) (2\kappa_6 c^2 \| C\|_6^2 \| \phi\|_2)^2 + (1/2) (\kappa_6^{-1}\| \phi\|_6)^2\\
&\le \Big( 2 \kappa_6^2 c^4 \| C\|_6^4 \Big) \| \phi\|_2^2 
                + (1/2) (\| C'\|_2^2 + \|\phi\|_2^2),
\end{align*}
which is \eref{vs528} with 
\beq
 a_1 = 1/2, \ \ \ a_2 =2\kappa_6^2 c^4.      \label{vs529.3}
 \eeq
\end{proof}

\begin{lemma}\label{lemdi2} $($Order 2$)$
\begin{align}
2\Big| ( B_C, [C'\wedge C'])\Big| 
              & \le (1/2) \Big\{ \| d_C^* C'\|_2^2 + \| d_CC'\|_2^2\Big\} \notag\\
 &+ (1/2) \Big\{ \lambda(B_C) + (3\kappa^2)^3 c^4 \|B_C\|_2^4 \Big\} \|C'\|_2^2 . \label{vs610}
 \end{align}
\end{lemma}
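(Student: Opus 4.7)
The plan is to bound $2|(B_C, [C' \wedge C'])|$ via H\"older plus interpolation, split the resulting product with a carefully tuned Young's inequality, and then absorb the remaining $\|C'\|_6$ factor using the Gaffney--Friedrichs--Sobolev inequality \eref{gaf50} applied to $C'$. Starting from Cauchy--Schwarz together with the pointwise bound $|[u \wedge v]| \le c\,|u|\,|v|$ (which is precisely the content of the commutator norm $c$ recalled in Section \ref{secce}), I first write
\[
2\,\bigl|(B_C, [C' \wedge C'])\bigr| \le 2c\,\|B_C\|_2\,\|C'\|_6\,\|C'\|_3,
\]
via H\"older with exponents $(2,6,3)$, and then interpolate $\|C'\|_3 \le \|C'\|_2^{1/2}\|C'\|_6^{1/2}$ to obtain
\[
2\,\bigl|(B_C, [C' \wedge C'])\bigr| \le 2c\,\|B_C\|_2\,\|C'\|_6^{3/2}\,\|C'\|_2^{1/2}.
\]

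Next, I apply Young's inequality with dual exponents $4/3$ and $4$ to the product $\|C'\|_6^{3/2}\cdot (\|B_C\|_2 \|C'\|_2^{1/2})$: for any $\delta>0$,
\[
2c\,AB \le \tfrac{3c\delta}{2}\,A^{4/3} + \tfrac{c}{2\delta^3}\,B^4.
\]
I choose $\delta = 1/(3c\kappa^2)$, so that $\tfrac{3c\delta}{2}\cdot \kappa^2 = \tfrac12$ and $\tfrac{c}{2\delta^3} = \tfrac{(3\kappa^2)^3 c^4}{2}$. This gives
\[
2\,\bigl|(B_C, [C' \wedge C'])\bigr| \le \tfrac{1}{2\kappa^2}\,\|C'\|_6^2 + \tfrac{(3\kappa^2)^3 c^4}{2}\,\|B_C\|_2^4\,\|C'\|_2^2.
\]
Finally, I invoke \eref{gaf50} with $\w = C'$,
\[
\|C'\|_6^2 \le \kappa^2\,\bigl(\|d_C^* C'\|_2^2 + \|d_C C'\|_2^2 + \lambda(B_C)\,\|C'\|_2^2\bigr),
\]
and substitute. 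The $\tfrac{1}{2\kappa^2}\|C'\|_6^2$ becomes $\tfrac12(\|d_C^* C'\|_2^2 + \|d_C C'\|_2^2) + \tfrac12 \lambda(B_C)\|C'\|_2^2$, and combining the latter with the Young residue yields exactly $\tfrac12\bigl(\lambda(B_C) + (3\kappa^2)^3 c^4 \|B_C\|_2^4\bigr)\|C'\|_2^2$, which is \eref{vs610}.

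There is essentially no analytic obstacle here, only bookkeeping. The constants are forced in a rigid chain: the interpolation exponent $1/3$ in $L^3$ between $L^2$ and $L^6$ produces $\|C'\|_6^{3/2}$, which dictates the Young exponent $4/3$; the requirement that the Gaffney--Friedrichs--Sobolev step land with coefficient exactly $1/2$ on $\|d_C^* C'\|_2^2 + \|d_C C'\|_2^2$ forces $\delta = 1/(3c\kappa^2)$, which in turn yields $(3\kappa^2)^3 c^4$ on the quartic $\|B_C\|_2^4$ term. The only real choice is the initial H\"older split that places $B_C$ in its natural $L^2$ space; any other split either fails to match the quartic structure of $\lambda(B_C)$ or loses the ability to re-use \eref{gaf50} for $C'$.
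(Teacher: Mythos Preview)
Your proof is correct and follows essentially the same route as the paper's: both arrive at the bound $2c\,\|B_C\|_2\,\|C'\|_2^{1/2}\,\|C'\|_6^{3/2}$ (the paper via $\|C'\|_4^2 \le \|C'\|_2^{1/2}\|C'\|_6^{3/2}$, you via $\|C'\|_6\|C'\|_3$ with $\|C'\|_3 \le \|C'\|_2^{1/2}\|C'\|_6^{1/2}$), then apply the same Young split tuned to produce $\tfrac{1}{2\kappa^2}\|C'\|_6^2$, and finish with \eref{gaf50}. Your parametrization of Young via $\delta$ is just a relabeling of the paper's $\eta$ and lands on the identical constants.
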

     \begin{proof} By the interpolation $ \|f\|_4 \le \|f\|_2^{1/4} \|f\|_6^{3/4} $ we have, 
     for any $\eta > 0$,       
      \begin{align}
 2\Big|(B_C, [C'\wedge C'])\Big| &\le 2c \| B_C\|_2 \|C'\|_4^2   \notag \\
 &\le  2c \| B_C\|_2 \|C'\|_2^{1/2} \|C'\|_6^{3/2} 
 =\Big(2\eta c \| B_C\|_2 \| C'\|_2^{1/2}\Big) \Big(\eta^{-1} \|C'\|_6^{3/2}\Big) \notag\\
 &\le (1/4)  \Big(2\eta c \| B_C\|_2 \|C'\|_2^{1/2}\Big)^4 + 
    (3/4) \Big(\eta^{-1}\|C'\|_6^{3/2}\Big)^{4/3} \notag\\
 & =(1/4)  \Big(2\eta c \| B_C\|_2\Big)^4 \|C'\|_2^2 
                 + (3/4)\eta^{-4/3} \|C'\|_6^2                        \notag\\
 & =\Big((1/2) (3 \kappa^2)^3 c^4 \|B_C\|_2^4\Big) \|C'\|_2^2 + \frac{1}{2 \kappa^2} \|C'\|_6^2, \label{vs611}
\end{align}
wherein we have chosen $(2\eta)^4 =  2(3\kappa^2)^3$, which makes
 $(3/4) \eta^{-4/3} =  1/(2\kappa^2)$.
By the Gaffney-Friedrichs-Sobolev inequality \eref{gaf50} with $\w = C'(s)$ we have
\beq
\kappa^{-2}\|C'(s)\|_6^2 \le  \Big\{ \| d_C^* C'\|_2^2 + \| d_CC'\|_2^2 
                + \lambda(B_C) \| C'\|_2^2 \Big\}.                   \label{vs612}
\eeq
Insert \eref{vs612} into the last term in \eref{vs611} to arrive at \eref{vs610}.
\end{proof}

         \begin{lemma}\label{lemdi3} $($Order 2$)$
\begin{align}
2\Big| ( [C\lrc C'], d_C^* C') \Big| &\le (1/2) \Big\{ \| d_C^* C'\|_2^2 
            + \| d_CC'\|_2^2\Big\}  \notag \\
&+\Big\{ \lambda(B_C) /4 + (4\kappa c^2)^2  \|C\|_6^4 \Big\} \| C'\|_2^2.      \label{vs620}
\end{align}
\end{lemma}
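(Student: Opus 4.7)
The plan is to mirror the structure of the proof of Lemma \ref{lemdi2}, using Cauchy–Schwarz together with H\"older and the Gaffney–Friedrichs–Sobolev inequality \eref{gaf50} applied to the 1-form $C'(s)$. There are two distinct factors to split: the inner product against $d_C^* C'$, and then the $L^3$ factor of $C'$ that will arise inside $[C\lrc C']$. Each splitting will contribute a portion of the $\|d_C^* C'\|_2^2 + \|d_C C'\|_2^2$ bracket on the right hand side of \eref{vs620}.

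First I would apply Young's inequality with weight $\epsilon = 1/4$ to peel off the $d_C^*C'$ factor:
\begin{align*}
2\bigl| ( [C\lrc C'], d_C^* C')\bigr| \le (1/4)\,\|d_C^*C'\|_2^2 + 4\,\|[C\lrc C']\|_2^2.
\end{align*}
Next, using the commutator bound and H\"older followed by the interpolation $\|C'\|_3 \le \|C'\|_2^{1/2}\|C'\|_6^{1/2}$, I estimate
\begin{align*}
\|[C\lrc C']\|_2^2 \le c^2 \|C\|_6^2 \|C'\|_3^2 \le c^2 \|C\|_6^2 \|C'\|_2 \|C'\|_6 .
\end{align*}
A second application of Young's inequality, with a weight $\sigma$ to be chosen, then splits the product $\|C'\|_2 \|C'\|_6$ so that
\begin{align*}
4\,\|[C\lrc C']\|_2^2 \le (2/\sigma)\,c^4 \|C\|_6^4 \|C'\|_2^2 + 2\sigma\,\|C'\|_6^2.
\end{align*}

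The last step is to remove $\|C'\|_6^2$ by invoking the gauge invariant Gaffney–Friedrichs–Sobolev inequality \eref{gaf50} with $\w = C'$, namely $\kappa^{-2}\|C'\|_6^2 \le \|d_C^*C'\|_2^2 + \|d_C C'\|_2^2 + \lambda(B_C)\|C'\|_2^2$. Choosing $\sigma = 1/(2\kappa^2)$ makes the coefficient of the Gaffney–Friedrichs bracket equal to $1/4$, so combined with the $(1/4)\|d_C^*C'\|_2^2$ already in hand the coefficient of $\|d_C^*C'\|_2^2$ becomes $1/2$ and that of $\|d_C C'\|_2^2$ becomes $1/4 \le 1/2$. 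The residual terms on $\|C'\|_2^2$ sum to $(1/(2\sigma))\cdot 4 c^4\|C\|_6^4 + \sigma\kappa^2\lambda(B_C)/2 = 16\kappa^2 c^4\|C\|_6^4 + \lambda(B_C)/4 = (4\kappa c^2)^2\|C\|_6^4 + \lambda(B_C)/4$, which is exactly the coefficient in \eref{vs620}.

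The only mildly delicate point is the bookkeeping of Young's inequality weights so that the two separate splittings produce exactly the coefficient $1/2$ on the Gaffney–Friedrichs bracket while generating the stated $(4\kappa c^2)^2\|C\|_6^4$ coefficient; no additional analytic ingredient beyond H\"older, interpolation between $L^2$ and $L^6$, and \eref{gaf50} is required.
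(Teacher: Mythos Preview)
Your approach is essentially identical to the paper's: first peel off $(1/4)\|d_C^*C'\|_2^2$ by Young's inequality, then H\"older and the $L^2$--$L^6$ interpolation on $\|C'\|_3$, then a second Young splitting so that the Gaffney--Friedrichs--Sobolev inequality \eqref{gaf50} absorbs $\|C'\|_6^2$. The paper packages the second Young step as $ab\le a^2+b^2/4$ with $a=4\kappa c^2\|C\|_6^2\|C'\|_2$ and $b=\kappa^{-1}\|C'\|_6$, which is exactly your weighted splitting with the correct choice of $\sigma$.

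There is, however, an arithmetic slip in your weight: applying \eqref{gaf50} to $2\sigma\|C'\|_6^2$ gives coefficient $2\sigma\kappa^2$ on the bracket (not $\sigma\kappa^2/2$), so matching this to $1/4$ forces $\sigma=1/(8\kappa^2)$, not $1/(2\kappa^2)$. With $\sigma=1/(8\kappa^2)$ your residuals become $(2/\sigma)c^4\|C\|_6^4=16\kappa^2 c^4\|C\|_6^4=(4\kappa c^2)^2\|C\|_6^4$ and $2\sigma\kappa^2\lambda(B_C)=\lambda(B_C)/4$, exactly as required. So the method is right; only the stated value of $\sigma$ and the intermediate $\lambda(B_C)$ coefficient need correcting.
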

        \begin{proof} H\"older's inequality and the interpolation
         $\|f\|_3 \le \|f\|_2^{1/2} \|f\|_6^{1/2}$         give
\begin{align*}
4 \|\, [C\lrc C']\, \|_2^2 &\le 4c^2\|C\|_6^2  \|C'\|_3^2  \notag\\
    & \le 4c^2\|C\|_6^2      \|C'\|_2 \|C'\|_6 
    =\Big( 4\kappa c^2  \|C\|_6^2 \|C'\|_2 \Big)\Big(\kappa^{-1} \|C'\|_6\Big)\\
 &\le  \Big( 4\kappa c^2  \|C\|_6^2 \|C'\|_2 \Big)^2 +\frac{1}{4\kappa^2}\|C'\|_6^2.
 \end{align*}       
 Hence,
 \begin{align}       
 2\Big| ( [C\lrc C'],& d_C^* C') \Big| \le    
  (1/4) \| d_C^* C'\|_2^2 
              + 4 \|\, [C\lrc C']\, \|_2^2                     \notag     \\
        &\le  (1/4) \| d_C^* C'\|_2^2 + \Big( 4\kappa c^2  \|C\|_6^2 \|C'\|_2 \Big)^2 +\frac{1}{4\kappa^2}\|C'\|_6^2. \label{vs621}
\end{align}   
Insert the Gaffney-Friedrichs-Sobolev inequality \eref{vs612} into the last term
 to arrive at \eref{vs620}.    
 \end{proof}

\bigskip
\noindent 
 \begin{proof}[Proof of Theorem \ref{diff-ineq}]
 To prove \eref{vs529} use the estimate in \eref{vs528}     to find
 \begin{align*}
\Big| 2(C' , [C,\phi]) \Big|  &\le (1/2) \| C'\|_2^2 + 2\|\, [C,\phi]\, \|_2^2 \\
&\le \| C'\|_2^2 + \alpha(s) \| \phi\|_2^2. 
\end{align*}
Estimate the right side of the identity \eref{vs522}  by this bound and then cancel
 the term $\|C'\|_2^2$ with part of the left side of \eref{vs522} to arrive at \eref{vs529}.

To prove \eref{vs530} add the inequalities \eref{vs610} and \eref{vs620} to find
\begin{align}
\Big| 2( B_C, [C'\wedge C'])&  + 2( [C\lrc C'], d_C^* C') \Big|    \notag\\
&\le \Big\{ \| d_C^* C'\|_2^2  + \| d_CC'\|_2^2\Big\}
+ \beta(s) \|C'(s)\|_2^2,       \label{vs630}
\end{align}
where
\begin{align}
\beta(s) &=  (1/2) \Big\{ \lambda(B_C) + (3\kappa^2)^3 c^4 \|B_C\|_2^4 \Big\} +
\Big\{ \lambda(B_C) /4 + (4\kappa c^2)^2  \|C\|_6^4 \Big\}      \notag  \\
&=(3/4) + \Big( (\gamma/2)+ 27 \kappa^6 c^4 + (\gamma/4)\Big) \|B_C\|_2^4
                               +  (4\kappa c^2)^2  \|C\|_6^4. \label{vs631}
\end{align}

We can now estimate  the right side of \eref{vs523} using  \eref{vs630}.
We see that  there is partial cancelation of the expression in braces in \eref{vs523}, leaving
  exactly \eref{vs530} with $\beta(s)$ given by \eref{vs631}. From this we can compute the coefficients in \eref{vs530.2} to be
\beq
  b_1 = 3/4, \ \   b_2= (4\kappa c^2)^2 ,\ \   b_3 = (\kappa^6c^4)b_0 \ \ \   \label{vs530.3}
  \eeq 
 with $b_0 =  19\cdot27/16$. 
  \end{proof}

\bigskip
\noindent

 The differential inequalities of 
 Theorem \ref{diff-ineq}  will yield information about the initial behavior
 of $C(t)$ and its derivatives with the help of the next elementary lemma. We will apply it
 several times  in the following sections.

 \begin{lemma}\label{lem50}$($Initial behavior from differential inequalities$)$
  Suppose that $f, g, h$ are nonnegative continuous 
  functions on $(0, t]$ and that $f$ is differentiable. Suppose also that 
 \beq
 (d/ds) f(s) + g(s) \le h(s)     ,\ \ \ 0<s \le t        .                                  \label{vs90}
 \eeq
 Let $-\infty < b <1$ and assume that 
 \begin{align}
 \int_0^t s^{-b} f(s) ds < \infty.                                                          \label{vs90f}
 \end{align}
 Then 
 \beq
 t^{1-b} f(t) +\int_0^t s^{(1-b)}  g(s)ds \le \int_0^t s^{(1-b)}  h(s) ds 
                   +(1-b) \int_0^t s^{-b} f(s)ds.                                                \label{vs91}
 \eeq
 If equality holds in \eref{vs90} then equality holds in \eref{vs91}.
 
 Suppose, instead of \eref{vs90}, that $ f'\le 0$. Then  
\beq
t^{1-b} f(t) +\int_0^t s^{1-b} (-f'(s)) ds = (1-b) \int_0^t s^{-b} f(s) ds     \label{fa7}
\eeq
and, for   $b \in [0,1)$,
\beq
(1-b) \int_0^t f(s)^q ds \le \Big\{(1-b) \int_0^t s^{-b} f(s) ds\Big\}^q\ \ 
                                     \text{if}                     \ q = (1-b)^{-1}.        \label{fa8}
\eeq
In particular,    
\beq
(1/2) \int_0^t f(s)^2 ds \le \Big\{(1/2) \int_0^t s^{-1/2} f(s) ds\Big\}^2.     \label{fa8.5}
\eeq
 \end{lemma}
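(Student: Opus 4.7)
The plan is to prove the integral inequality \eref{vs91} by multiplying the pointwise differential inequality \eref{vs90} by the integrating factor $s^{1-b}$ and integrating. For $0 < \epsilon < t$, integration by parts on the derivative term yields
\[
\int_\epsilon^t s^{1-b} f'(s)\,ds = t^{1-b} f(t) - \epsilon^{1-b} f(\epsilon) - (1-b)\int_\epsilon^t s^{-b} f(s)\,ds,
\]
so that \eref{vs90} integrated against $s^{1-b}$ over $(\epsilon, t)$ rearranges to
\[
t^{1-b} f(t) + \int_\epsilon^t s^{1-b} g(s)\,ds \le \int_\epsilon^t s^{1-b} h(s)\,ds + (1-b)\int_\epsilon^t s^{-b} f(s)\,ds + \epsilon^{1-b} f(\epsilon).
\]

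The main obstacle is disposing of the boundary term $\epsilon^{1-b} f(\epsilon)$ as $\epsilon \downarrow 0$; it is not guaranteed to tend to zero for every sequence. The integrability hypothesis \eref{vs90f} forces $\liminf_{s \downarrow 0} s^{1-b} f(s) = 0$, since otherwise $s^{-b}f(s) \ge c\,s^{-1}$ near $0$ for some $c>0$, contradicting integrability. Hence I can select a sequence $\epsilon_n \downarrow 0$ along which $\epsilon_n^{1-b} f(\epsilon_n) \to 0$, apply the displayed inequality along this sequence, and pass to the limit by monotone convergence in the three $s$-integrals (all integrands are nonnegative) to arrive at \eref{vs91}. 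Every step of this derivation preserves equality, so equality in \eref{vs90} yields equality in \eref{vs91}; applied with $g(s) = -f'(s) \ge 0$ and $h \equiv 0$, for which \eref{vs90} becomes the identity $f' + g = 0$, this immediately gives \eref{fa7}.

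For \eref{fa8} I would exploit the monotonicity of $f$ to convert the integrated quantity $F(s) := (1-b)\int_0^s r^{-b} f(r)\,dr$ into a pointwise bound on $f$. Since $f$ is decreasing and nonnegative,
\[
F(s) \ge (1-b) f(s) \int_0^s r^{-b}\,dr = s^{1-b} f(s), \qquad 0 < s \le t,
\]
so $f(s) \le s^{b-1} F(s)$. Writing $f(s)^q = f(s) \cdot f(s)^{q-1}$ with $q = (1-b)^{-1}$ and substituting this bound into the second factor --- noting that $(b-1)(q-1) = -b$ --- gives $f(s)^q \le s^{-b} f(s) F(s)^{q-1} = (1-b)^{-1} F'(s) F(s)^{q-1}$. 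Integration from $0$ to $t$ then telescopes by the chain rule to $\int_0^t f(s)^q\,ds \le F(t)^q/\bigl(q(1-b)\bigr) = F(t)^q$, which is stronger than \eref{fa8}; multiplying by $(1-b) \le 1$ recovers \eref{fa8} as stated, and specializing to $b = 1/2$, $q = 2$ yields \eref{fa8.5}.
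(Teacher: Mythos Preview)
Your proof is correct, and your approach to \eref{vs91} is genuinely different from the paper's. The paper multiplies \eref{vs90} by $s^{-b}$ (not $s^{1-b}$), integrates from $\sigma$ to $t$, and then integrates again in $\sigma$ over $[0,t]$; reversing the order of integration in the resulting double integrals has the effect of multiplying each integrand by $s$ and makes the problematic boundary contribution disappear into the integrable term $\int_0^t \sigma^{-b} f(\sigma)\,d\sigma$. In fact, the paper includes a remark immediately after the lemma noting that the ``seemingly shorter proof'' via $(d/ds)(s^{1-b}f(s))$ runs aground because ``we have no way of knowing in advance that $\lim_{s\downarrow 0} s^{1-b} f(s) = 0$.'' Your observation is precisely that one does not need the full limit: the integrability hypothesis \eref{vs90f} forces $\liminf_{s\downarrow 0} s^{1-b} f(s)=0$, and passing to the limit along a subsequence realizing this liminf suffices because all the $s$-integrals are monotone. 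This is a cleaner argument that the paper apparently overlooked. Your derivations of \eref{fa7} and \eref{fa8} are essentially the paper's, though for \eref{fa8} you use the pointwise bound $s^{1-b}f(s)\le F(s)$ inside the integral rather than replacing it by its supremum $F(t)$, which yields the slightly sharper $\int_0^t f^q \le F(t)^q$ before you weaken it to the stated form.
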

                  \begin{proof}                  
                  Multiply \eref{vs90} by $s^{-b}$ to find
    \begin{align*}
   (d/ds) s^{-b} f(s)  +b s^{-b-1} f(s) + s^{-b} g(s)   \le s^{-b} h(s) .
   \end{align*}
   For $0 < \sigma \le t$ integrate this inequality from $\sigma$ to $t$ to arrive at
   \begin{align*}
   t^{-b} f(t)     + b\int_\sigma^t s^{-b-1} f(s) ds +\int_\sigma^t s^{-b} g(s) ds 
   \le \int_\sigma^t s^{-b} h(s) ds        + \sigma^{-b} f(\sigma).
   \end{align*}
 Integrate this inequality now    with respect to $\sigma$ over the
  interval $[0, t]$.  Since all  integrands are positive we can reverse  the order of integration
  in the three double integrals.  This just results in multiplying
   each integrand by $s$, giving 
   \beq
   t^{1-b} f(t) + b\int_0^t s^{-b} f(s) ds +\int_0^t s^{1-b} g(s) ds 
   \le  \int_0^t s^{1-b} h(s) ds +\int_0^t \sigma^{-b} f(\sigma) d\sigma.    \notag
   \eeq
   Subtract the second term on the left from the second term on the right to arrive at  \eref{vs91}.
   If equality holds in \eref{vs90} then equality holds in all steps.
   
   To prove \eref{fa7} take $g = -f'$ and  $h =0$ in \eref{vs90}.
    Then  equality holds in \eref{vs90} and
   \eref{vs91}  reduces to \eref{fa7}.

    To prove \eref{fa8} let 
   $\rho(\sigma) =(1-b)\int_0^\sigma s^{-b} f(s) ds$ for $0 \le \sigma \le t$. \eref{fa7} 
   clearly holds when $t$ is replaced by $\sigma$ and therefore 
    $\sigma^{1-b} f(\sigma) \le \rho(\sigma)$ for $0 < \sigma \le t$. 
    Consequently,    since $(1-b)(q-1) = b$ and $q \ge 1$, we have
   \begin{align*}
\int_0^t f(s)^q ds & = \int_0^t \Big( s^{1-b} f(s) \Big)^{q-1} s^{-b} f(s) ds \\
&\le \Big( \sup_{0 < s \le t} s^{1-b} f(s) \Big)^{q-1} \int_0^ts^{-b} f(s) ds \\
&\le \rho(t)^{q-1} \rho(t)/(1-b)
\end{align*}
   by the monotonicity of $ \rho(\cdot)$. This proves \eref{fa8}. 
   Choose $b = 1/2$, and consequently $ q=2$, in \eref{fa8}
   to arrive at \eref{fa8.5}.
 \end{proof}

\begin{remark}{\rm  A seemingly shorter proof of \eref{vs91} can be derived
by simply using the identity
\beq
(d/ds)s^{1-b} f(s) + s^{1-b} g(s)  \le s^{1-b}h(s) +(1-b)s^{-b} f(s),
\eeq
which follows from \eref{vs90}, 
and then integrating it over $(0, t]$. However the integrated term at the lower limit
is $\lim_{s\downarrow 0} s^{1-b} f(s)$, and we have no way of knowing in advance
that this limit is zero, except in some special circumstances.
 For example in the special case $f' \le 0$,  leading to \eref{fa7}, and for $0 < b <1$,
  the monotonicity  of $f$ gives 
 $s^{1-b} f(s) = f(s)(1-b)\int_0^s \sigma^{-b}ds
 \le (1-b)\int_0^s \sigma^{-b} f(\sigma) d\sigma \rightarrow 0$ 
 as $s\downarrow 0$ because the integrand in integrable.
}
\end{remark}

\subsection{Initial behavior, order 0}   \label{secib0}

    In the classical integrating factor method for solving an ordinary differential equation
    such as $ dx/dt = f(t) x(t) + g(t)$, one changes  the ``dependent variable'' to 
    $y(t) \equiv e^{-\int_0^t f(s)ds} x(t)$ and then uses  the equivalent equation 
    $dy/dt =  e^{-\int_0^t f(s)ds} g(t)$ to solve for $y(t)$ as an integral. 
   The inequalities  \eref{vs529} and \eref{vs530} lend themselves
    to just such a use of integrating factors $e^{-\int_0^t \alpha(s) ds}$ and 
    $e^{-\int_0^t \beta(s) ds}$ respectively. However both functions $\alpha$ and $\beta$
    are quite singular near  $s=0$.  In fact from the sole  knowledge that $C(\cdot)$ lies
     in $\P_T^{1/2}$  
    one can only deduce  that each function is no worse than
     $o(s^{-1})$ near $s = 0$. 
        The existence of $\int_0^t \alpha(s) ds$, and therefore  its utility as an integrating factor,
        is thus in question in the critical case, $a = 1/2$. 
        The same is the case with $\beta(s)$.
      However it was shown in Theorem \ref{thmfa} 
    that for small initial data the solution has finite strong action when $a = 1/2$. Here it will be
    shown that if $C(\cdot)$ has finite strong action then   $\alpha$ and $\beta$ 
    are integrable over $(0, t]$. 
              Their use in  the method of integrating
     factors will then give detailed information about the initial behavior of
    the various derivatives of $C(\cdot)$ of interest to us. 
          This differs significantly from the non-critical case $a > 1/2$, where finite 
          strong  $a$-action
       is automatic. If $a > 1/2$ one need not assume that $\|C_0\|_{H_a}$ is small
       in order to use these integrating factors.
       
          However even when $C(\cdot)$ has infinite action 
          many  of the qualitative
  conclusions needed for  the recovery of $A$ from $C$ hold. 
  This will be shown in Section \ref{secrec}.
           
 The main theorems of this and the next two subsections concern the initial behavior of 
 solutions to the augmented  Yang-Mills equation \eref{aymh}. Some
 simple aspects of this behavior are just consequences of the fact that the function 
 $C(\cdot)$ lies
 in the path space $\P_T^a$. It need not be a solution. The following theorem
 (which we have labeled Order 0)  lists some of these properties.

            \begin{notation}\label{notab}{\rm Denote by $\alpha$ and $\beta$ the functions
    defined in \eref{vs529.2} and \eref{vs530.2}. Define  
  \beq
 \alpha_s^t = \int_s^t \alpha(\sigma) d \sigma, \ \     \ \ \ 
             \beta_s^t = \int_s^t \beta(\sigma) d\sigma\  \ \ \ \ \text{for}\ \ 0 \le s \le  t \le T < \infty. \label{vs39.1}
 \eeq 
    }
 \end{notation}

\begin{theorem}\label{thmord0}  $($Order 0$)$
 Let $ 1/2 \le a <1$  and $0 <T <\infty$. 
 Suppose that $C(\cdot)$ lies in the path space $\P_T^a$. 
 Then
\begin{align}
s^{1-a} \Big(\| \phi(s)\|_2^2 + \| B_C(s)\|_2^2\Big)\  \text{and}\  s^{2-2a} \|C(s)\|_6^4\
                \text{are bounded on}\  (0,T]       
                 \label{vs400a}
\end{align}
and go to zero as $s\downarrow 0$. Further,
\begin{align}
&\sup_{0 < s <T} s^{2-2a}\Big(\| B_C(s)\|_2^4 + \| \phi(s)\|_2^4 +\lambda(B_C(s))\Big) 
    < \infty,                                                         \label{vs420a}\\
&\ \alpha_\infty \equiv \sup_{0<s \le T} s^{2-2a} \alpha(s) < \infty, \ \  
 \beta_\infty \equiv \sup_{0< s \le T} s^{2-2a}\beta(s) < \infty.      \label{vs421a}
\end{align}
In particular, if $a = 1/2$, then
\begin{align}
s^{1/4} \| B_C(s)\|_2,\ \  s^{1/4} \| \phi(s)\|_2 \  \text{and}\ \ s\|C(s)\|_6^4\ \ \ 
                \text{are bounded on}\ \ (0,T]        \label{vs400}
\end{align}
and go to zero as $s\downarrow 0$. Further,
\begin{align}
&\sup_{0 < s <T} s\Big(\| B_C(s)\|_2^4 + \| \phi(s)\|_2^4\Big) 
    < \infty, \ \ \   \sup_{0< s <T}s\lambda(B_C(s)) < \infty,\ \ \         \label{vs420}\\
&\ \alpha_\infty \equiv \sup_{0<s \le T} s \alpha(s) < \infty,  \ \ \ \ \ 
          \text{and}\ \ \ \ \ \ \ \  \beta_\infty \equiv \sup_{0< s \le T} s\beta(s) < \infty.  \label{vs421}
\end{align}
If   $C(\cdot) \in \P_T^a$ and in addition has finite strong a-action then
\begin{align}
\int_0^T \|C(s)\|_{H_1}^4 ds &< \infty,           \label{vs410}\\
\int_0^T s^{-a} \|B_C(s)\|_2^2 ds < \infty\ \   
&\text{and} \  \int_0^T s^{-a} \|\phi(s)\|_2^2 ds  <\infty.                          \label{vs411a}  
\end{align}
Further, 
\begin{align}
      \ \int_0^T\Big( \|B_C(s)\|_2^4 &+ \| \phi(s)\|_2^4 + \| C(s)\|_6^4 
                                 + \lambda(B_C(s))\Big) ds < \infty,\ \    \label{vs422} \\
     & \alpha_0^T < \infty,\ \ \ \ \text{and} \ \ \ \   \beta_0^T < \infty.      \label{vs423}  
  \end{align}  
  In particular   \eref{vs410}- \eref{vs423} hold if $C(\cdot) \in \P_T^a$ with $1/2 < a <1$,
   and also hold for $a = 1/2$  if  $C(\cdot) \in \P_T^{1/2}$     
  and   $\|C_0\|_{H_{1/2}}$ is sufficiently small.
\end{theorem}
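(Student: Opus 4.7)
The plan is to derive everything from two ingredients: the quantitative bounds $\|C(s)\|_{H_1}\le s^{(a-1)/2}|C|_T$ and $|C|_s\to 0$ as $s\downarrow 0$ built into the path space $\P_T^a$ (see \eqref{ST416a}--\eqref{ST417a}), together with the Sobolev embedding $\|C(s)\|_6\le \kappa_6\|C(s)\|_{H_1}$ and the continuity of $C(\cdot):[0,T]\to H_a\subset L^3$, which makes $\sup_{[0,T]}\|C(s)\|_3$ finite. No use of the differential equation itself is needed for Theorem \ref{thmord0}; only the defining properties of the path space and, for the second batch of claims, the finite $a$-action hypothesis enter.

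First I would bound $\|\phi(s)\|_2$ and $\|B_C(s)\|_2$ pointwise in $s$. Since $\phi=d^*C$ we have $\|\phi(s)\|_2\le\|C(s)\|_{H_1}$. For the curvature, $\|B_C(s)\|_2\le\|dC(s)\|_2+\tfrac12\|[C(s)\wedge C(s)]\|_2\le\|C(s)\|_{H_1}+\tfrac{c}{2}\|C(s)\|_6\|C(s)\|_3$ by H\"older, so $\|B_C(s)\|_2\le\|C(s)\|_{H_1}\bigl(1+\tfrac{c\kappa_6}{2}\sup_{[0,T]}\|C(\cdot)\|_3\bigr)$. Squaring and multiplying by $s^{1-a}$ yields the boundedness claim in \eqref{vs400a}, and the analogous fourth-power estimates in \eqref{vs420a} come from squaring once more. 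The crucial point for the ``goes to zero'' assertion is that $s^{(1-a)/2}\|C(s)\|_{H_1}\le |C|_s$, and $|C|_s\downarrow 0$; so $s^{1-a}\|\phi(s)\|_2^2$ and $s^{2-2a}\|B_C(s)\|_2^4$ are controlled by powers of $|C|_s$ and vanish at $0$. The bound on $s^{2-2a}\|C(s)\|_6^4$ is immediate from Sobolev. Since $\lambda(B_C)=1+\gamma\|B_C\|_2^4$ and $\alpha,\beta$ depend polynomially on $\|C\|_6^4$ and $\|B_C\|_2^4$ (see \eqref{vs529.2}--\eqref{vs530.2}), \eqref{vs420a} and \eqref{vs421a} are immediate corollaries. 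The specialization to $a=1/2$ is just substitution.

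For the finite action half, I start from $\int_0^T s^{-a}\|C(s)\|_{H_1}^2\,ds<\infty$. To get \eqref{vs410}, factor $\|C(s)\|_{H_1}^4=\|C(s)\|_{H_1}^2\cdot\|C(s)\|_{H_1}^2$, estimate one factor by $s^{a-1}|C|_T^2$, and rewrite $s^{a-1}=s^{2a-1}s^{-a}\le T^{2a-1}s^{-a}$ (valid since $a\ge1/2$), so $\int_0^T\|C\|_{H_1}^4 ds\le T^{2a-1}|C|_T^2\int_0^T s^{-a}\|C\|_{H_1}^2 ds<\infty$. The bounds $\|\phi\|_2^2,\|B_C\|_2^2\le(\mathrm{const})\|C\|_{H_1}^2$ already established then give \eqref{vs411a}, and \eqref{vs422} follows from \eqref{vs410} since $\|B_C\|_2^4,\|\phi\|_2^4,\|C\|_6^4$ are all bounded by constants times $\|C\|_{H_1}^4$ on $[0,T]$ (using the fixed $L^3$ bound for the $[C\wedge C]$ term in $B_C$), and $\lambda(B_C)=1+\gamma\|B_C\|_2^4$ is integrable. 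Finally \eqref{vs423} is immediate from \eqref{vs422} and the formulas for $\alpha,\beta$. The last sentence of the theorem is automatic for $a>1/2$ (finite strong $a$-action is part of Theorem \ref{thmmild}, Part~ii) and for $a=1/2$ under the smallness hypothesis (Part~iii).

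Because every estimate reduces to a one-line H\"older/Sobolev calculation powered by the definition of $\P_T^a$, there is no serious obstacle; the only point demanding slight care is the uniform bound on $\|C(s)\|_3$, which is available because $C(\cdot)$ is \emph{continuous} into $H_a$ on the closed interval $[0,T]$ and $H_a\hookrightarrow L^3$. The organization I would use is to first record these structural bounds as a short preliminary lemma, then deduce \eqref{vs400a}--\eqref{vs421a} in one block, and finally deduce \eqref{vs410}--\eqref{vs423} under the finite action hypothesis in a second block.
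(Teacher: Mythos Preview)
Your proposal is correct and follows essentially the same approach as the paper: both reduce everything to the pointwise bound $\|B_C(s)\|_2\le (1+c\kappa_6\sup_{[0,T]}\|C\|_3)\|C(s)\|_{H_1}$ together with $\|\phi(s)\|_2,\|C(s)\|_6\le\kappa_6\|C(s)\|_{H_1}$, then feed in the path-space control $s^{(1-a)/2}\|C(s)\|_{H_1}\le|C|_s\to0$ and, for the second block, the same factorization $\|C\|_{H_1}^4=s^{2a-1}(s^{1-a}\|C\|_{H_1}^2)(s^{-a}\|C\|_{H_1}^2)$ against the finite strong $a$-action integral. Your write-up is in fact slightly more explicit than the paper's about the ``goes to zero'' clause in \eqref{vs400a}.
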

       \begin{proof}     
    The curvature $B_C$ is given by the usual expression $B_C = dC + C\wedge C$ for any connection form $C$. We can bound the  $L^2$ norm of $B_C$ as follows.
 \begin{align}
 \|B_C\|_2 &\le \|dC\|_2 + \| C\wedge C\|_2  \notag\\
 &\le \|C\|_{H_1} + c \|C\|_3\|C\|_6      \notag \\
 &\le   \|C\|_{H_1}  + c\kappa_6\|C\|_3 \|C\|_{H_1} .   \label{ST249}
  \end{align}
 Thus
 \begin{align}
\|B_C(s)\|_2 &\le  \|C(s)\|_{H_1} (1 + c\kappa_6 \|C(s)\|_3) \notag\\   
&\le c_3 \|C(s)\|_{H_1},             \label{vs44}
\end{align}
where
\beq
c_3 =1+ c\kappa_6  \sup_{0<s \le T} \|C(s)\|_3 .    \label{vs45}
\eeq
The constant $c_3$ is finite because $C(\cdot)$ lies in $\P_T^a \subset \P_T^{1/2}$
 and is therefore a  continuous function  on $[0,T]$ into 
 $H_{1/2}(M)$
 and therefore into $L^3(M)$.
 Now $s^{1-a} \|C(s)\|_{H_1}^2$  is bounded  on $(0, T)$ because  $C(\cdot) \in \P_T^{a}$.
   Hence $s^{1-a}\|B_C(s)\|_2^2$ is bounded on $(0,T]$. 
            Since 
 \beq
 \|\phi(s)\|_2 \le \|C(s)\|_{H_1}\ \ \text{and}\ \ \|C(s)\|_6 \le \kappa_6 \|C(s)\|_{H_1} \label{vs441}
 \eeq
both of these functions are also bounded after multiplying by $s^{(1-a)/2}$.  
This completes the proof of  \eref{vs400a}.      
 The inequalities \eref{vs420a} and \eref{vs421a}  now follow from \eref{vs400a} 
 in view of the definitions \eref{gaf70}, \eref{vs529.2} and \eref{vs530.2}.
 Put $a = 1/2$ to derive the special case \eref{vs400} - \eref{vs421}.

         Assume now that $C(\cdot)$ has finite strong a-action.
 Since $s^{1-a} \|C(s)\|_{H_1}^2$ is bounded on $(0, T]$ 
 and $a \ge 1/2$    we have
 \begin{align*}
 \int_0^T \|C(s)\|_{H_1}^4 ds & = \int_0^T s^{2a-1}\Big(s^{1-a} \|C(s)\|_{H_1}^2\Big)
                         \Big(s^{-a} \|C(s)\|_{H_1}^2\Big) ds \\
 & \le T^{2a-1} \sup_{0< s <T} \Big(s^{1-a} \|C(s)\|_{H_1}^2\Big)
                            \int_0^T \Big(s^{-a} \|C(s)\|_{H_1}^2\Big) ds \\
                            &< \infty.
\end{align*}
This proves \eref{vs410}.  By \eref{vs44} we have 
\beq
\int_0^T s^{-a}\|B_C(s)\|_2^2ds \le 
c_3^2 \int_0^T s^{-a} \|C(s)\|_{H_1}^2 ds,        \label{vs442}
\eeq
 which is finite by the assumption of finite strong a-action. 
 Since $\|\phi(s)\|_2 \le \|C(s)\|_{H_1}$ the second integral in \eref{vs411a}   
  is also finite.
 This proves \eref{vs411a}.
 
  The first three terms in the integral in \eref{vs422} are dominated by a constant times
    $\| C(s)\|_{H_1}^4$, by \eref{vs44} and  \eref{vs441}.
Hence the integral of these terms is finite. 
So is the integral of the last term, by the definition \eref{gaf70}. 
    The inequalities in \eref{vs423} now follow from the definitions 
      \eref{vs529.2}, \eref{vs530.2} and \eref{vs39.1}.  
 \end{proof}

\subsection{Initial behavior, order 1}\label{secib1}

\begin{theorem}\label{thmord1a} $($First order energy estimate$)$ 
 Let $1/2 \le a <1$.    Suppose that $C(\cdot)$ is  a strong solution to the augmented   equation
    \eref{aymh} lying in $\P_T^{a}$  and having finite
     strong a-action, i.e., \eref{fa1C} holds. Then
\begin{align}
t^{1-a}\Big(&\| B_C(t)\|_2^2 + \|\phi(t)\|_2^2\Big)  
      + \int_0^t s^{1-a}e^{\alpha_s^t}\|C'(s)\|_2^2 ds  \ \ \  \ \ \   (\text{Order 1}) \notag\\
&\le   (1-a)\int_0^t s^{-a}e^{\alpha_s^t}\Big(\| B_C(s)\|_2^2
                           +  \| \phi(s)\|_2^2\Big) ds < \infty. \ \ \    \label{vs38a}
\end{align}
The following weighted $L^6$ bound holds.
     \beq
\int_0^T s^{1-a}\Big( \|B_C(s)\|_6^2 + \|\phi(s)\|_6^2 \Big) ds         \label{vs550a}
                            < \infty. \ \ \ \ \ \ \ \ \ \qquad \qquad   (\text{Order 1})  
\eeq  
Furthermore,
       \beq
\int_0^Ts^{1-a}\Big( \|d_C^* B_C(s)\|_2^2 + \| d_C\phi(s)\|_2^2 
                                    + \| d\phi(s)\|_2^2 \Big) ds < \infty.       \  \   (\text{Order 1}) \label{vs38b}
\eeq
\end{theorem}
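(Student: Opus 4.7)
The plan is to upgrade the differential inequality \eref{vs529} into the integral inequality \eref{vs38a} by an integrating-factor argument, and then feed the resulting bound on $\int s^{1-a}\|C'(s)\|_2^2 ds$ through the Gaffney-Friedrichs-Sobolev inequality and through the structural identity $d\phi = d_C\phi - [C,\phi]$ to obtain \eref{vs550a} and \eref{vs38b} respectively. The main technical point is that, by Theorem \ref{thmord0}, finite strong $a$-action furnishes both $\alpha_0^T < \infty$ (from \eref{vs423}) and $\int_0^T s^{-a}(\|B_C\|_2^2+\|\phi\|_2^2)\,ds < \infty$ (from \eref{vs411a}); in the critical case $a=1/2$, this is precisely what the smallness assumption on $\|C_0\|_{H_{1/2}}$ provides via Theorem \ref{thmfa}.

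For \eref{vs38a}, set $f(s) = \|B_C(s)\|_2^2 + \|\phi(s)\|_2^2$ and $F(s) = e^{-\alpha_0^s}f(s)$. Using $\|\phi\|_2^2 \le f$, inequality \eref{vs529} rearranges to
\[
F'(s) + e^{-\alpha_0^s}\|C'(s)\|_2^2 = e^{-\alpha_0^s}\bigl(f'(s) + \|C'\|_2^2 - \alpha(s) f(s)\bigr) \le 0 .
\]
Apply Lemma \ref{lem50} with $b = a$, $g(s) = e^{-\alpha_0^s}\|C'(s)\|_2^2$, and $h = 0$; the hypothesis $\int_0^t s^{-a} F(s)\,ds < \infty$ follows from $F \le f$ and \eref{vs411a}. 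Multiplying the resulting inequality by $e^{\alpha_0^t}$ and using $\alpha_0^t - \alpha_0^s = \alpha_s^t$ produces exactly \eref{vs38a}; finiteness of the right side is then immediate, because $e^{\alpha_s^t} \le e^{\alpha_0^T} < \infty$.

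For \eref{vs550a}, apply the gauge invariant Sobolev inequality \eref{gaf01} to the $0$-form $\phi$ and the Gaffney-Friedrichs-Sobolev inequality \eref{gaf50} to the $2$-form $B_C$. The exterior derivative pieces are controlled by $\|C'\|_2^2$ via the orthogonality identity \eref{vs561.1}, which gives $\|d_C\phi\|_2^2 + \|d_C^* B_C\|_2^2 = \|C'\|_2^2$, together with $d_C B_C = 0$ from Bianchi. Multiplying through by $s^{1-a}$ and integrating, the $\|C'\|_2^2$ contribution is finite by \eref{vs38a} (since $e^{\alpha_s^t} \ge 1$). The remaining curvature term $s^{1-a}\lambda(B_C)\|B_C\|_2^2$ splits, via \eref{gaf70}, into $s^{1-a}\|B_C\|_2^2 + \gamma\,s^{1-a}\|B_C\|_2^6$; the first summand is bounded on $(0,T]$ by \eref{vs400a}, and the second satisfies $s^{1-a}\|B_C\|_2^6 \le (\sup_{s\le T} s^{1-a}\|B_C\|_2^2)\,\|B_C\|_2^4$, which is integrable by \eref{vs422}. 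The analogous $s^{1-a}\|\phi\|_2^2$ tail for $\phi$ is bounded on $(0,T]$ by \eref{vs400a}.

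For \eref{vs38b}, the $d_C^* B_C$ and $d_C\phi$ terms are controlled directly by \eref{vs561.1} and \eref{vs38a}. Writing $d\phi = d_C\phi - [C,\phi]$ reduces everything to bounding $\int_0^T s^{1-a}\|[C,\phi]\|_2^2\,ds$; Lemma \ref{lemdi1} gives $2\|[C,\phi]\|_2^2 \le \alpha(s)\|\phi(s)\|_2^2 + (1/2)\|C'(s)\|_2^2$, and the $\|C'\|_2^2$ piece is again handled by \eref{vs38a}, while
\[
\int_0^T s^{1-a}\alpha(s)\|\phi(s)\|_2^2\,ds \le \Bigl(\sup_{0<s\le T} s^{1-a}\|\phi(s)\|_2^2\Bigr)\alpha_0^T,
\]
which is finite by \eref{vs400a} and \eref{vs423}. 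The principal obstacle throughout is ensuring the finiteness of $\alpha_0^T$ and of $\int_0^T s^{-a}f(s)\,ds$ needed to run Lemma \ref{lem50}; these inputs are exactly where the finite-action hypothesis enters, and they are what the subtle smallness condition in the critical case is designed to produce through Theorem \ref{thmord0}.
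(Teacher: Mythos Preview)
Your proof is correct and follows essentially the same route as the paper: the integrating-factor argument combined with Lemma \ref{lem50} for \eref{vs38a}, the Gaffney--Friedrichs--Sobolev inequalities \eref{gaf01}, \eref{gaf50} together with the orthogonality \eref{vs561.1} for \eref{vs550a}, and the decomposition $d\phi = d_C\phi - [C,\phi]$ for \eref{vs38b}.

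The one genuine difference is in how you bound $\int_0^T s^{1-a}\|[C,\phi]\|_2^2\,ds$. The paper uses the H\"older estimate $\|[C,\phi]\|_2 \le c\|C\|_6\|\phi\|_3$ and then controls $\int_0^T\|\phi\|_3^2\,ds$ by interpolating between $\int s^{-a}\|\phi\|_2^2$ and $\int s^{1-a}\|\phi\|_6^2$, thereby feeding the just-proven $L^6$ bound \eref{vs550a} back into the argument. You instead invoke Lemma \ref{lemdi1} directly, which bounds $2\|[C,\phi]\|_2^2$ by $\alpha(s)\|\phi\|_2^2 + \tfrac12\|C'\|_2^2$ and closes using only $\alpha_0^T<\infty$ and \eref{vs38a}. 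Your route is a bit more self-contained, since it decouples \eref{vs38b} from \eref{vs550a}; the paper's route, on the other hand, yields the intermediate bound $\int_0^T\|\phi\|_3^2\,ds<\infty$ (and the analogous $\int_0^T\|B_C\|_3^2\,ds<\infty$), which it reuses later in the second-order estimates.
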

\begin{proof}  
By hypothesis $C(\cdot)$ lies in $\P_T^{a}$ and has finite strong a-action.
Therefore $\alpha_0^T < \infty$ by \eref{vs423} of  Theorem \ref{thmord0}.
 We will use this for the following bounds.

   Let $\zeta(t) = \alpha_0^t$ and define 
 $u(s) = \| B_C(s)\|_2^2 + \|\phi(s)\|_2^2$. The inequality \eref{vs529} shows that
$u'(s) + \|C'(s)\|_2^2 \le \zeta'(s) u(s)$.
Hence 
\beq
\frac{d}{ds}\Big(e^{-\zeta(s)} u(s)\Big) +  e^{-\zeta(s)} \| C'(s)\|_2^2 \le 0.  \label{vs558}
\eeq
Let $f(s) = e^{-\zeta(s)} u(s)$, 
$g(s) =e^{-\zeta(s)} \| C'(s)\|_2^2$ and $h(s) =0$.  We will apply Lemma \ref{lem50}
with these choices of $f,g$ and $h$ and with the choice $b =a$.  
Then \eref{vs91} asserts that
\begin{align*}
t^{1-a} e^{-\zeta(t)}u(t) + \int_0^t s^{1-a} e^{-\zeta(s)}  \|C'(s)\|_2^2 ds 
\le (1-a) \int_0^t  s^{-a} e^{-\zeta(s)}  u(s) ds  .
\end{align*}
Multiply by $e^{\zeta(t)}$ to arrive at  \eref{vs38a}. 
Since $\alpha_0^T < \infty$ by  \eref{vs423}, the inequalities in \eref{vs411a} 
 show that  the right hand side of \eref{vs38a} is finite.

To prove the  weighted $L^6$ bound \eref{vs550a} 
  we can use   Sobolev's inequalitiy \eref{gaf01} and the  Gaffney-Friedrichs-Sobolev
    inequality \eref{gaf50},  which give, respectively, (since $\kappa_6^2 \le \kappa^2$)
\begin{align}
\|\phi(s)\|_6^2 
          &\le \kappa^2 ( \| d_C \phi(s)\|_2^2 + \|\phi(s)\|_2^2)                       \label{vs560} \\
 \|B_C(s)\|_6^2 
          & \le \kappa^2 (\|d_C^* B_C(s)\|_2^2 + \lambda(B_C(s)) \| B_C(s)\|_2^2) \label{vs561}
 \end{align}
 since $d_CB_C =0$ by Bianchi's identity.
Adding \eref{vs560} and \eref{vs561}, and using the identity \eref{vs561.1}, we find  
  \begin{align}
 \kappa^{-2} \Big(  \|B_C(s)\|_6^2 &+ \|\phi(s)\|_6^2 \Big)              \notag\\
& \le \| C'(s)\|_2^2 + \| \phi(s) \|_2^2  
 +\lambda(B_C(s)) \| B_C(s)\|_2^2.                     \label{vs562}
 \end{align}
Multiply \eref{vs562} by $s^{1-a}$ to find
 \begin{align}
 \kappa^{-2}s^{1-a}\Big(&  \|B_C(s)\|_6^2 + \|\phi(s)\|_6^2 \Big) \notag\\
& \le  s^{1-a} \| C'(s)\|_2^2 + s^{1-a} \| \phi(s) \|_2^2  
 +\lambda(B_C(s)) \{s^{1-a}\| B_C(s)\|_2^2\}.                               \label{vs38e}
 \end{align}
 The first term on the right, $s^{1-a} \| C'(s)\|_2^2$, is integrable over $(0, T]$ by \eref{vs38a}. 
 The second term is bounded by \eref{vs400a} and therefore integrable. The third term
 is an integrable function times a bounded function by  \eref{vs422} 
 and \eref{vs400a}, respectively.  This proves \eref{vs550a}.
   
      Concerning the inequality \eref{vs38b} observe first that the  integrability
  of the first two terms   in \eref{vs38b} follows from the 
  orthogonality relation \eref{vs561.1} along with the relation
   $\int_0^T s^{1-a}\| C'(s)\|_2^2ds < \infty$,
  implied by \eref{vs38a}. Since  $\|d\phi(s)\|_2 \le \|d_C\phi(s)\|_2 + \| \, [C(s), \phi(s)]\, \|_2$
  the integrability of the third term would  follow from the integrability of 
  $s^{1-a} \| \, [C(s), \phi(s)]\, \|_2^2$. But
   \begin{align}
  \int_0^Ts^{1-a} \|\, [ C(s), \phi(s)]\,\|_2^2 ds 
  &\le c^2 \int_0^T s^{1-a} \|C(s)\|_6^2 \|\phi(s)\|_3^2 ds        \notag\\
  &\le c^2 \Big(\sup_{0 < s \le T} s^{1-a} \|C(s)\|_6^2\Big) \int_0^T\|\phi(s)\|_3^2ds \notag\\
  &\le (c\kappa_6)^2 |C|_T^2  \int_0^T\|\phi(s)\|_3^2ds.   \label{vs38f}
 \end{align} 
 Furthermore, using $ 0 =(a-(1/2)) -(a/2) +(1-a)/2$ along with the interpolation
  $\|f\|_3^2 \le \|f\|_2\|f\|_6$ we find 
 \begin{align}
 \int_0^T \|&\phi(s)\|_3^2 ds 
 \le \int_0^T s^{a-(1/2)}\Big(s^{-a/2}\|\phi(s)\|_2\Big)\Big(s^{(1-a)/2} \|\phi(s)\|_6\Big) ds \notag\\
 &\le T^{a-(1/2)}\Big(\int_0^T s^{-a} \|\phi(s)\|_2^2 ds \Big)^{1/2} 
                     \Big(\int_0^T s^{1-a} \|\phi(s)\|_6^2 ds \Big)^{1/2}          \notag \\
  &\le   T^{a-(1/2)}\Big(\int_0^T s^{-a} \|C(s)\|_{H_1}^2 ds \Big)^{1/2} 
                     \Big(\int_0^T s^{1-a} \|\phi(s)\|_6^2 ds \Big)^{1/2} .      \label{vs38g}
 \end{align}
 The first integral is finite because $C(\cdot)$ has finite action. The second integral is finite 
 by \eref{vs550a}. 
  The same argument shows that
  \begin{align}
 \int_0^T \|B_C(s)\|_3^2 ds < \infty  .         \label{vs38j}
 \end{align} 
   Combining \eref{vs38f} and \eref{vs38g} it follows that
 \beq
  \int_0^Ts^{1-a} \|\, [ C(s), \phi(s)]\,\|_2^2 ds <\infty.     \label{vs38m}
  \eeq
  This  proves the integrability of the last term in \eref{vs38b} and completes the
   proof of Theorem \ref{thmord1a}.
\end{proof}

\subsection{Initial behavior, order 2} 
  \label{secib2a}

 \begin{theorem}\label{thmord2a} $($Order 2$)$  Let $1/2 \le a <1$.
     Suppose that $C(\cdot)$ is  a strong solution to the augmented   equation
    \eref{aymh} lying in $\P_T^{a}$  and having finite
     strong a-action, i.e., \eref{fa1C} holds. Then
\begin{align}
t^{2-a} \| C'(t)\|_2^2 
   &+ \int_0^t s^{2-a}  e^{\beta_s^t} \Big( \| d_C^* C'(s) \|_2^2 
                                 + \| d_C C'(s)\|_2^2\big) ds              \ \ \ \  (\text{Order 2})       \notag\\
&\le (2-a)(1-a)e^{\beta_0^t} \int_0^t s^{-a}  
                  \Big( \| B_C(s) \|_2^2 + \|\phi(s)\|_2^2\Big) ds.                                  \label{vs640a}
\end{align}
The following $L^6$ bounds also hold.
\begin{align}
   \sup_{0< t <T} t^{2-a}\Big( \|B_C(t)\|_6^2 + \|\phi(t)\|_6^2 \Big) 
                            &< \infty.  \ \    (\text{Order 2})                             \label{vs641pa}\\
 \int_0^T s^{2-a}\Big( \|C'(s)\|_6^2 + \| d_C^* B_C(s)\|_6^2 + \| d_C\phi(s)\|_6^2 \Big) ds 
                            &< \infty.    \ \    (\text{Order 2})                            \label{vs641a} \\
 \int_0^T s^{2-a} \Big( \|d \phi(s)\|_6^2 +\|\, [C(s), \phi(s)]\, \|_6^2\Big)ds 
                           &< \infty. \ \   (\text{Order 2})                                \label{vs641b}
\end{align}
\end{theorem}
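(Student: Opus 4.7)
The inequality \eref{vs640a} will be derived by combining the two differential inequalities \eref{vs529} and \eref{vs530}. Multiplying \eref{vs530} by the integrating factor $e^{-\beta_0^s}$ converts it to $(d/ds)(e^{-\beta_0^s}\|C'(s)\|_2^2) + e^{-\beta_0^s}(\|d_C^*C'\|_2^2 + \|d_CC'\|_2^2) \le 0$. Apply Lemma \ref{lem50} with $b = a-1$ (so that $1-b = 2-a$); the required finiteness $\int_0^t s^{1-a}e^{-\beta_0^s}\|C'(s)\|_2^2\,ds < \infty$ is implied by the Order 1 bound \eref{vs38a}. After restoring the factor $e^{\beta_0^t}$ this yields $t^{2-a}\|C'(t)\|_2^2 + \int_0^t s^{2-a}e^{\beta_s^t}(\|d_C^*C'\|_2^2 + \|d_CC'\|_2^2)\,ds \le (2-a)\int_0^t s^{1-a}e^{\beta_s^t}\|C'(s)\|_2^2\,ds$. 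To rewrite the right side in terms of $u(s) := \|B_C(s)\|_2^2 + \|\phi(s)\|_2^2$, use \eref{vs529} in the form $\|C'(s)\|_2^2 \le \alpha(s)u(s) - u'(s)$ and integrate by parts in the $-u'(s)$ contribution. The boundary term at $s=0$ vanishes by \eref{vs400a}, the boundary term at $s=t$ is non-positive, and the remainder is proportional to $\alpha(s) - \beta(s)$, which is non-positive by the explicit comparison of the constants in \eref{vs529.3} and \eref{vs530.3} (together with $\kappa_6 \le \kappa$). Discarding the non-positive contributions and bounding $e^{\beta_s^t} \le e^{\beta_0^t}$ gives \eref{vs640a}.

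The $L^6$ bound \eref{vs641pa} follows by applying Gaffney-Friedrichs-Sobolev \eref{gaf50} to the 2-form $B_C$ (using $d_C B_C = 0$ by Bianchi) and the Sobolev inequality \eref{gaf01} to the 0-form $\phi$, then invoking the orthogonality \eref{vs561.1} to obtain $\|B_C(t)\|_6^2 + \|\phi(t)\|_6^2 \le \kappa^2(\|C'(t)\|_2^2 + \|\phi(t)\|_2^2 + \lambda(B_C(t))\|B_C(t)\|_2^2)$. After multiplying by $t^{2-a}$, each summand is uniformly bounded on $(0,T]$: the first by the pointwise part of \eref{vs640a}; the second via $t^{2-a}\|\phi\|_2^2 = t\cdot(t^{1-a}\|\phi\|_2^2)$ together with \eref{vs400a}; the third by combining \eref{vs400a} and \eref{vs420a}.

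For the weighted integral \eref{vs641a}, I apply Gaffney-Friedrichs-Sobolev to the 1-form $C'$ to obtain $\|C'(s)\|_6^2 \le \kappa^2(\|d_C^*C'\|_2^2 + \|d_CC'\|_2^2 + \lambda(B_C)\|C'\|_2^2)$; multiplying by $s^{2-a}$ and integrating, the first two summands are finite by \eref{vs640a}, and the last is the product of the bounded function $s^{2-a}\|C'(s)\|_2^2$ and the integrable function $\lambda(B_C(s))$ (integrability from \eref{vs422}). To handle $\|d_C^*B_C\|_6$ and $\|d_C\phi\|_6$ separately, I apply Gaffney-Friedrichs-Sobolev to each viewed as a 1-form, using identities from Section \ref{secid}: $(d_C^*)^2 B_C = [B_C \lrc B_C] = 0$ (vanishing by Ad-invariance of the inner product), $d_C d_C^* B_C = -B_C' - [B_C,\phi]$ (from $B_C' = d_C C'$), and $d_C^* d_C \phi = -d_C^* C'$ (from \eref{vs510} combined with the algebraic identity $\phi' = d_C^* C' - [C \lrc C']$). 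The resulting commutator terms are controlled by H\"older's inequality together with the $L^6$ and $L^2$ bounds from Theorems \ref{thmord0} and \ref{thmord1a}. Finally, \eref{vs641b} reduces to \eref{vs641a} via $\|d\phi\|_6 \le \|d_C\phi\|_6 + \|[C,\phi]\|_6$, with $\|[C,\phi]\|_6$ estimated by H\"older interpolation using the previously established weighted bounds.

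The main obstacle is controlling the exponential factor in \eref{vs640a}: a naive sequential application of Orders 1 and 2 produces a bound with $e^{\alpha_0^t + \beta_0^t}$ rather than $e^{\beta_0^t}$, and it is precisely the integration-by-parts step above, coupled with the pointwise comparison $\alpha \le \beta$, that permits the absorption needed to match the statement.
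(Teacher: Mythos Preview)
Your derivation of \eref{vs640a} is correct and is essentially the paper's argument in a slightly different packaging: you get to the intermediate inequality \eref{vs607} exactly as the paper does, and then you substitute $\|C'\|_2^2 \le \alpha u - u'$ and integrate by parts, using $\alpha \le \beta$ to discard the remainder. The paper instead writes $e^{\beta_s^t} \le e^{\beta_0^t-\alpha_0^t}e^{\alpha_s^t}$ (again using $\alpha\le\beta$) and then invokes the Order~1 estimate \eref{vs38a} directly. The two routes are equivalent; your observation in the last paragraph that this is where the exponential bookkeeping actually matters is on target. Your treatment of \eref{vs641pa} and \eref{vs641a} also matches the paper's, modulo the slight imprecision that for the $\|d_C\phi\|_6$ term GFS also produces $\|d_C^2\phi\|_2 = \|[B_C,\phi]\|_2$, which you presumably fold into ``the resulting commutator terms''.

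There is a genuine gap in your handling of \eref{vs641b}. After the reduction $\|d\phi\|_6 \le \|d_C\phi\|_6 + \|[C,\phi]\|_6$, you claim that $\|[C,\phi]\|_6$ can be estimated ``by H\"older interpolation using the previously established weighted bounds''. But any H\"older splitting $\|[C,\phi]\|_6 \le c\|C\|_p\|\phi\|_q$ requires $p^{-1}+q^{-1}=1/6$, hence $p,q\ge 6$; at this stage of the argument you have only $L^r$ control for $r\le 6$ on $C$ and $\phi$ (the $L^\infty$ bounds come later via Neumann domination and are not available here). The paper therefore takes a different route: it applies GFS to the 1-form $\omega=[C,\phi]$ itself, computing explicitly
\[
d_C^*[C,\phi]=[C\lrc d_C\phi],\qquad d_C[C,\phi]=-[C\wedge d_C\phi]+[B_C+\tfrac12[C\wedge C],\phi],
\]
and then bounds $\int_0^T s^{2-a}\big(\|d_C^*[C,\phi]\|_2^2+\|d_C[C,\phi]\|_2^2+\lambda(B_C)\|[C,\phi]\|_2^2\big)\,ds$ term by term, using in particular the interpolation \eref{vs38k} for $\int_0^T s\|d_C\phi\|_3^2\,ds$ and the bounds \eref{vs46}--\eref{vs47} for $s\|B_C+\tfrac12[C\wedge C]\|_3^2$. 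This is not a H\"older step on $[C,\phi]$ directly but rather a derivative-level estimate, and your sketch does not supply it.
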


In preparation for the proof of the $L^6$ bounds we will need the following lemma.

\begin{lemma} For a solution to the augmented equation \eref{aymh} the following
 three $(a$-independent$)$ inequalities hold.
 \begin{align}
   \kappa^{-2} \| d_{C(s)}^* B_C(s)\|_6^2 &\le \| d_C d_C^* B_C \|_2^2 + \lambda(B_C) \| d_C^* B_C \|_2^2.           \label{vs646}\\
   \kappa^{-2} \| d_{C(s)} \phi(s)\|_6^2 &\le \| d_C^* d_C \phi\|_2^2 + \| \, [B_C, \phi]\, \|_2^2 + \lambda(B_C) \| d_C \phi\|_2^2.      \label{vs647} \\
\kappa^{-2}\Big(\|C'(s)\|_6^2 + \| &d_{C(s)}^* B_C(s)\|_6^2
                                         +\| d_{C(s)} \phi(s)\|_6^2\Big)                     \label{vs649}\\
&\le  3  \|  d_C C'\|_2^2  + 2\| d_C^* C'\|_2^2  + 3\|\, [B_C, \phi]\, \|_2^2
 +2\lambda(B_C) \| C'\|_2^2 .            \notag
 \end{align}
\end{lemma}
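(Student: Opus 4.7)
The plan is to derive all three inequalities by applying the Gaffney--Friedrichs--Sobolev inequality \eref{gaf50} to three carefully chosen $\kf$-valued forms, namely $\omega = d_C^*B_C$ (a 1-form), $\omega = d_C\phi$ (a 1-form), and $\omega = C'$ (a 1-form), and then using the structure of the augmented equation \eref{aymh} together with standard covariant identities to simplify each right-hand side.

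For \eref{vs646}, I would apply \eref{gaf50} to $\omega = d_C^*B_C$. This immediately produces the terms $\|d_C d_C^* B_C\|_2^2$, $\|d_C^*(d_C^*B_C)\|_2^2$, and $\lambda(B_C)\|d_C^*B_C\|_2^2$. The middle term drops out because $(d_C^*)^2 B_C = 0$, which has already been invoked in the paper (the identity right after \eref{vs561.1}). For \eref{vs647}, apply \eref{gaf50} to $\omega = d_C\phi$. The Bianchi-type identity $d_C^2\phi = [B_C,\phi]$ (valid on $\kf$-valued $0$-forms, since for a $p$-form $\eta$ one has $d_C^2\eta = [B_C\wedge\eta]$) gives $\|d_C(d_C\phi)\|_2 = \|[B_C,\phi]\|_2$, and \eref{vs647} follows directly.

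For \eref{vs649}, the idea is to add the two inequalities just proved to the Gaffney--Friedrichs--Sobolev inequality applied to $\omega = C'$ and then use the augmented equation to rewrite everything in terms of $d_C C'$, $d_C^*C'$, $C'$ and $[B_C,\phi]$. Specifically, \eref{aymh} gives $-C' = d_C^*B_C + d_C\phi$, so applying $d_C$ yields $-d_C C' = d_C d_C^* B_C + [B_C,\phi]$, hence
\[
\|d_C d_C^* B_C\|_2^2 \le 2\|d_C C'\|_2^2 + 2\|[B_C,\phi]\|_2^2,
\]
while applying $d_C^*$ and using $(d_C^*)^2 B_C = 0$ gives $-d_C^* C' = d_C^* d_C\phi$, so $\|d_C^* d_C\phi\|_2^2 = \|d_C^* C'\|_2^2$. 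The $\lambda(B_C)$-coefficients combine via the orthogonality $\|C'\|_2^2 = \|d_C^*B_C\|_2^2 + \|d_C\phi\|_2^2$ from \eref{vs561.1}, yielding $2\lambda(B_C)\|C'\|_2^2$. Collecting coefficients then gives the stated $3$, $2$, $3$, $2$ on the right-hand side.

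There is no real obstacle here — the whole computation is a bookkeeping exercise once the three key pointwise relations $(d_C^*)^2 B_C = 0$, $d_C^2\phi = [B_C,\phi]$, and $-C' = d_C^*B_C + d_C\phi$ are in hand. The only place that requires any care is tracking the factor of $2$ arising from $(a+b)^2 \le 2a^2 + 2b^2$ in the bound on $\|d_C d_C^* B_C\|_2^2$, and making sure the $\lambda(B_C)$ terms recombine cleanly via the orthogonality identity.
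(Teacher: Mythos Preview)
Your proposal is correct and follows essentially the same route as the paper: apply the Gaffney--Friedrichs--Sobolev inequality \eref{gaf50} to $d_C^*B_C$, $d_C\phi$, and $C'$ separately, use $(d_C^*)^2B_C=0$ and $d_C^2\phi=[B_C,\phi]$ for the first two, and then combine via the identities $-d_C C'=d_Cd_C^*B_C+[B_C,\phi]$, $-d_C^*C'=d_C^*d_C\phi$, and the orthogonality \eref{vs561.1}. The only cosmetic difference is that the paper first adds \eref{vs646} and \eref{vs647}, simplifies, and then adds the $C'$ bound \eref{vs612}, whereas you add all three at once; the arithmetic and the resulting coefficients $3,2,3,2$ are identical.
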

\begin{proof}
 Use the    GFS inequality  \eref{gaf50}  twice  and the Bianchi identity twice, once for $d_C^* d_C^* B_C=0$ and once
   in $d_C^2 \phi = [B_C, \phi]$, to find \eref{vs646} and \eref{vs647}.
 In view of the identities  
    \begin{align*}
  -d_C^* C'(s) &=  d_C^* d_C\phi(s) \ \ \ \ \ \ \ \ \ \ \text{and}\  \\
     - d_C C'(s) &=  d_C d_C^* B_C(s) + [B_C(s), \phi(s)],
  \end{align*}
the first term on the right of each line in \eref{vs646} and \eref{vs647}
 can be expressed in terms of $C'$ and $[B_C, \phi] $.  We may add them and use \eref{vs561.1}
 to find
 \begin{align*}
\kappa^{-2}\Big( &\| d_{C(s)}^* B_C(s)\|_6^2+  \| d_{C(s)} \phi(s)\|_6^2\Big) \\
&\le \|  d_C C' +[ B_C,  \phi]\, \|_2^2   + \| d_C^* C'\|_2^2 + \|\, [B_C, \phi]\, \|_2^2  
                                   +  \lambda(B_C) \| C'\|_2^2 \\
 &\le  2\|d_C C'\|_2^2  +  \| d_C^* C'\|_2^2 + 3 \|\, [B_C, \phi]\, \|_2^2  +  \lambda(B_C) \| C'\|_2^2 .
\end{align*}  
        To this we may add the $\|C'(s)\|_6$ bound  \eref{vs612}  to arrive at \eref{vs649}.
\end{proof}

 \bigskip
\noindent
\begin{proof}[Proof of Theorem \ref{thmord2a}] 
From \eref{vs423}  in Theorem \ref{thmord0} we know that $\beta_0^T < \infty$.                      
                     Let $\zeta(s)= \beta_0^s$. Since $\zeta'(s) = \beta(s)$, 
  the inequality \eref{vs530} implies that 
  \begin{align}
\frac{d}{ds} \Big( e^{-\zeta(s)} \|C'(s)\|_2^2\Big) 
+e^{-\zeta(s)} \Big(\|d_C^* C'(s)\|_2^2 + \|d_C C'(s)\|_2^2\Big) \le 0. \label{vs605}
\end{align}                     
        In Lemma \ref{lem50} choose  $b = a-1$, 
         $h(s) =0$,
 \begin{align*}
         f(s) =e^{-\zeta(s)} \|C'(s)\|_2^2,\ \ \ \ 
    g(s) = e^{-\zeta(s)} \Big(\|d_C^* C'(s)\|_2^2 + \|d_C C'(s)\|_2^2\Big)
    \end{align*} 
    and use \eref{vs605}  and \eref{vs91} to find
    \begin{align}
t^{2-a}e^{-\zeta(t)} \| C'(t)\|_2^2  
    &+ \int_0^t s^{2-a} e^{-\zeta(s)} \Big(\|d_C^* C'(s)\|_2^2 + \|d_C C'(s)\|_2^2\Big)ds\notag\\
 &\le  (2-a) \int_0^t s^{1-a} e^{-\zeta(s)} \|C'(s)\|_2^2 ds.
  \end{align}  
  Since $\zeta(t) - \zeta(s) = \beta_s^t$, multiplication  by $e^{\zeta(t)}$ gives
    \begin{align}
t^{2-a} \| C'(t)\|_2^2  
    &+ \int_0^t s^{2-a} e^{\beta_s^t} \Big(\|d_C^* C'(s)\|_2^2 + \|d_C C'(s)\|_2^2\Big)ds\notag\\
 &\le (2-a) \int_0^t s^{1-a} e^{\beta_s^t} \|C'(s)\|_2^2 ds.    \label{vs607}
  \end{align}  
 From the  coefficients  in   \eref{vs529.3} and  \eref{vs530.3} one sees that 
  $\beta(\sigma) - \alpha(\sigma) \ge 0$, and therefore 
  $\beta_s^t  -\alpha_s^t \le  \beta_0^t - \alpha_0^t$. Hence, using \eref{vs38a}, one finds
 \begin{align*}
 (2-a) \int_0^t s^{1-a} e^{\beta_s^t} &\|C'(s)\|_2^2 ds  
 \le (2-a)e^{\beta_0^t - \alpha_0^t}\int_0^t s^{1-a} e^{\alpha_s^t} \| C'(s)\|_2^2 ds \\
 &\le (2-a)(1-a) e^{\beta_0^t - \alpha_0^t}\int_0^t s^{-a}e^{\alpha_s^t}\Big( \| B_C(s) \|_2^2 + \|\phi(s)\|_2^2 \Big) ds \\
 &\le  (2-a)(1-a) e^{\beta_0^t}\int_0^t s^{-a}\Big( \| B_C(s) \|_2^2 + \|\phi(s)\|_2^2 \Big) ds. 
 \end{align*}
 This proves \eref{vs640a}.

 To prove   \eref{vs641pa} multiply \eref{vs38e} 
  by $s$ to find
   \begin{align}
 \kappa^{-2}s^{2-a}\Big(&  \|B_C(s)\|_6^2 + \|\phi(s)\|_6^2 \Big) \label{vs643}\\ 
& \le s^{2-a} \| C'(s)\|_2^2 +s^{2-a} \| \phi(s) \|_2^2  
 +\{s\lambda(B_C(s))\}\{s^{1-a}\| B_C(s)\|_2^2\}.     \notag
 \end{align}
 The first term on the right is bounded by \eref{vs640a}. 
 The second  term is bounded by \eref{vs400a}. The third term is bounded
 by \eref{vs420a} and \eref{vs400a} since $2 -2a \le 1$. This proves \eref{vs641pa}.

 To prove \eref{vs641a} we can start with the a-independent inequality \eref{vs649}
and multiply it by $s^{2-a}$.  The resulting first two terms on the right 
are integrable by \eref{vs640a}.
The third term is  
$
 s^{2-a} \| \, [ B_C(s), \phi(s)]\, \|_2^2\le   \Big(c^2 s^{2-a} \| B_C(s)\|_6^2\Big) \| \phi(s) \|_3^2,
 $
 which is the product of a bounded function, by \eref{vs641pa},  times an integrable function,
 by \eref{vs38g}.   The last term is 
$\Big(s \lambda(B_C(s))\Big) \Big( s^{1-a}\|C'(s)\|^2\Big)$, which is a bounded function,
by \eref{vs420a}, times an integrable function, by \eref{vs38a}.    This proves \eref{vs641a}.

 Concerning the proof of \eref{vs641b}, it suffices to prove the integrability of either
 one of the two terms because we already know that
 $\int_0^T s^{2-a} \| d\phi(s) +[ C(s), \phi(s)]\, \|_6^2 ds < \infty$ by \eref{vs641a}.
 We will prove the integrability of the second term and do this by invoking  
 the GFS inequality  \eref{gaf50}. Thus, taking $\w = [C(s), \phi(s)]$ in \eref{gaf50},
 it suffices to show that
 \begin{align}
\int_0^T s^{2-a}\Big( \| d_C^* [ C(s), \phi(s)]\, \|_2^2 
         +  \| d_C [ C(s), \phi(s)]\, \|_2^2\Big) ds < \infty                              \label{vs57}
\end{align}
and
\beq
\int_0^Ts^{2-a} \|B_C(s)\|_2^4 \|\, [C(s), \phi(s)]\,\|_2^2 ds < \infty.        \label{vs58}
\eeq
To this end, observe first the identities
  \begin{align*}
  d_C^* [C, \phi] &= [C\lrc d_C\phi], \\
  d_C[C,\phi] &= - [C\wedge d_C\phi] + [B_C +(1/2)[C\wedge C], \phi],
 \end{align*}
 which follow from $[d_C^* C, \phi] = [\phi,\phi] =0$ and $d_C C = B_C + (1/2) [C\wedge C]$.
 Now   $\| [C\lrc d_C\phi]\, \|_2 \le c \| C\|_6 \| d_C \phi\|_3$ with the same bound
 for  $\| C\wedge d_C \phi\|_2$. Therefore
\begin{align}
 \int_0^T s^{2-a} &\Big( \| [C\lrc d_C\phi],\|_2^2 + \| C\wedge d_C \phi\|_2^2 \Big) ds       \notag\\
 &\le 2c^2 \Big(\sup_{0 \le s \le T}s^{1-a} \|C(s)\|_6^2\Big) \int_0^T s\|d_C \phi(s)\|_3^2 ds  \notag \\
 &\le (c\kappa_6)^2|C|_T^2 \int_0^T s\|d_C \phi(s)\|_3^2 ds.   \label{vs38h}
 \end{align}
Using now $ 1 =(a-(1/2))+  (1-a)/2 +(2-a)/2$ along with the interpolation
  $\|f\|_3^2 \le \|f\|_2\|f\|_6$ we find 
 \begin{align}
 \int_0^Ts \|&d_C\phi(s)\|_3^2 ds 
 \le \int_0^T s^{a-(1/2)}\Big(s^{(1-a)/2}\|d_C\phi(s)\|_2\Big)
                                         \Big(s^{(2-a)/2} \|d_C\phi(s)\|_6\Big) ds \notag\\
 &\le T^{a-(1/2)}\Big(\int_0^T s^{1-a} \|d_C\phi(s)\|_2^2 ds \Big)^{1/2} 
                     \Big(\int_0^T s^{2-a} \|d_C\phi(s)\|_6^2 ds \Big)^{1/2}          \notag \\
  &<\infty      \label{vs38k}
 \end{align}
by \eref{vs38b} and   \eref{vs641a}.  Hence
\begin{align*}
 \int_0^T s^{2-a} \Big( \| [C\lrc d_C\phi],\|_2^2 + \| C\wedge d_C \phi\|_2^2 \Big) ds
 <\infty.
 \end{align*}
Further, 
\begin{align*}
\int_0^T s^{2-a} &\|\,  [B_C +(1/2)[C\wedge C], \phi]\, \|_2^2 ds \\
& \le \sup_{0\le s \le T} \{s\| B_C +(1/2)[C\wedge C]\, \|_3^2\}\int_0^T s^{1-a} \|\phi(s)\|_6^2 ds.
\end{align*}
The integral in the last line is finite by \eref{vs550a}. The supremum in that line is also
 finite as follows   from the inequalities 
\begin{align}
s\|B_C(s)\|_3^2 
 &\le s^{a - (1/2)} \Big( s^{\frac{1-a}{2}} \| B_C(s)\|_2\Big) \Big( s^\frac{2-a}{2} \|B_C(s)\|_6\Big)
         \  \text{and}  \label{vs46} \\
s \| \, [ C(s)\wedge C(s) ] \|_3^2 &\le c^2 s^{2a -1} \Big( s^{\frac{1-a}{2} }\|C(s)\|_6\Big)^4 
\le c^2 s^{2a-1} (\kappa_6 |C|_T)^4,           \label{vs47}
\end{align}
since $|C|_T <\infty$ by the definition of $\P_T^a$ while the two expressions in parentheses
in line \eref{vs46} are bounded on $(0, T]$ by respectively  \eref{vs400a} and  \eref{vs641pa}
for each $a \in [1/2, 1)$. This proves \eref{vs57}.

It remains only to prove \eref{vs58}.
But    
 $$
 s^{2-a}\|B_C(s)\|_2^4 \|\, [C(s), \phi(s)]\,\|_2^2 = s^{2a-1}\Big(s^{1-a} \|B_C(s)\|_2^2\Big)^2
 \Big( s^{1-a}
  \|\, [C(s), \phi(s)]\,\|_2^2\Big).
  $$
  The first factor in parentheses is bounded by \eref{vs400a} while the last factor in parentheses
  is integrable by \eref{vs38m}. This completes the proof of \eref{vs641b}.
 \end{proof}

\subsection{The case of infinite action} \label{secia}
If a solution to  the augmented variational equation \eref{aymh} does not have 
finite a-action then our proofs of the estimates
given in Theorems \ref{thmord1a} and \ref{thmord2a} do not hold. 
 We are concerned in this section
only with the case $a = 1/2$ because finite a-action always holds for $ a > 1/2$.
 We are going to replace the first and second order initial behavior 
 bounds of Theorems \ref{thmord1a} and \ref{thmord2a}  by slightly weaker  bounds.
  All of these are outgrowths of the inequality
 \begin{align}
 \int_0^T s^{-1/2 + \delta} \|C(s)\|_{H_1}^2 ds < \infty,              \label{vs500}
 \end{align}
 which holds for all paths $C(\cdot) \in \P_T^{1/2}$ and $\delta >0$. 
 This follows from \eref{ST413a} with $a = 1/2$, which shows that
 $\|C(s)\|_{H_1}^2 = o(s^{-1/2})$ as $ s \downarrow 0$.
      The next theorem gives weaker information about the nature of the initial singularity
      than we obtained under the assumption of finite action, 
      but is adequate for implementing a weaker version of the ZDS procedure: 
      We will only be able to prove that  $g(t) \in \G_{1,q}$ for $q < 3$ rather
       than in the smaller group $\G_{3/2}$.

 \begin{theorem}\label{thmia1} $($Order 1$)$
 Suppose that $C_0 \in H_{1/2}$ and that $C(\cdot)$ is
  a strong solution
 of \eref{aymh} lying in $\P_T^{1/2}$. Define $\alpha_\infty$ and $\beta_\infty$ as in \eref{vs421}.
  Then, for any $\delta >0$, there holds
  \begin{align}
t^{(1/2) + \delta}\Big(&\| B_C(t)\|_2^2 + \|\phi(t)\|_2^2\Big)  
      + \int_0^t s^{(1/2) + \delta}\|C'(s)\|_2^2 ds  \notag\\
&\le   (\alpha_\infty +(1/2) + \delta) \int_0^t s^{\delta - (1/2)}
              \Big(\| B_C(s)\|_2^2 +\| \phi(s)\|_2^2\Big) ds        \notag    \\    
& < \infty.\ \ \  
\qquad\qquad \qquad\qquad \qquad\qquad\ \ \   \ \ \  (\text{Order 1})  \label{vs38d}
\end{align}
Moreover
\begin{align}
\int_0^T s^{\delta +(1/2)}\Big( \|B_C(s)\|_6^2 + \|\phi(s)\|_6^2 \Big) ds &< \infty
\qquad\qquad\  
                                                   \text{and}                               \label{vs550ep} \\
 \int_0^T s^{\delta + (1/2)} \Big( \| d_C^* B_C(s)\|_2^2 
            + \| d_C\phi(s)\|_2^2 &+ \| d\phi(s)\|_2^2\Big) ds < \infty.    \label{vs551ep}
\end{align}
\end{theorem}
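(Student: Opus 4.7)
The plan is to imitate the proof of Theorem \ref{thmord1a}, but replace the integrating factor $e^{\alpha_s^t}$ (which we can no longer control, since $\alpha$ need not be integrable near $0$) by the polynomial weight $s^{1/2+\delta}$ extracted from Lemma \ref{lem50}. The growth of $\alpha(s)$ is absorbed using the pointwise bound $s\alpha(s) \le \alpha_\infty$ from \eqref{vs421}, which is why a positive $\delta$ must be sacrificed.

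Concretely, I would apply Lemma \ref{lem50} with $b = 1/2 - \delta$,
$$f(s) = \|B_C(s)\|_2^2 + \|\phi(s)\|_2^2, \qquad g(s) = \|C'(s)\|_2^2, \qquad h(s) = \alpha(s)\|\phi(s)\|_2^2,$$
so that the differential inequality \eqref{vs529} is the required hypothesis \eqref{vs90}. The integrability hypothesis \eqref{vs90f} becomes $\int_0^t s^{\delta - 1/2}(\|B_C\|_2^2 + \|\phi\|_2^2)\,ds < \infty$, which follows from the crude bound $\|B_C(s)\|_2^2 + \|\phi(s)\|_2^2 \le (c_3^2+1)\|C(s)\|_{H_1}^2$ (inequality \eqref{vs44}) together with \eqref{vs500}. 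The inequality \eqref{vs91} then gives
\begin{align*}
t^{1/2+\delta} f(t) + \int_0^t s^{1/2+\delta}\|C'(s)\|_2^2\,ds
   \le \int_0^t s^{1/2+\delta}\alpha(s)\|\phi(s)\|_2^2\,ds + (1/2+\delta)\int_0^t s^{\delta-1/2} f(s)\,ds,
\end{align*}
and using $s\alpha(s) \le \alpha_\infty$ on the first integral collapses it into the second integral times $\alpha_\infty$, yielding \eqref{vs38d}.

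For the weighted $L^6$ bound \eqref{vs550ep}, I would multiply the Gaffney--Friedrichs--Sobolev inequality \eqref{vs562} by $s^{1/2+\delta}$ and integrate, exactly as in the derivation of \eqref{vs550a}. The three resulting terms are controlled by: (i) the $\|C'\|_2^2$ integral just proved; (ii) the bound $s^{1/2}\|\phi(s)\|_2^2 \in L^\infty(0,T]$ from \eqref{vs400} multiplied by $s^\delta$; and (iii) the factorization $s^{1/2+\delta}\lambda(B_C)\|B_C\|_2^2 = (s\lambda(B_C))\cdot(s^{\delta-1/2}\|B_C\|_2^2)$, where the first factor is bounded by \eqref{vs420} and the second is integrable by the same application of \eqref{vs500} as above.

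For \eqref{vs551ep}, the bounds on $\|d_C^* B_C\|_2$ and $\|d_C\phi\|_2$ follow immediately from the orthogonality \eqref{vs561.1} combined with the $\|C'\|_2^2$ part of \eqref{vs38d}. The bound on $\|d\phi\|_2$ then reduces, via $\|d\phi\|_2 \le \|d_C\phi\|_2 + \|[C,\phi]\|_2$, to showing $\int_0^T s^{1/2+\delta}\|[C(s),\phi(s)]\|_2^2\,ds < \infty$. This is done by the same chain as \eqref{vs38f}--\eqref{vs38g}: H\"older gives $\|[C,\phi]\|_2^2 \le c^2\|C\|_6^2\|\phi\|_3^2$, the factor $s^{1/2}\|C\|_6^2$ is bounded by \eqref{vs400}, and the remaining $\int_0^T s^\delta\|\phi\|_3^2\,ds$ is handled by the interpolation $\|\phi\|_3^2 \le \|\phi\|_2\|\phi\|_6$ and Cauchy--Schwarz against \eqref{vs500} and \eqref{vs550ep}. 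The main obstacle, and the only place where the proof truly departs from the finite-action case, is the inability to use $e^{\alpha_s^t}$ as an integrating factor; everything else is bookkeeping of powers of $s$ to ensure that each product of a bounded factor with an integrable factor stays integrable after the shift by $\delta$.
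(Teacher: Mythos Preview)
Your proposal is correct and follows essentially the same approach as the paper's proof: apply Lemma \ref{lem50} directly to the differential inequality \eqref{vs529} with $b=1/2-\delta$, absorb the $\alpha(s)$ term via $s\alpha(s)\le\alpha_\infty$, and then bootstrap to the $L^6$ and derivative bounds via \eqref{vs562}, orthogonality \eqref{vs561.1}, and the $[C,\phi]$ interpolation argument. The only cosmetic difference is that the paper factors the third term in \eqref{vs553ep} as $s^\delta\lambda(B_C)\cdot s^{1/2}\|B_C\|_2^2$ (integrable times bounded) rather than your $s\lambda(B_C)\cdot s^{\delta-1/2}\|B_C\|_2^2$ (bounded times integrable); both work.
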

         \begin{proof} 
 We  start with the differential inequality \eref{vs529}.
  We cannot put the 
    function $\alpha(\cdot)$ into an integrating factor, as we did under the assumption
     of finite action, because $\alpha$  is  not integrable near zero. Instead we will apply the machinery of Lemma \ref{lem50} directly to \eref{vs529}.   
     
      Let $d = \delta +(1/2)$. 
      In \eref{vs90} take $f(s) = \| B_C(s)\|_2^2 + \|\phi(s) \|_2^2$, $g(s) = \|C'(s)\|_2^2$
and $h(s) = \alpha(s) \|\phi(s)\|_2^2$.  Then \eref{vs529} asserts that \eref{vs90} holds.
Take $b =1 -d$ in \eref{vs91} to find
\begin{align*}
t^d f(t) + \int_0^t s^d \| C'(s)\|_2^2 ds \le \int_0^t s^{d} \alpha(s) \| \phi(s)\|_2^2 ds +
d \int_0^t s^{d-1} f(s) ds.
\end{align*}
But $s^d \alpha(s) \le s^{d-1}\alpha_\infty$ and $\|\phi(s)\|_2^2 \le f(s)$.
Hence the first inequality in \eref{vs38d} holds.
 By \eref{vs400} one has  
 $s^{\delta -(1/2)}\Big( \| B_C(s)\|_2^2 +  \|\phi(s)\|_2^2\Big) 
= O(s^{\delta -1})$ as $s \downarrow 0$, which is integrable. 
Since $\alpha_\infty < \infty$, by \eref{vs421},
 the inequality   \eref{vs38d} holds.

    The proof of the weighted $L^6$ bound \eref{vs550ep}  imitates the proof
     of the corresponding bound  for the finite action case: 
     We may    replace \eref{vs38e} (with $a = 1/2$) by    
   \begin{align}
 \kappa^{-2}&s^{d}\Big(  \|B_C(s)\|_6^2 + \|\phi(s)\|_6^2 \Big) \notag\\
& \le  s^{d} \| C'(s)\|_2^2 + s^{d} \| \phi(s) \|_2^2  
 +s^{\delta} \lambda(B_C(s)) \Big(s^{1/2}\| B_C(s)\|_2^2\Big)      \label{vs553ep}
 \end{align}
 with  $d = 1/2 + \delta$. From \eref{vs38d} we see that
   $\int_0^Ts^{d} \| C'(s)\|_2^2ds < \infty$. The second term  in \eref{vs553ep} is bounded,
   by \eref{vs400}. Since
   \beq
   \int_0^T s^{\delta} \lambda(B_C(s)) ds 
             =\int_0^T \Big(s^\delta + O(s^{-1 + \delta}) \Big) ds < \infty,
   \eeq
   the last term in \eref{vs553ep} is integrable. This proves \eref{vs550ep}.
   
   Concerning the inequality  \eref{vs551ep} observe that the integrability of the first two
   terms follows directly from \eref{vs38d}  
   because $\|C'(s)\|_2^2 = \| d_C B_C(s)\|_2^2 + \|d_C \phi(s)\|_2^2$. Just as in the proof
   of \eref{vs38b}, it suffices to prove that \linebreak
   $\int_0^T s^d \|\, [ C(s), \phi(s)]\, \|_2^2 ds < \infty$. But
   \begin{align*}
   \int_0^T s^d &\|\, [ C(s), \phi(s)]\, \|_2^2 ds 
   \le c^2|C|_T^2 \int_0^T s^\delta \|\phi(s)\|_2 \|\phi(s)\|_6 ds \\
   &\le c^2|C|_T^2 \Big(\int_0^T s^{\delta - (1/2)} \|\phi(s)\|_2^2 ds \Big)^{1/2}
   \Big(\int_0^T s^{\delta + (1/2)} \|\phi(s)\|_6^2 ds \Big)^{1/2}\\
   & < \infty
      \end{align*}
 by \eref{vs550ep}.  
     \end{proof}
     
\begin{theorem}\label{thmia2} $($Order 2$)$ Suppose again that $C_0 \in H_{1/2}$ and 
   that $C(\cdot)$ is   a strong solution
 of \eref{aymh} lying in $\P_T^{1/2}$. Define $\alpha_\infty$ and $\beta_\infty$ as in \eref{vs421}.
  Then, for any $\delta >0$, there holds
\begin{align}
t^{(3/2) + \delta}& \|C'(t) \|_2^2 
+ \int_0^t s^{(3/2) + \delta} \Big(\|d_C^* C'(s)\|_2^2 + \|d_C C'(s)\|_2^2\Big)ds \notag\\
&\le (\beta_\infty + d)(\alpha_\infty + (1/2) + \delta) 
                        \int_0^ts^{\delta -(1/2)}\Big( \|B_C(s)\|_2^2 + \| \phi(s)\|_2^2 \Big)ds \notag\\
& < \infty, \qquad \qquad\qquad \qquad \qquad\qquad   \qquad           (\text{Order 2}) \label{vs640d}
\end{align}
where $d = (3/2) + \delta$. 
Moreover 
\begin{align}
\sup_{0< t <T} t^{(3/2)+\delta}\Big( \|B_C(t)\|_6^2 
                       + \|\phi(t)\|_6^2 \Big) &< \infty,\ \                                       \label{vs641pep} \\
\int_0^T s^{(3/2)+\delta}\Big( \|C'(s)\|_6^2 + \| d_C^* B_C(s)\|_6^2 
                 + \| d_C\phi(s)\|_6^2 \Big) ds  &< \infty  \ \ \   \text{and}          \label{vs641ep}\\
 \int_0^T  s^{(3/2)+\delta}\Big(\|d\phi(s)\|_6^2 
                                     + \|\, [C(s), \phi(s)]\, \|_6^2 \Big) ds &< \infty.       \label{vs642ep}
\end{align} 
 \end{theorem}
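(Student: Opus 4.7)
The plan is to mirror the structure of the finite-action Theorem \ref{thmord2a}, but, as in Theorem \ref{thmia1}, to avoid integrating factors built from $\beta(\cdot)$, which need not be integrable near $s=0$. The scale-invariant bound $s\beta(s) \le \beta_\infty$ from \eref{vs421} replaces integrability of $\beta$.

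To establish the energy inequality \eref{vs640d}, I apply Lemma \ref{lem50} directly to the second-order differential inequality \eref{vs530}, with $f(s) = \|C'(s)\|_2^2$, $g(s) = \|d_C^* C'(s)\|_2^2 + \|d_C C'(s)\|_2^2$, $h(s) = \beta(s)\|C'(s)\|_2^2$, and $b = 1-d$ where $d = 3/2+\delta$. The pointwise bound $s\beta(s) \le \beta_\infty$ converts $\int_0^t s^d h(s)\,ds$ into $\beta_\infty \int_0^t s^{d-1}\|C'(s)\|_2^2\,ds$, and combining with the residual last term in \eref{vs91} produces the coefficient $\beta_\infty + d$. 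Since $d-1 = 1/2+\delta$, the first-order bound \eref{vs38d} from Theorem \ref{thmia1} controls this integral and yields the second factor $\alpha_\infty + 1/2+\delta$, with the final integrand $s^{\delta-1/2}(\|B_C\|_2^2+\|\phi\|_2^2) = O(s^{\delta-1})$ integrable by \eref{vs400}. The hypothesis \eref{vs90f} of Lemma \ref{lem50} is also verified by \eref{vs38d}.

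The weighted $L^6$ bounds then follow in the spirit of Theorem \ref{thmord2a}, with the freedom to replace $\delta$ by any smaller positive value in Theorem \ref{thmia1}. Multiplying the Gaffney-Friedrichs-Sobolev identity \eref{vs562} by $s^{3/2+\delta}$ yields \eref{vs641pep} at once: the three resulting terms are controlled pointwise by \eref{vs640d}, by $s^{1/2}\|\phi\|_2^2$ boundedness from \eref{vs400}, and by the product $(s\lambda(B_C))(s^{1/2+\delta}\|B_C\|_2^2)$ via \eref{vs421} and \eref{vs400}. Multiplying the $a$-independent bound \eref{vs649} by $s^{3/2+\delta}$ gives \eref{vs641ep}: the $\|d_C C'\|_2^2$ and $\|d_C^* C'\|_2^2$ terms are integrable by \eref{vs640d}; the $\lambda(B_C)\|C'\|_2^2$ term splits as $(s\lambda(B_C))(s^{1/2+\delta}\|C'\|_2^2)$, a bounded function times an integrable one by \eref{vs421} and \eref{vs38d}; and the $\|[B_C,\phi]\|_2^2$ term, using $\|[B_C,\phi]\|_2 \le c\|B_C\|_6\|\phi\|_3$, splits as $c^2(s^{3/2+\delta/2}\|B_C\|_6^2)(s^{\delta/2}\|\phi\|_3^2)$, the first factor bounded by \eref{vs641pep} at parameter $\delta/2$ and the second integrable by Cauchy-Schwarz on $\|\phi\|_3^2 \le \|\phi\|_2\|\phi\|_6$, pairing the weight $s^{\delta/2-1/2}$ in $L^2$ with $s^{\delta/2+1/2}$ in $L^6$ via \eref{vs38d} and \eref{vs550ep} (both at parameter $\delta/2$).

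For \eref{vs642ep}, the $d\phi$ bound follows from the $d_C\phi$ bound in \eref{vs641ep} once one establishes $\int_0^T s^{3/2+\delta}\|[C(s),\phi(s)]\|_6^2\,ds < \infty$. Applying \eref{gaf50} to $\omega = [C,\phi]$ together with the identities $d_C^*[C,\phi] = [C\lrc d_C\phi]$ and $d_C[C,\phi] = -[C\wedge d_C\phi] + [B_C + (1/2)[C\wedge C],\phi]$ reduces the task to four product integrals; each is handled by the interpolation $\|u\|_3^2 \le \|u\|_2\|u\|_6$ and Cauchy-Schwarz, pairing weighted $L^2$ bounds from \eref{vs551ep} with weighted $L^6$ bounds from \eref{vs641ep}, together with the pointwise boundedness of $s^{1/2}\|C\|_6^2$ coming from $C \in \P_T^{1/2}$. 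The main obstacle is the exponent bookkeeping: for each interpolation one must choose auxiliary parameters $\epsilon_1,\epsilon_2 > 0$ in \eref{vs551ep} and \eref{vs641ep} so that the available weights $1/2+\epsilon_1$ (for $L^2$) and $3/2+\epsilon_2$ (for $L^6$) add up to the required target weight. Because $\delta > 0$ is fixed and arbitrary while one can take $\epsilon_i$ arbitrarily small, this is always possible; but doing it systematically for every term is the technical heart of the argument.
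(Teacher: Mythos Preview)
Your proposal is correct and follows exactly the approach the paper indicates: the paper's own proof of Theorem \ref{thmia2} reads only ``The proof follows the pattern of that for the finite action case, Theorem \ref{thmord2a}, with modifications similar to those used in the proof of Theorem \ref{thmia1}. We will omit the details.'' You have supplied precisely those details --- applying Lemma \ref{lem50} directly to \eref{vs530} with the pointwise bound $s\beta(s)\le\beta_\infty$ in place of an integrating factor, then cascading the weighted $L^6$ bounds from \eref{vs562} and \eref{vs649} while exploiting the freedom to shrink $\delta$ in the first-order estimates of Theorem \ref{thmia1}.
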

         \begin{proof}          
         The proof follows the pattern of that for the finite action
  case, Theorem    \ref{thmord2a},  with modifications   similar to those used in the proof of 
  Theorem \ref{thmia1}. We will omit the details.
  \end{proof}

\subsection{High $L^p$ bounds  via Neumann domination}  \label{secibN}
Our energy estimates were  able to produce bounds on $L^p$ norms of various
 functions on $M$ 
for $ 2 \le p \le 6$. In this section we will derive $L^p$ bounds  of some functions 
for $6 \le p \le \infty$.

\begin{theorem} \label{thmnd1} $(p =\infty)$ Let $1/2 \le a <1$ and $0 < T <\infty$. 
Assume that either $M= \R^3$ or that $M$ is convex in the sense of Definition \ref{convex}.
          Suppose that $C(\cdot)$  is a  strong solution to \eref{aymh} lying in $\P_T^a$
    and having finite a-action. Then
\begin{align}
\int_0^T t^{(3/2) -a} \Big(\|B_C(t)\|_\infty^2 + \| \phi(t)\|_\infty^2\Big) dt < \infty.  \label{nd10}
\end{align}
In particular,
\beq
\int_0^T t \Big(\|B_C(t)\|_\infty^2 + \| \phi(t)\|_\infty^2\Big)
 dt < \infty\ \ \ \ \ \ \ \text{if} \ \ a = 1/2                                       \label{nd11}
\eeq
and
\beq
\int_0^T \Big(\|B_C(t)\|_\infty  + \|\phi(t)\|_\infty \Big)dt < \infty \ \ \ \ 
                                                               \text{if} \ \ a > 1/2.         \label{nd12}
\eeq
Furthermore,
\begin{align}
\|B_C(t)\|_\infty  = t^{-1 + \frac{a- (1/2)}{2}}\ o(1) \ \   
                                  \text{as}\ \ t\downarrow 0\ \ \ \text{if}\ \ 1/2 \le a <1.    \label{nd13}
\end{align}
In particular,
\begin{align}
\|B_C(t)\|_\infty = o(t^{-1})\ \ \text{as} \ \ t\downarrow 0\ \   \text{if}\ \ a = 1/2.    \label{nd15}
\end{align}
\end{theorem}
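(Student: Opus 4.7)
The plan is the Neumann domination method. Via Kato's inequality, convert the pointwise identities \eqref{vs511} and \eqref{vs510.1} into scalar heat inequalities for $|B_C|$ and $|\phi|$, apply Duhamel starting from a positive time, and bootstrap the $L^6$ bounds of Sections \ref{secib1}--\ref{secib2a} up to $L^\infty$ using heat semigroup $L^q\to L^\infty$ estimates. Convexity of $M$ (or $M=\R^3$) is precisely what is needed: it ensures that the absolute/relative boundary conditions on $B_C$ and $\phi$ translate, via Reilly-type boundary identities, into Neumann-type scalar boundary conditions for $|B_C|$ and $|\phi|$ with the correct sign contribution from the second fundamental form of $\partial M$, so that $e^{t\Delta_N}$ is a legitimate dominator.

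Concretely, Kato applied to \eqref{vs511} and \eqref{vs510.1} gives
\begin{align*}
(\partial_t - \Delta_N)|B_C| &\le c|B_C|^2 + c|B_C||\phi|,\\
(\partial_t - \Delta_N)|\phi| &\le c|C|\bigl(|d_C^* B_C| + |d\phi|\bigr).
\end{align*}
Duhamel from $\tau = t/2$, together with the standard bound $\|e^{u\Delta_N}\|_{L^q\to L^\infty}\le Cu^{-3/(2q)}$, produces, using $q=6$ on the initial term and $q=3$ on the nonlinear integrand (with H\"older $\|\,|B_C|^2\,\|_3 \le \|B_C\|_6^2$, etc.),
$$
\|B_C(t)\|_\infty \le Ct^{-1/4}\|B_C(t/2)\|_6
+ C\int_{t/2}^t (t-s)^{-1/2}\bigl(\|B_C(s)\|_6^2 + \|B_C(s)\|_6\|\phi(s)\|_6\bigr)ds,
$$
and analogously for $\|\phi(t)\|_\infty$ using $\|C(s)\|_6$ together with $\|d_C^* B_C(s)\|_6$ and $\|d\phi(s)\|_6$. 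Substituting the weighted bounds $\|B_C(s)\|_6^2, \|\phi(s)\|_6^2 \le Cs^{a-2}$ from \eqref{vs641pa} and computing the resulting beta-type integrals via Remark \ref{remcv} yields pointwise estimates for $\|B_C(t)\|_\infty$ and $\|\phi(t)\|_\infty$ in negative powers of $t$.

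The weighted integral bound \eqref{nd10} then follows by squaring these pointwise bounds, multiplying by $t^{3/2-a}$, integrating, and applying Minkowski followed by H\"older to convert the result into finite combinations of the quantities \eqref{vs641pa}, \eqref{vs641a}, \eqref{vs641b}, \eqref{vs550a} of Theorems \ref{thmord1a}--\ref{thmord2a}; the specializations \eqref{nd11} and \eqref{nd12} are immediate. For the pointwise asymptotic \eqref{nd13}, the little-o improvement comes from the fact that the weighted quantities $s^{2-a}\|B_C(s)\|_6^2$ and $s^{1-a}\|C(s)\|_6^4$ not only are bounded but actually tend to zero as $s\downarrow 0$ (by the continuity statements built into $\P_T^a$ and into $B_C(\cdot)\in C((0,T];L^6)$), and this decay propagates through the pointwise estimate above to give \eqref{nd13} and hence \eqref{nd15}. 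The main obstacle is the $\phi$-inequality, because the factor $|C|$ on its right-hand side is only in $L^6$ with an $s^{(a-1)/2}$-type singularity; this forces a careful H\"older distribution of weights between $\|C(s)\|_6$ and $\|d_C^* B_C(s)\|_6, \|d\phi(s)\|_6$ so that every resulting integral $\int(t-s)^{-\mu}s^{-\nu}\,ds$ has $\mu+\nu<1$ and stays finite. A secondary technical point is the rigorous justification of Kato's inequality up to $\partial M$ in the convex case, where the positivity of the second fundamental form is what keeps $|B_C|$ and $|\phi|$ inside the scalar Neumann form domain.
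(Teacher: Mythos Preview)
Your overall strategy—Neumann domination via Kato plus heat-kernel smoothing—is correct and is what the paper does. But the implementation differs from the paper's in one structural point that is worth flagging, and your sketch glosses over a place where the naive execution fails at the critical exponent.

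\textbf{The paper's device versus yours.} The paper does \emph{not} start Duhamel from $t/2$. Instead it writes the Duhamel formula starting from a variable time $s\in(0,t)$ and then \emph{averages over $s$} (Proposition~\ref{propnda}). This produces
\[
|B_C(t,x)|\le \frac{1}{t}\int_0^t e^{(t-s)\Delta_N}\bigl(|B_C(s)|+s\,|B_C\#B_C-[B_C,\phi]|\bigr)ds,
\]
and similarly for $\phi$ with nonlinearity $-[C\lrc C']$ (taken from \eqref{vs510}, not \eqref{vs510.1}). The paper then applies $\|e^{r\Delta_N}\|_{2\to\infty}\le c_1 r^{-3/4}$ and feeds the whole thing into a tailor-made convolution lemma (Lemma~\ref{lemu1}), which converts the $t^{-1}\int_0^t(t-s)^{-3/4}\beta(s)\,ds$ structure directly into the weighted $L^2_t$ bound \eqref{nd10}. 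The right-hand side becomes $\int_0^T s^{-a}\|\beta(s)\|_2^2\,ds$, and this is finite by finite $a$-action together with the second-order energy bounds \eqref{nd80}, \eqref{nd82} of Lemma~\ref{lemenbds}. So the paper uses $L^2$ input throughout; the $L^6$ bounds enter only indirectly, inside the proof of \eqref{nd80}--\eqref{nd82}.

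\textbf{Where your sketch needs care.} Your route via Duhamel from $t/2$ and $L^6\to L^\infty$, $L^3\to L^\infty$ heat bounds also works, but the sentence ``substitute the weighted bounds $\|B_C(s)\|_6^2\le Cs^{a-2}$ \ldots\ and compute beta-type integrals'' does not close at $a=1/2$: the pointwise input alone gives $t^{3/2-a}\|B_C(t)\|_\infty^2\le C t^{-1}$, which is not integrable. To make your approach go through for \eqref{nd10} you must combine the \emph{pointwise} bound \eqref{vs641pa} with the \emph{integral} bounds \eqref{vs550a}, \eqref{vs641a}, \eqref{vs641b} via Cauchy--Schwarz in the $s$-integral and a Fubini swap in $t$. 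This does succeed (each resulting integral reduces, after the swap, to one of the finite weighted integrals of Theorems~\ref{thmord1a}--\ref{thmord2a}), but it is more delicate than your wording suggests. The same remark applies to the $\phi$ term with your choice of nonlinearity $|C|\,(|d_C^*B_C|+|d\phi|)$; the paper's choice $[C\lrc C']$ leads to the cleaner single estimate \eqref{nd82}. For the little-$o$ in \eqref{nd13}, the paper extracts it from $\|B_C(s)\|_2=o(s^{(a-1)/2})$ (explicitly stated in \eqref{vs400a}) and \eqref{nd81}, rather than from an unproved $s^{2-a}\|B_C(s)\|_6^2\to 0$; the latter is in fact true but is not stated as such in \eqref{vs641pa}.
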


\begin{corollary}\label{cornd2} 
  If, in Theorem \ref{thmnd1}, we drop the hypothesis that
$C(\cdot)$ has finite a-action then we still have
\beq
\sup_{\epsilon\le t \le T} \|B_C(t)\|_\infty  < \infty        \label{nd15.1}
\eeq
for any $\epsilon >0$.
\end{corollary}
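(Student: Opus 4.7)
The plan is a straightforward time-shift reduction to Theorem \ref{thmnd1}: by shifting the origin forward by any $\tau>0$, the initial datum of the shifted solution lives in $H_1$, so finite $a'$-action becomes automatic for every $a'\in(1/2,1)$, and the hypotheses of Theorem \ref{thmnd1} are restored on the shifted solution.

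First I would fix $\epsilon>0$ and choose some $\tau\in(0,\epsilon)$. Since $C(\cdot)$ is a strong solution lying in $\P_T^a$, Theorem \ref{mildstr} guarantees that $C\in C^\infty((0,T)\times M;\L^1\otimes\kf)$, and, by the definition of $\P_T^a$, that $C\in C((0,T];H_1)$. In particular $C(\tau)\in H_1$. I would then introduce the shifted path $\tilde C(s):=C(s+\tau)$, $s\in[0,T-\tau]$, which is plainly a strong solution of \eref{aymh} with initial value $\tilde C(0)=C(\tau)\in H_1$.

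Next I would verify that, for any choice of $a'\in(1/2,1)$, the path $\tilde C$ lies in $\P_{T-\tau}^{a'}$ and has finite strong $a'$-action. Both properties are immediate from the continuity of $\|\tilde C(\cdot)\|_{H_1}$ on the closed interval $[0,T-\tau]$: the normalization \eqref{ST413a} is vacuous, and
\[
\int_0^{T-\tau}s^{-a'}\|\tilde C(s)\|_{H_1}^2\,ds \;\le\; \Big(\sup_{[0,T-\tau]}\|\tilde C\|_{H_1}^2\Big)\int_0^{T-\tau}s^{-a'}\,ds<\infty
\]
since $a'<1$. With these hypotheses in place, Theorem \ref{thmnd1} applied to $\tilde C$ with such an $a'>1/2$ controls $\|B_{\tilde C}(s)\|_\infty$, and upon translating back by $\tau$ this will give the desired bound of $\|B_C(t)\|_\infty$ on $[\tau,T]\supset[\epsilon,T]$.

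The one point that will require care is that the \emph{stated} conclusions \eref{nd10} and \eref{nd13} of Theorem \ref{thmnd1} provide a weighted-integral bound and an initial-behavior $o$-estimate, rather than an explicit $t$-uniform $L^\infty_x$ bound on a whole interval. The main obstacle is therefore to upgrade these to a genuine $\sup_{\epsilon\le t\le T}\|B_C(t)\|_\infty<\infty$. I would handle this either by appealing directly to the pointwise Neumann-domination inequality on which the proof of Theorem \ref{thmnd1} is built (which expresses $\|B_{\tilde C}(t)\|_\infty$ via a locally bounded kernel applied to data controlled on $(0,T-\tau]$), or by a two-step argument: applying the shift trick with $\tau=t/2$ at each $t\in[\epsilon,T]$ together with \eref{nd13} yields pointwise finiteness at every such $t$, and then the smoothness of $B_C$ on $(0,T)\times M$, combined with Neumann-domination-type kernel estimates for $e^{(t-\sigma)\Delta}$, upgrades this pointwise finiteness to uniform boundedness on the compact interval $[\epsilon,T]$.
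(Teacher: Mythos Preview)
Your approach is exactly the paper's: shift the origin into the interior so that the shifted initial datum lies in $H_1$, whence finite strong $a'$-action is automatic for any $a'\in(1/2,1)$, and then invoke Theorem~\ref{thmnd1} on the shifted solution.

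The paper handles your ``main obstacle'' more simply than either of your proposed resolutions. Although \eref{nd13} is phrased as an $o$-statement at $t\downarrow 0$, its derivation (via \eref{nd100} together with the \emph{boundedness} assertions in \eref{vs400a} and \eref{nd81}, not merely their $o$-behavior) actually gives a global bound $s\,\|B_{\tilde C}(s)\|_\infty\le C_0$ for all $s\in(0,T-\delta]$. The paper then takes a single shift $\delta=\epsilon/2$: on $[\epsilon,T]$ one has $t-\delta\ge\epsilon/2$, so $\|B_C(t)\|_\infty\le 2C_0/\epsilon$ immediately. Your first suggested resolution (go back to the pointwise Neumann-domination inequality) amounts to the same thing. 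Your second suggestion---vary $\tau=t/2$ with $t$ and then appeal to smoothness---is unnecessarily elaborate, and note that when $M=\R^3$ smoothness on $(0,T)\times M$ does not by itself control $\|\cdot\|_{L^\infty_x}$, so that route would need more work.
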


\begin{theorem}\label{thmndp} $($$p < \infty$$)$ Under the same hypotheses as in 
 Theorem \ref{thmnd1},    if $ 6 \le p <\infty$ then
\begin{align}
\int_0^T t^{(3/2) -a -(3/p)} \Big(\|B_C(t)\|_p^2 + \| \phi(t)\|_p^2\Big) dt < \infty .    \label{nd10p}
\end{align}
In particular,
\beq
\int_0^T t^{1 - (3/p)} \Big(\|B_C(t)\|_p^2 + \| \phi(t)\|_p^2\Big)  dt < \infty\ \ \ \ \ \text{if} \ \ a = 1/2      \label{nd11p}
\eeq
and
\begin{align}
\int_0^T \big( \|B_C(t)\|_p + \|\phi(t)\|_p\Big) dt < \infty\ \ \   \text{for}\ \ 1/2 \le a <1.  \label{nd12p}
\end{align}
Furthermore, for $1/2 \le a <1$, one has 
\begin{align}
\|B_C(t)\|_p +\| \phi(t)\|_p = t^{-1 +(3/2p) + \frac{a- (1/2)}{2}}\ o(1)) \ \ 
                  \text{as}\ \ t\downarrow 0.       \label{nd13p}
\end{align}     
In particular,
\begin{align}
\|B_C(t)\|_p = o(t^{-1 +(3/2p)})\ \ \text{as} \ \ t\downarrow 0\ \   \text{if}\ \ a = 1/2. \label{nd15p}
\end{align}
\end{theorem}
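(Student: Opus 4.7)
The plan is to mirror the Neumann domination proof of Theorem \ref{thmnd1} (the $p = \infty$ case) but with the scalar heat semigroup bound $\|e^{t\Delta_N}\|_{L^q \to L^\infty} \le C\, t^{-3/(2q)}$ replaced throughout by its $L^q \to L^p$ counterpart $\|e^{t\Delta_N}\|_{L^q \to L^p} \le C_{p,q}\, t^{-(3/2)(1/q - 1/p)}$, valid on $\R^3$ and on our bounded convex $M$ under Neumann or Dirichlet boundary conditions. The starting point is Lemma \ref{lemids4p}: Kato's inequality applied to the Bochner-form equation \eref{vs511} for $B_C$ and to \eref{vs510.1} for $\phi$ yields pointwise scalar differential inequalities
\begin{align*}
\partial_s |B_C| &\le \Delta |B_C| + c\, |B_C|^2 + c\, |B_C|\, |\phi|, \\
\partial_s |\phi| &\le \Delta |\phi| + c\, |C|\bigl(|d_C^* B_C| + |d\phi|\bigr),
\end{align*}
where convexity of $M$ guarantees the non-negative second fundamental form at $\p M$ needed to pass from the form-valued Bochner equation to scalar Neumann domination without boundary obstruction, given the boundary conditions \eref{ST11N} or \eref{ST11D}.

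Next, having established the Duhamel representation
\begin{align*}
|B_C|(t) \le e^{(t - \tau)\Delta_N}|B_C(\tau)| + \int_\tau^t e^{(t-r)\Delta_N} F(r)\, dr,
\end{align*}
with $F$ a bilinear expression in $B_C, \phi, C$ and their $d_C$-derivatives (and an analogous bound for $|\phi|$), I take the $L^p$ norm and apply the heat semigroup bound above. For the initial term I choose $\tau = t/2$ and control $\|B_C(\tau)\|_2$ by the Order~1 bound \eref{vs400a}, which gives $\tau^{1-a}\|B_C(\tau)\|_2^2 = O(1)$. For the forcing integral, $\|F(r)\|_q$ is estimated by H\"older's inequality combined with the pointwise $L^6$ bounds of \eref{vs641pa}, the integrated Order~2 bounds \eref{vs641a}--\eref{vs641b}, and the finiteness of $|C|_T$. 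The exponent $q$ is chosen so the resulting power $(t-r)^{-(3/2)(1/q - 1/p)}$ of the kernel is integrable against $r$-weights provided by Remark \ref{remcv}.

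Assembling these pieces gives, for each $p \in [6, \infty)$, an estimate $\|B_C(t)\|_p \le t^{-1 + 3/(2p) + (a-1/2)/2}\, \epsilon(t)$ with $\epsilon(t) \to 0$ as $t \downarrow 0$, proving \eref{nd13p} and, by specialization, \eref{nd15p}. Note that this exponent reduces to that of Theorem \ref{thmnd1} at $p = \infty$ and to the Order~2 bound \eref{vs641pa} at $p = 6$, as it should. The integrated bound \eref{nd10p} is obtained not by integrating the pointwise estimate (which only delivers $o(t^{-1})$ at unknown rate, hence is not \emph{a priori} integrable) but by squaring the Duhamel representation, integrating against $t^{3/2 - a - 3/p}$, and invoking an abstract convolution estimate of the same type as Theorem \ref{thmactint}, which converts finiteness of the weighted Order~1 or Order~2 integrals on the right into the claimed weighted integrability on the left. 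The endpoint consequences \eref{nd11p} and \eref{nd12p} follow by setting $a = 1/2$ or by Cauchy-Schwarz against \eref{nd13p}.

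The main obstacle I expect is bookkeeping the various H\"older and Sobolev exponents so that (i) the $L^q$ norms of the bilinear forcing $F$ are in fact controlled by the bounds of Section \ref{secib}, (ii) the heat-kernel singularity in $(t-r)$ remains integrable against these forcing weights, and (iii) the resulting exponent of $t$ matches exactly $-1 + 3/(2p) + (a-1/2)/2$ pointwise and $3/2 - a - 3/p$ in integrated form. The balance is tight: both exponents collapse to $-1$ at $p = \infty$, $a = 1/2$, so any slack in the H\"older arithmetic would break the conclusion at the critical case.
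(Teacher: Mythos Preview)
Your overall strategy is sound and would work, but it diverges from the paper's proof in one structural respect worth flagging. You propose to run Duhamel from a fixed starting time $\tau = t/2$. The paper instead uses the \emph{averaged} Neumann domination inequality (Proposition~\ref{propnda}): starting from Duhamel at time $s$, it averages over all $s \in (0,t)$ and divides by $t$, producing
\[
|B_C(t,x)| \le \frac{1}{t}\int_0^t e^{(t-s)\Delta_N}\Bigl(|B_C(s)| + s\,|B_C\#B_C - [B_C,\phi]|\Bigr)\,ds
\]
(and the analogous bound \eref{nd21} for $\phi$ with forcing $[C\lrc C']$). The advantage is that the initial term and the forcing now live inside a \emph{single} convolution $\frac{1}{t}\int_0^t (t-s)^{-c}\beta(s)\,ds$, to which the paper applies the tailor-made convolution Lemma~\ref{lemu1} (not Theorem~\ref{thmactint}, which has a different structure). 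With $c = 3/4 - 3/(2p)$ and $b = 3/2 - a - 3/p$ one has $b - 2c = -a$, and \eref{nd10p} reduces to $\int_0^T s^{-a}\|\beta(s)\|_2^2\,ds < \infty$, which is precisely finite $a$-action plus the energy bound \eref{nd80}. The averaging also supplies the factor $s$ on the nonlinearity, so that $\int s^{-a}(s\|F(s)\|_2)^2\,ds = \int s^{2-a}\|F(s)\|_2^2\,ds$ matches \eref{nd80} and \eref{nd82} exactly.

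Your $\tau = t/2$ route avoids the averaging trick and therefore requires separate treatment of the initial term (a change of variables $s = t/2$ reduces it to finite $a$-action) and the forcing term (a Fubini argument on $\int_0^T t^{b}\bigl(\int_{t/2}^t (t-r)^{-c}\|F(r)\|_2\,dr\bigr)^2 dt$ yields $\int_0^T r^{2-a}\|F(r)\|_2^2\,dr$, which again is \eref{nd80} or \eref{nd82}). Theorem~\ref{thmactint} does not apply directly because your inner integral runs from $t/2$, not $0$; you would need to supply the Fubini argument by hand. One further point: for the pointwise bound on $\|\phi(t)\|_p$, the forcing (whether you write it as $[C\lrc C']$ or $[C\lrc(d_C^*B_C - d\phi)]$) admits only a pointwise $L^{3/2}$ bound, not $L^2$, since no pointwise control of $\|d_C^*B_C\|_6$ or $\|C'\|_6$ is available (cf.\ Remark~\ref{rem4th}). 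The paper handles this by switching to the $L^{3/2}\to L^p$ heat kernel bound $(t-s)^{-1+3/(2p)}$ for that term; you would need to do the same, and your phrasing ``the exponent $q$ is chosen so\ldots'' should be read as $q = 3/2$ being forced here, not free.
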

Theorems \ref{thmnd1} and \ref{thmndp}  and Corollary \ref{cornd2} will be 
proven in the remainder of this subsection.

\begin{remark}\label{stratnd} {\rm (Strategy) The method of proof of these theorems  depends  on showing 
that $|\phi(t,x)|$ and $|B_C(t,x)|$ satisfy partial differential inequalities of the form
$\p f/\p t \le \Delta_N f +$ non-linear terms, where   $\Delta_N$ is  the 
Neumann Laplacian  on real valued functions over $M$ 
(or simply the Laplacian on real valued functions if $M = \R^3$.
  Unfortunately, $\phi(0,x)$ and $B_C(0, x)$ are, typically,  distributions lying
 in $H_{-1/2}(M)$ and therefore use of their absolute values at time zero seems infeasible.
 Instead, we are going to represent $\phi(\sigma, x)$ as the solution to \eref{vs510}
 over an interval $[s, t]$ with initial data   $\phi(s,x)$ and with $s >0$.
 We can then apply the Neumann domination techniques developed in
  \cite{CG2} over the interval  $[s,t]$ to derive $s$-dependent inequalities  for $|\phi(t,x)|$,
  which  can then be   averaged with respect to $s$ over the interval $(0, t)$. 
    
     The $L^p$  bounds, for large $p$,  that we will  derive from these Neumann
 domination   inequalities   will rely on the energy estimates made in the
  previous subsections for low $p$.
}
\end{remark}

\subsubsection{Neumann domination with averaging.}

 \begin{proposition} \label{propnda} $($Neumann domination with averaging$)$ 
 Assume that $M =\R^3$ or is the closure of a bounded open set in $\R^3$ with smooth boundary.
  Suppose that
  $A:(0, T] \rightarrow C^1(M; \L^1\otimes \kf)$ is a time dependent  1- form on $M$ which is continuous in the time variable.
 Let $\w: (0,T) \rightarrow C^2( M;\L^p \otimes \kf)$ be a time dependent, $\kf$ valued, p-form on $M$ which is continuously differentiable in the time variable and satisfies the equation
 \begin{align}
 \w'(s,x)  =\sum_{j=1}^N(\n_j^{A(s)} )^2 \w(s,x) + h(s, x),  \ \ \ 0 < s <T,   \label{nda5}
 \end{align}
 where $h \in C((0, T]\times M; \L^p \otimes \kf)$.  If $M \ne \R^3$ then assume also that 
 \begin{align}
 \n_{n} |\w(s,x)|^2 \le 0\ \ \ \text{for}\ \ \ 0< s <T,  \ \ \  x\in \p M.             \label{nda6}
 \end{align}
 Then 
 \begin{align}
 |\w(t,x)| &\le t^{-1} \int_0^t e^{(t-s)\Delta_N} |\w(s, \cdot)| ds\ (x)    \notag\\
 &\qquad\qquad +  t^{-1}\int_0^t  e^{(t-s)\Delta_N} s|h(s, \cdot)| ds\ (x)  .        \label{nda7}
 \end{align} 
 If $M = \R^3$ then the Neumann Laplacian in \eref{nda7} should be replaced by the self-adjoint
 version $\Delta$ over $\R^3$.
 \end{proposition}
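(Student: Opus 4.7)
The plan is to reduce the averaged inequality to the familiar non-averaged Neumann domination inequality, applied on the interval $[s,t]$ for each $s \in (0,t)$, and then integrate $ds$ from $0$ to $t$ and divide by $t$. Schematically, once we know, for every $0 < s < t$, the pointwise DuHamel bound
\begin{equation}
|\w(t,x)| \le e^{(t-s)\Delta_N}|\w(s,\cdot)|(x) + \int_s^t e^{(t-\tau)\Delta_N}|h(\tau,\cdot)|(x)\, d\tau, \label{prop1}
\end{equation}
we recover the left side of \eref{nda7} as the constant-in-$s$ function $|\w(t,x)|$, write it as $t^{-1}\int_0^t |\w(t,x)|\,ds$, bound each term using \eref{prop1}, and swap the order of integration on the double integral:
\begin{equation}
\frac{1}{t}\int_0^t\!\!\int_s^t e^{(t-\tau)\Delta_N}|h(\tau,\cdot)|(x)\,d\tau\,ds = \frac{1}{t}\int_0^t \tau\, e^{(t-\tau)\Delta_N}|h(\tau,\cdot)|(x)\,d\tau. \label{prop2}
\end{equation}
That gives exactly the second term in \eref{nda7}, with the crucial factor $\tau$ emerging from the Fubini swap.

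The first step, which is the real content, is to establish \eref{prop1} from the evolution equation \eref{nda5}. Following the Kato-type computation done in \cite{CG2}, I would apply the gauge covariant Bochner identity to $|\w|^2$ and use the equation \eref{nda5}. Writing $\w$ as a section of $\L^p\otimes\kf$ and using $\<\n_j^{A}\w,\w\> = (1/2)\p_j|\w|^2$ together with the identity $\sum_j\<(\n_j^A)^2\w,\w\> = (1/2)\Delta|\w|^2 - \sum_j|\n_j^A\w|^2$, one obtains
\begin{equation}
\frac{\p}{\p\sigma}|\w(\sigma,x)|^2 = \Delta|\w(\sigma,x)|^2 - 2\sum_j|\n_j^{A(\sigma)}\w(\sigma,x)|^2 + 2\<h(\sigma,x),\w(\sigma,x)\>. \label{prop3}
\end{equation}
Regularizing $|\w|$ by $|\w|_\epsilon := \sqrt{|\w|^2+\epsilon^2}$, dividing by $2|\w|_\epsilon$, and using the pointwise Kato inequality $|\n|\w|_\epsilon|^2 \le \sum_j|\n_j^A\w|^2$ yields the scalar pointwise inequality
\begin{equation}
\frac{\p}{\p\sigma}|\w|_\epsilon \le \Delta|\w|_\epsilon + |h|. \label{prop4}
\end{equation}

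Step two is to pass from \eref{prop4} to \eref{prop1}. One forms $F_\epsilon(\sigma,x) := e^{(t-\sigma)\Delta_N}|\w(\sigma,\cdot)|_\epsilon(x)$ on $[s,t]$ and computes, using \eref{prop4},
$\p_\sigma F_\epsilon \le e^{(t-\sigma)\Delta_N}|h(\sigma,\cdot)|(x)$, provided that the boundary contribution generated by integration against the heat kernel is non-positive. Here the assumption \eref{nda6}, $\n_{n}|\w|^2 \le 0$ on $\p M$, together with the fact that $\n_n|\w|_\epsilon = (2|\w|_\epsilon)^{-1}\n_n|\w|^2$, supplies the required inward-pointing normal derivative sign to justify comparison with the Neumann heat semigroup (as in \cite{CG2}); when $M=\R^3$ no boundary term appears. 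Integrating $\p_\sigma F_\epsilon \le e^{(t-\sigma)\Delta_N}|h(\sigma,\cdot)|$ from $\sigma = s$ to $\sigma = t$ and letting $\epsilon \downarrow 0$ gives \eref{prop1}.

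The main obstacle is the boundary-condition handling in step two: the comparison between the evolution of $|\w|_\epsilon$ and $e^{(t-\sigma)\Delta_N}|\w|_\epsilon$ depends on the sign of the boundary flux of $|\w|_\epsilon$, and \eref{nda6} has been imposed precisely to make that flux compatible with the Neumann heat semigroup. Everything else -- the Kato step, the DuHamel representation, and the Fubini swap producing the factor $\tau$ -- is essentially routine once one accepts the analogous non-averaged result established in \cite{CG2}. The averaging itself is just the trick $|\w(t,x)| = t^{-1}\int_0^t|\w(t,x)|\,ds$ followed by \eref{prop2}.
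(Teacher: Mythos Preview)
Your proposal is correct and matches the paper's proof essentially step for step: the paper cites the non-averaged bound \eref{prop1} from \cite[Proposition 2.7]{CG2} (the Kato-type argument you sketch), then averages over $s\in(0,t)$ and swaps the order of integration in the double integral to produce the factor $\sigma$. Your additional detail on deriving \eref{prop1} via the regularized Kato inequality is precisely the content of the cited result in \cite{CG2}.
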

        \begin{proof} The proof of  \cite[Proposition 2.7]{CG2}  shows        
  that  under the hypotheses of this Proposition    there holds
    \begin{align}
 |\w(t,x)| &\le e^{(t-s)\Delta_N} |\w(s, \cdot)| \ (x)    \notag\\
 &\qquad\qquad 
 + \int_s^t  e^{(t-\sigma)\Delta_N} |h(\sigma, \cdot)| d\sigma\ (x) , \ \   0 < s < t < T .      \label{nda8}
 \end{align}   
 One need only take the origin  in \cite[Proposition 2.7]{CG2} to be $s$ in our present setting. 
 The statement of  \cite[Proposition 2.7]{CG2} includes the assumption that $M$ is convex,
 which is used only to show that  our hypothesis \eref{nda6} holds in the cases of interest.
We will prove separately, in Lemma \ref{normderiv}, that \eref{nda6} 
holds for our circumstances.  
The statement of  \cite[Proposition 2.7]{CG2}  further hypothesizes that  $M$ is compact.
But when $M=\R^3$ the proof given there applies even more easily because  one need
not be concerned with boundary conditions. 
Instead one can allow $|\w| + |grad\ \w| \in L^2(\R^3)$ or even mild growth (e.g. polynomial)
 of these function as  $x\to \infty$. 
 These conditions  will be satisfied for the functions $\w = B$ or $\w = \phi$ of interest to us.

 The left side of \eref{nda8} is independent of $s$. We may therefore average 
 \eref{nda8} over the interval $(0, t)$ to find
 \begin{align}
 |\w(t,x)| &\le t^{-1} \int_0^t e^{(t-s)\Delta_N} |\w(s, \cdot)| ds\ (x)    \notag\\
 &\qquad\qquad 
  +  t^{-1}\int_0^t \int_s^t e^{(t-\sigma)\Delta_N} |h(\sigma, \cdot)| d\sigma ds \ (x).   \label{nda9}
 \end{align} 
 Since $e^{(t-\sigma)\Delta_N}$ is a positivity preserving operator we can reverse the
 $\sigma$ and $s$ integrals in the last line to find 
 $t^{-1}\int_0^t e^{(t-\sigma)\Delta_N}\sigma |h(\sigma, \cdot)| d\sigma$. 
 This proves   \eref{nda7}. 
 \end{proof}

\subsubsection{Pointwise bounds} 
The proofs of Theorems \ref{thmnd1}  and \ref{thmndp} 
 depend on the following representation inequality.

            \begin{theorem}\label{ndom2} $($Pointwise bounds$)$
        Assume that $M$ is as in the statement of Theorem \ref{thmnd1}.
            Let $C(\cdot)$ be a smooth solution over $(0,T)$ to the
 augmented equation \eref{aymh}
satisfying either Neumann or Dirichlet boundary conditions,
 \eref{ST11N}, resp. \eref{ST11D} in case $M\ne \R^3$.   Then,
for $0 < t <T$,  the following pointwise bounds hold.
\begin{align}
|B_C(t,& x)| \le \frac{1}{t} \int_0^t e^{(t-s)\Delta_N} | B_C(s, \cdot)| ds\ (x)    \notag\\
&+ \frac{1}{t} \int_0^t e^{(t-s)\Delta_N}\  
  s \ \Big| B_C(s)\# B_C(s) - [ B_C(s), \phi(s)] \Big|   ds  \ (x)  \label{nd20} 
\end{align}
and
\begin{align}
|\phi(t,x)| \le \frac{1}{t} \int_0^t e^{(t-s)\Delta_N}&|\phi(s, \cdot)| ds\  (x)  \notag\\
 +\frac{1}{t} &\int_0^t  e^{(t-s)\Delta_N}
  \  s\ \Big|\, [C(s) \lrc C'(s)]\, \Big|ds \ (x).      \label{nd21}  
\end{align}
In case $M = \R^3$ the Neumann Laplacian $\Delta_N$ should be replaced by the self-adjoint
Laplacian $\Delta$  here and in the following.  Here $\#$ denotes a pointwise product as in \eref{vs511}.
\end{theorem}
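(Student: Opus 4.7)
The plan is to apply Proposition \ref{propnda} twice, once with $\w = B_C$ and once with $\w = \phi$, in each case using the pointwise identities from Lemma \ref{lemids4p} to recast the evolution equation in the form \eref{nda5} required by the proposition. The averaged form of Neumann domination stated in \eref{nda7} will then immediately yield \eref{nd20} and \eref{nd21}.

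For the curvature, identity \eref{vs511} already has the required form: taking $A(s) = C(s)$, $\w(s) = B_C(s)$, and $h(s) = B_C(s)\# B_C(s) - [B_C(s), \phi(s)]$, the equation $\w'(s) = \sum_{j=1}^3 (\nabla_j^{C(s)})^2 \w(s) + h(s)$ holds. Inserting these choices into \eref{nda7} yields \eref{nd20} verbatim. For $\phi$, I would start from identity \eref{vs510}, namely $d\phi/ds = -d_C^* d_C \phi - [C\lrc C']$, and observe that because $\phi$ is a $\kf$-valued $0$-form the Bochner-Weitzenb\"ock formula has no curvature term: $-d_C^* d_C \phi = \sum_{j=1}^3 (\nabla_j^C)^2 \phi$. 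Thus $\phi$ satisfies \eref{nda5} with $h(s) = -[C(s)\lrc C'(s)]$, and Proposition \ref{propnda} produces \eref{nd21}.

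The only nontrivial hypothesis of Proposition \ref{propnda} that needs to be checked in each application is the boundary inequality \eref{nda6}, namely $\nabla_n |\w(s,x)|^2 \le 0$ on $\p M$ for $\w = B_C$ and $\w = \phi$. This is exactly where the convexity hypothesis on $M$ (nonnegative second fundamental form of $\p M$) enters, together with the boundary conditions \eref{ST140} or \eref{ST141} satisfied by $C(\cdot)$ as a strong solution of the augmented equation. In the case $M = \R^3$ there is no boundary condition to check and one just replaces $\Delta_N$ by $\Delta$, noting that $C(\cdot)$ being smooth with $B_C$ and $\phi$ in the needed weighted $L^2$ spaces permits the $\R^3$ version of Proposition \ref{propnda} as mentioned in its proof.

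I expect the main obstacle to be the verification of the normal-derivative inequality \eref{nda6} for $\w = B_C$ and $\w = \phi$, since this requires combining the boundary conditions on $C$ with curvature identities at $\p M$ and invoking the non-negativity of the second fundamental form; this is precisely the content deferred to Lemma \ref{normderiv}. Once that lemma is in hand, the proof of Theorem \ref{ndom2} reduces to the two applications of Proposition \ref{propnda} described above. No further estimation is needed at this stage: the quantitative bounds on the right-hand sides of \eref{nd20} and \eref{nd21} will be obtained in subsequent subsections from the order-1 and order-2 energy estimates of Sections \ref{secib1} and \ref{secib2a}.
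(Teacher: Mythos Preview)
Your proposal is correct and matches the paper's proof essentially line for line: two applications of Proposition~\ref{propnda}, with $\w=B_C$ and $h=B_C\#B_C-[B_C,\phi]$ from \eref{vs511} for \eref{nd20}, and with $\w=\phi$ and $h=-[C\lrc C']$ from \eref{vs510} for \eref{nd21}, with the boundary hypothesis \eref{nda6} supplied in each case by Lemma~\ref{normderiv}. Your explicit remark that $-d_C^*d_C\phi=\sum_j(\nabla_j^C)^2\phi$ for the $0$-form $\phi$ is exactly the step the paper leaves implicit when it says ``the identity \eref{vs510} shows that \eref{nda5} holds.''
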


The proof of Theorem \ref{ndom2} depends on the following lemma.

\begin{lemma}\label{normderiv}$($Normal derivatives$)$  Assume that $M$ is as in
 the statement of Theorem \ref{thmnd1} but $M\ne \R^3$. 
 Let $C(\cdot)$ be a smooth solution to \eref{aymh} over $(0, T)$ satisfying either
 Neumann  boundary conditions  \eref{ST11N} or Dirichlet boundary 
 conditions \eref{ST11D}. Then
 \begin{align}
 \n_n |B_C(t)|^2 &\le 0,\ \ \ 0 < t < T\ \ \ \text{and}    \label{nda50}\\
 \n_n |\phi(t)|^2 &= 0 , \ \ \ 0 < t < T .                         \label{nda51}
 \end{align}
\end{lemma}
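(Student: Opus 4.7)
The plan is to reduce each assertion to the pointwise identity $\n_n |\w|^2 = 2\langle \w, \n_n \w\rangle$ on $\p M$, and then to exploit the boundary conditions recorded in Theorem \ref{mildstr}.

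For \eref{nda51}, the Dirichlet case is immediate: \eref{ST141} gives $\phi(t) = d^*C(t) = 0$ on $\p M$, so $\langle \phi, \n_n \phi\rangle = 0$ there. In the Neumann case I instead show that $\n_n \phi(t) = 0$ on $\p M$. The cleanest route is to observe that, by \eref{ST140}, the smooth form $C(t)$ lies in the domain of the absolute (Neumann) Laplacian $\Delta_N^1$ on 1-forms. Since $(d^*)^2 = 0$, the formal identity
\begin{equation*}
d^*\, \Delta_N^1\, C = -d^*(d^*d + dd^*)C = -d^* d\, d^* C = \Delta_N^0 (d^*C)
\end{equation*}
exhibits $d^*$ as intertwining the two Neumann Laplacians, so that $\phi(t) = d^*C(t)$ lies in the domain of $\Delta_N^0$; the defining boundary condition of that domain is precisely $\n_n \phi = 0$. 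An explicit local verification is also available: in flat boundary coordinates, combining $C_n = 0$ and $\p_n C_i = 0$ on $\p M$ (for $i$ tangential, the latter derived from $(dC)_{norm} = 0$) with the augmented equation $C' = \Delta C + X(C)$ and with $C'_n = 0$ on $\p M$ lets one read off $\n_n \phi = 0$ after checking that each nonlinear contribution in $X(C)_n|_{\p M}$ inherits a factor of $C_n$.

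For \eref{nda50}, the restriction $B_C|_{\p M}$ satisfies an absolute boundary condition in the Neumann case ($(B_C)_{norm} = 0$ from \eref{ST140}) and a relative one in the Dirichlet case ($(B_C)_{tan} = 0$ from \eref{ST141}). I would then invoke a standard Reilly-type pointwise identity, which in either case yields
\begin{equation*}
\n_n |B_C|^2 = -2\, S_{\p M}\bigl(B_C|_{\p M},\, B_C|_{\p M}\bigr)
\end{equation*}
on $\p M$, where $S_{\p M}$ is a quadratic form built from the second fundamental form of $\p M$. The convexity hypothesis forces $S_{\p M} \ge 0$, giving the desired sign. This is essentially the same boundary identity underlying \cite[Proposition 2.7]{CG2}, and it is the reason convexity enters at exactly this point rather than earlier in the analysis.

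The main obstacle is the Neumann case for $\phi$: the domain-theoretic intertwining is the conceptually cleanest justification for $\n_n \phi = 0$, but making it fully rigorous at the level of smooth representatives — or, alternatively, replacing it by the coordinate computation sketched above — requires some care with boundary regularity up to $\p M$ and with the behavior of $X(C)$ at $\p M$. The $B_C$ step, by contrast, is essentially a bookkeeping exercise in the Reilly identity once convexity of $M$ is invoked.
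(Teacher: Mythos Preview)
Your outline has the right shape, but there is a genuine gap in each half that the paper fills by using the augmented equation more than you do.

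\textbf{The $\phi$ step.} The domain-theoretic argument is not merely delicate; it is incorrect as stated. The intertwining $d^*\Delta_N^1 = \Delta_N^0 d^*$ is a formal identity of differential operators, and it does \emph{not} imply that $d^*$ carries $\mathcal D(\Delta_N^1)$ into $\mathcal D(\Delta_N^0)$. In flat boundary coordinates, the conditions $C_n=0$ and $(dC)_{norm}=0$ give $\partial_n C_i=0$ for tangential $i$, and hence $\nabla_n\phi = -\partial_n^2 C_n$ on $\partial M$; but nothing in the boundary data alone fixes $\partial_n^2 C_n$. Your coordinate backup is what actually works, and it works precisely because you feed in the normal component of the \emph{equation} $C'=\Delta C + X(C)$ together with $C'_n=0$. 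That is exactly what the paper does, only organized through the decomposition $-C'=d_C^*B_C + d_C\phi$: in the Neumann case $(d_C\phi)_{norm} = -C'_{norm} - (d_C^*B_C)_{norm}=0$, using $(B_C)_{norm}=0$ and \cite[Equ.~(3.20)]{CG1}. (Also, your claim that every piece of $X(C)_n$ carries a factor of $C_n$ is not literally true---the $[C\lrcorner B_C]$ term vanishes through $(B_C)_{norm}=0$---though that in turn does follow from $C_{norm}=0$ and $(dC)_{norm}=0$.)

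\textbf{The $B_C$ step.} The Reilly-type identity you quote does \emph{not} follow from $(B_C)_{norm}=0$ or $(B_C)_{tan}=0$ alone; one also needs $(d_C B_C)_{norm}=0$ in the Neumann case and $(d_C^* B_C)_{tan}=0$ in the Dirichlet case (this is the hypothesis of \cite[Corollary~2.4]{CG2}). In the Neumann case the extra condition is free by Bianchi, so your sketch happens to go through. In the Dirichlet case it is \emph{not} automatic, and the paper obtains it from the augmented equation: $(d_C^*B_C)_{tan} = -C'_{tan} - (d_C\phi)_{tan}$, with $C'_{tan}=0$ from $C_{tan}=0$ and $(d_C\phi)_{tan}=0$ from $\phi|_{\partial M}=0$ via \cite[Equ.~(3.19)]{CG1}. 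Without this step your Dirichlet argument for $\nabla_n|B_C|^2\le 0$ is incomplete.

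The common theme is that both gaps are closed by invoking the augmented PDE at the boundary, not merely the static boundary conditions on $C$.
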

\begin{proof} For fixed $t \in (0, T)$ let $\w = B_C(t)$. 
In the case of Neumann boundary conditions \eref{ST11N} we have 
$\w_{norm} =B_C(t)_{norm} =0$
by \eref{ST11N} and $(d_C\w)_{norm}=0$ by the Bianchi identity.
 Therefore we may apply
\cite[Corollary 2.4]{CG2} to find \eref{nda50} in the case of Neumann
 boundary conditions.
 
 In the case of Dirichlet boundary conditions we have $C(t)_{tan}=0$ by \eref{ST11D}
 and therefore  $dC(t)_{tan}=0$ by \cite[Equ. (3.19)]{CG1}. 
 Since also $(C(t)\wedge C(t))_{tan} =0$
it follows  that   $B_C(t)_{tan}=0$.      In order to apply   \cite[Corollary 2.4]{CG2}
we need only show that $(d_C^*B_C(t))_{tan}=0$. But the differential equation
\eref{aymh} shows that $(d_C^*B_C(t))_{tan}= - C'(t)_{tan} - (d_C \phi(t))_{tan}$. The
first term is zero by differentiation of   $C(t)_{tan}=0$.   The second term is zero
by virtue of    \cite[Equ. (3.19)]{CG1}, since $\phi(t)_{tan}=0$ by
 the assumption   \eref{ST11D}. Thus \eref{nda50} holds for both Neumann and Dirichlet boundary conditions.
 
 The proof of the identity \eref{nda51} for the zero form $\phi(t)$ does not require convexity of $M$, unlike the proof of \eref{nda50}:
    At any boundary point $x$, the normal derivative of $|\phi(t,x)|^2$ is given by
    \begin{align*}
    \n_n | \phi(t, x)|^2 &= \< (d_C\phi(t,x))_{norm} , \phi(t, x)\>_\kf 
                    +\< \phi(t, x), (d_C\phi(t,x))_{norm}\>_{\kf}. 
   \end{align*}
   This is zero in the case of Dirichlet boundary conditions since
   $\phi(t) = d^*C(t) =0$ on $\p M$   by \eref{ST11D}. In the case of Neumann boundary
    conditions we need to use 
   the differential equation \eref{aymh}, which shows that 
   $ (d_C\phi(t))_{norm} = - C'(t)_{norm} - (d_C^* B_C(t))_{norm}$.
   The first term is zero by virtue of \eref{ST11N}. The second term is zero  by 
   \cite[Equ. (3.20)]{CG1}, since $B(t)_{norm}=0$. 
\end{proof}

\bigskip
\noindent
         \begin{proof}[Proof of Theorem \ref{ndom2}] Take $\w(s) = B_C(s)$ in 
         Proposition \ref{propnda}. The boundary condition \eref{nda6} is satisfied
          in this case by Lemma \ref{normderiv}. The identity \eref{vs511} shows that
          \eref{nda5} holds with $h(s) =  B_C(s)\# B_C(s) - [ B_C(s), \phi(s)] $. 
          The role of the connection form $A$ in Proposition \ref{propnda} is
           played here by $C$. \eref{nd20} now follows from \eref{nda7}.
           
           For the proof of \eref{nd21} 
      take $\w(t) = \phi(t)$ in Proposition \ref{propnda}.
The required boundary condition \eref{nda6} is satisfied, in accordance with Lemma 
\ref{normderiv}. The identity \eref{vs510} shows that \eref{nda5} holds
 with $h(s) = -[C(s)\lrc C'(s)]$. \eref{nd21}  now follows from \eref{nda7}.
\end{proof}

\subsubsection{A convolution inequality and energy bounds}

  \begin{lemma}\label{lemu1}$($A convolution inequality$)$ 
          Let $0 \le c <1$. Suppose that  $\alpha$ and $\beta$ are
          non-negative functions  on $(0, T]$ such that
\begin{align}
\alpha(t) \le (1/t)&\int_0^t (t-s)^{-c}  \beta(s) ds\ \ \ \text{for}\ \ \ 0 < t \le T .  \label{u31}
\end{align}
Then for any real number $b < 2c+1$ there holds
\begin{align}
\int_0^T t^b \alpha(t)^2 dt \le \gamma \int_0^T s^{b -2c} \beta(s)^2 ds   \label{u32}
\end{align}
for some constant $\gamma$ depending only on $b$ and $c$.
\end{lemma}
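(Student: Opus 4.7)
The plan is to apply a weighted Cauchy--Schwarz inequality inside the defining bound, then swap the order of integration, reducing everything to two Beta-function-like integrals. The condition $b<2c+1$ will enter precisely as the requirement that a certain range of weight parameters is non-empty.

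First I would rewrite the hypothesis \eqref{u31} as
\begin{equation*}
 t\,\alpha(t)\le \int_0^t (t-s)^{-c}\beta(s)\,ds,
\end{equation*}
and factor the integrand for an arbitrary real $\lambda$ as
\begin{equation*}
 (t-s)^{-c}\beta(s)=\bigl[(t-s)^{-c}s^{-\lambda}\bigr]^{1/2}\,\bigl[(t-s)^{-c}s^{\lambda}\beta(s)^2\bigr]^{1/2}.
\end{equation*}
Cauchy--Schwarz then gives
\begin{equation*}
 \bigl(t\,\alpha(t)\bigr)^2\le \Bigl(\int_0^t(t-s)^{-c}s^{-\lambda}\,ds\Bigr)\Bigl(\int_0^t(t-s)^{-c}s^{\lambda}\beta(s)^2\,ds\Bigr).
\end{equation*}
The substitution $s=tr$ evaluates the first factor, provided $\lambda<1$, as $t^{1-c-\lambda}B(1-c,1-\lambda)$, where $B$ denotes the Beta function.

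Next I would insert this bound, multiply by $t^{b-2}$, and swap the order of integration in
\begin{equation*}
 \int_0^T t^b\alpha(t)^2\,dt=\int_0^T t^{b-2}\bigl(t\,\alpha(t)\bigr)^2\,dt,
\end{equation*}
obtaining
\begin{equation*}
 \int_0^T t^b\alpha(t)^2\,dt\le B(1-c,1-\lambda)\int_0^T s^{\lambda}\beta(s)^2\Bigl(\int_s^T t^{b-1-c-\lambda}(t-s)^{-c}\,dt\Bigr)ds.
\end{equation*}
For the inner integral I would change variables $t=s\tau$, producing
\begin{equation*}
 \int_s^T t^{b-1-c-\lambda}(t-s)^{-c}\,dt = s^{b-2c-\lambda}\int_1^{T/s}\tau^{b-1-c-\lambda}(\tau-1)^{-c}\,d\tau.
\end{equation*}
Convergence at $\tau=1$ requires $c<1$ (given), and convergence at $\infty$ requires the exponent $b-1-2c-\lambda<-1$, i.e.\ $\lambda>b-2c$; combined with $\lambda<1$ this is possible exactly when $b<2c+1$, the hypothesis of the lemma. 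Fix any $\lambda$ in the open interval $(b-2c,1)$. Then the $\tau$-integral is bounded by a finite constant $C(b,c,\lambda)$ independent of $s$, and combining the surviving powers $s^{\lambda}\cdot s^{b-2c-\lambda}=s^{b-2c}$ yields \eqref{u32} with
\begin{equation*}
 \gamma=B(1-c,1-\lambda)\,C(b,c,\lambda).
\end{equation*}

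The only real decision in the argument is the choice of the interpolating weight $\lambda$, and the one mildly nontrivial check is that the two integrability conditions produced by Cauchy--Schwarz (on the left) and by the order-swap (on the right) are simultaneously satisfiable precisely under the hypothesis $b<2c+1$; I would highlight this compatibility as the heart of the proof. Everything else is routine Beta-integral bookkeeping. An alternative, essentially equivalent, route would be Schur's test with weights $w(s)=s^{-\eta}$, $v(t)=t^{-\eta}$ for $\eta\in(b/2-c,\,1+c-b/2)$, but the direct Cauchy--Schwarz computation above seems cleanest.
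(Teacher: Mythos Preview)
Your proof is correct and follows essentially the same route as the paper: weighted Cauchy--Schwarz on the convolution (your $\lambda$ is the paper's $r$), evaluation of the weight integral via the Beta identity, Fubini, and then a scaling substitution in the inner $t$-integral (you use $t=s\tau$, the paper uses $t=s/u$, equivalent via $u=1/\tau$). The compatibility window $\lambda\in(b-2c,1)$ that you identify is exactly the paper's choice $r\in[0,1)$ with $b<2c+r$.
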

          \begin{proof} Choose $r \in [0, 1)$ such that $b < 2c +r$. 
          By \eref{u31}, the Schwarz inequality and \eref{rec519} we have
      \begin{align*}
t^b\alpha(t)^2 &\le t^{b-2} \Big(\int_0^t(t -s)^{-c} s^{-r}ds\Big) 
\Big(\int_0^t (t-s)^{-c}  s^r\beta(s)^2 ds \Big) \\
&=   C_{c,r}\Big( t^{b-2} t^{1-c-r} \Big) \Big(\int_0^t (t-s)^{-c}  s^r\beta(s)^2 ds \Big).
\end{align*}     
Therefore, substituting $t = s/u$ in the second line below, we find  
\begin{align*}
\int_0^T t^b \alpha(t)^2 dt 
        &\le C_{c,r} \int_0^T   t^{b-1 -c-r }\Big(\int_0^t (t-s)^{-c}   s^r\beta(s)^2 ds \Big) dt\\
 &= C_{c,r}\int_0^T \Big(\int_s^T t^{b-1-c-r} (t-s)^{-c}dt\Big)  s^r\beta(s)^2 ds \\
&= C_{c, r}\int_0^T  s^{b-2c-r }\Big(\int_{s/T}^1  u^{2c + r -b -1} (1-u)^{-c} du\Big)    s^r\beta(s)^2 ds \\
&\le C_{c, r}\int_0^T  s^{b-2c-r }\Big(\int_{0}^1  u^{2c + r -b -1} (1-u)^{-c} du\Big)    s^r\beta(s)^2 ds \\
& =C_{c, r}C_{c,1 -(2c+r -b))} \int_0^T s^{b-2c-r }\    s^r\beta(s)^2 ds\\
&=  \gamma \int_0^T s^{b-2c } \beta(s)^2 ds,
  \end{align*}  
  wherein we have used \eref{rec519} in the fourth line with $\mu = c$ and $\nu =    1 -(2c+r -b) <1$.
  The coefficient $\gamma$ depends on the choice of $r \in [0,1)$. For definiteness we can choose 
  $r =0$ in case $b - 2c <0$ and we can choose $r$ midway between $b-2c$ and $1$ if $b - 2c >0$.
  In either case we have $b-2c < r$, as required by this proof.  
 \end{proof}

\begin{lemma} \label{lemenbds} $($Energy bounds$)$ Assume that $M=\R^3$ or is the closure
of a bounded, convex,  open set in $\R^3$ with smooth boundary. 
For $ 1/2 \le a <1$ there holds
\begin{align}
\int_0^T s^{2-a}\Big( \|B_C(s)\# B_C(s)\|_2^2 + \| \, [ B_C(s), \phi(s)]\, \|_2^2 
\Big) ds < \infty.  \label{nd80}\\
s\Big( \|B_C(s)\# B_C(s)\|_2 + \| \, [B_C(s), \phi(s)]\, \|_2\Big) 
       = o(s^{a - (3/4)})\ \          \text{as}\ \ s\downarrow 0.                                   \label{nd81}\\
\int_0^T s^{2-a}\|\, [ C(s)\lrc C'(s)]\, \|_2^2 ds < \infty.                                               \label{nd82}\\
s \|\, [ C(s)\lrc C'(s)]\, \|_{3/2} = o(s^{a - (1/2)})   \ \ \text{as}\ \ s\downarrow 0.     \label{nd83}
\end{align} 
\end{lemma}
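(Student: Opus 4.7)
The plan is to prove each of the four bounds by combining pointwise H\"older estimates for the commutator/product norms (using $|B_C\# B_C|\le c'|B_C|^2$ and $|[u,v]|\le c|u|\,|v|$) with the weighted $L^p$ and pointwise estimates already established in Theorems \ref{thmord0}, \ref{thmord1a}, and \ref{thmord2a}. All four are routine applications once one identifies the correct splitting of the $s$-weights.

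For the integral bound \eref{nd80} I will use H\"older with $\tfrac12=\tfrac13+\tfrac16$ to get $\|B_C\# B_C\|_2\le c'\|B_C\|_3\|B_C\|_6$ and $\|[B_C,\phi]\|_2\le c\|B_C\|_6\|\phi\|_3$, then pull out the bounded factor $\sup_{0<s\le T}(s^{2-a}\|B_C(s)\|_6^2)<\infty$ from \eref{vs641pa}, leaving $\int_0^T\|B_C\|_3^2\,ds$ and $\int_0^T\|\phi\|_3^2\,ds$, both finite by \eref{vs38j} and \eref{vs38g}. For the pointwise bound \eref{nd81} I will instead interpolate $\|B_C\|_3\le\|B_C\|_2^{1/2}\|B_C\|_6^{1/2}$ to reach $\|B_C\# B_C\|_2\le c'\|B_C\|_2^{1/2}\|B_C\|_6^{3/2}$, and then combine $\|B_C(s)\|_2=o(s^{-(1-a)/2})$ (the ``$\to 0$'' assertion in \eref{vs400a}) with $\|B_C(s)\|_6=O(s^{-(2-a)/2})$ (from \eref{vs641pa}); the arithmetic $1-\tfrac{1-a}{4}-\tfrac{3(2-a)}{4}=\tfrac{4a-3}{4}$ gives $s\|B_C\#B_C\|_2=o(s^{a-3/4})$, and the same template with $\phi$ in place of one copy of $B_C$ handles the commutator term.

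For \eref{nd82} I plan to use $\|[C\lrc C']\|_2\le c\|C\|_6\|C'\|_3$, interpolate $\|C'\|_3^2\le\|C'\|_2\|C'\|_6$, and factor out the $H_1$-pathspace bound $\sup_s s^{1-a}\|C(s)\|_6^2\le \kappa_6^2|C|_T^2$. The remaining integral $\int_0^T s\|C'\|_2\|C'\|_6\,ds$ will be handled by Cauchy--Schwarz with weights splitting $s=s^{(1-a)/2}\cdot s^{(1+a)/2}$, yielding the product of $\int_0^T s^{1-a}\|C'\|_2^2\,ds$ (finite by \eref{vs38a}) and $\int_0^T s^{1+a}\|C'\|_6^2\,ds$; since $1+a\ge 2-a$ for $a\ge 1/2$, the latter is controlled by \eref{vs641a} (on intervals $T\le 1$, which suffices). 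For \eref{nd83} I will use H\"older with $\tfrac{2}{3}=\tfrac{1}{6}+\tfrac{1}{2}$ to get $\|[C\lrc C']\|_{3/2}\le c\|C\|_6\|C'\|_2$, and combine $\|C(s)\|_6=O(s^{-(1-a)/2})$ with the crucial fact that the right-hand side of \eref{vs640a} vanishes as $t\downarrow 0$ (its integrand is integrable by finite $a$-action), so $\|C'(s)\|_2=o(s^{-(2-a)/2})$; the arithmetic $1-\tfrac{1-a}{2}-\tfrac{2-a}{2}=\tfrac{2a-1}{2}$ yields the claim.

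The main thing to be careful about is the distinction between $O(\cdot)$ and $o(\cdot)$: for \eref{nd81} and \eref{nd83} the $o(\cdot)$ conclusion requires that at least one of the two pointwise factors in each product be itself $o(\cdot)$. In both cases I will extract this from a source whose behavior is not merely bounded but actually vanishes as $s\downarrow 0$: for \eref{nd81} it is the ``$\to 0$'' halves of \eref{vs400a} applied to $\|B_C\|_2$ and $\|\phi\|_2$; for \eref{nd83} it is the fact that the partial integral $\int_0^t s^{-a}(\|B_C\|_2^2+\|\phi\|_2^2)\,ds$ on the right of \eref{vs640a} tends to zero, using integrability (i.e., finite $a$-action).
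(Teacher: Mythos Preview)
Your proposal is correct and follows essentially the same approach as the paper. The only differences are cosmetic choices of H\"older splittings: for \eqref{nd80} the paper uses $\|B_C\# B_C\|_2^2\le c^2\|B_C\|_2\|B_C\|_6^3$ (via $\|B_C\|_4^4$) and distributes the $s$-weight as $s^{a-1/2}\cdot s^{(1-a)/2}\cdot s^{(2-a)/2}\cdot s^{1-a}$, whereas you use $\|B_C\|_3^2\|B_C\|_6^2$ and pull out the full $s^{2-a}\|B_C\|_6^2$; for \eqref{nd83} the paper extracts the $o(1)$ from $|C|_s\to 0$ (i.e.\ $\|C(s)\|_6=o(s^{(a-1)/2})$) while you extract it from $\|C'(s)\|_2=o(s^{(a-2)/2})$, but both are valid and the arithmetic is identical.
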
 
  \begin{proof}  By H\"older we find 
$  \|B(s)\# B(s)\|_2^2  \le c^2 \|B(s)\|_4^4 
  \le c^2 \|B(s)\|_2 \|B(s)\|_6^3 $.
  Hence
  \begin{align}
 & \int_0^T s^{2-a}  \|B(s)\# B(s)\|_2^2 ds  \label{nd84} \\
  &\le c^2\int_0^T  s^{a-(1/2)}\Big(s^{(1-a)/2} \|B(s)\|_2\Big) 
                 \Big(s^{(2-a)/2} \|B(s)\|_6\Big) \Big(s^{1-a} \|B(s)\|_6^2 \Big) ds.        \notag
  \end{align}
  The first factor in parenthesis is bounded by \eref{vs400a}.  The second factor in
   parenthesis is bounded by virtue of the energy estimate 
   \eref{vs641pa}. 
   The third factor is integrable by 
   \eref{vs550a}. Hence the integral is finite.
   Notice that if $a = 1/2$ then there is no room to spare in these estimates, whereas
   if $ a > 1/2$ there is an extra factor of $s$ with strictly positive exponent.
   
   Concerning the second term in \eref{nd80} we have 
   \begin{align*}
   \|\, [ B(s)&, \phi(s)]\, \|_2^2 \le \|B(s)\|_4^2 \|\phi(s)\|_4^2  \\
  & \le\Big(\|B(s)\|_2 \|\phi(s)\|_2\Big)^{1/2}  \Big(\|B(s)\|_6 \|\phi(s)\|_6\Big)^{1/2} 
                        \Big(\|B(s)\|_6 \|\phi(s)\|_6\Big).
   \end{align*}
   Distribute the available factor $s^{2-a}$ among the three factors,
   assigning $s^{(1-a)/2}$ to the first
   square root, $ s^{(2-a)/2}$ to the second square root, and $s^{1-a}$ to the last parenthesis,
   leaving a factor $s^{a- (1/2)}$ as before.  Again we find two  bounded  products times an integrable product, as before. This completes the proof of \eref{nd80}.
   
        The proof of \eref{nd81} follows from the same kind of estimates. For the
        first term in \eref{nd81}, it suffices to show that
          $s^{3/2 - 2a} s^2 \|B_C(s) \# B_C(s)\|_2^2 =o(1)$.
        But, as in the first line of this proof, we have
        \begin{align*}
 s^{3/2 - 2a} s^2 \|B_C(s) \# B_C(s)\|_2^2 \le   c^2 s^{3/2 - 2a} s^2\|B_C(s)\|_2 \|B_C(s)\|_6^3\\
 = c^2\Big( s^{(1-a)/2} \|B_C(s)\|_2\Big) \Big( s^{(2-a)/2}  \|B_C(s)\|_6\Big)^3
 \end{align*}
 and all factors are bounded, while the first is $o(1)$. As in the proof of \eref{nd80} a
  polarization-like   argument applies  to the second term in \eref{nd81} also.
  
  For a proof of \eref{nd82} observe that 
  \begin{align*}
  s^{2-a}&\|\, [C(s)\lrc C'(s)]\, \|_2^2 \le c^2 s^{2-a}\|C(s)\|_6^2 \|C'(s)\|_3^2 \\
  &\le   c^2 s^{a- (1/2)} \Big(s^{1-a}\|C(s)\|_6^2\Big) \Big(s^{(1-a)/2} \|C'(s)\|_2\Big)
  \Big(s^{(2-a)/2} \|C'(s)\|_6\Big).
  \end{align*}
  Over the interval $(0, T]$ the first factor is at most $T^{a-(1/2)}$, the second factor
  is bounded by  \eref{ST420a}  and the last two factors are square integrable by \eref{vs38a} and \eref{vs641a}.
  This proves \eref{nd82}.

     To prove \eref{nd83} observe that  \eref{ST420a} and \eref{vs640a} yield 
    $s\|\, [C(s)\lrc C'(s)]\, \|_{3/2} \le sc \|C(s)\|_6 \|C'(s)\|_2 
    \le s s^{(a-1)/2}\kappa_6|C|_t s^{(a-2)/2} = O(s^{a-(1/2)}) |C|_t$ for $ 0 < s \le t$.
  \end{proof}  

\subsubsection{Proof of high $L^p$ bounds} 

\begin{proof}[Proof of Theorem \ref{thmnd1}]  Combine the two terms in \eref{nd20} to find
\begin{align}
|B_C(t, x)| \le \frac{1}{t} \int_0^t e^{(t-s)\Delta_N} \beta(s)ds\ (x),    \label{nd98}
\end{align}
where
\begin{align}
\beta(s,x) = |B_C(s, x)| +  s\Big( |B_C(s,x) \# B_C(s,x)| + |\, [B_C(s,x), \phi(s,x)]\, |\Big).  
                                          \label{nd99}
\end{align}
 Since 
$\|e^{(t-s)\Delta_N}\|_{2\rightarrow \infty} \le c_1(t-s)^{-3/4}$ we find
\begin{align}
\|B_C(t)\|_\infty \le \frac{c_1}{t} \int_0^t (t-s)^{-3/4} \|\beta(s)\|_2 ds.   \label{nd100}
\end{align}
Take $c= 3/4$ in Lemma \ref{lemu1}  
and take $b = 3/2 -a$. Then  $b-2c = -a < 0$. We can therefore apply Lemma  \ref{lemu1}.
\eref{u32} yields 
\begin{align}
&\int_0^T t^{(3/2) - a} \|B_C(t)\|_\infty^2dt  
        \le \gamma c_1^2 \int_0^T s^{-a} \|\beta(s)\|_2^2  ds,  
                              \label{nd102} \\
&\le  c_2  \int_0^T\Big\{s^{-a} \|B_C(s)\|_2^2  +  s^{2-a}  \Big( \|B_C(s)\# B_C(s)\|_2  
                                  +  \|\, [ B_C(s), \phi(s)]\,\|_2\Big)^2\Big\} ds       \notag 
 \end{align}
 with $c_2 = 2\gamma c_1^2$. 
 The integral of the first term is finite because $C$ has finite a-action. The integral 
 of the second term is finite by \eref{nd80}. This proves  half of \eref{nd10}.
 
Starting with \eref{nd21},  the same argument as above shows that
\begin{align}
\|\phi(t)\|_\infty \le \frac{c_1}{t} \int_0^t (t-s)^{-3/4} \|\hat \beta(s)\|_2 ds,   \label{nd100f}
\end{align}
where
\begin{align}
\hat \beta(s,x) = |\phi(s, x)| + s |\, [C(s,x)\lrc C'(s,x)]\, |.             \label{nd99f}
\end{align}
Lemma \ref{lemu1} applies with the same values of $c$ and $b$ and shows that
\begin{align}
\int_0^T t^{(3/2) - a}  \|\phi(t)\|_\infty^2 dt 
\le \gamma c_1^2 \int_0^T s^{-a} \|\hat \beta(s)   \|_2^2 ds.         \label{nd50}
\end{align}
The right hand side of \eref{nd50} is finite since $\int_0^T s^{-a} \|\phi(s)\|_2^2 ds < \infty$
by finite a-action, while 
 $\int_0^T s^{2-a} \|\, [C(s)\lrc C'(s)]\, \|_2^2 ds <\infty $ is proved in the energy
 estimate \eref{nd82}. 
 This completes the proof of  \eref{nd10}. 
  Set $a = 1/2$ in \eref{nd10} to derive \eref{nd11}.  
  
   For the proof of  \eref{nd12}  let $f(t) = \Big(\|B_C(t)\|_\infty  + \|\phi(t)\|_\infty \Big)$.
     Then, by the Schwarz inequality,
    $\Big(\int_0^T f(t) dt\Big)^2  \le \int_0^T t^{a -(3/2)} dt \int_0^T t^{(3/2) -a} f(t)^2 dt < \infty$
    because $a > 1/2$ and \eref{nd10} holds. This proves \eref{nd12}.

  The bound \eref{nd13} is a pointwise (in $t$) bound rather than an integral bound
   and has a slightly different proof.    
  Return to \eref{nd100} and insert the following pointwise (in $s$) bounds on $\|\beta(s)\|_2$,
  which are a little different for the two types on terms in \eref{nd99}.
  We have $\|B_C(s)\|_2 = o(s^{(a-1)/2})$ by \eref{vs400a}.
   On the other hand \eref{nd81}
   shows that   
   $s\Big( \|B_C(s)\# B_C(s)\|_2 + \| \, [B_C(s), \phi(s)]\, \|_2\Big)  = o(s^{a - (3/4)})$.  
   Thus \eref{nd100} shows that 
  \begin{align*}
   \|B_C(t)\|_\infty &=t^{-1}\int_0^t (t-s)^{-3/4}\Big(  s^{(a-1)/2} +  s^{a - (3/4)}\Big)ds\  o(1) \\
   &= \Big( t^{-1 +\frac{a - (1/2)}{2}}  +  t^{a - (3/2)}             \Big)o(1).
 \end{align*} 
 Since $t^{a -(3/2)} = t^{-1} t^{\frac{a - (1/2)}{2}} O(1)$ the assertion \eref{nd13} follows.
Put $a = 1/2$ in \eref{nd13} to find \eref{nd15}.
 \end{proof}

\bigskip
\noindent
\begin{proof}[Proof of Corollary \ref{cornd2}]
Let $0 < \delta < T$. Over the interval $[\delta, T]$ the function $C(\cdot)$ is a strong solution
lying in $\P_{[\delta, T]}^a$ (with obvious meaning for this notation). 
Since $\|C(t)\|_{H_1}$ is bounded on this interval we have 
$\int_\delta^T (s - \delta)^{-a} \|C(s)\|_{H_1}^2 ds < \infty$ for any $a < 1$. That is, $C(\cdot)$
has finite strong a-action over the interval $[\delta, T]$. We can apply Theorem \ref{thmnd1}
and conclude from \eref{nd13} that $(t-\delta) \|B_C(t)\|_\infty$ is bounded  over $(\delta, T]$.
In particular, if we choose $\delta = \epsilon/2$ and restrict $t$ to $[\epsilon, T]$ we find
that $(\epsilon/2)  \|B_C(t)\|_\infty$  is bounded over this interval. This proves
Corollary \ref{cornd2}.
\end{proof}

\bigskip
\noindent
\begin{proof}[Proof of Theorem \ref{thmndp}]
In view of the heat kernel bound
 $\|e^{(t-s)\Delta_N}\|_{2\rightarrow p} \le c_1(t-s)^{-3/4 +(3/2p)}$, the inequality \eref{nd98}
 shows that
 \begin{align}
 \|B_C(t)\|_p \le \frac{c_1}{t} \int_0^t (t-s)^{-(3/4) +(3/2p)} \|\beta(s)\|_2 ds.   \label{nd100p}
 \end{align}
 In Lemma \ref{lemu1} choose  $c= 3/4 -(3/2p)$    and   $b = 3/2 -a - (3/p)$. We have again
 $b -2c = -a$, which is strictly negative.  Lemma \ref{lemu1} now shows that
 \begin{align}
 \int_0^T   t^{(3/2) - a -(3/p)} \|B_C(t)\|_p^2 dt 
       \le \gamma c_1^2 \int_0^T s^{-a} \|\beta(s)\|_2^2 ds.   \label{nd102p}
 \end{align}
The right side is the same as that of \eref{nd102}, which we have already proven to be finite.
 This proves half of \eref{nd10p}.  Similarly,  with these new values of  $c$ and $b$, the
 inequality \eref{nd100f} changes to 
 \begin{align}
\|\phi(t)\|_p\le \frac{c_1}{t} \int_0^t (t-s)^{-(3/4)+(3/2p)} \|\hat \beta(s)\|_2 ds,   \label{nd100p}
\end{align} 
and therefore, by Lemma \ref{lemu1},
\begin{align}
 \int_0^T   t^{(3/2) - a -(3/p)} \|\phi(t)\|_p^2 dt 
       \le \gamma c_1^2 \int_0^T s^{-a} \|\hat \beta(s)\|_2^2 ds.   \label{nd50p}
 \end{align}
 The right side has already been shown to be finite in the discussion after \eref{nd50}.
This completes the proof of \eref{nd10p}.

 Put $a =1/2$ in \eref{nd10p} to find \eref{nd11p}. 
      Since $(3/2) - a - (3/p) < 1$ for all $a \in [1/2, 1)$ the inequality  \eref{nd12p} follows
       from the Schwarz inequality and \eref{nd10p} just as in the proof of \eref{nd12}.

          For the proof of the pointwise bounds  \eref{nd13p} a slight deviation from
     these choices of $c$ will be needed.
 We can choose again $c = (3/4) - (3/2p)$  to find
   \begin{align*}
   \|B_C(t)\|_p &=t^{-1}\int_0^t (t-s)^{-(3/4) +(3/p)}\Big(  s^{(a-1)/2} +  s^{a - (3/4)}\Big)ds\  o(1) \\
   &= \Big( t^{-1 +(3/p) +\frac{a - (1/2)}{2}}  +  t^{a - (3/2) +(3/p)}   \Big)o(1) \\
   &= \Big(t^{-1 +(3/p) +\frac{a - (1/2)}{2}} \Big) o(1)\ \ \text{as}\ \ t\downarrow 0.
 \end{align*} 
Here we have used again $t^{a - (3/2)} = O(t^{-1 +\frac{a - (1/2)}{2}})$. 
This proves half of \eref{nd13p}

For the corresponding bound on $\|\phi(t)\|_p$ we must go back to the Neumann pointwise 
bound \eref{nd21}, which we may write as
\begin{align}
|\phi(t,x)| \le \frac{1}{t} \int_0^t e^{(t-s)\Delta_N} \hat \beta(s)ds\ (x),     \label{nd60p}
\end{align}
with $\hat \beta$ as given in \eref{nd99f}.  For the first term in $\hat \beta$ we have
  $\|\phi(s)\|_2 = o(s^{(a-1)/2})$  because $C \in \P_T^a$. Therefore,
choosing $c  = (3/4) - (3/2p)$ again, we find
\begin{align}
\Big\| \frac{1}{t} \int_0^t e^{(t-s)\Delta_N} |\phi(s)|\ ds\ (\cdot)\Big\|_p
&\le   \frac{c_1}{t} \int_0^t (t-s)^{-(3/4) +(3/p)} \|\phi(s)\|_2 ds \notag \\
&=  \frac{1}{t} \int_0^t  (t-s)^{-(3/4) +(3/p)}s^{(a-1)/2} ds\ o(1) \notag \\
&= t^{-1 +(3/p) +\frac{a - (1/2)}{2}}\  o(1).  \notag
\end{align}
The second term in $\hat \beta$ must be estimated differently because we have only the 
$L^{3/2}$ bound  \eref{nd83}.
We must use the heat operator bound  
$\|e^{(t-s)\Delta}\|_{3/2 \rightarrow p} \le c_1 (t-s)^{ -1 +(3/p)}$. 
(We were not able to use this
  in case $p = \infty$ because the kernel $(t-s)^{-1}$ is not integrable.)
  Thus, in view of \eref{nd83},  we have 
  \begin{align*}
  \Big\| \frac{1}{t} &\int_0^t e^{(t-s)\Delta_N} |s [C(s)\lrc C'(s)]\, |\ ds\ (\cdot)\Big\|_p \\
  &\le \frac{c_1}{t}\int_0^t (t-s)^{-1 +(3/p)}  \|s [C(s)\lrc C'(s)]\,\|_{3/2} ds \\
  &\le  \frac{c_1}{t}\int_0^t (t-s)^{-1 +(3/p)} s^{a -(1/2)} ds\ o(1)\\
  &=  t^{a-(3/2) + (3/p)} \ o(1) ,
  \end{align*}
  which is also   $t^{-1 +(3/p) + \frac{ a-(1/2)}{2}} o(1)$.
This completes the proof of \eref{nd13p}  and of Theorem \ref{thmndp}.
\end{proof}

\begin{remark}\label{rem4th}
{\rm (Missing $\|\phi(t)\|_\infty$) Among the initial behaviors that have been described in Theorems  
\ref{thmnd1} and \ref{thmndp} the behavior 
$\|\phi(t)\|_\infty = o(t^{-1 + \frac{a- (1/2)}{2}})$ is noticeably missing.
It is the $\phi$ analog of \eref{nd13}.
 Our proof for $\phi$ is not
symmetrical to our proof for $B_C(t)$ because the energy bound  \eref{nd83},
with $3/2$ replaced by some $p > 3/2$,  would require third order energy 
estimates for $C$,     
 which are not in this paper.  We are forced thereby to use the
 index $3/2$ in \eref{nd83}.
But the heat operator bound $\|e^{t\Delta_N}\|_{3/2\rightarrow \infty} = O(t^{-1})$  is
not integrable and therefore cannot be used in the argument that produced \eref{nd13}.
 It is very likely that
third order energy estimates would succeed in proving this $\|\phi(t)\|_\infty$ bound.
But it is not needed in this paper. 
}
\end{remark}

\section{Gauge groups} \label{secgg}

\subsection{Notation and statements}  \label{secggstate} 

\begin{notation}\label{notgg1}{\rm    (Gauge Groups) 
  In this section 
  we will take $M$ to be either all of $\R^3$ or the closure of a bounded
   open set in $\R^3$ with smooth boundary.  We will not require $M$ to be convex. 
    Denote by $\Delta$ the self adjoint version of the Laplacian on $\kf$ valued
     1-forms on $\R^3$ 
 in case $M =\ R^3$, or  the  Dirichlet or Neumann   Laplacian  on $L^2(M; \L^1\otimes\kf)$
 in case $M \ne \R^3$.
    The Dirichlet and Neumann domains were  defined in Definition \ref{defbc}. 
   See \cite{CG1} for further discussion of these domains.
   For a measurable function $ g: M\rightarrow K\subset End\ \V$ the weak 
   derivatives $\p_j g(x)$ are well  defined, $End\ \V$ valued distributions on $M^{int}$
    (or on $\R^3$ if $M = \R^3$). 
          We will say that $g \in W_1(M; K)$ if 
  $\|g- I_\V\|_2 < \infty$ and the derivatives
  $\p_j g \in L^2(M; End\ \V)$.    
  If $g \in W_1$ and $M \ne \R^3$ then the restiction $g|\p M$ is well defined a.e. with
 respect to surface  measure by a Sobolev trace theorem.

  Write  $g^{-1}dg$ for the 1-form $ \sum_{j=1}^3\Big(g(x)^{-1}\p_j g(x)\Big) dx^j$.
     The coefficients  $g(x)^{-1}\p_j g(x)$  lie in $\kf \subset End\, \V$ for a.e. $x \in M$.
      Thus $g^{-1} dg$ is an a.e. defined  $\kf$
     valued 1-form on $M$.       Let  $D = (1 - \Delta)^{1/2}$. 
     We may  apply powers of the operator  $D$ to the $\kf$ valued 1-form $g^{-1}dg$
    and will write       $g^{-1}dg \in H_a$ if 
   $g^{-1}dg \in \D (D^a)$. 
   Define
   \beq
   \| g^{-1} dg\|_{H_a}= \| D^a (g^{-1}dg) \|_2,  \ \ \ \ 0 \le a \le 1.   \label{gp0}
   \eeq
  This norm has already been defined for general $\kf$ valued 1-forms in Definition \ref{defbc}. 
The Sobolev space $H_a = H_a(M;\L^1\otimes \kf)$ encodes Neumann or Dirichlet boundary
conditions in accordance with Definition \ref{defbc} when $M\ne \R^3$ and $1/2 \le a \le 1$.

In addition to the sets of gauge functions $g \in W_1(M;K)$ for which $\|g^{-1}dg \|_{H_a} < \infty$
we will need to use sets of gauge functions  $g \in W_1(M;K)$  for which
 $\| g^{-1}dg\|_{L^p(M;\L^1\otimes \kf)} < \infty$. Our proofs will make important use
  of these as preliminary target spaces for the gauge functions arising in the ZDS procedure.
   We want to consider the following six kinds of sets of gauge functions. Some of these sets 
    will be shown to be groups under pointwise multiplication.

\bigskip
\noindent   
For $0 \le a \le 1$ we let
 \begin{align}
  \G_{1+a}&(\R^3) 
            =\Big\{g \in W_1(\R^3;K):  g^{-1}dg \in H_a(\R^3; \L^1\otimes \kf)\Big\}  \label{gp3a} \\
 &\qquad\qquad \text{and, if} \ M \ne \R^3 \notag\\
 \G_{1+a}^N
    & =  \Big\{g \in W_1(M;K):  g^{-1}dg \in H_a(M; \L^1\otimes \kf)\Big\},\ \ \qquad \qquad \label{gp3aN}\\
  \G_{1+a}^D& =  \Big\{g \in W_1(M;K):  g^{-1}dg \in H_a(M; \L^1\otimes \kf),  \label{gp3aD}
                            \ g =I_\V\  \text{on}\ \p M\Big\}.
 \end{align}
 For $ 2 \le p \le \infty$ we let
 \begin{align}
 \G_{1,p}&(\R^3)
               =\Big\{g \in W_1(\R^3;K):  g^{-1}dg \in L^p(\R^3; \L^1\otimes \kf)\Big\}, \label{gp3p} \\
             &\qquad\qquad \text{and, if} \ M \ne \R^3                              \notag\\
 \G_{1,p}^N& =  \Big\{g \in W_1(M;K):  g^{-1}dg \in L^p(M; \L^1\otimes \kf)\Big\},\ \ \label{gp3pN} \\
  \G_{1,p}^D& =  \Big\{g \in W_1(M;K):  g^{-1}dg \in L^p(M; \L^1\otimes \kf), 
                            \ g =I_\V\  \text{on}\ \p M\Big\}.   \label{gp3pD}
 \end{align}

In case $p =\infty$ we require also that $g^{-1}dg$ be continuous.
Henceforth $\G_{1+a}$ will refer to any of the three sets \eref{gp3a} - \eref{gp3aD}
and $\G_{1,p}$ will refer to  any of the three sets \eref{gp3p} - \eref{gp3pD}.
For functions $g$ and $h$ in $W_1(M; K)$ define 
\begin{align}
\ra (g,h) &=  \| g^{-1} dg - h^{-1} dh \|_{H_a} + \|g- h\|_2, \ \ 0 \le a \le 1\ \label{gp2am}
\end{align} 
and
\begin{align}
\rp (g, h) &= \| g^{-1} dg - h^{-1} dh \|_p + \|g -h\|_2  , \  \ \ 
                                                               2 \le p \le \infty.\                         \label{gp2pm}
\end{align}
   $\ra$ and $\rp$ are clearly metrics on the sets $\G_{1+a}$ and $\G_{1,p}$ respectively.
}
\end{notation}      
    We will prove that the sets $\G_{1,p}$ and $\G_{1+a}$
   are complete topological groups under pointwise multiplication in their
    respective metrics,  $\rp$  or $\ra$,   for $ 2 \le p \le  \infty$ and $ 1/2 \le a \le 1$.
    We will ignore the case $p =\infty$ in all of the following statements because the
    proofs  in this case are elementary.

\begin{theorem}\label{thmgg3p}
 $\G_{1,p}$ is a complete topological group  under pointwise multiplication in the
  metric $\rp$, when $2 \le p < \infty$.  
\end{theorem}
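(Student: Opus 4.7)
The plan is to reduce everything to two elementary pointwise identities, valid for $g,h \in W_1(M;K)$:
\begin{align}
(gh)^{-1}d(gh) &= h^{-1}(g^{-1}dg)h + h^{-1}dh, \label{plan1}\\
(g^{-1})^{-1}d(g^{-1}) &= -g(g^{-1}dg)g^{-1}, \label{plan2}
\end{align}
together with the crucial observation that since $K$ is contained in the unitary (or orthogonal) group, conjugation by $g(x)^{\pm 1}$ preserves the pointwise $\kf$-norm of any $\kf$-valued form. Consequently \eqref{plan1} and \eqref{plan2} immediately give
$\|(gh)^{-1}d(gh)\|_p \le \|g^{-1}dg\|_p + \|h^{-1}dh\|_p$ and $\|(g^{-1})^{-1}d(g^{-1})\|_p = \|g^{-1}dg\|_p$, so $\G_{1,p}$ is closed under products and inverses. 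The summand $\|g-h\|_2$ in the metric \eqref{gp2pm} is preserved by multiplication on either side by a $K$-valued function, giving for instance $\|g_1h_1 - g_2h_2\|_2 \le \|g_1 - g_2\|_2 + \|h_1 - h_2\|_2$. The Dirichlet boundary condition in \eqref{gp3pD} is preserved under products and inverses by direct inspection, and under $L^2$-limits by continuity of the trace $W_1 \to L^2(\p M)$.

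For continuity of multiplication and inversion, write, using \eqref{plan1},
\begin{align*}
(g_nh_n)^{-1}d(g_nh_n) - (gh)^{-1}d(gh)
   &= h_n^{-1}\bigl[(g_n^{-1}dg_n) - (g^{-1}dg)\bigr]h_n \\
   &\quad + \bigl[h_n^{-1}(g^{-1}dg)h_n - h^{-1}(g^{-1}dg)h\bigr] \\
   &\quad + (h_n^{-1}dh_n - h^{-1}dh).
\end{align*}
The first term has $L^p$-norm bounded by $\|g_n^{-1}dg_n - g^{-1}dg\|_p \to 0$, and the third term tends to zero by hypothesis. For the middle term, $L^2$-convergence $h_n \to h$ together with the pointwise bound $|h_n|=1$ passes to a.e.\ convergence of $h_n$ and $h_n^{-1}$ along a subsequence, so dominated convergence with majorant $2|g^{-1}dg| \in L^p$ gives $L^p$ convergence along that subsequence; by the standard ``subsequence of subsequence'' argument the whole sequence converges. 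An identical argument, based on \eqref{plan2}, handles continuity of inversion.

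For completeness, let $\{g_n\} \subset \G_{1,p}$ be Cauchy. The second summand of $\rho_p$ shows $g_n \to g$ in $L^2(M;\operatorname{End}\V)$; since $|g_n|\le 1$ pointwise, a subsequence converges a.e., so the limit $g$ takes values in $K$ a.e.\ and $g_n \to g$ in $L^q_{\mathrm{loc}}$ for every $q<\infty$ by dominated convergence. The first summand of $\rho_p$ gives a limit $\omega \in L^p(M;\L^1\otimes\kf)$ of $g_n^{-1}dg_n$. Writing $dg_n = g_n\,(g_n^{-1}dg_n)$ and using H\"older on any bounded open $U \subset M^{int}$, the identity
$$
g_n(g_n^{-1}dg_n) - g\omega = (g_n - g)(g_n^{-1}dg_n) + g\bigl((g_n^{-1}dg_n) - \omega\bigr)
$$
shows $dg_n \to g\omega$ in $L^1_{\mathrm{loc}}$, hence as distributions on $M^{int}$. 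Therefore $dg = g\omega$ in the distributional sense, so $g \in W_1(M;K)$ with $g^{-1}dg = \omega \in L^p$, and $\rho_p(g_n,g)\to 0$. In the Dirichlet case, continuity of the trace propagates $g_n|_{\p M} = I_\V$ to $g|_{\p M}=I_\V$.

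The main obstacle is the middle ``conjugation'' term in the multiplication continuity estimate, where we cannot directly bound $\|h_n^{-1}(g^{-1}dg)h_n - h^{-1}(g^{-1}dg)h\|_p$ by the metric $\rho_p$: we only control $h_n \to h$ in $L^2$, not uniformly, so the convergence is forced through a dominated convergence / subsequence-of-subsequence argument rather than a uniform Lipschitz estimate. A related subtlety is the step $dg_n \to g\omega$ in the completeness proof, which requires promoting the $L^2$ convergence of $g_n$ to local $L^q$ convergence via the pointwise boundedness of $K$, and is the place where compactness of $K$ (hence boundedness of the image) enters essentially.
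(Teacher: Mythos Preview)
Your argument is correct and rests on the same ingredients as the paper's proof: the algebraic identities \eqref{plan1}--\eqref{plan2} (these are the paper's \eqref{gp15} and \eqref{gp18}), the fact that pointwise conjugation by a $K$-valued function is an isometry of $L^p(M;\L^1\otimes\kf)$, and a dominated-convergence argument to pass conjugation by $h_n$ to the limit when only $h_n\to h$ in $L^2$ is available.

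The organization differs in two ways worth noting. First, the paper observes that $\rho_p$ is genuinely \emph{right-invariant} (your identity \eqref{plan1} plus $\|gk-hk\|_2=\|g-h\|_2$ give $\rho_p(gk,hk)=\rho_p(g,h)$), and uses this to reduce joint continuity of multiplication to showing that $g_0\beta g_0^{-1}$ is close to $e$ when $\beta$ is; this is controlled via the conjugation identity \eqref{gp19} by $\rho_p(\beta,e)+\|(Ad\,\beta-1)(g_0^{-1}dg_0)\|_p$. Second, instead of your subsequence-of-subsequence dominated-convergence argument, the paper isolates a direct $\epsilon$--$\delta$ lemma (Lemma \ref{strcontp}): for fixed $u\in L^q$, truncate at level $\lambda$ so that $\|\chi_{|u|>\lambda}u\|_q$ is small, and on $\{|u|\le\lambda\}$ bound $|(Ad\,g-1)u|$ by $2\lambda\,|g-I_\V|$. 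This gives strong continuity of $g\mapsto Ad\,g$ on $L^q$ under control of $\|g-I_\V\|_2$ alone. Your route and the paper's are equivalent in strength; the paper's modular packaging is chosen because the same strong-continuity lemma (in a sharpened $H_b$ form) is reused for the harder group $\G_{1+a}$, where $\rho_a$ is \emph{not} right-invariant and no direct sequential decomposition suffices.

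For completeness your distributional-limit argument is fine; the paper instead shows directly that $dg_n=g_n(g_n^{-1}dg_n)$ is Cauchy in $L^p$ (same decomposition as your $L^1_{\mathrm{loc}}$ step, just run in $L^p$), which on bounded $M$ also yields the $W_1$ convergence needed for the trace argument in the Dirichlet case.
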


\begin{theorem}\label{thmgg3a}
 $\G_{1+a} $ is a complete topological group  under pointwise multiplication in the
  metric $\ra$,  when   $1/2\le a \le 1$.
\end{theorem}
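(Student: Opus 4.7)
The plan is to reduce everything to a single ``conjugation estimate'' on $H_a$ and then deduce the group and completeness properties from the standard Maurer--Cartan identities. For $g, h \in W_1(M;K)$ a direct computation gives
\begin{align}
(gh)^{-1} d(gh) &= h^{-1}(g^{-1}dg)h + h^{-1}dh, \label{pp1}\\
(g^{-1})^{-1} d(g^{-1}) &= - g(g^{-1}dg)g^{-1}. \label{pp2}
\end{align}
These show that membership in $\G_{1+a}$ is preserved under products and inverses provided conjugation $\w \mapsto h^{-1}\w h$ sends $H_a$ into itself whenever $h \in \G_{1+a}$. Thus the first step is to establish

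\begin{equation}
\|h^{-1}\w h\|_{H_a} \le F\bigl(\|h^{-1}dh\|_{H_a}\bigr)\, \|\w\|_{H_a}, \qquad 1/2 \le a \le 1, \label{ppkey}
\end{equation}
for some continuous increasing $F$, where $h \in \G_{1+a}$ and $\w$ is a $\kf$-valued 1-form in $H_a$. Since $h$ takes values in the compact unitary/orthogonal group $K$, pointwise conjugation preserves the $L^2$ norm, and also preserves any pointwise bound such as $|\,\cdot\,|_\kf$; all the difficulty is in bounding the $H_a$ seminorm.

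The bound \eqref{ppkey} would be the main technical obstacle, and I would prove it in three regimes. For $a = 1$ it is straightforward: writing $\n(h^{-1}\w h) = -h^{-1}(dh) h^{-1}\w h + h^{-1}(\n\w) h + h^{-1}\w\, dh$, one estimates the cross terms by H\"older against $\|dh\|_2$ and $\|\w\|_{L^6}$ and applies the Sobolev inequality $\|\cdot\|_6 \le \kappa \|\cdot\|_{H_1}$, together with $\|dh\|_2 \le \|h^{-1}dh\|_2$. For $1/2 < a < 1$ I would interpolate the $a = 0$ bound $\|h^{-1}\w h\|_2 = \|\w\|_2$ against the $a = 1$ bound just established, using that the $H_a$ norm defined in \eqref{ST19} agrees with complex interpolation between $L^2$ and $H_1$. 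The critical case $a = 1/2$ is where one must work hardest: I would characterize the $H_{1/2}$ norm through the Slobodetskii seminorm $\iint |\w(x)-\w(y)|^2 |x-y|^{-4}\,dx\,dy$ and split the resulting double integral with the pointwise identity $h(x)^{-1}\w(x)h(x) - h(y)^{-1}\w(y)h(y) = h(x)^{-1}(\w(x)-\w(y))h(x) + h(x)^{-1}\w(y)(h(x)-h(y)) + (h(x)^{-1}-h(y)^{-1})\w(y)h(y)$. The first term reproduces $\|\w\|_{H_{1/2}}^2$; the remaining two terms are handled by Cauchy--Schwarz combined with $\|h^{-1}dh\|_{H_{1/2}}$, using the Sobolev embedding $H_{1/2}\hookrightarrow L^3$ to control the product with $|\w(y)|$.

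Given \eqref{ppkey}, the closure of $\G_{1+a}$ under products and inverses is immediate from \eqref{pp1}--\eqref{pp2}. For continuity of multiplication I would use the decomposition
\begin{equation}
(g_1h_1)^{-1}d(g_1h_1) - (g_2 h_2)^{-1}d(g_2h_2) = h_1^{-1}(g_1^{-1}dg_1 - g_2^{-1}dg_2)h_1 + \bigl(h_1^{-1}\w_2 h_1 - h_2^{-1}\w_2 h_2\bigr) + (h_1^{-1}dh_1 - h_2^{-1}dh_2),
\end{equation}
with $\w_2 = g_2^{-1}dg_2$. The first summand is small by \eqref{ppkey}, the last by hypothesis, and the middle term rewrites as $h_1^{-1}\w_2(h_1-h_2) + h_1^{-1}(h_2 -h_1)h_2^{-1}\w_2 h_2$, which can be controlled once one shows that $\ra$-convergence implies $L^\infty$- or at least $L^q$-convergence of $h_n$ to $h$ (using that $K$ is bounded and $L^2$-convergence is built into $\ra$). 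Continuity of inversion follows similarly from \eqref{pp2}.

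For completeness, let $\{g_n\}$ be $\ra$-Cauchy. Then $g_n \to g$ in $L^2(M; End\,\V)$ and along a subsequence a.e., so $g$ takes values in $K$ a.e., and in the Dirichlet case the boundary condition $g = I_\V$ on $\p M$ passes to the limit via the Sobolev trace. Also $\alpha_n := g_n^{-1}dg_n$ converges in $H_a$ to some $\alpha$. The remaining step, identifying $g^{-1}dg = \alpha$ in the weak sense, requires showing that $dg_n = g_n \alpha_n$ converges to $g\alpha$ as distributions; here the main subtlety, and arguably the second technical obstacle, is that $g_n \to g$ only in $L^2$ while $\alpha_n$ converges only in $H_a$, so one has to pair them off carefully, multiplying by a test form and using the a.e.-convergent subsequence together with dominated convergence (with $|g_n| \le C$ uniformly) to justify the limit.
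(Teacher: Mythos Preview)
Your overall architecture matches the paper's: establish a conjugation bound of the form \eqref{ppkey}, deduce the group axioms from the Maurer--Cartan identities, and handle completeness by identifying the $H_a$-limit of $g_n^{-1}dg_n$ with $g^{-1}dg$. The paper also proves \eqref{ppkey} by complex interpolation between $a=0$ (isometry) and $a=1$ (product rule plus Sobolev); in fact this interpolation already covers $a=1/2$, so your separate Slobodetskii treatment of the critical case is unnecessary for the multiplier bound itself. (Two minor points: your H\"older pairing at $a=1$ should be $\|dh\|_3\cdot\|\w\|_6$, not $\|dh\|_2\cdot\|\w\|_6$, since the latter only lands in $L^{3/2}$; and the Slobodetskii characterization would need to be matched with the paper's spectral definition of $H_{1/2}$ for the Neumann/Dirichlet form Laplacian, which is not automatic.)

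The genuine gap is in your continuity argument for multiplication. After isolating the ``middle term'' $h_1^{-1}\w_2 h_1 - h_2^{-1}\w_2 h_2$ with $\w_2 = g_2^{-1}dg_2$ fixed, you assert this is small in $H_a$ once ``$\ra$-convergence implies $L^\infty$- or at least $L^q$-convergence of $h_n$ to $h$.'' At $a=1/2$ this is exactly the hard step: $\ra$-convergence gives only $L^q$ convergence of $h_n-h$ for $q<\infty$ (via boundedness and $L^2$), and that is \emph{not} enough by itself to push $(Ad\,h_n^{-1}-Ad\,h^{-1})\w_2\to 0$ in $H_{1/2}$. Indeed, the relevant product estimate $H_1\cdot H_{1/2}\hookrightarrow H_{1/2}$ is borderline in dimension three and fails without an $L^\infty$ gain. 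The paper closes this gap differently: it proves an interpolation inequality (its \eqref{gp8b}) of the form
\[
\|(Ad\,g - 1)u\|_{H_b}\le\bigl(\kappa_{\delta_1}\|Ad\,g-1\|_{p_1}+c_1\|g^{-1}dg\|_3\bigr)\|u\|_{H_{b+\delta_1}},\qquad p_1=3/\delta_1,
\]
trading a little extra smoothness of $u$ for a finite-$p_1$ norm of $Ad\,g-1$, and then uses a spectral truncation $u=v+w$ with $v\in H_{b+\delta_1}$ and $\|w\|_{H_b}$ small. This yields \emph{strong} (not norm) continuity of $g\mapsto Ad\,g$ on $H_b$, which is precisely what your middle term needs. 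Without this mechanism your proof of joint continuity at $a=1/2$ is incomplete. Your completeness sketch, by contrast, is essentially the paper's argument and is fine.
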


\begin{theorem}\label{thmgg2p} Let $ 0 \le b \le 1$ and let $3 \le p \le \infty$. 
If $g \in \G_{1,p}$ then the adjoint action
\beq
u\mapsto (Ad\, g) u = g u g^{-1}, \ \ \ \ \ u \in H_b
\eeq
 is a bounded operator on $H_b$. 
 The representation 
 \beq
 \G_{1,p}\ni g \mapsto (Ad\, g :H_b\rightarrow H_b)
 \eeq
 is strongly continuous if $p =3$. It is norm continuous  if $p >3$ and $M$ has finite volume. 
\end{theorem}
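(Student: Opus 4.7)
The plan is to establish the endpoint cases $b=0$ and $b=1$ separately and then invoke complex interpolation between $L^2 = H_0$ and $H_1$, which is valid since $H_b$ is the Hilbert scale generated by $D = (1-\Delta)^{1/2}$ through spectral calculus. The workhorse identity is the pointwise product rule
\begin{equation*}
\p_k(g u_j g^{-1}) = [Ad(g)(g^{-1}\p_k g),\, Ad(g) u_j] + Ad(g)(\p_k u_j),
\end{equation*}
together with the observation that $Ad(g(x))$ acts by isometry on $\kf$ (since the inner product on $\kf$ is $Ad\,K$-invariant), so $|Ad(g)v|_\kf = |v|_\kf$ pointwise for every $\kf$-valued $v$. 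Boundary conditions are preserved automatically: in the Neumann case $(gug^{-1})_{norm} = g\,u_{norm}\,g^{-1}$, while in the Dirichlet case the requirement $g|_{\p M} = I_\V$ built into \eqref{gp3pD} gives $(gug^{-1})|_{\p M} = u|_{\p M}$.

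For boundedness on $H_1$, the identity above together with H\"older's inequality
\begin{equation*}
\|[Ad(g)(g^{-1}\p_k g),\, Ad(g) u_j]\|_2 \le c\,\|g^{-1}dg\|_p\,\|u\|_q, \qquad \tfrac{1}{p}+\tfrac{1}{q}=\tfrac{1}{2},
\end{equation*}
combined with the Sobolev embedding $\|u\|_q \le C\|u\|_{H_1}$ (available since $q = 2p/(p-2) \le 6$ for $p \ge 3$) yields $\|Ad(g)u\|_{H_1} \le (1 + C\|g^{-1}dg\|_p)\|u\|_{H_1}$. With the trivial isometric bound on $L^2$, complex interpolation produces boundedness on $H_b$ for every $b \in [0,1]$.

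For strong continuity at $p=3$, given $g_n \to g$ in $\rho_p$, we pass to a subsequence so that $g_n \to g$ a.e.\ and apply the subsequence principle; then $Ad(g_n) \to Ad(g)$ pointwise a.e.\ as operators on $\kf$. On $L^2$, dominated convergence with dominating function $2|u|_\kf$ gives $\|(Ad\,g_n - Ad\,g)u\|_2 \to 0$ for each fixed $u \in L^2$. On $H_1$, the difference of product-rule identities for $g_n$ and $g$ splits into three pieces: $(Ad\,g_n - Ad\,g)\p_k u$, controlled by dominated convergence; a term in which $g_n^{-1}dg_n - g^{-1}dg$ is isolated and goes to zero in $L^3$ by hypothesis; and a residual commutator $[Ad(g)(g^{-1}\p_k g), (Ad(g_n) - Ad(g))u]$, handled via $u \in L^6$ (Sobolev) together with another application of dominated convergence. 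The uniform boundedness from the first step, together with the density of $H_1$ in $H_b$, then promotes strong continuity to all $H_b$ by a standard $3\epsilon$ argument.

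The main obstacle is norm continuity when $p > 3$ on $M$ of finite volume, since pointwise arguments no longer suffice; the crucial upgrade is $\|g_n - g\|_\infty \to 0$. Unitarity of $g_n$ gives $|\nabla g_n|_{End\,\V} = |g_n^{-1}\nabla g_n|_{End\,\V}$, so $\{g_n - g\}$ is bounded in $W^{1,p}(M;End\,\V)$; since $p > 3$, the Sobolev embedding $W^{1,p} \hookrightarrow C^\alpha$ (with $\alpha = 1 - 3/p > 0$) combined with Arzel\`a--Ascoli on the bounded domain $M$ makes $\{g_n - g\}$ precompact in $C^0$, and the $L^2$ limit $0$ forces the $C^0$ limit to vanish. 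This immediately yields $\|Ad\,g_n - Ad\,g\|_{L^2 \to L^2} \le C\|g_n - g\|_\infty \to 0$; on $H_1$ the same three derivative terms, now with $q = 2p/(p-2) < 6$ (so that Sobolev has room to spare), are each bounded by $\|u\|_{H_1}$ times a quantity controlled by $\|g_n - g\|_\infty + \|g_n^{-1}dg_n - g^{-1}dg\|_p \to 0$. Complex interpolation of operator norms then delivers norm continuity on $H_b$.
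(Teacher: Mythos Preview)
Your proof is correct and shares the paper's overall skeleton (endpoint bounds at $b=0,1$ plus complex interpolation), but the execution differs in a few respects worth recording. The paper computes in the $d, d^*$ formalism (Lemma~\ref{lemgp2}, identities \eqref{gp9.1}--\eqref{gp9.2}) rather than with $\p_k$, which yields the $H_1$ bound directly as $\|D(Ad\,g)u\|_2^2 = \|d(Ad\,g)u\|_2^2 + \|d^*(Ad\,g)u\|_2^2 + \|(Ad\,g)u\|_2^2$ without passing through the $W_1$ norm; this produces the explicit multiplier bounds of Proposition~\ref{propgp1} (in particular \eqref{gp3} and \eqref{gp8b}), which are reused heavily in Sections~\ref{secconvgp} and~\ref{secrec}. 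For strong continuity at $p=3$ the paper does not use dominated convergence: Lemma~\ref{strcontb} splits $u$ by a spectral cutoff $\chi_{[0,\lambda)}(D)$ and exploits the smoothing bound \eqref{gp8b}, sacrificing $\delta_1$ extra derivatives on the low-frequency piece to replace the uncontrollable $\|Ad\,g-1\|_\infty$ by the finite $\|Ad\,g-1\|_{p_1}$; your subsequence-plus-dominated-convergence argument is more elementary and self-contained, while the paper's is more quantitative. For $p>3$ both arguments rest on $\|g_n-g\|_\infty\to 0$; the paper obtains this in one stroke from Sobolev $W^{1,p}\hookrightarrow L^\infty$ (Lemma~\ref{strcontp}~b)) and then invokes \eqref{gp5}. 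Your Arzel\`a--Ascoli detour works but is unnecessary: writing $dg_n-dg = g_n(g_n^{-1}dg_n-g^{-1}dg)+(g_n-g)\,g^{-1}dg$ and applying dominated convergence to the second summand already gives $\|g_n-g\|_{W^{1,p}}\to 0$, hence $L^\infty$ convergence directly.
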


 \begin{corollary}\label{corgg3a} Let  $0 \le b \le 1$. The representation
 \begin{align}
 \G_{1+a}\ni g \mapsto (Ad\, g: H_b\rightarrow H_b)
 \end{align}
 is strongly continuous if $ a = 1/2$ and norm continuous if $1/2 < a \le 1$.
\end{corollary}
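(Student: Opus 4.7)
The plan is to deduce Corollary \ref{corgg3a} from Theorem \ref{thmgg2p} by establishing a Lipschitz inclusion $\G_{1+a} \hookrightarrow \G_{1,p_a}$ for a suitable Lebesgue exponent $p_a \ge 3$, and then composing with the representation $g \mapsto Ad\, g$ furnished by Theorem \ref{thmgg2p}.

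First I would invoke the standard Sobolev embedding
\[
H_a(M;\L^1\otimes\kf) \hookrightarrow L^{p_a}(M;\L^1\otimes\kf), \qquad p_a := \frac{6}{3-2a},
\]
valid for $1/2 \le a < 3/2$, uniformly for $M = \R^3$ or $M$ bounded with smooth boundary (the boundary conditions encoded in $H_a$ match the Dirichlet/Neumann domain of $\Delta$, but the Sobolev bound holds independently). Note that $p_{1/2} = 3$ while $p_a > 3$ when $a > 1/2$. Applied to $\omega = g^{-1}dg - h^{-1}dh$, this gives
\[
\|g^{-1}dg - h^{-1}dh\|_{p_a} \le c_a\, \|g^{-1}dg - h^{-1}dh\|_{H_a},
\]
and since the $\|g-h\|_2$ summands in the metrics $\rp$ and $\ra$ coincide, we conclude $\rho_{p_a}(g,h) \le \max(c_a,1)\,\rho_a(g,h)$. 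Verification that $g \in \G_{1+a}$ lands in $\G_{1,p_a}$ (including preservation of the condition $g|_{\p M} = I_\V$ in the Dirichlet case) is immediate from the definitions in Notation \ref{notgg1}.

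The corollary then follows by composition. For $a = 1/2$ we have $p_a = 3$, and Theorem \ref{thmgg2p} gives strong continuity of $\G_{1,3} \ni g \mapsto Ad\, g \in B(H_b)$; composing with the Lipschitz inclusion yields strong continuity on $\G_{3/2}$. For $1/2 < a \le 1$ with $M$ bounded, $p_a > 3$ and $M$ has finite volume, so Theorem \ref{thmgg2p} furnishes norm continuity on $\G_{1,p_a}$, which pulls back to norm continuity on $\G_{1+a}$ via the inclusion.

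The main technical obstacle is the remaining case $M = \R^3$ with $1/2 < a \le 1$, where Theorem \ref{thmgg2p} does not deliver norm continuity directly, its norm-continuity clause requiring finite volume. My strategy here will be to exploit the additional structure available when $M = \R^3$ and $a > 1/2$: the built-in $L^2$ control on $g^{-1}dg$ together with $H_a \hookrightarrow L^{p_a}$ yields by interpolation $g^{-1}dg \in L^q$ for every $q \in [2,p_a]$, while $g$ itself is bounded in $L^\infty(\R^3)$ by $\|K\|_{\mathrm{End}\,\V}$ since $K$ is compact. I would then revisit the proof of the norm-continuity clause of Theorem \ref{thmgg2p} and check that it adapts to $\R^3$ once these global $L^2$ bounds and the pointwise $L^\infty$ bound on $g$ replace the role played there by the finiteness of $\mathrm{vol}(M)$.
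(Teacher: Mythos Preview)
Your approach is essentially the paper's: embed $\G_{1+a}$ continuously into $\G_{1,p_a}$ via Sobolev, with $p_a^{-1} = 1/2 - a/3$, and invoke the $\G_{1,p}$ continuity result. For $a=1/2$ and for $a>1/2$ with $M$ bounded, your argument matches the paper's exactly.

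For the remaining case $M=\R^3$, $a>1/2$, your plan works but is more roundabout than necessary. The only place finite volume enters the norm-continuity proof is in bounding $\|g^{-1}dg\|_3$ by $\|g^{-1}dg\|_{p_a}$; you propose to recover the $L^3$ bound by interpolating between $L^2$ and $L^{p_a}$. The paper instead observes that for $a>1/2$ one has $H_a \subset H_{1/2} \subset L^3$ directly (cf.\ \eqref{ST20}), so $\G_{1+a} \subset \G_{3/2}$ and the $\rho_a$-metric already controls $\|g^{-1}dg\|_3$ without any interpolation step. Both routes are valid; the paper's is a one-line observation. Your remark about ``the pointwise $L^\infty$ bound on $g$'' is a red herring: the boundedness of $|g(x)|$ (from compactness of $K$) is always available and not the issue; what is needed is smallness of $\|g-I_\V\|_\infty$ near the identity, and that comes from Sobolev $W^{1,p}\hookrightarrow L^\infty$ for $p>3$, which holds on $\R^3$ with no volume hypothesis.
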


The proofs will be given in the next four subsections. 
  
 \begin{remark} {\rm 
 The changeover from norm continuity to strong continuity  in Corollary \ref{corgg3a}
   as $a \downarrow 1/2$ is typical of the 
 contrasts that we have seen before  between $ a >1/2$ and $a = 1/2$. 
 By Sobolev, $\G_{1+a} \subset \G_{1,p}$ if $1/p = 1/2 - a/3$.  Thus $p=3$
 corresponds to $a = 1/2$ in the sense of these containments. Theorem \ref{thmgg2p}
 also shows this loss of norm continuity as $p\downarrow 3$.
 }
 \end{remark}

           \begin{remark}\label{rmkbdycons1} 
   {\rm (More about  boundary conditions for $\G_{1+a}$)  
If $g^{-1} dg \in H_a$ and  $a > 1/2$ then $g^{-1}dg|\p M$  
   is well defined almost everywhere on $\p M$ by well known Sobolev restriction theorems.
    In this case  the boundary condition 
   $(g^{-1}dg)_{tan} =0$ in the Dirichlet case \eref{gp3aD} is consistent with  the 
   condition $g|_{M} = I_\V$ in the definition \eref{gp3aD}.   
    In  the Neumann case \eref{gp3aN} one has $(g^{-1}dg)_{norm}=0$  if $a > 1/2$ and this is the only boundary condition forced on elements of $\G_{1+a}^N$ by \eref{gp3aN}  when $a > 1/2$.

    But in the critical case, $a = 1/2$, the restriction $g^{-1}dg|\p M$ is ill defined.
    Nevertheless the boundary conditions  $(g^{-1}dg)_{tan}=0$,  resp. $(g^{-1}dg)_{norm}=0$
     hold in a mean sense by Fujiwara's theorem \cite{Fuj}.   
     Thus an element in the
     space $\G_{3/2}$ (Neumann) satisfies Neumann boundary conditions in a mean sense
     because of the requirement that $g^{-1}dg \in H_{1/2}$(Neumann),
     while an element of $\G_{3/2}$(Dirichlet) satisfies both $g|\p M = I_\V$ 
     pointwise almost everywhere, and also $(g^{-1}dg)_{tan} =0$ in a mean sense. 
    These functional analytic meanings of the boundary
           conditions will not be needed in this paper, but will be needed in \cite{G73} (Localization), 
           where they will be discussed further.         
           In case $a < 1/2$ the spaces $H_a$ do not force any boundary conditions on $g^{-1}dg$.
}    
\end{remark}  
 
           \begin{remark}\label{rmkbdycons2} {\rm  (Boundary conditions for $\G_{1,p}$) 
The $L^p$ norm imposes no boundary conditions on $g^{-1}dg$.
Thus if $g \in \G_{1,p}^D(M)$ then  the definition \eref{gp3pD} imposes only  the
 boundary condition $g = I_\V$ on $\p M$,
 while if $g \in \G_{1,p}^N$ then no condition need be satisfied 
 at the boundary by $g$ or $dg$.
 }
 \end{remark}

\subsection{Multiplier bounds for $Ad\, g$}   \label{secmb}

The proof of Theorem \ref{thmgg3p} requires little more than  use of H\"older inequalities.
But the proof of Theorem \ref{thmgg3a} requires use of multiplier bounds on Sobolev spaces.
In three dimensions the Sobolev $H_{3/2}$ norm of a function just fails to control its
supremum norm, with the result that multiplication by such a function
 is not a bounded operator on Sobolev spaces. However we are interested
  in multiplication by the $End\, \V$ valued function $Ad\, g(x)$, which is a bounded
   function because $g(x)$ lies in the compact group $K$.  Consequently we are able to derive better multiplier bounds for these functions than one would expect in the critical case.

\begin{proposition}\label{propgp1}
                     $($Multiplier bounds for $Ad\, g$$)$  
Suppose that $g \in H_1(M;K)$ and that $g^{-1} dg \in L^3(M)$.  Let $b \in [0,1]$. 
Then, for any form $u \in L^2(M;\L^1\otimes \kf)$, there holds
\begin{align}
\|(Ad\, g) u\|_{H_b} 
          &\le  \Big( 1 + c_1\| g^{-1} dg\|_3\Big)\| u\|_{H_b}\ \ \ \ \ \ \text{and} \label{gp3} \\
\|(Ad\, g - 1) u\|_{H_b} 
&\le  \Big( \|Ad\, g -1\|_\infty + c_1\| g^{-1} dg\|_3\Big)\| u\|_{H_b}        \label{gp5}
\end{align}
for a constant $c_1$ depending only on the commutator bound $c$ and 
a Sobolev constant.
 Let $ 0 < \delta_1 < 3/2$. Define $p_1 = 3/\delta_1$. Then 
 \begin{align}
   \| (Ad\, g - 1)  u \|_{H_b} 
   \le \Big(\kappa_{\delta_1} \| Ad\, g - 1\|_{p_1}  
                              + c_1 \|g^{-1} dg\|_3\Big) \|u\|_{H_{b+\delta_1}} \ \label{gp8b}
\end{align}  
for some Sobolev constant $\kappa_{\delta_1}$.
\end{proposition}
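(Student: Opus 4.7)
The plan is to derive the three inequalities by establishing them first at the endpoint values $b=0$ and $b=1$, then obtaining the intermediate values via complex interpolation of the self-adjoint operator $(1-\Delta)^{1/2}$ between $L^2$ and $H_1$. The common starting point is the pointwise Leibniz identity. For a function $g \in W_1(M;K)$, using $\p_j(g^{-1}) = -g^{-1}(\p_j g)g^{-1}$, one computes
\[
\p_j\bigl((Ad\,g)u\bigr) = (Ad\,g)\Bigl(\p_j u + [g^{-1}\p_j g,\, u]\Bigr),
\]
valid for every $\kf$-valued 1-form $u \in W_1$. Since $K$ preserves the $Ad$-invariant inner product on $\kf$, the operator $Ad\,g(x)$ is an isometry fiberwise, so $\|(Ad\,g)v\|_{L^2} = \|v\|_{L^2}$ for every $v \in L^2(M;\L^1\otimes \kf)$, and the boundary conditions built into the Neumann or Dirichlet $H_1$ are preserved because $Ad\,g$ acts only on the Lie-algebra values and not on the form indices.

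For \eqref{gp3}, the $b=0$ bound is the isometry $\|(Ad\,g)u\|_2 = \|u\|_2$. For $b=1$, I would use the Leibniz identity above together with H\"older's inequality and the Sobolev embedding $H_1 \hookrightarrow L^6$ to bound
\[
\|[g^{-1}\p_j g,\,u]\|_2 \le c\,\|g^{-1}dg\|_3\,\|u\|_6 \le c\kappa_6\,\|g^{-1}dg\|_3\,\|u\|_{H_1},
\]
yielding $\|(Ad\,g)u\|_{H_1} \le (1+c_1\|g^{-1}dg\|_3)\|u\|_{H_1}$ with $c_1 = c\kappa_6$. Since $H_b$ is the complex interpolation space $[L^2,H_1]_b$ (a standard consequence of the spectral calculus for the positive self-adjoint operator $1-\Delta$ with the specified boundary conditions), interpolation of the operator $Ad\,g$ between its $L^2\to L^2$ and $H_1\to H_1$ bounds gives \eqref{gp3} for every $b \in [0,1]$.

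Inequality \eqref{gp5} follows by the same pattern applied to $Ad\,g - 1$. The Leibniz identity gives
\[
\p_j\bigl((Ad\,g-1)u\bigr) = (Ad\,g)[g^{-1}\p_j g,\,u] + (Ad\,g-1)\p_j u,
\]
and the two terms on the right are bounded, respectively, by $c_1\|g^{-1}dg\|_3\|u\|_{H_1}$ and by $\|Ad\,g-1\|_\infty\|u\|_{H_1}$. Combined with the trivial $L^2$ estimate $\|(Ad\,g-1)u\|_2 \le \|Ad\,g-1\|_\infty\|u\|_2$, one interpolates to obtain \eqref{gp5} for $b\in[0,1]$.

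For \eqref{gp8b} the H\"older exponents change: with $p_1 = 3/\delta_1$ and $1/q_1 = 1/2 - \delta_1/3$, one has the Sobolev embedding $\|v\|_{q_1} \le \kappa_{\delta_1}\|v\|_{H_{\delta_1}}$, which gives the endpoint $\|(Ad\,g-1)u\|_2 \le \kappa_{\delta_1}\|Ad\,g-1\|_{p_1}\|u\|_{H_{\delta_1}}$. For the $H_1$ endpoint, I would estimate the $((Ad\,g)-1)\p_j u$ term by the same H\"older--Sobolev combination (applied now to $\p_j u$, costing one extra derivative on $u$), and retain the previous bound $c_1\|g^{-1}dg\|_3\|u\|_{H_1}$ for the commutator term, giving a bound by $\|u\|_{H_{1+\delta_1}}$. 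Interpolating the linear map $Ad\,g - 1$ between $H_{\delta_1}\to L^2$ and $H_{1+\delta_1}\to H_1$ produces \eqref{gp8b}. The main technical obstacle here is justifying the shifted interpolation, namely that $[H_{\delta_1},H_{1+\delta_1}]_b = H_{b+\delta_1}$ with constants independent of the boundary conditions; this reduces to the spectral-theoretic identity $D^{\delta_1}\bigl[L^2,H_1\bigr]_b = [H_{\delta_1},H_{1+\delta_1}]_b$, which is where one must be careful that the domains of the fractional powers of the Laplacian really do form a complex interpolation scale---standard for positive self-adjoint operators but requiring a brief check in the boundary-condition setting of Definition \ref{defbc}.
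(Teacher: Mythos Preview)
Your approach is correct and matches the paper's plan: endpoint bounds at $b=0$ and $b=1$ via the Leibniz identity, H\"older, and Sobolev, followed by complex interpolation. The paper packages the Leibniz rule in the form $d((Ad\,g)u)=(Ad\,g)(du+[h\wedge u])$ and $d^*((Ad\,g)u)=(Ad\,g)(d^*u+[h\lrc u])$ with $h=g^{-1}dg$ (derived from exactly your $\p_j$ identity), which feeds directly into $\|D\,\cdot\,\|_2^2=\|d\,\cdot\,\|_2^2+\|d^*\,\cdot\,\|_2^2+\|\,\cdot\,\|_2^2$ without a Gaffney--Friedrichs conversion step; your $\p_j$ version works too, at the cost of such a conversion.

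The one place the paper's argument differs in a way worth noting is \eqref{gp8b}. Rather than interpolating $Ad\,g-1$ between $H_{\delta_1}\to L^2$ and $H_{1+\delta_1}\to H_1$ and then confronting the shifted-scale identification $[H_{\delta_1},H_{1+\delta_1}]_b=H_{b+\delta_1}$ that you flagged, the paper applies its interpolation lemma (bounding $\|D^bSD^{-b}\|$ by $\max(\|S\|,\|DSD^{-1}\|)$) to the composite operator $S=(Ad\,g-1)D^{-\delta_1}$ on $L^2$. The $L^2\to L^2$ bound on $S$ is your H\"older--Sobolev endpoint $\|(Ad\,g-1)v\|_2\le\kappa_{\delta_1}\|Ad\,g-1\|_{p_1}\|D^{\delta_1}v\|_2$; for the $H_1\to H_1$ bound one uses that $d$ and $d^*$ commute with $D^{-\delta_1}$ (they commute with $\Delta$), reducing again to the Leibniz computation. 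Interpolation then bounds $\|D^b(Ad\,g-1)D^{-(b+\delta_1)}\|$, which is \eqref{gp8b}, with no separate verification of the shifted interpolation scale. Your route also works, since the domains of $D^s$ form a complex interpolation scale by spectral calculus for the fixed self-adjoint $D$; absorbing $D^{-\delta_1}$ into the operator is simply the tidier bookkeeping.
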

 The proof depends on the following standard interpolation lemma. 
                 \begin{lemma} \label{cpxinterp}$($Complex interpolation$)$ Suppose that $S$ 
     is a bounded operator on a complex Hilbert space $H$.
Let $D$ be a self-adjoint operator on $H$ such that $ D \ge 1$ and $DSD^{-1}$ 
  is bounded. Then, for $0 \le b \le 1$, the operator 
 $ D^b S D^{-b}$ is bounded by $\max (\|S\|, \|DSD^{-1}\|)$.
          \end{lemma}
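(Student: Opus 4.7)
The plan is to apply the Hadamard three-lines theorem to the operator-valued function
\[
F(z) = D^{z} S D^{-z}, \qquad 0 \le \mathrm{Re}\, z \le 1.
\]
Since $D$ is self-adjoint with $D \ge 1$, the spectral theorem defines $D^{z}$ for every $z \in \mathbb{C}$; in particular $D^{-z}$ is bounded of norm at most $1$ whenever $\mathrm{Re}\, z \ge 0$, and $D^{it}$ is unitary for every real $t$. Thus on the imaginary axis $\mathrm{Re}\, z = 0$ one has $\|F(it)\| = \|D^{it} S D^{-it}\| = \|S\|$, and on the line $\mathrm{Re}\, z = 1$ one can factor $F(1+it) = D^{it}(DSD^{-1})D^{-it}$, whence $\|F(1+it)\| = \|DSD^{-1}\|$.

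The main step is then to reduce the operator norm bound to a scalar three-lines argument. I would fix a large $N$ and pick $f,g$ in the range of the spectral projection $E_{[1,N]}$ associated with $D$; these vectors form a dense subspace of $H$. For such $f,g$, the scalar function
\[
h(z) = \langle F(z) f, g\rangle = \langle S D^{-z} f,\, D^{\bar z} g\rangle
\]
is entire, since $D^{-z} f = \int_{1}^{N} \lambda^{-z}\, dE_\lambda f$ and $D^{\bar z} g = \int_{1}^{N} \lambda^{\bar z}\, dE_\lambda g$ are Hilbert-space-valued entire functions of $z$. On the closed strip $0 \le \mathrm{Re}\, z \le 1$ the scalar function $h$ is bounded by $\|S\|\cdot N\cdot \|f\|\|g\|$, which is enough regularity for the three-lines theorem to apply.

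The boundary bounds computed above give $|h(it)| \le \|S\|\|f\|\|g\|$ and $|h(1+it)| \le \|DSD^{-1}\|\|f\|\|g\|$, so three lines yields $|h(b)| \le \max(\|S\|,\|DSD^{-1}\|)\,\|f\|\|g\|$ for $0 \le b \le 1$. Taking the supremum over $f, g$ in the dense family of spectral-compact vectors, and noting that $D^{b} S D^{-b}$ is already defined on all of $H$ (since $D^{-b}$ is bounded and the bound we just obtained shows $SD^{-b} f \in \D(D^{b})$ with the right norm control), gives the claimed operator bound on all of $H$.

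The main obstacle is the usual one in operator-valued interpolation: $F(z)$ need not be a bounded operator on $H$ when $\mathrm{Re}\, z > 0$, so one cannot naively apply three-lines to $z \mapsto \|F(z)\|$. Restricting to vectors with bounded spectral support for $D$ circumvents this entirely, since it makes the scalar function $h$ both entire and bounded on the strip without appealing to any a priori boundedness of $F(z)$ itself.
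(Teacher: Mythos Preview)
Your proposal is correct and follows essentially the same approach as the paper: apply the three-lines theorem to the scalar function $z \mapsto \langle S D^{-z} f, D^{\bar z} g\rangle$, using the unitarity of $D^{it}$ to get the boundary bounds, and then pass from a dense family to all of $H$. The only minor difference is that the paper restricts just one of the two vectors to a bounded spectral subspace (taking $u \in H$ arbitrary and $v$ in the spectral subspace for $[1,\lambda]$), which works because $D^{-z}$ is already bounded on all of $H$ for $\mathrm{Re}\, z \ge 0$; restricting both vectors as you do is harmless but slightly less economical.
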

         \begin{proof} 
Let $u \in H$ and let $v$ be in
 the spectral subspace of $D$ for the interval $[1, \lambda]$ with $\lambda < \infty$.
   Then, for $z = x+iy$
 in the strip $ 0 \le x \le 1$, the function
 $f(z) \equiv (SD^{-z} u, D^{\overline z} v)$ is bounded and continuous and analytic in the interior.
  On the left hand edge of the strip we have $|f(0+iy)| \le \|S\| \|u\| \|v\| $
 because $D^{iy}$ is unitary.  On the right  hand edge of the strip we have
 \begin{align}
 |f(1 + iy)| = | ( SD^{-1} D^{-iy} u , D D^{-iy} v)|            \notag
  \le \|DSD^{-1}\| \| u\| \|v\| .
  \end{align}
  By the three lines theorem 
  $f(b + iy)$ is bounded by the maximum of the right
  sides of the last two inequalities. In particular, at $y= 0$, we have  
  $|(S D^{-b} u, D^b v)| \le \gamma \| u\| \|v\| $,
  where $\gamma$ is the maximum  of $\|S\|$ and $ \| DS D^{-1}\|$. 
  Since  this   holds for all  $u \in H$ and for all $v$ in a core for $D$, it follows that 
  $\| D^b SD^{-b}\| \le \gamma$.  
\end{proof}

            \begin{lemma}\label{lemgp2} Let   $D = (1 -\Delta)^{1/2}$ as in Section \ref{secggstate}
and let $u \in L^2(M; \L^1\otimes \kf)$.
If   $R = Ad\, g$ or  $R = Ad\, g - 1$ and $\delta\ge0$ then
 \begin{align}
 \|R D^{-\delta} u\|_{H_1} \le (m + c_1 \|g^{-1}dg\|_3) \| u\|_{H_1} ,          \label{gp9.0}
 \end{align}
 where $c_1 =2^{1/2} c\kappa_6$  and 
 \beq
 m = \max\{\| RD^{-\delta}| L^2(M; \L^j\otimes\kf)\| : j =0,1,2\}.            \label{gp9.00}
 \eeq 
              \end{lemma}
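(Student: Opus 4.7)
The plan is to reduce the lemma to two elementary ingredients: the form-domain decomposition
$\|w\|_{H_1}^2 = \|w\|_2^2 + \|dw\|_2^2 + \|d^*w\|_2^2$,
valid whenever $w$ lies in the form domain of the chosen self-adjoint $-\Delta$ on forms, and the gauge-covariance identities
$d\bigl((Ad\,g)w\bigr) = (Ad\,g)\bigl(dw + [g^{-1}dg \wedge w]\bigr)$ and
$d^*\bigl((Ad\,g)w\bigr) = (Ad\,g)\bigl(d^*w + [g^{-1}dg \lrc w]\bigr)$,
which follow from the fact that $d_A$ and $d_A^*$ are gauge covariant and that the gauge transform of the trivial connection $A=0$ by $g^{-1}$ is exactly $g^{-1}dg$. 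Setting $v = D^{-\delta}u$ and $w = Rv$, a density argument reduces the pointwise manipulations to the case of smooth $g$ and $u$.

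Subtracting the $R=I$ instance of the displayed identities from the $R=Ad\,g$ instance produces, for \emph{both} choices $R = Ad\,g$ and $R = Ad\,g - I$, the same formulas
\begin{align*}
d(Rv) &= R(dv) + (Ad\,g)\,[g^{-1}dg \wedge v], \\
d^*(Rv) &= R(d^*v) + (Ad\,g)\,[g^{-1}dg \lrc v].
\end{align*}
Since $d$ and $d^*$ commute with every Borel function of $-\Delta$ (Hodge plus the spectral theorem), one has $dv = D^{-\delta}(du)$ on $\L^2\otimes\kf$ and $d^*v = D^{-\delta}(d^*u)$ on $\L^0\otimes\kf$. The three ``main'' terms $Rv,\ R(dv),\ R(d^*v)$ are therefore $RD^{-\delta}$ applied to $u,\ du,\ d^*u$ on forms of degrees $1,2,0$, and the definition \eref{gp9.00} of $m$ combined with Pythagoras yields
$\sqrt{\|Rv\|_2^2 + \|R(dv)\|_2^2 + \|R(d^*v)\|_2^2} \le m\,\|u\|_{H_1}$.
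The two remaining commutator terms are controlled by H\"older and Sobolev: since $(Ad\,g)$ acts fiberwise by an isometry of $\kf$ and each bracket is bounded by $c\,|g^{-1}dg|\,|v|$ pointwise, the sum of their squared $L^2$ norms is at most $2c^2\|g^{-1}dg\|_3^2\|v\|_6^2 \le 2(c\kappa_6)^2\|g^{-1}dg\|_3^2\|v\|_{H_1}^2$. An $\ell^2$-Minkowski inequality applied to the three-vector splitting of $\|Rv\|_{H_1}$ then gives $\|Rv\|_{H_1} \le m\|u\|_{H_1} + \sqrt{2}\,c\kappa_6\|g^{-1}dg\|_3\,\|v\|_{H_1}$, and the trivial bound $\|v\|_{H_1} = \|D^{1-\delta}u\|_2 \le \|u\|_{H_1}$ (which uses $\delta \ge 0$ and $D \ge 1$) closes the proof with the claimed $c_1 = 2^{1/2}c\kappa_6$.

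The main point of care is verifying that $w = Rv$ actually lies in the form domain of $-\Delta$, so that the first identity of the proof is applicable. For $M = \R^3$ there is nothing to check. In the Neumann case the only form-domain constraint is $w_{norm}=0$, which is inherited from $v$ because $R(x)$ acts fiberwise on $\kf$ and hence preserves the normal/tangential splitting. The Dirichlet case requires in addition that $g = I_\V$ on $\p M$, which is exactly the extra hypothesis built into \eref{gp3pD}; this makes $R$ either the identity or zero on the boundary, so that no stray trace appears and $(Rv)_{tan} = R(v_{tan}) = 0$. No other obstacle of substance is expected: the remainder is bookkeeping.
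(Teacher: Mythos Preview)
Your proof is correct and follows essentially the same route as the paper: both use the identities $d(Rv) = R(dv) + (Ad\,g)[g^{-1}dg\wedge v]$ and $d^*(Rv) = R(d^*v) + (Ad\,g)[g^{-1}dg\lrc v]$, the commutation of $d,d^*$ with $D^{-\delta}$, the H\"older--Sobolev bound $\|[h\wedge v]\|_2^2 + \|[h\lrc v]\|_2^2 \le 2(c\kappa_6\|h\|_3)^2\|v\|_{H_1}^2$, and finally $\|D^{-\delta}u\|_{H_1}\le\|u\|_{H_1}$ from $D\ge 1$. Your organization via a single $\ell^2$-Minkowski split is marginally cleaner than the paper's explicit expansion, and your closing paragraph on form-domain membership (especially the Dirichlet case using $g|_{\partial M}=I_\V$) is more careful than the paper, which leaves that point implicit.
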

              \begin{proof} We will need to carry out the computations in terms of $d$ and $d^*$
              rather than in terms of the partial derivatives $\p_j$ because only the former
              respect the boundary conditions adequately. Let $h = g^{-1}dg$.
  Observe first the identities
 \begin{align}
 d\{(Ad\,g) u\} &= Ad\, g \Big(du + [h\wedge u] \Big) \ \ \           \text{and}                 \label{gp9.1}\\
 d^* \{(Ad\,g) u\} &= Ad\, g \Big(d^*u + [h\lrc u] \Big),                                      \label{gp9.2}
 \end{align}
 which may be derived as follows: For an element $\alpha \in \kf$ we have, at each
  point $x \in M^{int}$, 
  \begin{align*}
  \p_ j \{g(x)\alpha g(x)^{-1}\} &= g(x) [ g(x)^{-1}\p_jg(x), \alpha] g(x)^{-1}\\
  &= (Ad\, g(x))(ad\, h_j(x)) \alpha,
  \end{align*}
  where $h_j(x) = g(x)^{-1}\p_j g(x)$. Hence
  \begin{align*}
  d\{(Ad\, g)u\} =(Ad\, g) du +\sum_{j=1}^3 dx^j \wedge (Ad\, g) (ad\, h_j)  u,
  \end{align*}
  which is \eref{gp9.1}.  The derivation of \eref{gp9.2} is similar, using 
  $-d^*\{(Ad\, g)u\} = \sum_{j=1}^3 \p_j\{ (Ad\, g )u_j\}$, where $u = \sum_{j=1}^3 u_jdx^j$.
  
  Thus if either $R = (Ad\, g)$ or $R = Ad\, g -1$ we have
  \begin{align}
  d (Ru) &= Rdu + (Ad\, g)[ h\wedge u] \ \ \ \                       \label{gp9.3}\\
  d^*(Ru) &= R d^*u  +  (Ad\, g)[ h\lrc u]                             \label{gp9.4}
  \end{align} 
  for elements $u \in L^2(M;\L^1\otimes \kf)$ which are in the domain of $d$ and of $d^*$.
            Therefore 
      \begin{align*}
 &\|DRu\|_2^2 = \|dRu\|_2^2 + \|d^* Ru\|_2^2 + \|Ru\|_2^2 \\
 &\qquad \ \ \ \ \   = \|Rdu +  (Ad\, g) [ h\wedge u]\, \|_2^2 
                                                +\| R d^*u +(Ad\, g) [ h\lrc u]\,\|_2^2 + \|Ru\|_2^2 \\
 &\le \Big(\|Rdu\|_2 + \|\, [h\wedge u]\, \|_2\Big)^2 
         +\Big(\| R d^*u\|_2^2 +\|\, [h\lrc u]\, \|_2\Big)^2+ \|Ru\|_2^2\\
 & = \Big(\|Rdu\|_2^2 +\| R d^*u\|_2^2  + \|Ru\|_2^2\Big)  \\
  & \ \ \ \ \ \ + 2\|Rdu\|_2 \|\, [h\wedge u]\, \|_2 + 2\|Rd^*u\|_2 \|\, [h\lrc u]\, \|_2
           +  \|\, [h\wedge u]\, \|_2^2
        +  \|\, [h\lrc u]\, \|_2^2 \\
        &\le  \Big(\|Rdu\|_2^2 +\| R d^*u\|_2^2  + \|Ru\|_2^2\Big)  
        + 2\Big( \|Rdu\|_2^2 +\| R d^*u\|_2^2\Big)^{1/2}  \mu + \mu^2,
 \end{align*}
 where $\mu^2 = \|\, [h\wedge u]\, \|_2^2 + \|\, [ h\lrc u]\, \|_2^2$. Now
 $\mu^2 \le 2(c \|h\|_3 \|u\|_6)^2\le 2(c\kappa_6 \|h\|_3 \|Du\|_2)^2$. 
 So $\mu \le \nu \|Du\|_2$ where $\nu = 2^{1/2} c\kappa_6\|h\|_3= c_1\|h\|_3$. 
 Hence
 \begin{align}
\|DRu\|_2^2  &\le   \Big(\|Rdu\|_2^2 +\| R d^*u\|_2^2  + \|Ru\|_2^2\Big)   \notag\\
&+ 2\Big( \|Rdu\|_2^2 +\| R d^*u\|_2^2\Big)^{1/2}\nu  \|Du\|_2 + \nu^2 \|Du\|_2^2  . \label{gp9.6}
\end{align}
 Now  replace $u$ by $D^{-\delta} u$ and observe that $d$ and $d^*$ commute with $D^2$ and therefore also with $D^{-\delta}$.  Thus, with $m$ defined by \eref{gp9.00}, we have
 $\|Rd D^{-\delta} u\|_2 = \| R D^{-\delta} du\|_2 \le m \|du\|_2$ with similar inequalities
  when $d$ is replaced by $d^*$ or by $I$.  One should note that $D$ is acting in these inequalities on $0$, $1$ or $2$-forms. Then  we find 
 \begin{align}
 \|DRD^{-\delta} u\|_2^2   
        \le m^2 \| Du\|_2^2 + 2m \|Du\|_2 \nu \|D^{-\delta} Du\|_2 
         + \nu^2 \|D^{-\delta} Du\|_2^2        .                                               \label{gp9.7}
 \end{align}
Since $D \ge I$ on 1-forms we can drop the factor $D^{-\delta}$ in the last
 two terms of \eref{gp9.7} and then take the square root to find 
 $ \|DRD^{-\delta} u\|_2 \le (m + \nu) \|Du\|_2$, which is exactly \eref{gp9.0}. 
\end{proof}

\bigskip
\noindent
\begin{proof}[Proof of Proposition \ref{propgp1}]
Choose  $H$ in Lemma \ref{cpxinterp}  to be the complexification
of $L^2(M; \L^1\otimes \kf)$.
To prove  \eref{gp3} and \eref{gp5} take $\delta =0$ in  \eref{gp9.0}. First choose
$R= Ad\, g$ and $S = R$ in Lemma \ref{cpxinterp}.  Then 
$m \equiv \|R\|_{2\rightarrow2} = \|Ad\, g\|_{2\rightarrow 2} =1$. Thus 
\beq
\max (\|R\|_{2\rightarrow2}, \|DRD^{-1}\|_{2\rightarrow 2})
          = 1 + c_1 \| g^{-1}dg\|_3.
 \eeq
\eref{gp3} now follows from Lemma \ref{cpxinterp}.  

Now choose $R = Ad\, g - 1$ and again $S=R$. Since
$m \equiv \|R\|_{2\rightarrow2}  = \| Ad\, g -1\|_\infty$, the inequality \eref{gp5} now follows
the same way.

For the proof of \eref{gp8b} take $\delta = \delta_1$ and $R = Ad\, g -1$ in \eref{gp9.0}. 
We apply Lemma \ref{cpxinterp} this time to the operator $S = R D^{-\delta_1}$. 
In view of \eref{gp9.0}, we need only verify that
$\|R D^{-\delta_1}\|_{2\rightarrow 2} \le \kappa_{\delta_1} \|Ad\, g - 1\|_{p_1}$.
For this, observe that $q^{-1} + p_1^{-1} = 1/2$ implies that $q^{-1} = (1/2) -(\delta_1/3)$ and therefore, by H\"older and Sobolev, 
  \begin{align}
  \| RD^{-\delta_1} v\|_2 &= \|(Ad\, g - 1)D^{-\delta_1} v\|_2              \notag\\
  &\le  \|Ad\, g - 1\|_{p_1} \| D^{-\delta_1} v\|_q                  \notag\\
  & \le   \|Ad\, g - 1\|_{p_1} \kappa_{\delta_1} \|v\|_2          \label{gp16}
  \end{align}
  for all $v \in L^2(M; \L^j\otimes \kf)$, $j = 0 ,1,2$.
This completes the proof of Proposition \ref{propgp1}.
\end{proof}

\subsection{$\G_{1,p}$ and $\G_{1+a}$  are groups}   \label{secpagps}

            \begin{lemma}\label{lemgps} \   
  
  a$)$ For $2 \le p \le \infty$,  $\G_{1,p}$ is a group under pointwise multiplication.
  $\rho_p$ is right invariant and right translation
  is continuous.

  b$)$ For $1/2\le a \le 1$, $\G_{1+a}$ is a group under pointwise multiplication. Right translation is continuous.
  \end{lemma}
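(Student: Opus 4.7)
The plan is to reduce both parts to two a.e.\ pointwise identities, valid for $g,h \in W_1(M;K)$:
\begin{equation*}
(gh)^{-1}d(gh) = (Ad\,h^{-1})(g^{-1}dg) + h^{-1}dh,\qquad g\,dg^{-1} = -(Ad\,g)(g^{-1}dg),
\end{equation*}
the first from the Leibniz rule and the second from differentiating $gg^{-1} = I_\V$. Once these are in hand, membership of $gh$ and $g^{-1}$ in $W_1(M;K)$, the $L^2$ control of $gh - I_\V$ and $g^{-1} - I_\V$, and preservation of the Dirichlet boundary condition $g|_{\p M} = I_\V$ under products and inverses, are immediate from the Leibniz rule and the pointwise unitarity of $g(x),h(x) \in K$.

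For part~(a) the rest is entirely pointwise. Because $K$ acts on $\kf$ by isometries, $|(Ad\,h(x)^{\pm 1})\xi|_\kf = |\xi|_\kf$ for every $\xi \in \kf$ and $x\in M$, whence $\|(Ad\,h^{\pm 1})u\|_p = \|u\|_p$ for every $p \in [2,\infty]$ and every $\kf$-valued form $u$. The two identities then give $\|(gh)^{-1}d(gh)\|_p \le \|g^{-1}dg\|_p + \|h^{-1}dh\|_p$ and $\|(g^{-1})^{-1}dg^{-1}\|_p = \|g^{-1}dg\|_p$, so $\G_{1,p}$ is closed under multiplication and inversion. For right invariance of $\rp$, the first identity applied to $gf,\,hf$ gives $(gf)^{-1}d(gf) - (hf)^{-1}d(hf) = (Ad\,f^{-1})(g^{-1}dg - h^{-1}dh)$, and combined with $\|(g-h)f\|_2 = \|g-h\|_2$ this yields $\rp(gf,hf) = \rp(g,h)$. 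Continuity of right translation is then automatic, as it is an isometry.

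For part~(b) the pointwise $L^2$ isometry of $Ad\,h^{-1}$ is no longer enough, because $\ra$ involves the $H_a$ norm and the commutator of $Ad\,h^{-1}$ with $d$ produces a term involving $h^{-1}dh$ (exactly as in \eqref{gp9.1}--\eqref{gp9.2}) that must be controlled. This is precisely what Proposition~\ref{propgp1} supplies. The hypothesis $a \ge 1/2$ enters through the Sobolev embedding $H_a \hookrightarrow L^3$, so that $h \in \G_{1+a}$ automatically ensures $h^{-1}dh \in L^3$. Proposition~\ref{propgp1} with $b = a$ then gives
\begin{equation*}
\|(Ad\,h^{-1})(g^{-1}dg)\|_{H_a} \le \bigl(1 + c_1\|h^{-1}dh\|_3\bigr)\|g^{-1}dg\|_{H_a},
\end{equation*}
so the first identity shows $(gh)^{-1}d(gh) \in H_a$; the same proposition applied to $g$ gives $(g^{-1})^{-1}dg^{-1} = -(Ad\,g)(g^{-1}dg) \in H_a$. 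Hence $\G_{1+a}$ is a group. For right translation by a fixed $f \in \G_{1+a}$, the identity $(g_nf)^{-1}d(g_nf) - (gf)^{-1}d(gf) = (Ad\,f^{-1})(g_n^{-1}dg_n - g^{-1}dg)$ combined with the same multiplier bound (applied to $Ad\,f^{-1}$, using $\|f\,df^{-1}\|_3 = \|f^{-1}df\|_3$) yields $\ra(g_nf,gf) \le \max\bigl(1,\,1 + c_1\|f^{-1}df\|_3\bigr)\,\ra(g_n,g)$, i.e.\ Lipschitz continuity.

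The main subtlety—more than a real obstacle—is this passage from pointwise $L^p$ isometry in (a) to the $H_a$ multiplier bound in (b): all the difficulty has been packaged into Proposition~\ref{propgp1}, which was set up precisely so that the critical index $a = 1/2$ is still covered through $H_{1/2} \hookrightarrow L^3$. Unlike in (a), right translation on $\G_{1+a}$ is only Lipschitz, not isometric, because $Ad\,f^{-1}$ is not an isometry on $H_a$; this is also why right invariance of $\ra$ is not claimed in the statement.
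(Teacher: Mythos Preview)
Your proof is correct and follows essentially the same approach as the paper. The paper organizes the argument by first stating a separate lemma (Lemma~\ref{lemrpa}) listing the key inequalities \eqref{gp220}--\eqref{gp223pp} for $\rho_p$ and \eqref{gp30a}--\eqref{gp33aa} for $\rho_a$, all derived from the same identities you use and from the multiplier bound \eqref{gp3}; your version simply inlines this and arrives at the same Lipschitz estimate for right translation on $\G_{1+a}$, which is exactly \eqref{gp28a}.
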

       The proof depends on the following properties of the metrics $\rho_p$ and $\rho_a$.

 \begin{lemma} \label{lemrpa} 
 $($Properties of $\rp$ and $\ra$.$)$
 Suppose that  $2\le p \le \infty$. If $g$ and $h$ are in $\G_{1,p}$ then
   \begin{align}
 \rho_p(gh,e) &\le \rho_p(g, e) + \rho_p(h,e)  \label{gp220}\\
 \rho_p(g^{-1}, e) &= \rho_p(g,e)     \label{gp221} \\
 \rho_p(gk, hk) &= \rho_p(g,h)\ \ \ \    \forall k \in \G_{1,p}    \label{gp222p} \\
 \rho_p(hkh^{-1}, e) &\le  \rho_p(k,e) + \|(Ad\, k - 1) h^{-1}dh\|_p  .    \label{gp223pp}
\end{align} 
 Suppose that $1/2\le a \le 1$.   
  If $g$ and $h$ are in $\G_{1+a}$  then 
  \begin{align}
\rho_a(gh, e)&\le  \rho_a(g, e) + \rho_a(h, e) + c_2 \rho_a(g, e) \rho_a(h, e)      \label{gp30a} \\
 \rho_a(g^{-1}, e) &\le \rho_a(g, e) + c_2 \rho_a(g, e)^2                             \label{gp31a} \\
 \rho_a(gk, hk) &\le (1 + c_2\rho_a(k, e)) \rho_a(g,h)\ \ \ \forall k \in \G_{1+a}     \label{gp28a}\\
  \rho_a(hgh^{-1}, e))                              
 &\le \Big(1 + c_1 \rho_3(h, e)\Big) \Big(\rho_a(g,e) + \| (Ad\, g^{-1} -1)(h^{-1}dh)\|_{H_{a}}\Big)
                   \label{gp33aa}
  \end{align} 
  with constants $c_1$ and $c_2$ depending only on Sobolev constants
   and the commutator bound $c$.
  Note:
  \eref{gp33aa} holds for all $a \in [0, 1]$ in this form. 
  \end{lemma}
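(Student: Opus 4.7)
The plan rests on three algebraic identities for the right Maurer-Cartan expression $g^{-1}dg$. Direct use of the product rule gives the cocycle identity
\[
(gh)^{-1}d(gh) = (Ad\, h^{-1})(g^{-1}dg) + h^{-1}dh;
\]
differentiating $gg^{-1} = I_\V$ yields $(g^{-1})^{-1}d(g^{-1}) = -(Ad\, g)(g^{-1}dg)$; and combining these gives both $(gk)^{-1}d(gk) - (hk)^{-1}d(hk) = (Ad\, k^{-1})(g^{-1}dg - h^{-1}dh)$ and
\[
(hgh^{-1})^{-1}d(hgh^{-1}) = (Ad\, h)(g^{-1}dg) + (Ad\, h)(Ad\, g^{-1} - I)(h^{-1}dh).
\]
Since $K$ embeds in the orthogonal or unitary group of $\V$, each $Ad\, g(x)$ acts isometrically on $\kf$, so $\|(Ad\, g)u\|_{L^p} = \|u\|_{L^p}$ for every $p$ and every $\kf$-valued form $u$. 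The linear identities $gh - I_\V = (g - I_\V)h + (h - I_\V)$ and $g^{-1} - I_\V = -g^{-1}(g - I_\V)$, together with pointwise isometry of $g$ and $h$ on $\V$, give $\|gh - I_\V\|_2 \le \|g - I_\V\|_2 + \|h - I_\V\|_2$ and $\|g^{-1} - I_\V\|_2 = \|g - I_\V\|_2$.

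The four $\rho_p$ inequalities \eqref{gp220}--\eqref{gp223pp} then follow essentially by inspection from these identities combined with pointwise isometry. In particular \eqref{gp222p} is actually an equality because every operator appearing on the right of the $(gk)-(hk)$ identity is an $L^p$ isometry, and for \eqref{gp223pp} one applies the conjugation identity, drops the outer $Ad\, h$ using pointwise isometry, and observes $\|hkh^{-1} - I_\V\|_2 = \|k - I_\V\|_2$. No multiplier bounds are needed at this stage because the $L^p$ norm is blind to the smoothness of $Ad\, g$.

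For the four $\rho_a$ inequalities I would invoke Proposition \ref{propgp1}, which supplies $\|(Ad\, g)u\|_{H_a} \le (1 + c_1\|g^{-1}dg\|_3)\|u\|_{H_a}$ for $a \in [0,1]$. When $a \ge 1/2$ the critical Sobolev embedding $H_a \hookrightarrow L^3$ in dimension three gives $\|g^{-1}dg\|_3 \le \kappa\|g^{-1}dg\|_{H_a} \le \kappa\,\rho_a(g, e)$. Substituting this into the cocycle identity produces the quadratic correction in \eqref{gp30a}; substituting into the inversion identity produces \eqref{gp31a}; substituting into the right-translation identity produces \eqref{gp28a}. For the conjugation bound \eqref{gp33aa}, which is asserted over the full range $a \in [0,1]$, one applies the multiplier bound to each summand of the conjugation identity to obtain
\[
\|(hgh^{-1})^{-1}d(hgh^{-1})\|_{H_a} \le (1 + c_1\|h^{-1}dh\|_3)\bigl(\|g^{-1}dg\|_{H_a} + \|(Ad\, g^{-1} - I)(h^{-1}dh)\|_{H_a}\bigr),
\]
bounds $\|h^{-1}dh\|_3 \le \rho_3(h, e)$ directly from the definition of $\rho_3$, and finally absorbs the leftover $\|hgh^{-1} - I_\V\|_2 = \|g - I_\V\|_2$ into the bracket using the trivial inequality $(1 + c_1 \rho_3(h, e)) \ge 1$.

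The proof is therefore routine algebraic bookkeeping once Proposition \ref{propgp1} is in hand; the only subtlety worth flagging is that it is essential for the multiplier constant to depend on $\|g^{-1}dg\|_3$ and no stronger Sobolev norm. The $L^3$ bound is precisely what the critical embedding $H_{1/2} \hookrightarrow L^3$ delivers, and without this marginal embedding the quadratic correction in \eqref{gp30a} would already fail to close at $a = 1/2$. The same decoupling is what allows \eqref{gp33aa} to be stated over all of $[0,1]$ using $\rho_3(h, e)$ as the conjugation parameter: the dependence of the multiplier bound on the $L^3$ norm of the conjugating factor is independent of $a$.
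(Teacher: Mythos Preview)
Your proposal is correct and follows essentially the same route as the paper: establish the four Maurer--Cartan identities for products, inverses, right translates, and conjugates, then use pointwise $L^p$-isometry of $Ad\,g$ for \eqref{gp220}--\eqref{gp223pp} and the multiplier bound of Proposition~\ref{propgp1} combined with the Sobolev embedding $H_a \hookrightarrow L^3$ (valid for $a\ge 1/2$) for \eqref{gp30a}--\eqref{gp33aa}. Your closing remark on why the $L^3$ dependence of the multiplier constant is exactly what makes the argument close at the critical index is also the point the paper is implicitly exploiting.
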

                 \begin{proof}   The proof of each assertion relies on one of the following identities.                
 \begin{align}
  (hg)^{-1} d(hg)               &= g^{-1}dg + (Ad\, g^{-1}) ( h^{-1}dh)      \label{gp15}\\
  (hg^{-1})^{-1} d(hg^{-1}) &= (Ad\, g)( h^{-1} dh - g^{-1}dg)  \label{gp16}\\
(g^{-1})^{-1} d(g^{-1})       &   = -(Ad\, g)(g^{-1}dg)     \label{gp18} \\
 (hgh^{-1})^{-1} d(hgh^{-1})  
          &=(Ad\, h)\Big( (Ad\, g^{-1} -1)(h^{-1}dh) + g^{-1}dg\Big).  \label{gp19} 
\end{align} 
All of these follow from straightforward computations. We will derive only the last one. 
\begin{align}
 (hgh^{-1})^{-1} d(hgh^{-1})  
 &=hg^{-1} h^{-1}\Big((dh) g h^{-1} + h(dg) h^{-1} -hg(h^{-1} dh) h^{-1}\Big)\notag \\ 
 &= h\Big( g^{-1}(h^{-1}dh)g +  g^{-1}dg  - h^{-1} dh\Big)h^{-1}   \notag \\
          &=(Ad\, h)\Big( (Ad\, g^{-1} -1)(h^{-1}dh) + g^{-1}dg\Big).    \notag
\end{align} 
This proves \eref{gp19}.

For the derivation of \eref{gp220} to \eref{gp223pp}   one need only note that      
   $Ad\, g$ preserves all $L^p$ norms.   We see then that 
     \eref{gp220} follows from the identity \eref{gp15} together with the inequalities
     $ \|gh - I_\V\|_2 =\|(g-I_\V)h + h - I_\V\|_2 \le \| g - I_\V\|_2 + \| h - I_\V\|_2$.     
     \eref{gp221} follows from
   \eref{gp18} along with $\| g^{-1} - I_\V\|_2 = \|I_\V - g\|_2$.   
   \eref{gp223pp} follows from \eref{gp19}.    
   For the proof of \eref{gp222p}  we can compute that 
   \begin{align*}
   \rho_p(gk, hk) &= \| (Ad\, k^{-1})(g^{-1} dg - h^{-1}dh)\|_p +\|gk - hk\|_2\\
    &=  \|g^{-1} dg - h^{-1}dh\|_p + \| g-h\|_2= \rho_p(g,h).
   \end{align*}
   
The derivation of the simple properties \eref{gp220} to \eref{gp223pp} relied on the fact that
multiplication by $Ad\, g$ preserves $L^p$ norms.         
   By contrast,  multiplication by 
   $Ad\, g$ does not preserve the $H_a$ norms. 
   The  inequalities for $\rho_a$ will depend for their proofs on the multiplier bounds
   of Proposition \ref{propgp1}.    
                
            Apply \eref{gp3} with $b =a$ and the appropriate choice of 
            $u$ to the identities \eref{gp15}, \eref{gp18}, \eref{gp19}   to find           
 \begin{align}
&  \rho_a(hg, e)=\| (hg)^{-1} d(hg)\|_{H_a}   + \|hg - I_V\|_2               \label{gp15a}\\
 &\ \ \ \ \le \| g^{-1}dg\|_{H_a} + (1+c_1 \| g^{-1}dg\|_3) \| h^{-1}dh\|_{H_a} 
                                                                                       +  \| g - I_\V\|_2 + \| h - I_\V\|_2\notag\\
&  \rho_a(g^{-1}, e)=\|(g^{-1})^{-1} d(g^{-1})\|_{H_a}    
                                                                                        +\| g^{-1} - I_\V\|_2      \label{gp18a} \\
&\ \ \ \ \le (1+c_1 \| g^{-1}dg\|_3) \|g^{-1}dg\|_{H_a} +\| g - I_\V\|_2              \notag \\
&   \rho_a(gk, hk) = \| (Ad\, k^{-1})(g^{-1} dg - h^{-1}dh)\|_{H_a}  +\|gk - hk\|_2   \label{gp17a}\\
&\ \ \ \  \le (1 + c_1\|k^{-1}dk\|_3) \|g^{-1} dg - h^{-1}dh\|_{H_a} + \|g-h\|_2 \notag\\ 
&   \rho_a(hgh^{-1}, e) 
                        = \|(hgh^{-1})^{-1} d(hgh^{-1})\|_{H_a}  +\|hgh^{-1} - I_\V\|_2     \label{gp19a}\\
&\ \ \ \ \le\Big(1+c_1 \| h^{-1}dh\|_3\Big) \| (Ad\, g^{-1} -1)(h^{-1}dh) + g^{-1}dg\|_{H_{a}} 
                                                             +\|g-I_\V\|_2 \notag\\ 
 & \ \ \ \ \le\Big(1+c_1 \| h^{-1}dh\|_3\Big)
 \Big( \| (Ad\, g^{-1} -1)(h^{-1}dh)\|_{H_a}  +\|g^{-1}dg\|_{H_{a}}\Big) +\|g-I_\V\|_2 .   \notag
 \end{align} 
 Each of these inequalities holds for all $a \in [0,1]$. However we now wish to estimate
 several of the $L^3$ norms that occur in these inequalities by an $H_a$ norm. By Sobolev
 we have $\|u\|_3 \le \kappa_3 \|u\|_{H_{1/2}} \le \kappa_3 \|u\|_{H_a}$ if $a \ge 1/2$.
 Thus we may dominate the factors 
 $(1 + c_1 \|g^{-1}dg\|_3)$ by $(1 + c_2 \|g^{-1}dg\|_{H_a}) \le 1 + c_2\rho_a(g, e)$
 in \eref{gp15a} and \eref{gp18a} and dominate the factor $1 + c_1\|k^{-1} dk\|_3$
 by $1 + c_2 \rho_a(k, e)$ in \eref{gp17a}. This completes the proof of 
 \eref{gp30a} through \eref{gp28a}.  The inequality \eref{gp33aa} follows from \eref{gp19a}
 if one takes into account that $\rho_a(g,e)= \|g^{-1} dg\|_{H_a} +\|g-I_\V\|_2$.
\end{proof}

\bigskip
\noindent
\begin{proof}[Proof of Lemma \ref{lemgps}]  
$\G_{1,p}$ is closed under multiplication and inversion by \eref{gp220} and \eref{gp221}.
The identity  \eref{gp222p} shows that $\rp$ is a right invariant metric. The right invariance
   of $\rp$ ensures that right multiplication is continuous in  this metric. We will use 
   \eref{gp223pp} later to show that left multiplication is also continuous.

$\G_{1+a}$ is closed under pointwise multiplication and inversion by 
\eref{gp30a} and \eref{gp31a}. Although $\rho_a$ is not right invariant, the topology
 induced by the metric $\rho_a$ is invariant under right multiplication, as follows
  immediately from \eref{gp28a}. Hence right multiplication is continuous in $\G_{1+a}$.
\end{proof}

      To show that $\G_{1,p}$ and $\G_{1+a}$ are topological
  groups it still needs to be shown that left multiplication and inversion are  continuous.
   These will be proven in the   next sections.

\subsection{$\G_{1,p}$ is a topological group} \label{topgp-p}

 \begin{theorem} \label{thmG13}\ 
 For  $2 \le p < \infty$ 
 multiplication and inversion are continuous in $\G_{1,p}$.
 $\G_{1,p}$ is a topological group.    
  For any index $q \in [2,\infty)$ the map 
 \beq
\G_{1,p} \ni  g \mapsto  (Ad\, g:L^q \rightarrow L^q)      \label{gp205}
 \eeq
 is a strongly continuous representation of $\G_{1,p}$ into isometries of $L^q(M;\L^1\otimes \kf)$.
 Moreover if $ p > 3$ then the representation is norm continuous.
\end{theorem}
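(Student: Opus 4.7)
The plan is to reduce every continuity claim to the case of convergence to the identity $e$, using the right-invariance \eref{gp222p} of $\rp$, and then close the critical $e$-case by a dominated-convergence argument driven by $L^2$-convergence of $g_n - I_\V$. Right-invariance already makes right translation an isometry, hence continuous. For left translation, once continuity at $e$ is established, continuity at a general $g$ will follow by writing $h g_n = (h g_n g^{-1}) g$ and using right invariance to rewrite $\rp(h g_n, h g) = \rp(h (g_n g^{-1}) h^{-1}, e)$, with $g_n g^{-1} \to e$ by right invariance. Inversion at $e$ is immediate from \eref{gp221}, and at a general $g$ one applies right invariance and \eref{gp221} to get $\rp(g_n^{-1}, g^{-1}) = \rp(g^{-1} g_n, e)$, then uses left translation (by $g^{-1}$), which by that point has been established. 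Joint continuity of multiplication will then follow from separate continuity by the standard triangle-inequality argument using \eref{gp220}.

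The key analytical step is continuity of left translation at $e$. Fixing $h \in \G_{1,p}$ and letting $g_n \to e$ in $\rp$, right invariance and \eref{gp223pp} give
\[
\rp(h g_n, h) = \rp(h g_n h^{-1}, e) \le \rp(g_n, e) + \|(Ad\, g_n - 1)(h^{-1} dh)\|_p.
\]
The first term vanishes by hypothesis; the main obstacle is the second. Pointwise $|Ad\, g_n(x) - I_\V| \le 2$ because $K$ acts by isometries on $\kf$, and $|h^{-1} dh|^p$ is integrable, so dominated convergence will close the argument once $g_n \to I_\V$ almost everywhere is known. The $L^2$-bound $\|g_n - I_\V\|_2 \to 0$ built into $\rp$-convergence already supplies a.e.\ convergence along a subsequence; a standard subsequence argument then upgrades this to convergence of the full numerical sequence $\|(Ad\, g_n - 1)(h^{-1} dh)\|_p \to 0$.

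For the representation $g \mapsto (Ad\, g: L^q \to L^q)$, each $Ad\, g$ is a pointwise fiberwise isometry of $\kf$, hence an isometry of $L^q(M; \L^1\otimes\kf)$ for every $q \in [2, \infty)$. Strong continuity reduces, via
\[
\|Ad\, g_n\, u - Ad\, g\, u\|_q = \|(Ad\, g^{-1} g_n - I)\, u\|_q
\]
and the fact that $g^{-1} g_n \to e$ (once left multiplication by $g^{-1}$ has been shown continuous), to exactly the dominated-convergence argument of the preceding paragraph, now with dominating function $2|u| \in L^q$ in place of $2|h^{-1} dh| \in L^p$.

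For the norm continuity claim when $p > 3$, it suffices to prove $\|Ad\, g_n - I\|_{L^\infty} \to 0$ as $g_n \to e$, since $\|(Ad\, g_n - I) u\|_q \le \|Ad\, g_n - I\|_\infty \|u\|_q$. On $M$ of finite volume, interpolating the trivial bound $\|g_n - I_\V\|_\infty \le 2$ with $\|g_n - I_\V\|_2 \to 0$ yields $\|g_n - I_\V\|_p \to 0$, and $dg_n = g_n (g_n^{-1} dg_n)$ combined with the $L^\infty$-bound on $g_n$ yields $\|dg_n\|_p \to 0$. Thus $g_n \to I_\V$ in $W^{1,p}(M; End\, \V)$, which for $p > 3$ on a bounded smooth domain embeds continuously into $C(\overline{M})$ by Morrey's inequality, giving the required uniform convergence. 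The essential analytical obstacle throughout is upgrading the $L^2$-control of $g_n - I_\V$ either to a.e.\ convergence (handled by the subsequence trick in the critical regime) or to uniform convergence (via Morrey in the subcritical regime); the failure of Morrey's embedding at $p = 3$ is precisely why only strong, and not norm, continuity is available in the critical range.
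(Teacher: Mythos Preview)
Your proof is correct and follows the same overall architecture as the paper: both exploit right invariance of $\rp$, reduce continuity of left translation to showing $\|(Ad\, g-1)(h^{-1}dh)\|_p\to 0$ as $g\to e$ via the conjugation bound \eref{gp223pp}, and handle norm continuity for $p>3$ through the Sobolev/Morrey control of $\|g-I_\V\|_\infty$ by $\|g^{-1}dg\|_p$ together with $\|g-I_\V\|_2$. The one genuine technical difference is in how the key estimate is established: the paper's Lemma \ref{strcontp} truncates the fixed element $u$ at level $\lambda$, bounds the part where $|u|\le\lambda$ by $2\lambda\|g-I_\V\|_q\le 2^{2-2/q}\lambda\|g-I_\V\|_2^{2/q}$, and chooses $\lambda$ large to control the tail---this yields an explicit $\delta(\epsilon,u)$. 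You instead pass to an a.e.-convergent subsequence from $\|g_n-I_\V\|_2\to 0$, apply dominated convergence with majorant $2^p|u|^p$, and close with the subsequence-of-subsequence contradiction. Both are valid; the paper's route is more quantitative, yours more streamlined. A minor point: your finite-volume caveat in the Morrey step is unnecessary, since the interpolation $\|g_n-I_\V\|_p\le\|g_n-I_\V\|_2^{2/p}\|g_n-I_\V\|_\infty^{1-2/p}$ holds on any measure space and $W^{1,p}(\R^3)\hookrightarrow L^\infty$ for $p>3$.
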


The proof depends on the following lemma. 
 
\begin{lemma}\label{strcontp}$($Strong and norm continuity on $L^q$.$)$ 
           Let $2 \le q < \infty$. 
 
 a$)$ Let $u \in L^q(M; \L^1\otimes \kf)$ and 
 let $\epsilon >0$. Then there exists $\delta >0$,  depending on $\epsilon$ and $u$, such that,
 for any function $g:M\rightarrow K$, one has
\begin{align}
\|(Ad\, g) u - u\|_q < \epsilon\ \ \ \ \text{whenever}\ \ \  \|g  -I_\V\|_2 < \delta.       \label{gp210}
\end{align}

b$)$ Let $p > 3$. Given $\epsilon >0$ there exists $\delta >0$ such that
\begin{align}
\| Ad\, g - 1\|_{L^q\rightarrow L^q} < \epsilon\ \ \ \ 
                         \text{whenever}\ \ \ \|g^{-1} dg\|_p + \|g  -I_\V\|_2  < \delta.      \label{gp211}
\end{align}
\end{lemma}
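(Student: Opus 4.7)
The plan is to prove part a) by exploiting the pointwise continuity of $Ad:K\to End(\kf)$ together with absolute continuity of the integral $\int|u|^q\,dx$, and to prove part b) by passing from the smallness of $\|g^{-1}dg\|_p+\|g-I_\V\|_2$ to the smallness of $\|g-I_\V\|_\infty$ via a Sobolev-type inequality that uses $p>3$, and then invoking uniform continuity of $Ad$ on the compact group $K$.

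For part a), fix $u\in L^q(M;\Lambda^1\otimes\kf)$ and $\epsilon>0$. Because $Ad$ is uniformly continuous on $K$, there exists $\eta>0$ such that $\|Ad\,k-1\|_{End(\kf)}<\epsilon/(2\|u\|_q+1)$ whenever $\|k-I_\V\|<\eta$. Set $\Omega=\{x\in M:\|g(x)-I_\V\|\ge\eta\}$; by Chebyshev, $|\Omega|\le\|g-I_\V\|_2^2/\eta^2$. On $M\setminus\Omega$ one has $|(Ad\,g(x)-1)u(x)|_\kf\le(\epsilon/(2\|u\|_q+1))|u(x)|$, whose $L^q$ contribution is at most $\epsilon/2$. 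On $\Omega$ one has the trivial pointwise bound $|(Ad\,g(x)-1)u(x)|\le 2|u(x)|$ (since $K$ sits in the unitary/orthogonal group), and the absolute continuity of the $L^q$ integral furnishes some $\tau>0$ such that $|E|<\tau$ implies $\int_E|u|^q<(\epsilon/4)^q$. Choosing $\delta=\eta\sqrt{\tau}$ then yields $|\Omega|<\tau$ whenever $\|g-I_\V\|_2<\delta$, and the two pieces combine to give $\|(Ad\,g)u-u\|_q<\epsilon$. Note that $\delta$ genuinely depends on $u$, not just on $\|u\|_q$, via the absolute continuity step.

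For part b), observe first that since $(Ad\,g-1)$ acts by pointwise multiplication on $L^q(M;\Lambda^1\otimes\kf)$, one has $\|Ad\,g-1\|_{L^q\to L^q}=\|Ad\,g-1\|_{L^\infty(M;End(\kf))}$, and by uniform continuity of $Ad$ on $K$ this quantity is controlled by a modulus-of-continuity function applied to $\|g-I_\V\|_\infty$. Thus it suffices to show that $\|g-I_\V\|_\infty$ is small when $\|g^{-1}dg\|_p+\|g-I_\V\|_2$ is small. Since $dg=g\cdot(g^{-1}dg)$ and $g$ takes values in a group of unitaries, $\|dg\|_p\le\|g^{-1}dg\|_p$. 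For bounded $M$ the Sobolev embedding $W^{1,p}(M)\hookrightarrow L^\infty(M)$ with $p>3$ gives
\begin{align*}
\|g-I_\V\|_\infty\le C\bigl(\|g-I_\V\|_2+\|dg\|_p\bigr).
\end{align*}
For $M=\R^3$ the analogous estimate follows from the Gagliardo--Nirenberg interpolation
\begin{align*}
\|f\|_\infty\le C\|\nabla f\|_p^{\,\theta}\|f\|_2^{\,1-\theta},\qquad\theta=\frac{3p}{5p-6}\in(0,1),
\end{align*}
applied to $f=g-I_\V$, giving again smallness of $\|g-I_\V\|_\infty$ from smallness of the two hypothesized quantities.

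The conceptually delicate step is the one for part b) on $\R^3$: the pure embedding $W^{1,p}\hookrightarrow C^{0,1-3/p}$ for $p>3$ controls oscillations but not the sup norm, and it is essential to use the $L^2$ smallness of $g-I_\V$ to rule out a function whose gradient is small in $L^p$ yet which is uniformly far from the identity at infinity. The Gagliardo--Nirenberg interpolation encodes precisely this combination. Part a), by contrast, is a soft measure-theoretic argument; the only subtlety is that $\delta$ must be allowed to depend on the particular function $u$, which is why one only obtains strong (not norm) continuity at the $L^2$ level.
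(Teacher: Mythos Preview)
Your proof is correct and follows essentially the same approach as the paper. In part a) you split $M$ according to where $g$ is far from $I_\V$ (controlling the bad set by Chebyshev and absolute continuity of $\int|u|^q$), whereas the paper splits $u$ according to where $|u|$ is large (bounding the remaining piece via $|(Ad\,g-1)u|\le 2|u(x)|\,|g(x)-I_\V|$ and $\|g-I_\V\|_q\lesssim\|g-I_\V\|_2^{2/q}$); these are dual versions of the same absolute-continuity argument and both correctly show why $\delta$ depends on $u$, not just on $\|u\|_q$. Part b) is the paper's argument with more care taken over the $M=\R^3$ case via Gagliardo--Nirenberg, where the paper simply appeals to ``Sobolev's inequality.''
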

\begin{proof}  
If, for some $x \in M$, $|u(x)|_{\L^1\otimes\kf} \le \lambda$ then 
$|(Ad\, g(x)- I_\V) v(x)|_{\L^1\otimes \kf} \le 2 \lambda |g(x) - I_\V|_{End\ \V}$.
Here, $|\cdot |_{\L^1\otimes \kf}$ refers to the Euclidean norm on $\L^1\otimes \kf$.
 Thus
 \begin{align*}
 \| (Ad\, g - 1) u \|_q 
  & \le \|(Ad\, g -1) \chi_{|u| > \lambda} u\|_q + 2\lambda \| g- I_\V\|_q\\
   &\le 2 \|\chi_{|u| > \lambda} u\|_q +2\lambda \cdot 2^{1-(2/q)} \| g- I_\V\|_2^{2/q}. 
      \end{align*} 
   Hence, given $u \in L^q$ and $\epsilon >0$, choose $\lambda$ so large that 
   the first term is at most $\epsilon/2$ and then choose $\delta > 0$ so small that the second
   term is also less than $\epsilon/2$ when $\|g - I_\V\|_2 < \delta$. This proves \eref{gp210}.
   Notice that the restraint on $\delta$  depends on $\lambda$, which depends on $u$ 
   and not just on $\|u\|_q$.
   
    For the proof of \eref{gp211} assume that $p > 3$ and  observe first that 
 $\|d (g - I_V)\|_p = \|dg\|_p = \|g^{-1}dg\|_p$. Since $(1/p) - (1/3) < 0$, Sobolev's inequality
 shows that $\|g^{-1}dg\|_p + \| g - I_\V\|_2$ controls $ \|g - I_\V\|_\infty$ and therefore also
 $\| Ad\, g - 1\|_\infty$, which is the norm of $Ad\, g - 1$ as an operator on
  $L^q(M; \L^1\otimes \kf)$. 
 
 (Notice that adding  $\|g^{-1}dg\|_3$  to the norm in \eref{gp210} will not help to dominate
  $\|g - I_\V\|_\infty$  because   $(1/3) - (1/3) = 0$.)
 \end{proof}

 \bigskip
\noindent
               \begin{proof}[Proof of Theorem \ref{thmG13}] 
   Since the metric $\rho_p$ is right invariant, right translation is a 
   homeomorphism of $\G_{1,p}$, and therefore a neighborhood of a point
   $g_0$ can represented in the form $U(g_0) = \{ \alpha g_0: \rho_p(\alpha, e) < \delta\}$.
    If also $h_0 \in \G_{1,p}$ and we take $V = \{\beta h_0: \rho_p(\beta , e) < \delta\}$
     as a neighborhood of $h_0$ then a product
 of points in these neighborhoods may be written 
 $gh = (\alpha g_0) (\beta h_0) = \gamma g_0h_0$ where
 $\gamma = \alpha (g_0\beta g_0^{-1})$. 
 By \eref{gp223pp} we have
 \beq
 \rho_p(g_0 \beta g_0^{-1}, e) 
            \le \rho_p(\beta, e) + \| (Ad\, \beta -1) g_0^{-1}dg_0\|_p.              \label{gp215pp}
 \eeq
  By Lemma \ref{strcontp}, the entire
 right hand side of \eref{gp215pp} can be made small by choosing $\delta$ small. 
 Thus, in view of \eref{gp220}, given $\epsilon >0$  there exists 
 $\delta >0$ such that $\rp(\gamma, e) < \epsilon$. Multiplication is therefore jointly continuous.
 In particular left translations are homeomorphisms of $\G_{1,p}$. Since a right translate
 of a basic neighborhood $N$ of the identity  by an element $g \in \G_{1,p}$ is carried by inversion
 into a left translate by $g^{-1}$ of the inverse $N^{-1}$, which is itself open by \eref{gp221},
 it follows that inversion is continuous. Thus multiplication and inversion are continuous and so
 $\G_{1,p}$ is a topological group.
 
  For the proof of strong continuity of the representation $\G_{1,p} \ni g \mapsto Ad\, g$
  on $L^q(M; \L^1\otimes \kf)$   it suffices to show
   that for fixed $u \in L^q(M; \L^1\otimes \kf)$
    the map $\G_{1,p} \ni g \mapsto (Ad\, g)u$ is continuous into $L^q$  at $ g = I_\V$.
    But $\|g- I_\V\|_2 \le \rho_p(g, I_V)$ by the definition  \eref{gp2pm}. The strong continuity
    now follows from Part a) of Lemma \ref{strcontp}.    
    If $p>3$ then Part b) of Lemma \ref{strcontp}  shows that 
    the map $g\mapsto Ad\, g$ is actually norm continuous on $L^q$.
   \end{proof}

\subsection{$\G_{1+a}$  is a topological group if $a \ge 1/2$} \label{topgp-a}

\begin{theorem} \label{thmG3/2}$($$\G_{1+a}$ is a topological group$)$ \  
            If  $1/2 \le a \le1$ then multiplication and inversion are
 continuous in $G_{1+a}$. In particular $\G_{1+a}$ is  a topological group.
\end{theorem}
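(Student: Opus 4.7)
The plan is to mirror the proof of Theorem \ref{thmG13}, reducing everything to Corollary \ref{corgg3a} (strong continuity of $Ad$ on $H_a$ at the identity) together with the algebraic inequalities of Lemma \ref{lemrpa}. By Lemma \ref{lemgps}, $\G_{1+a}$ is already a group with continuous right translation, so only two facts remain: joint continuity of multiplication at an arbitrary point, and continuity of inversion at $e$ (from which continuity of inversion everywhere follows by the standard topological-group argument, since joint continuity transports any neighborhood of $e$ to a neighborhood of an arbitrary point). Inversion at $e$ is immediate from \eref{gp31a}, which gives $\rho_a(g^{-1}, e) \le \rho_a(g, e) + c_2 \rho_a(g, e)^2$.

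For joint continuity of multiplication at $(g_0, h_0)$, I write $g = \alpha g_0$ and $h = \beta h_0$ with $\alpha, \beta$ close to $e$ in $\rho_a$, and rewrite the product as $gh = \gamma \cdot g_0 h_0$ where $\gamma = \alpha \cdot (g_0 \beta g_0^{-1})$. Applying \eref{gp30a} and then \eref{gp33aa} yields
\[
\rho_a(\gamma, e) \le \rho_a(\alpha, e) + \rho_a(g_0 \beta g_0^{-1}, e) + c_2 \, \rho_a(\alpha, e)\, \rho_a(g_0 \beta g_0^{-1}, e)
\]
and
\[
\rho_a(g_0 \beta g_0^{-1}, e) \le \bigl(1 + c_1\, \rho_3(g_0, e)\bigr)\Bigl(\rho_a(\beta, e) + \|(Ad\,\beta^{-1} - 1)(g_0^{-1}dg_0)\|_{H_a}\Bigr).
\]
The prefactor $1 + c_1 \rho_3(g_0, e)$ is a finite constant depending only on $g_0$, because the Sobolev embedding $H_a \hookrightarrow L^3$ (valid for $a \ge 1/2$ in three dimensions) gives $\rho_3(g_0, e) \le C \rho_a(g_0, e)$. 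The entire problem thus reduces to showing that $\|(Ad\,\beta^{-1} - 1)\,v\|_{H_a} \to 0$ as $\rho_a(\beta, e) \to 0$, for the fixed element $v = g_0^{-1}dg_0 \in H_a$. This is precisely the strong continuity of the map $\beta \mapsto (Ad\,\beta: H_a \to H_a)$ at the identity, which is Corollary \ref{corgg3a} at $b = a$.

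The main obstacle is therefore Corollary \ref{corgg3a}, which must itself be proved via the multiplier bounds of Proposition \ref{propgp1}. The plan is to exploit the Sobolev embedding $H_a \hookrightarrow L^3$ (for $a \ge 1/2$) to obtain a continuous inclusion $\G_{1+a} \hookrightarrow \G_{1,3}$, then establish strong continuity of $\beta \mapsto (Ad\,\beta: H_a \to H_a)$ as a representation of $\G_{1,3}$. For the latter, given $v \in H_a$ and $\epsilon > 0$, I would approximate $v$ in $H_a$-norm by $v_n \in H_{a + \delta_1}$ (dense, with $\delta_1 > 0$ to be chosen), split
\[
\|(Ad\,\beta^{-1} - 1)v\|_{H_a} \le \|(Ad\,\beta^{-1} - 1)(v - v_n)\|_{H_a} + \|(Ad\,\beta^{-1} - 1)v_n\|_{H_a},
\]
bound the first term uniformly in $\beta$ near $e$ using \eref{gp3}, and bound the second via \eref{gp8b}, together with the Sobolev estimates $\|Ad\,\beta - 1\|_{p_1} \le C\,\rho_a(\beta, e)$ (with $p_1 = 3/\delta_1$, invoking $W_1 \hookrightarrow L^{p_1}$ for $\beta - I_\V$, together with the $K$-unitarity identity $\|d\beta\|_2 = \|\beta^{-1}d\beta\|_2$) and $\|\beta^{-1}d\beta\|_3 \le C\,\rho_a(\beta, e)$. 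The subtle point is the critical case $a = 1/2$, where $\delta_1 = 1/2$ and $p_1 = 6$ is a borderline choice: the approximation step is genuinely unavoidable because the target regularity $H_{1/2}$ is strictly weaker than the regularity $H_1$ required to apply \eref{gp8b} directly to $v$ itself. This is the hardest step, and it is the source of the strong-versus-norm continuity distinction noted in Corollary \ref{corgg3a}.
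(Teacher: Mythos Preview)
Your proposal is correct and follows essentially the same route as the paper: the same factorization $gh=\gamma g_0h_0$ with $\gamma=\alpha(g_0\beta g_0^{-1})$, the same use of \eref{gp30a} and \eref{gp33aa}, and the same reduction to strong continuity of $Ad$ on $H_a$ at the identity (the paper invokes Lemma~\ref{strcontb} directly; you invoke its corollary, Corollary~\ref{corgg3a}). Your treatment of inversion via \eref{gp31a} and the argument that left translations become homeomorphisms once joint continuity is known also matches the paper.

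One small technical point in your sketch of the strong-continuity step needs repair. You propose controlling $\|Ad\,\beta-1\|_{p_1}$, with $p_1=3/\delta_1$, via the Sobolev embedding $W_1\hookrightarrow L^{p_1}$ applied to $\beta-I_\V$. That embedding holds only for $p_1\le 6$, i.e.\ $\delta_1\ge 1/2$; combined with the constraint $a+\delta_1\le 1$ needed to use \eref{gp8b} at $b=a$, this forces $a=1/2$, so your argument as written covers only the critical case. For $a>1/2$ you need $\delta_1<1/2$ and hence $p_1>6$. The paper's fix is to exploit the compactness of $K$: since $|\beta(x)-I_\V|_{op}\le 2$ pointwise, one has $\|\beta-I_\V\|_{p_1}\le 2^{1-2/p_1}\|\beta-I_\V\|_2^{2/p_1}$ for every $p_1\in[2,\infty)$, which controls $\|Ad\,\beta-1\|_{p_1}$ by a power of $\rho_a(\beta,e)$ regardless of how large $p_1$ is. With this replacement your argument goes through for the full range $1/2\le a\le 1$.
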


The critical case $a = 1/2$ will be the most delicate case in this theorem.
The proof depends on the following strong continuity lemma.

\begin{lemma}\label{strcontb}$($Strong continuity of  $\G_{1,3}$ on $H_b$.$)$          
         For $0 \le b \le 1$ the map 
         \begin{align}
    \G_{1,p} \ni g \mapsto (Ad\, g : H_b\rightarrow H_b)
    \end{align}
    is a strongly continuous representation of $\G_{1,p}$ into bounded operators on $H_b$ if
    $p = 3$. If $p >3$ and $M$ has finite volume then the representation is norm continuous.
\end{lemma}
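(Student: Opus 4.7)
The plan breaks into the easier norm continuity statement when $p > 3$ and the more delicate strong continuity statement when $p = 3$; the former rests on \eref{gp5}, while the latter requires a density argument based on \eref{gp8b}.

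For the norm continuity statement, suppose $p > 3$ and $|M| < \infty$. Then $\rho_p(g,e) \to 0$ forces $\|g^{-1}dg\|_3 \to 0$ by H\"older's inequality on a finite measure space. For the supremum term in \eref{gp5}, I would combine $\|dg\|_p = \|g^{-1}dg\|_p \to 0$ with $\|g - I_\V\|_2 \to 0$ and the uniform pointwise bound $|g - I_\V| \le 2$ that follows from the compactness of $K$ (via the interpolation $\|g - I_\V\|_p^p \le 2^{p-2}\|g - I_\V\|_2^2$) to conclude that $\|g - I_\V\|_{W^{1,p}(M)} \to 0$; since $p > 3 = \dim M$, the Morrey embedding $W^{1,p}(M) \hookrightarrow C^0(M)$ then yields $\|Ad\, g - 1\|_\infty \to 0$. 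Inserting both conclusions into \eref{gp5} gives norm continuity at $e$, which promotes to norm continuity at every point by the group property from Theorem \ref{thmG13} together with the uniform bound $\|Ad\, g\|_{H_b \to H_b} \le 1 + c_1\|g^{-1}dg\|_3$ on a neighborhood of $e$.

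For the strong continuity assertion at $p = 3$, I would first reduce to continuity at the identity via the factorization $(Ad\, g_n - Ad\, g)u = (Ad\, g)(Ad\, g^{-1}g_n - 1)u$ together with the uniform $H_b$-operator bound just mentioned. Given $u \in H_b$ and $\epsilon > 0$, I would approximate $u$ by some $u' \in H_{b + \delta_1}$ with $\delta_1 \in (0,1]$ small, say by spectral truncation using $D = (1-\Delta)^{1/2}$, and split
\begin{align*}
\|(Ad\, g - 1)u\|_{H_b} \le \bigl(\|Ad\, g\|_{H_b \to H_b} + 1\bigr)\|u - u'\|_{H_b} + \|(Ad\, g - 1)u'\|_{H_b}.
\end{align*}
On a fixed neighborhood of $e$ the first summand is at most $\epsilon/2$ by the choice of $u'$. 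For the second, \eref{gp8b} gives
\begin{align*}
\|(Ad\, g - 1) u'\|_{H_b} \le \bigl(\kappa_{\delta_1}\|Ad\, g - 1\|_{p_1} + c_1\|g^{-1}dg\|_3\bigr)\|u'\|_{H_{b+\delta_1}},
\end{align*}
with $p_1 = 3/\delta_1$, and the hypothesis $\rho_3(g,e) \to 0$ makes $\|g^{-1}dg\|_3 \to 0$.

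The main obstacle is controlling $\|Ad\, g - 1\|_{p_1}$, because the $W^{1,3}$ Sobolev embedding in three dimensions falls just short of $L^\infty$ and the Morrey trick that worked for $p > 3$ is therefore unavailable. My way around this is to again exploit the a priori uniform pointwise bound $|g(x) - I_\V| \le 2$ inherited from the compactness of $K$: combined with $\|g - I_\V\|_2 \to 0$, the interpolation $\|g - I_\V\|_{p_1}^{p_1} \le 2^{p_1 - 2}\|g - I_\V\|_2^2$ forces $\|g - I_\V\|_{p_1} \to 0$ for every finite $p_1$, and the pointwise estimate $|Ad\, g(x) - 1|_{End(\kf)} \le C|g(x) - I_\V|_{End\, \V}$ converts this into $\|Ad\, g - 1\|_{p_1} \to 0$, closing the argument.
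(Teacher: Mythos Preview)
Your proposal is correct and follows essentially the same approach as the paper's proof: for $p>3$ both invoke \eref{gp5} together with the Morrey/Sobolev control of $\|g-I_\V\|_\infty$ from $\|g^{-1}dg\|_p$ and $\|g-I_\V\|_2$ (using finite volume to pass to $\|g^{-1}dg\|_3$), while for $p=3$ both split $u$ by spectral truncation of $D$, apply \eref{gp8b} to the spectrally bounded piece, and then control $\|Ad\,g-1\|_{p_1}$ via the interpolation $\|g-I_\V\|_{p_1}^{p_1}\le 2^{p_1-2}\|g-I_\V\|_2^2$ coming from the pointwise bound $|g-I_\V|\le 2$. The only cosmetic difference is that the paper bounds the high-frequency piece $w$ using \eref{gp5} with $\|Ad\,g-1\|_\infty\le 2$, whereas you bound it via the uniform operator norm $\|Ad\,g\|_{H_b\to H_b}+1$; these are equivalent.
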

      \begin{proof}
We already know from \eref{gp3} that $Ad\, g$ is bounded on $H_b$. 
For the proof of strong continuity suppose first that $p = 3$. Let $u \in H_b$ and $\epsilon >0$ be given. We need to show that there exists
$\delta >0$, depending on $\epsilon$ and $u$ such that
\beq
\|(Ad\, g- 1)u\|_{H_b} < \epsilon\ \ \ 
                 \text{whenever} \ \ \ \|g^{-1} dg\|_3 + \|g - I_\V\|_2  < \delta.      \label{gp50}
\eeq
Choose $\delta_1 \in (0, 3/2)$ and let $p_1 = 3/\delta_1$. 
Pick $ \lambda < \infty$
   such that $\| \chi_{[\lambda, \infty)} (D) u\|_{H_b} < \epsilon/6$.   
   Let $v =\chi_{[0, \lambda)} (D) u$ and $w = \chi_{[\lambda, \infty)}(D) u$. Then 
   $u = v + w$ is an orthogonal decomposition of $u$ in $H_b$. 
   Moreover $\|w\|_{H_b} < \epsilon/6$ and 
   $\|v\|_{H_{b+\delta_1}} \le \lambda^{\delta_1} \| v\|_{H_{b}}
                           \le \lambda^{\delta_1} \|u\|_{H_b}$ 
       by    the spectral theorem.  
   In view of \eref{gp8b} and \eref{gp5}, we have
          \begin{align*}
&\|(Ad\, g - 1) u\|_{H_b} \le \|(Ad\, g - 1)v\|_{H_b} + \| (Ad\, g - 1) w\|_{H_b} \\
&\le \Big(\kappa_{\delta_1} \| Ad\, g - 1\|_{p_1}  + c_1 \|g^{-1} dg\|_3\Big) \|v\|_{H_{b+\delta_1}}  \\
 &\ \ \ \ \qquad \qquad \qquad    + \Big( \|Ad\, g - 1\|_\infty + c_1 \|g^{-1}dg\|_3\Big)
                                  \|w\|_{H_b} \\
 &\le \Big(\kappa_{\delta_1} \| Ad\, g - 1\|_{p_1}  + c_1 \|g^{-1} dg\|_3\Big)
                                   \lambda^{\delta_1}  \|v\|_{H_b}
 +\Big( 2 + c_1 \|g^{-1}dg\|_3\Big) \epsilon/6.   
\end{align*}    
 Since $|g(x) - I_\V |_{op} \le 2$ we have the pointwise operator bound \linebreak
$|g(x) - I_\V|_{op}^{p_1} \le 2^{p_1-2} | g(x) - I_\V |_{op}^2$ for $2\le p_1 < \infty$.
Therefore $\|g - I_\V\|_{p_1} \le 2^{1-(2/p_1)} \|g - I_\V\|_2^{2/p_1}$.
Further, since $g$ is unitary (or  orthogonal),  \linebreak
$\| Ad\, g - I_{End\, \V}\|_{p_1} \le 2 \|g - I_\V\|_{p_1}$.  
Hence
 \begin{align}
 \|(Ad\, g - 1) u\|_{H_b} 
 &\le \Big(2^{2- (2/p_1)} \kappa_{\delta_1} \| g - I_\V\|_2^{2/p_1}  
                      + c_1 \|g^{-1}dg\|_3 \Big) \lambda^{\delta_1} \|u\|_{H_b}            \notag\\ 
 &+ \Big( 2 + c_1 \|g^{-1}dg\|_3\Big) \epsilon/6.        \label{gp55}
 \end{align}
 Thus if $\delta$ is chosen small enough in \eref{gp50}, and in particular $c_1 \delta < 1$, then
 the last term in \eref{gp55} will be at most $\epsilon/2$ while the first term on the 
 right side can also be made less than $\epsilon/2$. This proves strong continuity
 at the identity element of $\G_{1,3}$.
 
 In case $p >3$  we can simply use the crude estimate \eref{gp5} in place
  of the deeper estimate\eref{gp8b}  because $\|g- I_\V\|_2$, together with $\| dg\|_p$, which equals
  $ \|g^{-1} dg\|_p$,  control   $\| g-I_\V\|_\infty$ and therefore also $\| Ad\, g - 1\|_\infty$. 
 Thus, given $\epsilon >0$, the entire coefficient of $\|u\|_{H_b}$ on the right 
 side of \eref{gp5} will be less than $\epsilon$ if $\rho_p(g, e) < \delta$ for small enough $\delta$. Here we are using the
 finite volume of $M$ only to dominate the $L^3$ norm of $g^{-1}dg$ by the $L^p$ norm.
This proves norm continuity of the representation  $g \mapsto Ad\, g $ on $H_b$.
\end{proof}

\bigskip
\noindent
\begin{proof}[Proof of Corollary \ref{corgg3a}]
If $1/p = 1/2 - a/3$ then $\G_{1+a}$ embeds continuously
 and homomorphically into $\G_{1,p}$ for $a \in [1/2, 1]$. By Lemma \ref{strcontb} 
  the representation  $\G_{1+a} \ni g \mapsto Ad\, g$ on $H_b$ is therefore strongly
   continuous for $a = 1/2$ and
 norm continuous for $ a \in (1/2, 1]$.
 It is not necessary to specify that $M$ have finite volume to prove  the norm continuity 
 because $H_{a} \subset H_{1/2}$ if $a > 1/2$ and therefore $\G_{1+a} \subset \G_{3/2}$,
 while $\G_{3/2}$ 
controls $\|g^{-1}dg\|_3$, whose control was the only reason for requiring $M$ to have finite
volume in the last two lines of the previous proof. 
\end{proof}

\bigskip
\noindent
               \begin{proof}[Proof of Theorem \ref{thmG3/2}] 
               We need to prove that multiplication on the left is continuous. Surprisingly,
this soft sounding assertion seems to require use of the complex interpolation methods that
underlie  the multiplier bounds of Section \ref{secmb},   at least in the critical case $a = 1/2$.
Otherwise the proof is similar to that for $\G_{1,p}$.

   Since right translation is a homeomorphism of $\G_{1+a}$, a neighborhood of a point
   $g_0$ can represented in the form $U(g_0) = \{ \alpha g_0: \rho_a(\alpha, e) < \delta\}$. 
   If also $h_0 \in \G_{1+a}$ and $V = \{\beta h_0: \rho_a(\beta, e) < \delta\}$ is a 
   neighborhood of $h_0$ then a product
 of points in these neighborhoods may be written 
 $gh = (\alpha g_0) (\beta  h_0) = \gamma g_0h_0$ where
 $\gamma  = \alpha (g_0\beta g_0^{-1})$.  We need to show that $\gamma$ is close to the 
 identity if $\alpha$ and $\beta$ are.  Recall that $g_0$ and $h_0$ are fixed. 
 By \eref{gp33aa} we have
\begin{align}
 \ra(g_0 \beta g_0^{-1}, e) &\le( 1 + c_1 \|g_0^{-1} dg_0\|_3)   
    \Big(\rho_a(\beta, e) +\| (Ad\, \beta^{-1} -1) g_0^{-1}dg_0\|_{H_a} \Big).  \label{gp216}
 \end{align}
  When $\rho_a(\beta,e)$ is small,
 so is $\rho_a(\beta^{-1}, e)$, by \eref{gp31a} and therefore, by Lemma \ref{strcontb}, the entire
 right hand side of \eref{gp216} can be made small by choosing $\delta$ small.  
 Thus, in view of \eref{gp30a}, given $\epsilon >0$  there exists 
 $\delta >0$ such that $\ra(\gamma, e) < \epsilon$. Multiplication is therefore jointly continuous.
 In particular left translations are homeomorphisms of $\G_{1+a}$. Since a right translate
 of a neighborhood $N$ of the identity  by an element $g \in \G_{1+a}$ is carried by inversion
 into a left translate by $g^{-1}$ of the inverse $N^{-1}$, which is itself open by \eref{gp31a},
 it follows that inversion is continuous.
 \end{proof}

\subsection{Completeness}   \label{seccomp}

\begin{lemma}\label{lemcomp}$($Completeness$)$ $\G_{1+a}$ is complete for $ 1/2 \le a \le 1$.
\end{lemma}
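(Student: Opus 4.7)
Let $\{g_n\}$ be a Cauchy sequence in $\G_{1+a}$ with respect to $\rho_a$. By the definition \eref{gp2am}, this means that $\{g_n\}$ is Cauchy in $L^2(M;End\,\V)$ and $\{g_n^{-1}dg_n\}$ is Cauchy in $H_a(M;\L^1\otimes\kf)$. Extract the $L^2$ limit $g$ and the $H_a$ limit $h$. The plan is to verify that $g$ takes values in $K$ almost everywhere, that $g \in W_1(M;K)$ with $g^{-1}dg = h$, and in the Dirichlet case that $g = I_\V$ on $\p M$. Convergence $g_n \to g$ in $\rho_a$ is then automatic.

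The pointwise conclusion is standard: by passing to a subsequence, $g_n(x) \to g(x)$ for a.e. $x \in M$, and since each $g_n(x) \in K$ and $K$ is a closed subset of $End\,\V$, we have $g(x) \in K$ a.e. In particular $g$ is bounded a.e. with $|g(x)|_{op} \le |I_\V|_{op}$, hence $g_n \to g$ in every $L^p$, $2 \le p < \infty$, by dominated convergence.

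The crux of the argument is to identify $g^{-1}dg$ with $h$. I would write $dg_n = g_n(g_n^{-1}dg_n)$ and pass to the limit in $L^2$. The second factor converges to $h$ in $H_a \hookrightarrow L^2$. For the product, use the estimate
\begin{align*}
\|g_n(g_n^{-1}dg_n) - gh\|_2
 &\le \|g_n - g\|_6\,\|g_n^{-1}dg_n\|_3 + \|g\|_\infty\,\|g_n^{-1}dg_n - h\|_2.
\end{align*}
The first term vanishes because $g_n \to g$ in $L^6$ (by interpolation between $L^2$ and $L^\infty$) while $\{g_n^{-1}dg_n\}$ is bounded in $L^3$ via the embedding $H_a \subset H_{1/2} \hookrightarrow L^3$; the second term vanishes by choice of $h$. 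Hence $dg_n \to gh$ in $L^2(M;End\,\V)$. Since $g_n \to g$ in $L^2$ already implies $dg_n \to dg$ in the sense of distributions, we conclude $dg = gh$, and since $g(x) \in K$ is invertible, $g^{-1}dg = h \in H_a$. Thus $g \in W_1$ and $g \in \G_{1+a}$ in the Neumann/$\R^3$ cases. For the Dirichlet version $\G_{1+a}^D$, the $L^2$ convergence $dg_n \to dg$ upgrades the convergence $g_n \to g$ to $W_1(M;End\,\V)$, so by continuity of the trace operator $W_1 \to L^2(\p M)$ we obtain $g|_{\p M} = \lim g_n|_{\p M} = I_\V$.

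The main obstacle is the identification $g^{-1}dg = h$, which requires just enough integrability on $g_n^{-1}dg_n$ to pass to the limit in the nonlinear product $g_n\cdot(g_n^{-1}dg_n)$; this is where the hypothesis $a \ge 1/2$ enters decisively, through the critical Sobolev embedding $H_{1/2} \hookrightarrow L^3$. The final step is trivial: $\|g_n - g\|_2 \to 0$ by construction and $\|g_n^{-1}dg_n - g^{-1}dg\|_{H_a} = \|g_n^{-1}dg_n - h\|_{H_a} \to 0$, so $\rho_a(g_n, g) \to 0$.
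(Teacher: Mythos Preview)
Your proof is correct and follows essentially the same strategy as the paper's: extract the $L^2$ limit $g$ of $g_n$ and the $H_a$ limit $h$ of $g_n^{-1}dg_n$, show that $dg_n$ converges in $L^2$, and then identify $g^{-1}dg$ with $h$. The paper carries out the product-limit step slightly differently: instead of your H\"older estimate $\|g_n - g\|_6\|g_n^{-1}dg_n\|_3$, it writes $\|dg_n - dg_k\|_2 \le \|g_n^{-1}dg_n - g_k^{-1}dg_k\|_2 + \|(I_\V - g_n^{-1}g_k)(g_k^{-1}dg_k)\|_2$ and handles the second term by convergence in measure plus dominated convergence. That argument does not actually use the embedding $H_{1/2}\hookrightarrow L^3$, so your comment that ``this is where the hypothesis $a\ge 1/2$ enters decisively'' is a mild overstatement --- the completeness itself holds for any $a\in[0,1]$ by the paper's route, and the restriction $a\ge 1/2$ in the lemma is inherited from the surrounding group-structure results rather than from completeness. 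On the other hand, you handle two points the paper glosses over: the verification that $g(x)\in K$ a.e.\ via an a.e.-convergent subsequence, and the Dirichlet boundary condition $g|_{\p M}=I_\V$ via the trace theorem after the $W_1$ convergence. Both are welcome additions.
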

\begin{proof} Suppose that $\{g_n\}_{n=0}^\infty $ is a Cauchy sequence in $\G_{1+a}$ with 
$ 1/2 \le a \le 1$.
Choose  a subsequence, denoted again the same way, such that
\beq
\rho_a(g_n , g_{n-1}) \le  1/2^n,\ \ \ n = 1,2,\dots             \label{gp70}
\eeq
Let
\beq 
u_n = g_n^{-1} dg_n - g_{n-1}^{-1} dg_{n-1}, \ \ \ n = 1,2, \dots    \label{gp71}
\eeq
Then 
\begin{align}
\|u_n\|_{H_a} \le 1/2^n     .                \label{gp77}
\end{align}
From \eref{gp71} we see that
\beq
g_{n}^{-1} dg_n = \sum_{k=1}^n u_k + g_0^{-1} dg_0.               \label{gp78}
\eeq
So  the sequence in \eref{gp78} is  convergent in $H_a$. There exists a unique
element $v \in H_a$ such that
\beq
v = \lim_{n\to \infty} \sum_{k=1}^n u_k + g_0^{-1} dg_0\ \ \ \ \ 
            \text{(convergence in $H_a$ norm)}.                                    \label{gp82}
\eeq
To construct the desired limit $g \in \G_{1+a}$ observe first that
\begin{align}
 \|dg_n - dg_k\|_2 &= \|g_n^{-1}(dg_n -dg_k)\|_2 \notag\\
 &=\|g_n^{-1} dg_n - g_k^{-1} dg_k +(g_k^{-1} - g_n^{-1}) dg_k\|_2\notag\\
 &\le \|g_n^{-1} dg_n - g_k^{-1} dg_k \|_2 
                                         + \|(I_\V - g_n^{-1}g_k ) (g_k^{-1} dg_k)\|_2 . \label{gp83}
 \end{align} 
 The first term on the right side of \eref{gp83}  goes to zero as $n, k \to \infty$
 because $\sum_1^\infty \|u_k\|_{H_a} < \infty$.  
 Since $g_k^{-1} dg _k$ converges in $H_a$ it also converges in $L^2(M)$. And, since $\| I_\V - g_n^{-1} g_k\|_2 = \| g_n - g_k \|_2 \rightarrow 0$, the factor $I_\V - g_n^{-1}g_k$
 converges to zero  in measure and boundedly. 
 Therefore the second term in line \eref{gp83} also goes to zero.
             Thus $\| dg_n - dg_k\|_2 \rightarrow 0$ and
$\|g_n - g_k\|_2 \rightarrow 0$. Hence there exists a function $g \in H_1(M; End\, \V)$ 
such that $\| g_n - g\|_{H_1} \rightarrow 0$.  Partly reversing the argument in \eref{gp83} we find
\begin{align*}
\| g_n^{-1} dg_n - g^{-1}dg\|_2  &= \| dg_n - g_n g^{-1} dg\|_2 \\
&\le \| dg_n - dg\|_2 + \|(I_\V -  g_n g^{-1} ) dg\|_2 .
\end{align*}
The first term on the right goes to zero, as we have just seen. Now $\|dg\|_2 < \infty$
while  $(I_\V -  g_n g^{-1} )$ converges to zero in $L^2$ and therefore in measure.
Since this factor is bounded by 2 we can now apply the dominated convergence
theorem to conclude that the second term goes to zero also.

    We already know that $g_n^{-1} dg_n$ converges to $v$ in the $H_a$ sense. Since
  it also converges to $g^{-1}dg$ in $L^2$ it follows that $v = g^{-1}dg$. In particular 
  $g \in \G_{1+a}$ and $\rho_a(g_n, g) \rightarrow 0$.
This completes the proof of Lemma \ref{lemcomp} and Theorem \ref{thmgg3a}.
\end{proof}

\begin{lemma}\label{lempcomp}  For $2\le p <\infty$     
 the groups $\G_{1,p}$ are complete.   
\end{lemma}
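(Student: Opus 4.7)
The plan is to imitate the proof of Lemma \ref{lemcomp}, substituting the $L^p$ norm for the $H_a$ norm throughout. The argument is actually simpler than in the $\G_{1+a}$ case because pointwise left multiplication by $g(x)$ preserves $L^p$ norms of $\kf$-valued forms (since $g(x)$ is unitary), so no complex-interpolation multiplier bounds are needed. Starting from a Cauchy sequence $\{g_n\}$ in $(\G_{1,p}, \rho_p)$, I would pass to a subsequence with $\rho_p(g_n, g_{n-1}) \le 2^{-n}$, set $u_n = g_n^{-1}dg_n - g_{n-1}^{-1}dg_{n-1}$ so that $\|u_n\|_p \le 2^{-n}$, and use the telescoping identity $g_n^{-1}dg_n = g_0^{-1}dg_0 + \sum_{k=1}^n u_k$ to conclude that $g_n^{-1}dg_n$ converges in $L^p$ to some limit $v$. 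Simultaneously, $\|g_n - g_{n-1}\|_2 \le 2^{-n}$ gives a Cauchy sequence in $I_\V + L^2$ converging to some $g$ with $g - I_\V \in L^2$; a further subsequence extraction produces a.e.\ convergence $g_n \to g$, so $g(x) \in K$ a.e.\ because $K$ is closed in $\text{End}\,\V$.

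Next I would establish convergence of $dg_n$ in $L^p$, following the key estimate of Lemma \ref{lemcomp}:
\begin{align*}
\|dg_n - dg_k\|_p &= \|g_n^{-1}(dg_n - dg_k)\|_p \\
&\le \|g_n^{-1}dg_n - g_k^{-1}dg_k\|_p + \|(I_\V - g_n^{-1}g_k)(g_k^{-1}dg_k)\|_p.
\end{align*}
The first term is small by the summability of $\{\|u_j\|_p\}$. For the second term, insert the limit $v$ via the triangle inequality: the piece involving $g_k^{-1}dg_k - v$ is dominated by $2\|g_k^{-1}dg_k - v\|_p$, and the piece $\|(I_\V - g_n^{-1}g_k)v\|_p$ vanishes by dominated convergence, since the prefactor is bounded by $2$ and converges to $0$ a.e., while $2|v| \in L^p$ furnishes an integrable majorant. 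Hence $\{dg_n\}$ is Cauchy in $L^p$. Combined with the distributional convergence $dg_n \to dg$ (a consequence of $g_n \to g$ in $L^2$), the $L^p$-limit of $dg_n$ coincides with $dg$, so $dg \in L^p$, and a direct identification yields $g^{-1}dg = v$ almost everywhere, whence $\rho_p(g_n, g) \to 0$.

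The main obstacle I anticipate is verifying the membership condition $g \in W_1$, i.e.\ that $dg$ lies in $L^2(M;\text{End}\,\V)$. On bounded $M$ this is immediate from the embedding $L^p \hookrightarrow L^2$ when $p \ge 2$, and on $\R^3$ with $p = 2$ it is automatic from the arguments above. For $M = \R^3$ with $p > 2$ some extra care is required: one uses the pointwise isometry $|dg(x)| = |(g^{-1}dg)(x)| = |v(x)|$ (valid because $g(x)$ is unitary) together with an auxiliary argument exploiting that each $g_n$ individually belongs to $W_1$ and that the $L^2$-component $\|g_n - g_k\|_2$ of $\rho_p$, combined with the isometric action of $Ad\, g$ on $L^2$, lets one transfer the $L^2$ regularity of $g_n^{-1}dg_n$ to the limit $v$. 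This $\R^3$ verification in the supercritical range $p > 2$ is the technical heart of the proof; everything else is a faithful mirror of Lemma \ref{lemcomp}.
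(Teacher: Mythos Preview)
Your first two paragraphs reproduce the paper's argument essentially verbatim: subsequence extraction with $\rho_p(g_n,g_{n-1})\le 2^{-n}$, the telescoping identity for $g_n^{-1}dg_n$, the key estimate
\[
\|dg_n - dg_k\|_p \le \|g_n^{-1}dg_n - g_k^{-1}dg_k\|_p + \|(I_\V - g_n^{-1}g_k)(g_k^{-1}dg_k)\|_p,
\]
and the dominated-convergence treatment of the second term. The paper packages the conclusion slightly differently --- it asserts that $\{g_n\}$ is Cauchy in the mixed Sobolev space $\{f:\int\sum_j|\partial_j f|^p dx + \int|f|^2 dx<\infty\}$, extracts a limit $g$ there with $dg\in L^p$, and then reverses the inequality to show $g_n^{-1}dg_n\to g^{-1}dg$ in $L^p$ directly --- but the content is identical to what you wrote.

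Your third paragraph flags a point the paper's proof does not address at all: membership $g\in W_1$ requires $dg\in L^2$, and on $M=\R^3$ with $p>2$ this is not a consequence of $dg\in L^p$. The paper's proof simply stops at $dg\in L^p$ and $g^{-1}dg\in L^p$ and declares $g\in\G_{1,p}$, so you have in fact been more scrupulous than the published argument here. That said, your sketched resolution is not yet a proof: the metric $\rho_p$ carries no $L^2$ information about $g_n^{-1}dg_n$ when $p>2$, and neither the pointwise isometry $|dg|=|g^{-1}dg|$ nor the $L^2$ control on $g_n-g_k$ furnishes a Cauchy bound for $dg_n$ in $L^2(\R^3)$ from the ingredients you list. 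On bounded $M$ (and for $p=2$) your observation that $L^p\hookrightarrow L^2$ closes the argument, matching the level of rigor in the paper.
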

\begin{proof}   The proof is similar to the proof  for the groups $\G_{1+a}$:
Dropping to a subsequence such that $\rho_p(g_n, g_{n-1})\le 2^{-n}$  we see that
$\{g_n^{-1}dg_n\}$ is a Cauchy sequence  in $L^p$. The inequality
$\|dg_n - dg_k\|_p \le \|g_n^{-1} dg_n - g_k^{-1} dg_k \|_p 
                                         + \|(I_\V - g_n^{-1}g_k ) (g_k^{-1} dg_k)\|_p$
now shows that $\|dg_n - dg _k\|_p \rightarrow 0$ by the same argument
 used after \eref{gp83}, which uses our hypothesis in this lemma 
 that  $\|g_n - g_k\|_2 \rightarrow 0$.
 Therefore the sequence $\{g_n\}$ is itself a Cauchy sequence in the Sobolev space
\beq
\Big\{f:M\rightarrow End\ \V   \Big |\ \  
          \int_M \sum_{j=1}^3|\p_j f(x)|^p dx +\int_M |f(x)|^2dx < \infty\Big\}
\eeq
and so converges to  some function $g$ which takes its values in $K$ (because there is a subsequence which converges a.e.)  and such that $dg \in L^p(M; \L^1\otimes End\ \V)$
Now the inequality 
$\| g_n^{-1} dg_n - g^{-1}dg\|_p  
\le \| dg_n - dg\|_p + \|(I_\V -  g_n g^{-1} ) dg\|_p$ 
shows that $g_n^{-1} dg_n$ converges in $L^p$ to $g^{-1}dg$. 
Thus $g \in \G_{1,p}$ and $\rho_p(g_n,  g) \rightarrow 0$.
This completes the proof of Lemma \ref{lempcomp} and of Theorem \ref{thmgg3p}.
 \end{proof}

\begin{remark}{\rm We are only interested in the groups $\G_{1,p}$ for $p\ge 2$.
But the for $1 < p < 2$ Theorem \ref{thmgg3p} also holds if one uses the right invariant metric 
defined by  $\rho_p(g, e) = \| g^{-1} dg\|_p + \| g - I_\V\|_p$. 
 Proofs are similar. 
 }
\end{remark}

 \begin{remark}\label{remdiffstr} {\rm  (Differentiable structures,  
  Hilbert and Banach Lie groups) $\G_{1+a}$ is a
  Hilbert Lie group if $ a > 1/2$ and $\G_{1,p}$
   is a Banach Lie group if $ p > 3$. Both assertions follow from the fact that the metric
   on the gauge group controls $\|g - I_V\|_\infty$:   
    Let 
    \begin{align*}
    \mathcal{L}_a&\equiv \{ \alpha:M\rightarrow \kf |\  \ \|\alpha\|_{H_{1+a}} < \infty\}\ \  \text{and} \\ 
   \mathcal{L}_p&\equiv \{ \alpha:M\rightarrow \kf|\  \  \|\alpha\|_{H_{1,p}} < \infty\},
   \end{align*}
    where       we have written $\|\alpha\|_{H_{1,p}} = \|D \alpha\|_p$.
If $a > 1/2$ then  $\|\alpha\|_\infty \le const. \|\alpha\|_{H_{1+a}}$. Consequently $\mathcal{L}_a$
is closed under the pointwise Lie bracket operation and is a Hilbert Lie algebra.
    Similarly if $p > 3$ then $ \|\alpha \|_\infty \le const. \|\alpha\|_{H_{1,p}} $ 
     and so $\mathcal{L}_p$ is a Banach Lie algebra.
  The exponential map
    $\alpha \mapsto (g:x\mapsto \exp{\alpha(x)})$ maps a neighborhood of zero in the
     Lie algebra     $\mathcal{L}_a$, resp. $\mathcal{L}_p$, onto a neighborhood of 
    $I_\V$ in $\G_{1+a}$, respectively $\G_{1,p}$, which follows 
    from the fact that in the metric on the gauge group
    there is a neighborhood of $I_\V$ contained in $\{g: \|g - I_\V\|_{\infty} < \epsilon\}$, 
    as may be seen from the proof of Part b) of Lemma \ref{strcontp} and the continuity
     of the injection $\G_{1+a} \rightarrow \G_{1, p}$ for $p^{-1} = 2^{-1} -(a/3)$. 
     Thus 
    for small $\epsilon$ one can use the known surjectivity of the exponential map in $K$ to
    prove the existence of a function $\alpha$ such that $g(x) = \exp\alpha(x)$ for all $x \in M$.
    If $\phi$ is the inverse of the exponential map on a small neighborhood of the identity in $K$
    then the formula $\alpha(x) = \phi(g(x))$ transfers regularity of $g$ to the same Sobolev
    regularity of $\alpha$. Thus the tangent space at the identity of 
    $\G_{1+a}$, resp. $\G_{1,p}$ can be identified with  $\mathcal{L}_a$, resp. $\mathcal{L}_{p}$.
    We leave to the reader to verify that the  
    topologies on these two classes of groups, given respectively by the
     metrics \eref{gp2am} and \eref{gp2pm},
    agree with those induced by  the norms on the Lie algebras.
      
      This construction of a differentiable structure            
  breaks down in case $a = 1/2$ or $p=3$. It seems
                highly unlikely that in these critical cases there is a useful differentiable structure
                on $\G_{3/2}$ or on $\G_{1, 3}$. 
    The fact  that $\G_{3/2}$ and $\G_{1,3}$ are actually  topological groups
                  (i.e. products and inversion
                  are continuous) is thanks to our avoidance
   of the exponential map in Definitions \eref{gp3a} - \eref{gp2pm}.
          The Hilbert space $\mathcal{L}_{3/2}$ and Banach space $\mathcal{L}_{1,3}$
           are not closed under Lie bracket. 
           Nevertheless $\exp \mathcal{L}_{3/2} \subset \G_{3/2}$ and 
          $\exp \mathcal{L}_{1,3} \subset \G_{1,3}$. It will be shown elsewhere 
          that $\exp \mathcal{L}_{1,3}$
          does not cover any neighborhood of the identity in $\G_{1,3}$ if $K = SU(2)$.
          This strongly suggests that $\exp\mathcal {L}_{3/2}$ also does not cover any
           neighborhood           of the identity in $\G_{3/2}$.
      }
           \end{remark}
          
\begin{remark}\label{remgghistory} {\rm  (History)  Some of these gauge groups have been used in various contexts before. 
     In \cite{Seg126} I. E. Segal discussed possible choices for the phase space of a classical
     Yang-Mills field and chose the group that we have denoted by $\G_2$ in his definition of
      configuration space: Configuration space $=\{\text{some connection forms over $\R^3$}\}$ modulo $\G_2$.
     This use is similar  to our intended use \cite{G72}. Segal already pointed out in that paper  
     that  a Sobolev gauge group  
     is a  Hilbert or Banach manifold when the Sobolev norm controls the sup norm.

            As noted in Remark \ref{remgg},  K. Uhlenbeck pointed out in \cite{Uh2} that  
   in four dimensions the critical gauge group is $\G_2$ and multiplication fails to be continuous
   if one defines the topology by the exponential map. But in \cite{Uh3} she
   introduced  a different topology that made it into  a useful topological group. It's not clear how that
    topology is related to the topology given by $g^{-1}dg \in H_1$, which would be the four dimensional
    analog of the topology used in our groups $\G_{1+a}$.

     In \cite{Fre} Daniel Freed made use of some  one dimensional analogs of our groups $\G_{1+a}$
     for non-critical $a$. His interest 
     was the loop group $Map(S^1;K)$  with
     some Sobolev regularity imposed. The critical Sobolev index is  $1/2$ in one dimension instead of $3/2$.
     He was able to avoid attaching a  meaning to $H_{1/2}(S^1; K)$ as a group even though the
     linear space  $H_{1/2}(S^1;Lie\ K)$ was a central object of study in his work.

        The group that we have denoted by $\G_{1,2}$ has been used by 
        G. Dell'Antonio and D. Zwanziger, \cite{DZ2}, to give a very pretty proof 
        that every gauge orbit intersects $\{A: \int_{M} |A(x)|^2 dx < \infty\}$ at a point which
        minimizes this $L^2$ norm. $M$ can be a $d$ dimensional manifold. Their result
        illuminates the Gribov ambiguity.
}
\end{remark}

\section{The conversion group} \label{secconvgp} 

In this section we will take $M$ to be either all of $\R^3$ or the closure of a bounded,
 convex, open subset of $\R^3$ with smooth boundary.

\subsection{The ZDS procedure}       

\begin{definition}{\rm (Definition of $g_\epsilon$.) Suppose that $C(\cdot)$ is a smooth
 solution to the augmented Yang-Mills heat equation \eref{aymh} over $(0,T)$. 
 Let $\epsilon \in (0,T)$ and define
$g_\epsilon(t)$ to be the solution to the ODE, for each (suppressed) $x \in M$, 
\beq
\frac{dg_\epsilon(t)}{dt} g_\epsilon(t)^{-1} = d^*C(t), \ t \in (0, T),\ 
                                                   \ g_\epsilon(\epsilon) = I_\V.       \label{rec100}
\eeq
Then  $g_\epsilon \in C^\infty((0, T] \times M; K)$ because $d^*C(t,x)$ is smooth
 on $(0,T]\times M$.
}
\end{definition}

The ZDS procedure for recovering a solution to the Yang-Mills heat equation \eref{str5} from a solution to the augmented equation \eref{aymh} is outlined in the Introduction. Informally,
the function $g(t)$ defined in \eref{I8} is the function $g_\epsilon(t)$ for $\epsilon =0$.
As $\epsilon \downarrow 0$, however, the functions $g_\epsilon(\cdot)$ lose smoothness
in both space and time. This results from the strong singularity  of $d^*C(t)$ at $t =0$.
In case $a=1/2$ one has, typically, $d^*C(0) \in H_{-1/2}(M)$. In the next theorem we will
show that as $\epsilon \downarrow 0$ the functions $g_\epsilon(\cdot)$ converge uniformly
over $(0, T]$  as functions into the gauge group $\G_{1+a}$. Typically, a gauge 
function in  $ \G_{1+a}$
is  continuous on $M$ if $ 1/2 < a <1$ but not smooth. If $a = 1/2$ it need not even be 
continuous.

\begin{theorem} \label{thmgconv}$($The conversion group$)$
  Let $ 1/2 \le a < 1$ and $ 0 < T < \infty$. 
  Assume that either $M= \R^3$ or is the closure of  a bounded, convex,  open set in $\R^3$
  with smooth boundary.
     Let  $A_0 \in H_a$. 
Suppose that $C(\cdot)$ is the  strong solution
  of the augmented equation \eref{aymh}  over $[0,T]$ constructed in Theorem \ref{thmaug} 
  with initial value   $ C_0 = A_0$.  Assume that $C(\cdot)$ has finite strong a-action. $($This is automatic for $a > 1/2$.$)$
Define $g_\epsilon$ as in \eref{rec100}. 
Then $g_\epsilon(t) \in \G_{1+a}$ for each $t \in (0, T]$. Further, 

a$)$  $g_\epsilon: (0, T]\rightarrow \G_{1+a}$ is continuous.

 b$)$ There is a unique continuous function 
\beq
g:[0, T] \rightarrow \G_{1+a}                                                    \label{rec221a}
\eeq
such that 
\begin{align}
\lim_{\epsilon \downarrow 0} \sup_{0< t \le T} \ra(g_\epsilon(t), g(t)) = 0.   \label{rec221b}
\end{align}

c$)$ 
\beq
g(0,x) = I_\V\ \ \ \ \forall x \in M.                                                    \label{g20}
\eeq

d$)$ The function $h(t) \equiv g(t)^{-1} dg(t)$ is continuous on $[0, T]$ into $H_a$ and 
\beq
h(0) =0.                                           \label{h21}
\eeq

e$)$  For any time $\tau \in (0, T)$,  the function
\beq
k(t) \equiv      g(t) g(\tau)^{-1}                                                     \label{rec221c}
\eeq
is in $C^\infty( (0, T] \times M; K)$. Moreover $\lim_{t\downarrow 0} k(t) = g(\tau)^{-1}$,
with convergence in the sense of the metric group $\G_{1+a}$.
\end{theorem}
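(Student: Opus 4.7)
For each fixed $\epsilon \in (0,T)$ the function $\phi = d^*C$ lies in $C^\infty([\epsilon,T]\times M;\kf)$, so the ODE \eref{rec100} has a unique smooth solution $g_\epsilon(t,x)$ by standard Picard theory applied pointwise in $x$ with parameter dependence. Differentiating the defining ODE spatially, one checks that $h_\epsilon(t) := g_\epsilon(t)^{-1}dg_\epsilon(t)$ satisfies $\dot h_\epsilon = \mathrm{Ad}(g_\epsilon^{-1})\,d\phi$ with $h_\epsilon(\epsilon)=0$, whence
\beq
h_\epsilon(t) = \int_\epsilon^t \mathrm{Ad}(g_\epsilon(\sigma)^{-1})\,d\phi(\sigma)\,d\sigma. \label{pp1}
\eeq
Together with the unitary bound $\|g_\epsilon(t)-I_\V\|_2\le\int_\epsilon^t\|\phi(\sigma)\|_2\,d\sigma$, formula \eref{pp1} together with the multiplier bounds of Proposition \ref{propgp1} will give $h_\epsilon(t)\in H_a$ and continuity of $g_\epsilon:(0,T]\to\G_{1+a}$, proving item a).

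The key algebraic identity is a cocycle property: for $\epsilon,\epsilon'\in(0,T)$ the product $\eta(t)=g_\epsilon(t)^{-1}g_{\epsilon'}(t)$ satisfies $\dot\eta\equiv 0$, so
\beq
g_{\epsilon'}(t) = g_\epsilon(t)\,g_\epsilon(\epsilon')^{-1}\qquad \text{for all }t\in(0,T]. \label{pp2}
\eeq
Using the gauge identities collected in Lemma \ref{lemrpa} together with \eref{pp2}, the distance $\ra(g_\epsilon(t),g_{\epsilon'}(t))$ can be expressed entirely in terms of $\ra(g_\epsilon(\epsilon'),e)$ and $\|(\mathrm{Ad}\,g_\epsilon(\epsilon')-1)h_\epsilon(t)\|_{H_a}$. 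The plan is to show that both quantities tend to $0$ uniformly in $t\in(0,T]$ as $\epsilon,\epsilon'\downarrow 0$. For the $L^2$-piece of $\ra(g_\epsilon(\epsilon'),e)$ one uses that $C(\cdot)$ has finite $a$-action to conclude $\int_0^T\|\phi(s)\|_2\,ds<\infty$, giving uniform smallness of $\|g_\epsilon(\epsilon')-I_\V\|_2$. For the $H_a$-piece of $\ra(g_\epsilon(\epsilon'),e)$ one applies \eref{pp1}, pulls out the bounded $\mathrm{Ad}$-operator via Corollary \ref{corgg3a}, and estimates the remaining time integral of $\|d\phi(s)\|$ in an appropriate mix of $L^2$ and $L^6$ using the Order~1 energy bound \eref{vs38b} and the $L^6$ bound \eref{vs641a}, interpolated against the weights $s^{1-a}$ and $s^{2-a}$ built into these inequalities. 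This yields a Cauchy sequence in $\G_{1+a}$ uniformly in $t$, producing the limit function $g$ of item b).

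The boundary value $g(0)=I_\V$ of item c) will follow from passing to the limit in the relation $g_\epsilon(t)=I_\V+\int_\epsilon^t\phi(\sigma)g_\epsilon(\sigma)\,d\sigma$: for each $t>0$ one gets $g(t)=I_\V+\int_0^t\phi(\sigma)g(\sigma)\,d\sigma$ by dominated convergence with majorant $\|\phi\|_2\in L^1(0,T)$, and then $t\downarrow 0$ gives $g(0)=I_\V$. Item d), namely $h(t)=g(t)^{-1}dg(t)$ continuous on $[0,T]$ into $H_a$ with $h(0)=0$, follows from the same Cauchy-in-$\rho_a$ estimate restricted to $t\downarrow 0$: the same bound on $\|h_\epsilon(t)\|_{H_a}$ that gave smallness in the Cauchy step also shows $\|h_\epsilon(t)\|_{H_a}\to 0$ uniformly in $\epsilon$ as $t\downarrow 0$. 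Item e) is immediate from \eref{pp2}: fixing $\tau\in(0,T)$ and letting $\epsilon\downarrow 0$ in the identity $g_\epsilon(t)g_\epsilon(\tau)^{-1}=g_\tau(t)$ gives $g(t)g(\tau)^{-1}=g_\tau(t)$, and $g_\tau\in C^\infty((0,T]\times M;K)$ because $\phi$ is smooth there; the limit as $t\downarrow 0$ is read off from $g(0)=I_\V$.

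The main obstacle is the $H_a$-estimate in the Cauchy step, particularly in the critical case $a=1/2$, where one cannot simply bound $\int_0^T\|d\phi(s)\|_{H_{1/2}}\,ds$ directly. The plan is to avoid such a direct estimate by distributing the time-weights $s^{(1-a)/2}$ and $s^{(2-a)/2}$ from Theorems \ref{thmord1a}--\ref{thmord2a} between the $L^2$-piece and the $L^6$-piece of $d\phi$, interpolating via $\|d\phi\|_{H_{1/2}}^2\lesssim\|d\phi\|_2\|d\phi\|_{H_1}$ combined with the Gaffney--Friedrichs--Sobolev bound \eref{gaf50} applied to $d\phi$, so that the weights $s^{a-1/2}$ absorbed in the process leave a genuinely integrable remainder on $(0,T)$. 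All needed ingredients are present in Section~\ref{secib}; what requires care is bookkeeping the exponents so that the resulting integral bound on $\|h_\epsilon(t)-h_{\epsilon'}(t)\|_{H_a}$ is uniform in $t$ and tends to $0$ with $\max(\epsilon,\epsilon')$.
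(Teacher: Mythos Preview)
Your outline contains the right cocycle relation \eref{pp2} and correctly identifies the integral formula \eref{pp1}, but the heart of the argument---the $H_a$ Cauchy estimate---has two genuine gaps that the paper's proof is specifically designed to avoid.

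\textbf{First gap (circularity).} When you write ``pulls out the bounded $\mathrm{Ad}$-operator via Corollary \ref{corgg3a}'', the operator bound you need is \eref{gp3}, which says $\|a_\epsilon(\sigma)u\|_{H_a}\le(1+c_1\|h_\epsilon(\sigma)\|_3)\|u\|_{H_a}$. But $\|h_\epsilon(\sigma)\|_3$ is precisely part of what you are trying to bound. The paper breaks this circularity by first establishing a uniform bound on $\|h_\epsilon(t)\|_{q}$ for $3\le q\le q_a$ (Lemma \ref{lemhe6}), exploiting the fact that $a_\epsilon(\sigma)$ is an \emph{isometry} on $L^q$ so no multiplier bound is needed there. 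Only afterwards is the multiplier bound \eref{gp3} invoked to upgrade to $H_a$ (Lemma \ref{lemhe7}). Your proposal skips this bootstrapping step entirely.

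\textbf{Second gap (non-integrability of $\|d\phi\|_{H_a}$).} Your plan to control $\int_0^T\|d\phi(s)\|_{H_a}\,ds$ by interpolation between the weighted $L^2$ bound \eref{vs38b} and the weighted $L^6$ bound \eref{vs641b} cannot succeed: interpolating $L^2$ against $L^6$ yields $L^r$ for $2\le r\le 6$, not $H_a$, which requires derivative information on $d\phi$. If one tries to get $\|d\phi\|_{H_a}$ via the Riesz-avoidance Lemma \ref{lemhe3k}, one needs $\|d^*(d\phi)\|_p$, which by \eref{vs510.1} contains the term $\phi'=d^*C'$; this is a third-order quantity on $C$ for which the paper has no initial-behavior bound. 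Heuristically, the scaling $\|C'(s)\|_2\sim s^{(a-2)/2}$ and the expected $\|C'(s)\|_{H_1}\sim s^{(a-3)/2}$ interpolate to $\|C'(s)\|_{H_a}\sim s^{-1}$, which is not integrable near $0$. The paper circumvents this by the representation of Proposition \ref{prophrep}: using $a_\epsilon(s)\,d\phi(s)=-\frac{d}{ds}\{a_\epsilon(s)\hat C(s)\}+a_\epsilon(s)\chi(s)$, the divergent piece $\hat C'$ is absorbed into a total time derivative, leaving the boundary term $\hat C(\epsilon)-a_\epsilon(t)\hat C(t)$ (controlled by continuity of $\hat C$ into $H_a$) and the integrand $\chi(s)$, whose exterior derivative and coderivative reduce to commutator products (see \eref{h115}--\eref{h30c}) that \emph{can} be estimated from the energy bounds of Section \ref{secib}. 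The paper remarks explicitly (just before Proposition \ref{prophrep}) that the simple representation \eref{pp1} ``is inadequate for use in the estimates we will need''.

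Parts c) and e) of your outline are essentially correct and match the paper (Lemmas \ref{lemgconv} and \ref{smrat}). But for b) and d) you need the refined representation \eref{h32} and the two-stage $L^{q_a}\to H_a$ bootstrap.
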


\begin{remark}{\rm (Strategy) The proof of the theorem will proceed in three steps. 
It will first be proven that the functions $g_\epsilon$ converge in 
a relatively weak sense, namely as functions into $L^p(M; End\, \V)$ for all $p < \infty$.
 This will then be used
to show that they are bounded as functions into the metric group $\G_{1,q}$ 
for $q$ appropriately related to $a$. This in turn will
 then be used to prove the strong sense of convergence asserted in Theorem \ref{thmgconv},
 namely as functions into the metric group $\G_{1+a}$.
}
\end{remark}

\begin{remark}\label{nosmooth}{\rm (Smoothness)
The singularity  in $d^* C(t)$
as $ t \downarrow 0$ reflects itself in a lack of smoothness  of $g(t, \cdot)$  for each $t >0$, 
not just ``near'' $t=0$. 
    We will see in Section \ref{secrec} how this then reflects itself in a lack of 
    smoothness of $A(t, \cdot)$ for each $t >0$. The function $A(t, \cdot)$ 
    need not even be in $H_1(M)$ for each $t >0$.
On the other hand  Part e) of the theorem  
 shows that the singularity disappears
from ratios. This lies behind the assertion in Theorems  \ref{thmeu>} and \ref{thmeu=}  
 that the solution is gauge equivalent to a strong solution, which is in fact $C^\infty$ for some time.
 See Theorem \ref{thmreca} for a precise statement.
}
\end{remark}

\subsection{$g$ estimates}      \label{secgests} 

We will  prove in this section that the functions $g_\epsilon$ converge
 as $\epsilon\downarrow 0$, but  in a much weaker sense than that asserted
  in Theorem \ref{thmgconv}.

\begin{lemma} \label{lemgconv} Let $2 \le p <\infty$. 
Under the hypotheses of Theorem \ref{thmgconv} the functions
$(0, T] \ni t \mapsto g_\epsilon(t)$ are continuous functions into $L^p(M; End\, \V)$. There is a continuous function $g:[0, T] \rightarrow L^p(M; End\, \V)$ to which the
 functions $g_\epsilon$ converge, uniformly over $(0, T]$.  That is,
\beq
\sup_{0 <t \le T}\| g_\epsilon(t) - g(t)\|_p \rightarrow 0\ \ \ 
                                              \text{as}\ \ \ \epsilon \downarrow 0.     \label{rec248}
\eeq
 Moreover $g(0, x) = I_\V$ for all $x \in M$. For each $t \in [0, T]$, 
 $g(t,x)$ lies in $K$  
 for almost all  $x \in M$. 
       If $a > 1/2$ then \eref{rec248} holds also for $p = \infty$. In this case  $g(\cdot, \cdot)$
       is continuous on $[0, T] \times M$ into $K$.
\end{lemma}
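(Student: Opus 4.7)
The plan is to convert the ODE \eref{rec100} to an integral equation, derive a pointwise Gronwall bound on differences $g_\epsilon - g_{\epsilon'}$, and then pass to $L^p$ via level-set splitting. First I would record the basic pointwise information. Since $\phi(t,x)\in\kf$ and $g_\epsilon$ solves a right-invariant linear ODE on $K$, we have $g_\epsilon(t,x)\in K$ pointwise for $t\ge\epsilon$, and hence the operator-norm bound $|g_\epsilon(t,x)|_{op}\le 1$. The ODE is equivalent to
\beq
g_\epsilon(t) = I_\V + \int_\epsilon^t \phi(s) g_\epsilon(s)\,ds.            \label{gplan0}
\eeq
By \eref{nd12p}, $s\mapsto\|\phi(s)\|_p$ lies in $L^1(0,T)$ for every $p\in[2,\infty)$ and every $a\in[1/2,1)$, and by \eref{nd12} this is also true for $p=\infty$ when $a>1/2$. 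Continuity of each $g_\epsilon(\cdot):(0,T]\to L^p(M;End\,\V)$ then follows from \eref{gplan0} and absolute continuity of the integral, since $\|g_\epsilon(t)-g_\epsilon(t')\|_p\le\big|\int_{t'}^t\|\phi(s)\|_p\,ds\big|$.

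Next I would derive a Gronwall estimate in $x$. For $0<\epsilon'<\epsilon\le t$, subtracting the two integral equations gives
\beq
g_\epsilon(t)-g_{\epsilon'}(t) = -\int_{\epsilon'}^\epsilon \phi(s)g_{\epsilon'}(s)\,ds + \int_\epsilon^t \phi(s)\bigl(g_\epsilon(s)-g_{\epsilon'}(s)\bigr)\,ds.      \label{gplan1}
\eeq
Using $|g_{\epsilon'}|_{op}\le 1$ and pointwise Gronwall in $x$, one obtains
\beq
|g_\epsilon(t,x)-g_{\epsilon'}(t,x)|_{op} \le F_{\epsilon'}^\epsilon(x)\,e^{G(x)}    \label{gplan2}
\eeq
where $F_{\epsilon'}^\epsilon(x)=\int_{\epsilon'}^\epsilon|\phi(s,x)|\,ds$ and $G(x)=\int_0^T|\phi(s,x)|\,ds$. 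Combined with the trivial bound $|g_\epsilon - g_{\epsilon'}|_{op}\le 2$, this gives $|g_\epsilon(t,x)-g_{\epsilon'}(t,x)|_{op}\le \min\bigl(2,F_{\epsilon'}^\epsilon(x)e^{G(x)}\bigr)$.

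Now I would pass to the $L^p$ norm and extract uniform Cauchyness. By Minkowski, $\|G\|_p\le\int_0^T\|\phi(s)\|_p\,ds<\infty$, so $G$ is finite a.e. and $F_{\epsilon'}^\epsilon\to 0$ a.e. as $\epsilon\downarrow 0$. Given $\delta>0$, for $p<\infty$ I would choose $K$ so large that $2|\{G\ge K\}|^{1/p}<\delta/2$ (possible by Chebyshev's inequality applied to $G\in L^p$), then choose $\epsilon_0$ small enough that $e^K\int_0^{\epsilon_0}\|\phi(s)\|_p\,ds<\delta/2$. Splitting the integration over $\{G<K\}$ (on which $e^G\le e^K$ and Minkowski bounds $\|F_{\epsilon'}^\epsilon\|_p$ by the tail integral of $\|\phi(s)\|_p$) and over $\{G\ge K\}$ (on which the bound $2$ is used), one gets $\sup_{t\in(\epsilon,T]}\|g_\epsilon(t)-g_{\epsilon'}(t)\|_p<\delta$ for $0<\epsilon'<\epsilon<\epsilon_0$. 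For $p=\infty$ and $a>1/2$ one has $G\in L^\infty$ and $\|F_{\epsilon'}^\epsilon\|_\infty\le\int_{\epsilon'}^\epsilon\|\phi(s)\|_\infty\,ds\to 0$, so \eref{gplan2} directly yields uniform Cauchyness in $L^\infty$.

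Finally I would identify the limit $g$ and its properties. Uniform Cauchyness on $(0,T]$ produces a uniform limit $g:(0,T]\to L^p$, continuous as a uniform limit of continuous maps. A diagonal subsequence converging pointwise a.e. shows $g(t,x)\in K$ for almost every $x$, since $K$ is closed. Applying the same pointwise argument with $g_{\epsilon'}$ replaced by the constant $I_\V$ (i.e. directly from \eref{gplan0}) gives $\|g_\epsilon(t)-I_\V\|_p\le\int_\epsilon^t\|\phi(s)\|_p\,ds$, and in the limit $\|g(t)-I_\V\|_p\le\int_0^t\|\phi(s)\|_p\,ds\to 0$ as $t\downarrow 0$, extending $g$ continuously to $t=0$ with $g(0)=I_\V$. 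When $a>1/2$ and $p=\infty$, uniform convergence of the smooth (hence continuous on $(0,T]\times M$) functions $g_\epsilon$ to $g$ forces $g$ to be continuous on $(0,T]\times M$ into $K$, with $g(0,\cdot)=I_\V$ by the bound above. The main obstacle is the $a=1/2$ case: there $\|\phi(t)\|_\infty$ is generally not integrable (only $\|\phi(t)\|_\infty=o(t^{-1})$ from \eref{nd15}), so a naive Gronwall in $L^p$ using $\|\phi\|_\infty$ fails. The level-set splitting of $G$ is the device that replaces this; it works only for finite $p$ and depends essentially on $G\in L^p$, which explains why $p=\infty$ is excluded in the critical case.
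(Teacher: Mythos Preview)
Your argument is correct, but it is considerably more elaborate than necessary because you overlook a structural identity that the paper exploits. Since $g_\delta$ and $g_\epsilon$ both solve the same right-invariant ODE \eref{rec100}, uniqueness for ODEs gives, for $0<\delta\le\epsilon$ and \emph{all} $t\in(0,T]$,
\[
g_\delta(t) = g_\epsilon(t)\, g_\delta(\epsilon),\qquad\text{hence}\qquad g_\delta(t)-g_\epsilon(t)=g_\epsilon(t)\bigl(g_\delta(\epsilon)-I_\V\bigr).
\]
Because $g_\epsilon(t,x)\in K$ is unitary, the operator norm of the difference is \emph{exactly} $|g_\delta(\epsilon,x)-I_\V|_{op}$, independent of $t$. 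Bounding this via the integral equation and applying Minkowski yields directly
\[
\|g_\delta(t)-g_\epsilon(t)\|_p = \|g_\delta(\epsilon)-I_\V\|_p \le \int_\delta^\epsilon \|\phi(s)\|_p\,ds,
\]
uniformly in $t$. This is the paper's route: no Gronwall, no exponential factor $e^{G(x)}$, no level-set splitting. Your Gronwall bound \eref{gplan2} introduces the nuisance factor $e^{G(x)}$, which then forces the Chebyshev-based decomposition over $\{G<K\}$ and $\{G\ge K\}$; the multiplicative identity makes all of this disappear. Your approach does have the minor virtue of not depending on the group structure (it would work for a general linear ODE), but here that generality buys nothing. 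Note also that your derivation is written only for $t\ge\epsilon$; the multiplicative identity covers $t<\epsilon$ automatically, whereas in your framework that range needs a separate (easy) check.
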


\begin{remark}{\rm In the critical case $a = 1/2$ it seems doubtful that  the function
 $x\mapsto g(t, x)$ need be 
 continuous on $M$  for any fixed $t > 0$. The strong sense of convergence asserted
  in Theorem \ref{thmgconv}, Part b) does not assure that $ g(t, \cdot)$ is continuous
   for fixed $ t >0$ because the metric on $\G_{3/2}$ does not control  the supremum
    norm on differences
  $g_\epsilon(t, \cdot) - g_\delta(t, \cdot)$ in $End\, \V$. Thus in the critical case there 
  may   be a bundle change for each $t > 0$.
  }
  \end{remark}

\bigskip
\noindent  
          \begin{proof}[Proof of Lemma \ref{lemgconv}]
Let  $\phi(t) = d^* C(t)$ as in \eref{vs20}.   
 The differential equation \eref{rec100} implies,
for $0 < \delta \le \epsilon$ and for each point $x \in M$,  that 
$g_\delta(t) = g_\epsilon(t) g_\delta(\epsilon)$ for all $t \in (0,T]$. Hence
$g_\delta(t) - g_\epsilon(t) = g_\epsilon(t)( g_\delta(\epsilon) - I_\V)$. 
    Moreover, by \eref{rec100} one has $g_\delta' (t,x) = \phi(t,x)g_{\delta}(t, x)$ 
    and therefore 
    $\|g_\delta(\epsilon, x) - I_\V\|_{op} = \|\int_\delta^\epsilon \phi(t,x) g_\delta(t, x)dt\|_{op}
    \le \int_\delta^\epsilon \| \phi(t,x)\|_{op} dt$. Hence, for each point $x \in M$ we have
\begin{align*}
\|g_\delta(t, x) - g_\epsilon(t, x)\|_{op}  =  \|g_\delta(\epsilon, x) - I_{\V}\|_{op} 
\le \int_\delta^\epsilon \| \phi(s, x) \|_{op} ds,\ \ 0 <t\le T.
\end{align*}
 Therefore, for $2 \le p < \infty$, one has 
\begin{align}
\| g_\delta(t) - g_\epsilon(t)\|_p =  \|g_\delta(\epsilon) - I_{\V}\|_p
\le  \int_\delta^\epsilon \| \phi(s)\|_p ds   \ \ \ \text{for}\ \   0 < t \le T.          \label{rec250}
\end{align}
It follows from the integrability $\int_0^T \| \phi(s)\|_p ds < \infty$,
 proven in  \eref{nd12p} (for $6 \le p <\infty$), and implied by  \eref{vs400a} (for $p=2$),
 and which holds therefore for all $p \in [2, \infty)$ by interpolation, 
 that
\begin{align}
\sup_{0 < \delta \le \epsilon}\  \sup_{0< t <T} 
\| g_\delta(t) - g_\epsilon(t)\|_p \rightarrow 0,\ \ \text{as}\ \ \
                                  \epsilon \downarrow 0.                     \label{rec101}
\end{align}
Hence the functions $g_\epsilon$ converge uniformly on $(0, T]$ to a continuous 
 function $g:(0,T]\rightarrow L^p$.
Fix $\epsilon$ in \eref{rec250} and let
$\delta\downarrow 0$ to find 
\beq
\|g(\epsilon) - I_{\V}\|_p
\le  \int_0^\epsilon \| \phi(s)\|_p ds,       \label{rec102}
\eeq
which goes to zero as $\epsilon \downarrow 0$.
Therefore,  by defining  $g(0, x) = I_{\V}$ for all $x \in M$, one obtains a continuous
 function  $g:[0, T) \rightarrow L^p(M;End\ \V)$ for $2 \le p < \infty$.
 
      In case $a > 1/2$ the inequality \eref{nd12}  
      shows that we can simply replace the $L^p$ norm
      in \eref{rec250} - \eref{rec102} by the $L^\infty$ norm. In this case the convergence
       in \eref{rec248} is uniform in both space and time. $g(\cdot,\cdot)$ is therefore continuous. 
    
   It will be useful to record here the observation that
   \beq
   \sup_{0 < \delta \le t} \| g_\delta(t) - I_\V\|_p \to 0 \ \ 
            \text{as}\ \  t\downarrow 0,\ \ \ 2 \le p < \infty,                    \label{rec103}
   \eeq
   which follows from \eref{rec250} with $\epsilon = t$, namely, 
   $ \| g_\delta(t) - I_\V\|_p  \le \int_0^t \| \phi(s)\|_p ds$.
\end{proof}

\subsection{The vertical projection} \label{secvert}

\begin{notation}{\rm In the Hilbert space $L^2(M; \L^1\otimes \kf)$ the subspace
\beq
\Hc \equiv  \{ \w \in L^2(M; \L^1\otimes \kf): d^*\w = 0\}                        \label{vert4}
\eeq
is a closed subspace of $L^2(M; \L^1\otimes \kf)$ because $d^*$ is a closed operator.
If $M\ne \R^3$ then  $d^*$ refers to the maximal operator in the case of Dirichlet
boundary conditions and to the minimal operator in the case of Neumann boundary conditions.
$\Hc$  is the horizontal subspace for the  Coulomb connection at the connection form zero.
Denote by $\Hc^\perp$ its orthogonal complement and by $P^\perp$ the orthogonal 
projection onto $\Hc^\perp$. }
\end{notation}

The next lemma concerns  the  well known projection onto
 the exact 1- forms in the Hodge decomposition. We are going to carry out some of the details
because of possible technical problems associated to Neumann boundary conditions.

        \begin{lemma}\label{lemvert}  
The restriction of $P^\perp$ to $H_1(M;\L^1\otimes \kf)$ is a bounded operator
 from $H_1$ into $H_1$.  Moreover
 \begin{align}
 dP^\perp \w &=0 \ \ \ \ \ \ \ \ \forall\  \w \in L^2(M;\L^1\otimes \kf).     \label{vert2}\\
 d^*P^\perp \w &= d^*\w\ \ \ \ \ \forall\  \w \in \D(d^*) .                \label{vert3}
 \end{align} 
\end{lemma}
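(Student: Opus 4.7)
The plan is first to dispatch the algebraic identities (\ref{vert2}) and (\ref{vert3}) from the orthogonal decomposition $L^2 = \Hc \oplus \Hc^\perp$, and then to obtain the $H_1$ boundedness from the quadratic-form identity
\[
\|\eta\|_{H_1}^2 \;=\; \|d\eta\|_2^2 + \|d^*\eta\|_2^2 + \|\eta\|_2^2,
\]
which holds for every $\eta \in H_1$ because $H_1 = \D((1-\Delta)^{1/2})$ with $-\Delta = d^*d + dd^*$.

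For (\ref{vert3}): since $\Hc = \ker d^* \subset \D(d^*)$, every $\w \in \D(d^*)$ has its component $P\w$ in $\D(d^*)$ with $d^* P\w = 0$; hence $P^\perp \w = \w - P\w$ also lies in $\D(d^*)$ and $d^* P^\perp \w = d^*\w - 0 = d^*\w$. For (\ref{vert2}): the subspace $\Hc^\perp$ coincides with the $L^2$-closure of the range of $d$, where $d$ here denotes the adjoint of the closed operator $d^*$ of (\ref{vert4}). Every $d\alpha$ in that range is annihilated by the weak $1$-form exterior derivative via $d^2 = 0$, and closedness of that exterior derivative passes the relation $d(d\alpha_n) = 0$ to the limit, forcing $d P^\perp \w = 0$ whenever $P^\perp \w$ is the $L^2$-limit of such $d\alpha_n$.

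For the $H_1$ boundedness, once it is known that $P^\perp \w$ lies in $H_1$ whenever $\w$ does, the quadratic-form identity above together with (\ref{vert2}), (\ref{vert3}) and the $L^2$ contraction $\|P^\perp \w\|_2 \le \|\w\|_2$ gives at once
\[
\|P^\perp \w\|_{H_1}^2 \;=\; 0 + \|d^*\w\|_2^2 + \|P^\perp \w\|_2^2 \;\le\; \|\w\|_{H_1}^2.
\]
The whole lemma therefore reduces to showing that $P^\perp$ preserves $H_1$.

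The hard part --- and the only non-trivial step --- is verifying that $P^\perp \w$ satisfies the boundary condition built into the definition of $H_1$ via (\ref{ST18}): $\w_{\mathrm{norm}} = 0$ in case (N), $\w_{\mathrm{tan}} = 0$ in case (D). This will be handled by showing that $\Hc$ itself is contained in the form domain of $-\Delta$, so that the decomposition $\w = P\w + P^\perp \w$ automatically preserves the boundary condition. The choice of $d^*$ in (\ref{vert4}) is tailored for exactly this: with $d^*$ the minimal operator in the Neumann case, elements of $\D(d^*)$ already satisfy $\w_{\mathrm{norm}} = 0$ in the trace sense, and dually in the Dirichlet case the maximal choice pairs with $d_{\min}$ to yield $\w_{\mathrm{tan}} = 0$. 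Pinning this compatibility down by standard Hodge theory on manifolds with boundary (as in \cite{Co} and \cite{RaS}) is the technical obstacle; once it is in place, the display above closes the argument.
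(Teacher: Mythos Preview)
Your arguments for (\ref{vert2}) and (\ref{vert3}) are correct and close to the paper's; the paper obtains (\ref{vert2}) by the dual route --- since $d^*u \in \Hc$ for every $u \in \D(d^*)$ (because $(d^*)^2 = 0$), one has $(P^\perp \w, d^*u) = 0$ for all such $u$, whence $P^\perp \w \in \D((d^*)^*) = \D(d)$ with $dP^\perp \w = 0$ --- but your closure argument reaches the same conclusion and in particular also places $P^\perp\w$ in $\D(d)$.

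For the $H_1$ boundedness, however, you take an unnecessary detour and leave it unfinished. Your own proofs of (\ref{vert2}) and (\ref{vert3}) already place $P^\perp \w$ in $\D(d) \cap \D(d^*)$, where $d = (d^*)^*$ carries the minimal/maximal choice dual to that of $d^*$ in (\ref{vert4}). The Gaffney--Friedrichs inequality \cite[Equ.~(2.22)]{CG1}, already in the paper's toolkit, is precisely the statement that any such $\eta$ lies in $H_1$ with
\[
\|\eta\|_{H_1}^2 \;\le\; 2\bigl(\|d\eta\|_2^2 + \|d^*\eta\|_2^2 + \|\eta\|_2^2\bigr),
\]
and this is how the paper finishes in one line. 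Your proposed alternative --- showing that $\Hc$ itself lies in the form domain of $-\Delta$ --- is both unnecessary and false as stated: $\Hc = \ker d^*$ imposes no control whatsoever on $d\eta$, so $\Hc \not\subset H_1$ in general. The boundary condition you are chasing is carried not by $P\w$ but by $P^\perp \w$, and it is already encoded in the domain memberships you have just established: in case (D), $P^\perp \w \in \D(d)$ with $d$ minimal gives $(P^\perp\w)_{\mathrm{tan}} = 0$; in case (N), $P^\perp \w \in \D(d^*)$ with $d^*$ minimal gives $(P^\perp\w)_{\mathrm{norm}} = 0$. Gaffney--Friedrichs packages exactly this, so the deferral to unspecified ``standard Hodge theory'' is misdirected.
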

         \begin{proof} If $\w \in L^2$ and $u \in \D(d^*)$ then $d^*u \in\D(d^*)$ by 
\cite[Proposition 3.5]{CG1} and $d^* d^* u =0$. Therefore $d^*u \in \Hc$. Hence, for any 1-form
$\w \in L^2(M; L^1\otimes \kf)$ we have $(P^\perp \w, d^*u) =0$ for all $u \in \D(d^*)$. Since $d$ and $d^*$ are closed operators it follows that $P^\perp\w \in \D(d)$ and $dP^\perp \w = 0$ for all
$\w \in L^2$. This proves \eref{vert2}. Now $\w - P^\perp \w \in \Hc \subset \D(d^*)$. So if $\w \in \D(d^*)$ then
$P^\perp\w \in \D(d^*)$ and  $d^*\w - d^*P^\perp \w = 0$.  This proves \eref{vert3}.
From the Gaffney-Friedrichs inequality, \cite[Equ. (2.22)]{CG1}, 
    we then have, for $\w \in H_1$,
    \begin{align*}
    \|P^\perp \w \|_{H_1}^2 
    &\le 2 \Big( \|d (P^\perp \w)\|_2^2 + \|d^*(P^\perp \w)\|_2^2 + \|P^\perp \w\|_2^2\Big)\\
    &=2 \Big(\|d^*\w\|_2^2 +  \|P^\perp \w\|_2^2 \Big) \le 2 \Big(\|d^*\w\|_2^2 +  \| \w\|_2^2 \Big) \\
    &\le 2 \|\w\|_{H_1}^2.
    \end{align*}
Thus $P^\perp:H_1\rightarrow H_1$ is bounded and \eref{vert2} and \eref{vert3} hold.
\end{proof}

\begin{remark}
 {\rm It is well known that $P^\perp$ is given by $P^\perp\w = d (d^*d)^{-1} d^*\w$
for $\w \in H_1$ under various circumstances. This is the case here also
 when $M$ is bounded and
either Neumann or Dirichlet boundary conditions are used. But we will not need this expression.
}
\end{remark}

\begin{lemma}\label{lemvert2} For any $a \in [0, 1]$ the operator 
$P^\perp: H_a \rightarrow H_a$ is bounded.
In particular, if $C(\cdot):[0,T] \rightarrow H_a$ is continuous then the function
\beq
\hat C(t) \equiv P^\perp C(t), \ \ \ 0\le t \le T   \label{vert15}
\eeq
is also continuous into $H_a$
\end{lemma}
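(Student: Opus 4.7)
The plan is to reduce everything to the observation that $P^\perp$ commutes with the spectral resolution of $\Delta$, from which boundedness on $H_a$ for every $a\in[0,1]$ follows immediately via the functional calculus, and the continuity assertion is then automatic.

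First I would verify that both $\Hc$ and $\Hc^\perp$ are invariant subspaces of $\Delta$ (with the appropriate boundary conditions, Neumann or Dirichlet, in the case $M\ne\R^3$). For $\w\in\Hc\cap\D(\Delta)$ we have $d^*\w=0$, hence $\Delta\w=-d^*d\w$, and then $d^*(\Delta\w)=-d^*d^*d\w=0$ by the identity $d^*d^*=0$. Thus $\Delta\w\in\Hc$. Since $\Delta$ is self-adjoint, its invariant subspace $\Hc$ has an invariant orthogonal complement $\Hc^\perp$. Equivalently, $P^\perp$ commutes with every bounded Borel function of $\Delta$, and in particular with $D^a=(1-\Delta)^{a/2}$ for $0\le a\le 1$.

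From this commutation, the boundedness is a one-line computation: for $\w\in H_a=\D(D^a)$, the vector $P^\perp D^a\w$ is well defined, and commutation forces $P^\perp\w\in\D(D^a)$ with $D^aP^\perp\w=P^\perp D^a\w$. Hence
\[
\|P^\perp \w\|_{H_a}=\|D^aP^\perp \w\|_2=\|P^\perp D^a\w\|_2\le\|D^a\w\|_2=\|\w\|_{H_a},
\]
so $P^\perp:H_a\to H_a$ is bounded with norm at most $1$. This recovers the $H_0$ and $H_1$ statements of Lemma \ref{lemvert} and extends them to all intermediate $a$.

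The continuity of $\hat C$ is then automatic: the composition of the continuous map $C:[0,T]\to H_a$ with the bounded linear map $P^\perp:H_a\to H_a$ is continuous into $H_a$. The only step requiring genuine care is verifying the invariance of $\Hc$ under $\Delta$ in the presence of boundary conditions, since one must check that the formal computation $d^*(\Delta\w)=-d^*d^*d\w$ is valid on $\D(\Delta)$ with the prescribed boundary behavior; this follows from the domain descriptions in Definition \ref{defbc} (where $d^*$ is read as maximal in the Dirichlet case and minimal in the Neumann case, so that $d^*d^*=0$ holds on the relevant domain), and is the one point where one must be attentive rather than formally symbolic.
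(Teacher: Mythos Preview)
Your approach differs from the paper's. The paper argues by complex interpolation (Lemma \ref{cpxinterp}): it records $\|D^a P^\perp \w\|_2 \le c_a \|D^a \w\|_2$ at the endpoints $a=0$ (trivially, $c_0=1$, since $P^\perp$ is an orthogonal projection) and $a=1$ (with $c_1\le\sqrt{2}$, from Lemma \ref{lemvert} via Gaffney--Friedrichs), then interpolates to get $c_a\le 2$ for all $a\in[0,1]$. Your route---showing that $\Hc$ reduces $\Delta$ so that $P^\perp$ commutes with $D^a$---is more direct and, once justified, yields the sharper bound $\|P^\perp\|_{H_a\to H_a}\le 1$. The paper's interpolation avoids checking the reduction property and simply leverages the $H_1$ bound it already has.

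There is, however, a genuine gap in your justification. The sentence ``Since $\Delta$ is self-adjoint, its invariant subspace $\Hc$ has an invariant orthogonal complement $\Hc^\perp$'' is not valid for an unbounded operator: invariance of a closed subspace $K$ under an unbounded self-adjoint $A$, in the sense $A(K\cap\D(A))\subset K$, does \emph{not} force $K^\perp$ to be invariant. (Take $A=-\Delta$ on $L^2(\R)$ and $K=L^2([0,\infty))$: any $f\in H^2(\R)$ vanishing on $(-\infty,0)$ has $f''$ vanishing there too, yet heat flow immediately spreads support, so $K$ does not reduce $A$.) What you actually need is that $\Hc$ \emph{reduces} $\Delta$, and the clean way to obtain this is to show that $\Hc$ is invariant under the \emph{bounded} operators $e^{t\Delta}$ or $(1-\Delta)^{-1}$; invariance of $\Hc^\perp$ then follows automatically. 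That invariance comes from the intertwining $d^*\,e^{t\Delta_1}=e^{t\Delta_0}\,d^*$, equivalently from the commutation of $d$ and $d^*$ with $D^2$ that the paper itself uses (see the proof of Lemma \ref{lemgp2}). With that in place your functional-calculus argument goes through. Your closing paragraph flags the boundary-condition domain issues but not this particular point.
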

         \begin{proof} Writing $D = (1 -\Delta)^{1/2}$ as before, we have
   \beq
\| D^a P^\perp \w\|_2 \le c_a \|D^a\w\|_2,\ \ \ a=0, 1,   \label{vert17}
\eeq
with $c_0 = 1$ 
because $P^\perp$ is a projection on $L^2$, and $c_1 \le 2$ by  Lemma \ref{lemvert}.
 By complex interpolation \eref{vert17}  holds  for all $a \in [0,1]$ with $c _ a \le 2$. 
  The assertion concerning $\hat C$ now follows.
 \end{proof}

\subsection{Integral representation of $g_\epsilon^{-1} dg_\epsilon$}    \label{sechrep}

To show that the functions $g(t, \cdot)$ constructed in Lemma \ref{lemgconv} lie 
in the gauge group $\G_{1+a}$ for each $t$ we will need  information
 about the spatial derivatives of $g$.
The next proposition gives a representation of the spatial derivatives from which we will
derive quantitative bounds  
 in   Sections \ref{sechests} and \ref{secconvh}. The simple representation of $g^{-1}dg$ which
was used in \cite{CG1} is inadequate for use in the estimates we will need in this paper. 
Instead we will use the representation in the next Proposition.

\begin{proposition}\label{prophrep} $($Representation of $g_\epsilon^{-1} dg_\epsilon$$)$
Suppose that $C(\cdot)$ is the  strong solution
  of the augmented equation \eref{aymh}  over $[0,T]$ constructed in Theorem \ref{thmaug} 
  with initial value   $ C_0 = A_0$.
Define $g_\epsilon$ as in \eref{rec100} and define 
\beq
h_\epsilon(t) = g_\epsilon(t)^{-1} d g_\epsilon(t),\ \ \  0 < t \le T           \label{rec224}. 
\eeq
Define $\hat C(t)$ as in \eref{vert15} and let
\begin{align}
 a_\epsilon(t,x) &= Ad(g_\epsilon(t,x)^{-1})\ \ 
                          \text{for}\ 0< t \le T \ \text{and}\ \ x \in M. 
                                     \label{h28e}
\end{align}
Then      
     \begin{align}
  h_\epsilon(t) = \Big(\hat C(\epsilon)  -a_\epsilon(t) \hat C(t) \Big) 
         + \int_\epsilon^t a_\epsilon(s) \chi(s) ds, \ \ \ 0 < t \le T,  \label{h32}
         \end{align}
         where         
     \beq
     \chi(s) =    [\hat C(s), \phi(s) ]  - P^\perp \Big(d_C^* B_C + [C, \phi]\Big).  \label{h33a}
     \eeq      
\end{proposition}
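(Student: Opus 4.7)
The plan is to derive \eref{h32} by differentiating both $h_\epsilon(s)$ and $a_\epsilon(s)\hat C(s)$ in $s$, summing, and integrating from $\epsilon$ to $t$. All manipulations are pointwise in $x$ and take place for $s\in(\epsilon,T]$, on which $C(\cdot)$ is smooth in space and time by Theorem \ref{mildstr} and $g_\epsilon(s,x)$ is $K$-valued and smooth by standard ODE theory applied to \eref{rec100}.

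First I would compute $\frac{d}{ds}h_\epsilon(s)$. Using $\frac{d}{ds}g_\epsilon^{-1}=-g_\epsilon^{-1}\phi$ (which follows from \eref{rec100} with $\phi=d^*C$), and expanding $d(\phi g_\epsilon) = (d\phi)g_\epsilon + \phi\, dg_\epsilon$ since $\phi$ is a $0$-form, the cross terms cancel and one obtains
\begin{equation}
\tfrac{d}{ds}h_\epsilon(s) = g_\epsilon^{-1}(d\phi)\,g_\epsilon = a_\epsilon(s)\,d\phi(s). \label{plan1}
\end{equation}

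Next I would compute $\frac{d}{ds}\bigl(a_\epsilon(s)\hat C(s)\bigr)$. Writing $a_\epsilon(s)X = g_\epsilon^{-1} X g_\epsilon$, the time derivative of $a_\epsilon(s)$ applied to a $\kf$-valued $X$ gives $a_\epsilon(s)[X,\phi(s)]$. Applied to the $s$-dependent $\hat C(s)$, the product rule yields
\begin{equation*}
\tfrac{d}{ds}\bigl(a_\epsilon(s)\hat C(s)\bigr) = a_\epsilon(s)\bigl([\hat C(s),\phi(s)] + \hat C'(s)\bigr).
\end{equation*}
Now $\hat C'(s) = P^\perp C'(s) = -P^\perp\bigl(d_C^*B_C + d_C\phi\bigr)$ by the augmented equation \eref{aymh} and Lemma \ref{lemvert2}. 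Expanding $d_C\phi = d\phi + [C,\phi]$, the key point is that $d\phi$ is exact and hence lies in $\Hc^\perp$, so $P^\perp(d\phi)=d\phi$ by Lemma \ref{lemvert}. Combining these,
\begin{equation}
\tfrac{d}{ds}\bigl(a_\epsilon(s)\hat C(s)\bigr) = a_\epsilon(s)\,\chi(s) - a_\epsilon(s)\,d\phi(s), \label{plan2}
\end{equation}
with $\chi$ as in \eref{h33a}.

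Adding \eref{plan1} and \eref{plan2} the $a_\epsilon\,d\phi$ terms cancel, giving the clean identity
\begin{equation*}
\tfrac{d}{ds}\bigl(h_\epsilon(s) + a_\epsilon(s)\hat C(s)\bigr) = a_\epsilon(s)\,\chi(s).
\end{equation*}
Integrating from $\epsilon$ to $t$ and using the initial conditions $h_\epsilon(\epsilon)=0$ and $a_\epsilon(\epsilon) = \operatorname{Ad}(I_\V) = I$ (from $g_\epsilon(\epsilon)=I_\V$) yields exactly \eref{h32}. The only step requiring any care is the projection identity $P^\perp d\phi = d\phi$, which rests on the Hodge-theoretic characterization $\Hc^\perp = \overline{\operatorname{range}\,d}$; all other steps are routine pointwise calculus, valid because $C$ and $g_\epsilon$ are smooth on $(\epsilon,T]\times M$.
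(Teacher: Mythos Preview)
Your proof is correct and follows essentially the same approach as the paper. The paper packages the computation slightly differently—it first records the identity $d\phi = -\hat C' - P^\perp(d_C^*B_C + [C,\phi])$ (your step rearranged) and the derivative formula $(g^{-1}dg)' = a(s)\,d\phi$ as a separate lemma, then combines them into $(d/ds)h_\epsilon = -(d/ds)(a_\epsilon\hat C) + a_\epsilon\chi$ and integrates—but the substance, including the key observation that $P^\perp d\phi = d\phi$ because $d\phi$ is vertical, is identical to yours.
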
 

The proof depends on the following lemma.

         \begin{lemma}\label{lemh3} $($An identity$)$      
      Suppose that $C(\cdot)$ is a $C^\infty$
  solution to the   augmented Yang-Mills heat equation  \eref{aymh} over some interval $(a, b)$.
 Let $g(t,x)$ be a smooth solution  to the ODE
 \beq
 g'(t,x) g(t, x)^{-1} = d^*C(t,x)       \label{h27}
 \eeq
 for each $x \in M$ and $ t \in (a,b)$. 
  Define
 \beq
 a(t,x) = Ad(g(t,x)^{-1})\ \     \text{for}\ t \in (a,b)\ \ \text{and}\ \ x \in M. \label{h28}
 \eeq
 Let $\hat C(t) = P^\perp C(t)$ as in \eref{vert15} and $\phi(t) = d^*C(t)$ as in   \eref{vs20}.
 Then, for each $($suppressed$)$ $s\in (a,b)$, we have
 \begin{align}
 d\phi = -\hat C' -P^\perp\Big(d_C^*B_C + [C, \phi]\Big). \label{h28b}
 \end{align}  
 where $\hat C'(s) =(d/ds) \hat C(s)$. Further, 
  \begin{align}
 (d/ds)( g(s)^{-1} dg(s)) =&  -(d/ds) \Big\{a(s) \hat C(s)\Big\}  \label{h30}           \\
 & + a(s)\Big\{ [\hat C(s), \phi(s) ] -P^\perp \Big(d_C^*B_C + [C, \phi]\Big)\Big\}. \notag
\end{align}
  \end{lemma}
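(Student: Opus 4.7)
The plan is to prove the two identities in order, since (2) will use (1). Part (1) will be obtained by applying the projection $P^\perp$ to the augmented equation rewritten in the form $C' = -d_C^*B_C - d\phi - [C,\phi]$, and part (2) will follow from a direct computation of $(d/ds)(g^{-1}dg)$ using the ODE $g' = \phi g$, after which (1) is substituted.

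For \eqref{h28b}, I would start from \eqref{aymh} together with the expansion $d_C\phi = d\phi + [C,\phi]$ to write
\begin{equation*}
C'(s) = -d_C^*B_C(s) - d\phi(s) - [C(s),\phi(s)].
\end{equation*}
Apply $P^\perp$ to both sides. Since $P^\perp$ is a bounded time-independent operator on $L^2$, it commutes with $d/ds$, giving $P^\perp C' = \hat C'$. The key observation is that $d\phi \in \Hc^\perp$: for any $\w \in \Hc = \ker d^*$ one has $(d\phi,\w) = (\phi,d^*\w) = 0$, so $P^\perp d\phi = d\phi$ (the boundary conditions embedded in the appropriate max/min version of $d^*$ make this integration by parts legitimate). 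Rearranging yields \eqref{h28b}.

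For \eqref{h30}, write $h(s) = g(s)^{-1}dg(s)$ and differentiate. From $g' = \phi g$ one has $(d/ds)g^{-1} = -g^{-1}\phi$, so
\begin{equation*}
h'(s) = -g^{-1}\phi\, dg + g^{-1}d(\phi g) = -g^{-1}\phi\, dg + g^{-1}(d\phi)g + g^{-1}\phi\, dg = a(s)\, d\phi(s),
\end{equation*}
where the inner two cross terms cancel. Substituting \eqref{h28b} gives
\begin{equation*}
h'(s) = -a(s)\hat C'(s) - a(s)P^\perp\bigl(d_C^*B_C + [C,\phi]\bigr).
\end{equation*}
It remains to rewrite the first term. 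Using $a(s) = \mathrm{Ad}(g(s)^{-1})$, a short pointwise computation on any $X \in \kf$ gives $(d/ds)\bigl(a(s)X\bigr) = a(s)[X,\phi(s)]$, hence
\begin{equation*}
-a(s)\hat C'(s) = -(d/ds)\bigl\{a(s)\hat C(s)\bigr\} + a(s)[\hat C(s),\phi(s)],
\end{equation*}
which combined with the previous display yields \eqref{h30}.

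The main obstacle is bookkeeping rather than analysis: one must keep careful track of the non-commutative product structure (differentiating $g$, $g^{-1}$, and their adjoint actions on $\kf$-valued forms) and verify that $P^\perp$ interacts correctly with the exact form $d\phi$ under whichever boundary conditions are in force. The latter reduces to the duality $(d\phi,\w) = (\phi,d^*\w)$ being valid for $\w$ in the domain used to define $\Hc$, which is exactly the content of the discussion preceding Lemma \ref{lemvert}. No further estimates are needed; both identities are pointwise in $s$ for the smooth solution.
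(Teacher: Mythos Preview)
Your proof is correct and follows essentially the same route as the paper's own argument: apply $P^\perp$ to the augmented equation (using that $d\phi$ is vertical) to obtain \eqref{h28b}, then combine the identity $(g^{-1}dg)' = a(s)\,d\phi$ with the product-rule computation $a'(s)\hat C = a(s)[\hat C,\phi]$ to obtain \eqref{h30}. The only cosmetic difference is that the paper quotes $(g^{-1}dg)' = a(s)\,d\phi$ from \cite{CG1}, whereas you supply the short direct derivation.
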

            \begin{proof} The augmented heat equation asserts
             that $ - C' = d_C^*B_C + d\phi  + [C, \phi]$.
    Thus  $d\phi = - \{C'  + d_C^*B_C + [C,\phi]\}$. Since $d\phi$ is vertical and 
    $P^\perp C'(s) = (d/ds) P^\perp C(s)$,
    we can apply the vertical projection $P^\perp$ to this equation to find
    $ d\phi = -\hat C' -P^\perp(d_CB_C^* + [C, \phi])$, which is \eref{h28b}.

    For the derivation of \eref{h30} we need to use the following identity, which is valid for any
     continuous  $\kf$ valued 1-form $\w$ on $M$. 
\begin{align}
 a'(s) \w = a(s) [ \w, \phi].     \label{h36}
 \end{align}
This follows from the definitions \eref{h27} and \eref{h28}  and the computation, 
 at each (suppressed) point $x \in M$,
 $a'(s)\w = (d/ds)\Big(g(s)^{-1}\w g(s)\Big)  
  =g^{-1} \w (g' g^{-1})  g  - g^{-1} (g' g^{-1}) \w g
  =g^{-1} [\w, \phi] g$.

From \eref{h36} and the product rule we have 
 $ (d/ds) (a \hat C) = a\hat C' + a' \hat C = a\hat C' + a[\hat C, \phi]$, so that
 \beq
 a\hat C' =  (d/ds) (a \hat C) - a[\hat C, \phi].                          \label{h34}
 \eeq
 We will make use of the identity   
 \beq
  (g(s)^{-1} dg(s))' = a(s) d\phi(s)                     \label{h35}
  \eeq
  proved in   \cite[Eq (8.18)]{CG1}. 
 From \eref{h28b},  \eref{h34}  and \eref{h35} it follows  that 
 \begin{align*}
 (g(s)^{-1} dg(s))'                    
 & =a(s)\Big\{ -\hat C' -P^\perp\Big(d_C^*B_C + [C, \phi]\Big)\Big\} \\
 & = -(d/ds)(a\hat C) + a[\hat C, \phi] - aP^\perp\Big(d_C^*B_C + [C, \phi]\Big),
 \end{align*}
 which is \eref{h30}.
\end{proof}

\bigskip
\noindent
\begin{proof}[Proof of Proposition \ref{prophrep}]
     If we take $g$ in Lemma \ref{lemh3} to be $g_\epsilon$, defined in \eref{rec100} then
      $h_\epsilon(t)$, defined in \eref{rec224}, satisfies  $h_\epsilon(\epsilon) =0$  
       in view of the initial  condition in \eref{rec100}.
   Moreover   $a_\epsilon(\epsilon)(x) = $ the identity operator on $\kf$ for all $x \in M$. 
   The identity \eref{h30} shows   that 
   \beq
   (d/ds) h_\epsilon(s) =  -(d/ds) \Big\{a_\epsilon(s) \hat C(s) \Big\}         
 + a_\epsilon(s)\chi(s).        \label{h30e} 
 \eeq
     We may integrate  \eref{h30e} to find
  \begin{align*}
  h_\epsilon(t) &= \int_\epsilon^t (d/ds) h_\epsilon(s) ds   \\
     &=\int_\epsilon^t\Big( -(d/ds) \{a_\epsilon(s) \hat C(s)\} +a_\epsilon(s) \chi(s) \Big)ds\\
     &= - a_\epsilon(s) \hat C(s)\Big|_\epsilon^t + \int_\epsilon^t a_\epsilon(s) \chi(s) ds.
     \end{align*}
     This proves \eref{h32}. 
\end{proof}

\subsection{Estimates for $g_\epsilon^{-1} dg_\epsilon$}
\label{sechests} 

We need to make estimates of the integrand in the representation \eref{h32}.
 Our estimates will be described in the following two theorems, which differ in the nature of
 their techniques of proof. The first depends entirely on the initial behavior estimates
 made in Section \ref{secib}. The second depends on the multiplier bounds of Section \ref{secgg}.
 
 \begin{theorem} \label{thmhest1} 
 Let $ 1/2 \le a <1$ and $0 < T <\infty$.    Assume that $M$, $A_0$ and $C(\cdot)$ are as stated
 in Theorem \ref{thmgconv}. 
Define $\chi(s)$ as in  \eref{h33a}. Let
\beq
1/q_a = 1/2 - a/3.                         \label{he9}
\eeq
Then
\begin{align}
\int_0^T \|\chi(s)\|_{H_a} ds& < \infty \ \ \ \ \text{and}     \label{he10}\\
\int_0^T \|\chi(s)\|_{q} ds &< \infty \ \ \  \ \text{for}\ \ \ 3 \le q \le q_a.            \label{he11}
\end{align}
\end{theorem}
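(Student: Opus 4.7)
The plan is to exploit a one-derivative cancellation hidden in $\chi$ that lets us avoid ever having to bound $\|d_C^*B_C\|_{H_a}$. Writing $d_C^*B_C = d^*B_C + [C\lrcorner B_C]$ and using $(d^*)^2=0$, one sees that $d^*B_C \in \Hc$, so Lemma \ref{lemvert} gives
\[
P^\perp(d_C^*B_C) \;=\; P^\perp([C\lrcorner B_C]),
\]
and hence
\[
\chi(s) \;=\; [\hat C(s),\phi(s)] \;-\; P^\perp\!\bigl([C(s)\lrcorner B_C(s)] + [C(s),\phi(s)]\bigr).
\]
In this representation every term is a zeroth-order algebraic product of $C$, $B_C$ and $\phi$, with no spatial derivatives. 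Combined with the boundedness of $P^\perp$ on $H_a$ (Lemma \ref{lemvert2}), this reduces the $H_a$-bound for $\chi$ to estimating $\|[\hat C,\phi]\|_{H_a}$, $\|[C\lrcorner B_C]\|_{H_a}$ and $\|[C,\phi]\|_{H_a}$.

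For each of these products I would use complex interpolation $\|u\|_{H_a}\le c\|u\|_2^{1-a}\|u\|_{H_1}^a$ together with Kato--Ponce/Leibniz product estimates and H\"older/Sobolev. The $L^2$ bounds come from H\"older in space, e.g.\ $\|[C\lrcorner B_C]\|_2\le c\|C\|_6\|B_C\|_3$ and $\|[\hat C,\phi]\|_2\le c\|C\|_{H_1}\|\phi\|_3$, together with the weighted-in-time bounds from Theorem \ref{thmord0} (in particular \eref{vs400a} and \eref{vs400a}) and the first-order estimates \eref{vs550a} and \eref{vs38b} of Theorem \ref{thmord1a}. The $H_1$ bounds for the products use the Leibniz rule and are controlled by the second-order weighted $L^2$-in-time estimate \eref{vs640a} together with the weighted $L^6$-in-time estimates \eref{vs641pa}, \eref{vs641a}, \eref{vs641b} of Theorem \ref{thmord2a}. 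The time integration is then closed by H\"older in $s$, matching the weight $s^{1-a}$ in the $L^2$-factor and $s^{2-a}$ in the $H_1$-factor that appear in those energy bounds. For the second conclusion, the Sobolev embedding $H_a\hookrightarrow L^{q_a}$ gives $\|\chi(s)\|_{q_a}\le c\|\chi(s)\|_{H_a}$, and for $3\le q\le q_a$ an interpolation $\|\chi\|_q\le \|\chi\|_3^\theta\|\chi\|_{q_a}^{1-\theta}$ (using again $H_{1/2}\hookrightarrow L^3$) reduces \eref{he11} to \eref{he10}.

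The main obstacle is the critical case $a=1/2$: the available weights $s^{1-a}=s^{1/2}$ and $s^{2-a}=s^{3/2}$ make the interpolated weight on $\|\chi\|_{H_a}^{\,2}$ come out to $s^{1}$ exactly, so a naive Cauchy--Schwarz in time would land on the divergent integral $\int s^{-1}ds$. The cancellation above is what makes the estimate feasible at all, because it removes the otherwise missing derivative on $B_C$; the residual critical bookkeeping is handled by using a three-factor H\"older split that combines the pointwise bounds of Theorem \ref{thmord0} with the integrated weighted bounds of Theorems \ref{thmord1a} and \ref{thmord2a}, thereby recovering a strictly positive power of $s$ and restoring integrability.
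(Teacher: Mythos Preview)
Your cancellation observation $P^\perp d^*B_C=0$ is correct, and your derivation of \eref{he11} from \eref{he10} via Sobolev embedding matches the paper. But the core strategy has a gap.

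By invoking boundedness of $P^\perp$ on $H_a$ and then estimating $\|[C,\phi]\|_{H_a}$ and $\|[C\lrcorner B_C]\|_{H_a}$ directly via interpolation between $L^2$ and $H_1$ plus Leibniz, you discard the most important structural feature of $P^\perp$, namely that $dP^\perp=0$. Once the projection is stripped away, the $H_1$ estimate for $[C,\phi]$ must confront $\|d[C,\phi]\|_2$, which contains $[dC,\phi]$; controlling this in $L^2$ forces a bound on $\|dC\|_6$ (equivalently on $\|[C\wedge C]\|_6$ or $\|C\|_{12}$), and none of these is supplied by Theorems \ref{thmord0}--\ref{thmord2a}. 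The same obstruction hits $\|d[C\lrcorner B_C]\|_2$: the Leibniz rule produces a $C\cdot\nabla B_C$ term that needs either $\|\nabla B_C\|_3$ or $\|C\|_\infty$, neither of which is available. Kato--Ponce product estimates for the $H_a$ spaces defined by the Neumann/Dirichlet Laplacian on a bounded domain are also not established here and are not standard. Finally, your closing claim that at $a=1/2$ one ``recovers a strictly positive power of $s$'' is not what happens; the estimate is exactly borderline there.

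The paper's route is built precisely to keep $dP^\perp=0$ in play while never passing through $H_1$. Lemma \ref{lemhe3k} (Riesz avoidance) bounds $\|\chi\|_{H_a}$ directly by $\|d\chi\|_p+\|d^*\chi\|_p+\|\chi\|_2$ with $1/2=1/p-(1-a)/3$, so $p<2$. Then $d\chi=-[\hat C\wedge d\phi]$ (from $d\hat C=0$ and $dP^\perp=0$), while $d^*\chi$ is computed from $d^*P^\perp=d^*$ together with the Bianchi identity (Lemma \ref{lemhe10}), giving $\|d\chi\|_p+\|d^*\chi\|_p\le c\,\|C\|_{H_1}\bigl(\|d_C^*B_C\|_r+\|d\phi\|_r\bigr)$ with $1/r=1/p-1/6$. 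The time integration is then closed at all $a\in[1/2,1)$ by the interpolation Lemma \ref{lemhe12} combined with the weighted $L^2$ and $L^6$ bounds \eref{vs38b}, \eref{vs641a}, \eref{vs641b}; at $a=1/2$ this is tight, with no power of $s$ to spare.
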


\begin{theorem} \label{thmhest2} Under the same hypotheses as in Theorem \ref{thmhest1} there holds
\begin{align}
\int_0^T \|a_\epsilon(s) \chi(s)\|_{q} ds &\le c_{21}\ \  
                                        \forall\ \epsilon \in (0, T)\ \text{and}\  \ 3 \le q \le q_a,   \label{he11.1}\\
\sup_{\epsilon > 0, t>0} \|h_{\epsilon}(t)\|_{q} &< \infty\qquad \qquad\  \ \ \
                                         \ \ \text{for}\ \ 3 \le q \le q_a,                                   \label{he12}\\
\sup_{\{\epsilon: 0< \epsilon \le t\} } \|h_\epsilon(t)\|_{q}& \rightarrow 0 \ \ 
             \text{as} \ \  t \downarrow 0\ \ \  \ \ \  \,  \text{for}\ \ 3 \le q \le q_a,       \label{he13}\\
\sup_{0 <\delta \le T} \int_0^T \| a_\delta(s) \chi(s) \|_{H_a}ds  &<\infty,       \label{he14} \\
\sup_{\{\epsilon: 0< \epsilon \le t\} } \|h_\epsilon(t)\|_{H_a}& \rightarrow 0 \ \ 
                                     \text{as} \ \  t \downarrow 0                                          \label{he15}
\end{align}
for some finite constant $c_{21}$ depending only on $C(\cdot)$.
\end{theorem}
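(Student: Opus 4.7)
The plan is to exploit the integral representation \eref{h32} of $h_\epsilon$, handling the $L^q$ statements first — where $a_\epsilon(s,x) = Ad(g_\epsilon(s,x)^{-1})$ acts as a pointwise isometry on $\kf$ — and then leveraging those to obtain the harder $H_a$ statements via the multiplier estimate of Proposition \ref{propgp1}. First I would note that pointwise isometry gives $\|a_\epsilon(s)\chi(s)\|_q = \|\chi(s)\|_q$, so \eref{he11.1} is immediate from \eref{he11}. Feeding \eref{h32} into the $L^q$ norm, together with continuity of $\hat C:[0,T]\to H_a$ (Lemma \ref{lemvert2}) and the Sobolev embedding $H_a \hookrightarrow L^{q_a}$, then yields the uniform bound \eref{he12}.

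For the vanishing statement \eref{he13}, I would split
$$\hat C(\epsilon) - a_\epsilon(t)\hat C(t) = \bigl(\hat C(\epsilon) - \hat C(t)\bigr) + \bigl(I - a_\epsilon(t)\bigr)\bigl(\hat C(t)-\hat C(0)\bigr) + \bigl(I - a_\epsilon(t)\bigr)\hat C(0).$$
The first two pieces are controlled (in $L^q$) by continuity of $\hat C$ and the pointwise isometry of $a_\epsilon(t)$. The third piece is the place where strong continuity enters: I would apply Part a) of Lemma \ref{strcontp} to the fixed vector $\hat C(0)\in L^q$, using $\|g_\epsilon(t)^{-1}-I_\V\|_2 \le \int_0^t \|\phi(s)\|_2\,ds \to 0$ uniformly in $\epsilon\le t$, which is just \eref{rec103}. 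The integral $\int_\epsilon^t a_\epsilon(s)\chi(s)\,ds$ is bounded in $L^q$ by $\int_0^t \|\chi(s)\|_q\,ds$, which vanishes as $t\downarrow 0$ by the absolutely integrable estimate \eref{he11}.

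For \eref{he14}, I would combine the multiplier inequality \eref{gp3} of Proposition \ref{propgp1} with the identity \eref{gp18}, which gives $\|(g_\delta(s)^{-1})^{-1}d(g_\delta(s)^{-1})\|_3 = \|h_\delta(s)\|_3$. Hence
$$\|a_\delta(s)\chi(s)\|_{H_a} \le \bigl(1 + c_1\|h_\delta(s)\|_3\bigr)\|\chi(s)\|_{H_a}.$$
Since $3\le q_a$ whenever $a\ge 1/2$, the uniform bound \eref{he12} taken at $q=3$ turns the parenthesis into an $(\epsilon,s)$-independent constant, and \eref{he10} finishes the integration.

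The real obstacle is \eref{he15}: I expect it to require a bootstrap from the $L^q$ convergence already obtained. The same splitting as above is repeated, now measured in $H_a$. The integral piece is handled by combining the $s$-pointwise bound from the $L^3$-uniform $h_\epsilon$ estimate with \eref{he14} and absolute continuity; the piece $(I-a_\epsilon(t))(\hat C(t)-\hat C(0))$ is treated by the same multiplier inequality together with $H_a$-continuity of $\hat C$ at $0$. The delicate third piece $(I-a_\epsilon(t))\hat C(0)$ is where I would invoke the strong-continuity statement of Lemma \ref{strcontb} for $\G_{1,3}$ acting on $H_a$: this requires $g_\epsilon(t)^{-1}\to I_\V$ in the metric $\rho_3$, i.e.\ $\|h_\epsilon(t)\|_3 \to 0$ and $\|g_\epsilon(t)^{-1}-I_\V\|_2 \to 0$ uniformly in $\epsilon\le t$, both of which are already in hand from \eref{he13} at $q=3$ and \eref{rec103}. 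This closes the loop: $L^3$ vanishing of $h_\epsilon$ is precisely what is needed to feed strong continuity of the gauge group action on $H_a$ and upgrade \eref{he13} to \eref{he15}.
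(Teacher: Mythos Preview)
Your proof is correct and follows essentially the same approach as the paper: the $L^q$ statements come from pointwise isometry of $a_\epsilon$ plus continuity of $\hat C$ into $H_a\hookrightarrow L^q$ and \eref{he11}, while the $H_a$ statements bootstrap off the $L^3$ bound \eref{he12} (via the multiplier estimate \eref{gp3}) and the $L^3$ vanishing \eref{he13} (via the strong continuity of Lemma \ref{strcontb}), exactly as you describe. Your three-term decomposition of $\hat C(\epsilon)-a_\epsilon(t)\hat C(t)$ differs cosmetically from the paper's (the paper writes $(\hat C(\epsilon)-\hat C(0)) + (1-a_\epsilon(t))\hat C(0) + a_\epsilon(t)(\hat C(0)-\hat C(t))$), but both isolate the same fixed-vector piece $(I-a_\epsilon(t))\hat C(0)$ on which strong continuity is invoked, and both handle the remainder by continuity of $\hat C$ at zero together with a uniform operator bound on $a_\epsilon(t)$ or $I-a_\epsilon(t)$.
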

The order in \eref{he12} - \eref{he15} reflects the order in which the proof proceeds.

\begin{corollary}\label{corhest3} 
Suppose that $M$ is as in the statement of Theorem \ref{thmgconv}.
Let $C(\cdot)$ be a strong solution to the augmented Yang-Mills heat equation \eref{aymh} over $[0,T]$ for some $T < \infty$.  Define $\chi(s)$ as in \eref{h33a}. Then
\begin{align}
\int_{\epsilon_0}^T \| \chi(s)\|_3ds &< \infty\ \ \text{for any}\ \ \epsilon_0 >0 \ \ \ 
                                                                                                             \text{and}\label{he316}\\
\int_{\epsilon_0}^T \| \chi(s)\|_{H_1} ds &< \infty\ \ \text{for any}\ \ \epsilon_0 >0.    \label{he16}
\end{align}
\end{corollary}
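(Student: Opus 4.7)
The corollary drops both the regularity assumption on the initial data and the finite-action hypothesis of Theorem \ref{thmhest1}, but in exchange weakens the conclusion by integrating only over $[\epsilon_0, T]$, away from the initial singularity. The natural strategy is therefore to restart the solution at $\tau := \epsilon_0/2$, at which time the strong-solution property forces $C(\tau) \in H_1$, and then reduce to results already available for $H_1$ initial data. Define $\tilde C(s) := C(s + \tau)$ on $[0, T - \tau]$. Condition b) of Definition \ref{defstrsol}, applied to $C$, gives $\tilde C(0) \in W_1 \subset H_1 \subset H_a$ for every $a \in [1/2, 1]$, and $\tilde C$ is again a strong solution of \eref{aymh} on $[0, T-\tau]$.

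For the $L^3$ bound \eref{he316} I would fix any $a \in (1/2, 1)$ and verify that $\tilde C$ satisfies the hypotheses of Theorem \ref{thmhest1}. Continuity of $C:(0,T) \to W_1$ implies $\sup_{[0,\,T-\tau]}\|\tilde C(s)\|_{H_1} < \infty$, so
\begin{equation*}
\int_0^{T-\tau} s^{-a}\,\|\tilde C(s)\|_{H_1}^2\, ds \; \le \; \Big(\sup_{[0,\,T-\tau]}\|\tilde C(s)\|_{H_1}^2\Big) \int_0^{T-\tau} s^{-a}\, ds \; < \; \infty
\end{equation*}
since $a<1$, and $\tilde C$ has finite strong $a$-action. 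Applying \eref{he11} of Theorem \ref{thmhest1} with $q = 3$ (admissible because $q_a \ge 3$ throughout $a \in [1/2, 1)$), and then translating back by the time shift, yields $\int_\tau^T \|\chi(s)\|_3\, ds < \infty$, which is stronger than \eref{he316}.

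The $H_1$ bound \eref{he16} does not follow from Theorem \ref{thmhest1} alone, since that theorem yields only $H_a$ integrability with $a < 1$. For this I would appeal instead to the $H_1$-theory for the augmented equation with $H_1$ initial data established in \cite{CG1}, together with the uniqueness clause of Theorem \ref{thmaug}. This identifies $\tilde C$ with the smooth $H_1$-strong solution starting from $\tilde C(0)$, provides uniform $H_k$-bounds for every $k$ on the compact interval $[\epsilon_0, T]$, and extends this regularity up to (and past) $s = T$. Each summand of $\chi = [\hat C, \phi] - P^\perp(d_C^* B_C + [C, \phi])$ is then bounded in $H_1$: the commutator brackets by standard Sobolev and algebra-type estimates, and the projected term by Lemma \ref{lemvert}, which gives $\|P^\perp u\|_{H_1} \le 2\|u\|_{H_1}$. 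Consequently $\|\chi(s)\|_{H_1}$ is uniformly bounded on $[\epsilon_0, T]$, and \eref{he16} follows. The principal obstacle I anticipate is verifying enough regularity of $\tilde C$ up to the endpoint $s = T - \tau$; once this is secured from the $H_1$-existence theory for the augmented equation, the remaining bounds are routine pointwise estimates on a smooth function.
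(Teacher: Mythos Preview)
Your treatment of \eref{he316} matches the paper's: shift the origin to $\epsilon_0/2$, observe that continuity of $C$ into $H_1$ on $[\epsilon_0/2,T]$ forces finite strong $a$-action there for any $a\in[1/2,1)$, and invoke \eref{he11} with $q=3$.

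For \eref{he16} your route diverges from the paper's, and the gap you flag is real rather than cosmetic. You want a \emph{pointwise} bound $\sup_{[\epsilon_0,T]}\|\chi(s)\|_{H_1}<\infty$, and for this you need $\|d_C^*B_C(s)\|_{H_1}$, which is third-order in $C$. Two problems: first, the smoothness statement available here is $C\in C^\infty\big((0,T)\times M\big)$ on the open interval, and promoting this to uniform control at $s=T$ requires an extension of $C$ past $T$ via the $H_1$ theory for the \emph{augmented} equation, which you do not actually carry out; second, when $M=\R^3$, spatial $C^\infty$ smoothness says nothing about $H_k(\R^3)$ membership, so ``routine pointwise estimates on a smooth function'' do not yield the Sobolev bounds you claim. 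The paper confirms this obstruction in Remark~\ref{remchi}: pointwise boundedness of $\|\chi(s)\|_{H_1}$ would require third-order energy estimates not developed here.

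The paper sidesteps both issues by \emph{not} bounding $\|\chi(s)\|_{H_1}$ pointwise. It applies Lemma~\ref{lemhe11} with $b=1$, $p=2$, $r=3$ to obtain
\[
\|\chi(s)\|_{H_1}\le c_5\|C(s)\|_{H_1}\big(\|d_C^*B_C(s)\|_3+\|d\phi(s)\|_3\big)+\|d_C^*B_C(s)\|_2+3c\kappa_6\|C(s)\|_{H_1}\|\phi(s)\|_3,
\]
and then integrates each product over $[\epsilon_0,T]$ via Cauchy--Schwarz in $s$ and the interpolation $\|u\|_3^2\le\|u\|_2\|u\|_6$, reducing everything to the \emph{second}-order energy integrals \eref{vs38b}, \eref{vs641a}, \eref{vs641b} already established. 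The factor $s^{2a-3/2}$ that appears is bounded on $[\epsilon_0,T]$ (and would not be integrable down to $s=0$, which is exactly why the restriction to $s\ge\epsilon_0$ is needed). This stays entirely within the toolkit of the present paper and works uniformly for bounded $M$ and for $\R^3$.
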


\begin{remark}\label{strat4}{\rm (Strategy)  
A proof of the main inequality \eref{he10}
 will require a bound on  $\|D^a \w\|_2$ when $\w = \chi(s)$ and $ 1/2 \le a <1$.
 This fractional derivative cannot be computed directly. Instead we will compute first order
 derivatives, $d\w$ and $d^*\w$ and make estimates of their $L^p$ norms for  ``small''  $p$,
  i.e. $ p < 2$. Then we will implement the heuristic 
  $\|D^a \w\|_2 = \|D^{a-1} D\w\|_2 \le \|D^{a-1}\|_{L^p \rightarrow L^2} \|D\w\|_p$, wherein
  the last inequality is a Sobolev inequality.
}
\end{remark}

  \begin{lemma}\label{lemhe3k} $($Riesz avoidance$)$
  Let $0 \le b \le 1$. Define $p$ in the interval $[6/5, 2]$ by 
 \beq
 \frac{1}{2} = \frac{1}{p} - \frac{(1-b)}{3}  .     \label{h513}
 \eeq  
 If $\w$ is a 1-form in $L^p$ with $d\w \in L^p$ and $d^*\w \in L^p$  then $ \w \in H_b$.
  There is a Sobolev constant   $\kappa_{p,2}$ such that
 \beq
 \|\w\|_{H_b} \le \kappa_{p,2} \Big(\|d\w\|_p + \| d^* \w\|_p\Big)  
            + min(\kappa_{p,2} \|\w\|_p, \|\w\|_2) .                    \label{h514}
 \eeq 
 In particular, if $\w = P^\perp \mu$ for some 1-form $\mu$ then
 \beq
 \|\w\|_{H_b} \le \kappa_{p,2} \|d^*\mu\|_p +\|P^\perp \mu\|_2  .       \label{h517}
 \eeq
 \end{lemma}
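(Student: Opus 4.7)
The plan is to reduce the estimate to an $L^2$ duality argument that bypasses any Riesz-transform bound on $L^p$. With $D=(1-\Delta)^{1/2}$, one has
\begin{equation*}
\|\w\|_{H_b} = \|D^b\w\|_2 = \sup_{\eta:\,\|\eta\|_2\le 1}(\w, D^b\eta).
\end{equation*}
Inside the pairing I would insert the Hodge identity $D^2 = 1+d^*d+dd^*$ on $\kf$-valued 1-forms from Definition \ref{defbc}, write $D^b\eta = D^{b-2}\eta + D^{b-2}d^*d\eta+D^{b-2}dd^*\eta$, and integrate by parts, using the intertwining of $d$ and $d^*$ with the Laplacian on forms of adjacent degree (automatic once $\eta$ is in a core of the 1-form domain under the matched absolute or relative boundary conditions). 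This produces the decomposition
\begin{equation*}
(\w, D^b\eta) = (\w, D^{b-2}\eta) + (d\w, D^{b-2}d\eta) + (d^*\w, D^{b-2}d^*\eta),
\end{equation*}
where $D^{b-2}$ is now read on 2-forms in the middle term and on 0-forms in the last.

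For the estimates I would split $D^{b-2}=D^{-(1-b)}\cdot D^{-1}$ and use two ingredients. First, the relation \eref{h513} is exactly the Sobolev condition that $D^{-(1-b)}\colon L^2\to L^{p'}$ is bounded with norm $\kappa_{p,2}$, so for any form $\alpha$ one has $\|D^{b-2}\alpha\|_{p'}\le\kappa_{p,2}\|D^{-1}\alpha\|_2$. Second, the operator inequality $d^*d\le D^2$ on 1-forms, immediate from the non-negativity of $1+dd^*$, together with the intertwining gives
\begin{equation*}
\|D^{-1}d\eta\|_2^2 = (D^{-2}d^*d\eta,\eta) \le \|\eta\|_2^2,
\end{equation*}
and likewise $\|D^{-1}d^*\eta\|_2\le\|\eta\|_2$. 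Applying H\"older with exponents $p,p'$ to the last two terms of the decomposition therefore contributes at most $\kappa_{p,2}(\|d\w\|_p+\|d^*\w\|_p)\|\eta\|_2$. For the zeroth-order pairing $(\w, D^{b-2}\eta)$ the same scheme yields $\kappa_{p,2}\|\w\|_p\|\eta\|_2$, while the crude bound $\|D^{b-2}\eta\|_2\le\|\eta\|_2$ (valid since $b\le 2$) yields $\|\w\|_2\|\eta\|_2$; taking whichever is smaller produces the $\min$ in \eref{h514}. The specialization \eref{h517} is then immediate from Lemma \ref{lemvert}, which gives $dP^\perp\mu=0$ and $d^*P^\perp\mu=d^*\mu$, so only the $d^*$-term and the zeroth-order $\|P^\perp\mu\|_2$ survive.

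The main obstacle will be justifying the integration by parts and the intertwining relations $d\,D^{-2}|_{\L^1}=D^{-2}|_{\L^2}\,d$, and its $d^*$-analogue to $\L^0$, in the presence of the Neumann or Dirichlet boundary conditions built into the 1-form Laplacian. For the absolute/relative boundary conditions used in the paper this is standard Hodge theory, but the cleanest presentation is to establish \eref{h514} first for $\eta$ in a smooth core of the 1-form domain for which $d\eta$ and $d^*\eta$ lie in the corresponding cores of the 2-form and 0-form domains, verifying that the cross-boundary integrands vanish, and then pass to a general $\eta\in L^2$ by density; since the resulting bound is in terms of $\|\eta\|_2$ alone, the supremum over unit $\eta$ extends without further work.
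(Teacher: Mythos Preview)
Your argument is correct and rests on exactly the same three ingredients as the paper's proof: the Hodge identity $D^2=1+d^*d+dd^*$, the intertwining of $d,d^*$ with the form Laplacians on adjacent degrees, and the Sobolev bound for $D^{-(1-b)}$ determined by \eref{h513}. The only difference is packaging: the paper works directly, applying the identity $\|\alpha\|_2^2=\|dD^{-1}\alpha\|_2^2+\|d^*D^{-1}\alpha\|_2^2+\|D^{-1}\alpha\|_2^2$ to $\alpha=D^b\w$, then intertwining to obtain $\|D_2^{b-1}d\w\|_2^2+\|D_0^{b-1}d^*\w\|_2^2+\|D_1^{b-1}\w\|_2^2$ and invoking $\|D_j^{b-1}\|_{L^p\to L^2}\le\kappa_{p,2}$, which avoids your duality step and the separate bound $\|D^{-1}d\eta\|_2\le\|\eta\|_2$ but is otherwise the same computation read in the non-dualized direction.
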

         \begin{proof} Let
  \beq
  D_j =  (d^*d + dd^* +1)^{1/2}\ \  \text{on j-forms},\ \ j = 0,1,2.
  \eeq
 For any 1-form $\w$ there holds
\begin{align}
\|\w\|_2^2 = \| dD_1^{-1} \w\|_2^2 + \|d^* D_1^{-1} \w\|_2^2 +\|D_1^{-1}\w\|_2^2,      \label{h510}
\end{align}
as follows from the computation
\begin{align*}
(D_1^{-1} d^*d &D_1^{-1} \w, \w) + (D_1^{-1}dd^* D_1^{-1}\w, \w)  +(D_1^{-2}\w, \w) \\
&= (D_1^{-1}D_1^2 D_1^{-1}, \w, \w) = \| \w\|_2^2.
\end{align*}
       Hence
\begin{align}
\| \w \|_{H_b}^2 &=\|D_1^b \w\|_2^2     \notag\\
 &= \|dD_1^{-1} D_1^b \w \|_2^2 +\|d^*D_1^{-1}D_1^b  \w \|_2^2 
                      +\|D_1^{-1} D_1^b \w\|_2^2                                          \notag\\
&=\|D_2^{b-1} d\w\|_2^2 + \|D_0^{b-1} d^* \w\|_2^2  +\|D_1^{b-1} \w\|_2^2\notag\\
&\le \|D_2^{b-1}\|_{p\rightarrow 2}^2  \|d \w \|_p^2 
                  + \| D_0^{b-1}\|_{p \rightarrow 2}^2 \| d^*\w\|_p^2   + \|D_1^{b-1} \w\|_2^2. \notag
 \end{align}
 Let $\kappa_{p,2} =  max(  \|D_j^{a-1}\|_{p\rightarrow 2}^2, j=0,1,2 )$. 
 Since $D_1^{b-1}$ is both a contraction on $L^2$ and bounded from $L^p$ to $L^2$ we have
 $\| D_1^{b-1} \w\|_2 \le  min(\kappa_{p,2} \|\w\|_p, \|\w\|_2)$. 
 Therefore
 \begin{align}
 \| \w \|_{H_b}^2 \le  \kappa_{p,2}^2 (\|d \w\|_p^2 + \|d^* \w\|_p^2 )      \notag
 +  min(\kappa_{p,2} \|\w\|_p, \|\w\|_2)^2,
 \end{align}
 which implies \eref{h514}. 
        In case $\w = P^\perp \mu$ we have $d\w = 0$ and $d^*\w = d^*\mu$ by 
        Lemma \ref{lemvert}. The inequality
    \eref{h514} therefore implies  \eref{h517} in this case. 
\end{proof}

\subsubsection{Proof of Theorem \ref{thmhest1}}

\begin{lemma}  \label{lemhe10} $($Low $p$$)$
Let $3/2 \le p \le 3$ and $(1/r) = (1/p) - (1/6)$. Then 
\begin{align}
 \| d^*\chi(s)\|_ p + \|d\chi(s)\|_p 
            \le c_5\|C(s)\|_{H_1}\Big( \|d_C^* B_C(s)\|_r + \|d\phi(s)\|_r\Big)         \label{he50}
 \end{align}
 for a constant $c_5$ depending only on a Sobolev constant and the commutator bound $c$.
\end{lemma}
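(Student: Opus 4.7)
The plan is to compute $d\chi$ and $d^*\chi$ directly from the definition of $\chi(s)$ in \eqref{h33a} and then bound each resulting term by H\"older's inequality on the exponent split $1/p = 1/6 + 1/r$, placing one factor in $L^6(M)$ (absorbed into $\|C\|_{H_1}$ via Sobolev and the $H_1$-boundedness of $P^\perp$ from Lemma \ref{lemvert}) and the other in $L^r(M)$. The key structural inputs are the projection identities $dP^\perp = 0$ and $d^*P^\perp\omega = d^*\omega$ of Lemma \ref{lemvert}, the scalar-bracket product formula $d^*[X,\phi] = [d^*X, \phi] + [X \lrc d\phi]$ derived as in the calculation preceding \eqref{vs510.1}, and the Bianchi identity $d_C B_C = 0$.

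For $d\chi$ the situation is immediate: $dP^\perp = 0$ annihilates the projected piece of $\chi$, so $d\chi = d[\hat C, \phi]$, and since $d\hat C = dP^\perp C = 0$ as well, the Leibniz rule collapses this to $d\chi = -[\hat C \wedge d\phi]$. Hence $\|d\chi\|_p \le c\|\hat C\|_6\|d\phi\|_r \le 2c\kappa_6\|C\|_{H_1}\|d\phi\|_r$, contributing the $\|d\phi\|_r$ piece on the right of \eqref{he50}. For $d^*\chi$ one first uses $d^*P^\perp = d^*$ and $(d^*)^2 = 0$ to write $d^*\chi = d^*[\hat C, \phi] - d^*[C,\phi] - d^*[C \lrc B_C]$. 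The scalar-bracket product formula combined with $d^*\hat C = d^*C = \phi$ and $[\phi,\phi] = 0$ reduces the first two terms to $[(\hat C - C) \lrc d\phi]$, whose $L^p$ norm is again controlled by $c'\|C\|_{H_1}\|d\phi\|_r$ via the same Sobolev-H\"older argument.

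The main technical task is to estimate $\|d^*[C \lrc B_C]\|_p$ by $c\|C\|_{H_1}\|d_C^* B_C\|_r$. Writing the Leibniz expansion in components and using the antisymmetry $B_{ij} = -B_{ji}$ converts the combination $\partial_j C_i - \partial_i C_j$ into $-(dC)_{ij}$; then the substitutions $dC = B_C - \frac{1}{2}[C \wedge C]$ and $d^*B_C = d_C^* B_C - [C \lrc B_C]$ isolate the dominant piece $[C \lrc d_C^* B_C]$, bounded directly by $c\kappa_6\|C\|_{H_1}\|d_C^* B_C\|_r$. The purely $B_C$-quadratic contribution that would otherwise appear vanishes because $[x,x] = 0$ in $\kf$, and the surviving remainders are cubic in $C$, of the shape $[C \lrc [C \lrc B_C]]$ together with the $\kf$-Lie pairing $\langle [C \wedge C], B_C \rangle_\kf$. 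The main obstacle will be absorbing these cubic remainders into the target form with a constant $c_5$ independent of $C$: the plan is to invoke an $L^p$ Gaffney-Friedrichs bound for $B_C$, in which the $dB_C$ term is eliminated by Bianchi, so that $\|B_C\|$ at the Sobolev exponent forced by the H\"older arithmetic is controlled by $\|d_C^* B_C\|_r$ plus lower order, and then to redistribute the remaining $C$-factors using $\|C\|_6 \le \kappa_6\|C\|_{H_1}$, checking that the arithmetic closes uniformly over $p \in [3/2, 3]$.
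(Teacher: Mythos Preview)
Your treatment of $d\chi$ and of the two $[\,\cdot\,,\phi]$ contributions to $d^*\chi$ matches the paper. The gap is in the third piece. You expand $d^*d_C^*B_C = d^*[C\lrc B_C]$ componentwise, substitute $dC = B_C - \tfrac12[C\wedge C]$ and $d^*B_C = d_C^*B_C - [C\lrc B_C]$, and are left with cubic remainders $[C\lrc[C\lrc B_C]]$ and the $\kf$-pairing $\langle[C\wedge C],B_C\rangle$. Your plan to absorb these via an $L^p$ Gaffney--Friedrichs inequality for $B_C$ cannot give the stated lemma: any such bound takes the form $\|B_C\|_q \lesssim \|d_C^*B_C\|_r + \lambda(B_C)^{1/2}\|B_C\|_r$ (cf.\ \eqref{gaf50}), and after the H\"older split $\|C\|_6^2\|B_C\|_q$ you would obtain a bound with an extra factor $\|C\|_{H_1}$ and with the lower-order term carrying $\lambda(B_C)$. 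Neither can be absorbed into a constant $c_5$ depending only on Sobolev constants and the commutator bound $c$.

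The resolution is that your cubic remainders cancel \emph{exactly}. The paper bypasses the whole issue by writing
\[
d^*d_C^*B_C \;=\; d_C^*d_C^*B_C - [C\lrc d_C^*B_C] \;=\; -[C\lrc d_C^*B_C],
\]
since $(d_C^*)^2 B_C = -\tfrac12\sum_{i,j}[(B_C)_{ij},(B_C)_{ij}] = 0$ (each summand is $[x,x]=0$ in $\kf$). Equivalently, in your expansion the Jacobi identity forces $[C\lrc[C\lrc B_C]]$ to match the $\langle[C\wedge C],B_C\rangle$ contribution. With this in hand, $d^*\chi(s) = [\hat C\lrc d\phi] + [C\lrc(d_C^*B_C - d\phi)]$ exactly, and H\"older on $1/p = 1/6 + 1/r$ gives \eqref{he50} with $c_5 = 5c\kappa_6$. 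No Gaffney--Friedrichs step is needed here at all.
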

       \begin{proof}
       The first of the following three identities
\begin{align}
d[\hat C(s), \phi(s)] &= - [\hat C(s) \wedge d\phi(s)],    \label{h115}\\
d^*[\hat C(s), \phi(s)] & = [\hat C(s) \lrc d\phi(s)],        \label{h116} \\
    d^*\Big(d_C^*B_C + [C, \phi]\Big) &= -\Big[C\lrc\Big(d_C^* B_C - d\phi\Big)\Big],   \label{h30c}      
\end{align}
follows from the product rule: 
$d[\hat C(s), \phi(s)] = [d\hat C(s), \phi(s)] - [\hat C(s) \wedge d\phi(s)] 
                      =- [\hat C(s) \wedge d\phi(s)]$
 because $d\hat C(s) = 0$ by \eref{vert2}.        
 Since $[d^*\hat C, \phi] = [d^* C, \phi] = [\phi, \phi] =0$, the second identity 
 follows from the product rule  $ d^* [\hat C, \phi] = [d^* \hat C, \phi] + [C\lrc d\phi]$. 
In the third identity the second term is $d^* [C, \phi] = [C\lrc d\phi]$, while the Bianchi identity
gives  $d^*d_C^*B_C = d_C^*d_C^*B_C - [C\lrc d_C^*B_C] =- [C\lrc d_C^*B_C]$.
      
    Since $d(P^\perp\w) =0$ for any 1-form $\w \in L^2$, these three identities show that 
    \begin{align}
  \|d\chi(s)\|_p &=\|d[\hat C(s), \phi(s)] - d P^\perp\Big(d_C^*B_C + [C, \phi]\Big)\|_p\notag\\
  &=\|d[\hat C(s), \phi(s)] \|_p\notag\\
    & = \|\, [\hat C(s) \wedge d\phi(s)]\, \|_p \notag\\
    & \le c \|\hat C(s)\|_6 \| d\phi(s)\|_r,                           \label{he55}
    \end{align}
    and also
    \begin{align}
  \|d^*\chi(s)\|_p &= \| d^*[\hat C(s), \phi(s)] - d^*\Big(d_C^*B_C + [C, \phi]\Big)\|_p     \notag\\
  &=\|\, [\hat C(s) \lrc d\phi(s)] +\Big[C\lrc\Big(d_C^* B_C - d\phi\Big)\Big] \|_p                \notag\\
 & \le c\|\hat C(s)\|_6 \|d\phi(s)\|_r + c\|C(s)\|_6 \|\Big(d_C^* B_C - d\phi\Big)\|_r. \label{he56}
  \end{align}
  Since $P^\perp$ is a bounded operator from $H_1$ to $H_1$ (with bound at most $\sqrt{2}$)
  we have $\|\hat C(s)\|_6 \le \kappa_6 \| \hat C(s)\|_{H_1} \le 2 \kappa_6 \| C(s)\|_{H_1}$
  and also $\| C(s)\|_6 \le \kappa_6 \| C(s)\|_{H_1}$. Insert these bounds into \eref{he55} and \eref{he56}
  to find  $\|d\chi(s)\|_p \le 2c\kappa_6\|C(s)\|_{H_1} \|d\phi(s)\|_r$ and
  $\|d^*\chi(s)\|_p \le 2c\kappa_6\|C(s)\|_{H_1} \|d\phi(s)\|_r 
  +c\kappa_6 \| C(s)\|_{H_1}\Big(\|d_C^* B_C(s)\|_r + \|d\phi(s)\|_r\Big) $.
  Add to arrive at \eref{he50} with $c_5 = 5c\kappa_6$.
\end{proof}

        \begin{lemma}  \label{lemhe11} Let $3/2 \le p \le 2$. Define $(1/r) = (1/p) - (1/6)$
 and define $b$ by  \eref{h513}. 
Then 
\begin{align}
\|\chi(s) \|_{H_b} 
 &\le \kappa_{p,2}  c_5\|C(s)\|_{H_1}\Big( \|d_C^* B_C(s)\|_r + \|d\phi(s)\|_r\Big)   \label{he57}\\
&+ \| d_C^*B_C(s) \|_2 + 3c\kappa_6 \|C(s)\|_{H_1} \| \phi(s)\|_3       .                   \label{he58}
\end{align}
\end{lemma}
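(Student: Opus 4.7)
The plan is to apply the Riesz avoidance Lemma \ref{lemhe3k} directly to $\w = \chi(s)$ with the stated parameters $p \in [3/2,2]$ and $b$ defined by \eref{h513}, which gives
\[
\|\chi(s)\|_{H_b} \le \kappa_{p,2}\Big(\|d\chi(s)\|_p + \|d^*\chi(s)\|_p\Big) + \|\chi(s)\|_2.
\]
The first-derivative $L^p$ bound on the right is exactly the content of Lemma \ref{lemhe10}, which produces the term \eref{he57}. So the only real work left is to bound $\|\chi(s)\|_2$ by the expression on line \eref{he58}.

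For that $L^2$ estimate, I split
\[
\chi(s) = [\hat C(s),\phi(s)] - P^\perp\Big(d_C^*B_C(s) + [C(s),\phi(s)]\Big),
\]
use the triangle inequality, and use that $P^\perp$ is a contraction on $L^2$. This yields
\[
\|\chi(s)\|_2 \le \|\,[\hat C(s),\phi(s)]\,\|_2 + \|d_C^*B_C(s)\|_2 + \|\,[C(s),\phi(s)]\,\|_2.
\]
The two commutator terms are estimated by H\"older and Sobolev: $\|[\hat C,\phi]\|_2 \le c\|\hat C\|_6\|\phi\|_3$ and $\|[C,\phi]\|_2 \le c\|C\|_6\|\phi\|_3$. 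Since $P^\perp$ has operator norm at most $\sqrt{2}$ from $H_1$ to $H_1$ by Lemma \ref{lemvert}, one gets $\|\hat C\|_6 \le 2\kappa_6\|C\|_{H_1}$ and $\|C\|_6 \le \kappa_6\|C\|_{H_1}$, so the sum of the two commutator contributions is bounded by $(2c\kappa_6+c\kappa_6)\|C\|_{H_1}\|\phi\|_3 = 3c\kappa_6\|C\|_{H_1}\|\phi\|_3$, which is precisely the non-$d_C^*B_C$ term on line \eref{he58}.

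Putting the pieces together (the Riesz avoidance bound, Lemma \ref{lemhe10}, and the $L^2$ estimate just derived) gives the two-line inequality \eref{he57}--\eref{he58}. No step should be a genuine obstacle here: the Riesz avoidance lemma and the first-order identities \eref{h115}--\eref{h30c} have already done the heavy lifting, and the only mild subtlety is being careful with the factor of $\sqrt{2}$ coming from the $H_1 \to H_1$ bound on $P^\perp$ when estimating $\|\hat C\|_6$, so that the constant $3c\kappa_6$ in \eref{he58} comes out exactly as stated rather than something larger.
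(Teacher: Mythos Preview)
Your proposal is correct and follows essentially the same approach as the paper: apply the Riesz avoidance inequality \eref{h514} with $\w=\chi(s)$, invoke Lemma \ref{lemhe10} for the $L^p$ derivative terms, and then bound $\|\chi(s)\|_2$ by the triangle inequality together with the $L^2$ contraction property of $P^\perp$ and the H\"older--Sobolev estimates on the commutators. The paper's organization of the $\|\chi(s)\|_2$ bound is the same as yours, including the use of $\|\hat C\|_6 \le 2\kappa_6\|C\|_{H_1}$ and $\|C\|_6 \le \kappa_6\|C\|_{H_1}$ to produce the constant $3c\kappa_6$.
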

         \begin{proof}  Choose $\w = \chi(s)$ in \eref{h514}  
Then  \eref{h514} 
 and \eref{he50} show  that  
\begin{align*}
\|\chi(s)\|_{H_b}& \le \kappa_{p,2} ( \|d \chi(s) \|_p + \| d^* \chi(s) \|_p ) + \| \chi(s)\|_2 \\
&\le \kappa_{p,2} c_5 \|C(s) \|_{H_1}  \Big( \|d_C^* B_C(s)\|_r + \|d\phi(s)\|_r\Big)   + \| \chi(s)\|_2.
\end{align*}
 But 
 \begin{align*}
 \|\chi(s)\|_2 &= \|\, [\hat C(s), \phi(s)] - P^\perp(d_C^* B_C(s) + [C(s), \phi(s)])\, \|_2 \\ 
 &\le \|\, [\hat C(s), \phi(s)]\, \|_2 + \| d_C^* B_C(s) + [C(s), \phi(s)] \|_2 \\
 &\le c\Big(\|\hat C(s)\|_6 + \| C(s)\|_6\Big) \|\phi(s)\|_3 +\| d_C^* B_C(s)\|_2 \\
 &\le 3\kappa_6 \| C(s)\|_{H_1} \| \phi(s)\|_3 + \| d_C^* B_C(s)\|_2,
\end{align*}
wherein  the two $L^6$ norms have been estimated in the last line just 
as in the proof of Lemma \ref{lemhe10}.
\end{proof}

           \begin{lemma}\label{lemhe12} Let  $1/2 \le a \le 1$. Define 
 \beq
 r_a^{-1} = (2/3) - (a/3).                                       \label{he19r}
 \eeq
  Then,
 for any 1-form $u(s)$ over $M$ there holds
  \begin{align}
  \int_0^Ts^a \|u(s)\|_{r_a}^2ds \le T^{a-(1/2)} \Big(\int_0^T s^{1-a} \|u(s)\|_2^2ds\Big)^\alpha 
  \Big(\int_0^T s^{2-a} \|u(s)\|_6^2ds \Big)^\beta     \label{he19a}
  \end{align} 
  where  $\alpha = (3/2) - a$ and $\beta = a - (1/2)$.
 \end{lemma}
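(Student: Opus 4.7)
The proof should be a routine interpolation-plus-Hölder argument. The plan is to first recognize that $r_a$ is exactly the interpolation exponent between $2$ and $6$ with weights $\alpha$ and $\beta$: from the defining relation $1/r_a = (2-a)/3$ and the identity $\alpha + \beta = 1$, a direct check gives $(1-\theta)/2 + \theta/6 = 1/r_a$ with $\theta = \beta = a-1/2$ and $1-\theta = \alpha = 3/2-a$. Consequently the standard log-convexity of $L^p$ norms yields the pointwise (in $s$) bound
\begin{equation*}
\|u(s)\|_{r_a}^{2} \le \|u(s)\|_{2}^{2\alpha}\,\|u(s)\|_{6}^{2\beta}.
\end{equation*}

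Next I would split the weight $s^a$ so that the factors inside the prospective Hölder application match the weights on the right-hand side of \eqref{he19a}. Writing $s^{a} = s^{a-1/2}\cdot s^{\alpha(1-a)}\cdot s^{\beta(2-a)}$ (one checks directly that $\alpha(1-a)+\beta(2-a) = 1/2$, hence the leftover power is $s^{a-1/2}$), the integrand becomes
\begin{equation*}
s^{a}\|u(s)\|_{r_a}^{2} \le s^{a-1/2}\bigl(s^{1-a}\|u(s)\|_{2}^{2}\bigr)^{\alpha}\bigl(s^{2-a}\|u(s)\|_{6}^{2}\bigr)^{\beta}.
\end{equation*}
Since $a \ge 1/2$, the prefactor $s^{a-1/2}$ is uniformly bounded on $(0,T]$ by $T^{a-1/2}$, which can be pulled outside the integral.

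Finally, applying Hölder's inequality on $(0,T)$ with the conjugate exponents $1/\alpha$ and $1/\beta$ (these are conjugate because $\alpha+\beta=1$, both lying in $[1/2,1]$ when $a\in[1/2,1]$) gives
\begin{equation*}
\int_0^T \bigl(s^{1-a}\|u\|_{2}^{2}\bigr)^{\alpha}\bigl(s^{2-a}\|u\|_{6}^{2}\bigr)^{\beta} ds \le \Bigl(\int_0^T s^{1-a}\|u\|_2^2\,ds\Bigr)^{\alpha}\Bigl(\int_0^T s^{2-a}\|u\|_6^2\,ds\Bigr)^{\beta},
\end{equation*}
which combined with the previous step yields \eqref{he19a}. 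There is no real obstacle here: the only arithmetic to double-check is the bookkeeping of the exponents of $s$, and the endpoint $a=1/2$ (where $\beta = 0$, $\alpha = 1$, and the claim degenerates to $\int_0^T s^{1/2}\|u\|_{3}^2 ds \le \int_0^T s^{1/2}\|u\|_2^2 ds$ via $\|u\|_3 \le \|u\|_2$ which is false in general, so one must interpret the degenerate factor correctly) and $a=1$ (where $\alpha=1/2$, $\beta=1/2$, $r_a = 3$) are both handled uniformly by treating the boundary case as a limit or by noting that when $\beta=0$ one interprets $\|u\|_6^{2\beta}=1$ and the claim reduces to the trivial $\|u\|_2 \le \|u\|_2$ after absorbing $T^{a-1/2}=1$.
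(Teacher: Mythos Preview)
Your proof is correct and essentially identical to the paper's: interpolate $\|u\|_{r_a}^2 \le \|u\|_2^{2\alpha}\|u\|_6^{2\beta}$, split the weight $s^a = s^{a-1/2}\cdot s^{\alpha(1-a)}\cdot s^{\beta(2-a)}$, pull out $T^{a-1/2}$, and apply H\"older with conjugate exponents $1/\alpha,\,1/\beta$. Your endpoint discussion at $a=1/2$ is muddled, though: there $r_a^{-1}=(2-1/2)/3=1/2$ so $r_a=2$ (not $3$), and the inequality collapses trivially to $\int_0^T s^{1/2}\|u\|_2^2\,ds \le \int_0^T s^{1/2}\|u\|_2^2\,ds$ with the $L^6$ factor interpreted as~$1$, exactly as your final clause says.
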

           \begin{proof}  
 The standard interpolation   formula 
  \beq
  \| \psi\|_r \le \|\psi\|_2^\alpha \|\psi\|_6^\beta,  \label{h114}
  \eeq
 is valid for $ 2 \le r \le 6$,  $\alpha = (3/r) - (1/2)$ and  $\beta = (3/2) - (3/r)$.
 For  $ 2 \le r \le 6$    both $\alpha$ and $\beta$ are non-negative and  $\alpha + \beta= 1$. 
Take $\psi = u(s)$ and observe that
 \begin{align}
  s^a \|u(s)\|_r^2 \le s^{a -\gamma} \Big(s^{1-a} \|u(s)\|_2^2\Big)^\alpha 
  \Big( s^{2-a} \|u(s)\|_6^2 \Big)^\beta,         \label{he19}          
  \end{align}
  where $\gamma =(1-a) \alpha + (2-a) \beta  =1-a + \beta = (5/2) - a -(3/r)$.
  In case $r = r_a$ we therefore have $a -\gamma  = a -(1/2)$. Since $a - (1/2) \ge 0$
 the first factor in \eref{he19} has a non-negative exponent. Integrate both sides of \eref{he19}
 over $(0, T]$, taking the maximum of the first factor out, and use H\"older's inequality 
 to arrive at \eref{he19a}. 
 \end{proof}

\bigskip
\noindent 
 \begin{proof} [Proof of Theorem \ref{thmhest1}]
 Choosing $b = a$ in Lemma \ref{lemhe11}, we need only show that
 each of the four terms on  the 
 right hand side of \eref{he57} + \eref{he58}  is integrable over $(0, T]$. 
 Since $1/2 \le a <1$ we have $3/2 \le p < 2$.
 The value of $r$ determined in Lemma \ref{lemhe11} for the value $b=a$
 is given by $(1/r) = (1/p) - (1/6) = (1/2) +(1-a)/3 -(1/6) = (1/r_a)$. Thus we can apply
 Lemma \ref{lemhe12}.

 For the integral of the   first term  in \eref{he57} we find
 \begin{align}
 &\int_0^T\|C(s)\|_{H_1}  \|d_C^* B_C(s)\|_{r_a}  ds      \label{he20} \\
 &\le \Big(\int_0^Ts^{-a} \|C(s)\|_{H_1}^2 ds \Big)^{1/2}
 \Big(\int_0^T s^a \|d_C^* B_C(s)\|_{r_a} ^2 ds\Big)^{1/2}    \notag\\
 &\le \Big(\int_0^Ts^{-a} \|C(s)\|_{H_1}^2 ds \Big)^{1/2} \cdot     \notag \\
&\  \Big\{ T^{a-(1/2)} \Big(\int_0^T s^{1-a} \|d_C^* B_C(s)\|_2^2ds\Big)^\alpha 
  \Big(\int_0^T s^{2-a} \|d_C^* B_C(s)\|_6^2ds \Big)^\beta\Big\}^{1/2}  . \notag
 \end{align}
 All  three integrals are finite, the first by the assumption of finite strong $a$-action,
 the second by the inequality \eref{vs38b}, and the third by the
  inequality \eref{vs641a}. 
  
  The integral of the second term in \eref{he57} can be bounded the same way: One need 
  only replace $\|d_C^*B_C(s)\|_{r_a}$ by $\|d\phi(s)\|_{r_a}$ in the inequalities \eref{he20}. 
   The final step in the integrability
  argument holds again,  in virtue of  the inequalities \eref{vs38b}   and \eref{vs641b}.
  
     Concerning the first term in line \eref{he58}  we have, by \eref{vs561.1} 
     and\eref{vs640a},
      $ \|d_C^*B_C(s)\|_2 \le \|C'(s)\|_2 = o(s^{-1 + (a/2)})$, which is integrable
     over $(0, T]$. The second  term in line \eref{he58} is integrable by virtue of the inequalities
     \linebreak
  $\Big(\int_0^T \|C(s\|_{H_1} \|\phi(s)\|_3 ds\Big)^2 \le  \int_0^T s^{-a} \|C(s)\|_{H_1}^2 ds
  \int_0^T s^a \| \phi(s)\|_3^2 ds$, which is finite in view of  \eref{vs38g}, since $a \ge 1/2$. 
    This proves \eref{he10}. 
  
  Since $\|\chi(s)\|_b$ is dominated by $\|\chi(s)\|_a$ when $b \le a$ it follows that  \linebreak
  $\int_0^T \|\chi(s)\|_{H_b} ds <\infty $ for $1/2 \le b \le a$. Thus if $3 \le q \le q_a$ and 
  $q^{-1} = 2^{-1} - (b/3)$ then Sobolev gives \eref{he11}.
  This completes the proof of Theorem \ref{thmhest1}.
 \end{proof}

\subsubsection{Proof of Theorem \ref{thmhest2}}

The following three lemmas  prove Theorem \ref{thmhest2}.

\begin{lemma}\label{lemhe6} 
 There is a constant $c_{19} < \infty$, 
independent of $\epsilon $ and $t$,  such that
\begin{align}
 \|h_\epsilon(t)\|_{q} \le c_{19}\ \ \ \text{for}\ \ \ 0 <  t \le T,\ 0<\epsilon <T
                           \ \ \text{and}\ \   3 \le q \le q_a     .                                 \label{h83}
\end{align}
Furthermore
\beq
 \sup_{\{\epsilon: \epsilon \le t\}} \|h_{\epsilon}(t)\|_{q} \rightarrow 0
                            \ \ \ \text{as}\ \ t \downarrow 0 \ \ \ \text{for}\ \ \ 3 \le q \le q_a .\label{h86}
 \eeq
 \end{lemma}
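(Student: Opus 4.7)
The plan is to exploit the representation formula \eref{h32} from Proposition \ref{prophrep}, together with the pointwise isometric action of $a_\epsilon(s) = \mathrm{Ad}(g_\epsilon(s)^{-1})$, which gives $\|a_\epsilon(s)u\|_q = \|u\|_q$ for every $\kf$-valued form $u$ and every $q \in [1,\infty]$. Thus, applying the triangle inequality to \eref{h32} yields
\[
\|h_\epsilon(t)\|_q \le \|\hat C(\epsilon)\|_q + \|\hat C(t)\|_q + \int_\epsilon^t \|\chi(s)\|_q\, ds.
\]
For the uniform bound \eref{h83}, first observe that since $C \in C([0,T];H_a)$ and $P^\perp$ is bounded on $H_a$ by Lemma \ref{lemvert2}, the function $\hat C$ is continuous, hence bounded, on $[0,T]$ into $H_a$; by Sobolev $H_a \hookrightarrow L^{q_a}$, so $\sup_{s\in[0,T]} \|\hat C(s)\|_q < \infty$ for $3\le q\le q_a$. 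Combined with $\int_0^T \|\chi(s)\|_q\, ds < \infty$ from \eref{he11}, this immediately gives \eref{h83} with a constant independent of $\epsilon, t$.

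For the vanishing assertion \eref{h86}, I would rewrite
\[
h_\epsilon(t) = \bigl(\hat C(\epsilon) - \hat C(t)\bigr) + \bigl(I - a_\epsilon(t)\bigr)\hat C(t) + \int_\epsilon^t a_\epsilon(s)\chi(s)\,ds
\]
and show each piece tends to zero in $L^q$, uniformly in $\epsilon \in (0,t]$, as $t\downarrow 0$. The first piece goes to zero by continuity of $\hat C$ at $t=0$ (in $H_a$, hence in $L^q$). The last piece is controlled by $\int_0^t \|\chi(s)\|_q\, ds$, which vanishes as $t\downarrow 0$ since $\|\chi\|_q \in L^1(0,T)$.

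The delicate piece is the middle term. I would split $\hat C(t) = \bigl(\hat C(t) - \hat C(0)\bigr) + \hat C(0)$. The contribution from $\hat C(t) - \hat C(0)$ is bounded by $2\|\hat C(t) - \hat C(0)\|_q$, which vanishes by continuity of $\hat C$ at $0$. For the contribution from the fixed element $\hat C(0) = P^\perp A_0 \in L^q$, I invoke the strong continuity of the adjoint representation from Lemma \ref{strcontp}(a): given $\epsilon_0 > 0$, there exists $\delta > 0$ such that $\|g - I_\V\|_2 < \delta$ implies $\|(\mathrm{Ad}\,g - I)\hat C(0)\|_q < \epsilon_0$. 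Since \eref{rec103} (applied to $g_\epsilon^{-1}$, which satisfies the same ODE initial estimate) gives $\sup_{0<\epsilon\le t}\|g_\epsilon(t)^{-1} - I_\V\|_2 \to 0$ as $t\downarrow 0$, the bound applies uniformly in $\epsilon \le t$, yielding $\sup_{\epsilon\le t}\|(I-a_\epsilon(t))\hat C(0)\|_q \to 0$.

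The main obstacle is precisely this last step: because $\hat C(0)$ need not vanish, one cannot conclude purely from pointwise norm-preservation of $a_\epsilon(t)$; the argument requires the fact that the singular initial coefficient $\phi = d^*C$ is integrable enough (via \eref{nd12p} and \eref{vs400a}, as used in the proof of Lemma \ref{lemgconv}) to force $g_\epsilon(t) \to I_\V$ in $L^2$ uniformly in $\epsilon \le t$, after which strong continuity of $\mathrm{Ad}$ on $L^q$ does the rest.
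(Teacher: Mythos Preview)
Your proof is correct and follows essentially the same approach as the paper. Both arguments use the representation \eref{h32}, the isometric action of $a_\epsilon(s)$ on $L^q$, continuity of $\hat C$ on $[0,T]$ into $H_a\hookrightarrow L^q$, the integrability \eref{he11}, and the strong continuity of $\mathrm{Ad}$ on $L^q$ from Lemma \ref{strcontp} together with \eref{rec103}; the only difference is a cosmetic regrouping of the integrated term $\hat C(\epsilon)-a_\epsilon(t)\hat C(t)$ (the paper splits it as $(\hat C(\epsilon)-\hat C(0))+(1-a_\epsilon(t))\hat C(0)+a_\epsilon(t)(\hat C(0)-\hat C(t))$, you as $(\hat C(\epsilon)-\hat C(t))+(I-a_\epsilon(t))(\hat C(t)-\hat C(0))+(I-a_\epsilon(t))\hat C(0)$). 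One small simplification: you don't need to invoke a separate ODE estimate for $g_\epsilon^{-1}$, since $\|g_\epsilon(t)^{-1}-I_\V\|_2=\|g_\epsilon(t)-I_\V\|_2$ by unitarity, so \eref{rec103} applies directly.
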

         \begin{proof}
          Since the operators $a_\epsilon(s)$ are isometries in all $L^p$ spaces, 
   the representation \eref{h32} shows that 
\begin{align}
\|h_\epsilon(t)\|_q 
     &\le \|\hat C(\epsilon)\|_q + \|\hat C(t)\|_q   +\Big| \int_\epsilon^t \|  \chi(s)\|_q ds\Big|  \notag\\
&\le 2\sup_{0 <s \le T} \|\hat C(s)\|_q + \int_0^T \|  \chi(s)\|_q ds\ \ \ \ 
                                                                        \text{for}\ \ 0 < t \le  T  .       \label{h84}
\end{align}
Lemma \ref{lemvert2} shows that $\hat C(\cdot)$ is continuous on $[0,T]$ into $H_a$ and therefore
into $H_b$ for all $b \in [1/2, a]$ and therefore into $L^q$ for all $q \in [3,q_a]$. 
Hence, in view of \eref{he11},  the right side of \eref{h84} is finite. This proves \eref{h83}.

      It might be useful to note that we have obtained a bound on $\|h_\epsilon(t)\|_q$ for all 
$q \in [3, q_a]$ by  using $H_a \subset H_b$ if $a \ge b$ but not by using $L^{q_a} \subset L^q$
if $q_a > q$. The latter would require $M$ to be of finite volume, which we do not have when $M =\R^3$.
 The former holds because of  the definition \eref{ST19}. We will need a bound on 
  $\|h_\epsilon(t)\|_3$ in order to apply the multiplier bounds of Section \ref{secmb}.

   Concerning the assertion \eref{h86}, observe that  the $L^q$ norm of the 
   integral term in the representation
   \eref{h32} is at most $\int_0^t \|\chi(s)\|_{q} ds$ 
    for all $ \epsilon \in (0, t]$, which goes to zero
   as $t\downarrow0$ in view of \eref{he11}. 
            In regard to the integrated terms in \eref{h32}, observe that for any norm we have
        \begin{align}
 \|\hat C(\epsilon) - a_\epsilon(t) \hat C(t)\|
 &\le \| \hat C(\epsilon) - \hat C(0)\| + \|(1 - a_\epsilon(t)) \hat C(0) \|             \notag\\
 &+ \| a_\epsilon(t)( \hat C(0) -  \hat C(t))\|       .                                       \label{h87}
 \end{align}
 If the norm is the $L^{q}$     
 norm then, since $a_\epsilon(t)$ is isometric in this norm, the
  first and third  terms on the right add to 
  $\| \hat C(\epsilon) - \hat C(0)\|_{q} +\| \hat C(0) -  \hat C(t)\|_{q}$, 
  which goes to zero as $0< \epsilon \le t \downarrow 0$ 
   because $\hat C(\cdot)$ is continuous into $H_a$,  therefore
   into $H_b$, and therefore into $L^{q}$. 
   The second term on the right side can (and must) be treated as a strong limit
   in the sense of Lemma \ref{strcontp}. Since $\hat C(0)$ is fixed i.e. is independent
    of $\epsilon$ and $t$, Lemma \ref{strcontp} shows that this term will be small when 
    $\|g_\epsilon(t) - I_\V\|_2$ is small. The latter is assured by \eref{rec103} with $p =2$.
    This completes the proof of Lemma \ref{lemhe6}.
\end{proof}

\begin{lemma} \label{lemhe7}  
\eref{he14} and \eref{he15} hold.
\end{lemma}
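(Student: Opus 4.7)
The plan is to derive both statements from the multiplier bound \eref{gp3} together with the uniform $L^3$ bound on $h_\epsilon$ already established in \eref{h83}.

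For \eref{he14}, I would fix $\delta \in (0,T]$ and apply \eref{gp3} (with $b=a$) to the map $u \mapsto (Ad\, g_\delta(s)^{-1}) u$. The input to that bound is the $L^3$ norm of the associated Maurer--Cartan form of $g_\delta(s)^{-1}$, and by \eref{gp18} this form is $-(Ad\, g_\delta(s)) h_\delta(s)$, whose $L^3$ norm equals $\|h_\delta(s)\|_3$. Therefore
\begin{align*}
\|a_\delta(s)\chi(s)\|_{H_a} \le \Big(1 + c_1 \|h_\delta(s)\|_3\Big)\|\chi(s)\|_{H_a} \le (1+c_1 c_{19})\,\|\chi(s)\|_{H_a},
\end{align*}
where $c_{19}$ is the bound from \eref{h83} taken at $q=3$. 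Integrating in $s$ over $(0,T]$ and invoking \eref{he10} gives the uniform bound \eref{he14}.

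For \eref{he15}, I would start from the representation \eref{h32}. The integral piece is easy: by the same multiplier bound just obtained,
\begin{align*}
\Big\|\int_\epsilon^t a_\epsilon(s)\chi(s)\,ds\Big\|_{H_a} \le (1+c_1 c_{19}) \int_0^t \|\chi(s)\|_{H_a}\,ds,
\end{align*}
and the right side tends to $0$ as $t\downarrow 0$ by \eref{he10}, uniformly in $\epsilon \in (0,t]$. The integrated piece $\hat C(\epsilon) - a_\epsilon(t)\hat C(t)$ I would split as in \eref{h87}:
\begin{align*}
\hat C(\epsilon) - a_\epsilon(t)\hat C(t) = \bigl(\hat C(\epsilon)-\hat C(0)\bigr) + \bigl(I-a_\epsilon(t)\bigr)\hat C(0) + a_\epsilon(t)\bigl(\hat C(0)-\hat C(t)\bigr).
\end{align*}
The first and third summands have $H_a$ norm at most $\|\hat C(\epsilon)-\hat C(0)\|_{H_a} + (1+c_1 c_{19})\|\hat C(0)-\hat C(t)\|_{H_a}$, which goes to $0$ by continuity of $\hat C$ from $[0,T]$ into $H_a$ (Lemma \ref{lemvert2}). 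The middle summand is where the real work is, and it is handled by the strong continuity of the representation $\G_{1,3}\ni g\mapsto Ad\,g$ on $H_a$ (Lemma \ref{strcontb} with $p=3$, $b=a$): applied to the \emph{fixed} vector $\hat C(0)\in H_a$, it suffices to check that $\rho_3(g_\epsilon(t)^{-1},e)\to 0$ uniformly for $\epsilon\le t$ as $t\downarrow 0$. But $\rho_3(g_\epsilon(t)^{-1},e) = \|h_\epsilon(t)\|_3 + \|g_\epsilon(t)^{-1}-I_\V\|_2$, which tends to $0$ by \eref{h86} (with $q=3$) and \eref{rec103} (with $p=2$) respectively.

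The only genuinely delicate step is the last one: because $a=1/2$ is admissible, the representation $g\mapsto Ad\,g$ on $H_{1/2}$ is only strongly, not norm, continuous (this is the dichotomy flagged in Corollary \ref{corgg3a}), so I must keep $\hat C(0)$ as a \emph{fixed} vector and not try to absorb it into an operator norm estimate. Provided I respect that, the proof is a straightforward assembly of \eref{gp3}, \eref{h83}, \eref{h86}, \eref{rec103}, \eref{he10} and the continuity of $\hat C$ into $H_a$.
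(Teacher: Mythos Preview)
Your proof is correct and follows essentially the same route as the paper's: the uniform $H_a$ bound \eref{he14} comes from \eref{gp3} combined with the uniform $L^3$ control \eref{h83} on $h_\delta$, and \eref{he15} is obtained from the representation \eref{h32} by splitting the integrated term exactly as in \eref{h87} and invoking the strong continuity in Lemma~\ref{strcontb} for the middle summand. Your explicit remark that one must treat $\hat C(0)$ as a fixed vector (because at $a=1/2$ the representation is only strongly continuous) is precisely the point the paper emphasizes as well.
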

\begin{proof} In view of \eref{gp3} and \eref{h83} with $q = 3$ we have
\begin{align}
 \| a_\delta(s) \chi(s) \|_{H_a} \le  c_{20} \| \chi(s)\|_{H_a},\ \ 0 < \delta <T,            \label{h87.1}
 \end{align}
 where $c_{20} = 1 + c_1 c_{19}$. Therefore
\begin{align}
            \|\int_0^T a_\delta(s) \chi(s) ds \|_{H_a} 
    \le c_{20}  \int_0^T \| \chi(s)\|_{H_a} ds. 
\end{align} 
 Since the right hand side is finite, by \eref{he10},  and 
independent of $\delta$ the inequality \eref{he14} follows. Concerning the assertion 
\eref{he15} observe that, as in the case of the $L^q$ norms, 
the integral in the representation \eref{h32}
for $h_\epsilon(t)$ is at most $\int_0^t\|a_\epsilon(s) \chi(s)\|_{H_a} ds$, which is dominated by
$c_{20}\int_0^t \| \chi(s)\|_{H_a} ds$ and which goes to zero as $t\downarrow 0$. To address
the integrated terms in \eref{h32} take the norm in \eref{h87} to be the $H_a$ norm. 
This time we need to base our estimates on the inequality  \eref{gp3}, which gives
    \begin{align}
 \|\hat C(\epsilon) - a_\epsilon(t) \hat C(t)\|_{H_a} 
 &\le \| \hat C(\epsilon) - \hat C(0)\|_{H_a} + \|(1 - a_\epsilon(t)) \hat C(0) \|_{H_a}             \notag\\
 &+ (1 +c_1 \| g_\epsilon^{-1} d g_\epsilon(t)\|_3)\|  \hat C(0) -  \hat C(t)\|_{H_a}.  \label{h88}
 \end{align}
 The factor   $\| g_\epsilon^{-1} d g_\epsilon(t)\|_3$ is bounded in $\epsilon$ and $t$ by 
 \eref{he12} (which is why \eref{he12} had to be proven first.) Since $\hat C(\cdot)$ is continuous
 into $H_a$ the first and third terms go to zero as $t\downarrow 0$, uniformly for $0< \epsilon \le t$.
As in the case of the $q$ norms, the second term on the right side can (and must)
 be treated as a strong limit, but this time in the sense of Lemma  \ref{strcontb},
 which requires that $\|g_\epsilon(t) - I_\V\|_2$ go to zero, as assured by \eref{rec103},
 and also that $\|g_\epsilon(t)^{-1}dg_\epsilon(t)\|_3$ go to zero, 
 which is assured by \eref{he13}.  
 \end{proof}

The inequality  \eref{he11.1} follows immediately from \eref{he11}. All the other 
assertions of Theorem \ref{thmhest2} have been proven in the lemmas.

\begin{remark}\label{remchi} {\rm 
 One should contrast \eref{he10} with \eref{he16}.
When $A_0 \in H_a$ with $a <1$  the allowed singularity in $\|\chi(s)\|_{H_1}$ as $s \downarrow 0$ will be too strong to ensure  that $\int_0^T \|\chi(s)\|_{H_1} ds < \infty$. 
 But \eref{he16}
avoids the singularity at zero. Actually, third order initial behavior estimates, which are not in this paper, would show that $\|\chi(s)\|_{H_1}$ is 
 bounded on $[\epsilon_0, T]$. But we only
need the integrability asserted in Corollary \ref{corhest3}.
}
\end{remark}

\bigskip
\noindent
\begin{proof}[Proof of Corollary \ref{corhest3}] 
No assumption on the nature of the initial singularity of $C(\cdot)$ has been made
 in the statement of the corollary. In particular we are not assuming finite strong a-action.
 However the conclusion of the corollary concerns the behavior of $C(\cdot)$ only on the interval
 $[\epsilon_0, T]$. Since $C(\cdot)$ is a continuous function on $[\epsilon_0/2, T]$
 into $H_1(M)$ it has finite strong a-action on the interval $[\epsilon_0/2, T]$ 
 for any $a \in [1/2, 1)$.  By shifting the origin over to $\epsilon_0/2$ we can, 
 without loss of generality, assume that $C(\cdot)$ has finite strong a-action over $[0, T]$ for any 
 $a\in [1/2, 1)$ that we choose. We will make this assumption and leave $a$ unspecified for easy
 comparison with formulas that we have already developed. 
 By doubling $\epsilon_0$ we can continue to write the distance
 from the origin as  $\epsilon_0$ rather than $\epsilon_0/2$. 
    It suffices to prove, therefore, that \eref{he316} and \eref{he16}  hold under the assumption
    that $C(\cdot)$ has finite strong a-action over $[0,T]$.

 \eref{he316} now follows from \eref{he11} because $q_a \ge 3$ for all $a \in [1/2, 1)$.

For the proof of \eref{he16} choose , in Lemma \ref{lemhe11},  $p =2$, $b=1$, and
therefore $r =3$. Then \eref{he57} and \eref{he58} assert that
\begin{align}
\|\chi(s) \|_{H_1} 
 &\le  c_5\|C(s)\|_{H_1}\Big( \|d_C^* B_C(s)\|_3 + \|d\phi(s)\|_3\Big)   \label{he57.1}\\
&+ \| d_C^*B_C(s) \|_2 + 3c\kappa_6 \|C(s)\|_{H_1} \| \phi(s)\|_3       .                   \label{he58.1}
\end{align}
The integral of line \eref{he58.1} over $[\epsilon_0, T]$ (in fact over $(0, T]$) has already been shown to be finite in the proof of Theorem \ref{thmhest1}. The first term in line \eref{he57.1}
can be estimated as in \eref{he20} thus:
\begin{align}
\int_{\epsilon_0}^T \|C(s)\|_{H_1} &\|d_C^* B_C(s)\|_3 ds       \notag\\
&\le \Big(\int_{\epsilon_0}^Ts^{-a} \|C(s)\|_{H_1}^2ds \Big)^{1/2}
             \Big(\int_{\epsilon_0}^T s^a \|d_C^* B_C(s)\|_3^2 ds \Big)^{1/2} .   \notag
\end{align}
The first factor is finite by finite strong a-action. 
It may be illuminating to note that the second integral would not necessarily be finite
 if it were extended down to $s =0$ because, for any $a \in [1/2,1)$, 
  the power $s^a$  would not be high enough
  to match with the use of the $L^3$ norm.
(The distinction between strong and almost strong solutions can be traced back to this point.)
But in our case, using 
$s^a = s^{2a -(3/2)} s^{(1-a)/2} s^{(2-a)/2}$ and the 
 interpolation $\|f\|_3^2 \le \|f\|_2 \|f\|_6$ we find
 \begin{align*}
 \int_{\epsilon_0}^T s^a &\|d_C^* B_C(s)\|_3^2 ds 
 \le \Big( \max_{\epsilon_0 \le s \le T} s^{2a -(3/2)}\Big) \times  \\    
 &\Big(\int_{\epsilon_0}^T s^{1-a}\|d_C^* B_C(s)\|_2^2 ds\Big)^{1/2}
 \Big(\int_{\epsilon_0}^T s^{2-a}\|d_C^* B_C(s)\|_6^2 ds\Big)^{1/2},
 \end{align*}
 which is finite by \eref{vs38b} and \eref{vs641a}.
The second term in line \eref{he57.1} can be estimated similarly.
\end{proof}

\subsection{Convergence of $g_\epsilon^{-1} dg_\epsilon$} 
\label{secconvh}

    \begin{lemma}\label{lemh7a} 
    Let $1/2 \le a < 1$. Under the hypotheses of Theorem \ref{thmgconv},  there is a  
     continuous function $h: [0, T]\rightarrow L^{q_a}$
such that  $h(0) = 0$ and such that,  for each number $t_1 \in (0, T]$, there holds
\begin{align}
 \sup_{t_1 \le t \le T} \| h(t) - h_{\epsilon}(t)\|_{q_a} \rightarrow 0\ \
               \text{as}\ \  \epsilon\downarrow 0 .     \label{h80a}
 \end{align}
\end{lemma}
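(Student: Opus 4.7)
The plan is to identify the limit $h$ explicitly by passing term-by-term to the limit in the representation \eref{h32}. Define
\begin{equation*}
h(t) = \bigl(\hat C(0) - a(t)\hat C(t)\bigr) + \int_0^t a(s)\chi(s)\,ds, \qquad 0 \le t \le T,
\end{equation*}
where $a(t,x) = Ad(g(t,x)^{-1})$ and $g$ is the limit produced in Lemma \ref{lemgconv}. Since $g(0,\cdot) = I_\V$, the operator $a(0)$ is the identity and $h(0) = \hat C(0) - \hat C(0) + 0 = 0$. First I would verify that $h:[0,T]\to L^{q_a}$ is continuous: $\hat C(\cdot)$ is continuous into $H_a$ by Lemma \ref{lemvert2}, hence into $L^{q_a}$ by Sobolev; the map $t\mapsto a(t)\hat C(t)$ is continuous into $L^{q_a}$ because $a(t)$ is a pointwise $L^{q_a}$ isometry while $(a(t)-a(t_0))\hat C(t_0)$ tends to $0$ by strong continuity (Lemma \ref{strcontp}) applied to the $L^2$-convergent family $g(t)\to g(t_0)$; the integral is continuous because its integrand lies in $L^1((0,T); L^{q_a})$ by \eref{he11} and the isometry of $a(s)$.

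Subtracting \eref{h32} from the definition of $h(t)$ yields
\begin{equation*}
h(t) - h_\epsilon(t) = \bigl(\hat C(0) - \hat C(\epsilon)\bigr) + \bigl(a_\epsilon(t)-a(t)\bigr)\hat C(t) - \int_0^\epsilon a(s)\chi(s)\,ds + \int_\epsilon^t \bigl(a(s)-a_\epsilon(s)\bigr)\chi(s)\,ds.
\end{equation*}
The first term tends to $0$ in $L^{q_a}$ uniformly in $t$ by continuity of $\hat C$ at $0$, and the third has $L^{q_a}$ norm at most $\int_0^\epsilon\|\chi(s)\|_{q_a}\,ds$, which tends to $0$ by \eref{he11}. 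For the fourth term, extract a subsequence $\epsilon_n\downarrow 0$ along which $g_{\epsilon_n}(s,x)\to g(s,x)$ pointwise a.e. on $[0,T]\times M$ (possible from the uniform $L^2$-convergence \eref{rec248}). Then $(a_{\epsilon_n}(s)-a(s))\chi(s)\to 0$ pointwise in $x$, with $L^{q_a}$ norm dominated by $2\|\chi(s)\|_{q_a}$, which is integrable by \eref{he11}. Two applications of dominated convergence yield $\int_0^T\|(a(s)-a_{\epsilon_n}(s))\chi(s)\|_{q_a}\,ds\to 0$, bounding the fourth term uniformly in $t$; the standard subsequence-of-every-subsequence argument upgrades this to convergence along the full family.

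The delicate step is the uniform-in-$t$ treatment of the second term $(a_\epsilon(t)-a(t))\hat C(t)$, since one cannot apply strong continuity with a vector that itself varies with $t$. Using the factorization $a_\epsilon(t)-a(t) = a(t)(Ad(g(t)^{-1}g_\epsilon(t))-\mathrm{Id})$ and the $L^{q_a}$-isometry of $a(t)$, one reduces to estimating $\|(Ad\,k_\epsilon(t)-\mathrm{Id})\hat C(t)\|_{q_a}$ where $k_\epsilon(t)=g(t)^{-1}g_\epsilon(t)$ satisfies $\sup_{t}\|k_\epsilon(t)-I_\V\|_2\to 0$ by \eref{rec248}. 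Since $\hat C:[t_1,T]\to L^{q_a}$ is uniformly continuous, one can cover $[t_1,T]$ by finitely many intervals on which $\|\hat C(t)-\hat C(t_i)\|_{q_a}<\eta$; the approximation error contributes at most $2\eta$ because $Ad\,k_\epsilon(t)-\mathrm{Id}$ has pointwise norm at most $2$, and on each of the finitely many fixed vectors $\hat C(t_i)$ the strong continuity from Lemma \ref{strcontp} a) gives $\|(Ad\,k_\epsilon(t)-\mathrm{Id})\hat C(t_i)\|_{q_a}<\eta$ uniformly in $t$ for $\epsilon$ sufficiently small.

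The main obstacle is precisely this second term: without some form of uniform equicontinuity of $\hat C$, one cannot convert pointwise (in $t$) strong continuity into uniform-in-$t$ convergence. This compactness-of-$[t_1,T]$ step is the reason the conclusion \eref{h80a} is stated only for intervals bounded away from $0$; near $t=0$ the family $\hat C(t)$ is only continuous, not equicontinuous, so the approximation covering argument fails and one cannot expect uniformity down to the origin.
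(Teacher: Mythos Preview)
Your argument is correct in substance, with one small computational slip: the factorization should read $a_\epsilon(t)-a(t)=a(t)\bigl(Ad(g(t)g_\epsilon(t)^{-1})-\mathrm{Id}\bigr)$, i.e.\ $k_\epsilon(t)=g(t)g_\epsilon(t)^{-1}$ rather than $g(t)^{-1}g_\epsilon(t)$. The $L^2$ distance $\|k_\epsilon(t)-I_\V\|_2=\|g(t)-g_\epsilon(t)\|_2$ is unaffected, so the rest of your compactness/covering argument goes through unchanged.

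Your route differs from the paper's in two respects. First, the paper argues that $\{h_\epsilon\}$ is uniformly Cauchy on $[t_1,T]$ and obtains the limit $h$ abstractly; you define $h$ explicitly in terms of the limiting gauge function $g$ and prove convergence directly. Second, and more interestingly, for the delicate term $(a_\epsilon(t)-a_\delta(t))\hat C(t)$ the paper exploits the \emph{extra} regularity $\hat C\in C([t_1,T];H_1)\subset C([t_1,T];L^6)$: it bounds $\|a_\epsilon(t)\hat C(t)\|_6$ uniformly and then uses H\"older with $\|a_\delta(\epsilon)-1\|_p\to 0$ for the complementary exponent $p$ (finite since $q_a<6$). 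This is a one-line estimate and is the reason the lemma only claims uniformity on $[t_1,T]$. Your finite-covering argument, by contrast, needs only the $L^{q_a}$ continuity of $\hat C$ and compactness of $[t_1,T]$, trading the paper's regularity gain for a softer compactness step. Both are valid; the paper's is shorter, while yours is more elementary and would apply even without access to an $H_1$ bound on $\hat C(t)$ away from the origin.
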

           \begin{proof}   The representation 
      \eref{h32} for $h_\epsilon$ gives, for each (suppressed) $x \in M$,
\begin{align}
&h_\delta(t) - h_\epsilon(t) = \Big(\hat C(\delta)  - \hat C(\epsilon)\Big)
  - \Big(a_\delta(t) - a_\epsilon(t)\Big) \hat C(t)             \label{h81} \\
 &+ \int_\delta^\epsilon a_\delta(s) \chi(s) ds 
 + \int_\epsilon^t (a_\delta(\epsilon) - 1) a_\epsilon(s) \chi(s) ds \
 \ \ \text{for}\ \ \ 0 < t \le T . \label{h82}
 \end{align}
 Therefore,  for $0< \delta \le \epsilon \le t_1 \le t \le T$, we have
 \begin{align}
 &\|h_\delta(t) - h_\epsilon(t)\|_{q_a} \le \|\hat C(\delta)  - \hat C(\epsilon)\|_{q_a}
  + \|(a_\delta(\epsilon) -I_\V) \{ a_\epsilon(t) \hat C(t)\}\|_{q_a}              \label{h84d} \\
 &\qquad \qquad + \int_\delta^\epsilon \|\chi(s)\|_{q_a} ds 
 + \int_\epsilon^t \| \,|a_\delta(\epsilon) - 1|_{End V} \ | \chi(s)|_{\kf}\|_{q_a} ds .  \label{h85}
\end{align}
We need to show that each of these four terms go to zero uniformly for $ t \in [t_1, T]$ 
as  $0 < \delta \le \epsilon \downarrow 0$.

       Term \# 1  
     goes to zero, uniformly for $t  \in (0, T]$, as 
$0 < \delta \le \epsilon \downarrow 0$ because $\hat C(\cdot)$ is continuous into $H_a$ and therefore into $L^{q_a}$. 

Term \# 2  
   can be dominated for  $ 0 < t_1 \le t \le T$ as follows.
  $ \| a_\epsilon (t) \hat C(t)\|_6 \le \sup_{t_1 \le t \le T} \| \hat C(t)\|_6 < \infty $
  because $\hat C$ is continuous into $H_1$ on $[t_1, T]$, hence into $L^6$ on this interval.
  Since $a < 1$ we have $q_a < 6$. But  $\| a_\delta(\epsilon) -1 \|_p \rightarrow 0$ 
  for all $ p <\infty$ as a consequence of \eref{rec250}. 
  Therefore Term \# 2 converges to zero uniformly over $[t_1, T]$.

Term \# 3  
goes to zero, uniformly for $t \in (0, T]$,  in view of  \eref{he11}. 

Term \# 4  
goes to zero, uniformly for $t \in (0, T]$, 
 because the integrand is dominated by the integrable function $2 \|\chi(s)\|_{q_a}$
 and goes to zero for each $s$ because 
 $\Big(|(a_\delta(\epsilon, x) - 1)|_{End, \V} |\chi(s)|_\kf\Big)^{q_a} 
 \le |a_\delta(\epsilon, x) - 1|_{End\, \V}^{q_a} |\chi(s,x)|_\kf^{q_a}$,
 which goes to zero in measure  and is dominated by $2^{q_a} |\chi(s, x)|^{q_a}$.
 
         Hence there exists a function $h: (0,T] \rightarrow L^{q_a}$ to which the family $h_\epsilon(\cdot)$ converges for each $t \in (0, T]$ and uniformly on each interval $[t_1, T]$. Thus 
      $h$ is continuous from $(0, T]$ into $L^{q_a}$. Moreover 
      $\|h(t) \|_{q_a} 
  \le \sup_{0 <\epsilon \le t} \|h_\epsilon(t)\|_{q_a} \rightarrow 0$ as $t\downarrow 0$ by \eref{he13}. Thus we may define $h(0) =0$ to fulfill all the requirements of the lemma.
\end{proof}

  \begin{lemma} \label{lemh9} 
  Let $1/2 \le a < 1$.
   $h$ is a continuous function on $[0,T]$ into $H_a$. 
   Moreover, for any number $t_1 \in (0, T]$,   
    there holds 
      \begin{align}
      \sup_{t_1 \le t \le T} \| h(t) - h_\epsilon(t)\|_{H_a} \rightarrow 0\ \ 
                    \text{as}\ \ \epsilon\downarrow 0                                   \label{h190}
\end{align}
\end{lemma}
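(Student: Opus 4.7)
The plan is to show that $\{h_\epsilon(t)\}_{\epsilon>0}$ is uniformly Cauchy on $[t_1,T]$ in $H_a$; the limit must then agree with $h$ produced in Lemma \ref{lemh7a} by the embedding $H_a\hookrightarrow L^{q_a}$. Starting from the identity \eqref{h81}--\eqref{h82} (valid for $0<\delta\le\epsilon\le t_1\le t\le T$), I will estimate the $H_a$ norm of each of the four resulting terms separately. Write $R_{\delta,\epsilon}:=Ad(g_\delta(\epsilon)^{-1})-1$ so that $a_\delta(t)-a_\epsilon(t)=R_{\delta,\epsilon}\,a_\epsilon(t)$, and set $\delta_1:=1-a\in(0,1/2]$, $p_1:=3/\delta_1$. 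The multiplier bound \eqref{gp3} gives $\|a_\epsilon(s)w\|_{H_b}\le(1+c_1\|h_\epsilon(s)\|_3)\|w\|_{H_b}$ for $b\in[0,1]$, and the bound \eqref{he12} of Theorem \ref{thmhest2} shows that $\|h_\epsilon(s)\|_3$ is uniformly bounded in $\epsilon$ and $s$; call this uniform constant $C$.

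Terms 1 and 3 are straightforward. Since $\hat C:[0,T]\to H_a$ is continuous by Lemma \ref{lemvert2}, $\|\hat C(\delta)-\hat C(\epsilon)\|_{H_a}\to 0$ as $\delta,\epsilon\downarrow0$, handling term 1. For term 3, the uniform-in-$\delta$ bound \eqref{he14} of Theorem \ref{thmhest2} shows that $\|a_\delta(s)\chi(s)\|_{H_a}$ is integrable uniformly in $\delta$, so $\int_\delta^\epsilon\|a_\delta(s)\chi(s)\|_{H_a}\,ds\to0$ as $\epsilon\downarrow0$. For term 2, I apply the refined multiplier bound \eqref{gp8b} to $w=a_\epsilon(t)\hat C(t)$:
\begin{align*}
\|R_{\delta,\epsilon}\,a_\epsilon(t)\hat C(t)\|_{H_a}
\le\bigl(\kappa_{\delta_1}\|R_{\delta,\epsilon}\|_{p_1}+c_1\|h_\delta(\epsilon)\|_3\bigr)\|a_\epsilon(t)\hat C(t)\|_{H_{a+\delta_1}}.
\end{align*}
Because $a+\delta_1=1$ and $t\in[t_1,T]$, the factor $\|a_\epsilon(t)\hat C(t)\|_{H_1}\le(1+c_1 C)\sup_{s\in[t_1,T]}\|\hat C(s)\|_{H_1}$ is bounded uniformly in $\epsilon$ and $t$. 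The parenthesized coefficient tends to zero as $\epsilon\downarrow0$, uniformly in $\delta\le\epsilon$: the first summand because $\|R_{\delta,\epsilon}\|_{p_1}\le 2\|g_\delta(\epsilon)-I_\V\|_{p_1}\to0$ by \eqref{rec103}, the second by \eqref{he13}.

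The main obstacle is term 4. The point is that the strong-type multiplier bound \eqref{gp5} only gives a coefficient bounded by $2+c_1 C$, which does \emph{not} tend to zero, while the stronger bound \eqref{gp8b} costs an extra $\delta_1$ derivative on the multiplicand, and $\int_0^T\|\chi(s)\|_{H_{a+\delta_1}}ds$ need not be finite near $s=0$. The remedy is to split: given $\eta>0$, choose $\sigma\in(0,t_1)$ so small that $2(1+c_1 C)\int_0^\sigma\|\chi(s)\|_{H_a}\,ds<\eta$, using \eqref{he10}. For the piece $\int_\epsilon^{\max(\epsilon,\sigma)}R_{\delta,\epsilon}a_\epsilon(s)\chi(s)\,ds$, use the uniform operator bound $\|R_{\delta,\epsilon}\|_{H_a\to H_a}\le 2+c_1 C$ (from \eqref{gp3} applied to $Ad(g_\delta(\epsilon)^{\pm1})$) and the choice of $\sigma$ to obtain a bound by $\eta$. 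For the piece $\int_{\max(\epsilon,\sigma)}^t R_{\delta,\epsilon}a_\epsilon(s)\chi(s)\,ds$, apply \eqref{gp8b} with $\delta_1=1-a$ to get
\begin{align*}
\Bigl\|\int_\sigma^t R_{\delta,\epsilon}a_\epsilon(s)\chi(s)\,ds\Bigr\|_{H_a}
\le\bigl(\kappa_{\delta_1}\|R_{\delta,\epsilon}\|_{p_1}+c_1\|h_\delta(\epsilon)\|_3\bigr)(1+c_1 C)\int_\sigma^T\|\chi(s)\|_{H_1}\,ds,
\end{align*}
and the last integral is finite by Corollary \ref{corhest3}, while the prefactor vanishes as $\epsilon\downarrow0$ as above. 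Combining the four estimates proves that $\{h_\epsilon(t)\}$ is Cauchy in $H_a$ uniformly on $[t_1,T]$, giving the convergence \eqref{h190} and a continuous limit $h:[t_1,T]\to H_a$; this limit agrees with the $L^{q_a}$-limit from Lemma \ref{lemh7a}. Finally, continuity at $t=0$ follows from \eqref{he15}: for any $\eta>0$ one may pick $t_*>0$ with $\sup_{\epsilon\le t_*}\|h_\epsilon(t_*)\|_{H_a}<\eta$, and passing to the limit $\epsilon\downarrow0$ in this inequality at $t=t_*$ yields $\|h(t_*)\|_{H_a}\le\eta$, hence $h(t)\to0=h(0)$ in $H_a$.
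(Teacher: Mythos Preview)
Your proof is correct and follows essentially the same route as the paper: both decompose $h_\delta-h_\epsilon$ via the representation \eqref{h81}--\eqref{h82}, handle Terms 1--3 directly, and deal with the crucial Term 4 by splitting the integration range at a small $\sigma$ (the paper's $\epsilon_0$), using the uniform $H_a$ bound on the near-zero piece and the refined multiplier estimate \eqref{gp8b} together with $\int_{\sigma}^T\|\chi(s)\|_{H_{a+\delta_1}}\,ds<\infty$ (Corollary \ref{corhest3}) on the remaining piece. One small imprecision: for Term 3 you invoke \eqref{he14}, but uniform boundedness of the full integral does not by itself force the tail $\int_\delta^\epsilon$ to vanish uniformly in $\delta$; what is actually needed (and what underlies \eqref{he14}) is the pointwise domination $\|a_\delta(s)\chi(s)\|_{H_a}\le c_{20}\|\chi(s)\|_{H_a}$ from \eqref{h87.1} combined with \eqref{he10}.
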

              \begin{proof} As in the proof of Lemma \ref{lemh7a} we will show that the functions
$h_\delta: [t_1, T] \rightarrow H_a$ form a uniformly Cauchy sequence. From the representation \eref{h81}
we have, for $0 <\delta \le \epsilon \le t_1 \le t  \le T$, 
\begin{align}
&\|h_\delta(t) - h_\epsilon(t)\|_{H_a} \le \|\hat C(\delta)  - \hat C(\epsilon)\|_{H_a}
  + \|\Big(a_\delta(t) - a_\epsilon(t)\Big) \hat C(t)\|_{H_a}         \notag\\
 &\qquad\qquad+ \int_\delta^\epsilon \|a_\delta(s) \chi(s)\|_{H_a} ds 
 + \int_\epsilon^t \|(a_\delta(\epsilon) - 1) a_\epsilon(s) \chi(s)\|_{H_a} ds.  \label{h85a}
 \end{align}
 We will show that each of the four terms on the right hand side  
 go to zero   as $ 0 < \delta \le \epsilon \downarrow 0$.
    
          Term \# 1 goes to zero because $\hat C(\cdot)$ is a continuous function on $[0, T]$
          into $H_a$.
          
          Term \# 2 can be dominated as follows. 
     Choose $\delta_1 > 0$ such that $a + \delta_1 \le 1$.  Let $p_1 = 3/\delta_1$.     
     Then, by \eref{gp8b} with $b =a$, we have
   \begin{align}
&\|\Big(a_\delta(t) - a_\epsilon(t)\Big) \hat C(t)\|_{H_a} 
= \| (a_\delta(\epsilon) - 1) a_\epsilon(t) \hat C(t)\|_{H_a}         \notag\\
&\le \Big( \kappa_{\delta_1} \|g_{\delta}(\epsilon) -I_\V\|_{p_1} 
      + c_1 \| g_\delta(\epsilon)^{-1} d g_\delta(\epsilon)\|_3\Big)
                         \| a_\epsilon(t) \hat C(t)\|_{H_{a + \delta_1}} .           \label{h85b}
\end{align}  
The two terms  in parentheses go to zero by \eref{rec250} (with $p = p_1$) and
 by    \eref{he13}(with $q =3$).
 Now $C(\cdot): [t_1,T] \rightarrow H_1$ 
 is  continuous
because $C(\cdot)$  lies in the path space $\P_T^a$. See Notation \ref{notpatha}.  
By   Lemma  \ref{lemvert2}  $\hat C(t)$ is also continuous into $H_1$ and therefore
 also continuous into
 $H_{a + \delta_1}$. Hence 
 \begin{align}
\sup_{t_1 \le t \le T} \| \hat C(t)\|_{H_{a + \delta_1}} < \infty   \label{h182}
\end{align}
if $a +\delta_1 \le 1$.  The inequality 
\beq
\|a_{\epsilon}(t) \hat C(t)\|_{H_{a + \delta_1}} \le 
   (1 + c_1 \| h_\epsilon(t)\|_3) \| \hat C(t)\|_{H_{a+\delta_1}}    \label{h182.1}
   \eeq
   follows from \eref{gp3}.    In view of \eref{he12} with $q =3$, the last factor
    in \eref{h85b} is   therefore bounded over $[t_1, T]$. Hence Term \# 2 goes to zero
    uniformly for $t \in [t_1,T]$ as $ 0 < \delta \le \epsilon \downarrow 0$.

     Term \# 3 is dominated by $c_{20} \int_\delta^\epsilon \|\chi(s)\|_{H_a} ds$ 
    by \eref{h87.1}.   
    In view of \eref{he10} this term also goes to zero, and uniformly for $t \in (0, T]$.

Term \# 4 can be estimated  as follows. 
  Suppose that $\alpha >0$ is small.
   Choose $\epsilon_0 \in (0, T]$  so small that
   \begin{align}
   2c_{20} \int_0^{\epsilon_0}  \| \chi(s)\|_{H_a} ds < \alpha.
   \end{align}
   \eref{he10}     assures that such an $\epsilon_0$ exists. 
    Then, for $0< \delta \le \epsilon \le \epsilon_0$ and for all $t \in [\epsilon, T]$,
    we find, with the help of \eref{h87.1},   
   \begin{align}
   &\| \int_\epsilon^t (a_\delta(s) - a_\epsilon(s)) \chi(s) ds \|_{H_a}  \notag\\
   & \le \int_\epsilon^{\epsilon_0} \| (a_\delta(s) - a_\epsilon(s)) \chi(s) \|_{H_a} ds
   + \int_{\epsilon_0}^T \|  (a_\delta(s) - a_\epsilon(s)) \chi(s) \|_{H_a}ds\notag \\
   &\le 2c_{20} \int_{\epsilon}^{\epsilon_0}\|  \chi(s)\|_{H_a} 
   +  \int_{\epsilon_0}^T \|  (a_\delta(s) - a_\epsilon(s)) \chi(s) \|_{H_a} ds \notag\\
   &< \alpha +  \int_{\epsilon_0}^T \| (a_\delta(s) - a_\epsilon(s)) \chi(s) \|_{H_a} ds \label{h195}
   \end{align}
   for all $t \in [\epsilon, T]$ and for all $0< \delta \le \epsilon \le \epsilon_0$.
   It remains to show that the integral in line \eref{h195} is small for small $\epsilon$.
   
    Choose again $\delta_1 >0$ such that $a + \delta_1 \le 1$ and let  $p_1 = 3/\delta_1$.
    We can apply      \eref{gp8b} with $b = a$ and $g = g_\delta(\epsilon)$ to find
           \begin{align}
  &\int_{\epsilon_0}^T \|  (a_\delta(s) - a_\epsilon(s)) \chi(s) \|_{H_a} ds
  =  \int_{\epsilon_0}^T \| (a_\delta(\epsilon) - 1) a_\epsilon(s) \chi(s) \|_{H_a} ds      \notag\\
 &\le  \Big(\kappa_{\delta_1} \| Ad\, g_\delta(\epsilon) - 1\|_{p_1}  + c_1 \| h_\delta(\epsilon)\|_3\Big)  
 \int_{\epsilon_0}^T\|a_{\delta}(s) \chi(s)\|_{H_{a + \delta_1}} ds                                  \notag\\
 &\le  \Big(\kappa_{\delta_1} \| Ad\, g_\delta(\epsilon) - 1\|_{p_1}  
               + c_1 \|h_\delta(\epsilon)\|_3\Big)
 c_{20}  \int_{\epsilon_0}^T\| \chi(s)\|_{H_{a +\delta_1}} ds   .                          \label{h198}
  \end{align}
  In the last line we have used  \eref{h87.1}.
  The integral in the last line is finite by \eref{h181} with $b = a + \delta_1$.  
  For any $p_1 < \infty$
  the expression in large parentheses in \eref{h198} goes to zero as 
  $0<  \delta \le \epsilon \downarrow 0$ by \eref{rec250} and \eref{he13}.

  Thus Term \#4 goes to zero uniformly for $t$ in any interval $[t_1, T]$ when $t_1 >0$.  
   
  This concludes the proof that the functions $h_\epsilon(\cdot)$ are uniformly Cauchy
  in $H_a$ norm over each interval $[t_1, T]$.     
  The family of functions therefore converges
  to a continuous function $h(\cdot)$ into $H_a$ over $(0, T]$, and since the $H_a$ norm
   dominates the $L^{q_a}$ norm the function $h$ is the same as the one in Lemma \ref{lemh7a},
   over $(0, T]$.
 Since $\|h(t)\|_{H_a} \le \sup_{\epsilon \le t} \|h_\epsilon(t)\|_{H_a}$,
  which goes to zero by \eref{he15}
 as $t \downarrow 0$, it follows that $\|h(t)\|_{H_a} \rightarrow 0$ as $t \downarrow 0$. 
 Since $h(0) =0$ by Lemma \ref{lemh7a}, this concludes the proof of  Lemma \ref{lemh9}. 
    \end{proof}

\subsection{Smooth ratios}

\begin{lemma}\label{smrat} $($Smooth ratio$)$ Let $0 < \tau < T$.  
Define a function $k:(0, T]\times M \rightarrow K$ by
\beq
 g(t) = k(t) g(\tau)\ \ \text{on}\ \ (0, T]\times M,        \label{rec405}
 \eeq
 where $g(t)$ is the gauge function constructed in Lemma \ref{lemgconv}.
 Then $k \in C^\infty ((0, T) \times M; K)$ and, for each (suppressed) point $x \in M$, 
 is the solution to 
 \beq
k'(t) k(t)^{-1} = d^*C(t),   
\ \ \  0 < t < T, \ \ \ \ \ \ \ \ k(\tau) = I_\V.                       \label{rec400}
\eeq  
$k(\cdot)$ satisfies the boundary conditions
 \begin{align}
 (k(t)^{-1} d k(t))_{norm} &= 0\ \ \text{for}\ \ 0 < t < T\ \ \text{in case}\ \ \ (N)          \label{rec400N}\\
 (k(t)^{-1} d k(t))_{tan} &= 0 \  \ \text{for}\ \  0 < t < T\ \ \text{in case}\ \ \ (D).        \label{rec400D}
\end{align} 
 \end{lemma}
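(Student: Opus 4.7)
The plan is to exploit an $\epsilon$-independence observation for the ratio $g_\epsilon(t)\,g_\epsilon(\tau)^{-1}$. For any $\delta,\epsilon\in(0,T)$, both $g_\delta(\cdot,x)$ and $g_\epsilon(\cdot,x)$ solve the ODE $g'(t) = (d^*C(t,x))\,g(t)$, so $u(t,x) := g_\delta(t,x)^{-1}g_\epsilon(t,x)$ has vanishing time derivative; hence $u$ is $t$-constant, yielding
\[
g_\epsilon(t)\,g_\epsilon(\tau)^{-1} = g_\delta(t)\,g_\delta(\tau)^{-1}\qquad\forall\,\delta,\epsilon\in(0,T),\ t\in(0,T].
\]
First I would specialize this identity to $\delta=\tau$: since $g_\tau(\tau)=I_\V$, the common value is $g_\tau(t)$. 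Because $d^*C$ is $C^\infty$ on $(0,T]\times M$ by Theorem \ref{thmaug}, standard parameter-dependent ODE theory yields $g_\tau\in C^\infty((0,T]\times M;K)$, and by definition $g_\tau$ solves \eqref{rec400}.

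Next I would pass to the limit $\epsilon\downarrow 0$. Lemma \ref{lemgconv} gives $g_\epsilon\to g$ in $L^p(M;End\,\V)$ uniformly on $(0,T]$ for every $p<\infty$, so $g_\epsilon(t)\,g_\epsilon(\tau)^{-1}\to g(t)\,g(\tau)^{-1}$ in $L^p$. Because the left-hand side is $\epsilon$-independent and equal to $g_\tau(t)$, the limit gives $g(t)\,g(\tau)^{-1}=g_\tau(t)$ a.e.\ on $M$. Taking $k:=g_\tau$ as the smooth representative then establishes \eqref{rec405} and the ODE \eqref{rec400}.

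For the boundary conditions I would compute the evolution of $\ell(t):=k(t)^{-1}dk(t)$. Setting $\phi:=d^*C$ and using $(k^{-1})'(t)=-k(t)^{-1}\phi(t)$ together with $(dk)'(t)=d(\phi k)(t)=(d\phi(t))k(t)+\phi(t)\,dk(t)$, one finds $\ell'(t)=Ad(k(t)^{-1})\,d\phi(t)$; moreover $\ell(\tau)=0$ because $k(\tau)\equiv I_\V$ is spatially constant. Hence
\[
\ell(t)=\int_\tau^t Ad(k(s)^{-1})\,d\phi(s)\,ds.
\]
Since $Ad(k^{-1})$ acts fiberwise on $\kf$, it commutes with the normal/tangential decomposition of $\kf$-valued 1-forms on $\p M$. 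It therefore suffices to check that $(d\phi(s))_{norm}=0$ on $\p M$ in case (N) and $(d\phi(s))_{tan}=0$ in case (D), for each $s\in(0,T)$. In case (N), the last paragraph of the proof of Lemma \ref{normderiv} already yields $(d_C\phi(s))_{norm}=0$; combined with $C(s)_{norm}=0$ on $\p M$, which forces $[C(s),\phi(s)]_{norm}=[C(s)_{norm},\phi(s)]=0$, this gives $(d\phi(s))_{norm}=(d_C\phi(s)-[C(s),\phi(s)])_{norm}=0$ and hence \eqref{rec400N}. In case (D), \eqref{ST11D} asserts $\phi(s)|_{\p M}=0$, so every tangential derivative of $\phi(s)$ along $\p M$ vanishes, yielding $(d\phi(s))_{tan}=0$ and \eqref{rec400D}.

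The main subtlety is conceptual rather than computational: although in the critical case $a=1/2$ the limit function $g(t,\cdot)$ need not even be continuous on $M$ at fixed $t>0$, the $\epsilon$-independent identity forces $g(t)\,g(\tau)^{-1}$ to agree with the smooth ODE solution $g_\tau(t)$, so the anticipated singular behavior of the gauge function is concentrated in a \emph{common} $K$-valued factor that cancels in the ratio. Once that recognition is in place, the remaining boundary-condition analysis is an immediate reading of facts already established in the proof of Lemma \ref{normderiv}.
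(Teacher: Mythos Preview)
Your proof is correct and follows essentially the same route as the paper's. The paper introduces $u(t,x)$ as the solution to \eqref{rec400}, observes $g_\epsilon(t)=u(t)g_\epsilon(\tau)$ for $\epsilon<\tau$, and passes to the $L^p$ limit using Lemma~\ref{lemgconv}; your $\epsilon$-independence identity $g_\epsilon(t)g_\epsilon(\tau)^{-1}=g_\tau(t)$ is the same observation, phrased slightly differently (your $g_\tau$ is the paper's $u$). For the boundary conditions the paper simply cites \cite[Lemma 8.7]{CG1}, whereas you supply an explicit derivation via the identity $\ell'(t)=Ad(k(t)^{-1})\,d\phi(t)$ (which is exactly \eqref{h35} in the paper) together with the boundary analysis from the proof of Lemma~\ref{normderiv}; this is a correct self-contained reconstruction of the cited argument.
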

      \begin{proof} For each point $x \in M$ let $u(t,x)$ be the unique solution to  \linebreak
       $u'(t,x) u(t,x)^{-1} = d^*C(t)$ 
       on $(0, T)$ for which   $u(\tau, x) = I_\V$. Then $u(t,x)$
       lies in $K$ for all $t \in (0, T]$ and all $x \in M$.
               If $\epsilon < \tau$ then
  $g_\epsilon(t) = u(t) g_\epsilon(\tau)$ for $\epsilon \le  t < T$ because both sides satisfy
  the ODE in   \eref{rec100} and agree at $t = \tau$. 
  For fixed $t >0$ , $g_\epsilon(t)$ and $g_\epsilon (\tau)$
 converge to $g(t)$ and $g(\tau)$, respectively, in $L^p(M; End\ \V)$ by
  Lemma \ref{lemgconv}, as $\epsilon \downarrow 0$. Hence $g(t) = u(t) g(\tau)$
 for $ 0 < t < T$. Therefore $k =u $. Since $ C(\cdot) \in C^{\infty}( (0, T)\times M)$ so is
  $u$ and hence $k$.    
         
     The boundary conditions \eref{rec400N} and \eref{rec400D} follow from
      the boundary conditions
     \eref{ST11N},  respectively \eref{ST11D} for $C(\cdot)$ by the same argument
      given  in \cite[Lemma 8.7]{CG1}.
      \end{proof}

               \begin{lemma}\label{smrat2}   Suppose that $M$ is as in the statement of Theorem \ref{thmgconv}. 
Let $C(\cdot)$ be a strong solution to the augmented Yang-Mills heat equation \eref{aymh} over $[0,T]$ for some $T < \infty$.  Let $\tau >0$ and let $k(\cdot)$ be the solution to
  the initial value  problem \eref{rec400}.
                     Then for  $0 < \epsilon_0 \le \tau$ there holds
\begin{align}
\sup_{\epsilon_0 \le t \le T} \|k(t)^{-1}dk(t)\|_3 &< \infty,      \label{rec435}\\
\sup_{\epsilon_0 \le t \le T} \|k(t)^{-1}dk(t)\|_{H_1}& < \infty.  \label{rec436}
\end{align}
           \end{lemma}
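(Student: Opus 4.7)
The plan is to mimic the structure of the proof of Theorem \ref{thmhest2}, but applied to the function $k$ instead of $g_\epsilon$, while exploiting the fact that we are working on a compact subinterval $[\epsilon_0,T]$ of $(0,T]$, well away from the origin. First I would apply Lemma \ref{lemh3} with $g = k$. The initial condition $k(\tau) = I_\V$ gives $k(\tau)^{-1}dk(\tau)=0$ and $a(\tau,x) = $ the identity on $\kf$, where $a(t,x) = Ad(k(t,x)^{-1})$. Integrating the identity \eqref{h30} from $\tau$ to $t$ yields the representation
\begin{align}
k(t)^{-1}dk(t) = \bigl(\hat C(\tau) - a(t)\hat C(t)\bigr) + \int_\tau^t a(s)\chi(s)\,ds, \qquad 0<t\le T,
\end{align}
which is the exact analog of \eqref{h32}. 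This works on all of $(0,T)$ since $C(\cdot) \in C^\infty((0,T)\times M;\L^1\otimes\kf)$ is smooth by Theorem \ref{mildstr}.

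Next I would deduce \eqref{rec435}. Since $a(s)$ is pointwise a unitary (resp.\ orthogonal) transformation, it preserves every $L^p$ norm, so
\begin{align}
\|k(t)^{-1}dk(t)\|_3 \le \|\hat C(\tau)\|_3 + \|\hat C(t)\|_3 + \left|\int_\tau^t \|\chi(s)\|_3\,ds\right|.
\end{align}
By Lemma \ref{lemvert2}, $\hat C:(0,T]\to H_1$ is continuous, hence bounded on the compact interval $[\epsilon_0,T]$, and by Sobolev $\hat C$ is bounded into $L^3$ on this interval. The integral term is dominated by $\int_{\epsilon_0}^T \|\chi(s)\|_3\,ds$, which is finite by \eqref{he316} of Corollary \ref{corhest3}. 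This gives \eqref{rec435}.

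For \eqref{rec436} I would use the multiplier bound \eqref{gp3} (with $b=1$), which requires control of $\|k(s)^{-1}dk(s)\|_3$ — precisely what \eqref{rec435} just provided. Let $M_1 := \sup_{\epsilon_0 \le s \le T}\|k(s)^{-1}dk(s)\|_3 < \infty$. Then for every $s,t \in [\epsilon_0,T]$,
\begin{align}
\|a(s)\chi(s)\|_{H_1} \le (1+c_1 M_1)\|\chi(s)\|_{H_1}, \qquad \|a(t)\hat C(t)\|_{H_1} \le (1+c_1 M_1)\|\hat C(t)\|_{H_1}.
\end{align}
Applying the $H_1$ norm to the representation and invoking these estimates gives
\begin{align}
\|k(t)^{-1}dk(t)\|_{H_1} \le \|\hat C(\tau)\|_{H_1} + (1+c_1 M_1)\|\hat C(t)\|_{H_1} + (1+c_1 M_1)\int_{\epsilon_0}^T \|\chi(s)\|_{H_1}\,ds.
\end{align}
The continuous function $\hat C:[\epsilon_0,T]\to H_1$ is bounded on this compact interval, and the last integral is finite by \eqref{he16} of Corollary \ref{corhest3}. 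Taking the supremum over $t \in [\epsilon_0,T]$ yields \eqref{rec436}.

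No genuinely new obstacle arises; the only point to watch is the bootstrap structure, namely that the $L^3$ bound must be established \emph{before} the $H_1$ bound so that the multiplier constant in \eqref{gp3} is finite. The absence of a singularity at the origin (guaranteed by $\epsilon_0 > 0$) is what permits Corollary \ref{corhest3} to replace the much more delicate estimates of Theorem \ref{thmhest1}, so that no finite $a$-action hypothesis is needed here.
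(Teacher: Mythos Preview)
Your proof is correct and follows essentially the same approach as the paper's own proof: derive the integral representation for $k(t)^{-1}dk(t)$ from Lemma \ref{lemh3}, use the isometry of $a(s)$ on $L^3$ together with \eqref{he316} to get \eqref{rec435}, and then bootstrap via the multiplier bound \eqref{gp3} together with \eqref{he16} to get \eqref{rec436}. The only minor difference is that the paper bounds $\|\hat C(t)\|_3$ via $H_a \subset H_{1/2} \subset L^3$ rather than via $H_1 \subset L^3$, but both routes are valid on $[\epsilon_0,T]$.
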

            \begin{proof} Since we are only concerned with the behavior of $k(t)$ 
    for $t \ge \epsilon_0$ we can assume without loss of generality, by the argument
    in the proof of Corollary \ref{corhest3},  that $C(\cdot) \in \P_T^a$
    for any $a \in [1/2, 1)$.
    In Lemma  \ref{lemh3} choose for the function $g$ the function $k$ defined
   in \eref{rec400}. Since $dk(\tau) =0$, we learn from \eref{h30} that
 \begin{align}
 k(t)^{-1} d k(t) 
     =   \Big(a(s) \hat C(s) \Big)\Big|_t^\tau + \int_\tau^t a(s) \chi(s) ds,\ \ \  t >0,  \label{rec437}
 \end{align}
 where $a(s) = Ad\ k(s)^{-1}$ and $\chi(s)$ is again given by \eref{h33a}. 
  Then
 \begin{align*}
 \|k(t)^{-1} d k(t)\|_3 
    & \le \Big\|  \Big(a(s) \hat C(s) \Big)\Big|_t^\tau \Big\|_3 \pm \int_\tau^t \|\chi(s)\|_3 ds \\
 &\le \|\hat C(\tau)\|_3 + \| 
 \hat C(t)\|_3 + \int_{\epsilon_0}^T  \|\chi(s)\|_3 ds
 \end{align*}
 The integral is finite by  \eref{he316}. 
 $\hat C(\cdot)$ is a continuous function into $H_a$
 by Lemma \ref{lemvert2} and therefore into $H_{1/2}$ and therefore into $L^3(M)$. Hence
 $\| \hat C(t) \|_3$ is bounded on $[\epsilon_0, T]$. This proves \eref{rec435}.
 
    To prove \eref{rec436} we will use the representation \eref{rec437} again.
    In view of \eref{gp3} we have, for $\epsilon_0 \le t \le T$, 
      \begin{align*}
  \| k(t)^{-1} dk(t)\|_{H_1} \le \Big( 1 + c_{1} \gamma_{3}\Big)
        \Big(\|\hat C(\tau)\|_{H_1} +  \|\hat C(t)\|_{H_1} + \int_{\epsilon_0}^T \|\chi(s)\|_{H_1} ds\Big)
        \end{align*}
        where $\gamma_3$ denotes the left side of \eref{rec435}. $C(\cdot)$, 
         and therefore $\hat C(\cdot)$, are  continuous functions on $[\epsilon_0, T]$ 
          into $H_1$ because $C(\cdot)$ lies in $\P_T^a$. 
     The lemma now follows from \eref{he16}.  
 \end{proof}

\subsection{Proof of Theorem \ref{thmgconv}}
          Most of the steps in the proof of Theorem \ref{thmgconv} have been carried out
  in the preceding subsections. In Section \ref{secgests} we showed that the functions
  $g_\epsilon : (0,T] \times M \rightarrow K \subset End\, \V$ converge as functions of $t$ into
  $L^p(M; End\, \V)$ and in fact uniformly for  $t \in (0, T]$.  To prove convergence in the sense
  of the metric groups $\G_{1+a}$ one must  show that the logarithmic derivatives
  $h_\epsilon(t) \equiv g_\epsilon(t)^{-1} d g_\epsilon(t)$ converge in $H_a$.  
  It was first shown that the functions $h_\epsilon(\cdot)$ converge in $L^{q_a}$,
   in Lemma \ref{lemh7a},   and then shown, in Lemma \ref{lemh9}, 
    that they also converge in $H_a$,
   with both convergences uniform for $t \in [t_1, T]$ for each $t_1 \in (0, T]$.
  Therefore the functions $g_\epsilon(t)$ converge uniformly over $[t_1, T]$ to $g(t)$
  in the sense of the metric group $\G_{1+a}$. $h(\cdot)$ and $g(\cdot)$ are 
  therefore continuous on $(0, T]$ into $H_a$ and  $\G_{1+a}$ respectively.
  
  The limit function $h(t)$ converges to zero in $H_a$ as $t\downarrow 0$ by virtue 
  of \eref{he15}.
  The limit function $g(t)$ therefore converges to the identity operator
    on $L^2(M; \V)$ in the sense of  the $G_{1+a}$ topology as $t \downarrow 0$.
     $h$ and $g$ are therefore continuous  into $H_a$ and $\G_{1+a}$, respectively, over $[0, T]$.
    This proves Theorem \ref{thmgconv}, Parts  b), c)  and d).
    
    The smoothness of ratios asserted in Part e)
  of Theorem \ref{thmgconv} is proved in Lemma \ref{smrat} because the function $g(t)$
  constructed in Lemma \ref{lemgconv} is the function $g$ defined in Theorem \ref{thmgconv}
  as a limit in the group $\G_{1+a}$.  Since $k(t) = g(t) g(\tau)^{-1}$ and $g(t) \rightarrow I_\V$ in the sense of the metric group $\G_{1+a}$ it follows that $k(t) \rightarrow g(\tau)^{-1}$ in
   this sense also. 
   This concludes the  proof of Theorem \ref{thmgconv}.

\section{Recovery of $A$ from $C$}       \label{secrec}

In this section $M$ will be assumed to be all of $\R^3$ or the closure of a bounded, convex,
 open subset of $\R^3$ with smooth boundary.

\begin{theorem}\label{thmreca} $($Recovery of $A$ from $C$$)$
 Let $1/2 \le a <1$. Let $M=\R^3$ or be the closure of a bounded, convex,  open set in $\R^3$
 with smooth boundary.
  Suppose that $A_0 \in H_{a}(M)$
 and that $C(\cdot)$
is a strong solution to the augmented  equation \eref{aymh} with $C(0) = A_0$ and
 having  finite strong  $a$-action over $[0,T]$.
 Then there exists a continuous function
 \beq
 g:[0, T] \rightarrow \G_{1+a}                                 \label{rec214a}
 \eeq
such that $g(0) = I_\V$ and such that the gauge transform $A(\cdot)$, defined by 
\beq
A(t) = C(t)^{g(t)},  \ \ 0 \le t \le T,                               \label{rec215a}
\eeq
is an almost strong  solution to the  Yang-Mills heat equation over $(0, T]$, whose
 curvature satisfies the boundary condition \eref{s50} resp. \eref{s51}. 
The map
\begin{align}
A(\cdot): [0, T]\rightarrow H_a
\end{align}
is continuous. In particular   
 $A(t)$ converges in $H_{a}$ norm  to $A_0$ as $t\downarrow 0$.

 If  $0 < \tau < T$ and
$g_0 \equiv g(\tau)^{-1}$ then the function $t\mapsto A(t)^{g_0}$ is a strong solution to the 
Yang-Mills heat equation satisfying  the boundary condition \eref{s50} resp \eref{s51} 
as  well as  the boundary condition \eref{s52} resp. \eref{s53}.   
$A^{g_0}(\cdot)$ lies in $C^\infty((0, T)\times M;\L^1\otimes \kf)$.
The map 
\beq
A^{g_0}(\cdot):[0,T] \rightarrow H_a
\eeq
is continuous.  In particular $A(t)^{g_0}$ converges in $H_{a}$ norm 
to $A_0^{g_0}$ as $t\downarrow 0$.

 $A(\cdot)$ and $A^{g_0}(\cdot)$ have finite $a$-action:
\beq
\int_0^Ts^{-a} \|B(s)\|_2^2 ds < \infty .           \label{rec216a}
\eeq
In case $a = 1/2$ and $\|A_0\|_{H_{1/2}}$ is sufficiently small then $C(\cdot)$ has
 finite strong $(1/2)$-action and all the  preceding conclusions hold.
\end{theorem}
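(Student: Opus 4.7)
\bigskip
\noindent
\textbf{Proof plan.}
The plan is to produce $A$ as a limit of smooth gauge transforms $A_\epsilon(t) := C(t)^{g_\epsilon(t)}$ and then read off the required properties from Theorem \ref{thmgconv} and Lemma \ref{smrat2}. First I would observe that, because $g_\epsilon \in C^\infty((\epsilon,T]\times M;K)$ solves the ODE $g_\epsilon' g_\epsilon^{-1} = d^*C$, a direct computation (the ZDS identity; cf.\ \cite[Sec.\ 8]{CG1}) shows that $A_\epsilon$ is a smooth strong solution of the Yang-Mills heat equation \eqref{str5} on $(\epsilon,T]$ with $A_\epsilon(\epsilon)=C(\epsilon)$, and with curvature $B_\epsilon(t)=g_\epsilon(t)^{-1}B_C(t)g_\epsilon(t)$, satisfying the boundary conditions \eqref{s50} or \eqref{s51} because these are inherited from $B_C$ under pointwise conjugation by $g_\epsilon(t)\in K$. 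By Theorem \ref{thmgconv}, $g_\epsilon \to g$ in $\G_{1+a}$ uniformly on $(0,T]$, and $g(0)=I_\V$. Since $C(\cdot):[0,T]\to H_a$ is continuous and the adjoint action of $\G_{1+a}$ is bounded on $H_a$ by Corollary \ref{corgg3a}, the functions $A_\epsilon(t)=(Ad\, g_\epsilon(t)^{-1})C(t)+g_\epsilon(t)^{-1}dg_\epsilon(t)$ converge in $H_a$, uniformly in $t$, to
\[
A(t)=(Ad\,g(t)^{-1})C(t)+g(t)^{-1}dg(t),
\]
and $A:[0,T]\to H_a$ is continuous with $A(0)=A_0$.

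Next I would verify that $A$ is an almost strong solution on $(0,T)$. Condition b) of Definition \ref{defstrsol} holds because $B(t)=g(t)^{-1}B_C(t)g(t)$, $B_C(t)\in W_1$ for $t>0$ by the smoothness of $C$, and the multiplier bound \eqref{gp3} applied with $b=1$ (note $\|g^{-1}dg\|_3<\infty$ by Sobolev since $g\in\G_{1+a}$ with $a\ge 1/2$) shows $B(t)\in H_1\subset W_1$; the boundary conditions \eqref{s50}--\eqref{s51} are preserved under pointwise conjugation by $K$. For c) and d), I would fix $0<\tau<t_0<T$ and consider $k(t):=g(t)g(\tau)^{-1}$, which by Theorem \ref{thmgconv}(e) lies in $C^\infty((0,T]\times M;K)$. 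Writing $A(t)^{g(\tau)^{-1}}=C(t)^{k(t)}$ and repeating the ZDS identity with the smooth function $k$ shows $A(\cdot)^{g(\tau)^{-1}}$ is a \emph{strong} solution on $(0,T]$; gauge transforming back by $g(\tau)\in\G_{1+a}$ (which need not lie in $\G_2$, so $A$ itself is only almost strong) gives the strong $L^2$ derivative $A'(t)=-d^*_{A(t)}B(t)$ on $(0,T)$ after conjugating the identity by $g(\tau)$ pointwise in $t$ (this uses that conjugation by $Ad\,g(\tau)$ carries the identity $A_\epsilon'=-d^*_{A_\epsilon}B_\epsilon$ into its $A$-analog in the limit). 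Continuity of $A'(\cdot)$ into $L^2$ follows from continuity of the right-hand side.

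Setting $g_0=g(\tau)^{-1}$ for any chosen $\tau\in(0,T)$ produces the strong solution $A^{g_0}(t)=C(t)^{k(t)}$, which by the smoothness of $k$ and $C$ lies in $C^\infty((0,T)\times M;\L^1\otimes\kf)$; its boundary conditions \eqref{s50}--\eqref{s53} follow from \eqref{rec400N}--\eqref{rec400D} in Lemma \ref{smrat}, together with the inherited conditions on $B_C$. Continuity of $A^{g_0}(\cdot):[0,T]\to H_a$ at $t=0$ follows from continuity of $A(\cdot)$ into $H_a$ combined with the fact that left multiplication by the fixed element $g_0\in\G_{1+a}$ acts continuously on $H_a$ via $A\mapsto g_0^{-1}Ag_0+g_0^{-1}dg_0$ (Corollary \ref{corgg3a}). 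The finite action bound \eqref{rec216a} reduces, via $\|B(t)\|_2=\|B_C(t)\|_2$, to the estimate $\int_0^T s^{-a}\|B_C(s)\|_2^2\,ds<\infty$, already contained in \eqref{vs411a}. Finally, when $a=1/2$ and $\|A_0\|_{H_{1/2}}$ is small, Theorem \ref{thmmild}(iii) supplies the finite strong $(1/2)$-action hypothesis needed to apply everything above.

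The main obstacle is not the convergence argument but the justification that the strong-solution identity $A_\epsilon'=-d^*_{A_\epsilon}B_\epsilon$ survives in the limit as an identity for the almost-strong $A$, given that $A(t)$ need not lie in $W_1$. The device of introducing the smooth ratio $k(t)=g(t)g(\tau)^{-1}$ from Theorem \ref{thmgconv}(e) is what makes this go through: for each fixed $\tau$ the solution $A^{g(\tau)^{-1}}=C^{k}$ is strong and smooth on $(0,T]$, and the identity for $A$ is then obtained from the identity for $A^{g(\tau)^{-1}}$ by pointwise-in-$t$ conjugation by $g(\tau)\in \G_{1+a}$, which preserves the $L^2$ identity even when it destroys $W_1$-regularity of $A$ itself. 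This is exactly the mechanism, foreshadowed in Remark \ref{nosmooth}, that forces the distinction between almost strong and strong solutions at critical regularity.
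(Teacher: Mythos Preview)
Your approach is essentially the same as the paper's: define $A(t)=C(t)^{g(t)}$ with $g$ from Theorem \ref{thmgconv}, factor through the smooth ratio $k(t)=g(t)g(\tau)^{-1}$ to exhibit $\hat A(t)=C(t)^{k(t)}$ as a smooth strong solution via the ZDS identity, and then recover $A=\hat A^{g(\tau)}$ as an almost strong solution; continuity into $H_a$, boundary conditions, and finite $a$-action are read off exactly as you indicate. One small point: for $B(t)\in W_1$ you invoke the multiplier bound \eqref{gp3} with $b=1$ on the $2$-form $B_C(t)$, but Proposition \ref{propgp1} is stated only for $1$-forms; the paper instead uses the product rule together with the $L^\infty$ bound $\|B_C(t)\|_\infty<\infty$ from Corollary \ref{cornd2} and $g(t)^{-1}dg(t)\in L^2$, which avoids this extension.
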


If $a > 1/2$ then the solution $C(\cdot)$ to the augmented Yang-Mills equation automatically
has finite a-action, as was proven in Theorem \ref{thmfa}. If $a=1/2$ and $C(\cdot)$ does not 
have finite $(1/2)$-action then there is  a weaker version of Theorem \ref{thmreca} that holds.

\begin{theorem}\label{thmrec0} $($Recovery in case of infinite action$)$ 
Let $M=\R^3$ or be the closure of a bounded, convex, open set in $\R^3$
 with smooth boundary.
Suppose that $A_0 \in H_{1/2}$ and that $C(\cdot)$ is a strong solution to the augmented equation \eref{aymh} with not necessarily finite strong $1/2$-action.
Then there exists a continuous function
\beq
 g:[0, T] \rightarrow \G_{1,2}                                 \label{rec2140}
 \eeq
such that $g(0) = I_\V$ and such that the function $A(t)$, defined by \eref{rec215a},  
is an almost strong  
solution to the  Yang-Mills heat equation over $(0, T]$. 
Its  curvature satisfies the boundary condition \eref{s50} resp. \eref{s51}. If $g_0 = g(\tau)^{-1}$
 as in Theorem \ref{thmrec} then  $A(t)^{g_0}$ is a strong solution to the Yang-Mills heat equation
  satisfying   \eref{s50} resp \eref{s51} as well as \eref{s52} resp. \eref{s53}. 
$A(t)$ converges to $A_0$ in $L^2(M)$ and $A(t)^{g_0}$ converges to $A_0^{g_0}$ in $L^2(M)$.
\end{theorem}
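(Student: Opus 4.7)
\bigskip
\noindent
\emph{Plan of proof.} My plan is to run the same ZDS conversion-group construction as in the proof of Theorem \ref{thmreca}, but with every hypothesis of finite strong $(1/2)$-action replaced by the weaker $\delta$-regularised initial behavior bounds of Theorems \ref{thmia1} and \ref{thmia2}, and with the target gauge group $\G_{3/2}$ replaced by the much softer group $\G_{1,2}$. Correspondingly every convergence assertion for $g(t)$, $h(t) = g(t)^{-1}dg(t)$ and $A(t)$ is lowered from the $H_{1/2}$ level to the $L^2$ level. I define $g_\epsilon(t)$ by \eqref{rec100} as before and organize the argument into three steps: (i) $g_\epsilon \to g$ uniformly on $[0,T]$ in every $L^p$, $p < \infty$; (ii) $h_\epsilon(t) \to h(t)$ in $L^2$ uniformly on each subinterval $[t_1, T]$ with $\|h(t)\|_2 \to 0$ as $t \downarrow 0$; (iii) $A(t) := C(t)^{g(t)}$ has the claimed properties.

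For step (i) the estimate \eqref{rec250} still gives $\|g_\delta(t) - g_\epsilon(t)\|_p \le \int_\delta^\epsilon \|\phi(s)\|_p\,ds$, so it suffices to verify $\int_0^T \|\phi(s)\|_p\,ds < \infty$. For $p=2$ this follows from Theorem \ref{thmia1}, which gives $\|\phi(t)\|_2 = O(t^{-1/4-\delta/2})$ for any $\delta >0$, integrable near $0$. Combined with the pointwise bound $|g_\epsilon(t,x) - I_\V|_{End\,\V} \le 2$ this upgrades to $L^p$ convergence for all $p < \infty$ by interpolation, and \eqref{rec103} yields $g(0) = I_\V$.

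For step (ii) I prove the $L^2$ analogue of Theorems \ref{thmhest1}--\ref{thmhest2}, namely $\int_0^T \|\chi(s)\|_2 \, ds < \infty$, with $\chi$ as in \eqref{h33a}. The commutator terms are bounded by $\|[\hat C, \phi]\|_2 + \|[C,\phi]\|_2 \le 3c\kappa_6 \|C(s)\|_{H_1} \|\phi(s)\|_3$; the product is integrable by the interpolation $\|\phi\|_3^2 \le \|\phi\|_2\|\phi\|_6$, the weighted $L^6$ bound \eqref{vs550ep}, and the path-space control $\|C(s)\|_{H_1}=o(s^{-1/4})$, argued exactly as in \eqref{vs38g}. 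The projection term satisfies $\|P^\perp(d_C^*B_C)\|_2 \le \|C'(s)\|_2$, and Theorem \ref{thmia2} furnishes $\|C'(s)\|_2 = O(s^{-3/4-\delta/2})$, integrable for $\delta < 1/2$. Granted this $L^2$ integrability of $\chi$, the Cauchy-sequence arguments of Lemmas \ref{lemh7a} and \ref{lemh9} transport verbatim after replacing $H_a$ by $L^2$ throughout. The key simplification is that $Ad\,g_\epsilon(t)$ is an isometry on $L^2$, so only the soft strong-continuity assertion of Lemma \ref{strcontp}(a) is needed --- its hypothesis $\|g_\delta(\epsilon) - I_\V\|_2 \to 0$ is supplied by \eqref{rec103} --- and the deeper multiplier bounds of Section \ref{secmb} are bypassed entirely.

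For step (iii) the definition $A(t) = C(t)^{g(t)}$ together with Lemma \ref{smrat} (which does not use finite action) identifies the smooth ratio $k(t) = g(t)g(\tau)^{-1}$ as the unique smooth solution of \eqref{rec400}, so $A(t)^{g_0} = C(t)^{k(t)}$ with $g_0 = g(\tau)^{-1}$; Lemma \ref{smrat2} supplies the $H_1$ bounds on $k^{-1}dk$ on each $[\epsilon_0, T]$, from which $A^{g_0}$ is seen to be a strong solution on $(0,T]$ satisfying the boundary conditions by the same argument as in Theorem \ref{thmreca}. The $L^2$ convergence $A(t) \to A_0$ follows from the decomposition
\begin{equation*}
A(t) - A_0 = (Ad\,g(t)^{-1})(C(t) - A_0) + \big((Ad\,g(t)^{-1}) - I\big) A_0 + g(t)^{-1}dg(t),
\end{equation*}
where the first term vanishes in $L^2$ by continuity of $C(\cdot)$ into $H_{1/2}\subset L^2$, the second by Lemma \ref{strcontp}(a) combined with $g(t) \to I_\V$ in $L^2$, and the third by $\|h(t)\|_2 \to 0$; the same argument with $A_0^{g_0}$ in place of $A_0$ handles $A^{g_0}(t) \to A_0^{g_0}$. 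The hard part will be the uniform dominations needed in the dominated convergence step in (ii), which reduces to careful book-keeping of the $s^{(1/2)+\delta}$ and $s^{(3/2)+\delta}$ exponents from Theorems \ref{thmia1}--\ref{thmia2} and choosing $\delta > 0$ small enough that every power of $s$ that appears is locally integrable --- no fundamentally new estimate is needed beyond reshuffling the critical-case interpolations already used in Sections \ref{secib} and \ref{secconvgp}.
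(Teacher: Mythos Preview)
Your proposal is correct and reaches the same conclusion, but you take a slightly more laborious route than the paper at the key step (ii). The paper explicitly remarks that for the $L^2$ target (as opposed to the $H_a$ target in the finite-action case) the complicated representation \eqref{h32} via $\hat C$, $P^\perp$ and $\chi$ ``does not offer an advantage now,'' and reverts to the simple identity \eqref{h35}, giving
\[
h_\epsilon(t) = \int_\epsilon^t a_\epsilon(s)\, d\phi(s)\, ds, \qquad
\|h_\epsilon(t)\|_2 \le \Big|\int_\epsilon^t \|d\phi(s)\|_2\, ds\Big|.
\]
Then a single Cauchy--Schwarz against $s^{-(1/2)-\delta}$ and the bound \eqref{vs551ep} on $\int_0^T s^{(1/2)+\delta}\|d\phi(s)\|_2^2\,ds$ already gives $\int_0^T \|d\phi(s)\|_2\,ds < \infty$, from which the Cauchy-sequence argument and $h(0)=0$ follow in one stroke.

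Your route via $\chi$ does work: you correctly bound $\|\chi(s)\|_2$ by $\|C'(s)\|_2$ plus commutator terms, and the infinite-action estimates of Theorems \ref{thmia1}--\ref{thmia2} make each piece integrable. But this reproduces the machinery (vertical projection, integrated boundary terms $\hat C(\epsilon) - a_\epsilon(t)\hat C(t)$, the multiplier-free isometry simplification) that was introduced in Sections \ref{secvert}--\ref{sechests} precisely to reach $H_a$ norms, and which is overkill once the target is only $L^2$. Your step (iii) and the $L^2$ decomposition of $A(t)-A_0$ match the paper's argument.
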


Actually, the function $g(\cdot)$ on $[0, T]$ that we will construct in the 
proof of Theorem \ref{thmrec0}     will be  
 a continuous function into the gauge group $\G_{1,q}$ for any $ q \in [2, 3)$. 
 See Remark \ref{rem0} for this marginal improvement.
 
\begin{remark}{\rm Theorems \ref{thmreca}  and \ref{thmrec0}  prove and extend 
 Theorem \ref{thmrec},  to all $a \in [1/2, 1)$ and to infinite action. They will be used
 in the next subsection to prove the existence portions of the two main theorems,
 Theorems \ref{thmeu>} and \ref{thmeu=}.
  The uniqueness assertions  of these theorems will be proven in  
  Section \ref{secu} after establishing apriori initial behavior  properties
   of  solutions $A(\cdot)$ to \eref{str5}.
} 
 \end{remark}

\subsection{Construction of $A$} \label{secconstruct}

In this section we will prove Theorem \ref{thmreca} 
 and its special case Theorem \ref{thmrec}. We will also prove Theorem \ref{thmrec0}.

\bigskip
\noindent
\begin{proof}[Proof of Theorem \ref{thmreca}] Denote by $g(t)$ the function constructed from $C(\cdot)$ in Theorem  \ref{thmgconv}.
In view of \eref{rec405} we may write the function $A(\cdot)$ defined 
      in \eref{rec215}    as
 \beq
 A(t) = C(t)^{g(t)} = (C(t)^{k(t)})^{g(\tau)} .              \label{rec401}    
 \eeq
 Let
\beq
\hat A(t) = C(t)^{k(t)},\ \ \ 0 < t < T.                      \label{rec402}
\eeq
Then
\beq
A(t) = \hat A(t)^{g(\tau)}. \ \ \ \ \qquad\qquad   \label{rec403}
\eeq
 $\hat A$ is a smooth function on $(0, T)\times M$ because $C$ and $k$ are smooth.
      Moreover \eref{rec400} and \eref{aymh} imply that $\hat A$ is a (smooth) solution to the
 Yang-Mills heat equation, \eref{str5} over $(0,T)$.  See \cite[Lemma 8.6]{CG1} for a proof. 
 This is the ZDS mechanism for constructing  a solution of \eref{str5} from a 
 solution of \eref{aymh}. 
        In accordance with \cite[Lemma 8.6]{CG1}, the curvature and time
  derivative of $\hat A$ and $A$ can be expressed in terms  of $C(\cdot)$ as 
\begin{align}
\hat B(t) &= k(t)^{-1} B_C(t) k(t),\ \ \ \hat A'(t) =k(t)^{-1}\Big(d_C^* B_C(t)\Big) k(t)    \label{rec420} \\
 B(t) &= g(t)^{-1} B_C(t) g(t)= g(\tau)^{-1} \hat B(t) g(\tau)          \label{rec422}\\
 A'(t) &=g(t)^{-1}\Big(d_C^* B_C(t)\Big) g(t)  =g(\tau)^{-1} \hat A'(t) g(\tau)      \label{rec424}
\end{align}
 Since $A(\cdot)$ is the gauge transform of $\hat A(\cdot)$ by a fixed gauge function $g(\tau)$,
 it is also a solution to the Yang-Mills heat equation, at least informally. We need to show that
 $\hat A(\cdot)$ is actually a strong solution and that $A(\cdot)$ is actually 
 an almost strong solution.
 
      By Corollary \ref{cornd2}  $\|B_C(t)\|_\infty$ is bounded on $[\epsilon, T]$ 
  for any $\epsilon >0$.  Secondly, $B_C(t) \in H_1(M)$ because $C(\cdot)$ is, by assumption,
  a strong solution to the augmented Yang-Mills heat equation, \eref{aymh}.
 Thirdly, $k(t)^{-1} d k(t) \in L^2(M)$ and $g(t)^{-1} dg(t) \in L^2(M)$. By the product rule, it follows
 from these three facts and the representations \eref{rec420} and \eref{rec422} 
 that both $\hat B(t)$ and $B(t)$ are in $W_1(M)$ for each $t > 0$. 
 Boundary  conditions will be discussed below. 
 Now $d_C^* B_C(t) \in L^2(M)$ by  \eref{vs561.1} and 
 \eref{vs640a}.  Therefore \eref{rec420} shows that $\hat A'(t) \in L^2(M)$ for $t >0$.
  Either of the two representations in \eref{rec424} shows 
  that $A'(t) \in L^2(M)$ for $t > 0$ also. Since $g(\cdot)$ and $k(\cdot)$ are both continuous into
  $\G_{1+a} \subset \G_{1,2}$ it is routine to show that $\hat A'$ and $A'$ are both continuous into
  $L^2(M)$. Therefore $\hat A(\cdot)$ and $A(\cdot)$ are both almost strong solutions to \eref{str5}.

 There is a distinction now between $k(t)$ and $g(t)$.  In accordance with \eref{rec221a} we know
 that $g(t)^{-1} dg(t)$ lies in $H_a(M)$, but since $a < 1$ we cannot conclude that $A(t)$, 
 which is  $g^{-1} C g + g^{-1} dg$, lies in $W_1(M)$. 
 That is,  \eref{str2} may fail and $A(\cdot)$  may therefore not be a strong solution.
 On the other hand Lemma \ref{smrat2} shows that $k(t)^{-1}dk(t) \in H_1(M)$ for all $t >0$. 
 Thus to show that $\hat A(t) \in H_1(M)$ it remains  only to show that
  $k(t)^{-1} C(t) k(t) \in H_1(M)$ for each $t >0$. 
  But,  in view of \eref{gp3}, this follows from the fact that $C(t)\in H_1(M)$ and 
  $k(t)^{-1} dk(t) \in L^3(M)$, which has been shown in \eref{rec435}. 
  Therefore $\hat A(\cdot)$ is a strong solution.

 The boundary conditions \eref{s50} - \eref{s53} for $\hat A$ and its curvature  $\hat B$
 follow from \eref{rec400N} and \eref{ST11N},
 respectively  \eref{rec400D} and \eref{ST11D}, by the same argument as
  in \cite[Corollary 8.8]{CG1}. Since $B(t) = g(\tau)^{-1} \hat B(t) g(\tau)$ the boundary
   conditions \eref{s50}, respectively \eref{s51}, hold for $B(t)$ also.  But it is well to note at this
   point that in the important case when $a =1/2$ we do not know that $g(t)^{-1} d g(t)$ satisfies any 
   particular boundary conditions and may not even have well defined boundary values
   because we know only that it lies in $H_{1/2}$. (See Theorem \ref{thmgconv}, Part d).)
   We therefore cannot assert an analog of \eref{s52} or \eref{s53} for $A$    itself. 
   By \eref{rec403} we see that $A(t)^{g(\tau)^{-1}} = \hat A(t)$, from which it follows that
   $A^{g_0}(\cdot)$ lies in $C^\infty((0,T)\times M; \L^1\otimes \kf)$, as asserted in the theorem.
   
           Concerning the continuity  of the map
\begin{align}
A(\cdot): [0, T] \rightarrow H_a,
\end{align}
observe  that  $A(t) = g(t)^{-1} C(t) g(t) + g(t)^{-1} dg(t)$, wherein 
the second term is a continuous function on $[0, T]$ into $H_a$ by virtue of
 Theorem \ref{thmgconv}, Part d). The first term lies in $H_a$ for every $ t \in [0, T]$
 by virtue of the inequality (see \eref{gp3})
 $\| g(t)^{-1} C(t) g(t)\|_{H_a} \le (1 + c_1\|g(t)^{-1} dg(t)\|_3) \|C(t)\|_{H_a}$.
 For the continuity of the first term at a point $s \in [0,T]$ we have
 \begin{align}
& \|(Ad\ g(t)^{-1})C(t) - (Ad\ g(s)^{-1}) C(s) \|_{H_a}                  
  \le \| (Ad\ g(t)^{-1})(C(t) - C(s))\|_{H_a}                           \notag \\
 &\qquad\qquad \qquad + \|\{(Ad\ g(t)^{-1}) -(Ad\ g(s)^{-1})\} C(s) \|_{H_a} \notag \\
 &\le (1 + c_1\|g(t)^{-1} dg(t)\|_3) \|C(t) - C(s)\|_{H_a}  \label{rec409}\\
 &\qquad\qquad \qquad  +  \|\{(Ad\ g(t)^{-1}) -(Ad\ g(s)^{-1})\} C(s) \|_{H_a}. \label{rec410}
 \end{align}
 The first factor in line \eref{rec409} is bounded because $t\mapsto g(t)^{-1} dg(t)$ is a continuous
 function into $H_a$ and therefore into $H_{1/2}$ and therefore into $L^3(M)$. 
 Hence, since
 $C(\cdot)$ is a continuous function into $H_a$, Line \eref{rec409} goes to zero as $t\rightarrow s$.
 In Line \eref{rec410} $s$ is fixed and we can therefore use the strong continuity 
 of the representation $\G_{1+a} \ni g\mapsto Ad\ g | H_a$, as in Corollary \ref{corgg3a},  
 to conclude that  Line \eref{rec410} also goes to zero as $t\rightarrow s$. Thus $A(\cdot)$ is 
 a continuous function on $[0, T]$ into $H_a$ and in particular $A(t)$ converges to $A_0$
 in $H_a$ norm as $t \downarrow 0$ (and not just in $L^2$).
               Finally, in view of \eref{gp3}, we have
$\|A(t)^{g_0} - A(s)^{g_0}\|_{H_{a}} = \| (Ad\, g_0^{-1})(A(t) - A(s))\|_{H_{a}} \le ( 1 + c_1\| g_0^{-1} dg_0\|_3) \|A(t) - A(s)\|_{H_{a}} \rightarrow 0$ as $t\rightarrow s$.
 Herein we have used the fact that 
$g_0 = g(\tau)^{-1} \in \G_{1+a} \subset \G_{3/2} \subset \G_{1,3}$. Thus $A(\cdot)^{g_0}$
is also a continuous function on $[0, T]$ into $H_a$ and in particular converges to its
initial value $A_0^{g_0}$ in $H_a$ norm.

  That $A(\cdot)$ and $A(\cdot)^{g_0}$ have finite $a$-action when $C(\cdot)$ has finite 
  strong $a$-action follows  from  \eref{vs411a}      
  since gauge invariance shows that  
  $$\int_0^T s^{-a} \| B(s)\|_2^2 ds
  = \int_0^T s^{-a} \| B_C(s)\|_2^2 ds <\infty.
  $$

In case $a = 1/2$ and $\|A_0\|_{H_{1/2}}$  is sufficiently small then Theorem  \ref{thmfa} 
shows that
the solution $C(\cdot)$ to the augmented equation with initial data $A_0$ has finite strong action.
Therefore all of the preceding assertions in Theorem \ref{thmreca} hold.
    This completes the proof of Theorem   \ref{thmreca} and  its special case Theorem 
    \ref{thmrec}.    
     \end{proof}

\begin{remark}{\rm  
It was pointed out in the introduction that if  $A_0:= u^{-1} du$ 
 is a pure gauge in $H_{1/2}(M)$  then the solution to the Yang-Mills
  heat equation is given by $A(t) := A_0$, which will never be in $H_1(M)$ if $A_0 \notin H_1(M)$.
  This is a simple example of an almost strong solution which is not a strong solution.
}
\end{remark}
 
\bigskip
\noindent
\begin{proof}[Proof of Theorem \ref{thmrec0}]
The proof relies on the weaker estimates for infinite action proved in Section \ref{secia}.
    We are going to use the simple expression \eref{h35} for $(d/ds) (g^{-1} dg)$ rather than
    the more complicated expression \eref{h30} because the latter  does not offer
     an advantage now. Thus we have
     \begin{align}
     h_\epsilon(t) = \int_\epsilon^t a(s) d\phi(s) ds
     \end{align}
     and therefore, for $1 \le q \le \infty$ we have
  \begin{align}
    \| h_\epsilon(t)\|_q  &\le \Big|  \int_\epsilon^t \|a(s) d\phi(s)\|_q ds\Big| \notag\\
    &=  \Big|  \int_\epsilon^t \| d\phi(s)\|_q ds\Big|.   \label{rec510}
     \end{align}   
 The case of immediate interest for us is $q = 2$. 
 For $\delta >0$ we have  
 \beq
 \int_0^T \| d\phi(s)\|_2 ds  \le \Big(\int_0^T s^{-(1/2) - \delta} ds)^{1/2}\Big)
                    \Big(\int_0^Ts^{(1/2) + \delta} \|d\phi(s)\|_2^2 ds \Big)^{1/2}  < \infty       \notag
 \eeq
 by \eref{vs551ep} if $0 < \delta < 1/2$.
  Therefore $\| h_\epsilon(t)\|_2$ remains bounded on $(0, T]$
 as $\epsilon \downarrow 0$. The standard machinery for differences, already used in 
 Section \ref{secconvh}, now shows that the functions $h_\epsilon(t)$ converge uniformly on 
 $(0, T]$,  as functions into $L^2(M; \L^1\otimes \kf)$, to a continuous
  function $h$ on $(0, T]$ into $L^2(M; \L^1\otimes \kf)$
 with limit $\lim_{t\downarrow 0} h(t) =0$. Defining  $h(0) = 0 $ extends $h$ to
  a continuous function
  on $[0, T]$ into $L^2(M; \L^1\otimes \kf)$. The same  arguments used in the
   proof of Theorem \ref{thmgconv} now show that there is a 
   continuous function $g:[0,T] \rightarrow \G_{1,2}$ such that $g(0, x) = I_\V$,
 and to which    the functions $g_\epsilon(t,x)$ converge, 
   uniformly over $(0, T]$, as functions into the metric group $\G_{1,2}$.

We need to show now that the gauge transform $A := C^g$  is an almost strong
 solution of the Yang-Mills heat equation \eref{str5} and that $A^{g_0}$ is a strong solution.
 As in the case of finite strong a-action we have $\|B_C(t)\|_\infty <\infty$ for each $t >0$ by 
 Corollary \ref{cornd2} and $B_C(t) \in H_1(M)$.
 The proof that $g(t)^{-1}B_C(t) g(t) \in W_1(M)$ is therefore the same as for the case of
 finite strong a-action because that proof made use only of these two properties of $B_C(t)$ 
 and the fact that $g(t) \in \G_{1,2}$. The same argument applies to $\hat B(t)$ in view of 
 \eref{rec420}. Each lies in $H_1(M)$ by the same argument as in the strong a-action case.
 As in the case of strong a-action, $A'(t)$ and $\hat A'(t)$ both lie in $L^2(M)$. 
 
 Just as in the case of finite strong a-action, $A(t)$ can fail to lie in $W_1(M)$, 
 whereas $\hat A(t)$ does lie  in $H_1(M)$, the latter  by virtue
          of Lemma \ref{smrat2} (for size) 
  again and     \cite[Corollary 8.8]{CG1} (for boundary conditions). 
 
 Since $g(\cdot)$ and $k(\cdot)$ are both continuous functions on $[0,T]$ into $\G_{1,2}$ it follows
 that $A(\cdot)$ and $\hat A(\cdot)$ are both continuous functions into $L^2(M)$ and
  therefore satisfy the continuity requirement \eref{str1}.

 Finally, $A(t)^{g_0} \in C^\infty((0, T) \times M)$ because it is 
 equal to $\hat A(t)$ by virtue of \eref{rec403}.
    \end{proof}

 \begin{remark}\label{rem0} {\rm (More on infinite action) 
 In Theorem \ref{thmrec0} we showed 
  that even if the solution $C(\cdot)$ does not have finite action a weaker version of the ZDS
  procedure holds. Failure to have finite action can only happen when $a =1/2$.
  If $A_0 \in H_{1/2}$ and does not have finite (1/2)-action the conversion
   function $g(\cdot)$ was only shown to be  continuous on $[0, T]$ into the
  rather large gauge group $\G_{1,2}$ rather than into the natural gauge group $\G_{3/2}$. 
  But the second order initial behavior bounds for infinite action stated in Theorem
  \ref{thmia2} can be used to show that  $g(\cdot)$ is actually continuous
  on  $[0,T]$ into the smaller  the gauge group $\G_{1,q}$ for any $ q \in [2, 3)$.  
 This would imply   that $A(t)$ converges to $A_0$ in $L^q(M)$ and 
 that $A(t)^{g_0}$ converges to $A_0^{g_0}$ in  $L^q(M)$ as $t\downarrow 0$. 
 We will omit here  the details of this marginal improvement because the critical value $q =3$ is 
 still not achieved in the infinite action case.
 }
 \end{remark}

\bigskip
\noindent
\begin{proof}[Proof of Theorems \ref{thmeu>} and \ref{thmeu=}, Existence]
Suppose that $A_0 \in H_a$.  If $a >1/2$ then Theorem \ref{thmaug} ensures that there exists
a strong solution $C(\cdot)$  to the augmented Yang-Mills heat
 equation \eref{aymh} on some interval
$[0,T]$ with initial value $A_0$ and satisfying all the hypotheses of Theorem \ref{thmreca},
which in turn assures the existence of a solution $A(t)$ to \eref{str5} 
and a gauge function $g_0$
satisfying all the conditions required in Theorem \ref{thmeu>} over the interval $[0,T]$. Since
$A(t)^{g_0} $ is a strong solution,  it lies in $H_1(M)$ for $t >0$. Therefore, by \cite{CG1},
it can be extended uniquely to a strong solution over $[0,\infty)$. One can now gauge transform
back via $g_0^{-1}$ to find an almost strong solution over all of $[0,\infty)$ which agrees with
$A(t)$ for $0 \le t \le T$. In this way we have extended the original almost strong solution
over $[0, T]$ to an almost strong solution over $[0,\infty)$. 
This proves items 1) to 5) of Theorem \ref{thmeu>}.

 If $a = 1/2$ then  Theorem \ref{thmaug} ensures that there exists
a strong solution $C(\cdot)$ to the augmented Yang-Mills heat 
equation \eref{aymh} on some interval
$[0,T]$ with initial value $A_0$ and satisfying all the hypotheses of Theorem \ref{thmrec0},
which in turn ensures that there exists a solution $A(\cdot)$ of \eref{str5} and a
 gauge function $g_0$ satisfying the
requirements 1) and 2) of Theorem \ref{thmeu=} after extending the solution to all
 of $[0,\infty)$ by the method described above.
    If, moreover, $\|A_0\|_{H_{1/2}}$ 
is sufficiently small then  Theorem \ref{thmaug} shows that the solution $C(\cdot)$ will have
finite strong (1/2)-action. Theorem \ref{thmreca} now ensures that 
conditions 3) and 4)  of Theorem \ref{thmeu=} also hold.

 This concludes the proof of the existence portions of these two theorems. The uniqueness will
 be proven in Section \ref{secu}.
\end{proof}

\subsection{Initial behavior of $A$}      
  \label{secibA}

\subsubsection{Initial behavior by energy bounds} 

\begin{notation} {\rm Let $1/2 \le a <1$. 
 For a strong solution, $A(\cdot)$,  to the Yang-Mills  heat equation over $(0,\infty)$ 
let
\beq
\rho_a(t) = (1-a) \int_0^t s^{-a} \|B(s)\|_2^2 ds.             \label{fa3}
\eeq
In accordance with Definition \ref{finaact},  a strong solution $A(\cdot)$ has 
 finite $a$-action in case $\rho_a(t) < \infty$  for some   (hence all) $t > 0$.
}
\end{notation}
     $\rho_a(t)$ is a gauge invariant function of the initial data $A_0$. All of the estimates
 in this section will be fully gauge invariant. They will depend only on finiteness of $\rho_a(t)$.
 Finite a-action, as defined by \eref{fa3}, is a slightly weaker notion than finite strong a-action,
 which we have used for $C(\cdot)$, and which is not gauge invariant.
 
We are going to derive initial behavior estimates  of orders one, two and three  for  a solution $A(\cdot)$ and then apply our Neumann domination techniques from 
Section \ref{secibN}  to derive  initial behavior bounds of $\|B(t)\|_\infty$ needed to prove uniqueness of solutions.

 \begin{proposition}\label{p.Aord1} $($Order 1$)$
     If $A(\cdot)$ is  a strong solution with  finite  $a$-action  then
\beq
 t^{1-a} \| B(t) \|_2^2 + 2 \int_0^t s^{1-a}  \| A'(s) \|_2^2 ds
                                                                             = \rho_a(t).           \label{fa10a}                                                               
 \eeq
 \end{proposition}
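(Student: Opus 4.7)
The plan is to establish the pointwise-in-$t$ energy identity
\begin{equation}
\frac{d}{ds}\|B(s)\|_2^2 = -2\|A'(s)\|_2^2,\qquad s > 0,\label{plan:en}
\end{equation}
and then feed it into the elementary calculus tool \eqref{fa7} of Lemma \ref{lem50} with $f(s)=\|B(s)\|_2^2$ and $b=a$; the assumption of finite $a$-action is exactly the integrability hypothesis \eqref{vs90f} needed to invoke that lemma, and the conclusion \eqref{fa7} is literally the claimed identity \eqref{fa10a}.

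To derive \eqref{plan:en}, I would first differentiate the curvature along the flow. Since $B(s)=dA(s)+A(s)\wedge A(s)$, the product rule and the commutator-wedge convention from Section \ref{secstate} give
\begin{equation*}
B'(s) = dA'(s) + [A(s)\wedge A'(s)] = d_{A(s)} A'(s),
\end{equation*}
interpreted in the weak sense permitted by the strong-solution hypotheses ($A(s)\in W_1$, $A'(s)\in L^2$, $B(s)\in W_1$). Therefore
\begin{equation*}
\tfrac{1}{2}\tfrac{d}{ds}\|B(s)\|_2^2 = (d_{A(s)}A'(s),B(s)) = (A'(s),d_{A(s)}^*B(s)) = -\|A'(s)\|_2^2,
\end{equation*}
where the last equality uses the Yang--Mills heat equation $A'(s)=-d_{A(s)}^*B(s)$.

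The real work is justifying the integration by parts $(d_A A',B)=(A',d_A^*B)$ in the middle step, since $A(s)$ need only be in $W_1$ and we are working on a manifold with boundary in the cases $M\ne\R^3$. This is where the strong-solution hypothesis is essential: $B(s)\in W_1$ provides enough regularity for $d^*$ to act classically, the boundary condition on the curvature asserted in \eqref{s50}/\eqref{s51} (coming with the notion of strong solution, cf.\ Theorem \ref{thmH1} and the hypotheses inherited from the existence theorems) kills the boundary term, and the nonlinear piece $[A\lrc B]$ is handled by pointwise antisymmetry of the bracket against the inner product on $\kf$. I expect that the cleanest route is to cite or re-derive \cite[Lemma 9.1]{CG1}, which provides a smooth approximation mechanism for strong solutions that makes this integration by parts rigorous at the level of smooth approximants and then passes to the limit using the continuity statements in Definition \ref{defstrsol}.

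Once \eqref{plan:en} is in hand, note that it implies $f(s):=\|B(s)\|_2^2$ satisfies $f'\le 0$ on $(0,\infty)$, so \eqref{fa7} of Lemma \ref{lem50} with $b=a\in[1/2,1)$ applies and yields
\begin{equation*}
t^{1-a}\|B(t)\|_2^2 + \int_0^t s^{1-a}\cdot 2\|A'(s)\|_2^2\,ds \;=\; (1-a)\int_0^t s^{-a}\|B(s)\|_2^2\,ds,
\end{equation*}
which is precisely \eqref{fa10a}. The hypothesis \eqref{vs90f} of Lemma \ref{lem50} is the finiteness of $\rho_a(t)$, i.e.\ finite $a$-action, so everything is consistent. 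The main obstacle is thus purely technical: confirming that, at the regularity of a strong (but not necessarily $C^\infty$) solution, the formal manipulation leading to \eqref{plan:en} is valid, including the boundary term vanishing in the cases $M\ne\R^3$.
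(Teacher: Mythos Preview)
Your proposal is correct and follows essentially the same route as the paper: establish the energy identity $(d/ds)\|B(s)\|_2^2 = -2\|A'(s)\|_2^2$ (which the paper simply cites from \cite[Equ.\ (5.7)]{CG1}, noting it is the $\phi=0$ case of \eqref{vs522}), and then feed it into Lemma~\ref{lem50} with $b=a$. The only cosmetic difference is that the paper invokes \eqref{vs91} with equality (taking $f=\|B\|_2^2$, $g=2\|A'\|_2^2$, $h=0$) rather than the specialized form \eqref{fa7}, but these coincide here.
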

              \begin{proof}   For $s > 0$  the identity  
   \beq 
   (d/ds) \| B(s)\|_2^2 = - 2 \| A'(s) \|_2^2                                          \label{5.21}
   \eeq
   holds, as shown in \cite[Equ. (5.7)]{CG1}. (It is also  special case of \eref{vs522} with $\phi=0$.)   In Lemma \ref{lem50} take $ f(s) = \|B(s)\|_2^2$, $g(s) = 2\|A'(s)\|_2^2$ and
   $h(s) = 0$. Then equality holds in \eref{vs90}.  Choose $b= a$ in Lemma \ref{lem50}.
  Then  \eref{vs91} (with equality) asserts that \eref{fa10a} holds.
 \end{proof}

\begin{notation}{\rm Recall from \eref{gaf70}, $\lambda(B(s)) = 1 + \gamma\|B(s)\|_2^4$.
 We take  from \cite[Equ. (6.1)]{CG1} the notation 
 \beq
 \psi_s^t = 2\int_s^t \lambda(B(s))ds\ \ \text{and}\ \  \psi(t) = \psi_0^t. \label{fa199a}
 \eeq
}
\end{notation}
       \begin{corollary} $($Order 1$)$ For $1/2 \le a < 1$ and $0 < t < \infty$ there holds
\begin{align}
 t^{2-2a} \|B(t)\|_2^4 &\le \rho_a(t)^2\ \ \text{and}\ \ \ 
                                                            t\|B(t)\|_2^4 \le t^{2a-1} \rho_a(t)^2,\label{fa200a}\\
 \int_0^t \|B(s)\|_2^4ds &\le(1-a)^{-1}  t^{2a -1}\rho_a(t)^2,                    \label{fa201a}\\
  \sup_{0 < s \le t} s\lambda(B(s))  &<\infty ,  \ \ \ \ \  \text{and}    \label{fa202a} \\ 
  \psi(t)   &< \infty.          \label{fa203a}                 
 \end{align}
\end{corollary}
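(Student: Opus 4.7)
The plan is to derive all four bounds as elementary consequences of Proposition \ref{p.Aord1} combined with the monotonicity of $\|B(\cdot)\|_2^2$ (from \eref{5.21}) and the monotonicity of $\rho_a(\cdot)$, plus the standing assumption $a \ge 1/2$ which lets us trade powers of $s$ for powers of $t$.

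First, \eref{fa10a} immediately gives $t^{1-a}\|B(t)\|_2^2 \le \rho_a(t)$, and squaring yields the first bound in \eref{fa200a}. Multiplying that bound by $t^{2a-1}$ produces the second bound, since $t\cdot t^{2a-2} = t^{2a-1}$. Both bounds in \eref{fa200a} thus follow with no work beyond Proposition \ref{p.Aord1}.

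For the integral bound \eref{fa201a} I would split $\|B(s)\|_2^4 = (s^{1-a}\|B(s)\|_2^2)\cdot s^{a-1}\|B(s)\|_2^2$ and use the $s$-version of \eref{fa10a} (together with monotonicity of $\rho_a$) to dominate the first parenthesized factor by $\rho_a(t)$. This reduces the task to estimating $\int_0^t s^{a-1}\|B(s)\|_2^2\,ds$. Writing $s^{a-1}=s^{2a-1}s^{-a}$ and using $a\ge 1/2$ to bound $s^{2a-1}\le t^{2a-1}$ pulls a $t^{2a-1}$ outside and leaves $\int_0^t s^{-a}\|B(s)\|_2^2\,ds=\rho_a(t)/(1-a)$ by the definition \eref{fa3}. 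Combining gives exactly $(1-a)^{-1}t^{2a-1}\rho_a(t)^2$.

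For \eref{fa202a}, recall $\lambda(B(s))=1+\gamma\|B(s)\|_2^4$, so $s\lambda(B(s))=s+\gamma s\|B(s)\|_2^4$. Applying the second inequality of \eref{fa200a} with $t$ replaced by $s$ and then using $a\ge 1/2$ and monotonicity of $\rho_a$ gives $s\|B(s)\|_2^4 \le s^{2a-1}\rho_a(s)^2 \le t^{2a-1}\rho_a(t)^2$, so the supremum over $s\in(0,t]$ is at most $t+\gamma t^{2a-1}\rho_a(t)^2<\infty$. Finally, \eref{fa203a} is immediate: $\psi(t)=2t+2\gamma\int_0^t\|B(s)\|_2^4\,ds$, and the integral is finite by \eref{fa201a}.

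There is no real obstacle here; the only point requiring a moment's thought is that $a\ge 1/2$ is used (rather than just $a<1$) to turn $s^{2a-1}$ into an increasing power of $s$, which is what allows the clean conversion of pointwise $s$-estimates into $t$-estimates throughout.
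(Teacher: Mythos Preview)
Your proof is correct and essentially identical to the paper's own argument: both use \eref{fa10a} to get $\|B(s)\|_2^2 \le s^{a-1}\rho_a(s)$, square it for \eref{fa200a}, and for \eref{fa201a} apply this bound to one factor of $\|B(s)\|_2^4$, write $s^{a-1}=s^{2a-1}s^{-a}$, pull out $t^{2a-1}\rho_a(t)$, and recognize the remaining integral as $\rho_a(t)/(1-a)$; the last two items then follow at once from the definitions of $\lambda$ and $\psi$. (Your opening sentence mentions monotonicity of $\|B(\cdot)\|_2^2$, but you never actually use it---only monotonicity of $\rho_a$ and the sign of $2a-1$ are needed, exactly as in the paper.)
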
 
              \begin{proof} \eref{fa10a} shows that 
  \beq
  \|B(s)\|_2^2 \le s^{a-1} \rho_a(s).  \label{fa205a}
  \eeq	
  Square this to find \eref{fa200a}. Use it once more to find
\begin{align}
\int_0^t \|B(s)\|_2^4ds &\le \int_0^t (s^{a-1}\rho_a(s)) \|B(s)\|_2^2 ds  \notag\\
 &\le t^{2a -1}\rho_a(t)\int_0^t s^{-a}\|B(s)\|_2^2 ds, \notag
 \end{align}
 which, upon using the definition \eref{fa3}, gives \eref{fa201a}.
 Since $\lambda(B(s)) = 1 +\gamma \|B(s)\|_2^4$, 
 \eref{fa202a} and \eref{fa203a} follow immediately from
 \eref{fa200a} and \eref{fa201a} respectively.
\end{proof}

  \begin{corollary} \label{c1.Aord1} $($Order 1$)$
   If $A(\cdot)$ is  a strong solution with  finite  $a$-action  then
  \begin{align}
  t^{1-a} \|B(t)\|_2^2 + 2\kappa^{-2} \int_0^t s^{1-a} \|B(s)\|_6^2 ds 
  &\le  \rho_a(t)\Big(1 + 2 \int_0^t \lambda(B(s))ds\Big) \notag \\   
  &<\infty .    \label{fa12a}
  \end{align}   
\end{corollary}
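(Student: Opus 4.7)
The plan is to bootstrap Proposition \ref{p.Aord1} into an $L^6$ bound on the curvature by invoking the Gaffney-Friedrichs-Sobolev inequality \eqref{gaf50}. Since $A(\cdot)$ is a strong solution, Bianchi gives $d_A B = 0$, while the equation \eqref{str5} asserts $d_A^* B = -A'$. Applying \eqref{gaf50} with $C = A$ and $\w = B(s)$ therefore yields, for each $s > 0$,
\begin{equation*}
\kappa^{-2} \| B(s)\|_6^2 \le \| A'(s)\|_2^2 + \lambda(B(s)) \, \| B(s)\|_2^2 .
\end{equation*}

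Next, I would multiply this pointwise (in $s$) inequality by $s^{1-a}$ and integrate over $(0,t)$. The first term on the right then contributes $\int_0^t s^{1-a}\|A'(s)\|_2^2 ds$, which is already controlled by Proposition \ref{p.Aord1}: twice it is at most $\rho_a(t) - t^{1-a}\|B(t)\|_2^2$. For the second term, the key observation is that the pointwise bound \eqref{fa205a}, namely $\|B(s)\|_2^2 \le s^{a-1}\rho_a(s)$, gives $s^{1-a}\|B(s)\|_2^2 \le \rho_a(s) \le \rho_a(t)$ because $\rho_a$ is monotone in $t$. Consequently
\begin{equation*}
\int_0^t s^{1-a} \lambda(B(s)) \|B(s)\|_2^2 \, ds \le \rho_a(t) \int_0^t \lambda(B(s))\, ds .
\end{equation*}

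Adding $t^{1-a}\|B(t)\|_2^2$ to twice the integrated inequality and substituting the identity \eqref{fa10a} of Proposition \ref{p.Aord1} causes the $A'$ contribution to combine exactly with $t^{1-a}\|B(t)\|_2^2$ to produce $\rho_a(t)$, leaving
\begin{equation*}
t^{1-a}\|B(t)\|_2^2 + 2\kappa^{-2}\int_0^t s^{1-a}\|B(s)\|_6^2\, ds \le \rho_a(t)\Big(1 + 2\int_0^t \lambda(B(s))\, ds\Big),
\end{equation*}
as desired. Finiteness of the right-hand side is immediate from $\rho_a(t) < \infty$ (finite $a$-action) together with \eqref{fa203a}, which records $\psi(t) = 2\int_0^t \lambda(B(s))\, ds < \infty$.

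There is no serious obstacle here; the proof is essentially a gauge-invariant algebraic rearrangement. The only delicate point to verify is that the Gaffney-Friedrichs-Sobolev inequality \eqref{gaf50} is legitimately applicable to $\w = B(s)$ for $s > 0$, i.e.\ that $B(s)$ satisfies the boundary conditions implicit in \eqref{gaf50}. For a strong solution this follows from item 5 of Theorem \ref{thmeu>} (resp.\ Theorem \ref{thmeu=}), which guarantees that the curvature satisfies the Neumann or Dirichlet boundary condition \eqref{s50} or \eqref{s51} on $\partial M$ for all $t > 0$; together with $d_A B = 0$ and $d_A^* B \in L^2$, this places $B(s)$ in the regime where \eqref{gaf50} was derived in \cite{CG1}.
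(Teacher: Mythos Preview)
Your proof is correct and follows essentially the same route as the paper: apply the Gaffney--Friedrichs--Sobolev inequality \eqref{gaf50} to $\w = B(s)$ using $d_A B = 0$ and $d_A^* B = -A'$, multiply by $s^{1-a}$, bound $s^{1-a}\|B(s)\|_2^2 \le \rho_a(t)$ via \eqref{fa205a}, and then combine with the identity \eqref{fa10a}. One minor point: citing Theorems \ref{thmeu>} and \ref{thmeu=} for the boundary conditions on $B$ is slightly circular, since this corollary is invoked in the uniqueness proof for arbitrary strong solutions (not just those produced by the existence construction); however, the relevant boundary conditions \eqref{uN}, \eqref{uD} are explicit hypotheses in the uniqueness statements, so the argument goes through.
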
  
\begin{proof}
  Since $d_A B =0$  and $d_A^*B = -A'$ the Gaffney-Friedrichs-Sobolev 
   inequality \eref{gaf50}   gives  
 \beq
 \kappa^{-2} \|B(s)\|_6^2 \le \|A'(s)\|_2^2 + \lambda(B(s)) \|B(s)\|_2^2.   \label{fa13}
 \eeq
 Therefore 
 \begin{align*}
 2\kappa^{-2} s^{1-a} \|B(s)\|_6^2 \le 2 s^{1-a} \| A'(s)\|_2^2 + 2\lambda(B(s)) (s^{1-a}\|B(s)\|_2^2).
 \end{align*}
 But $s^{1-a} \|B(s)\|_2^2 \le \rho_a(s) \le \rho_a(t)$ by \eref{fa205a}.  Therefore
  \begin{align*}
  t^{1-a} \|B(t)\|_2^2 &+ 2\kappa^{-2} \int_0^t s^{1-a} \|B(s)\|_6^2 ds  \\
  &\le   t^{1-a} \|B(t)\|_2^2 + 2 \int_0^t s^{1-a} \| A'(s)\|_2^2 
  + 2\rho_a(t)  \int_0^t  \lambda(B(s))  ds \\
 & =  \rho_a(t)  +2\rho_a(t)  \int_0^t  \lambda(B(s))  ds,
  \end{align*}
which is finite by \eref{fa203a}. 
  \end{proof}

\begin{proposition}\label{p.Aord2}$($Order 2$)$ If $A(\cdot)$ is  a strong solution with  finite  
$a$-action  then
\begin{align}
t^{2-a} \| A'(t) \|_2^2 + \int_0^t s^{2-a} e^{\psi_s^t} \| B'(s) \|_2^2 ds
 \le   e^{\psi(t)}\rho_a(t).   \label{fa100a}
\end{align}
\end{proposition}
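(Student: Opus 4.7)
The strategy mirrors the two-step argument used for the Order~$2$ bound \eref{vs640a} in Theorem~\ref{thmord2a} for the augmented equation, but in the simpler pure Yang--Mills setting where $\phi = 0$.

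First, I would derive the pointwise-in-$s$ identity
\begin{equation*}
\tfrac{d}{ds}\|A'(s)\|_2^2 + 2\|B'(s)\|_2^2 = -2\bigl(B(s),\ [A'(s)\wedge A'(s)]\bigr),
\end{equation*}
which follows from $A' = -d_A^* B$, the Bianchi identity $d_A B = 0$ (so that $d_A A' = B'$), and the short computation $A'' = -[A' \lrc B] - d_A^* B'$, giving $(A'',A') = -(B,[A'\wedge A']) - (B',d_A A') = -(B,[A'\wedge A']) - \|B'\|_2^2$. This is the $\phi = 0$ analog of \eref{vs523} and is essentially derived in \cite{CG1}. Bounding the right-hand side via the Gaffney--Friedrichs--Sobolev inequality \eref{gaf50} applied to $\w = A'$, combined with the interpolation $\|A'\|_4^2 \le \|A'\|_2^{1/2}\|A'\|_6^{3/2}$ and Young's inequality exactly as in Lemma~\ref{lemdi2}, a portion of $\|B'\|_2^2$ absorbs the Sobolev contribution and the remainder lands in $\lambda(B)\|A'\|_2^2$, yielding
\begin{equation*}
\tfrac{d}{ds}\|A'(s)\|_2^2 + 2\|B'(s)\|_2^2 \le 2\lambda(B(s))\,\|A'(s)\|_2^2.
\end{equation*}

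Since $\psi'(s) = 2\lambda(B(s))$, this is equivalent to $(d/ds)\bigl(e^{-\psi(s)}\|A'(s)\|_2^2\bigr) + 2e^{-\psi(s)}\|B'(s)\|_2^2 \le 0$. I would then apply Lemma~\ref{lem50} with $f(s) = e^{-\psi(s)}\|A'(s)\|_2^2$, $g(s) = 2e^{-\psi(s)}\|B'(s)\|_2^2$, $h \equiv 0$, and $b = a-1$, so that $1 - b = 2 - a$. The integrability hypothesis \eref{vs90f} reduces to $\int_0^t s^{1-a}\|A'(s)\|_2^2\,ds < \infty$, which is guaranteed by the Order~$1$ identity \eref{fa10a} in the sharpened form $\int_0^t s^{1-a}\|A'(s)\|_2^2\,ds \le \tfrac12\rho_a(t)$. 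The conclusion \eref{vs91}, multiplied by $e^{\psi(t)}$ and using $e^{\psi(t)-\psi(s)} = e^{\psi_s^t}$ together with $e^{-\psi(s)} \le 1$, gives
\begin{equation*}
t^{2-a}\|A'(t)\|_2^2 + 2\int_0^t s^{2-a} e^{\psi_s^t}\|B'(s)\|_2^2\,ds \le \tfrac{2-a}{2}\,e^{\psi(t)}\rho_a(t) \le e^{\psi(t)}\rho_a(t),
\end{equation*}
since $(2-a)/2 \le 1$ for $a \ge 0$. Dropping the excess factor of $2$ in front of the $B'$ integral yields exactly \eref{fa100a}.

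\textbf{Main obstacle.} The delicate point is Step~1, specifically the GFS bound on $\|A'\|_6$. In contrast to the augmented case, where both $\|d_C C'\|_2^2$ and $\|d_C^* C'\|_2^2$ are available on the left of \eref{vs523}, here only $\|d_A A'\|_2^2 = \|B'\|_2^2$ is. The quantity $\|d_A^* A'\|_2 = \|(d_A^*)^2 B\|_2$ appearing in GFS is purely algebraic (bilinear) in $B$, so it cannot be absorbed on the left but must instead be routed through $\|B\|_4^2$ and folded into the $\lambda(B)\|A'\|_2^2$ term. Once the pointwise inequality is in hand, the passage to \eref{fa100a} via the integrating factor, Lemma~\ref{lem50}, and the Order~$1$ bound is purely mechanical.
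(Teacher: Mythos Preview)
Your proposal is correct and follows the same route as the paper: the paper cites the differential inequality $(d/ds)(e^{-\psi(s)}\|A'(s)\|_2^2) + e^{-\psi(s)}\|B'(s)\|_2^2 \le 0$ from \cite[Equ.~(6.13)]{CG1}, applies Lemma~\ref{lem50} with $b = a-1$, and then bounds the resulting right-hand side $\,(2-a)\int_0^t s^{1-a} e^{-\psi(s)}\|A'(s)\|_2^2\,ds \le \rho_a(t)$ via \eref{fa10a}, exactly as you outline.

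One correction to your ``Main obstacle'': in fact $d_A^* A' = -(d_A^*)^2 B = 0$ identically. The operator $(d_A^*)^2$ on 2-forms is adjoint to $d_A^2 = [B\wedge\,\cdot\,]$, hence equals $[B\lrc\,\cdot\,]$, and $[B\lrc B] = 0$ pointwise by $\mathrm{Ad}$-invariance of the inner product on $\kf$. The paper uses this repeatedly; see the sentence before \eref{vs561.1} and the proof of Corollary~\ref{c.Aord3}. So the GFS bound on $A'$ is simply $\kappa^{-2}\|A'\|_6^2 \le \|B'\|_2^2 + \lambda(B)\|A'\|_2^2$, which is \eref{fa23}, with no $\|B\|_4^2$ term to reroute. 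This removes the obstacle you flagged and only makes your derivation cleaner.
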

         \begin{proof} The inequality 
\beq
(d/ds)( e^{-\psi(s)} \|A'(s) \|_2^2 ) + e^{-\psi(s)} \|B'(s) \|_2^2  \le 0        \label{fa36}
\eeq
was proved in \cite[Equ. (6.13)]{CG1}. 
     In Lemma \ref{lem50} take $f(s) =e^{-\psi(s)} \|A'(s) \|_2^2$, $g(s) =  e^{-\psi(s)} \|B'(s) \|_2^2$
     and $h(s) =0$. Choose  $b = a -1$. Then \eref{vs90} holds and \eref{vs91} shows that
\begin{align*}
t^{2-a}  ( e^{-\psi(t)} \|A'(t) \|_2^2 )  +\int_0^t s^{2-a}  e^{-\psi(s)} \|B'(s) \|_2^2
\le (2-a) \int_0^t s^{1-a}e^{-\psi(s)} \|A'(s) \|_2^2 ds.
\end{align*}
But \eref{fa10a} shows that $\int_0^t s^{1-a}e^{-\psi(s)} \|A'(s) \|_2^2 ds \le (1/2)\rho_a(t)$.
Insert this bound into the last displayed inequality and multiply by $e^{\psi(t)}$ to find \eref{fa100a}.
\end{proof}

The bounds in the preceding inequalities depend on $t$ and on $\rho_a(t)$. It will be convenient
to emphasize this kind of dependence in the following, slightly more complicated inequalities
in terms of a standard kind of bounding function. We will call a continuous function  from
$[0,\infty)^2$ to $[0, \infty)$ a {\it standard dominating 
  function}  if it is zero at $(0,0)$ 
and non-decreasing in both arguments. In the following inequalities quantities arising from
previous estimates are bounded by standard dominating functions and consequently 
the new bounds are easily seen to be bounded by new standard dominating functions.

       \begin{corollary}\label{c1.Aord2} $($Order 2$)$ $($$L^6$ estimates.$)$  If $A(\cdot)$ is  a 
       strong solution with   finite  $a$-action  then
\begin{align}
 t^{2-a} \| B(t)\|_6^2  
               &+\int_0^t s^{2-a} e^{\psi_s^t} \| A'(s) \|_6^2 ds      \label{fa114}\\
&\le e^{\psi(t)}\rho_a(t) \Big(1+  t\lambda(B(t))e^{-\psi(t)} 
 +   \int_0^t \lambda(B(s))) ds \Big)                           \notag \\
& \le C_1(t, \rho_a(t))                                                 \notag
\end{align}
for some standard dominating function $C_1$.
\end{corollary}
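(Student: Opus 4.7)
The plan is to obtain both pieces on the left-hand side by applying the Gaffney-Friedrichs-Sobolev inequality \eqref{gaf50} once to the 2-form $B(t)$ and once to the 1-form $A'(s)$, and then to combine the resulting bounds with the first- and second-order estimates already established in Corollary \ref{c1.Aord1} and Proposition \ref{p.Aord2}. For $B(t)$ this is exactly \eqref{fa13}: $\kappa^{-2}\|B(t)\|_6^2 \le \|A'(t)\|_2^2 + \lambda(B(t))\|B(t)\|_2^2$. For $A'(s)$ I would use the two identities $d_A A' = B'$ (from differentiating $B = dA + (1/2)[A\wedge A]$ in $t$) and $d_A^* A' = -(d_A^*)^2 B = 0$, a Bianchi-type vanishing, so that GFS collapses to $\kappa^{-2}\|A'(s)\|_6^2 \le \|B'(s)\|_2^2 + \lambda(B(s))\|A'(s)\|_2^2$.

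For the pointwise piece I would multiply the first GFS inequality by $t^{2-a}$. Proposition \ref{p.Aord2} supplies $t^{2-a}\|A'(t)\|_2^2 \le e^{\psi(t)}\rho_a(t)$, while the remaining term factors as $t^{2-a}\lambda(B(t))\|B(t)\|_2^2 = t\lambda(B(t))\cdot t^{1-a}\|B(t)\|_2^2 \le t\lambda(B(t))\rho_a(t)$ via \eqref{fa205a}. For the integral piece I would multiply the second GFS inequality by $s^{2-a}e^{\psi_s^t}$ and integrate on $(0,t]$. The $\|B'\|_2^2$ contribution is bounded directly by the integral half of Proposition \ref{p.Aord2}, yielding at most $e^{\psi(t)}\rho_a(t)$. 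For the $\lambda(B)\|A'\|_2^2$ contribution I would insert the pointwise bound $s^{2-a}\|A'(s)\|_2^2 \le e^{\psi(s)}\rho_a(s)$, then use the telescoping identity $e^{\psi_s^t}\,e^{\psi(s)} = e^{\psi(t)}$ and the monotonicity $\rho_a(s) \le \rho_a(t)$ to pull $e^{\psi(t)}\rho_a(t)$ outside the integral, leaving $\int_0^t \lambda(B(s))ds$.

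Adding the two resulting bounds produces precisely the right-hand side $e^{\psi(t)}\rho_a(t)\bigl(1 + t\lambda(B(t))e^{-\psi(t)} + \int_0^t \lambda(B(s))ds\bigr)$, after absorbing the Sobolev constant $\kappa^{2}$ into the overall constants (or by inserting it on the left as done elsewhere in the paper). The passage to a standard dominating function $C_1(t,\rho_a(t))$ is then immediate from \eqref{fa200a}--\eqref{fa203a}, since each of $\psi(t)$, $t\lambda(B(t))$, and $\int_0^t\lambda(B(s))ds$ is already controlled by a standard dominating function of $(t,\rho_a(t))$ under the finite $a$-action hypothesis.

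The only non-routine step is verifying $d_A^* A' = 0$, equivalently $(d_A^*)^2 B = 0$: pairing against a test $0$-form $\phi$ and using $(d_A)^2\phi = [B,\phi]$ gives $\langle (d_A^*)^2 B,\phi\rangle = \langle B,[B,\phi]\rangle$, which vanishes pointwise because $\sum_{j<k}\langle B_{jk},[B_{jk},\phi]\rangle_{\kf}=0$ by $\operatorname{Ad}$-invariance of the inner product. Apart from this small algebraic check, the argument is a weighted integration plus bookkeeping of the integrating factor $e^{\psi}$, and I do not anticipate any genuine obstacle.
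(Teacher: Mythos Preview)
Your approach is essentially identical to the paper's: apply GFS to $B(t)$ via \eqref{fa13} and to $A'(s)$ via $d_A A' = B'$, $d_A^* A' = 0$, then feed the resulting terms into Proposition~\ref{p.Aord2} and the first-order bounds. One small refinement: the paper observes that $t^{2-a}\|A'(t)\|_2^2$ and $\int_0^t s^{2-a}e^{\psi_s^t}\|B'(s)\|_2^2\,ds$ together form the full left side of \eqref{fa100a}, so a \emph{single} application of \eqref{fa100a} bounds their sum by $e^{\psi(t)}\rho_a(t)$, yielding the coefficient ``$1$'' exactly as stated; your separate bounds on each piece would produce ``$2$'' there, which is harmless for the dominating-function conclusion but does not give ``precisely'' the displayed right-hand side.
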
 
        \begin{proof} 
Since $d_A^* A' = 0 $ and $d_A A' = B'$, the Gaffney-Friedrichs-Sobolev
 inequality  \eref{gaf50}  gives
\begin{align}
\kappa^{-2}\|A'(s)\|_6^2 &\le \|B'(s)\|_2^2 + \lambda(B(s)) \|A'(s) \|_2^2.      \label{fa23}
\end{align}
Therefore,  in view of \eref{fa13} and \eref{fa23}, 
  we have  
\begin{align}
\kappa^{-2} &\Big\{  t^{2-a}  \|B(t)\|_6^2  
               + \int_0^t s^{2-a} e^{\psi_s^t} \| A'(s) \|_6^2 ds\Big\} \notag\\
  &\le  t^{2-a} (\|A'(t)\|_2^2 +  \lambda(B(t)) \| B(t)\|_2^2) \notag\\
       &\qquad \qquad  \ \ \        +\int_0^t s^{2-a} e^{\psi_s^t}\Big(\|B'(s)\|_2^2 
                    + \lambda(B(s)) \|A'(s) \|_2^2\Big)ds                        \label{fa115a}  \\
 & \le  e^{\psi(t)}  \rho_a(t)     +   t \lambda(B(t)) (t^{1-a} \| B(t)\|_2^2) 
+ \int_0^t e^{\psi_s^t} \lambda(B(s)) e^{\psi(s)}\rho_a(s) ds. \notag\\
&\le  e^{\psi(t)}\rho_a(t)  +  t \lambda(B(t)) \rho_a(t)  
      +  e^{\psi(t)} \rho_a(t) \int_0^t \lambda(B(s)) ds.     \notag
 \end{align} 
 We have applied \eref{fa100a} twice to  terms in line \eref{fa115a},
 once for dominating the sum of the first and third
  terms and once for dominating the   factor $s^{2-a} \|A'(s)\|_2^2$ in the integral. 
 In the transition to the last line 
  we have used   $e^{\psi_s^t}  e^{\psi(s)} = e^{\psi(t)}$ along with \eref{fa205a}.
  The last line is finite in virtue of \eref{fa202a} and \eref{fa203a}.
  \end{proof}

\begin{corollary}\label{c2.Aord2a} $($Order 2$)$
 $($Energy bounds$)$. Let $1/2 \le a <1$. Denoting again by $\#$ the pointwise product as in \eref{vs511}
 we have
\begin{align*}
\int_0^T s^{2-a} \| B(s)\# B(s)\|_2^2 ds < \infty.
\end{align*}
\end{corollary}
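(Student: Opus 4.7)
The plan is to reduce the $L^2$ norm of $B(s)\#B(s)$ to a product of three gauge-invariant quantities that have already been estimated in the preceding corollaries, distribute the weight $s^{2-a}$ among them, and then integrate. This mirrors the proof of \eref{nd80} in Lemma \ref{lemenbds}, but now applied to the gauge-invariant solution $A(\cdot)$ rather than $C(\cdot)$.

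First, since $\#$ is a pointwise bilinear product, there is a constant (depending only on $c$) with $|B(s)\#B(s)|\le c|B(s)|^{2}$ pointwise, so
\begin{align*}
\|B(s)\#B(s)\|_2^2 \le c^{2}\|B(s)\|_4^{4} \le c^{2}\|B(s)\|_2\|B(s)\|_6^{3},
\end{align*}
by the standard $L^{2}$--$L^{6}$ interpolation for $L^{4}$. Next I would split the weight as
\begin{align*}
s^{2-a}=s^{(2a-1)/2}\cdot s^{(1-a)/2}\cdot s^{(2-a)/2}\cdot s^{1-a},
\end{align*}
which is valid since $(2a-1)/2+(1-a)/2+(2-a)/2+(1-a)=2-a$. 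Thus
\begin{align*}
s^{2-a}\|B(s)\#B(s)\|_2^2 \le c^{2}s^{(2a-1)/2}\bigl(s^{(1-a)/2}\|B(s)\|_2\bigr)\bigl(s^{(2-a)/2}\|B(s)\|_6\bigr)\bigl(s^{1-a}\|B(s)\|_6^{2}\bigr).
\end{align*}

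The three factors in parentheses are exactly the quantities controlled by the preceding corollaries: Proposition \ref{p.Aord1} (more precisely \eref{fa205a}) gives $s^{(1-a)/2}\|B(s)\|_2 \le \sqrt{\rho_a(T)}$, Corollary \ref{c1.Aord2} gives $\sup_{0<s\le T}s^{(2-a)/2}\|B(s)\|_6\le \sqrt{C_1(T,\rho_a(T))}$, and Corollary \ref{c1.Aord1} gives $\int_0^{T}s^{1-a}\|B(s)\|_6^{2}ds<\infty$. The prefactor $s^{(2a-1)/2}$ is bounded by $T^{(2a-1)/2}$ on $[0,T]$ because $a\ge 1/2$. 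Pulling out the three bounded factors and integrating the remaining $s^{1-a}\|B(s)\|_6^{2}$ therefore gives
\begin{align*}
\int_0^{T}s^{2-a}\|B(s)\#B(s)\|_2^{2}ds \le c^{2}T^{(2a-1)/2}\sqrt{\rho_a(T)\,C_1(T,\rho_a(T))}\int_0^{T}s^{1-a}\|B(s)\|_6^{2}ds<\infty,
\end{align*}
which is the desired bound.

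There is no real obstacle here; the estimate is straightforward given the order-one and order-two bounds that have already been assembled. The only point requiring minor care is the critical case $a=1/2$, where the prefactor $s^{(2a-1)/2}=1$ leaves no room to spare and the proof uses all three bounds at their full strength. This is parallel to the remark made after \eref{nd84}.
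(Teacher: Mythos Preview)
Your proof is correct and is essentially identical to the paper's own argument: the same interpolation $\|B\#B\|_2^2\le c^2\|B\|_2\|B\|_6^3$, the same splitting of the weight $s^{2-a}=s^{a-1/2}\cdot s^{(1-a)/2}\cdot s^{(2-a)/2}\cdot s^{1-a}$ (your $s^{(2a-1)/2}$ is the same exponent), and the same three references to \eref{fa205a}/\eref{fa12a}, \eref{fa114}, and \eref{fa12a} for the bounded and integrable factors. Your closing remark about the critical case $a=1/2$ also matches the paper's observation after \eref{nd84}.
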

   \begin{proof} Just as in the proof of \eref{nd84} we have the bound
   \begin{align*}
   s^{2-a} &\| B(s)\# B(s)\|_2^2  \notag\\
   &\le c^2 s^{a-(1/2)}\Big(s^{(1-a)/2} \|B(s)\|_2\Big) 
                \Big(s^{(2-a)/2} \|B(s)\|_6\Big) \Big(s^{1-a} \|B(s)\|_6^2 \Big). 
   \end{align*}    
    where $c$ is the commutator bound in $\kf$.
The first two factors in parentheses are bounded by  \eref{fa12a} and \eref{fa114},
respectively.
The third factor is integrable by \eref{fa12a}.
   \end{proof}

        \begin{proposition} \label{p.Aord3} $($Order 3$)$ 
  For $1/2 \le a <1$ and $0 < t <\infty$ there holds
      \begin{align}
t^{3-a} \| B'(t)\|_2^2 
 &+ \int_0^t s^{3-a}e^{\psi_s^t} \Big( \|A''(s)\|_2^2 
                             +(1/2) \|d_{A(s)}^* B'(s) \|_2^2\Big) ds \le C_2(t, \rho_a(t)) \label{gf10}                                      
 \end{align}
 for some standard dominating function $C_2$.
\end{proposition}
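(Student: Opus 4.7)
The plan is to follow the same three-step template used in Propositions \ref{p.Aord1} (Order 1) and \ref{p.Aord2} (Order 2): derive an $L^2$ differential identity for $\|B'(s)\|_2^2$, convert it into a differential inequality of the form $f' + g \le \Psi\, f$ with $\int_0^t \Psi(s)\, ds < \infty$, and then apply Lemma \ref{lem50} one level deeper, this time with $b = a - 2$ so that $1 - b = 3 - a$ produces the desired weight $t^{3-a}$.

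First, I would compute a pointwise-in-time identity. Using $B' = d_A A'$ and the YM heat equation $A' = -d_A^* B$, one gets $B'' = d_A A'' + [A'\wedge A']$ and $A'' = -[A'\lrc B] - d_A^* B'$. Substituting into $(1/2)(d/ds)\|B'(s)\|_2^2 = (B'', B')$ and integrating by parts (legitimate because $d_A^* B' \in L^2$ for $s>0$ by the Order 2 estimate \eqref{fa100a}) should yield an identity of the schematic form
\begin{align*}
(1/2)\frac{d}{ds}\|B'(s)\|_2^2 + \|A''(s)\|_2^2 + \|d_{A(s)}^* B'(s)\|_2^2 = \mathcal{Q}(A',B,B'),
\end{align*}
analogous to the augmented identity \eqref{vs523}, where $\mathcal{Q}$ collects commutator terms that are bilinear in $(A', B, B')$.

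Second, I would estimate $\mathcal{Q}$ by H\"older together with the Gaffney--Friedrichs--Sobolev inequality \eqref{gaf50}, applied to $B'$ as a $\kf$-valued $2$-form and to $A''$ as a $\kf$-valued $1$-form (noting that $d_A B' = [B\wedge A']$ by differentiating the Bianchi identity, while $d_A^* A'' $ is controlled through the equation for $A''$). After absorbing half of the positive terms on the left, this should yield a differential inequality
\begin{align*}
\frac{d}{ds}\|B'(s)\|_2^2 + \|A''(s)\|_2^2 + \tfrac{1}{2}\|d_{A(s)}^* B'(s)\|_2^2 \le \Psi(s)\,\|B'(s)\|_2^2,
\end{align*}
where $\Psi(s)$ has the shape $c_1\lambda(B(s)) + c_2\|B(s)\|_6^4 + c_3\|A'(s)\|_6^2 + \dots$. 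Setting $\Phi(s) = \int_0^s \Psi(\sigma)d\sigma$, multiplication by the integrating factor $e^{-\Phi(s)}$ then gives
\begin{align*}
\frac{d}{ds}\Bigl(e^{-\Phi(s)}\|B'(s)\|_2^2\Bigr) + e^{-\Phi(s)}\bigl(\|A''\|_2^2 + \tfrac{1}{2}\|d_A^* B'\|_2^2\bigr) \le 0.
\end{align*}
Applying Lemma \ref{lem50} with $b = a - 2$ (so that $1 - b = 3 - a$), $f(s) = e^{-\Phi(s)}\|B'(s)\|_2^2$ and $g(s)$ the second summand above, then multiplying by $e^{\Phi(t)}$, produces
\begin{align*}
t^{3-a}\|B'(t)\|_2^2 + \int_0^t s^{3-a} e^{\Phi_s^t}\bigl(\|A''\|_2^2 + \tfrac{1}{2}\|d_A^* B'\|_2^2\bigr)\, ds \le (3-a)\, e^{\Phi(t)}\int_0^t s^{2-a} e^{-\Phi(s)}\|B'(s)\|_2^2\, ds,
\end{align*}
and the right-hand integral is already dominated by $e^{\psi(t)}\rho_a(t)$ via Proposition \ref{p.Aord2}.

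The main obstacle is showing $\Phi(t) < \infty$ and dominating it by a standard dominating function of $(t,\rho_a(t))$. The summand $\int_0^t \lambda(B(s))\, ds$ is finite by \eqref{fa203a}. The terms $\int_0^t \|B\|_6^4\, ds$ and $\int_0^t \|A'\|_6^2\, ds$ are the delicate ones near $s=0$, because Order 1/Order 2 give only pointwise $O(s^{a-1})$ and $O(s^{a-2})$ control. I would handle these by the same interpolation trick used in Corollary \ref{c2.Aord2a}: splitting the weight as
\begin{align*}
\int_0^t \|B(s)\|_6^4\, ds \le \Bigl(\sup_{0<s\le t} s^{2-a}\|B(s)\|_6^2\Bigr)\int_0^t s^{a-2}\|B(s)\|_6^2\, ds,
\end{align*}
where the supremum is bounded by Corollary \ref{c1.Aord2} and the integral is rewritten via H\"older against the available weights $s^{1-a}\|B\|_6^2\in L^1$ (Corollary \ref{c1.Aord1}) and $s^{2-a}\|B\|_6^2\in L^\infty$; a parallel interpolation handles $\|A'\|_6^2$ using $\int_0^t s^{2-a}e^{\psi_s^t}\|A'\|_6^2\, ds < \infty$ from Corollary \ref{c1.Aord2}. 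Getting the correct form of $\Psi$, so that every quadratic commutator on the right of step one reduces to a linear multiplier of $\|B'(s)\|_2^2$ plus terms absorbable into $\|A''\|_2^2$ and $\|d_A^* B'\|_2^2$, is the principal technical issue; once this is in place, the weighted integration via Lemma \ref{lem50} is essentially automatic and the standard dominating function $C_2(t,\rho_a(t))$ can be read off in terms of $C_1(t,\rho_a(t))$, $\psi(t)$ and $\rho_a(t)$.
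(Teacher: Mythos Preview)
Your overall template (identity $\to$ differential inequality $\to$ Lemma \ref{lem50} with $b=a-2$) matches the paper, but step two as you describe it cannot be carried out, and the paper proceeds differently at exactly that point.

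The identity (Lemma \ref{lemId2}) reads
\[
(d/ds)\|B'\|_2^2 + \|A''\|_2^2 + \|d_A^* B'\|_2^2 = \|\,[A'\lrc B]\,\|_2^2 + 2([A'\wedge A'],B').
\]
The first term on the right, $\|\,[A'\lrc B]\,\|_2^2$, contains no factor of $B'$ whatsoever; it cannot be written as $\Psi(s)\|B'(s)\|_2^2$, nor can it be absorbed into $\|A''\|_2^2$ or $\|d_A^*B'\|_2^2$ without destroying the sign you need on the left. So the homogeneous inequality $f'+g\le\Psi f$ you aim for is unattainable: there is necessarily an inhomogeneous forcing term $h(s)$. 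The paper keeps the \emph{same} integrating factor $e^{-\psi(s)}$ as in Order~2 (not a larger $\Phi$), obtains
\[
(d/ds)\bigl(e^{-\psi}\|B'\|_2^2\bigr)+e^{-\psi}\bigl(\|A''\|_2^2+\tfrac12\|d_A^*B'\|_2^2\bigr)\le e^{-\psi}h(s),\qquad h(s)=c_7\|B\|_6^2\|A'\|_3^2+c_8\|A'\|_2^2\|A'\|_3^2,
\]
applies Lemma \ref{lem50} with nonzero $h$, and is left with $\int_0^t s^{3-a}e^{-\psi}h(s)\,ds$ on the right. This is bounded by pulling out $\eta=\sup_{0<s\le t}s^{2-a}\bigl(c_7\|B\|_6^2+c_8\|A'\|_2^2\bigr)$ (finite by Order~2) and then controlling $\int_0^t s\|A'(s)\|_3^2\,ds$ via the interpolation $\|A'\|_3^2\le\|A'\|_2\|A'\|_6$ against the Order~1 and Order~2 integrals at $a=1/2$.

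Separately, your proposed integrability of $\Phi$ would also fail: your splitting gives $\int_0^t\|B\|_6^4\,ds\le\bigl(\sup s^{2-a}\|B\|_6^2\bigr)\int_0^t s^{a-2}\|B\|_6^2\,ds$, but the weight $s^{a-2}$ is strictly more singular than the $s^{1-a}$ for which Corollary \ref{c1.Aord1} gives integrability (since $a-2<1-a$ for all $a<3/2$), so that integral diverges at $s=0$.
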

The proof depends on the following lemmas.

    \begin{lemma} \label{lemId2} $($Integral Identity$)$ 
 \begin{align}
  (d/ds) &\|B'(s)\|_2^2 +   \|A''(s)\|_2^2 + \|d_{A(s)}^* B'(s)\|_2^2      \notag\\
      &=  \|\, [A'(s) \lrc B(s)]\, \|_2^2 
                         + 2( [A'(s)\wedge A'(s) ], B'(s)).   \label{5.23}
 \end{align}
\end{lemma}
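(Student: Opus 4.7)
The strategy is a direct computation in the spirit of Lemma \ref{lemids4}, differentiating $\|B'(s)\|_2^2$ and systematically replacing derivatives of $A$ and $B$ via the equation of motion and the Bianchi identity. For a strong solution all the manipulations below are legitimate pointwise in $s>0$ because $A(\cdot)$ is smooth in $t$ after a possible gauge fix (the identity is gauge invariant), and the required integration by parts $(d_A\eta,\omega)=(\eta,d_A^*\omega)$ is valid under either the Neumann or Dirichlet boundary conditions inherited by $B(s)$ and its derivatives, exactly as in \cite{CG1}.

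First, I would record the two key first-order identities. From $B=dA+(1/2)[A\wedge A]$ one gets $B'=d_AA'$, and from the Yang-Mills heat equation $A'=-d_A^*B$ combined with the product rule $(d/ds)d_A^*=d_{A'}^*=[A'\lrc\,\cdot\,]$ one gets
\begin{equation}
A''=-[A'\lrc B]-d_A^*B',\qquad\text{equivalently}\qquad d_A^*B'=-A''-[A'\lrc B]. \label{pp-keyA}
\end{equation}
Differentiating $B'=d_AA'$ once more, using $(d/ds)d_A=[A'\wedge\,\cdot\,]$, yields
\begin{equation}
B''=d_AA''+[A'\wedge A']. \label{pp-keyB}
\end{equation}

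Next, I would expand $(1/2)(d/ds)\|B'\|_2^2=(B',B'')$ using \eqref{pp-keyB} and the Bianchi identity $d_AB=0$ (so that $d_A^*d_AA'=d_A^*B'$), followed by integration by parts:
\begin{align*}
(B',B'')&=(B',d_AA'')+(B',[A'\wedge A'])\\
&=(d_A^*B',A'')+(B',[A'\wedge A']).
\end{align*}
Substituting \eqref{pp-keyA} into the first term gives $(d_A^*B',A'')=-\|A''\|_2^2-([A'\lrc B],A'')$. Hence
\begin{equation}
(d/ds)\|B'\|_2^2=-2\|A''\|_2^2-2([A'\lrc B],A'')+2([A'\wedge A'],B'). \label{pp-step}
\end{equation}

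Finally, I would rewrite $\|d_A^*B'\|_2^2$ by expanding the square of the right-hand side of \eqref{pp-keyA}:
\[
\|d_A^*B'\|_2^2=\|A''\|_2^2+2(A'',[A'\lrc B])+\|[A'\lrc B]\|_2^2.
\]
Adding this to \eqref{pp-step} and adding $\|A''\|_2^2$, the cross term $\pm 2(A'',[A'\lrc B])$ cancels and the $\|A''\|_2^2$ terms combine cleanly, leaving precisely
\[
(d/ds)\|B'\|_2^2+\|A''\|_2^2+\|d_A^*B'\|_2^2=\|[A'\lrc B]\|_2^2+2([A'\wedge A'],B'),
\]
which is the claimed identity. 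The only real obstacle is bookkeeping the integrations by parts: one must verify that $A''(s)$, $B'(s)$, and $d_A^*B'(s)$ all lie in the correct Sobolev domains so that $(d_AA'',B')=(A'',d_A^*B')$ holds with no boundary contributions. For a strong solution this follows from the order-two regularity obtained earlier and from the fact that $B(s)$ satisfies the Neumann or Dirichlet boundary conditions of \eqref{s50}--\eref{s51} (whose first variation passes to $B'$), exactly as used to justify the analogous integrations by parts in Lemma \ref{lemids4}.
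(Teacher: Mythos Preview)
Your proof is correct and follows essentially the same route as the paper: you derive the identities $A''=-d_A^*B'-[A'\lrc B]$ and $B''=d_AA''+[A'\wedge A']$, compute $(d/ds)\|B'\|_2^2=2(B'',B')$, integrate by parts, and then do the algebra. The only cosmetic difference is that the paper evaluates $2(A'',d_A^*B')$ by substituting \eqref{pp-keyA} once for each factor and adding, whereas you substitute once and then separately expand $\|d_A^*B'\|_2^2$; the cancellations are identical.
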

\begin{proof}
The  first two of the identities
  \begin{align}
 A''  &=  -d_A^* B'   - [A'\lrc B]  \label{B32}\\
 B''  &=d_A A'' + [A' \wedge A']    \label{B33}\\
 d_A B' &= [B\wedge A']    \label{B35}
 \end{align} 
 follow by differentiating with respect to $s$,  first the Yang-Mills heat equation itself
 and then the identity  $B' = d_A A'$. The third follows from Bianchi's identity:
 $ d_A B' = (d_A)^2 A' = [B\wedge A']$. 
 From \eref{B33} we find that
   \begin{align*}
 (d/ds) \|B'(s)\|_{L^2}^2 & = 2(B'', B')\\
 & = 2(d_A A'' +[A'\wedge A'], B')\\
 &= 2(A'', d_A^* B') +  2([A'\wedge A'], B').
 \end{align*}
 We may evaluate the first term on the right in two different ways: 
 Replace $d_A^*B'$ using \eref{B32} or replace $A''$ using \eref{B32}. We find 
  $(A'', -A'' - [ A' \lrc B]) = ( A'', d_A^* B') = (-d_A^* B' - [ A' \lrc B], d_A^* B')$.
 Adding these two representations we find
  \begin{align*}
 2(A'', d_A^* B') &= -\|A''\|_2^2 - \|d_A^* B' \|_2^2  - ( A'' + d_A^*B', [A'\lrc B])\\
 &= -\|A''\|_2^2 - \|d_A^* B' \|_2^2 + \| A'\lrc B\|_2^2.
 \end{align*}
  This  proves \eref{5.23}. 
  \end{proof}

        \begin{lemma}\label{lemgf3} $($Differential inequality, order 3.$)$
\begin{align}
(d/ds) &\|B'(s)\|_2^2 +   \|A''(s)\|_2^2 + (1/2) \|d_{A(s)}^* B'(s)\|_2^2      \notag\\
&\le (3c^2/2) \|B(s)\|_6^2 \| A'(s)\|_3^2 +  2(c\kappa)^2 \|A'(s)\|_2^2 \| A'(s)\|_3^2 \notag\\
  &\ \ \ \ \ \ \ \ \ \ +(1/2) \lambda(B(s))  \| B'(s)\|_2^2     .            \label{gf3}
  \end{align}
\end{lemma}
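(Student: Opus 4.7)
The plan is to start from the integral identity \eref{5.23} in Lemma \ref{lemId2} and estimate each of the two terms on its right-hand side, absorbing half of the positive $\|d_A^* B'(s)\|_2^2$ from the left into the bound on the second term. The first term, $\|\,[A'(s)\lrc B(s)]\,\|_2^2$, is cubic-free in nonlinearity and will be handled by one H\"older step: $\|\,[A'\lrc B]\,\|_2 \le c\|A'\|_3\|B\|_6$, giving $\|\,[A'\lrc B]\,\|_2^2 \le c^2 \|B\|_6^2 \|A'\|_3^2$, which accounts for $c^2\|B\|_6^2\|A'\|_3^2$ of the desired $(3c^2/2)\|B\|_6^2\|A'\|_3^2$ coefficient. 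The remaining $(c^2/2)\|B\|_6^2\|A'\|_3^2$ will come from the second term.

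The main work, and the only slightly delicate step, is to estimate $2([A'(s)\wedge A'(s)], B'(s))$ in a form that produces $2(c\kappa)^2 \|A'\|_2^2 \|A'\|_3^2$ plus absorbable leftovers. I would use H\"older on $[A'\wedge A']$ in $L^{6/5}$ (via the arithmetic $5/6 = 1/2 + 1/3$) and pair against $B'$ in $L^6$ to obtain $|([A'\wedge A'], B')| \le c\|A'\|_2\|A'\|_3\|B'\|_6$. Then Young's inequality with weight $\epsilon = 2\kappa^2$ yields
\begin{align*}
2|([A'\wedge A'], B')| \le 2(c\kappa)^2 \|A'\|_2^2\|A'\|_3^2 + \frac{1}{2\kappa^2}\|B'\|_6^2.
\end{align*}

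To control the $L^6$ norm of $B'$ I would invoke the Gaffney--Friedrichs--Sobolev inequality \eref{gaf50} with $\w = B'(s)$. Crucially, Bianchi's identity \eref{B35} gives $d_A B' = [B\wedge A']$, so a further H\"older estimate yields $\|d_A B'\|_2^2 \le c^2 \|B\|_6^2 \|A'\|_3^2$, and hence
\begin{align*}
\kappa^{-2}\|B'\|_6^2 \le \|d_A^* B'\|_2^2 + c^2 \|B\|_6^2\|A'\|_3^2 + \lambda(B)\|B'\|_2^2.
\end{align*}
Substituting this back produces an extra $(c^2/2)\|B\|_6^2\|A'\|_3^2$ term (which combines with the contribution from the first estimate to give the stated $(3c^2/2)\|B\|_6^2\|A'\|_3^2$), along with the $(1/2)\lambda(B)\|B'\|_2^2$ term and a $(1/2)\|d_A^* B'\|_2^2$ term.

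Finally I would rearrange: the $(1/2)\|d_A^* B'\|_2^2$ appearing from the estimate of the quadratic-in-$A'$ pairing is moved to the left side of the identity \eref{5.23}, leaving exactly $(1/2)\|d_A^* B'\|_2^2$ on the left and producing inequality \eref{gf3}. The only subtlety worth flagging is the choice of Young weight $\epsilon = 2\kappa^2$, which is dictated by the need to leave only a factor of $1/(2\kappa^2)$ multiplying $\|B'\|_6^2$ so that, after applying GFS, the $\|d_A^* B'\|_2^2$ piece can be absorbed by the left-hand side while keeping the coefficient in front of $\lambda(B)\|B'\|_2^2$ at $1/2$; all constants and H\"older arithmetic are otherwise routine.
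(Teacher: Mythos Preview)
Your proposal is correct and follows essentially the same route as the paper's proof: start from the identity \eref{5.23}, bound the first term by H\"older as $\|[A'\lrc B]\|_2^2\le c^2\|B\|_6^2\|A'\|_3^2$, bound the second by H\"older plus Young with the same weight to produce $2(c\kappa)^2\|A'\|_2^2\|A'\|_3^2+(2\kappa^2)^{-1}\|B'\|_6^2$, then apply GFS to $B'$ using $d_AB'=[B\wedge A']$ and absorb the resulting $(1/2)\|d_A^*B'\|_2^2$ into the left side. The only cosmetic difference is that the paper keeps $\|[B\wedge A']\|_2^2$ separate until the last line and combines it with $\|[A'\lrc B]\|_2^2$ to read off the $(3c^2/2)$ coefficient, whereas you estimate $\|d_AB'\|_2$ immediately inside the GFS step; the arithmetic is identical.
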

        \begin{proof} We need only find appropriate bounds for the terms on the right side 
        of \eref{5.23}.      
        For the first term we have the simple H\"older bound 
        $\|\, [ A' \lrc B]\, \|_2^2 \le c^2 \| B\|_6^2 \| A'\|_3^2$. 
        
        Concerning the second term in \eref{5.23} we
        may apply H\"older and then the Gaffney-Friedrichs-Sobolev inequality \eref{gaf50}
        to find
        \begin{align}
        2 |( [A' &\wedge A'], B') | \le 2c \int_M |A'|\ |A'|\ |B'| dx   \notag\\
        &\le 2c \|A'\|_2 \| A'\|_3 \| B'\|_6   \notag\\
        &\le (1/2) \Big(2c\kappa \|A'\|_2 \| A'\|_3\Big)^2 + (1/2) \kappa^{-2} \| B'\|_6^2  \notag\\
        &\le 2(c\kappa)^2   \| A'\|_2^2 \| A'\|_3^2 +(1/2) \Big( \| d_A^* B'\|_2^2 + \| d_A B'\|_2^2 
         + \lambda(B) \| B'\|_2^2\Big)   \notag\\
       & = 2(c\kappa)^2   \| A'\|_2^2 \| A'\|_3^2 +(1/2) \| d_A^* B'\|_2^2     \notag\\
       &\ \ \ \ \ \ \ +  (1/2) \| \, [ B\wedge A']\, \|_2^2  + (1/2) \lambda(B) \| B'\|_2^2,  \notag
        \end{align}
        wherein we have used \eref{B35}.
        We can cancel $(1/2) \| d_A^* B'\|_2^2$ with a half of the corresponding term on the left
        side on \eref{5.23}. Using $\|\, [A' \lrc B]\, \|_2^2 +(1/2) \| \, [ B\wedge A']\, \|_2^2
        \le c^2(3/2) \| B\|_6^2 \| A'\|_3^2$ we arrive at \eref{gf3}.
        \end{proof}

        \begin{lemma}\label{lemgf4} There are constants $c_7, c_8$ depending only 
   on Sobolev constants and the commutator bound $c$ such that
        \begin{align}
        (d/ds)&\Big( e^{-\psi(s)} \|B'(s)\|_2^2\Big) 
                    + e^{-\psi(s)}\Big( \|A''(s)\|_2^2 +(1/2) \|d_{A(s)}^* B'(s) \|_2^2\Big) \notag\\
        &\le e^{-\psi(s)} \Big\{c_7 \|B(s)\|_6^2 \| A'(s)\|_3^2 
                    +  c_8 \|A'(s)\|_2^2 \| A'(s)\|_3^2\Big\}.        \label{gf7}
        \end{align}
        \end{lemma}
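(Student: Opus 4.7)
The plan is to apply an integrating factor argument to the differential inequality already established in Lemma \ref{lemgf3}. Recalling from \eref{fa199a} that $\psi(s) = 2\int_0^s \lambda(B(\sigma))\, d\sigma$, we have $\psi'(s) = 2\lambda(B(s))$, and the product rule gives
\begin{align*}
(d/ds)\Big(e^{-\psi(s)}\|B'(s)\|_2^2\Big) = e^{-\psi(s)}\Big((d/ds)\|B'(s)\|_2^2 \;-\; 2\lambda(B(s))\|B'(s)\|_2^2\Big).
\end{align*}
Substituting the upper bound from \eref{gf3} for $(d/ds)\|B'(s)\|_2^2$ on the right and transposing the $\|A''(s)\|_2^2$ and $(1/2)\|d_{A(s)}^*B'(s)\|_2^2$ terms to the left side will produce the claimed inequality, with $c_7 = 3c^2/2$ and $c_8 = 2(c\kappa)^2$.

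The key observation that makes this go through cleanly is that the stray term $(1/2)\lambda(B(s))\|B'(s)\|_2^2$ appearing on the right side of \eref{gf3} is dominated by the $-2\lambda(B(s))\|B'(s)\|_2^2$ coming from differentiating the integrating factor. Their sum is $-(3/2)\lambda(B(s))\|B'(s)\|_2^2 \le 0$, which can simply be discarded. The constants $c_7$ and $c_8$ so obtained depend only on the commutator bound $c$ and the Gaffney-Friedrichs-Sobolev constant $\kappa$, as required.

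There is no real obstacle here; the entire purpose of defining $\psi$ with the precise coefficient $2$ in \eref{fa199a} is to make this cancellation work. The analytic content was already spent in Lemma \ref{lemgf3}, where the Gaffney-Friedrichs-Sobolev inequality was used to absorb the $L^6$ part of the $([A'\wedge A'], B')$ term from the identity \eref{5.23}, at the cost of producing exactly the $(1/2)\lambda(B)\|B'\|_2^2$ remainder. Lemma \ref{lemgf4} is then just the routine repackaging of \eref{gf3} into a form suitable for applying the machinery of Lemma \ref{lem50}, which is how it will be used in the proof of Proposition \ref{p.Aord3}.
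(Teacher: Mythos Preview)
Your proposal is correct and follows essentially the same route as the paper: differentiate the product $e^{-\psi(s)}\|B'(s)\|_2^2$, substitute the differential inequality \eref{gf3}, and absorb the $(1/2)\lambda(B)\|B'\|_2^2$ term from \eref{gf3} into the negative term $-\psi'(s)\|B'\|_2^2$ coming from the integrating factor. You arrive at the same explicit constants $c_7 = 3c^2/2$ and $c_8 = 2(c\kappa)^2$ as the paper, and your write-up is in fact more careful about the factor of $2$ in $\psi'(s) = 2\lambda(B(s))$ than the paper's terse argument.
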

                 \begin{proof} The first term is 
 $e^{-\psi(s)}\Big((d/ds)\|B'(s)\|_2^2 - \lambda(B(s)) \|B'(s)\|_2^2\Big)$. Therefore
 multiplication of \eref{gf3} by  $e^{-\psi(s)}$ yields \eref{gf7} if one chooses $c_7 =3c^2/2$
 and $c_8 = 2c^2 \kappa^2 $.
\end{proof}

\bigskip
\noindent
               \begin{proof}[Proof of Propostition \ref{p.Aord3}]
   We will apply Lemma \ref{lem50} with 
   $f(s) = e^{-\psi(s)} \|B'(s)\|_2^2$, 
   $g(s) =  e^{-\psi(s)} \Big( \|A''(s)\|_2^2 +(1/2) \|d_{A(s)}^* B'(s) \|_2^2\Big)$ and $h(s)$
   equal to the entire right hand side of  \eref{gf7}. Then  \eref{vs90} holds in virtue of \eref{gf7}.
   Choose $ 1- b = 3-a$, i.e. $b = a-2$.  Then \eref{vs91} shows that
             \begin{align}
   t^{3-a} &e^{-\psi(t)} \|B'(t)\|_2^2 
   + \int_0^t s^{3-a} e^{-\psi(s)}  \Big( \|A''(s)\|_2^2 +(1/2) \|d_{A(s)}^* B'(s) \|_2^2\Big) ds \notag\\
   &\le (3-a) \int_0^t s^{2-a} e^{-\psi(s)} \| B'(s)\|_2^2 ds  \notag\\
   &+\int_0^t s^{3-a} e^{-\psi(s)} \Big\{c_7 \|B(s)\|_6^2 \| A'(s)\|_3^2 
                    +  c_8 \|A'(s)\|_2^2 \| A'(s)\|_3^2\Big\}  ds  .       \label{gf12}
    \end{align}
    The first integral on the right is finite by  \eref{fa100a}
     and justifies      use of Lemma \ref{lem50}. Upon multiplying
    \eref{gf12} by $e^{\psi(t)}$ we find an inequality whose left side is the left side of  \eref{gf10}.
      It remains to show that
    the last integral in line \eref{gf12} is finite. 
   From our bounds  \eref{fa100a} and \eref{fa114}  of order two
    we have
    \beq
    \eta \equiv \sup_{0 < s \le t}
                    s^{2-a}\Big(c_7 \| B(s)\|_6^2 + c_8\| A'(s)\|_2^2\Big) < \infty.  \notag
    \eeq
    Therefore the integral in line \eref{gf12} is at most
    \begin{align*}
    \eta \int_0^t s e^{-\psi(s)} \| A'(s)\|_3^2 ds 
         &\le \eta\int_0^t s^{1/4}\|A'(s)\|_2 s^{3/4}\|A'(s)\|_6 ds \\
    &\le \eta \Big(\int_0^t s^{1/2} \|A'(s)\|_2^2 ds \Big)^{1/2} 
     \Big( \int_0^t s^{3/2} \|A'(s)\|_6^2 ds \Big)^{1/2}\\& < \infty,
    \end{align*}
    wherein we have used \eref{fa10a} and \eref{fa114}, with $a = 1/2$, which is allowed because $\rho_{1/2}(t) < \infty$ if $\rho_a(t) < \infty$ for some $a \ge 1/2$.
   \end{proof}

   \begin{corollary}\label{c.Aord3} $($Order 3$)$
   For $ 1/2 \le a <1$ and $0 < t < \infty$ there holds
   \begin{align}
   t^{3-a} \| A'(t)\|_6^2 &\le C_{3}( t, \rho_a(t))\ \ \ \text{and}       \label{gf38}\\
   \int_0^t s^{3-a} \|B'(s)\|_6^2 ds &\le C_4(t, \rho_a(t))              \label{gf39}
   \end{align}
   for some standard dominating functions $C_3$ and $C_4$.
   \end{corollary}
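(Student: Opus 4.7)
The plan is to imitate the proof of Corollary \ref{c1.Aord2} one order higher: apply the Gaffney--Friedrichs--Sobolev inequality \eref{gaf50} to $A'(s)$ and to $B'(s)$, and then feed in the $L^2$ bounds of Proposition \ref{p.Aord3} together with the first- and second-order bounds from \eref{fa10a}, \eref{fa100a}, \eref{fa114} and \eref{fa202a}. All quantities entering the right-hand sides are already controlled by standard dominating functions of $(t,\rho_a(t))$, and products of such functions remain of this form.

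For \eref{gf38}, since $d_A^*A'=0$ and $d_A A'=B'$, the GFS inequality yields
\[
\kappa^{-2}\|A'(s)\|_6^2\le \|B'(s)\|_2^2+\lambda(B(s))\,\|A'(s)\|_2^2.
\]
Multiplying by $t^{3-a}$ at $s=t$ gives
\[
\kappa^{-2}t^{3-a}\|A'(t)\|_6^2\le t^{3-a}\|B'(t)\|_2^2+\bigl(t\lambda(B(t))\bigr)\bigl(t^{2-a}\|A'(t)\|_2^2\bigr).
\]
The first term is bounded by $C_2(t,\rho_a(t))$ via \eref{gf10}; the factor $t\lambda(B(t))$ is bounded using \eref{fa202a}; and $t^{2-a}\|A'(t)\|_2^2\le e^{\psi(t)}\rho_a(t)$ by \eref{fa100a}. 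Collecting gives \eref{gf38}.

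For \eref{gf39}, use the Bianchi-type identity \eref{B35}, $d_A B'=[B\wedge A']$, together with the $L^2$ control on $d_A^*B'$ from Proposition \ref{p.Aord3}. The GFS inequality applied to $B'(s)$ gives
\[
\kappa^{-2}\|B'(s)\|_6^2\le \|d_A^*B'(s)\|_2^2+c^2\|B(s)\|_6^2\|A'(s)\|_3^2+\lambda(B(s))\|B'(s)\|_2^2.
\]
Multiply by $s^{3-a}$ and integrate on $(0,t)$. The integral of the first term is at most $2C_2(t,\rho_a(t))$ by \eref{gf10}. The integral of the third term is handled by pulling $s\lambda(B(s))$ out via \eref{fa202a}, leaving $\int_0^t s^{2-a}\|B'(s)\|_2^2\,ds\le e^{\psi(t)}\rho_a(t)$ from \eref{fa100a}.

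The main obstacle, and the one step worth doing carefully, is the cross term $\int_0^t s^{3-a}\|B(s)\|_6^2\|A'(s)\|_3^2\,ds$. The plan is to interpolate $\|A'\|_3^2\le \|A'\|_2\|A'\|_6$, factor out the bounded quantity $s^{2-a}\|B(s)\|_6^2\le C_1(t,\rho_a(t))$ supplied by \eref{fa114}, and then split the remaining power of $s$ via Cauchy--Schwarz as
\[
\int_0^t s\,\|A'(s)\|_2\|A'(s)\|_6\,ds\le t^{(2a-1)/2}\!\left(\int_0^t s^{1-a}\|A'(s)\|_2^2\,ds\right)^{1/2}\!\left(\int_0^t s^{2-a}\|A'(s)\|_6^2\,ds\right)^{1/2},
\]
where the first factor is controlled by \eref{fa10a} and the second by \eref{fa114}. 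Combining the three contributions produces a standard dominating function of $(t,\rho_a(t))$, proving \eref{gf39}.
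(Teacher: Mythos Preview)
Your proof is correct and follows essentially the same route as the paper: apply the GFS inequality \eref{gaf50} to $A'$ and to $B'$, use $d_A^*A'=0$, $d_AA'=B'$, $d_AB'=[B\wedge A']$, and feed in the lower-order bounds \eref{fa10a}, \eref{fa100a}, \eref{fa114}, \eref{fa202a}, \eref{gf10}.

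The only difference is in the H\"older splitting of the commutator term $\|[B\wedge A']\|_2^2$. You estimate it by $c^2\|B\|_6^2\|A'\|_3^2$, pull out the bounded factor $s^{2-a}\|B\|_6^2$ from \eref{fa114}, and then handle $\int_0^t s\|A'\|_3^2\,ds$ via interpolation and Cauchy--Schwarz. The paper instead estimates by $c^2\|B\|_3^2\|A'\|_6^2$, writes $s^{3-a}\|B\|_3^2\|A'\|_6^2=(s\|B\|_3^2)(s^{2-a}\|A'\|_6^2)$, bounds $s\|B\|_3^2$ by interpolation between \eref{fa12a} and \eref{fa114} (this is \eref{gf42}), and uses that $s^{2-a}\|A'\|_6^2$ is integrable by \eref{fa114}. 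The paper's split avoids the extra Cauchy--Schwarz step, but both arguments are valid and give standard dominating functions of $(t,\rho_a(t))$.
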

                    \begin{proof}  Since $d_A^* A'= -d_A^*(d_A^* B) = 0$,  the 
        Gaffney-Friedrichs-Sobolev  inequality \eref{gaf50} gives
 \begin{align*}
 \kappa^{-2}\|A'(t)\|_6^2  &\le  \|d_A A'\|_2^2   +\lambda(B(t))\|A'(t)\|_2^2 \\
 &\le\| B'(t)\|_2^2 + (1 + \gamma \|B(t)\|_2^4) \|A'(t)\|_2^2.
 \end{align*}
 We see from \eref{gf10} that $ t^{3-a}\|B'(t)\|_2^2$ is bounded.
  Moreover \eref{fa100a}
 shows that $t^{2-a} \|A'(t)\|_2^2$ is also bounded. Since, by \eref{fa202a}, 
  $t\|B(t)\|_2^4$ is  also bounded, the inequality \eref{gf38} follows.     
  
  For the proof of \eref{gf39} observe that $d_A B' = d_A d_A A' = [B\wedge A']$. The Gaffney-Friedrichs-Sobolev inequality therefore gives
\begin{align*}
\kappa^{-2} &\| B'(s)\|_6^2 
  \le \|d_A^* B'(s)\|_2^2 + \| d_A B'(s)\|_2^2 + (1 + \gamma \|B(s)\|_2^4) \|B'(s)\|_2^2 \\
  &\le  \|d_A^* B'(s)\|_2^2 + \| \ [B(s)\wedge A'(s)]\, \|_2^2  + \Big(1 + \gamma \|B(s)\|_2^4\Big) \|B'(s)\|_2^2.
  \end{align*}
Hence
  \begin{align}
  \kappa^{-2}& s^{3-a}\| B'(s)\|_6^2 \le s^{3-a}\|d_A^* B'(s)\|_2^2          \label{gf41}\\
  &+\Big(c^2 s\|B(s)\|_3^2\Big) \Big(s^{2-a} \|A'(s)\|_6^2\Big)   
   +\Big(s + s\|B(s)\|_2^4\Big) \Big(s^{2-a} \|B'(s)\|_2^2\Big).          \notag     
\end{align}
The first term on the right hand side is integrable over $[0,t]$ by \eref{gf10}. 
   Since 
   \begin{align}
   s\|B(s)\|_3^2 \le s^{a-(1/2)} \Big(   s^{(1-a)/2}    \|B(s)\|_2\Big) \Big(  s^{(2-a)/2} \|B(s)\|_6\Big), \label{gf42}
  \end{align} 
  $s\|B(s)\|_3^2$  is bounded over $(0, t]$ by \eref{fa12a} and \eref{fa114}.
   The second term on the right side of \eref{gf41} is therefore a product
of a bounded function   and an integrable function, by \eref{fa114}.

The third term  is also a product of a bounded function, by \eref{fa202a} and an
integrable function, by \eref{fa100a}.
     \end{proof}

\subsubsection{Initial behavior by Neumann domination}

\begin{proposition}\label{p.Anda} $($Neumann Domination$)$  
    Let $1/2 \le a <1$.    Let $A(\cdot)$ be a strong solution to the Yang-Mills heat equation
     with finite a-action. Then, for $0 < T <\infty$, there holds
       \begin{align}
\int_0^T t^{(3/2)-a} \|B(t)\|_\infty^2 ds &< \infty.                \label{u28a}
\end{align}
          In particular,
    \begin{align}
    \int_0^T t \|B(t)\|_\infty^2 ds &< \infty\ \ \ \text{if}\ \ a = 1/2\ \ \     \text{and}    \label{u28} \\
    \int_0^T \|B(t)\|_\infty dt & < \infty \ \ \ \text{if}\ \ \ 1/2 < a < 1.               \label{u40}
    \end{align}
Further,
\begin{align}
\|B(t)\|_\infty  = o(t^{ \frac{a- (1/2)}{2} -1}) \ \ 
                                  \text{as}\ \ t\downarrow 0\ \ \ \text{if}\ 1/2 \le a <1.    \label{u41}
\end{align}
In  particular,
\begin{align}
\|B(t)\|_\infty &= o(t^{-1})\ \  \text{as} \ \  t\downarrow 0  \ \ 
                                                            \ \text{if}\ \ a = 1/2.\ \ \qquad \ \ \    \label{u29}                                                             
\end{align}
For $a =1/2$ we also have
\beq
 \|A'(t)\|_\infty = o(t^{-3/2})                                \label{nda60}
 \eeq
 and 
 \beq
 \int_0^T t^2 \|A'(t)\|_\infty^2 dt < \infty.    \label{nda61}
 \eeq 
\end{proposition}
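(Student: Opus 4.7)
The plan is to mirror the proof of Theorem \ref{thmnd1} and Theorem \ref{thmndp}, now applied directly to the strong solution $A(\cdot)$ rather than to the auxiliary $C(\cdot)$. The key simplification is that, for a genuine solution of \eref{str5}, there is no gauge-fixing term $d\phi$, so only $B$ (and later $A'$) satisfies a Bochner-type parabolic equation, and there is no $\phi$ analog to track.

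First I would record the pointwise identity
\begin{align*}
\frac{d}{ds}B(s) &= d_A A'(s) = -d_A d_A^* B(s) = -(d_A d_A^* + d_A^* d_A)B(s) \\
&= \sum_{j=1}^3(\nabla_j^A)^2 B(s) + B(s)\# B(s),
\end{align*}
which uses $d_A B = 0$ and the Bochner-Weitzenboch formula \eref{vs516}. The boundary condition $\nabla_n|B(s)|^2 \le 0$ on $\partial M$ follows exactly as in Lemma \ref{normderiv} from \eref{s50} resp. \eref{s51} together with Bianchi (no Dirichlet/Neumann identity involving $d^*A$ is needed since $A'= -d_A^* B$ rather than $-d_A^* B - d_A\phi$). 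Proposition \ref{propnda} then applies with $\omega=B$ and $h = B\#B$, giving the averaged Neumann domination
\[
|B(t,x)| \le \frac{1}{t}\int_0^t e^{(t-s)\Delta_N}\beta(s)\,ds\,(x), \qquad \beta(s)=|B(s)|+s|B(s)\# B(s)|.
\]
Taking $L^\infty$ norms using $\|e^{(t-s)\Delta_N}\|_{2\to\infty}\le c_1(t-s)^{-3/4}$ and applying the convolution inequality of Lemma \ref{lemu1} with $c=3/4$, $b=3/2-a$ (so $b-2c=-a<0$) yields
\[
\int_0^T t^{3/2-a}\|B(t)\|_\infty^2\,dt \le \gamma c_1^2\int_0^T s^{-a}\|\beta(s)\|_2^2\,ds,
\]
whose right-hand side is finite because $\int_0^T s^{-a}\|B(s)\|_2^2\,ds<\infty$ by finite $a$-action and $\int_0^T s^{2-a}\|B(s)\# B(s)\|_2^2\,ds<\infty$ by Corollary \ref{c2.Aord2a}. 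This gives \eref{u28a}; specializing to $a=1/2$ produces \eref{u28}, and a Cauchy-Schwarz argument as in the derivation of \eref{nd12} yields \eref{u40}. For the pointwise bound \eref{u41}, I would insert into the same integral the sharper initial behavior $\|B(s)\|_2 = o(s^{(a-1)/2})$ from \eref{fa205a} and $s\|B(s)\# B(s)\|_2 = o(s^{a-3/4})$, the latter obtained by the same interpolation argument as \eref{nd81} but now using the order-one and order-two bounds \eref{fa12a} and \eref{fa114} for $A$. The two resulting convolutions evaluate, as in the derivation of \eref{nd13}, to $t^{-1+(a-1/2)/2}o(1)$; setting $a=1/2$ gives \eref{u29}.

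For the two bounds on $A'$ when $a=1/2$, the same strategy applies with $\omega = A'$. Differentiating $A'=-d_A^* B$ in $s$ and using $d_A^*A' = -(d_A^*)^2 B = 0$ (Bianchi) together with $d_A A'=B'$ gives $A''=-(d_A^*d_A+d_A d_A^*)A'-[A'\lrc B]$, so Bochner-Weitzenboch yields
\[
\frac{d}{ds}A'(s) = \sum_{j=1}^3(\nabla_j^A)^2 A'(s) + B(s)\# A'(s) - [A'(s)\lrc B(s)].
\]
The boundary condition $\nabla_n|A'(s)|^2 \le 0$ again follows from the boundary conditions on $A$ and $B$ in \eref{s50}-\eref{s53} by differentiating the identities $A(t)_{tan}=0$ or $A(t)_{norm}=0$ in $t$ and invoking the relations of \cite[Cor. 2.4]{CG2}. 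Proposition \ref{propnda} then produces
\[
|A'(t,x)|\le \frac{1}{t}\int_0^t e^{(t-s)\Delta_N}\hat\beta(s)\,ds\,(x),
\]
with $\hat\beta(s)=|A'(s)|+s(|B(s)\#A'(s)|+|[A'(s)\lrc B(s)]|)$. Applying Lemma \ref{lemu1} with $c=3/4$ and $b=2$ gives \eref{nda61} after bounding $\int_0^T s^{-2}\cdot s^2\|A'(s)\|_2^2\,ds$ via \eref{fa100a} and bounding the quadratic terms by a H\"older-plus-interpolation argument analogous to Lemma \ref{lemenbds}, this time using the order-two $L^6$ bounds of Corollary \ref{c1.Aord2} for both factors. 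Inserting the pointwise estimates $\|A'(s)\|_2=o(s^{-3/4})$ from \eref{fa100a} and $s(\|B\# A'\|_2+\|[A'\lrc B]\|_2)=o(s^{-5/4})$ from the same interpolation analysis yields \eref{nda60}.

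The principal obstacle is not the convolution machinery, which is already packaged in Lemma \ref{lemu1}, but rather producing the sharp pointwise $L^2$ bounds on the quadratic nuisance terms $B\# B$, $B\#A'$, $[A'\lrc B]$ with the correct powers of $s$; in the critical case $a=1/2$ the arithmetic of exponents is tight, and the estimates must be set up exactly so that the $L^2$ and $L^6$ factors furnished by \eref{fa205a}, \eref{fa12a}, \eref{fa114}, and \eref{gf38} combine to give the required $o(\cdot)$ behavior. Once these initial-behavior estimates are in place, the Neumann domination argument and Lemma \ref{lemu1} deliver all the stated conclusions.
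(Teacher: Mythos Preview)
Your approach is identical to the paper's: same Bochner identities for $B$ and $A'$, same averaged Neumann domination from Proposition~\ref{propnda}, same use of Lemma~\ref{lemu1} with $c=3/4$, and the same energy inputs. Two slips need fixing, both in the $A'$ part.

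First, the arithmetic in \eref{nda60}: with $\|h(s)\|_2 \le 2c\,\|B(s)\|_3\|A'(s)\|_6$ and the bounds $\|B(s)\|_3 = O(s^{-1/2})$ (from \eref{gf42}) and $\|A'(s)\|_6 = O(s^{-5/4})$ (from \eref{gf38} with $a=1/2$), you get $s\|h(s)\|_2 = o(s^{-3/4})$, not $o(s^{-5/4})$. This matters: with exponent $-5/4<-1$ the convolution $\int_0^t (t-s)^{-3/4}s^{-5/4}\,ds$ would diverge and the argument would collapse. The correct exponent $-3/4$ matches that of $\|A'(s)\|_2$ and gives exactly $o(t^{-3/2})$.

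Second, the pointwise bound $\|A'(s)\|_6 = O(s^{-5/4})$ is a \emph{third-order} estimate, namely \eref{gf38} from Corollary~\ref{c.Aord3}; the second-order Corollary~\ref{c1.Aord2} only furnishes the integral bound $\int s^{3/2}\|A'(s)\|_6^2\,ds<\infty$, which is what you need for \eref{nda61} but is not enough for \eref{nda60}. Relatedly, in your application of Lemma~\ref{lemu1} for \eref{nda61} the parameters $b=2$, $c=3/4$ give $b-2c=1/2$, so the right side is $\int_0^T s^{1/2}\|\hat\beta(s)\|_2^2\,ds$; the $\|A'\|_2^2$ contribution is then controlled by \eref{fa10a}, not \eref{fa100a}.
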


       If $a >1/2$ then $\|A'(t)\|_\infty$ has more regular behavior near zero than that indicated in \eref{nda60}
       and \eref{nda61}. But I don't anticipate a need for these extensions. 
   
       \bigskip
       \noindent
              \begin{proof}
   The Yang-Mills heat equation is a little simpler than the augmented version. 
   The  equation \eref{vs511} for $B_C$ can be replaced by
\begin{align}
B'(s) =\sum_{j=1}^3 (\n_j^{A})^2 B + B\# B.                          \label{u30n} 
\end{align}
The derivation that led to \eref{nd20} now yields instead
\begin{align*}
|B(t, x)| \le \frac{1}{t} \int_0^t e^{(t-s)\Delta_N} |B(s, \cdot)| ds (x)
+\frac{1}{t} \int_0^t e^{(t-s)\Delta_N}  s |B(s)\# B(s)| ds  (x).
\end{align*}
Using $\|e^{(t-s)\Delta_N}\|_{2\rightarrow \infty} \le c_1 (t-s)^{-3/4}$ it follows that
\begin{align}
\|B(t)\|_\infty \le  \frac{1}{t}\int_0^t c_1(t-s)^{-3/4} \Big(\|B(s)\|_2 
                                          + s \|B(s)\# B(s)\|_2\Big) ds.                    \label{u33} 
\end{align}
 Lemma  \ref{lemu1}, with $c = 3/4$, $b = (3/2) -a$ and 
 $$\beta(s) = c_1 \Big(\|B(s)\|_2 
                                          + s \|B(s)\# B(s)\|_2\Big),$$
 shows that  
\begin{align*}
\int_0^T t^{(3/2) -a}\|B(t)\|_\infty^2 dt 
 \le 2c_1^2\gamma \int_0^T \Big(s^{-a} \|B(s)\|_2^2 + s^{2-a} \| B(s)\# B(s)\|_2^2 \Big) ds.
\end{align*}
It suffices to show therefore that the right hand side is finite.
But the integral of the first term is finite  by the assumption of finite a-action. 
The integral of the second term is finite  by Corollary \ref{c2.Aord2a}. This proves \eref{u28a}.
Put $a =1/2$ in \eref{u28a} to find \eref{u28}. If $a > 1/2$ then
\begin{align*}
\int_0^T\|B(t)\|_\infty dt \le \Big(\int_0^Tt^{a- (3/2)} dt \Big)^{1/2}
              \Big(\int_0^T t^{(3/2) - a} \|B(t)\|_\infty^2 dt \Big)^{1/2} < \infty,
\end{align*}
which proves \eref{u40}.

To prove \eref{u41} return to the inequality \eref{u32} and observe that by 
\eref{fa10a} and \eref{fa114} one has 
$\|B(s)\|_2 =o(s^{(a-1)/2})$ and $\|B(s)\|_6 = o(s^{(a-2)/2})$,  respectively.
Since 
\begin{align*}
\|B(s)\# B(s)\|_2 &\le c \|B(s)\|_4^2 
   \le c \|B(s)\|_2^{1/2} \|B(s)\|_6^{3/2} \\
   & =o(s^{(a-1)/4}) o(s^{3(a-2)/4}) = 
   o(s^{a -(7/4)}) 
   \end{align*}
   we find
   \begin{align*}
   s \|B(s)\# B(s)\|_2 = o(s^{a -(3/4)}).
   \end{align*}
   Hence
   \begin{align*}
  t\|B(t)\|_\infty &\le  c_1 \int_0^t (t-s)^{-3/4}\Big(o(s^{(a-1)/2}) + o(s^{a - (3/4)}) \Big) ds \\
  &=o(t^{(a - (1/2))/2}) + o(t^{a- (1/2)})
  \end{align*}
by \eref{rec519}. This proves \eref{u41}. Put $a=1/2$ in \eref{u41} to find \eref{u29}.

       For the proofs of \eref{nda60} and \eref{nda61}  we need to take from   \cite[Equ. (46)]{CG2}  
 the identity           
 \begin{align}
 (d/dt)A'(t) = \sum_{j=1}^3(\n_j^{A(t)} )^2 A'(t) +B(t)\# A'(t) - [A'(t) \lrc B(t)].     \label{nda62}
 \end{align}
 Take $\w(t) = A'(t)$ and $h(t) =  B(t)\# A'(t) - [A'(t) \lrc B(t)]$ in \eref{nda7} to find
 \begin{align}
 |A'(t,x)| \le t^{-1}\int_0^t e^{(t-s)\Delta_N} \Big(|A'(s, \cdot)| + s |h(s, \cdot)| \Big) ds.\    (x) \label{nda63}
 \end{align}
 Therefore
 \begin{align}
 \|A'(t)\|_\infty 
 &\le t^{-1} \int_0^t \|e^{(t-s)\Delta_N} \|_{2\rightarrow \infty} \Big(\|A'(s)\|_2 + s \|h(s)\|_2 \Big) ds \notag \\
 &\le (c_1/t)\int_0^t (t-s)^{-3/4}  \Big(\|A'(s)\|_2 + s \|h(s)\|_2 \Big) ds.              \label{nda65}
 \end{align}
  
 We will show that for each $t>0$ there is a constant $k_t$ such that $k_t \rightarrow 0$ as $t \downarrow 0$
 and
 \beq
 \Big(\|A'(s)\|_2 + s \|h(s)\|_2 \Big) \le k_t\ s^{-3/4},\ \ \ 0 < s \le t.\label{nda66}
 \eeq
 Using  this estimate then 
 in \eref{nda65} yields
 \begin{align}
 \|A'(t)\|_\infty &\le (c_1/t)\int_0^t (t-s)^{-3/4} k_t s^{-3/4} ds      \notag\\
 &= k_t  t^{-3/2} \cdot c_1C_{3/4,3/4} ,  \label{nda67}
 \end{align}
wherein we have used \eref{rec519}. 
This will prove \eref{nda60} once \eref{nda66} is shown.
 
 For the proof of \eref{nda66} observe that  from the second order initial behavior bound 
   \eref{fa100a} 
 with $a = 1/2$ we have $t^{3/2} \|A'(t)\|_2^2 = o(1)$ as $t\downarrow 0$. This proves the assertion in 
 \eref{nda66} in regard to the first term. Concerning the second term we have
 \begin{align}
 \|h(s)\|_2 &= \| B(s)\# A'(s) - [ A'(s)\lrc B(s)]\, \|_2   \notag\\
 &\le 2c \|\ |B(s)|\ |A'(s)|\ \|_2                         \notag\\
 & \le 2c \|B(s)\|_3 \| A'(s)\|_6                    \label{nda69}\\
 &=o(s^{-1/2}) o(s^{-5/4})                    \notag
 \end{align}
 by      \eref{gf42}   and \eref{gf38}, with $a =1/2$.  
 Hence $s \|h(s)\|_2 = o(s^{-3/4})$. This completes the proof of \eref{nda66} and \eref{nda60}.
 It will be useful to observe for later work that the bounds used above show that $k_t$ can be chosen
 to be dominated by a standard dominating function $C_5(t, \rho_{1/2}(t))$.

           In order to prove \eref{nda61} we will apply Lemma \ref{lemu1}. We need to take $b = 2$ and $c = 3/4$
in that lemma. In this case we have $b - 2c =1/2 < 1$. So we can apply the lemma, using \eref{nda65},
 to find
\begin{align}
\int_0^T t^2 \|A'(t)\|_\infty^2 dt \le \gamma \int_0^T s^{1/2} \Big( \| A'(s)\|_2 + s \| h(s)\|_2\Big)^2 ds.
\end{align} 
Now $\int_0^T s^{1/2} \|A'(s)\|_2^2 ds < \infty$ by \eref{fa10a}  with $a =1/2$.  
Moreover, using
 the bound in \eref{nda69} we find 
\begin{align}
\int_0^T s^{1/2} (s \|h(s)\|_2)^2 ds
&\le 4c^2\int_0^T s^{5/2} \|B(s)\|_3^2 \|A'(s)\|_6^2 ds                \notag\\
&= 4c^2\int_0^T \{s \|B(s)\|_3^2\} \{ s^{3/2} \|A'(s)\|_6^2 \} ds \\
&< \infty
\end{align}
because the first factor in braces is bounded, in accordance with  \eref{gf42}
 and the second factor
is integrable in accordance with  \eref{fa114} with $a=1/2$. 
 This completes the proof of the proposition.    
\end{proof}

\subsection{Uniqueness of $A$ } \label{secu}

         \begin{theorem}\label{thmufa}  $($Uniqueness for $a = 1/2$.$)$ 
Suppose that $A_1(\cdot)$ and $A_2(\cdot)$ are two
strong solutions with finite action and the same initial value. 
Assume that, if $M \ne \R^3$, then for all $t >0$, both  satisfy the boundary
 conditions \eref{u8} in case $($N$)$ 
or \eref{u9} in case $($D$)$. 
\begin{align}
 B_j(t)_{norm}&=0\ \ \ \text{in case} \ \ \ (N)  \label{u8}\\
A_j(t)_{tan} &= 0\ \ \ \text{in case} \ \ \ (D)     \label{u9}
\end{align}
Then $A_1(t) = A_2(t)$ for all $t \ge 0$.
\end{theorem}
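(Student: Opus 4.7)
Set $u(t) = A_1(t) - A_2(t)$ and $v(t) = B_1(t) - B_2(t)$. Since both $A_j$ are strong solutions with finite action, Proposition \ref{p.Anda} applies with $a = 1/2$, giving $\|B_j(t)\|_\infty = o(t^{-1})$, $\int_0^T t\|B_j(t)\|_\infty^2\,dt < \infty$, and the analogous estimates for $\|A_j'(t)\|_\infty$. Moreover, the continuity of $A_j$ into $L^2$ with matching initial values yields $\|u(t)\|_2 \to 0$ as $t \downarrow 0$.

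The first step is to derive the key differential inequality. From the Yang-Mills heat equation one has $u'(t) = -(d_{A_1}^* B_1 - d_{A_2}^* B_2)$, which rearranges to $u' = -d_{A_2}^* v - [u \lrc B_1]$. Taking the $L^2$ inner product with $u(t)$, integrating by parts using the boundary conditions \eref{u8} or \eref{u9}, and re-expressing $d_{A_2} u$ in terms of $v$ and $[u \wedge A_j]$-type commutators produces, after H\"older and Young inequalities to absorb the cross terms into half of $\|v\|_2^2$, an estimate
\begin{align}
\tfrac{d}{dt}\|u(t)\|_2^2 + \|v(t)\|_2^2 \le K(t)\,\|u(t)\|_2^2,  \label{u11plan}
\end{align}
where $K(t)$ is a positive polynomial in $\|B_j(t)\|_\infty$ and $\|A_j'(t)\|_\infty$ which is $o(t^{-1})$ by Proposition \ref{p.Anda}. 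Dropping the dissipation term gives the Gronwall-type inequality alluded to in Remark \ref{remau}.

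Given $\epsilon > 0$, choose $t_0 > 0$ so that $K(t) \le \epsilon/t$ on $(0,t_0]$. An integrating-factor argument applied to \eref{u11plan} then shows that $t^{-\epsilon}\|u(t)\|_2^2$ is non-increasing on $(0,t_0]$, so $\|u(t)\|_2^2 \le (t/s)^\epsilon \|u(s)\|_2^2$ for $0 < s \le t \le t_0$. If one can show $\liminf_{s \downarrow 0} s^{-\epsilon}\|u(s)\|_2^2 = 0$ for some $\epsilon > 0$, then $u \equiv 0$ on $(0,t_0]$, and uniqueness propagates to $[0,\infty)$ by the $H_1$ uniqueness of Theorem \ref{thmH1} applied on any interval $[\tau,\infty)$ with $\tau \in (0,t_0]$, since both solutions lie in $H_1$ for positive time and agree at $t = \tau$.

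The principal obstacle is precisely extracting that initial rate $\|u(s)\|_2^2 = o(s^\epsilon)$ from the finite-action hypothesis: bare $L^2$ continuity carries no modulus, and this is exactly the breakdown of the standard Gronwall proof at the critical exponent $a = 1/2$. I would attempt to bootstrap using the dissipation term retained in \eref{u11plan}. Combining it with Lemma \ref{lem50} and the finite-action integrability $\int_0^T s^{-1/2}\|B_j(s)\|_2^2\,ds < \infty$ should yield $\int_0^{t_0} s^{-b}\|u(s)\|_2^2\,ds < \infty$ for some $b < 1$; together with the monotonicity of $t^{-\epsilon}\|u(t)\|_2^2$ on $(0,t_0]$ and the pointwise $o(t^{-1})$ bound on $K$, this would force the required rate. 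An alternative route is to compare the associated strong $H_1$ solutions $A_j^{g_0^{(j)}}$ produced by Theorem \ref{thmreca}; however the gauge functions $g_0^{(j)}$ depend on $j$, so one must simultaneously control their difference in $\G_{3/2}$, and balancing this gauge interplay against \eref{u11plan} is where the technical heart of the critical case lies.
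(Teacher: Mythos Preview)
Your overall strategy is correct, but you have not found the step that closes the argument. The gap you yourself flag --- extracting an initial rate $\|u(s)\|_2^2 = o(s^\epsilon)$ for some $\epsilon > 0$ --- is resolved in the paper by a direct computation that you are overlooking, not by the bootstrap-via-dissipation or gauge-comparison routes you propose. Write
\[
\|A_j(t) - A_0\|_2 \le \int_0^t \|A_j'(s)\|_2\,ds
= \int_0^t s^{-1/4}\bigl(s^{1/4}\|A_j'(s)\|_2\bigr)\,ds
\le \Bigl(\int_0^t s^{-1/2}\,ds\Bigr)^{1/2}\Bigl(\int_0^t s^{1/2}\|A_j'(s)\|_2^2\,ds\Bigr)^{1/2}.
\]
The first factor is $\sqrt{2}\,t^{1/4}$; the second is $o(1)$ by the order-one energy identity \eqref{fa10a} with $a=1/2$, which follows solely from finite action. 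Hence $\|A_j(t)-A_0\|_2 = o(t^{1/4})$ and so $f(t):=\|u(t)\|_2^2 = o(t^{1/2})$. This is Lemma \ref{lemu3}. With this rate in hand, your integrating-factor argument with any $\epsilon < 1/2$ already closes. Your speculative bootstrap via the $\|v\|_2^2$ dissipation term is unnecessary, and the gauge-comparison route is a detour.

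Two further remarks on the comparison. First, the paper's differential inequality \eqref{u11} (taken from \cite{CG1}) involves only $\|B_j(t)\|_\infty$, not $\|A_j'(t)\|_\infty$; your derivation introduces the latter, which is harmless since it is also $o(t^{-1})$, but it is not needed. Second, the paper's endgame is an iteration rather than an integrating factor: setting $g(t)=f(t)/\sqrt{t}$ and $w(t)=\bigl(\int_0^t s\,K(s)^2\,ds\bigr)^{1/2}$, Cauchy--Schwarz applied to $f(t)=\int_0^t f'(s)\,ds \le \int_0^t K(s)f(s)\,ds$ gives $g(t)\le w(t)\bigl(t^{-1}\int_0^t g(s)^2\,ds\bigr)^{1/2}$. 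Since $g$ is bounded (by the $o(t^{1/2})$ rate) and $w$ is monotone with $w(0)=0$, iterating yields $g(t)\le w(t)^n C$, forcing $g\equiv 0$ on any interval where $w\le 1/2$. Your integrating-factor variant is equivalent in spirit once the rate is in hand.
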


The proof  will require  the following lemma.

          \begin{lemma}\label{lemu3} If $A_j(\cdot)$, $j = 1,2$, are two strong
  solutions of finite action  with the same initial value then
\beq
\| A_1(t) - A_2(t)\|_2^2 = o(t^{1/2})    \text{as} \ \ \ t\downarrow 0.         \label{u24}
\eeq
\end{lemma}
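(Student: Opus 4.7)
The plan is to bound each of $A_1(t)$ and $A_2(t)$ separately against their common initial value $A_0$ in $L^2$, using only the strong solution property and finite $(1/2)$-action. Both estimates follow at once from the fact that $A_j \in C([0,\infty); L^2)$ and that the strong derivative $A_j'(s)$ exists on $(0,\infty)$ with values in $L^2$.

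First I would invoke the fundamental theorem of calculus in Bochner form. For each $\epsilon \in (0,t)$, the strong solution property gives
\begin{equation*}
A_j(t) - A_j(\epsilon) = \int_\epsilon^t A_j'(s)\, ds
\end{equation*}
as a Bochner integral in $L^2(M;\Lambda^1\otimes \kf)$. The Cauchy--Schwarz inequality gives
\begin{equation*}
\int_0^t \|A_j'(s)\|_2\, ds \le \Bigl(\int_0^t s^{-1/2}ds\Bigr)^{1/2}\Bigl(\int_0^t s^{1/2}\|A_j'(s)\|_2^2\, ds\Bigr)^{1/2} = \sqrt{2}\, t^{1/4}\Bigl(\int_0^t s^{1/2}\|A_j'(s)\|_2^2\, ds\Bigr)^{1/2},
\end{equation*}
and the second factor is finite because $A_j$ has finite $(1/2)$-action (this is exactly the content of the order-one energy identity \eqref{fa10a} with $a=1/2$). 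Hence $\|A_j'(\cdot)\|_2 \in L^1(0,t)$, so letting $\epsilon \downarrow 0$ and using continuity of $A_j$ at $t=0$ together with dominated convergence yields $A_j(t) - A_0 = \int_0^t A_j'(s)\, ds$ in $L^2$.

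Next I would extract the rate. By \eqref{fa10a} applied with $a=1/2$ we have $\int_0^t s^{1/2}\|A_j'(s)\|_2^2\, ds \le \tfrac{1}{2}\rho_{1/2}(t)$, and $\rho_{1/2}(t)\to 0$ as $t\downarrow 0$ by finite action and dominated convergence. Combining this with the Cauchy--Schwarz estimate above gives
\begin{equation*}
\|A_j(t) - A_0\|_2 \le \int_0^t \|A_j'(s)\|_2\, ds \le \sqrt{2}\, t^{1/4}\bigl(\tfrac{1}{2}\rho_{1/2}(t)\bigr)^{1/2} = o(t^{1/4}) \quad \text{as}\ t\downarrow 0.
\end{equation*}
By the triangle inequality, $\|A_1(t) - A_2(t)\|_2 \le \|A_1(t) - A_0\|_2 + \|A_2(t) - A_0\|_2 = o(t^{1/4})$, and squaring gives the claim $\|A_1(t) - A_2(t)\|_2^2 = o(t^{1/2})$.

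There is no real obstacle here: the argument is a two-line application of Cauchy--Schwarz once one identifies the correct weighted integral $\int_0^t s^{1/2}\|A'(s)\|_2^2 ds$ controlled by finite action via \eqref{fa10a}. The only technical point that deserves explicit mention in the write-up is the justification of the Bochner-integral identity $A_j(t) - A_0 = \int_0^t A_j'(s)\, ds$ down to $s=0$, which is handled by the $L^1$ bound on $\|A_j'(s)\|_2$ just derived.
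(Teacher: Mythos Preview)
Your proof is correct and follows essentially the same approach as the paper: both reduce to bounding $\|A_j(t)-A_0\|_2$ via $\int_0^t\|A_j'(s)\|_2\,ds$, apply Cauchy--Schwarz with the weight $s^{-1/2}$, and invoke the order-one energy estimate \eqref{fa10a} at $a=1/2$ to see the weighted integral is $o(1)$. Your added justification of the Bochner identity down to $s=0$ is a welcome bit of care that the paper leaves implicit.
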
 
         \begin{proof} Since 
 \begin{align*}
   \|A_1(t) - A_2(t)\|_2 &\le \| A_1(t) - A_0 +A_0 - A_2(t)\|_2  \\
   &\le  \| A_1(t) - A_0\|_2 + \|A_0 - A_2(t)\|_2 ,
\end{align*}
it suffices to show that each term is $o(t^{1/4})$. For any solution $A(\cdot)$ of
 finite action one has
 \begin{align*}
 \|A(t) - A_0\|_2 &\le \int_0^t \| A'(s)\|_2 ds \\
 &= \int_0^t s^{-1/4}\Big(s^{1/4} \| A'(s)\|_2 \Big) ds \\
 &\le \Big(\int_0^t s^{-1/2} ds\Big)^{1/2} \Big(\int_0^t s^{1/2} \| A'(s)\|_2^2 ds \Big)^{1/2}\\
 &=t^{1/4} \sqrt{2}  \Big(\int_0^t s^{1/2} \| A'(s)\|_2^2 ds \Big)^{1/2} .
 \end{align*}
 The integral is finite by the energy estimate \eref{fa10a} (with $a = 1/2$) and therefore the integral is $o(1)$ as $t\downarrow 0$.
\end{proof}

\bigskip
\noindent
\begin{proof}[Proof of Theorem \ref{thmufa}]  The identity
  \cite[Equ. (8.63)]{CG1} shows that 
  \begin{align}
  \frac{d}{dt}\| A_1(t) - A_2(t)\|_2^2 
  &\le c (\|B_1(t)\|_\infty + \| B_2(t)\|_\infty )\| A_1(t) - A_2(t)\|_2^2. \label{u11} 
    \end{align}
  This was derived in \cite{CG1} in case $M$ is compact. The proof in case $M =\R^3$ is 
  easier since one need not be concerned with boundary conditions. We omit the minor changes.
  
Let $f(t) = \| A_1(t) - A_2(t)\|_2^2$  and  
$u(t) = c (\|B_1(t)\|_\infty + \| B_2(t)\|_\infty)$. Then  
\begin{align}
f'(t) \le u(t)f(t),\ \ \ t >0     \label{u15}
\end{align}
for $t >0$ by \eref{u11}. 
$f$ is continuous on $[0,T]$ because each $A_j(t)$ converges to
  $A_0$ in $L^2(M)$ as $t\downarrow 0$.
Since $f(0) = 0$ it follows that
\begin{align} 
f(t) &= \int_0^t f'(s) ds \notag\\
& \le \int_0^t u(s) f(s) ds \notag \\
&\le \Big( \int_0^t s u(s)^2ds\Big)^{1/2} \Big(\int_0^t s^{-1} f(s)^2 ds \Big)^{1/2} .  \label{u56}
\end{align}
Let $g(t) = f(t)/\sqrt{t}$ for $t >0$. By Lemma \ref{lemu3} we see that $g$ is bounded on $(0, T]$ and
in fact goes to zero as $t \downarrow 0$. For convenience we may extend $g$ continuously
to $[0, T]$ by defining  $g(0) = 0$. Let 
\beq
w(t) =  \Big( \int_0^t s u(s)^2ds\Big)^{1/2}.
\eeq
Then $w(t) < \infty$  for $0\le t \le T$ by   \eref{u28}. 
Dividing \eref{u56} by $\sqrt{t}$ we find
        \begin{align}
g(t) \le w(t) \Big( \frac{1}{t} \int_0^t g(s)^2 ds \Big)^{1/2} .                          \label{u58}
\end{align}
There is a constant $C$ such that $g(t) \le C$ for $0 \le t \le T$. Insert this bound
in the integral in \eref{u58} to find that $g(t) \le w(t)C$. We can now proceed  by induction
using the fact that $w$ is non-decreasing:
Assuming that $g(s) \le w(s)^n C$  for $ 0\le s \le T$, \eref{u58} then implies that
\begin{align*}
g(t) &\le w(t)\Big(\frac{1}{t}\int_0^t w(s)^{2n} C^2 ds\Big)^{1/2} \\
&\le w(t)\{ w(t)^n C\}.
\end{align*}
  Consequently 
$g(t) \le w(t)^{n+1} C$. Thus if $t_0>0$ is such that $w(t) \le 1/2$ for $0 \le t \le t_0$ then
$g(t) = 0$ on $[0, t_0]$. Hence $A_1(t) = A_2(t)$ on this interval.  Since $A_j(t) \in H_1(M)$
for j = 1,2 and all $t >0$ we can now use the uniqueness theorem in \cite{CG1}
for $H_1$ initial data to conclude that $A_1(t) = A_2(t)$ for all $t > 0$.
\end{proof}

\begin{remark}{\rm  (Uniqueness for $a > 1/2$) 
If a solution to the Yang-Mills heat equation has finite a-action
for  some $a \ge 1/2$ then it has finite $(1/2)$-action, as is clear from the definition
\eref{fa3}.  Our uniqueness proof applies therefore to all $a \in [1/2, 1)$.
However if $a >1/2$ then the inequality \eref{u28}, on which our proof rests,
 can be replaced by \eref{u40}. Thus for $a > 1/2$ we have
 $\int_0^t \| B_j(s)\|_{\infty} ds < \infty$, $j =1,2$ by  the apriori bound \eref{u40}. 
 The function $u$
 that appears in \eref{u15} is therefore integrable over $[0, T]$. Consequently
 the standard Gronwall argument for uniqueness is applicable: the non-negative 
 function
 $h(t) \equiv e^{-\int_0^t u(s) ds} f(t)$ has a non-positive derivative on $(0, T]$ and is
 zero at $t=0$, hence is identically zero on $[0, T]$. This is the basis
  for the uniqueness proof used in  \cite{CG1} for the case of finite energy ($a = 1$).
  Here  we see  another instance of breakdown of standard techniques
 at criticality.
 }
\end{remark}

\bibliographystyle{amsplain}
\bibliography{ymh}

\end{document}